\let\oldskull\skull
\def\skull{\mathord{\oldskull}}
\DeclareMathAlphabet{\mathbbm}{U}{bbm}{m}{n}
\DeclareFontFamily{U}{BOONDOX-calo}{\skewchar\font=45 }
\DeclareFontShape{U}{BOONDOX-calo}{m}{n}{
  <-> s*[1.05] BOONDOX-r-calo}{}
\DeclareFontShape{U}{BOONDOX-calo}{b}{n}{
  <-> s*[1.05] BOONDOX-b-calo}{}
\DeclareMathAlphabet{\mcb}{U}{BOONDOX-calo}{m}{n}
\SetMathAlphabet{\mcb}{bold}{U}{BOONDOX-calo}{b}{n}
\setlist{noitemsep,topsep=4pt}
\def\DeclareSymbol#1#2#3{%
	\expandafter\gdef\csname MH@symb@#1\endcsname{\tikzsetnextfilename{symbol#1}%
	\tikz[baseline=#2,scale=0.15,draw=symbols,line join=round,line cap=round]{#3}}%
	\expandafter\gdef\csname MH@symb@#1s\endcsname{\scalebox{0.75}{\tikzsetnextfilename{symbol#1}%
	\tikz[baseline=#2,scale=0.15,draw=symbols,line join=round,line cap=round]{#3}}}%
	\expandafter\gdef\csname MH@symb@#1ss\endcsname{\scalebox{0.65}{\tikzsetnextfilename{symbol#1}%
	\tikz[baseline=#2,scale=0.15,draw=symbols,line join=round,line cap=round]{#3}}}%
	}
\def\<#1>{\ifthenelse{\boolean{mmode}}{\mathchoice{\csname MH@symb@#1\endcsname}{\csname MH@symb@#1\endcsname}{\csname MH@symb@#1s\endcsname}{\csname MH@symb@#1ss\endcsname}}{\csname MH@symb@#1\endcsname}}
\newcommand*{\bigcdot}{}% Check if undefined
\DeclareRobustCommand*{\bigcdot}{%
  \mathbin{\mathpalette\bigcdot@{}}%
}
\newcommand*{\bigcdot@scalefactor}{.5}
\newcommand*{\bigcdot@widthfactor}{1.15}
\newcommand*{\bigcdot@}[2]{%
  % #1: math style
  % #2: unused
  \sbox0{$#1\vcenter{}$}% math axis
  \sbox2{$#1\cdot\m@th$}%
  \hbox to \bigcdot@widthfactor\wd2{%
    \hfil
    \raise\ht0\hbox{%
      \scalebox{\bigcdot@scalefactor}{%
        \lower\ht0\hbox{$#1\bullet\m@th$}%
      }%
    }%
    \hfil
  }%
}
\def\act{\bigcdot}
\newcommand{\cT}{\mcb{T}}
\newcommand{\cbT}{\mcb{T}}
\newcommand{\cbI}{\mcb{I}}
\newcommand{\cut}{\mathfrak{C}}
\newcommand{\mrd}{\mathop{}\!\mathrm{d}}
\newcommand{\dt}{\mrd_t}
\newcommand{\mcE}{\mathcal{E}}
\newcommand{\mcH}{\mathcal{H}}
\newcommand{\mcR}{\mathcal{R}}
\newcommand{\mcC}{\mathcal{C}}
\newcommand{\mcS}{\mathcal{S}}
\newcommand{\mcL}{\mathcal{L}}
\newcommand{\mcI}{\mathcal{I}}
\newcommand{\mcP}{\mathcal{P}}
\newcommand{\mbA}{\mathbf{A}}
\newcommand{\mbY}{\mathbf{Y}}
\newcommand{\mbX}{\mathbf{X}}
\newcommand{\T}{\mathbf{T}}
\newcommand{\bonds}{{\mathbf{B}}}
\newcommand{\obonds}{{\overline{\mathbf{B}}}}
\newcommand{\Grid}{\mathbf{G}}
\newcommand{\olines}{\overline{\mathbf{L}}}
\newcommand{\lines}{{\mathbf{L}}}
\newcommand{\rect}{{\mathbf{R}}}
\newcommand{\plaq}{{\mathbb{P}}}
\def\Diff{\mathrm{Diff}}
\newcommand{\U}{\mathrm{U}}
\newcommand{\SO}{\mathrm{SO}}
\newcommand{\SU}{\mathrm{SU}}
\def\boldR{\boldsymbol{R}}
\def\boldH{\boldsymbol{H}}
\def\${|\!|\!|}
\def\id{\mathrm{id}}
\def\var#1{#1\textnormal{-var}}
\def\Hol#1{#1\textnormal{-H{\"o}l}}
\def\gr#1{#1\textnormal{-gr}}
\def\v#1{#1\textnormal{-vee}}
\newcommand{\fix}{\textnormal{fix}}
\newcommand{\nfix}{\textnormal{nfix}}
\DeclareMathOperator{\hol}{\mathrm{hol}}
\def\conf{Q}
\def\scal#1{{\langle#1\rangle}}
\def\CS{\mathcal{S}}
\def\CR{\mathcal{R}}
\def\CT{\mcb{T}}
\def\CG{\mcb{G}}
\def\bone{\mathbf{1}}
\def\ST{\boldsymbol{\mcb T}}
\def\sol{{\mathop{\mathrm{sol}}}}
\newcommand{\mfu}{\mathfrak{u}}
\newcommand{\mfT}{\mathfrak{T}}
\newcommand{\mfL}{\mathfrak{L}}
\newcommand{\mfa}{\mathfrak{a}}
\newcommand{\mfR}{\mathfrak{R}}
\newcommand{\mft}{\mathfrak{t}}
\newcommand{\mfp}{\mathfrak{p}}
\newcommand{\mfl}{\mathfrak{l}}
\newcommand{\mfh}{\mathfrak{h}}
\newcommand{\mfq}{\mathfrak{q}}
\newcommand{\mfG}{\mathfrak{G}}
\newcommand{\mfO}{\mathfrak{O}}
\newcommand{\mfg}{\mathfrak{g}}
\newcommand{\mfk}{\mathfrak{k}}
\newcommand{\mfj}{\mathfrak{j}}
\def\cC{\mathscr{C}}
\def\cS{\mathscr{S}}
\def\cD{\mathscr{D}}
\def\cE{\mathscr{E}}
\def\sym{\textnormal{\scriptsize \textsc{sym}}}
\def\YM{\textnormal{\small \textsc{ym}}}
\def\northeast{\textnormal{\scriptsize \textsc{ne}}}
\def\northwest{\textnormal{\scriptsize \textsc{nw}}}
\def\southeast{\textnormal{\scriptsize \textsc{se}}}
\def\southwest{\textnormal{\scriptsize \textsc{sw}}}
\def\Southeast{\textnormal{\textsc{se}}}
\def\Southwest{\textnormal{\textsc{sw}}}
\def\Northwest{\textnormal{\textsc{nw}}}
\newcommand{\ad}{\mathrm{ad}}
\newcommand{\Ad}{\mathrm{Ad}}
\newcommand{\Aut}{\mathrm{Aut}}
\DeclareMathOperator{\Trace}{Tr}
\newcommand{\SYM}{\mathrm{SYM}}
\DeclareMathOperator{\Log}{Log}
\def\combplus[#1,#2,#3,#4]{\binom{#1\ {\scriptstyle #4} }{#2\ #3}}
\def\singlescalegenvert[#1,#2]{\hat{H}^{#2}_{#1}}
\def\multiscalegenvert[#1,#2]{H^{#2}_{#1}}
\def\moll{\chi}
\def\nr[#1]{\tilde{N}[#1]} %nr stands for non-root
\def\inn[#1]{\mathring{N}[#1]}
\def\nrinn[#1]{\hat{N}_{#1}} %nrinn stands for non-root inner nodes
\def\nrmod[#1,#2]{\tilde{N}_{#1}(#2)}
\def\nrinnmod[#1,#2]{\hat{N}_{#1}(#2)}
\def\ident[#1]{\underline{#1}}
\def\mylink#1#2{\mathrel{\vbox{\offinterlineskip\ialign{%
    \hfil##\hfil\cr
    $\scriptscriptstyle#1$\cr
    \noalign{\kern0.1ex}
    $#2$\cr
}}}}
\def\mysublink[#1]#2#3{\mathrel{\vbox{\offinterlineskip\ialign{%
    \hfil##\hfil\cr
    $\scriptscriptstyle#2$\cr
    \noalign{\kern0.1ex}
    $#3$\cr
    \noalign{\kern-0.2ex}
    \smash{\raisebox{-\height}{\hbox{$\scriptscriptstyle #1$}}}\cr
    \noalign{\kern0.2ex}
}}}}
\def\fon[#1]{\cC_{#1}}
\def\mincompproj[#1]{\mfp_{#1}}
\def\Proj_#1{\mathop{\mathrm{Proj}_{#1}}}
\def\negrenorm[#1]{\mfR_{#1}}
\def\topnegrenorm[#1]{\overline{\mfR}_{#1}}
\def\quotedge[#1]{E^{q}_{#1}}
\def\posrenorm[#1]{\mcC_{#1}}
\def\topposrenorm[#1]{\overline{\mcC_{#1}}}
\def\cutsmod[#1]{\mathbb{C}_{+,#1}}
\def\fullcutsmod[#1]{\cut_{#1}}
\def\rem{{\mathop{\mathrm{rem}}}}
\def\ym{{\mathop{\mathrm{YM}}}}
\def\ren{{\mathop{\mathrm{ren}}}}
\def\emptyset{{\centernot\Circle}}
\colorlet{symbols}{blue!30!black!50}
\colorlet{testcolor}{green!60!black}
\colorlet{darkblue}{blue!60!black}
\colorlet{darkgreen}{green!60!black}
\definecolor{darkergreen}{rgb}{0.0, 0.5, 0.0}
\definecolor{purple}{rgb}{0.55,0.05,0.8}
\colorlet{redkernel}{red!80}
\def\symbol#1{{\mathbf{#1}}}
\def\1{\mathbf{\symbol{1}}}
\def\X{\symbol{X}}
    \pgfmathsetlength{\pgf@xb}{\pgfkeysvalueof{/pgf/outer xsep}}%  
    \pgfmathsetlength{\pgf@yb}{\pgfkeysvalueof{/pgf/outer ysep}}%  
\colorlet{greennode}{green!50!black}
\colorlet{rednode}{red!50!black}
\colorlet{lbluenode}{blue!25}
\colorlet{dbluenode}{blue}
\colorlet{orangenode}{orange}
\definecolor{connection}{rgb}{0.7,0.1,0.1}
\tikzset{
dot/.style={circle,fill=black,inner sep=0pt, minimum size=1mm},
root/.style={circle,fill=black!50,inner sep=0pt, minimum size=3mm},
        var/.style={circle,fill=black!10,draw=black,inner sep=0pt, minimum size=1.6mm},
        delta/.style={densely dotted},
        var1/.style={rectangle,fill=black!10,draw=black,inner sep=0pt, minimum size=1.6mm},
        var2/.style={diamond,fill=black!10,draw=black,inner sep=0pt, minimum size=2mm},
        kernel/.style={semithick,shorten >=2pt,shorten <=2pt},
       kernel1/.style={postaction={decorate,decoration={markings,mark=at position 0.45 with {\draw[-] (0,-0.08) -- (0,0.08);}}}},
        kernels/.style={snake=snake,segment amplitude=1pt,segment length=4pt},
        rho/.style={densely dashed,semithick,shorten >=2pt,shorten <=2pt},
           testfcn/.style={dotted,semithick,shorten >=2pt,shorten <=2pt},
           tau/.style={circle,inner sep=1pt,draw=black,fill=white,text=black,thin},
        renorm/.style={shape=circle,fill=white,inner sep=1pt},
        labl/.style={shape=rectangle,fill=white,inner sep=1pt},
       % noise nodes and their colored versions: 
        xi/.style={very thin,circle,fill=lbluenode,draw=symbols,inner sep=0pt,minimum size=1.2mm},
        xi1/.style={very thin,rectangle,fill=lbluenode,draw=symbols,inner sep=0pt,minimum size=1.2mm},
        xi2/.style={very thin,diamond,fill=lbluenode,draw=symbols,inner sep=0pt,minimum size=1.6mm},
        xigreen/.style={very thin,circle,fill=greennode,draw=symbols,inner sep=0pt,minimum size=1.2mm},
        xigreen1/.style={very thin,rectangle,fill=greennode,draw=symbols,inner sep=0pt,minimum size=1.2mm},
        xired/.style={very thin,circle,fill=rednode,draw=symbols,inner sep=0pt,minimum size=1.2mm},
        xilblue/.style={very thin,circle,fill=lbluenode,draw=symbols,inner sep=0pt,minimum size=1.2mm},
        xidblue/.style={very thin,circle,fill=dbluenode,draw=symbols,inner sep=0pt,minimum size=1.2mm},
        xiorange/.style={very thin,circle,fill=orangenode,draw=symbols,inner sep=0pt,minimum size=1.2mm},
        xix/.style={crosscircle,fill=lbluenode,draw=symbols,inner sep=0pt,minimum size=1.2mm},
 %
%xix-green-red/.style={crosscircle, fill=greennode,draw=rednode,inner sep=0pt,minimum size=1.2mm},
xix-green-red/.style={circle, fill=greennode!70!white,draw=rednode,inner sep=0pt,minimum size=1.6mm,append after command={node [inner sep=0pt,minimum size=0.8mm,thick, draw = rednode, cross out]{}}},
xix-green-red1/.style={rectangle, fill=greennode!70!white,draw=rednode,inner sep=0pt,minimum size=1.5mm,append after command={node [inner sep=0pt,minimum size=1mm,thick, draw = rednode, cross out]{}}},
	xib/.style={very thin,circle,fill=lbluenode,draw=symbols,inner sep=0pt,minimum size=1.6mm},
	xib1/.style={very thin,rectangle,fill=lbluenode,draw=symbols,inner sep=0pt,minimum size=1.6mm},
	xie/.style={very thin,circle,fill=greennode,draw=symbols,inner sep=0pt,minimum size=1.6mm},
	xid/.style={very thin,circle,fill=lbluenode,draw=symbols,inner sep=0pt,minimum size=1.6mm},
	xibx/.style={crosscircle,fill=lbluenode,draw=symbols,inner sep=0pt,minimum size=1.6mm},
	kernels2/.style={ultra thick,draw=symbols,segment length=12pt},
	not/.style={thin,regular polygon, regular polygon sides=3,draw=connection,fill=connection,inner sep=0pt,minimum size=1.2mm},
	notlblue/.style={thin,regular polygon, regular polygon sides=3,draw=lbluenode,fill=lbluenode,inner sep=0pt,minimum size=1.2mm},
	notorange/.style={thin,regular polygon, regular polygon sides=3,draw=orangenode,fill=orangenode,inner sep=0pt,minimum size=1.2mm},
	notgreen/.style={thin,regular polygon, regular polygon sides=3,draw=greennode,fill=greennode,inner sep=0pt,minimum size=1.2mm},
	>=stealth,
  }
\newtheorem{assumption}[lemma]{Assumption}
\colorlet{darkblue}{blue!90!black}
\colorlet{darkred}{red!90!black}
\colorlet{darkgreen}{green!70!black}
\def\CI{\mcb{I}}
\def\A{\mathsf{A}}
\def\fancyC{\mathsf{C}}
\def\BM{\mathsf{W}}
\def\s{\mathfrak{s}}
\newcommand{\e}{\varepsilon}
\def\K{\mathfrak{K}}
\def\${|\!|\!|}
\def\Wick#1{\colon\!\! #1 \! \colon}
\DeclareMathOperator{\Ker}{Ker}
\def\?{{\color{red}?}}
\def\id{\mathrm{id}}
\def\restr{\mathbin{\upharpoonright}}
\newcommand{\VERT}{\vert\!\vert\!\vert}
\def\Hom{\mathord{\mathrm{Hom}}}
\def\Cas{\mathord{\mathrm{Cas}}}
\def\Func{\mathbf{F}}
\def\id{\mathrm{id}}
\newcommand{\TV}{\mathrm{TV}}
\def\bXi{\boldsymbol{\Xi}}
\def\bPsi{\boldsymbol{\Psi}}
\def\be{{\boldsymbol{\epsilon}}}
\def\bs{{\boldsymbol{s}}}
\def\bw{{\boldsymbol{w}}}
\def\dash{\leavevmode\unskip\kern0.18em--\penalty\exhyphenpenalty\kern0.18em}
\def\slash{\leavevmode\unskip\kern0.15em/\penalty\exhyphenpenalty\kern0.15em}
\newcommand{\floor}[1]{\lfloor #1 \rfloor}
\def\fancynorm#1{{\talloblong #1 \talloblong}}
\def\plaqnorm#1{{|\!|\!| #1 |\!|\!|}}
\newtheorem{example}[lemma]{Example}
\let\basepoint\logof
\def\logof{\mathord{{\basepoint}}} % Turns it into the behaviour of an ordinary math symbol
\title{Invariant measure and universality of the 2D Yang--Mills Langevin dynamic}
\author{Ilya Chevyrev$^1$, Hao Shen$^2$}
\institute{International School for Advanced Studies (SISSA), \email{ichevyrev@gmail.com} \and University of Wisconsin-Madison, \email{pkushenhao@gmail.com}}
\begin{document}
\maketitle
\begin{abstract}
We prove that the Yang--Mills (YM) measure for the trivial principal bundle over the two-dimensional torus, with any connected, compact structure group, is invariant for the associated renormalised Langevin dynamic. Our argument relies on a combination of regularity structures, lattice gauge-fixing, and Bourgain's method for invariant measures. Several corollaries are presented including a gauge-fixed decomposition of the YM measure into a Gaussian free field and an almost Lipschitz remainder, and a proof of universality for the YM measure that we derive from a universality for the Langevin dynamic for a wide class of discrete approximations. The latter includes standard lattice gauge theories associated to Wilson, Villain, and Manton actions. An important step in the argument, which is of independent interest, is a proof of uniqueness for the mass renormalisation of the gauge-covariant continuum Langevin dynamic, which allows us to identify the limit of discrete approximations. This latter result relies on Euler estimates for singular SPDEs and for Young ODEs arising from Wilson loops.
\\[.4em]
\noindent {\small\textit{MSC 2020 classification:} 60H15 (Primary), 60L30, 81T13, 81T27 (Secondary)}
\end{abstract}
\setcounter{tocdepth}{2}

\tableofcontents

\section{Introduction}

We show that the Euclidean Yang--Mills (YM) measure associated to the trivial principal 
$G$-bundle over the two-dimensional (2D) torus $\T^2$ is invariant
for the corresponding renormalised Langevin dynamic, 
where $G$ is an arbitrary connected, compact Lie group.
The YM measure is formally given by the Gibbs-type probability measure
\begin{equ}\label{eq:YM_measure}
\mu(\mrd A) \propto e^{-S(A)}\mrd A
\end{equ}
where $S(A)$ is the YM action
\begin{equ}\label{eq:YM_action}
S(A) = \int_{\T^2} |F(A)|^2  \mrd x
\end{equ}
and $\mrd A$ is a formal Lebesgue measure on the space of connections on a principal $G$-bundle $P\to \T^2$.
Above, $F(A)$ is the curvature $2$-form of $A$.
In everything that follows, we consider $P$ as the trivial bundle $P=\T^2\times G$.
In this case, after choosing a global section, every connection can be written as a 
$\mfg$-valued $1$-form $A=(A_1,A_2) \colon \T^2\to \mfg^2$, where $\mfg$ is the Lie algebra of $G$.
In this case
\begin{equ}\label{eq:curvature}
F_{ij}(A)= \d_iA_j - \d_jA_i + [A_i,A_j]
\end{equ}
and the norm $|F(A)|$ in~\eqref{eq:YM_action} is understood as $|F_{12}(A)|^2$ where $|X|^2=\scal{X,X}$ and $\scal{\cdot,\cdot}$ is an $\Ad$-invariant inner product on $\mfg$, e.g.
if $G$ and $\mfg$ are subsets of unitary and skew-Hermitian matrices respectively, one can take $\scal{X,Y}=-\Trace(XY)$.

The 2D YM measure $\mu$ is one of the few quantum gauge theories that has been rigorously constructed.
This construction relies on an elegant exact solvability property, known as `invariance under subdivision',
initially observed in the physics literature by Migdal~\cite{Migdal75} (see also the work of Witten~\cite{Witten91}).
Mathematically, the YM measure on a compact surfaces has been constructed as a stochastic process indexed by Wilson loops in~\cite{Levy03,Sengupta97}, which built on earlier works~\cite{Driver89,GKS89} for the plane.
We refer to the surveys~\cite{Chatterjee18, Levy20,LevySengupta17} for further discussions and references.
The two constructions that are most relevant for us are that of L{\'e}vy~\cite{Levy06},
which identifies the YM measure associated to a given isomorphism class of principal $G$-bundles as the limit of discrete approximations,
and that of~\cite{Chevyrev19YM}, which constructs a gauge-fixed representation of 
the measure as a random distributional $1$-form (under the assumption that $G$ is simply connected).

A natural approach to construct and study Euclidean quantum field theories (EQFTs) is via stochastic quantisation.
The stochastic quantisation equation, or Langevin dynamic, for any measure of the form~\eqref{eq:YM_measure} is the stochastic partial differential equation (SPDE) 
\begin{equ}\label{eq:SQE}
\d_t A = -\frac12\nabla S(A) + \xi\;,
\end{equ}
where $\xi$ is a space-time white noise and $\nabla S$ is the gradient of $S$,
both taken with respect to some metric on the space of fields $A$.
There has been a lot of progress in recent years in using the Langevin dynamic~\eqref{eq:SQE}
to construct and study properties (e.g. Osterwalder--Schrader axioms) of scalar EQFTs, most notably for the $\Phi^4_3$ measure~\cite{AK20,GH21,Hairer_Steele_22,MW17Phi43,MoinatWeber20}
(see also~\cite{BG20,GM24} for related approaches).

For the YM measure, taking the natural $L^2$-metric on connections, the Langevin dynamic is
\begin{equ}\label{eq:SYM_no_coord}
\d_t A = -\mrd_A^* F(A) + \xi\;,
\end{equ}
where $\mrd_A^*$ is the adjoint covariant derivative.
This is (the 2D, finite volume version of)
the stochastic quantisation equation introduced by Parisi--Wu~\cite{ParisiWu}.
To indicate the type of nonlinearities in~\eqref{eq:SYM_no_coord}, we heuristically write the equation as
\begin{equ}\label{eq:SYM_no_DeTurck_heuristic}
\d_t A = \Delta A + \d^2 A + A\d A + A^3 + \xi\;,
\end{equ}
where $\Delta$ is the Laplacian and $\Delta + \d^2$ indicates a second order differential operator that is \textit{not} elliptic (i.e. has an infinite dimensional kernel).
The lack of ellipticity of $\Delta+\d^2$ is a well-known feature of YM theory
and stems from the invariance of the YM action under the infinite dimensional gauge group of bundle automorphisms,
i.e. the group of functions $g\colon \T^2\to G$ that act on connections by
\begin{equ}
A\mapsto A^g \eqdef gAg^{-1} - \mrd g g^{-1}\;.
\end{equ}
Gauge invariance is an important feature of YM theory which indicates that 
all physical quantities should depend on the gauge orbit $[A]\eqdef \{A^g\,:\,g\colon \T^2\to G\}$ instead of on $A$.
($A$ can be seen as a coordinate-based representation of $[A]$ coming from our initial choice of the global section of the bundle $P\to\T^2$.)

A celebrated method to bypass the lack of parabolicity of~\eqref{eq:SYM_no_coord} (see~\cite{Donaldson,DK90,Sadun,zwanziger81})
is to consider instead
the equation
\begin{equ}\label{eq:SYM_DeTurck_no_coord}
\d_t A = -\mrd_A^* F(A) - \mrd_A\mrd^* A + \xi\;,
\end{equ}
where $-\mrd_A\mrd^* A$ is called the \textit{DeTurck term}, after the work of DeTurck~\cite{deturck83} on the Ricci flow,
and has the heuristic form $\d^2A + A\d A$ that precisely cancels the $\d^2 A$ term in~\eqref{eq:SYM_no_DeTurck_heuristic}.
Furthermore, on the level of gauge orbits,~\eqref{eq:SYM_DeTurck_no_coord}
is equivalent to~\eqref{eq:SYM_no_coord} since any term $\mrd_A \omega$ for a $0$-form $\omega\colon \T^2\to \mfg$ is tangent to $[A]$ at $A$.

In coordinates,~\eqref{eq:SYM_DeTurck_no_coord} reads
\begin{equ}[eq:SYM_DeTurck_coord]
\d_t A_i = \Delta A_i + [A_j, 2\d_jA_i - \d_i A_j+[A_j,A_i]] + \xi_i \;,\quad i=1,2\;, 
\end{equ}
with implicit summation over $j=1,2$
and which we write in the sequel simply as
\begin{equ}\label{eq:SYM_heuristic}
\d_t A = \Delta A + A\d A + A^3 + \xi\;.
\end{equ}
We see at this stage that~\eqref{eq:SYM_heuristic} is \textit{singular} in the sense that we expect the solution $A$
to be a distribution of the same regularity as the linear stochastic heat equation (SHE),
which renders the products $A\d A$ and $A^3$ ill-defined.
It is therefore non-trivial to make rigorous sense of the SPDE~\eqref{eq:SYM_heuristic}.

A programme to solve and analyse~\eqref{eq:SYM_heuristic} was initiated in~\cite{CCHS_2D}.
It was shown therein that, given a space-time mollifier $\moll$ and any operator $C\in L(\mfg^2,\mfg^2)$,
the solution to the regularised and renormalised equation
\begin{equ}\label{eq:SYM_moll}
\d_t A = \Delta A + A\d A + A^3 + CA + \moll^\eps *\xi
\end{equ}
converges, as $\e\downarrow0$, locally in time to a Markov process with values in a suitable Banach space of distributional $1$-forms $\Omega^1_\alpha$
(which is similar to but stronger than the space introduced in~\cite{Chevyrev19YM}).
Here we write $\moll^\eps(t,x) = \eps^{-4}\moll(\e^{-2}t,\e^{-1}x)$ for the natural approximation of the Dirac delta built from $\moll$.
Furthermore, and more importantly, if $\moll$ is non-anticipative,
then there exists a choice for $C\in L_G(\mfg^2,\mfg^2)$, which depends explicitly on $\moll$ and takes the form $C(A_1,A_2)=(CA_1,CA_2)$,
such that the limiting dynamic $A$ projects to a Markov process $X\colon t\mapsto [A(t)]$ on the quotient space of gauge orbits $\mfO_\alpha = \Omega^1_\alpha/{\sim}$, where
$\sim$ is a canonical extension of gauge equivalence to $\Omega^1_\alpha$.
Here $L_G$ denotes the space of operators that commute with the adjoint action of $G$.
For a simple heuristic reason behind this `gauge-covariance' based on It\^o isometry, see~\cite[Sec.~2.2]{CCHS_2D} or the survey~\cite[Sec.~I.A]{Chevyrev22YM}.

It was conjectured in~\cite{CCHS_2D} that the YM measure for the trivial principal $G$-bundle is invariant for the Markov process $X$.
More precisely, the YM measure $\mu$ as constructed in~\cite{Levy06} 
can be seen as a probability measure on the space of functions $G^\mcL/G$, where $\mcL$ is a suitable class of loops $\ell\colon[0,1]\to\T^2$
(for concreteness, taken as all piecewise smooth loops based at $0\in \T^2$)
and $G^\mcL$ carries the diagonal adjoint action $G^\mcL\times G \to G^\mcL$, $(H,g)\mapsto \{\ell\mapsto gH(\ell)g^{-1}\}$.
On the other hand, every $A\in\Omega^1_\alpha$ canonically defines a `holonomy process' $H_A\in G^\mcL$
with the property that $A\sim B$ if and only if $gH_Ag^{-1}=H_B$ for some $g\in G$,
i.e. the orbit space $\Omega^1_\alpha/{\sim}$ from~\cite[Sec.~3.6]{CCHS_2D} can be identified with a subset of $G^\mcL/G$.
The precise conjecture is that the YM measure
$\mu$ gives full measure to  $\Omega^1_\alpha/{\sim}$
and is the unique invariant measure of the Markov process $X$.

The main result of this article answers this conjecture in the affirmative, thereby making the first rigorous link between the YM measure and its Langevin dynamic. 
Our argument is based on discrete regularity structures,
% in Sections~\ref{sec:DiscreteRS},~\ref{sec:Aeps},~\ref{sec:diagonal_argument})
lattice gauge fixing,
%  (for which we use ideas from~\cite{Chevyrev19YM} and rough paths theory),
% in Section~\ref{sec:moment_estimates} and Appendix~\ref{app:Gaussian_tails}),
and Bourgain's method for invariant measures.
We summarise the challenges of carrying out this approach and our methodology in Section~\ref{subsec:challeneges}.
% (Section~\ref{sec:invar_measure}).
%We further employ ergodicity properties of parabolic singular SPDEs~\cite{HM16,HS22}.

In our use of Bourgain's method~\cite{Bourgain94,Bourgain96}, we approximate the solution to~\eqref{eq:SYM_moll} through lattice dynamics, 
and one of the novelties in our approach is the way in which we identify the continuum limit of these discrete dynamics. 
Our argument for this is geometric, based on preservation of gauge symmetries,
and an important step is to show that there is a \textit{unique} mass renormalisation of the YM Langevin dynamic \eqref{eq:SYM_moll}
that preserves gauge covariance (see Theorem~\ref{thm:C_unique}).
% and Section~\ref{sec:gauge-covar}).
This step resolves another problem from~\cite{CCHS_2D},
where the question of uniqueness of the mass renormalisation constant $C\in L_G(\mfg^2,\mfg^2)$ was left open.\footnote{\cite[Thm.~2.9]{CCHS_2D} proved gauge covariance via a particular coupling argument, for which 
uniqueness of $C$ does hold. See also \cite[Theorem~1.11 and Remark~1.12]{Chevyrev22YM}.}
Our proof of this result, which we believe is of independent interest, relies on Euler-type estimates for singular SPDEs and for Young ODEs associated to Wilson loops,
combined with ideas from sub-Riemannian geometry. 
%(Appendix~\ref{app:reps}).

There are several consequences of our main result.
The first and simplest corollary is the long-time existence of the Markov process $X$ on gauge orbits associated to the YM Langevin dynamic as constructed in~\cite{CCHS_2D} (see Corollary~\ref{cor:long_time}). 

We further derive a number of consequences on the YM measure itself based on our main result.
Notably, we prove that
the scaling limit of a broad class of lattice gauge theories on $\T^2$ is the YM measure,
thereby establishing a general universality result for the YM measure.
More precisely, we provide conditions (Assumptions~\ref{assump:R} and~\ref{assum:P_N})
on the local action of a lattice gauge theory that ensures its scaling limit is the YM measure for the trivial principal bundle -
see Corollary~\ref{cor:universality}.
% for a precise statement.
%(see also Proposition~\ref{prop:criterion_S_N} for a method to verify Assumption~\ref{assum:P_N}).
%By `lattice gauge theory' we mean any probability measure of the form $\mcZ^{-1}\prod_p e^{S_N(U(\d p))} \mrd U$, where $p$ ranges over the plaquettes of a lattice
%and $S_N$ is a suitable class function on $G$.
This universality result covers the Wilson, Manton, and Villain actions (the result for Villain action was already known \cite{Levy06}).
A related result on $\R^2$ (or more simply $[0,1]^2$) for the Villain and Wilson actions, was shown in~\cite{Driver89}
by use of a central limit theorem (CLT) for group-valued random variables
(see also~\cite{BS83} for a type of universality related to $\tau$-continuum limits~\cite{Kogut79,KS75});
it appears difficult, however, to extend proofs of universality using the CLT
to surfaces with non-trivial topology, like $\T^2$ that we cover here.
This universality of the measure, which can be seen as a functional CLT for random connections, is derived from a corresponding universality for the dynamic (Theorem~\ref{thm:discrete_dynamics}).
To our knowledge, this is the first example where one can prove {\it universality} of the measure using uniqueness of the dynamic.

Another consequence is a gauge-fixed decomposition of the YM measure $\mu$ (Corollary~\ref{cor:decomposition}):
there exist, on the same probability space, random distributional $1$-forms $\Psi$ and $B$ such that $\Psi$ is a Gaussian free field (GFF) and $|B|_{\CC^{1-\kappa}}$ has moments of all orders for any $\kappa>0$, and such that
$[\Psi+B] \sim \mu$.
This decomposition rigorously establishes the YM measure as a perturbation of the GFF.
It furthermore implies strong regularity properties of the measure, improving the main result of~\cite{Chevyrev19YM} in two directions: (i)
it removes the restriction of axis-parallel lines in the norms of~\cite{Chevyrev19YM},
and (ii) it establishes moments bounds for these norms -- see Remark~\ref{rem:Che19_compare} for more details.

\begin{remark}
We use the construction of the YM measure from the Villain (heat kernel) action in~\cite{Levy03,Levy06}
(see also~\cite{Sengupta97}) as an important guide, but
we do not explicitly use this construction except to identify the invariant measure of $X$ with the well-known YM measure.
In particular, we show existence and uniqueness of the invariant measure using only SPDE techniques,
and therefore our results can be seen as providing a new construction and characterisation of the YM measure.
\end{remark}

\subsection{Challenges, novelties and methodologies}
\label{subsec:challeneges}

While our overall methodology towards the main results is described above, to carry on the proofs there are a number of essential obstacles  which  require novel ideas. We discuss these here.

1. As mentioned above, in order to use Bourgain’s invariant measure argument, we approximate by Langevin dynamics of lattice gauge theories.
These lattice theories have the crucial advantage of preserving the exact gauge symmetries but come with a cost of a complicated setup, in particular the fields need to take values in $G$ whereas in the limit \eqref{eq:SYM_DeTurck_no_coord} takes values in $\mfg$.
Heuristic justifications (c.f. \cite{Chatterjee18}) are usually done by an expansion using the Baker--Campbell--Hausdorff (BCH) formula.

Only using the BCH formula, however, turns out far from being enough for us to rigorously derive the approximate dynamic (for fixed $\eps>0$) on the level of the Lie algebra
since naively expanding the right-hand side of e.g. \eqref{eq:discrete_hat_U} using BCH and the logarithm map would make the structure of the dynamic obscure and difficult to write out.
We instead introduce a novel geometric derivation in which we endow the Lie algebra with a Riemannian manifold structure (not just a vector space) that is isometric with the Lie group around the identity, and pull back the action, noise, and other structures to the Lie algebra via this isometry.
This isometry allows us to perform BCH computations much more efficiently with the help of the derivative of the exponential map and avoid considering power series expansion of $\log$.
% Intuitively, we are substituting many computations based on the BCH formula and the logarithm map by computations using the derivative of the exponential map, which is a simpler object.
Furthermore, we streamline our computations by grouping terms that take values in a certain Lie ideal, which allows us to show that they contribute $O(1)$ remainder terms regardless of their precise forms.

This strategy turns out to be very robust and allows us to deal with a large class of lattice models (which appear to have quite distinct forms, see Section~\ref{subsec:assumptions}) simultaneously.
In fact, although we only prove convergence in 2D, we derive the approximate equations in {\it arbitrary} dimensions in Section~\ref{sec:lattice_dynamics}.
In summary, the approximate dynamic in the 2D Yang--Mills problem is significantly more involved than typical realisations of Bourgain’s argument
since in our case it is crucial to use gauge invariant lattice approximations while simple Fourier cutoffs or finite difference discretisations
break gauge symmetries.

2.  Our proof of convergence for the approximate dynamics relies on the discrete implementation of regularity structures. It turns out that this is not a direct application of the existing frameworks of \cite{EH19,HM18} (see also~\cite{EH21,Shen18}), and new aspects are necessary. We introduce shifting operators to the discrete framework to keep track of non-localities, and multiplication by $\eps$ on the abstract level to deal with various error terms. The latter is similar in spirit with \cite{KPZJeremy}, but in our ``semi-discrete’’ setting the multiplication by $\eps$ does not improve temporal regularity and additional effort is taken.  See Section~\ref{sec:DiscreteRS}.
% Moreover, in establishing the necessary stochastic estimates in Section \ref{subsec:moments_discrete_models}, we need to carefully examine the effect of the lattice discretisations, including asymmetry of discrete derivatives.
We believe that the discrete machineries we developed here are strong enough to prove limits of dynamics for more general models, such as non-abelian gauge theories incorporating a class of Higgs fields in various representations (along the line of \cite{SZZ24}) in 2D. 
%Remark that implementing the invariant measure argument of Bourgain with Higgs fields still has substantial challenge, see Section~\ref{subsec:related_works} below.
It is worth noting, however, that implementing Bourgain's invariant measure argument in the presence of Higgs fields remains a significant challenge (see Section~\ref{subsec:related_works}  below).

3.  One of the most intriguing and challenging questions is to {\it identify} the dynamic constructed in \cite{CCHS_2D} as the limit of our discrete dynamics. Theorem~\ref{thm:some_C} whose proof spans Sections~\ref{sec:DiscreteRS} - \ref{sec:diagonal_argument} only shows that  the limits of the discrete dynamics are solutions of \eqref{eq:SYM_moll} with {\it some} mass renormalisation $C$, but it does not tell us whether it coincides with the choice of  mass renormalisation in \cite{CCHS_2D}.
The challenge here is that this map $C$ arises as an aggregate of an enormous number of error terms which makes its numerical evaluation essentially impossible, and
we are not allowed to ``renormalise’’ the lattice gauge theories to tune this degree of freedom since doing that would break the gauge symmetry and make the problem meaningless. 
(In fact, we only show boundedness of these error terms and thus, a priori, $C$ may depend on a choice of subsequence $\eps\downarrow0$ of lattice spacings and on the underlying lattice gauge theory used to define the approximate dynamic.)
Note that this is an important contrast between our situation and the universality results of KPZ or $\Phi^4$ (e.g. \cite{KPZJeremy,MR4038037,MR3628883}).
In the latter cases, one has families of solutions parametrised by the limiting coefficients and finite shifts of renormalisation constants. For Yang--Mills, however, gauge invariance is much more rigid and we prove that there is only one single limit for all the models in the universality class.
Moreover, in the previous works on KPZ or $\Phi^4$, the renormalisation constants are essentially explicitly computable from the microscopic models, which is far from the case for us.

Surprisingly, overcoming the above difficulty of identifying the unique limit  requires certain ``intrinsic’’, namely topological and geometric arguments, which is the content of Section~\ref{sec:gauge-covar} and Appendix~\ref{app:reps}. The idea which allows us to conclude that the limits of lattice gauge theories cannot be anything else but the dynamic in \cite{CCHS_2D} is to use gauge invariant observable (Wilson loop) to detect the breaking of gauge covariance of any other possible limiting dynamic. In particular, we establish in Theorem~\ref{thm:A_tilde_A} a {\it lower} bound on the discrepancy of Wilson loop expectations  of two solutions at $t>0$ to such a dynamic starting from delicately chosen gauge equivalent initial conditions. 
The construction of such (gauge transformation of) initial conditions is achieved by tools from sub-Riemannian geometry, and the crucial lower bound is proved by perturbation argument of the SPDE and the holonomy ODE. To re-iterate, without this intrinsic step we would not be able to identify the limit and close the Bourgain’s argument (even for Villain case).
We believe that this result is of independent interest and the methodology possibly generalises to 3D.

4. Another key input to the Bourgain’s invariant measure argument is  that it requires sufficiently strong moment bounds on the invariant measure, for Markov inequality to apply. We obtain these moment bounds in Section~\ref{sec:moment_estimates}, Appendix~\ref{app:Gaussian_tails} and~\ref{app:RWs}.  This requires nontrivial extensions from \cite{Chevyrev19YM}, using rough paths theory, a quantitative rough version of Uhlenbeck compactness that uses a mix of Landau and axial gauges, and estimates on transition functions of random walks on groups inspired by \cite{Hebisch_Saloff_Coste_93}.

5. Another novelty is that we obtain universality of a large class of 2D Yang--Mills measures, by using the convergence of the {\it dynamics} combined with ergodicity results. The crucial step is our identification of the universal limiting measure as the unique invariant measure to the (unique, as discussed above) gauge covariant Yang--Mills Langevin dynamic.
Proving convergence of measures using convergence of the dynamics has a precursor, see the case of  Kac-Ising model \cite{MR3790155}, but we emphasise that we prove {\it universality} for a {\it wide} class of gauge theories and our limit is a ``singleton’’ rather than a family.

\subsection{Related works and open problems}
\label{subsec:related_works}

Some of the results of~\cite{CCHS_2D} were recently extended to 3D in~\cite{CCHS_3D},
including the construction of a state space of distributional $1$-forms and of a Markov process on gauge orbits corresponding to~\eqref{eq:SYM_no_coord}.
See~\cite{Chevyrev22YM} for a summary and comparison of the results of~\cite{CCHS_2D,CCHS_3D}.
See also~\cite{Sourav_flow,Sourav_state} for a different but related approach to a state space for 3D quantum YM.
In contrast to~\cite{CCHS_2D,Chevyrev19YM},
the state spaces in~\cite{Sourav_state,CCHS_3D} are based on the deterministic DeTurck--YM heat flow (see~\cite{CG13} for related ideas) and are nonlinear,
a property shown in~\cite{Chevyrev22_norm_inf} to be unavoidable.

The results of this article are restricted to the 2D pure YM theory, and identifying the invariant measure of~\eqref{eq:SYM_no_coord} in 3D remains an important open problem.
In fact, the construction of the 3D YM measure in the continuum is open, even in finite volume,
although progress was made in establishing
a form of ultraviolet stability~\cite{Balaban85IV,Balaban89,MRS93}.

The restriction to dimension 2 and to the \textit{pure} YM theory (i.e. no coupling with matter) arises from our use of Bourgain's invariant measure argument~\cite{Bourgain94} (see also~\cite[Cor.~1.3]{HM18}), which requires moment bounds on the invariant measure.
Improving the results of~\cite{Chevyrev19YM}, we derive in this article the necessary moment bounds
for lattice approximations of the 2D (pure) YM measure by exploiting its exact solvability together with ideas from rough paths.
It remains an open problem to extend these bounds to
higher dimensions or to other models, such as the 2D YM--Higgs model that has no known exact solvability property.
(The 2D and 3D Abelian Higgs models have been constructed in~\cite{BFS79,BFS80,BFS81,King86I,King86II},
but a construction in the non-Abelian case remains open even in 2D.)
However, see~\cite{ChandraChevyrev22} for progress on moment estimates for the Abelian Higgs model,
and~\cite{Shen18} where the stochastic quantisation equation of an Abelian gauge theory is solved using lattice approximations.
See also the recent progress \cite{BringmannCaoHiggs} on global existence of the Langevin dynamic for Abelian-Higgs on $\T^2$.
The discrete version of~\eqref{eq:SYM_no_coord}
was furthermore used in~\cite{SSZloop,SZZ22} to study lattice YM.

Another problem worth mentioning is the extension of the results of this article to non-trivial principal bundles, which would
require a solution theory for~\eqref{eq:SYM_moll} outside the `periodic' setting.
It would furthermore be of interest to extend the results from finite to infinite volume, i.e. from $\T^2$ to $\R^2$.

Finally, we mention recent progress on understanding invariance of EQFTs for corresponding Hamiltonian flows in the singular regime~\cite{Bringmann20,BDNY22, DNY19,OT20,OOT20,OOT21}, that parallels developments for the Langevin dynamic~\eqref{eq:SQE} (see~\cite{Bourgain94,Bourgain96} for earlier work). In particular \cite{GKOT22,ORT23} studies randomness from both initial data and stochastic forcing.
These results have largely been restricted to scalar theories (but see~\cite{BLS21,Bringmann_Rodnianski_23} for exceptions), and it would be of interest if similar results can be obtained for the YM measure;
Bringmann--Rodnianski~\cite{Bringmann_Rodnianski_23} recently exposed a potential obstruction to such a result.

\subsection*{Acknowledgements}

{\small
HS gratefully acknowledges support by NSF via DMS-1954091 and CAREER DMS-2044415.
IC gratefully acknowledges support by EPSRC via the New Investigator Award EP/X015688/1
and by the ERC via the Starting Grant SQGT 101116964.
We would like to thank Ajay Chandra and Martin Hairer for numerous discussions on closely related problems, and Rhys Steele for pointing out a potential technical issue on the norms in discrete regularity structures.
IC is also grateful to Massimiliano Gubinelli and Thierry L\'evy for discussions on gauge theories; and HS also would like to thank  Bjoern Bringmann and Sky Cao for some interesting conversations.
}

\section{Main results and preliminaries}

We state in this section our main results and corresponding notations.
We fix $d\geq 1$ %throughout this section 
(taken as $d=2$ further down).

\subsection{Notation}
\label{sec:Notation}

For linear spaces $E,F$, we let $L(E,F)$ denote the space of linear maps $E\to F$.

Let $\T^d=\R^d/\Z^d$ denote the $d$-dimensional torus which we identify as a set with $[0,1)^d$.
We equip $\T^d$ with the geodesic distance inherited from the Euclidean structure of $\R^d$, which we denote as $|x-y|$ for $x,y\in\T^d$.

For $N\geq 1$,
let $\Lambda_N\subset \T^d$\label{Lamdba_page_ref} denote the lattice with spacing $2^{-N}$,
which we identify with the set $\{k2^{-N}\,:\,k=0,\ldots, 2^N-1\}^d$.
Let $\bonds_N \eqdef \{(x,y)\in\Lambda_N^2\,:\, |x-y|=2^{-N}\}$,\label{bonds_page_ref}
denote the set of oriented bonds (also called edges) on the lattice.

When clear from the context, we will drop reference to $N$ in our notation and simply write $\Lambda$, $\bonds$, etc. and write $\eps = 2^{-N}$.

We denote $[d]=\{1,\ldots, d\}$ and write $\{e_j\}_{j\in [d]}$ for the
set of unit basis vectors in $\R^d$.
For $j\in [d]$ we will write 
$\eps_j$ as shorthand for $\eps e_j$.
%We write 
%$\CE_{\pm} \eqdef  \{\pm\e_j\}_{j\in [d]}$ and $\CE\eqdef \CE_+\cup \CE_-$.\label{page:CE_pm}

Let $\obonds_N\equiv \obonds \eqdef \{(x,x+\eps_j)\,:\, x\in\Lambda\,,\; j\in[d]\}\subset \bonds$,\label{obonds_page_ref}
denote the set of positively oriented bonds.
We also write 
$\obonds_j \eqdef \{(x,x+\eps_j)\,:\, x\in\Lambda\}$
so that $\obonds = \cup_{j\in[d]} \obonds_j$.
%(the clash of notation between $\obonds_N$ and $\obonds_j$ will not cause confusion).
For $e=(x,y)\in \bonds$ we write $e\pm \e_i \eqdef (x\pm\e_i,y\pm\e_i)$.

\begin{definition}
\label{def:plaquette}
An oriented plaquette, or sometimes just a plaquette,
is a tuple $p=(x,\e,\bar\e)$, where $x\in\Lambda$ is a lattice site and %$\e,\bar \e \in \CE$ 
$\e,\bar\e \in \{\pm\e_j\}_{j\in [d]}$
where $\e\neq \pm \bar \e$. 
We will also use the alternative notation 
\begin{equ}
p=(x_1,x_2,x_3,x_4)\eqdef (x,x+\e,x+\e+\bar\e,x+\bar\e)
\end{equ}
and
$p=(e^{(1)},e^{(2)},e^{(3)},e^{(4)})$ where $e^{(i)}=(x_i,x_{i+1})$ for $i=1,\ldots, 4$ with $x_5\eqdef x_1$,
since this plaquette $p$ can be also thought as the square enclosed by these four points or edges.
We will call $x$ the base point of $p$, and say that $\overleftarrow{p}\eqdef (x,\bar\e,\e)$ is the plaquette with the reversed orientation.
We denote by $\plaq\equiv \plaq_N$ the set of all plaquettes of $\Lambda$.
%For $p=(x,\e,\bar\e)\in\plaq$ and $i\in[d]$, we use the notation $p+\e_i \eqdef (x+\e_i,\e,\bar\e)\in\plaq$.
\end{definition}

We fix throughout the paper a connected compact Lie group $G$ and let $\mfg= T_\id G$ be its Lie algebra, which we identify with the space of left invariant vector fields on $G$.
Without loss of generality, we assume there exists $n\geq 1$ such that $G\subset \U(n)$,\label{page:G} the $n\times n$ unitary matrices,
and $\mfg\subset \mfu(n)$,\label{page:mfg} the $n\times n$ skew-Hermitian matrices.

Let $\conf\equiv \conf_N$ denote the set of all functions $U\colon\bonds\to G$ with the property that $U(x,y)=U(y,x)^{-1}$
for all $(x,y)\in\bonds$.
Remark that we can identify $\conf \simeq G^{\obonds}$ and view it as the configuration space of discrete gauge fields. 

Let further $\mfq\equiv \mfq_N$ denote the space of functions 
$A\colon \bonds\to \mfg$ with the property that $A(x,y)=- A(y,x)$ for all $(x,y)\in\bonds$.
Remark that $\mathfrak{q}$ can be identified with $\mfg^{\obonds}$
and thus with the Lie algebra $T_\id G^{\obonds}$ of $G^{\obonds}$.
We call elements of $\mfq$ discrete $\mfg$-valued $1$-forms, or simply $1$-forms.
We further write $\mfq_i = \mfg^{\obonds_i}$ so that $\mfq \simeq \oplus_{i=1}^d \mfq_i$.\footnote{The
clash of notation $\obonds_N$ vs. $\obonds_j$ and $\mfq_N$ vs. $\mfq_i$ will not cause confusion.}

Fix sufficiently small neighbourhoods $\mathring V\subset \mfg$\label{ring_V_page_ref} of $0$ and $\mathring W\subset G$\label{ring_W_page_ref} of $\id$
on which $\exp \colon \mathring V\to \mathring W$ is a diffeomorphism.
For $x\in G$, let $\Ad_x\colon\mfg\to\mfg$ denote the adjoint action.
We equip $\mfg$ with an $\Ad$-invariant inner product $\scal{\cdot,\cdot}_\mfg$
and equip $\mfq$ with the corresponding inner product\footnote{The factor $\e^{d-2}$ is such that the
right-hand side of \eqref{e:mfq-inner} is of order $1$, since $A,B$ here will be, in a sense, order $\eps$.}
\begin{equ}[e:mfq-inner]
\scal{A,B}_\mfq = \eps^{d-2}\sum\nolimits_{e\in\obonds} \scal{A(e),B(e)}_\mfg\;.
\end{equ}

%As in~\cite[Sec.~2.3]{Chevyrev19YM}, 
We fix a bounded, measurable map $\log\colon G\to \mfg$ such that (see~\cite[Sec.~2.3]{Chevyrev19YM} for existence of such a map $\log$)\label{page:log}
\begin{itemize}
\item $\log = \exp^{-1}$ when restricted to $\mathring W$,
\item $\exp(\log x)=x$ for all $x\in G$,
\item $\log(yxy^{-1})=\Ad_y(\log x)$ and $|\log x|_\mfg = |\log x^{-1}|_\mfg$ for all $x,y\in G$, and
\item $\log x=-\log (x^{-1})$ for all $x\in G$ outside a set with Haar measure zero.
\end{itemize}
\begin{remark}
We choose $\log$ to be defined on all of $G$ only for convenience; the more common choice of defining $\log$ only on $\mathring W$ would suffice for our purposes but causes some complications down the line.
\end{remark}
Define $W \eqdef \mathring W^{\obonds}\subset\conf$,\label{W_page_ref} the subset of functions $U$ on bonds taking values in $\mathring W$,
and similarly $V \eqdef \mathring V^{\obonds} \subset \mfq$.\label{V_page_ref}
Then $\exp \colon V\to W$
is still a diffeomorphism.

For a plaquette $p$, which we write as $p=(x_1,x_2,x_3,x_4)$ as in Definition~\ref{def:plaquette}, and $U\in\conf$, we define the {\it holonomy} of $U$ around $p$ as
\begin{equ}[e:U-pp]
U(\partial p) \eqdef U(x_1,x_2)U(x_2,x_3)U(x_3,x_4)U(x_4,x_1)\;.
\end{equ}

Let $\mfG\equiv \mfG_N$ denote the group of all functions $g\colon \Lambda\to G$.
Elements of $\mfG$ are called \emph{gauge transformation}.
There is a left group action of $\mfG$ on $\conf$ which we denote, for $g\in\mfG$ and $U\in\conf$, by
$(g,U) \mapsto g\act U \eqdef U^g$,
where
\begin{equ}\label{eq:gauge_transform}
U^g(x,y)= g(x)U(x,y)g(y)^{-1}\;.
\end{equ}
Remark that $U^g(\partial p)=g(x)U(\partial p)g(x)^{-1}$ for any plaquette $p=(x,\e,\bar\e)$.

For $U\in\conf$,
we write $[U]=\{U^g\,:\,g\in\mfG\}\subset \conf$, which is called the {\it gauge orbit} of $U$.
We say that $\bar U$ and $U$ are gauge equivalent and write $\bar U\sim U$, if $\bar U\in [U]$.
We write $\pi\colon Q\to Q/{\sim}$ for the  projection $\pi(U)=[U]$.\label{page:pi1}

\subsection{Approximations of the Yang--Mills measure and Langevin dynamic}
\label{sec:approx-YM}

In this subsection, we describe the types of lattice approximations of the YM measure that we consider together with their Langevin dynamics
that we show convergence to a continuum limit.
Fix again $d\geq 1$ in this subsection.

Let $G$ (resp. $Q$) be endowed with the bi-invariant Riemannian metric $\scal{\act,\act}$ coming from the $\Ad$-invariant inner product $\scal{\act,\act}_\mfg$ on $\mfg$ (resp. $\scal{\act,\act}_{\mfq}$) as fixed in Section~\ref{sec:Notation}.
(Note that this metric on $Q\simeq G^{\obonds}$ differs from the product metric by a factor $\e^{d-2}$.)
We also equip $G$ and $\conf\simeq G^{\obonds_N}$ with their Haar measures of total mass $1$.
We denote by $\int_G f(x)\mrd x$ the integral of a function $f$ on $G$ against the Haar measure, and similarly for $\conf$.

Let $S\equiv S_N \colon G \to \R\cup\{\infty\}$
be a measurable symmetric class function, i.e. $S(x)=S(x^{-1})$ and $S(yxy^{-1})=S(x)$ for all $x,y\in G$, that is twice differentiable on a fattening of $\mathring W$.
Define $\CS\equiv \CS_N\colon\conf\to \R\cup\{\infty\}$ by
\begin{equ}
\mcS(U)
=\frac18 \sum_{p\in\plaq} S[U(\partial p)]\;. 
\end{equ}
(The factor $1/8$ arises from the fact that $8$ plaquettes in $\plaq$ enclose the same square in $\T^d$.)
Consider the probability measure $\mu_N$ on $\conf$ defined by
\begin{equ}\label{eq:mu_N}
\mu_N(\mrd U) \eqdef Z^{-1} e^{-\mcS(U)} \mrd U\;,
\end{equ}
where $\mrd U$ is the Haar measure on $Q$ and
 $Z\equiv Z_N$ is the normalisation constant which makes $\mu_N$ a probability measure.
By the remark on $U^g$ below \eqref{eq:gauge_transform}, $\mcS$ and $\mu_N$ are invariant under the action 
by $\mfG$.
%(We allow $S,\CS$ to take values $\infty$ to allow for $\mu_N$ whose density $Z^{-1}e^{-\CS}$ takes values $0$ outside a neighbourhood of $1_G$.)
(We allow $S,\CS$ to take values $\infty$ so that the density $e^{-\CS}$ is allowed to be $0$ outside a neighbourhood of identity.)

\begin{remark}\label{rem:Up}
One should think of the plaquette variable $U(\partial p)$ as approximating (the exponential of) the curvature.
Writing $U=e^{\eps A}$,  the
Baker--Campbell--Hausdorff (BCH) formula (e.g. \cite[Sec.~3]{Chatterjee18}) yields $U(\partial p) \approx 
e^{\e^2 F_{ij}(A)(x)}$
%\[
%U(\partial p)=
%\exp \Big(\eps^2 \Big(\frac{\d A_k}{\d x_j} - \frac{\d A_j}{\d x_k} + [A_j(x), A_k(x)]\Big)+O(\e^3)\Big)
%\approx 
%e^{\e^2 F_{jk}(A)(x)}
%\]
for $p=(x,\e_i,\e_j)$ for $i\neq j\in [d]$
where $F(A)$ is as in~\eqref{eq:curvature}.
This is one of the motivations for viewing
$\e^{d-4} |\log x|_{\mfg}^2$ as the ``leading term'' in our class function $S_N$ (see \eqref{e:S=xR} below),
since the YM action in the continuum is $|F(A)|^2_{L^2}$.
\end{remark}
Recall that $\mfq \simeq \mfg^{\obonds}$ is identified with the left invariant vector fields on $\conf \simeq G^{\obonds}$.
For a differentiable function $f\colon Q\to \R$, the linear map $Df_U\colon T_U Q \to \R$ (i.e. ``differential'' of $f$ at $U$) is given  at $A\in \mfq \simeq T_U Q$ by
\begin{equ}
Df_U(A) = \frac{\mrd}{\mrd \alpha} \Big|_{\alpha=0} f(Ue^{\alpha A}) = \lim_{\alpha\to 0} \frac{f(Ue^{\alpha A}) - f(U)}{\alpha}\;,
\end{equ}
where the product $Ue^{\alpha A}$ in $\conf$ is understood via the identifications $\conf \simeq G^{\obonds}$ and $\mfq \simeq \mfg^{\obonds}$ (i.e. the exponential and product are taken over positively oriented bonds).

For any $U\in Q$, the gradient $\nabla \mcS(U)$ is the unique element of the tangent space $T_U Q$ such that $\scal{\nabla \mcS(U),X} = D \mcS_U(X)$ for all $X\in T_U Q$.
A natural diffusion to consider, which is a discrete version of~\eqref{eq:SQE}, is
\begin{equ}\label{e:ParisiWu}
\dt \tilde U_t = -\frac12\nabla \mcS(\tilde U_t) \mrd t + \tilde U_t (\circ \dt \tilde \BM_t)\;,
\end{equ}
where $\tilde \BM$ is a $\mfq$-valued Brownian motion
%$\E\int_0^T f_t \mrd_t\tilde \BM_t = \E\int_0^T|f_t|^2_\mfq\mrd t$
and we denote time and stochastic differentials by $\dt$ to distinguish from exterior and covariant derivatives $\mrd$ and $\mrd_A$.
By a $\mfq$-valued Brownian motion we mean a centred Gaussian process with covariance
$\E\scal{\tilde\BM_t,X}_\mfq\scal{\tilde\BM_s,Y}_\mfq = (s\wedge t)\scal{X,Y}_\mfq$ where we recall  \eqref{e:mfq-inner}.
%Indeed, the invariant measure of $\tilde U$ is, at least formally, $\mu_N$ defined by~\eqref{eq:mu_N}.

There are two issues with this definition of $\tilde U$:
\begin{enumerate}[label=(\roman*)]
\item the SDE~\eqref{e:ParisiWu} approximates the SPDE~\eqref{eq:SYM_no_DeTurck_heuristic}, which is not parabolic, and thus we have no way to establish its convergence, and
\item the function $S_N\colon G\to \R\cup\{\infty\}$ is assumed differentiable only on a neighbourhood of $\id$, so $\nabla \mcS(U)$ may not make sense for $U$ far from $\id$.
\end{enumerate}
To resolve these issues, we instead define a dynamic with an added DeTurck term and which is stopped upon exiting $W$.
Specifically, let $\Log \in \CC^\infty(G,\mfg)$ such that $\Log\restr_{\mathring{W}} = \log\restr_{\mathring{W}}$
and define $\mrd^* \in\CC^\infty(Q, \mfg^\Lambda)$ by
\begin{equ}\label{eq:disc_adjoint_der}
(\mrd^* U)(x)
\eqdef
\eps^{-1} \sum_{j=1}^d \big[ \Log U(x,x+\e_j) - \Log U(x-\e_j,x) \big] \;,
\end{equ}
which is a discrete approximation of the adjoint exterior derivative.
For $U \in \conf$ and $\omega \in \mfg^\Lambda$, define the infinitesimal transformation
\begin{equ}[e:d_U-omega]
(\mrd_U \omega)(x,y) \eqdef \eps^{-1}(\omega(x) U(x,y) - U(x,y)\omega(y))\;,
\end{equ}
which is a tangent vector at $U$ that
is tangent to the gauge orbit of $U$.

We further consider another class function $\check S\equiv \check S_N \colon G\to \R$ which is symmetric, twice differentiable
everywhere, agrees with $S_N$ on $\mathring W$,
and arbitrary otherwise.
We define $\check\mcS = \frac18 \sum_{p\in\plaq} \check S[U(\d p)]$
together with the dynamic $\check U\equiv \check U^{(N)}$ that solves
\begin{equ}\label{eq:discrete_hat_U}
\dt \check U_t = -\frac12\nabla \check \CS(\check U_t)\mrd t  - \mrd_{\check U_t} \mrd^* \check U_t\mrd t + \check U_t (\circ \dt \BM_t)\;,
\end{equ}
where $\BM$ is a $\mfq$-valued Brownian motion as in \eqref{e:ParisiWu}.
We then define the exit time
\begin{equ}\label{eq:varpi_def}
\varpi = \inf\{t\geq 0\,:\, \check U_t \notin W\}\;,
\end{equ}
and the process
\begin{equ}\label{eq:discrete_U}
U_t \equiv U^{(N)}_t \eqdef \check U_{t\wedge \varpi}\;.
\end{equ}
One of our main results (Theorem~\ref{thm:discrete_dynamics}) is that $\log U$ converges as $N\to\infty$ to the $\eps\downarrow0$ limit of~\eqref{eq:SYM_moll} identified in~\cite[Thm.~2.9]{CCHS_2D}.

The following proposition reveals the basic gauge-covariance properties of the above processes.
For $X\in\mfq$ and $g\in\mfG$, define $X^g\in \mfq$ by $
X^g_{xy} \eqdef g_y X_{xy} g_y^{-1}$ for all
$(x,y)\in\obonds$.
In particular, for a H\"older continuous process $\BM\colon[0,T]\to\mfq$ and a finite-variation process $g\colon[0,T]\to\mfG$, define $\BM^g\colon[0,T]\to\mfq$ by
$
\dt (\BM^g)_{xy} \eqdef g_y\dt \BM_{xy} g_y^{-1}$
for all $(x,y)\in\obonds$.
If $g$ is adapted and $\BM$ is a $\mfq$-valued Brownian motion, remark that $\BM^g$ is another $\mfq$-valued Brownian motion by It\^o isometry.

\begin{proposition}\label{prop:gauge_covar}
Suppose $S_N\colon G\to\R$ is twice differentiable on $G$ and $S_N=\check S_N$.

\begin{enumerate}[label=(\roman*)]
\item\label{pt:invar_measure} The diffusion~\eqref{e:ParisiWu} has invariant measure $\mu_N$ defined in \eqref{eq:mu_N}.
\item\label{pt:DeTurck_gauge_covar} Suppose $\tilde U$ satisfies~\eqref{e:ParisiWu}.
Define adapted finite-variation  $g\colon[0,T]\to\mfG$ by
\begin{equ}[e:disc-DT]
(\dt g) g^{-1} = -\eps^{-1} \mrd^*(\tilde U^g) \mrd t\;, \quad g_0=\id\;,
\end{equ}
(which admits a unique  solution since $\mrd^*\colon Q \to \mfg^\Lambda$ is smooth).
Then $\check U \eqdef \tilde U^g$ solves~\eqref{eq:discrete_hat_U} with $\BM\eqdef \tilde\BM^g$.

Conversely, given $\check U$ satisfying~\eqref{eq:discrete_hat_U}, there exists an adapted finite-variation $g\colon[0,T]\to\mfG$  such that $g_0=\id$ and $\tilde U \eqdef \check U^g$ solves~\eqref{e:ParisiWu} with $\tilde\BM\eqdef \BM^g$.

In particular, $[\check U_t] \eqlaw [\tilde U_t]$ for all $t\geq 0$ whenever $\check U_0=\tilde U_0$.

\item\label{pt:ic_gauge_covar} Let $\tilde U^{(u)}$ denote the solution to~\eqref{e:ParisiWu} with $\tilde U_0=u$.
If $v=u^h$ for some $h\in\mfG$, then $(\tilde U^{(u)})^h$ solves~\eqref{e:ParisiWu} with $\tilde\BM$ replaced by $\tilde\BM^h$. 

In particular, $[\tilde U^{(u)}_t] \eqlaw [\tilde U^{(v)}_t]$ for all $t\geq 0$ whenever $v\sim u$.

\item\label{pt:DeTurck_to_DeTurck}
Let $\check U^{(u)}$ denote the solution to~\eqref{eq:discrete_hat_U} with $\check U_0=u$.
If $v=u^h$ for some $h\in\mfG$,
then there exists an adapted finite-variation $g\colon[0,T]\to\mfG$ with $g_0=h$ such that $(\check U^{(u)})^g$ solves~\eqref{eq:discrete_hat_U} 
with $\BM$ replaced by $\BM^g$.  

In particular, $[\check U^{(u)}_t] \eqlaw [\check U^{(v)}_t]$ for all $t\geq 0$ whenever $v\sim u$.
\end{enumerate}
\end{proposition}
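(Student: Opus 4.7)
Part (i) is the standard Langevin invariance check: \eqref{e:ParisiWu} is a Stratonovich Brownian motion on the compact Riemannian manifold $Q$ with drift $-\tfrac12\nabla\mcS$, and its generator on smooth $f\colon Q\to\R$ is $\tfrac12\Delta_Q f-\tfrac12\scal{\nabla\mcS,\nabla f}_\mfq$, which is symmetric on $L^2(Q,e^{-\mcS}\mrd U)$ by one integration by parts (using compactness of $Q$ to suppress boundary terms). This classical computation yields invariance of $\mu_N$.

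For part (ii), I would apply the Stratonovich change-of-variable formula to the time-dependent gauge action $\Phi_{g(t)}\colon U\mapsto U^{g(t)}$ with $g$ a finite-variation adapted process. Since Stratonovich obeys the ordinary Leibniz rule, edgewise differentiation of $\check U(x,y)=g(x)\tilde U(x,y)g(y)^{-1}$ yields three contributions. The two $\dt g$-terms combine, via \eqref{e:d_U-omega}, into $\eps\mrd_{\check U}(\dt g\,g^{-1})\mrd t$; choosing $\dt g\,g^{-1}=-\eps^{-1}\mrd^*\check U$ turns this into the DeTurck drift $-\mrd_{\check U}\mrd^*\check U\mrd t$, and the resulting ODE \eqref{e:disc-DT} has smooth coefficients on $Q$ (since $\mrd^*$ is smooth) so it admits a unique global adapted solution. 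The Parisi--Wu drift transforms into $-\tfrac12\nabla\mcS(\check U)\mrd t$ because $\mcS$ is $\mfG$-invariant and $\Phi_g$ is an isometry of $Q$, which forces $\nabla\mcS$ to be $\mfG$-equivariant. The noise term transforms as $g(x)\tilde U(x,y)(\circ\dt\tilde\BM(x,y))g(y)^{-1}=\check U(x,y)g(y)(\circ\dt\tilde\BM(x,y))g(y)^{-1}=\check U(x,y)\circ\dt\tilde\BM^g(x,y)$: the first equality uses $g(x)\tilde U(x,y)=\check U(x,y)g(y)$, and the second uses that $g$ has finite variation (so conjugation by $g(y)$ produces no It\^o correction) together with the definition $\dt\tilde\BM^g(x,y)=\Ad_{g(y)}\dt\tilde\BM(x,y)$, which is itself a $\mfq$-valued Brownian motion by Ad-invariance. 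The converse direction is analogous: starting from $\check U$ solving \eqref{eq:discrete_hat_U}, one solves the analogous ODE for $g^{-1}$ (now driven by $\check U$) to produce a Parisi--Wu solution $\tilde U=\check U^{g^{-1}}$.

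Part (iii) is the special case of this computation with $g\equiv h\in\mfG$ constant: then $\dt g\,g^{-1}\equiv 0$, and the argument above shows $\Phi_h$ maps \eqref{e:ParisiWu} with noise $\tilde\BM$ to \eqref{e:ParisiWu} with noise $\tilde\BM^h$, which is still a Brownian motion. For part (iv), I chain (ii) and (iii): given $\check U^{(u)}$ solving \eqref{eq:discrete_hat_U} from $u$ with noise $\BM$, the converse of (ii) produces adapted $g_1$ with $g_1(0)=\id$ and $\tilde U\eqdef(\check U^{(u)})^{g_1}$ solving \eqref{e:ParisiWu} from $u$ with noise $\BM^{g_1}$; part (iii) gives $(\tilde U)^h$ solving \eqref{e:ParisiWu} from $v=u^h$ with noise $\BM^{hg_1}$; the forward direction of (ii) applied to $(\tilde U)^h$ produces adapted $g_2$ with $g_2(0)=\id$ such that $\check U^{(v)}\eqdef((\tilde U)^h)^{g_2}$ solves \eqref{eq:discrete_hat_U} from $v$ with noise $\BM^{g_2 h g_1}$. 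Then $g\eqdef g_2\cdot h\cdot g_1$ (pointwise in $\mfG$) satisfies $g(0)=h$ and $(\check U^{(u)})^g=\check U^{(v)}$ with noise $\BM^g$, as required.

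The main obstacle is the careful execution of the Stratonovich gauge-change computation in (ii): one must verify that conjugation of $\circ\dt\tilde\BM$ by the FV-adapted process $g(y)$ really produces $\circ\dt\tilde\BM^g$ with no quadratic-covariation correction, and that the $\mfG$-equivariance of $\nabla\mcS$ matches the edgewise conjugation by $\Ad_{g(y)}$ under the left-trivialisation $T_U Q\simeq\mfq$.
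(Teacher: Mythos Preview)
Your proposal is correct and follows essentially the same approach as the paper. The paper packages your edgewise Stratonovich computation into the single identity $\dt(U^g)=(\dt U)^g+\eps\,\mrd_{U^g}(\dt g\,g^{-1})$ and proves the $\mfG$-equivariance of $\nabla\mcS$ (your ``main obstacle'') directly from bi-invariance of the metric; for (iv) it likewise chains (ii) and (iii) via the left-group-action property $(U^{g_1})^{g_2}=U^{g_2g_1}$, exactly as you do.
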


\begin{proof}
\ref{pt:invar_measure} is clear from general principles (see also~\cite[Lem.~3.3]{SSZloop})
while~\ref{pt:DeTurck_to_DeTurck} follows from~\ref{pt:DeTurck_gauge_covar} and~\ref{pt:ic_gauge_covar}
and the fact that $Q\times\mfG \ni (U,g) \mapsto U^g\in Q$ and $\mfq\times\mfG \ni (X,g) \mapsto X^g\in \mfq$ are left group actions.

\ref{pt:DeTurck_gauge_covar}
We distinguish for the moment between $\mfq$ and $T_UQ$ and write $UX\in T_UQ$ for the left-invariant vector field associated to $X\in\mfq$.
Note that $\nabla \mcS$ is gauge covariant in the sense that,
for any $g\in \mfG$,
\begin{equ}\label{eq:mcS_gauge_covar}
\nabla\mcS(U^g)=[\nabla\mcS(U)]^g \in T_{U^g}Q\;,
\end{equ}
by which we mean
$\nabla\mcS (U^g)(x,y) = g(x) \nabla \mcS(U)(x,y) g(y)^{-1}$ for every $(x,y)\in\obonds$.
Indeed, for any $X\in \mfq$ and $\alpha\in\R$, one has $\mcS(U^g e^{\alpha X})=\mcS((U^g e^{\alpha X})^{g^{-1}}) = \mcS(U \exp(\alpha X^{g^{-1}}))$ by gauge-invariance of $\mcS$.
Therefore
\begin{equs}
\scal{ \nabla\mcS (U^g), U^g X}
%= D\mcS_{U^g}(U^g X)
&=\frac{\mrd}{\mrd \alpha} \Big|_{\alpha=0} \mcS(U^g e^{\alpha X})
=\frac{\mrd}{\mrd \alpha} \Big|_{\alpha=0} \mcS(U \exp(\alpha X^{g^{-1}}))
\\
&= \scal{ \nabla\mcS (U),  U X^{g^{-1}} }
= \scal{(\nabla \mcS(U))^g, U^g X}
\end{equs}
where we used the bi-invariance of $\scal{\act,\act}$ in the final step and which proves~\eqref{eq:mcS_gauge_covar}.

Furthermore (recall \eqref{e:d_U-omega})
\begin{equ}\label{eq:mrd_U_g}
\dt (U^g) = (\dt U)^g + \eps\mrd_{U^g}(\dt g g^{-1})
\end{equ}
(which is valid for any semi-martingale $U$ and finite-variation $g$).
Therefore 
\begin{equs}
\dt \check U &= 
 (\dt \tilde U)^g + \eps \mrd_{\check U} (\dt g g^{-1})
\\
&= -\frac12\nabla \mcS( \check U)\mrd t  +  \check U (\circ \dt \BM)
- \mrd_{\check U} (\mrd^* \check U)\mrd t\;,
\end{equs}
where we used~\eqref{eq:mrd_U_g} in the first equality, and~\eqref{e:disc-DT}-\eqref{eq:mcS_gauge_covar}
in the second equality.
%and the identity for all $U\in Q,X\in\mfq,f\in\mfG$
%\begin{equ}\label{eq:UX_g}
%(U X)^f = U^f X^f\;.
%\end{equ}
This proves the first claim in~\ref{pt:DeTurck_gauge_covar}.
The second claim follows in the same manner.

\ref{pt:ic_gauge_covar}
This follows from~\eqref{eq:mcS_gauge_covar},~\eqref{eq:mrd_U_g},
and the fact that $h$ is constant in time.
%The process $V_t \eqdef (U^{(u)}_t)^h$  satisfies
%\begin{equ}
%\dt V = (\dt U^{(u)})^h = -\frac12 \nabla \mcS(V)\mrd t + V(\circ \dt \tilde \BM^h)\;,
%\end{equ}
%where we used again~\eqref{eq:mcS_gauge_covar},~\eqref{eq:mrd_U_g},~\eqref{eq:UX_g}, and that $h$ is constant in time.
\end{proof}

\subsection{Assumptions on the actions}
\label{subsec:assumptions}

%Restricting to $d=2$, 
We will work with a general function $S_N$ satisfying the properties mentioned at the start of Section~\ref{sec:approx-YM} and 
\begin{equ}\label{e:S=xR}
%\log e^{t\Delta}(x) 
S_N (x)= \eps^{d-4} |x,\id |_G^2+ R_{\eps}(x)\;,
\end{equ}
where $R_\e$ is a remainder that we soon discuss 
and $|\act ,\act|_G$ is the geodesic distance on $G$.
By standard considerations, note that $|x,\id|_G = |\log x|_\mfg$ for all $x$ in a small neighbourhood of $\id$.
%Recall that for a compact Lie group $G$ endowed with a bi-invariant metric, 
% the Lie-theoretic exponential map for $G$ coincides with the exponential map of this Riemannian metric;
% and thus if $x=\exp v$ for $v$ in a small neighbourhood of $0\in \mfg$, then we have 
% a geodesic $\gamma$ such that $\gamma(0)=\id$, $\gamma(1) = x$, $\dot \gamma(0)= v$.
%So,
%\begin{equ}
% |x,\id |_G =  |\gamma(1),\gamma(0) |_G  = |\dot \gamma(0)|_{\mfg} = |v|_\mfg  =|\log x|_\mfg  \;,
%\end{equ}
%where the second step is by definition of geodesic.

\begin{remark}
Since our metric on $\mfg$
is $\Ad$-invariant, and we assumed that $S_N$ is a class function invariant under inversion,
all the three terms in \eqref{e:S=xR} are class functions invariant under inversion for all $x$ outside of a null set.
\end{remark}
The assumption we make on $R_\e$ is the following. Define the pullback function
\begin{equ}
\hat R_\e \colon \mfg \to \R,\quad \hat R_\e (X) = R_\e(e^X)\;.
\end{equ}

\begin{assumption}\label{assump:R}
There exist linear maps $E^{(1)}_{\e} \in L(\mfg , \mfg^*)$ and $E^{(2)}_{\e}\in L(\mfg^{\otimes 3} , \mfg^*)$,
such that 
$
\sup_{\e\in (0,1]} |E^{(1)}_\e| + |E^{(2)}_\e| < \infty
$
and, uniformly in $\eps\in (0,1]$ and $X\in\mathring V$,
\begin{equ}[eq:DR_expan]
D\hat R_{\e}(X) = E^{(1)}_{\e}(X) + \e^{d-4} E^{(2)}_{\e}(X^{\otimes 3}) + O(X^2) + \e^{d-4}O(X^4)\;.
\end{equ}
We furthermore assume that $E^{(2)}_\e$ is $\Ad_G$ covariant in the sense that
\[
E^{(2)}(\Ad_g X_1\otimes\Ad_g X_2\otimes \Ad_gX_3)(\Ad_gY) = E^{(2)}( X_1\otimes X_2\otimes X_3)(Y)\;,
\]
for all $X_i\in V$, $Y\in\mfg$, and $g\in G$.
\end{assumption}
\begin{remark}\label{rem:big-O}
In \eqref{eq:DR_expan} and the rest of the paper,
for a normed space $(\CB,\|\cdot\|)$, $X\in \CB$, and $n\geq 1$, we let
$O(X^n)$ denote $Y\in \CB$ such that $\|Y\|=O(\|X\|^n)$.
\end{remark}

See Section~\ref{subsec:rem_action} and Remark~\ref{rem:power_of_eps} for the motivation behind Assumption \ref{assump:R}.

Remark that the final assumption on $\Ad_G$ covariance is not a restriction and can be assumed without loss of generality.
This is because $S_N$ is a class function which implies $\hat R_{\e}(X) = \hat R_{\e}(\Ad_g X)$ and thus $D\hat R_{\e}(\Ad_g X)(\Ad_g Y) = D\hat R_{\e}(X)(Y)$.
Therefore, we can replace every term $f(X)$ on the right-hand side of \eqref{eq:DR_expan} by $\tilde f(X)(Y) = \int_G f(\Ad_h X)(\Ad_h Y)\mrd h$, which is now $\Ad_G$ covariant.

We show below that some common actions, namely the Manton, Wilson, and Villain actions, satisfy Assumption~\ref{assump:R}.

\begin{remark}
%It is natural that we assume no term proportional to $X^2$ appears in the expansion for $D\hat R_\eps(X)$
%since $\hat R_\eps$ is even in $X$, and therefore $D\hat R_\eps$ is odd.
In Assumption~\ref{assump:R} 
the ``leading terms'' $E^{(1)}_{\e}(X)$ and $\e^{d-4} E^{(2)}_{\e}(X^{\otimes 3})$ are odd in $X$.  
This is natural 
since $\hat R_\eps$ is even in $X$, and therefore $D\hat R_\eps$ is odd.
\end{remark}

\subsubsection{Examples}
\label{subsubsec:example_actions}

\textbf{Manton.}
The simplest action that satisfies Assumption~\ref{assump:R} is the Manton action~\cite{Manton80} where $S_N(x)=\e^{d-4}|x,\id|_G^2$, so that $\hat R_\e(x)=0$ and $E^{(1)}_\e=0$ and $E^{(2)}_\e=0$.
Remark that $S_N$ in this case is \textit{not} differentiable on all of $G$, e.g. for $G=\U(1)$, $S_N$ is not differentiable at $-1$.

\medskip
\noindent
\textbf{Wilson.}
Suppose that the inner product on $\mfg\subset \mfu(n)$ is given by $\scal{X,Y}_\mfg = -\frac12\Trace{XY}$.
The Wilson action, which is a smooth function $S_N\colon G\to [0,\infty)$, is
\[
S_N(x) = \eps^{d-4}\Re\Trace (\id-x)\;.
\]
%In view of Remark~\ref{rem:Up}, this formally approximates the $L^2$ norm of curvature $F(A)$: 
%\begin{equ}
%\eps^{d-4} \Re\Trace(U(\partial p) - \id) = \frac{1}{2}\e^d\Trace(F_{jk}(A)(x)^2)+O(\e^{d+1})\,,
%\end{equ}
%noting that $\Re \Trace F(A) =0$ since $F(A)$ is skew-Hermitian.

\begin{remark}
See \cite{SSZloop,SZZ22} for explicit forms of the Langevin dynamic arising from the Wilson action for $\SO(n)$, $\U(n)$, and $\SU(n)$.
The results in this paper yield convergence of 
these dynamics on the two dimensional torus. 
\end{remark}
%Taking $d=2$, 
We have for $x\in \mathring W$ 
\begin{equs}
S(x) &= \eps^{d-4}\Re\Trace (\id-x)
= \eps^{d-4}\Re\Trace(\id - e^{\log x})
\\
&= -\eps^{d-4}\Re \Trace (\log(x) + (\log x)^2/2! + (\log x)^3/3! + (\log x)^4/4! + \ldots)
\\
&= \eps^{d-4} |\log x|_\mfg^2 - \eps^{d-4} \sum_{k\geq 2}  \Trace (\log x)^{2k}/(2k)!
= \eps^{d-4} |\log x|_\mfg^2 + R_{\eps}(x) \;,
\end{equs}
where we used that $|X|_\mfg^2 = -\frac12\Trace(X^2)$.
%Therefore
%\begin{equ}
%R_{\eps}(x) = - \eps^{-2} \sum_{k\geq 2}  \Trace (\log x)^{2k}/(2k)!
%\end{equ}
%and
Here $R_{\eps}(x)$ is defined in the last line and
\begin{equ}
\hat R_\e(X) = R_\e(e^X) 
= - \eps^{d-4} \sum_{k\geq 2}  \frac{1}{(2k)!}\Trace X^{2k} 
= -\eps^{d-4}\Trace (\cosh(X) - 1 - \frac{X^2}{2!})\;.
\end{equ}
It follows that, for all $Y\in\mfg$,  
\begin{equ}
D\hat R_\e(X)(Y) = -\eps^{d-4} \frac{1}{3!} \Trace (X^3 Y) + \eps^{d-4}O(X^5 Y)\;.
\end{equ}
Therefore the Wilson action satisfies Assumption~\ref{assump:R} with $E^{(1)}_\e=0$ and
\begin{equ}
E^{(2)}_\e(X_1\otimes X_2 \otimes X_3)(Y) 
%=  -\sum_{i,j,k}  \frac{1}{(3!)^2}\Trace(X_iX_jX_kY)\;,
=- \frac{1}{3!} \Trace (X_1X_2X_3 Y)\;.
\end{equ}

\medskip
\noindent
\textbf{Villain.}
Restricting to $d\leq 3$, the Villain (heat kernel) action $S_N\colon G\to\R$ is
\begin{equ}
e^{-S_N(x)-c_\e} \propto e^{\frac14\e^{4-d} \Delta}(x)\;,
\end{equ}
where $\Delta$ is the Laplace--Beltrami operator on $G$,
$e^{t\Delta}$ is the associated heat kernel at time $t>0$,
and $c_\e$ does not depend on $x$ and is chosen so that $S_N(\id)=0$.
Like the Wilson action, the Villain action is smooth on $G$.
Recall the Varadhan formula
\begin{equ}\label{eq:Varadhan_formula}
\log e^{\frac14\e^{4-d} \Delta}(x) = -\e^{d-4}|\log x|_\mfg^2 - \bar R_\e(x)
\end{equ}
where $\lim_{\e\to0}\e^{4-d} \bar R_{\e}(x) = 0$ uniformly in $x$.
Remark that $\bar R_\e(x) = R_\eps(x) + c_\e$ for $R_\e$ as in~\eqref{e:S=xR}.
Stroock--Turetsky~\cite[Thm.~4.1]{Stroock_Turetsky_97} (see also~\cite{Malliavin_Stroock_96,Stroock_Turetsky_98})
showed furthermore that the limit $\e^{4-d} \bar R_\e(x) \to 0$ commutes with gradients:
their result implies that, for any collection of smooth vector fields $X_1,\ldots,X_k$ on $G$,
\begin{equ}\label{eq:R_grad_bound}
\sup_{\e\in(0,1]}\sup_{x\in\mathring W}|(X_k\circ \cdots \circ X_1) \bar R_\e (x)| <\infty \;.
\end{equ}
Since $\bar R_\e$ and $R_\e$ differ by a constant,~\eqref{eq:R_grad_bound} holds also for $R_\e$, which implies, for all smooth vector fields $Y_1,\ldots, Y_k$ on $\mfg$,
$
\sup_{\e\in(0,1]}\sup_{X\in\mathring V}|(Y_k\circ\cdots\circ Y_1)\hat R_\e(X)| <\infty
$,
which in particular implies for every $k\geq 1$
\begin{equ}\label{eq:R_deriv_bound}
\sup_{\e\in(0,1]}\sup_{X\in\mathring V}|D^k \hat R_\e(X)| <\infty \;,
\end{equ}
where $D^k \hat R_\e \colon \mfg \to L(\mfg^{\otimes k},\R)$ is the $k$-th derivative.
%and the norm $|D^k \hat R_\e(X)|$ is taken as the operator norm once we equip $\mfg^{\otimes k}$ with an arbitrary cross norm (e.g. the Hilbert tensor product).
It follows that
\begin{equ}
D\hat R_\e(X) = D\hat R_\e(0) + D^2\hat R_\e(0)(X) + O(|X|^2)\;,
\end{equ}
where the proportionality constant in $O(|X|^2)$ is uniform in $\e\in (0,1]$ and $X\in\mathring V$ due to~\eqref{eq:R_deriv_bound}.
Due to the symmetry $\hat R_\e(X)=\hat R_\e(-X)$, which itself follows from $e^{\frac14\e^{4-d}\Delta}(x)=e^{\frac14\e^{4-d}\Delta}(x^{-1})$ and $|\log x|_\mfg=|\log x^{-1}|_\mfg$,
we obtain
$D\hat R_\e(0) = 0$.
Since $\sup_{\e\in (0,1]}|D^2\hat R_\e(0)| < \infty$
we conclude that
$\hat R_{\e}$ satisfies Assumption \ref{assump:R}
with $E_{\e}^{(1)} \eqdef D^2\hat R_{\e}(0) \in L(\mfg\otimes \mfg,\R)\simeq L(\mfg,\mfg^*)$ and $E^{(2)}_\e=0$.
%\begin{equ}
%D\hat R_{\e}(X) = E_\e^{(1)}(X) +
% O(|X|^2)\;,
%\end{equ}
%where  
%$E_{\e}^{(1)} \eqdef D^2\hat R_{\e}(0) \in L(\mfg\otimes \mfg,\R)\simeq L(\mfg,\mfg^*)$ and $\sup_{\e\in (0,1]} |E_{\e}^{(1)}|<\infty$,
%so Assumption~\ref{assump:R} is satisfied with $E^{(2)}_\e=0$.

\subsection{Lattice and continuum \texorpdfstring{$1$}{1}-forms}
\label{sec:norms}

We recall in this section the norms on lattice and continuum $1$-forms from~\cite{Chevyrev19YM, CCHS_2D}.
Let $d\geq 1$ and define the sets
\begin{equs}
\olines_N  &= \{(x,ke_i 2^{-N}) \in \T^d\times \R^d\,:\, x\in\Lambda_N \,,\, k\in \{0,\ldots, 2^{N-1}\}\,,\, i\in [d]\}\;,
\\
\lines_N &= \olines_N \cup \{(x,-h)\,:\, (x,h)\in\olines_N\}\;.
\end{equs}
For $\ell=(x,ke_i 2^{-N})\in\olines_N$, we call $|\ell| \eqdef |k|2^{-N}$ the length of $\ell$.
The set $\olines_N$ can be interpreted as the set of (positively oriented) lines on the lattice $\Lambda_N$ of length at most $\frac12$.
We will frequently identify every $\ell=(x,h)\in\olines_N$ with the set $\{x+th\,:\,t\in[0,1]\}\subset \T^d$ (which completely characterises $\ell$).

We say that $\ell,\bar \ell\in\olines_N$ are parallel, and write $\ell\parallel\bar \ell$, whenever $\ell=(x,ke_i2^{-N})$ and $\bar\ell=(x+m e_j 2^{-N}, ke_i2^{-N})$ for some $j\neq i$ and $|m|\leq 2^{N-1}$.
We write $d(\ell,\bar\ell)\eqdef m2^{-N}$, which is the Hausdorff distance between $\ell$ by $\bar \ell$ treated as subsets of $\T^d$.
We further define the quantity
\begin{equ}
\rho(\ell,\bar\ell) = |\ell|^{1/2}d(\ell,\bar\ell)^{1/2}\;.
\end{equ}
The area of the (smallest) rectangle with two of its sides given by $\ell,\bar\ell$ is $\rho(\ell,\bar\ell)^2$.

Every $A\in \mfq_N$ defines a function still denoted by $A\colon \olines_N \to \mfg$ as
\begin{equ}\label{eq:A_def_1_form}
A(x,ke_i2^{-N}) \eqdef \sum_{j=1}^{k}A(x+(j-1)\e_{i},x+j\e_i)\;.
\end{equ}
We denote by $\Omega_N$   \label{page:Omega_N}
the set of all functions $A\colon \olines_N \to \mfg$ arising this way, which is clearly in bijection with $\mfq_N\simeq \mfg^{\obonds_N}$.
We extend every $A\in\Omega_N$ to a function $A\colon \lines_N \to \mfg$ given by requiring $A(\overleftarrow{\ell})=-A(\ell)$ where $\overleftarrow{(x,h)} = (x+h,-h) \in\lines_N$ is the reversal of $(x,h)\in\lines_N$.

Recalling $\log\colon G\to \mfg$ from Section~\ref{sec:Notation},
every $U\in Q_N$ defines a function $A\in\mfq_N$ by $A(b) = \log U(b)$ for all $b\in\obonds_N$ and thus defines an element of $\Omega_N$.
Conversely, every $A\in\Omega_N$ defines an element $U\in Q_N$ by $U(b)=e^{A(b)}$ for all $b\in\bonds_N$.
We correspondingly write $A\sim B$ for $A,B\in\Omega_N$ whenever $e^A\sim e^B$
and denote $[A] \eqdef \{\omega\in\Omega_N\,:\,\omega\sim A\}$.
For $g\in\mfG_N$ and $A\in\Omega_N$ we further denote $A^g \eqdef \log [(e^A)^g]$.
Remark that $A^g\sim A$, but, if $B\in \Omega_N$ is outside the image of $\log$, it is possible that $B\sim A$ while $B \neq A^g$ for any $g\in \mfG_N$.
We denote by $\pi\colon\Omega_N\to\Omega_N/{\sim}$ the projection
$\pi(A)=[A]$.\label{page:pi2}

Fix $\alpha\in [0,1]$ throughout this subsection.
We define on $\Omega_N$ the H\"older-type norm $|\cdot|_{N;\gr\alpha}$ and semi-norm $|\cdot|_{N;\alpha;\rho}$ by
\begin{equs}
|A|_{N;\gr\alpha} &= \sup\{ |\ell|^{-\alpha}|A(\ell)| \,:\, \ell\in\olines_N\,,\, |\ell|>0\}\;,
\\
|A|_{N;\alpha;\rho} &= \sup\{ \rho(\ell,\bar\ell)^{-\alpha} |A(\ell)-A(\bar\ell)| \,:\, \ell,\bar\ell\in\olines_N\,,\,
\ell\parallel\bar\ell\}\;.
\end{equs}
We then denote by $\Omega_{N;\alpha}$ \label{page:Omega_Nalpha}
the space $\Omega_N$ equipped with the norm
$
|\cdot|_{N;\alpha} \eqdef |\cdot|_{N;\gr\alpha}+|\cdot|_{N;\alpha;\rho}
$.
For $M\leq N$, we clearly have inclusions $\olines_M\subset \olines_N$, which gives the natural projections $\pi_{N,M}\colon\Omega_{N} \twoheadrightarrow \Omega_{M}$ by restriction.
Explicitly,		\label{page:pi_NM}
\begin{equ}
(\pi_{N,M}A)(x,e_i 2^{-M}) = A(x,e_i 2^{-M})
\end{equ}
for all $x\in \Lambda_M$ and $i\in[d]$.
Furthermore $\pi_{N,M}$ is clearly a contraction $\Omega_{N;\alpha}\to \Omega_{M;\alpha}$.
This endows $\{\Omega_{N;\alpha}\}_{N\geq 1}$ with the structure of a projective system of Banach spaces.
The projective limit $\varprojlim_{N\to\infty} \Omega_{N;\alpha}$ is naturally described as follows.

\begin{definition}\label{def:barL}
We denote by $\olines = \cup_{N=1}^\infty \olines_N$ the set of positively oriented lines in $\T^d$ of length $\leq \frac12$ whose starting and ending points are dyadic rationals.
We say that $\ell,\bar\ell\in\olines$ are \textit{joinable} if $\ell=(x,e_ih)$ and $\bar\ell = (x+e_ih,e_i\bar h)$ for some $x\in\Lambda_N$, $i\in[d]$ and $h,\bar h\geq 0$ such that, treated as subset of $\T^d$, $\ell\cup\bar\ell\in\olines$,
i.e. if $|h+\bar h| \leq \frac12$.
We say that a function $A\colon \olines \to \mfg$ is additive if $A(\ell\cup\bar\ell)=A(\ell)+A(\bar\ell)$ for all joinable $\ell,\bar\ell\in\olines$.
We denote by $\Omega$ the vector space of additive functions.
\end{definition}
With these definitions, it is clear that $\Omega = \varprojlim_{N\to\infty} \Omega_N$ as a projective limit of vector spaces.
We define the (extended) norm $|\cdot|_{\gr\alpha} = \lim_{N\to \infty}|\cdot|_{N;\gr\alpha}$ and (extended) semi-norm $|\cdot|_{\alpha;\rho} = \lim_{N\to \infty}|\cdot|_{N;\alpha;\rho}$ on $\Omega$, which are well-defined since $|\cdot|_{N;\gr\alpha}$ and $|\cdot|_{N;\alpha;\rho}$ are increasing in $N$.
Define the Banach spaces
\begin{equ}
\Omega_{\gr\alpha} = \{A\in\Omega\,:\, |A|_{\gr\alpha}<\infty\}\;,\quad
\Omega_{\alpha} = \{A\in\Omega\,:\, |A|_{\gr\alpha}+|A|_{\alpha;\rho}<\infty\}\;,
\end{equ}
where $\Omega_{\gr\alpha}$ is equipped with the norm $|\cdot|_{\gr\alpha}$,
and $\Omega_{\alpha}$ is
equipped with the norm
$|\cdot|_{\alpha}\eqdef |\cdot|_{\gr\alpha}+|\cdot|_{\alpha;\rho}$.
Then $\Omega_{\alpha} = \varprojlim_{N\to\infty} \Omega_{N;\alpha}$ as a projective limit of Banach spaces.\label{page:Omega_alpha}
We let $\pi_N \colon \Omega_\alpha\to \Omega_{N;\alpha}$ denote the corresponding projection.

%The set
%\begin{equ}
%\alllines = \{(x,he_i)\,:\,x\in\T^d\,,\,h\in[0,1/2]\,,\, i\in[d]\}
%\end{equ}
%of all positively oriented lines of length at most $\frac12$ is 
%the closure of $\olines$ in the Hausdorff distance $d_\Haus$
%(as before, we identify $\ell=(x,he_i)\in\alllines$ with the subset $\{x+te_ih\,:\,t\in[0,1]\}\subset \T^d$).
%One can show (see~\cite[Prop.~3.9]{Chevyrev19YM}) that there exists $C>0$ such that, for all $\alpha\in[0,1]$ and $\ell,\bar\ell\in\olines$,
%$
%|A(\ell)-A(\bar\ell)| \leq C|A|_\alpha d_\Haus(\ell,\bar\ell)^{\alpha/2}
%$.
%Therefore, every $A\in\Omega_\alpha$ with $\alpha>0$ extends uniquely to a continuous map $A\colon(\alllines,d_\Haus) \to \mfg$
%(the same is not true for elements of $\Omega_{\gr\alpha}$).

For a vector space $\mcE$ of $\mfg$-valued distributions (in the sense of Schwartz),
let $\Omega \mcE$\label{page:Omega_mcE} denote the space of $\mfg$-valued distributional $1$-forms $A=(A_1,\ldots,A_d)$ with $A_i\in \mcE$.
If $\mcE$ carries a norm $|\cdot|_\mcE$, we denote
$|A|_{\mcE}\eqdef \max_{i\in[d]}|A_i|_{\mcE}$.
For  the space $\CC$  of $\mfg$-valued continuous functions
there exists an injection $\imath \colon \Omega\CC\to \Omega$
given for $A=(A_1,\ldots,A_d)\in\Omega\CC$ by
$
(\imath A)(x,e_ih) = \int_0^{|h|} A_i(x+te_i) \mrd t
$.
It is easy to see that
\begin{equ}\label{eq:cont_embedding}
|\imath A|_{\alpha} \lesssim |A|_{\CC^{\alpha/2}}\;.
\end{equ}
Here and below, we let $\CC^\gamma$ for $\gamma\in\R$ denote the Banach space of H\"older--Besov distributions on $\T^d$ as in~\cite[Sec.~1.5]{CCHS_2D}.
Conversely, if $\alpha>0$, then one also has for $A\in\Omega\CC$
\begin{equ}[eq:dist_embedding]
|A|_{\CC^{\alpha-1}} \lesssim |\imath A|_{\gr\alpha}
\end{equ}
(see \cite[Prop.~3.21]{Chevyrev19YM}).
It follows from~\eqref{eq:cont_embedding} that one can view $\Omega\CC^\infty$ as a subspace of $\Omega_\alpha$ via $\imath\colon \Omega\CC^\infty \to \Omega_\alpha$
and we let $\Omega^1_\alpha$\label{page:Omega_1_alpha} denote the closure of $\Omega\CC^\infty$ in $\Omega_\alpha$.
Moreover, by~\eqref{eq:dist_embedding},
one can view $\Omega^1_\alpha$ as a subspace of $\Omega\CC^{\alpha-1}$ via
$\imath^{-1}\colon\Omega^1_\alpha\to\Omega\CC^{\alpha-1}$.
In the sequel, we will not write $\imath$ and $\imath^{-1}$ explicitly and will simply treat $\Omega^1_\alpha$
as a space of distributional $1$-forms.

%More generally, every $A\in\Omega_{\gr\alpha}$ such that $A$ is continuous on $(\olines,d_\Haus)$
%canonically defines a distributional $1$-form $(A_1,\ldots, A_d)$ with $A_i\in\mcS'(\T^d,\mfg)$ for which $\max_{i\in [d]}|A_i|_{\CC^{\alpha-1}}\lesssim |A|_{\gr\alpha}$ and for which $A_i=0$ for all $i\in[d]$ if and only if $A\equiv 0$ -- see~\cite[Propositions~3.15,~3.21]{Chevyrev19YM}.

Denote $\mfG^{\alpha}=\CC^{\alpha}(\T^d,G)$\label{mfG_page_ref}
and let $\mfG^{0,\alpha}$\label{mfG_0_page_ref} denote the closure of smooth functions in $\mfG^\alpha$.
It follows from the exact same argument
as in~\cite[Sections~3.4-3.6]{CCHS_2D} that, for $\alpha\in(\frac23,1]$, there is a locally uniformly continuous group action $\Omega_\alpha\times \mfG^\alpha \to\Omega_\alpha$, denoted $(A,g)\mapsto A^g$,
which restricts to a group action $\Omega^1_\alpha\times \mfG^{0,\alpha} \to\Omega^1_\alpha$
for which $\mfO_\alpha \eqdef \Omega^1_\alpha/ \mfG^{0,\alpha}$\label{page:mfO} is a Polish space.
For smooth $A,g$, the $1$-form $A^g$ takes the familiar form $A^g = \Ad_g A - (\mrd g) g^{-1}$.
As usual, for $A,B\in\Omega^1_\alpha$, we write $B\sim A$ if $B=A^g$ for some $g\in\mfG^{0,\alpha}$
and denote $[A] = \{\omega\in\Omega^1_\alpha\,:\,\omega\sim A\}$.
We denote by $\pi\colon\Omega^1_\alpha\to\mfO_\alpha$ the projection
$\pi(A)=[A]$.\label{page:pi3}

\begin{remark}\label{rem:diff_with_CCHS_2D}
The spaces $\Omega_{\gr\alpha}$, $\Omega_\alpha$, $\Omega^1_\alpha$, $\mfO_\alpha$ differ from
those denoted by the same symbols in~\cite[Sec.~3]{CCHS_2D}: our spaces are weaker as we only consider axis-parallel lines (vs. all straight lines in~\cite{CCHS_2D}).
However, it is straightforward to verify that all results on these spaces shown in~\cite[Sec.~3]{CCHS_2D}
remain true for our definitions
and we will use them without constantly highlighting this distinction.
\end{remark}

\subsection{Main results}
\label{subsec:main_results}

We define our $\mfq$-valued Brownian motion $\BM$ and the white noise $\xi$ on the same probability space by setting
for $e\in\obonds_i$, $i\in [d]$,
\begin{equ}[e:couple-noise]
\dt \BM(e) = \e^{1-d}  %\e^{-1}
 \langle \xi_i(t,\cdot),1_{B(e,\e)} \rangle
\end{equ}
where $B(e,\e)$ is the $d$-cube centred at the midpoint of $e$ with side length $\e$.
The scaling in \eqref{e:couple-noise} is such that  $\dt \BM$ is a discrete white noise under the metric \eqref{e:mfq-inner}.

A mollifier is a smooth function $\moll\in\CC^\infty(\R\times\R^d,\R)$ such that $\int\moll = 1$ and with
support contained in $\{z\,:\, |z|<\frac14\}$.
We say that $\moll$ is non-anticipative if the support of $\moll$ is contained in $\{(t,x)\in\R\times\R^d\,:\, t\geq0\}$.

We equip $\mfg^d$ with the (diagonal) adjoint action of $G$, given for $X\in\mfg^d$ and $g\in G$ by
\[
\Ad_g(X_1,\ldots,X_d) = (\Ad_g X_1,\ldots,\Ad_g X_d)\;.
\]
We denote
\begin{equ}[eq:L_G]
L_G(\mfg^d,\mfg^d) = \{C\in L(\mfg^d,\mfg^d) \,:\, \Ad_g C(X) = C(\Ad_g X) \text{ for all }
g\in G\,,\, X\in\mfg^d\}
\;,
\end{equ}
i.e. the subset of maps that commute with the adjoint action of $G$.
We tacitly identify $L(\mfg,\mfg)$ with a subspace of $L(\mfg^d,\mfg^d)$ by
extending every $C\in L(\mfg,\mfg)$ to a `diagonal' map $C \in L(\mfg^d,\mfg^d)$ by $C(A_1,\ldots,A_d)=(CA_1,\ldots,CA_d)$.

Throughout this subsection, we fix a non-anticipative mollifier $\moll$ and
independent and identically distributed (i.i.d) $\mfg$-valued white noises $\xi=(\xi_1,\ldots,\xi_d)$ on $\R\times \T^d$.

For the remainder of the section, we take $d=2$.
We further fix $\alpha\in (\frac45,1)$.
For $C\in L(\mfg^2,\mfg^2)$ and $a\in\Omega^1_{\alpha}$, let $\SYM_t(C,a)$\label{page:SYM_t} denote the solution at time $t>0$ to
the stochastic YM equations (SYM) driven by $\xi$ with mollifier $\moll$, mass renormalisation $C$, and initial condition $a$.
That is,
$
\SYM(C,a) = \lim_{\e \downarrow 0} A^\e
$,
where $A^\e$ solves~\eqref{eq:SYM_moll}
and the limit is taken in the metric space $(\Omega_{\alpha}^1)^\sol$ that allows from blow-up, see~\cite[Sec.~1.5.1]{CCHS_2D}.
Therefore $\{\SYM_t(C,a)\}_{t\geq 0}$ is a continuous stochastic process with values in $\hat\Omega_{\alpha}^1\eqdef \Omega_{\alpha}^1\sqcup\{\skull\}$.
We recall from~\cite[Sec.~1.5.1]{CCHS_2D} that, for a metric space $(F,\rho)$, we define $\hat F \eqdef F\sqcup \{\skull\}$\label{page:F_hat}
equipped with a metric $\hat\rho$ which allows points to approximate $\skull$ given by
\begin{equ}
\hat\rho(f,g) = \rho(f,g)\wedge (h(f)+h(g))
\end{equ}
where $h\colon \hat F \to [0,1]$, $h(f)\eqdef (1+\rho(f,o))^{-1}$ for an arbitrary fixed $o\in F$.
We further let $\bar C\in L_G(\mfg,\mfg)$ be the $\moll$-dependent operator as in~\cite[Thm.~2.9(i)]{CCHS_2D}.

Our first main result shows convergence of a class of discrete dynamics on the lattice to $\SYM(\bar C,\cdot)$,
which is a form of universality for the YM Langevin dynamic.
For a normed space $(\CB, \|\cdot \|)$, $T>0$, and $\eta\in\R$, let $\CC_\eta^T(\CB)$ denote the space of functions $f\in\CC((0,T],\CB)$ for which
\begin{equ}
\|f\|_{\CC_\eta^T(\CB)}  \eqdef \sup_{t\in (0,T]} t^{-\eta/2} \|f(t)\|< \infty\;.
\end{equ}

\begin{theorem}[Universality of the dynamic]\label{thm:discrete_dynamics}
Suppose Assumption~\ref{assump:R} holds.
Let $\eta<\alpha-1$, $a\in\Omega_{\alpha}^1$, and $a^{(N)} \in \log(\conf_N) \subset \mfq_N$
such that
\begin{equ}
\lim_{N \to \infty}|a^{(N)} - \pi_N a|_{N;\alpha} = 0\;.
\end{equ}
Consider the $\mfq$-valued Markov process $A^{(N)} \eqdef \log U^{(N)}$ with initial condition $a^{(N)}$.
Denote by $T^*>0$ the blow-up time of $\SYM(\bar C,a)$.
Then there exists an increasing sequence of stopping times $T_N$ such that $\lim_{N\to\infty}T_N=T^*$ almost surely and
\begin{equ}	
\lim_{N\to\infty}\|A^{(N)} - \pi_N \SYM(\bar C,a)\|_{\CC^{T_N}_ {\eta-\alpha/2}(\Omega_{N;\alpha})} = 0 \quad \text{in probability.}
\end{equ}
\end{theorem}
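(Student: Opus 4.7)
The plan is to treat the lattice process $A^{(N)}=\log U^{(N)}$ as a discretised singular SPDE, to prove convergence to a renormalised continuum SYM equation via a discrete regularity structures argument, and then to identify the limiting mass renormalisation as $\bar C$ using gauge covariance. Expanding the drift in~\eqref{eq:discrete_hat_U} while $\check U_t\in W$, with the help of the BCH formula for $U(\partial p)$ (see Remark~\ref{rem:Up}), the decomposition~\eqref{e:S=xR}, the discrete formulae~\eqref{eq:disc_adjoint_der}--\eqref{e:d_U-omega}, and Assumption~\ref{assump:R} for $D\hat R_\e$, one obtains for $A^{(N)}$ a discrete equation of the schematic form
\begin{equation*}
\dt A^{(N)} = \Delta_N A^{(N)}\,\mathrm{d} t + B_N(A^{(N)},\partial A^{(N)})\,\mathrm{d} t + N_N(A^{(N)})\,\mathrm{d} t + F_N A^{(N)}\,\mathrm{d} t + \dt \mathbf{W}_t\;,
\end{equation*}
where $\Delta_N$ is the componentwise discrete Laplacian on $\mfq_N$ (the DeTurck term $-\mathrm{d}_{\check U}\mathrm{d}^*\check U$ precisely supplies the missing $\mathrm{d}\mathrm{d}^*$ piece, just as in~\eqref{eq:SYM_heuristic}); $B_N$ and $N_N$ are discrete approximations of $A\partial A$ and $A^3$; $F_N\in L(\mfg^2,\mfg^2)$ gathers the linear contribution of $E^{(1)}_\e$ together with a local constant produced by $\e^{d-4} E^{(2)}_\e(X^3)$ after renormalisation; and the remaining $O(X^2)$ and $\e^{d-4}O(X^4)$ terms in~\eqref{eq:DR_expan}, together with higher BCH remainders, are of better regularity or scaling and play the role of irrelevant perturbations.

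The next step is to run a discrete regularity structures argument in the spirit of~\cite{EH19,EH21,HM18,Shen18}. The regularity structure is precisely the one used to build $\SYM$ in~\cite{CCHS_2D}; the Green's function of $\Delta_N$ agrees with the continuum heat kernel at scales much larger than $\e$, so the associated discrete models, equipped with BPHZ-type subtractions for the divergent trees built from $\mathbf{W}$, converge to the continuum BPHZ model of~\cite{CCHS_2D}. Combined with discrete Schauder estimates in the anisotropic norms $|\cdot|_{N;\alpha}$ of Section~\ref{sec:norms}, with the time weight $t^{(\alpha/2-\eta)/2}$ accommodating rough initial data through the embedding~\eqref{eq:cont_embedding}, this yields that, up to a stopping time $T_N$ taken as the minimum of $\varpi$ and the first time the relevant discrete modelled-distribution norm exceeds a diverging threshold, $A^{(N)}$ converges to $\pi_N A^\star$ in $\CC^{T_N}_{\eta-\alpha/2}(\Omega_{N;\alpha})$ in probability, where $A^\star$ solves~\eqref{eq:SYM_moll} in the $\e\downarrow 0$ limit with \emph{some} mass renormalisation $C^\star\in L(\mfg^2,\mfg^2)$ depending on $\moll$, on the discretisation, and on the $E^{(i)}_\e$.

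The conceptually key step is then to show $C^\star=\bar C$. Rather than computing $C^\star$ directly, I would use gauge covariance: by Proposition~\ref{prop:gauge_covar}\ref{pt:DeTurck_to_DeTurck}, the law of $[U^{(N)}_t]$ depends only on $[U_0^{(N)}]$, so the lattice orbit process is Markov and gauge-covariant; the continuity of $\pi\colon\Omega^1_\alpha\to\mfO_\alpha$ transfers this property to the limit, so $A^\star$ is a gauge-covariant solution of~\eqref{eq:SYM_moll} in the sense of~\cite[Thm.~2.9(ii)]{CCHS_2D}. The uniqueness result for gauge-covariant mass renormalisations (Theorem~\ref{thm:C_unique} in the introduction) then forces $C^\star=\bar C$, identifying $A^\star=\SYM(\bar C,a)$. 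A standard argument shows that $T_N\to T^*$ almost surely, since below $T^*$ the continuum norms remain bounded and hence so do the discrete ones in the limit.

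The main obstacle I expect is the second step: implementing discrete regularity structures in the anisotropic, line-indexed norms $|\cdot|_{N;\alpha}$ of Section~\ref{sec:norms} (rather than the usual discrete H\"older--Besov framework), and controlling the full nonlinear BCH and DeTurck expansions so that only renormalisable divergent subtrees appear. The identification step, while conceptually the heart of the universality statement, is then clean once the uniqueness Theorem~\ref{thm:C_unique} is available.
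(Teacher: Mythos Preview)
Your overall strategy is correct and mirrors the paper's architecture: derive a lattice SPDE, prove convergence via discrete regularity structures to $\SYM(C^\star,a)$ for some $C^\star$, then identify $C^\star=\bar C$ via gauge covariance and Theorem~\ref{thm:C_unique}. However, your schematic equation for $A^{(N)}$ and your choice of regularity structure miss a genuine structural difficulty that the paper has to work hard to overcome.

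The noise term in~\eqref{eq:discrete_hat_U} is $\check U_t(\circ\mrd_t\BM_t)$, a multiplicative Stratonovich noise on $G$. When you pull this back to $\mfq$ via the exponential map, you do \emph{not} get additive noise: the derivative of $\exp$ (cf.~\eqref{e:d-exp}) forces a correction $\Phi(\A)^{-1}\xi^\e$, and expanding $\Phi^{-1}$ yields
\begin{equation*}
\xi_i + \tfrac{\e}{2}[A_i,\xi_i] + \tfrac{\e^2}{12}[A_i,[A_i,\xi_i]] + O((\e A)^3\xi)\;,
\end{equation*}
see Proposition~\ref{prop:rescaled-equ} and Remark~\ref{rem:noise_terms}. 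The term $\e[A,\xi]$ has the same scaling dimension as the main nonlinearity $A\partial A$ and is \emph{not} an irrelevant perturbation; similarly the quadratic error $\tilde R_2(A)\sim\e(\partial A)^2$ in~\eqref{e:tildeR2} has negative power counting. Your schematic form with purely additive $\mrd_t\mathbf{W}$ drops exactly these terms.

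Because of them, the continuum regularity structure from~\cite{CCHS_2D} is insufficient. The paper enlarges it substantially (Section~\ref{sec:tr-reg}, Steps 1--3): new noise labels $\bar\mfl_i,\mfl_i^{=,\pm}$ modelling $\e^{1-\kappa}\xi$ and $\e^{(1-\kappa)/2}\partial\Psi$, a splitting $\CT=\CT^{(1)}\oplus\CT^{(2)}$ indexed by bond orientations, shifting operators $\CS_{\be}$, multiple discrete derivatives, and many additional trees listed in~\eqref{e:trees-deg-0-rem}. Handling the multiplicative noise further requires a bespoke ``multiplication by $\eps$'' map $\cE$ on modelled distributions (Section~\ref{subsubsec:mult_eps_models}) so that $\e F(\e A)[A,\xi]$ can be lifted and shown to vanish in the limit. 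The renormalisation group and the derivation of the renormalised equation (Sections~\ref{subsec:renorm_group}--\ref{subsec:renorm_eq}) are correspondingly much more involved than in the continuum.

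Two further remarks. First, the passage to $\SYM(C^\star,a)$ is not obtained by direct convergence of discrete models to the continuum BPHZ model; the paper instead uses a two-parameter diagonal argument (Section~\ref{sec:diagonal_argument}) with an auxiliary mollification scale $\bar\e$, proving only $O(1)$ bounds on the lattice renormalisation constants and extracting convergent subsequences. Second, your identification of the main obstacle is off: the discrete fixed point and model estimates are carried out in standard discrete H\"older--Besov spaces $\CC^\alpha_\e$, and the line-indexed norms $|\cdot|_{N;\alpha}$ enter only through the SHE analysis (Section~\ref{sec:SHE}) and the embeddings~\eqref{eq:embeddings}. The real technical core is the multiplicative-noise, non-polynomial structure of the lattice equation.
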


\begin{remark}\label{rem:T*}
The stopping times $T_N$ can be taken as $T_N=K_N\wedge\inf\{t>0\,:\,|\SYM(\bar C,a)|_\alpha>K_N\}$
for some increasing sequence $K_N$ with $\lim_{N\to\infty} K_N=\infty$.
\end{remark}

\begin{remark}\label{rem:local_condition}
Assumption~\ref{assump:R} is purely local around $\id\in G$.
The reason such a local assumption on $S_N$ suffices is that the probability of $e^{A^{(N)}}$ leaving the neighbourhood $W$ in a short amount of time becomes small as $N\to\infty$.
Consequently, the only role that the exit time~\eqref{eq:varpi_def} plays is to ensure that $U$ is well-defined
and Theorem~\ref{thm:discrete_dynamics} remains true with $U^{(N)}$ replaced by $\check U^{(N)}$.
\end{remark}
We give the proof of Theorem~\ref{thm:discrete_dynamics} at the end of Section~\ref{sec:gauge-covar}.
The proof is based on regularity structures and involves several steps.
In Section~\ref{sec:lattice_dynamics} we derive the dynamics for $A^{(N)}$ on the lattice.
The main idea is to work directly on the Lie algebra $\mfq$ by pulling back the Riemannian structure of $G$ to $\mfg$ via the derivative of the exponential map.
In Sections~\ref{sec:DiscreteRS} and~\ref{sec:Aeps}, we show local well-posedness of $A^{(N)}$ uniform in $N\geq1$ by means of discrete regularity structures. 
The main work in Section~\ref{sec:Aeps} is to show moment bounds on discrete models and that there is a \textit{finite} mass renormalisation arising from the lattice approximations.
We emphasise that the equation for $A^{(N)}$ is non-polynomial for finite $N$ with a multiplicative noise
-- this forces us to work with a substantially larger regularity structure than the one considered in~\cite[Sec.~6]{CCHS_2D}, see Section~\ref{sec:tr-reg}.
In Section~\ref{sec:SHE} we analyse the stochastic heat equation, which is the roughest part of $\SYM$.

In Section~\ref{sec:diagonal_argument} we use a diagonal argument to show that $A^{(N)}$ (along subsequences)
converges to $\SYM(C,\cdot)$ for \textit{some} operator $C\in L_G(\mfg^2,\mfg^2)$.
Finally, in Section~\ref{sec:gauge-covar} we show that $C=\bar C$ by arguing that $\bar C$ is the unique operator in $L_G(\mfg^2,\mfg^2)$ for which $\SYM(C,\cdot)$ is gauge covariant.
This final point is of independent interest,
resolving another problem left open in~\cite{CCHS_2D}, and can be summarised in the following theorem (see Theorem~\ref{thm:A_tilde_A} for a more precise statement).
\begin{theorem}[Uniqueness of gauge-covariant constant]\label{thm:C_unique}
Let $C\in L_G(\mfg^2,\mfg^2)$ for which $C\neq \bar C$.
There exists a loop $\ell\in\CC^\infty(S^1,\T^2)$ with the following property.
For all $t>0$ sufficiently small, there exists $g\in \CC^\infty(\T^2,G)$ such that
\begin{equ}\label{eq:W_diff_lower}
|\E W_\ell(\SYM_t(C,0)) - \E W_\ell(\SYM_t(C,0^{g}))| \gtrsim t^2\;,
\end{equ}
where
$W_\ell\colon\Omega_{\alpha}^1\sqcup\{\skull\} \to \C$  is the Wilson loop defined by $W_\ell(\omega) = \Trace\hol(\omega,\ell)$ for $\omega\in\Omega^1_{\alpha}$ and $W_\ell(\skull)=0$.
\end{theorem}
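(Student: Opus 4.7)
The plan is to isolate the obstruction coming from $D \eqdef C - \bar C \neq 0$ via a quantitative short-time expansion of the Wilson loop expectation. Since $\SYM_t(\bar C, \cdot)$ is gauge-covariant (see \cite[Thm.~2.9]{CCHS_2D}) and $W_\ell$ is gauge-invariant, one has $\E W_\ell(\SYM_t(\bar C, 0)) = \E W_\ell(\SYM_t(\bar C, 0^g))$ for every smooth $g$. Adding and subtracting these equal quantities, the left-hand side of~\eqref{eq:W_diff_lower} becomes the difference of two ``excess'' terms, each measuring the deviation of $\SYM_t(C, a)$ from $\SYM_t(\bar C, a)$ (evaluated against $W_\ell$), respectively at $a = 0$ and $a = 0^g$. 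The proof thus reduces to showing that this excess depends on $a$ in a non-gauge-invariant way, with quantitative rate $t^2$.

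To expand each excess I would apply the Euler-type estimate for the singular SPDE~\eqref{eq:SYM_moll} announced in the abstract. Writing $A^{C}_\cdot = \SYM(C, a)$ and $A^{\bar C}_\cdot = \SYM(\bar C, a)$, driven by the same noise, the difference $\delta A_t \eqdef A^{C}_t - A^{\bar C}_t$ satisfies a perturbed parabolic equation with driver $D A^{\bar C}_s$ and vanishing initial condition, and a Duhamel expansion gives
\begin{equation*}
\delta A_t = \int_0^t P_{t-s}\, D A^{\bar C}_s\, \mrd s + r_t^{(a)}\,, \qquad \|r_t^{(a)}\| = o(t)
\end{equation*}
in a suitable topology on $\Omega^1_\alpha$. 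Combining this with the Euler-type expansion of the Young ODE defining $\hol(A, \ell)$, I obtain, for each $a$, an asymptotic formula of the form
\begin{equation*}
\E W_\ell(\SYM_t(C,a)) - \E W_\ell(\SYM_t(\bar C,a)) = t\, \E\bigl\langle DW_\ell(A^{\bar C}_t),\, D A^{\bar C}_t\bigr\rangle + o(t^2).
\end{equation*}
Substituting the short-time approximations $A^{\bar C}_s(0) \approx \Psi_s$ (mean-zero, driven by the SHE) for $a = 0$, and $A^{\bar C}_s(0^g) \approx P_s 0^g + \Psi_s$ for $a = 0^g$, and taking the difference of the two resulting asymptotics, the purely $\Psi$-quadratic pieces cancel (again by gauge covariance of $\bar C$), leaving a deterministic leading contribution of the form $c(\ell, g, D)\, t^2$ coming from the interaction of the flat connection $-P_s (\mrd g) g^{-1}$ with the mass mismatch $D$.

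Finally I would verify that $c(\ell, g, D)$ is nonzero for appropriate data. Since $D \neq 0$, fix $X \in \mfg^2$ with $DX \neq 0$, and construct a smooth $g$ so that $-(\mrd g) g^{-1}$ equals $X$ at a chosen point $x_0$; for a small axis-parallel loop $\ell$ based at $x_0$, a direct computation of the leading Wilson-loop variation on the flat connection yields $c(\ell, g, D) \neq 0$, establishing the required lower bound. \emph{The main obstacle is the Euler expansion of $\delta A_t$}: because $A^{\bar C}$ lives in a distributional space, the remainder $r_t^{(a)}$ must be controlled in a norm strong enough to be fed into the Young ODE for holonomy along $\ell$. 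This is exactly the content of the Euler-type estimates for singular SPDEs and for Young ODEs announced in the abstract; the remainder of the argument is a careful tracking of leading-order terms, with the gauge covariance of $\bar C$ producing the cancellations that guarantee the $o(t^2)$ remainder.
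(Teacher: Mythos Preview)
Your architecture is right --- Euler estimates for the SPDE plus a Young-ODE expansion of the holonomy --- but two steps do not go through as written.

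First, you invoke $\E W_\ell(\SYM_t(\bar C,0)) = \E W_\ell(\SYM_t(\bar C,0^g))$ as an identity. The paper explicitly warns (see the discussion around~\eqref{eq:best_bound_bar_C}) that this is \emph{not known}: gauge covariance of $\bar C$ is established only via a pathwise coupling up to a random blow-up time, and the two processes may blow up at different times, so one only has that the difference is $O(t^M)$ for every $M$. This is salvageable (any $M>2$ suffices), but more to the point the paper does not compare $\SYM(C,a)$ with $\SYM(\bar C,a)$ at all. Instead it uses the coupling of \cite[Sec.~7]{CCHS_2D} to show that $\SYM_t(C,0^{g})$ is equal in law, modulo $O(t^M)$ blow-up errors, to a process $\tilde A$ solving the \emph{same} equation as $\SYM(C,0)$ with the \emph{same} initial condition $0$, but with an additional smooth deterministic forcing $(C-\bar C)\,\mrd g g^{-1}$. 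The Euler estimate then compares two processes differing only by this forcing, which is what makes the remainder control in Proposition~\ref{prop:Euler_A} tractable.

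Second, and this is the fatal gap, your final step ``a direct computation of the leading Wilson-loop variation on the flat connection yields $c(\ell,g,D)\neq 0$'' fails whenever $\mfg$ is semi-simple, i.e.\ whenever $G$ has no torus factor. The first-order variation of $W_\ell=\Trace\hol(\cdot,\ell)$ in any direction lands in $\Trace\mfg$, and $\Trace\mfg=\Trace[\mfg,\mfg]=0$ for semi-simple $\mfg$. The paper therefore pushes the holonomy expansion to second order (Lemma~\ref{lem:a_tilde_a_hol}), obtaining a deterministic term $\tfrac{t^2}{2}\Trace[(c_1\gamma(1))^2]$ together with a random term $\bar L_t(h(0))$ that is \emph{linear} in $h(0)=(\mrd g)g^{-1}$ but of unknown sign and potentially also of order $t^2$. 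To beat the linear term, the paper constructs \emph{two} gauge transformations $u,\tilde u$ whose $h(0)$ differ by a scalar factor $4$, so the linear piece scales by $4$ and the quadratic piece by $16$; a dichotomy argument then selects the winner. Producing such $u,\tilde u$ with $c_1\gamma(1)\neq 0$ under the periodicity constraint on $\T^2$ forces $\ell$ to be the non-contractible loop $x\mapsto(x,0)$ (not a small loop at a basepoint) and, in the non-Abelian case, requires the Chow--Rashevskii theorem (Lemma~\ref{lem:curve_selection}). None of this structure is present in your outline.
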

Here $\hol(\omega,\ell)\in G$ is the holonomy of $\omega$ along $\ell$, see \cite[Sec.~3.5]{CCHS_2D} or~\eqref{eq:hol_def} below.
If $G$ is Abelian, a similar result is much simpler to show~\cite[Sec.~I.E]{Chevyrev22YM}
(see also Remark \ref{rem:Abelian_vs_simply_conn} that contrasts the Abelian and simply connected cases).

The lower bound $t^2$ in~\eqref{eq:W_diff_lower} is important as it rules out differing explosion times as the source of non-equality between $\E W_\ell(\SYM_t(C,0))$ and $\E W_\ell(\SYM_t(C,0^{g}))$ for small $t$.
That is, although $\SYM(\bar C,\cdot)$ is `gauge-covariant' in a suitable sense, $\SYM(\bar C,0^{g})$ and $\SYM(\bar C,0)$ may still explode at finite (and different!) times,
so we do not know if $\E W_\ell(\SYM_t(\bar C,0)) = \E W_\ell(\SYM_t(\bar C,0^{g}))$ holds for any $t>0$.
However, the probability that either process explodes before time $t$ is of order $O(t^M)$ for any $M>0$,
so one does have
\begin{equ}\label{eq:best_bound_bar_C}
|\E W_\ell(\SYM_t(\bar C,0)) - \E W_\ell(\SYM_t(\bar C,0^{g}))|  = O(t^M)\;.
\end{equ}
Therefore,~\eqref{eq:W_diff_lower} shows that the characterising feature of $\bar C$ is that one can take $M>2$ in~\eqref{eq:best_bound_bar_C} while every other choice of $C$ at best allows $M=2$.

As a corollary of Theorem~\ref{thm:C_unique}, we show that $\bar C$ is the unique operator for which `generative probability measures' (in the language of~\cite{CCHS_2D}) give rise to a Markov process on gauge orbits, see Corollary~\ref{cor:unique_C}.

Our next main result is on the invariant measure of $\SYM(\bar C,\cdot)$ projected to gauge orbits.
To state this result, let $X$ be the time homogenous Markov process
taking values in $\mfO_\alpha$ from~\cite[Thm.~2.13(ii)]{CCHS_2D}
(one should think of $X_t=[\SYM_t(\bar C,\cdot)]$, although this is only formal because $\SYM(\bar C,\cdot)$ might blow-up before $X$ does).
Letting $X^x$ denote the law of $X$ with initial condition $x\in\mfO_\alpha$,
recall that $X^x$ has a.s. continuous sample paths with values in $\hat\mfO_\alpha \eqdef \mfO_\alpha\sqcup \{\skull\}$
and its law is $\pi_* \nu$, where 
$\nu$ is a generative probability measure with initial condition $x$.

\begin{theorem}[Invariant measure]\label{thm:invar_measure}
\begin{enumerate}
\item There exists a unique probability measure $\mu$ on $\mfO_\alpha$ such that its finite dimensional distributions, seen as a process indexed by loops,
agree with those of the YM measure 
for the trivial principal $G$-bundle over $\T^2$ in the sense of \cite{Levy06}.

\item $\mu$ is the unique invariant probability measure on $\mfO_\alpha$ of $X$. Furthermore, $\mu$ has full support in $\mfO_\alpha$ and $X$ is reversible with respect to $\mu$.
\end{enumerate}
\end{theorem}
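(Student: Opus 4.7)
The strategy is to combine the universality of the Langevin dynamic (Theorem~\ref{thm:discrete_dynamics}) with the manifest invariance of the lattice Gibbs measure $\mu_N$ (Proposition~\ref{prop:gauge_covar}(i)), and to pass to the limit via Bourgain's invariant measure method. First I would choose the approximating action to be the Villain heat-kernel action of Section~\ref{subsubsec:example_actions}, since for this choice the Wilson-loop marginals of $\mu_N$ converge to those of L\'evy's YM measure on the trivial principal bundle \cite{Levy06}. I would then prove uniform-in-$N$ moment bounds
\begin{equ}
\sup_{N\ge 1} \int_{Q_N} \inf_{h \in \mfG_N}\bigl|(\log U)^h\bigr|_{N;\alpha}^p\, \mrd \mu_N(U) < \infty \qquad \text{for all } p < \infty,
\end{equ}
using lattice gauge-fixing along the lines of~\cite{Chevyrev19YM} combined with rough-path Young-type estimates on Wilson loops (the latter being available in 2D by exact solvability and the Markov property under subdivision). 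These bounds yield tightness of the pushforwards $\pi_*\mu_N$ in $\mfO_\alpha$; any weak limit has Wilson-loop finite-dimensional distributions equal to those of the YM measure and, since holonomy is a well-defined measurable function on $\mfO_\alpha$ which separates the orbits arising from regular connections (cf.~\cite[Sec.~3.5-3.6]{CCHS_2D}), this characterises the limit $\mu$ uniquely.

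For invariance of $\mu$ under $X$, fix a bounded continuous $F\colon \mfO_\alpha \to \R$ and $t > 0$. Proposition~\ref{prop:gauge_covar}(i)--(ii) gives $\pi_* \mu_N$-invariance of the orbit-valued lattice dynamic $[U^{(N)}]$, so
\begin{equ}
\int_{\mfO_\alpha} F \,\mrd(\pi_*\mu_N) \;=\; \E_{\mu_N}\bigl[F([U^{(N)}_t])\bigr] \qquad \text{for every } N,\,t.
\end{equ}
I would then pass to the limit $N\to\infty$: the left-hand side converges to $\int F\,\mrd\mu$ by construction of $\mu$, while Theorem~\ref{thm:discrete_dynamics} together with the moment bounds (which, by Fatou, transfer from $\mu_N$ to $\mu$) identifies the limit of the right-hand side with $\E_\mu[F(X_t)\mathbf{1}_{\{t < T^*\}}]$. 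The Bourgain trick then converts this identity into the statement that $T^*=\infty$ almost surely under $\mu$, because any $\mu$-mass on $\{\skull\}$ at time $t$ would not be visible on the lattice side; iterating the resulting non-explosive invariance across arbitrary time intervals yields global invariance of $\mu$ for $X$.

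For the uniqueness/support/reversibility portion of part~(2), I would argue as follows. Reversibility of $X$ under $\mu$ is inherited from reversibility of $\tilde U^{(N)}$ under $\mu_N$ (standard for the Langevin dynamic of a symmetric potential) via the same convergence scheme used for invariance, applied to two-time expectations. Full support of $\mu$ on $\mfO_\alpha$ follows from strict positivity of the density $e^{-\CS_N}$ on $Q_N$, combined with approximation of any open set in $\mfO_\alpha$ by cylinder events in Wilson loops. Uniqueness of the invariant measure is then standard once one has full support together with an irreducibility/Feller property for $X$, which can be extracted from the continuity of the solution map $\SYM(\bar C,\cdot)$ proved in \cite{CCHS_2D} and the non-degeneracy of the noise driving the stochastic heat equation in Section~\ref{sec:SHE}.

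\textbf{Main obstacle.} The hardest step will be the uniform-in-$N$ moment bounds on $\mu_N$ in the $|\cdot|_{N;\alpha}$ norm. Wilson-loop observables capture the gauge-invariant content of the YM measure, whereas $|\cdot|_{N;\alpha}$ is gauge-covariant and requires the selection of a good representative within each orbit. The existing axis-parallel bounds of~\cite{Chevyrev19YM} must be both strengthened (to cover all directional increments uniformly, which is why the semi-norm $|\cdot|_{N;\alpha;\rho}$ was introduced in Section~\ref{sec:norms}) and upgraded to control of all $L^p$-moments. Achieving this without Corollary~\ref{cor:decomposition}, which is downstream of the theorem, is the essential analytic work underlying the Bourgain scheme.
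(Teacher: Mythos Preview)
Your overall strategy is essentially that of the paper: choose the Villain action, obtain uniform-in-$N$ gauge-fixed moment bounds on $\mu_N$ (this is Theorem~\ref{thm:YM_gauge_fixed}, and you correctly identify it as the main obstacle), and run a Bourgain-type argument to pass the lattice invariance to the continuum via Theorem~\ref{thm:discrete_dynamics}. However, several steps in your sketch are either underspecified or incorrect in a way that the paper has to work around carefully.

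First, your full-support argument is a genuine gap: strict positivity of $e^{-\CS_N}$ on $Q_N$ combined with approximation by Wilson-loop cylinders does \emph{not} yield full support of the limit in $\mfO_\alpha$, since the latter contains orbits not in the closure of any fixed lattice and Wilson loops do not generate the topology. The paper instead uses the support theorem for singular SPDEs from \cite{HS22} applied to $\SYM(\bar C,\cdot)$, which gives full support of $X^x$ in $\{Y\in(\mfO_\alpha)^\sol:Y_0=x\}$ directly; invariance then forces $\mu$ to have full support. Second, your uniqueness argument needs the \emph{strong} Feller property of $X$ on $\overline\mfO_\alpha$, which is not a consequence of mere continuity of the solution map---the paper obtains it from \cite{HM16} for $\SYM(\bar C,\cdot)$ and then checks (Proposition~\ref{prop:X_strong_Feller}) that it survives the restarting procedure defining $X$. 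Third, in the Bourgain step you should be aware that $[U^{(N)}]$ does not preserve $\pi_*\mu_N$ exactly (it is stopped at $\varpi$); one must work with $[\check U^{(N)}]$ and couple to $X$ via its restarting structure, which is what Lemma~\ref{lem:unif_bound_M} and the stopping times $\sigma_j^h$ in the proof accomplish. Finally, you do not address the case of non-simply-connected $G$ at all; the paper handles this separately via the structure theorem $G\simeq(\T^n\times L)/Z$, proving the result first for the product $\T^n\times L$ and then pushing forward along the finite covering $p$.
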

The proof of Theorem~\ref{thm:invar_measure}
is given in Section~\ref{sec:invar_measure}
and employs Bourgain's invariant measure argument, lattice gauge-fixing, and ergodicity properties of $X$ (the latter following from recent ergodicity results in singular SPDE~\cite{HM16,HS22}).
The first part of Theorem~\ref{thm:invar_measure} (for simply connected $G$) is essentially due to~\cite{Chevyrev19YM},
so the main new content is in the second part.
We prove the theorem first for the case that $G$ is simply connected, so that every principal $G$-bundle is trivial,
and then deduce the general result by an appropriate decomposition and projection.

\begin{remark}
We make no claims on the invariant measure of $\SYM(\bar C,\cdot)$ itself (in fact, without modifications, $\SYM(\bar C,\cdot)$ does not have an invariant probability measure already in the Abelian case, see~\cite[Rem.~1.17]{Chevyrev22YM}).
We do not even know if $\SYM(\bar C,\cdot)$ has global-in-time solutions.
\end{remark}
\begin{remark}\label{rem:CCHS22_compare}
The construction of the Markov process $X$ is one of the main results of~\cite{CCHS_2D}
and is shown therein through
a coupling argument with time-dependent gauge-transformations.
We believe our results, with further technical effort,\footnote{Specifically, one would need to improve the 
$O(1)$ bounds in Sections~\ref{sec:renorm_constants} and~\ref{subsec:some_C_proof} to convergences.}
can recover independently this construction via lattice approximations;
specifically, one should be able to prove the existence and uniqueness (of pushforwards) of
generative probability measures for \textit{some} mass renormalisation $C\in L_G(\mfg,\mfg)$
(see~\cite[Thm.~2.13]{CCHS_2D}).
We do, however, crucially rely on the coupling argument of~\cite[Sec.~7]{CCHS_2D} in Section~\ref{sec:gauge-covar} to prove Theorem~\ref{thm:C_unique}.
which is used to identify the continuum limit of our lattice approximations and, perhaps more importantly,
to show that the Markov process is entirely canonical in that $\SYM$ does not yield
a Markov process on gauge orbits for any other mass renormalisation in $L_G(\mfg^2,\mfg^2)$.
\end{remark}
Finally, we turn to several consequences of Theorem~\ref{thm:invar_measure}.

\begin{corollary}[Long-time existence]\label{cor:long_time}
The Markov process $X^x$ does not blow up for every initial condition $x\in \mfO_\alpha$.
\end{corollary}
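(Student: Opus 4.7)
The plan is a two-step argument based on Theorem~\ref{thm:invar_measure}. Let $T^* \eqdef \inf\{t\geq 0 : X^x_t = \skull\}$ denote the blow-up time; the goal is to show $\PP(T^* = \infty) = 1$ for every $x\in\mfO_\alpha$. First, since $\mu$ is a probability measure on $\mfO_\alpha$ we have $\mu(\{\skull\}) = 0$, so invariance of $\mu$ under the Markov semigroup $P_t$ gives, for every $t\geq 0$,
\begin{equ}
0 \;=\; \mu(\{\skull\}) \;=\; (\mu P_t)(\{\skull\}) \;=\; \int P_t(x,\{\skull\})\,\mu(\mrd x)\;,
\end{equ}
whence $P_t(x,\{\skull\}) = 0$ for $\mu$-a.e. $x$. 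Taking a countable union over rational $t$, and using that $\skull$ is absorbing together with a.s. continuity of sample paths in $\hat\mfO_\alpha$ (so that $\{X^x_t = \skull\} = \{T^*\leq t\}$), yields a Borel set $G\subseteq\mfO_\alpha$ with $\mu(G)=1$ on which $T^* = \infty$ almost surely.

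In the second step, I would extend the conclusion from $G$ to all of $\mfO_\alpha$. By Theorem~\ref{thm:invar_measure}, $\mu$ has full support in $\mfO_\alpha$, so $G$ is dense. Since $\mfO_\alpha$ is open in $\hat\mfO_\alpha$ (the metric $\hat\rho$ introduced in Section~\ref{subsec:main_results} is tailored so that approach to blow-up corresponds to convergence to $\skull$), the indicator $\one_{\mfO_\alpha}$ is lower semicontinuous on $\hat\mfO_\alpha$ and can be written as a pointwise increasing limit of continuous bounded functions $f_n\colon\hat\mfO_\alpha\to[0,1]$, for instance $f_n(x)=\min(1,n(1+\hat\rho(x,o))^{-1})$ for an arbitrary $o\in\mfO_\alpha$. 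By the Feller property of $X$---continuity of $x\mapsto P_tf(x)$ for every bounded continuous $f$, which is inherited from continuous dependence of $\SYM(\bar C,\cdot)$ on its initial data on time intervals strictly before blow-up---each $P_tf_n$ is continuous, hence $x\mapsto P_t(x,\mfO_\alpha) = \sup_n P_tf_n(x)$ is lower semicontinuous. Being bounded by $1$ and equal to $1$ on the dense set $G$, it must equal $1$ on all of $\mfO_\alpha$. A final countable union over rational $t$ yields $\PP(T^* = \infty) = 1$ for every $x\in\mfO_\alpha$.

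The main obstacle is justifying the Feller property for $X$ on the augmented space $\hat\mfO_\alpha$: in our singular SPDE setting, solutions from nearby initial data may have substantially different blow-up times, so continuity of the semigroup near $\skull$ is delicate. Fortunately, what is actually needed is only lower semicontinuity of $x\mapsto P_t(x,\mfO_\alpha)$, which is strictly weaker than continuity of the transition kernel and follows from weak Feller via the monotone approximation described above. Weak Feller in turn is a by-product of the construction of $X$ from $\SYM(\bar C,\cdot)$ in~\cite{CCHS_2D}, where continuous dependence on initial data on any compact time interval contained in $[0,T^*)$ is established; blow-up is then detected in $\hat\mfO_\alpha$ precisely via divergence of $\rho(\cdot,o)$, by construction of the metric $\hat\rho$.
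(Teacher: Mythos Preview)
Your first step is fine, but the second step contains a genuine logical gap. You argue that $x\mapsto P_t(x,\mfO_\alpha)$ is lower semicontinuous and equals $1$ on a dense set $G$, then conclude it equals $1$ everywhere. This inference is false: lower semicontinuity gives $\phi(x)\leq\liminf_{y\to x}\phi(y)$, so from $\phi=1$ on a dense set you only recover the trivial bound $\phi\leq 1$. (A concrete counterexample is $\one_{\R\setminus\{0\}}$ on $\R$, which is lower semicontinuous, bounded by $1$, and equal to $1$ on a dense set, yet vanishes at $0$.) What you would need is \emph{upper} semicontinuity of $x\mapsto P_t(x,\mfO_\alpha)$, but weak Feller gives lower semicontinuity for open sets and upper semicontinuity for closed sets---exactly the wrong direction here.

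This is precisely why the paper invokes the \emph{strong} Feller property (Proposition~\ref{prop:X_strong_Feller}), which is proved separately using the results of~\cite{HM16}. Under strong Feller, $x\mapsto P_t(x,\{\skull\})$ is genuinely continuous on $\overline\mfO_\alpha$; being nonnegative and vanishing on the dense set $G$, it vanishes identically, and the conclusion follows. Your weak-Feller route cannot be repaired without an additional ingredient of this strength. (As a side remark, your explicit $f_n(x)=\min(1,n(1+\hat\rho(x,o))^{-1})$ does not approximate $\one_{\mfO_\alpha}$: the metric $\hat\rho$ is bounded on $\hat\mfO_\alpha$, so $f_n(\skull)\to 1$ rather than $0$. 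This is fixable, but the semicontinuity issue above is not.)
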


\begin{proof}
Since $\mu$ has full support in $\mfO_\alpha$ and is invariant for $X$,
the conclusion follows from the strong Feller property of $X$ (see Proposition~\ref{prop:X_strong_Feller} below).
\end{proof}
We give the proofs of the next two corollaries in Section~\ref{subsec:proofs_cor}.
The first provides a decomposition of the YM measure into the GFF plus an almost Lipschitz remainder.
\begin{definition}\label{def:GFF}
A Gaussian free field (GFF) is a random Gaussian  $\mfg$-valued distributional $1$-form $\Psi=(\Psi_1,\Psi_2)$ such that $\scal{\Psi_i,1}=0$ a.s., $\Psi_1,\Psi_2$ are i.i.d, and
$
\E\scal{\Psi_i,\phi}\scal{\Psi_i,\bar\phi} = \scal{\phi,(-\Delta)^{-1}\bar\phi}
$
for all $\phi,\bar\phi\in\CC^\infty(\T^2,\mfg)$ with $\int \phi = \int\bar\phi=0$.
\end{definition}

\begin{corollary}[Gauge-fixed decomposition]\label{cor:decomposition}
There exist, on the same probability space, random distributional $1$-forms $\Psi,B$ such that $\Psi$ is a GFF
and $|B|_{\CC^{1-\kappa}}$ has moments of all order for each $\kappa>0$,
and $\mbox{Law}([\Psi+B]) = \mu$.
\end{corollary}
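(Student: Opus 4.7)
The strategy is a Da~Prato--Debussche decomposition at stationarity. Informally, I want to represent a $\mu$-sample $[A]$ as $[\Psi+B]$, where $\Psi$ is the time-zero value of a two-sided stationary solution of the stochastic heat equation (hence a GFF in the sense of Definition~\ref{def:GFF}) and $B$ is the residual, which satisfies a drift equation with no white-noise forcing and therefore enjoys a full degree better of parabolic regularity than $\Psi$.

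Concretely, I would construct on a common probability space carrying a space-time white noise $\xi$ a two-sided stationary process $(A(t))_{t\in\R}$ with values in $\Omega^1_\alpha$, solving the gauge-covariant renormalised SYM equation in a suitable gauge, and satisfying $[A(t)]\sim\mu$ for all $t$. Since $\SYM(\bar C,\cdot)$ is not known to exist globally in $\Omega^1_\alpha$, this is naturally done through the lattice: start $U^{(N)}$ at its invariant measure $\mu_N$, lift to a gauge-fixed lattice $1$-form (for example via axial gauge as in~\cite{Chevyrev19YM}), and pass to the limit using Theorem~\ref{thm:discrete_dynamics} together with the tightness and moment estimates on $\mu_N$ that are already developed in the proof of Theorem~\ref{thm:invar_measure}. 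On the same probability space, let $(\Psi(t))_{t\in\R}$ be the two-sided stationary SHE solution driven by the same $\xi$, so that $\Psi(t)$ is a GFF for every $t$. Setting $B(t) := A(t)-\Psi(t)$, the white-noise term cancels and $B$ solves a drift equation whose right-hand side is the renormalised nonlinearity $F(\Psi+B)+\bar C(\Psi+B)$ appearing in SYM. Parabolic Schauder-type estimates applied to this equation, combined with the BPHZ-type bounds on models developed in Section~\ref{sec:Aeps} and the moment control on $A(t)$ at stationarity, should yield $B(t)\in\CC^{1-\kappa}$ with moments of all orders, uniformly in $t$. Taking $\Psi:=\Psi(0)$ and $B:=B(0)$ then produces the desired pair, and $[\Psi+B]=[A(0)]\sim\mu$ by construction.

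\paragraph{Main obstacle.} The crux is obtaining the $\CC^{1-\kappa}$ moment bound on $B$ uniformly in the lattice scale $N$. The nonlinear terms $A\mrd A+A^3$ are only meaningful after renormalisation, so the residual equation for $B$ must be interpreted through the reconstruction of a modelled distribution, and deriving the needed bound requires marrying the discrete model estimates of Section~\ref{sec:Aeps} (uniform in $\e=2^{-N}$) with stationary moment bounds on $\mu_N$ from the proof of Theorem~\ref{thm:invar_measure}. If constructing the stationary continuum process $A$ directly is too delicate, a cleaner fallback is to avoid stationarity altogether: start the lattice dynamic at a large negative time $-T$ from deterministic initial data, decompose $A^{(N),T}(0)=\Psi^{(N),T}(0)+B^{(N),T}(0)$ via the same splitting, and let first $T\to\infty$ (using ergodicity of the lattice chain, so that $[A^{(N),T}(0)]$ converges to the law of $\mu_N$ on gauge orbits) and then $N\to\infty$ (by Theorem~\ref{thm:discrete_dynamics}). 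The stochastic convolution part $\Psi^{(N),T}(0)$ then tends to a stationary GFF, while $B^{(N),T}(0)$ tends to the desired $B$, and the main work is still in obtaining the uniform $\CC^{1-\kappa}$ moment bound on the residual.
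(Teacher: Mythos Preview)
Your Da~Prato--Debussche instinct is right, but the central obstacle is not the one you identify. There is no stationary $\Omega^1_\alpha$-valued lift to decompose: $\SYM(\bar C,\cdot)$ has no invariant probability measure on $\Omega^1_\alpha$ (already in the Abelian case), and the Markov process $X$ with invariant law $\mu$ lives on orbits $\mfO_\alpha$ and is built by \emph{restarting} SYM at stopping times with gauge-equivalent jumps. Your lattice route inherits the same defect: gauge-fixing a $\mu_N$-sample once at $t=0$ gives good $|A^{(N)}(0)|_{N;\alpha}$, but the DeTurck dynamic does not preserve any gauge condition, so $|A^{(N)}(t)|_{N;\alpha}$ has no uniform-in-$t$ control even though $[A^{(N)}(t)]$ does. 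Your fallback has the identical issue, since ergodicity is a statement about orbits and $A^{(N),T}(0)$ can sit at a representative of arbitrarily large norm within an orbit close to $\pi_*\mu_N$.

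The paper's resolution is to build a c\`adl\`ag process $A$ on $[0,1]$ that solves SYM piecewise between stopping times $0=\sigma_0<\sigma_1<\cdots$, jumping at each $\sigma_j$ to a gauge-equivalent representative of near-minimal $|\cdot|_\alpha$-norm via a measurable selection on $\mfO_\alpha$; then $[A]$ has the law of $X$ and $[A(1)]\sim\mu$. The stopping times are chosen (through the size of $|A(\sigma_j-)|_\alpha$ and of the model) so that the restart data $|A(\sigma_{J-1})|_\alpha$ and the inverse gap $(1-\sigma_{J-1})^{-1}$ both have all moments, where $\sigma_J$ is the last restart before time $1$; one steps back to $\sigma_{J-1}$ rather than $\sigma_J$ precisely because $(1-\sigma_J)^{-1}$ is not known to have moments. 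On $[\sigma_{J-1},1]$ one then performs exactly your split: with $\Phi$ the SHE started from an independent GFF at time $0$, write the SYM solution from $A(\sigma_{J-1})$ as $\Phi+B$ and evaluate at $t=1$. The $\CC^{1-\kappa}$ moment bound on $B(1)$ follows from the standard remainder estimates in the regularity-structure fixed point, fed the moment control on those quantities.
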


\begin{remark}\label{rem:Che19_compare}
By the same (and simpler) arguments as in~\cite[Sec.~4.2]{CCHS_2D},
a GFF $\Psi$ admits a modification with $|\Psi|_\alpha < \infty$, and thus $[\Psi+B]$ indeed makes sense as an $\mfO_\alpha$-valued random variable.

Furthermore, let  $|\cdot|_{\bullet}$ denote any norm from~\cite[Sec.~3]{CCHS_2D}, e.g.
$|\cdot|_{\v\alpha}$ or $|\cdot|_{\gr\alpha}$ (note that the norms $|\cdot|_{\bullet}$ are isotropic, i.e.
 not restricted to axis-parallel lines,
in particular $|\cdot|_{\gr\alpha}$ in~\cite{CCHS_2D} denotes
a stronger norm than $|\cdot|_{\gr\alpha}$ here).
Then $\E|\Psi|^p_{\bullet} < \infty$ for all $p\geq 1$.
Therefore, Corollary~\ref{cor:decomposition} shows that $\mu$ admits a gauge-fixed representation $A=\Psi+B$ with strong regularity properties,
namely with $\E |A|^p_{\bullet} < \infty$ for all $p\geq 1$.
This significantly improves~\cite[Thm.~1.1]{Chevyrev19YM} which only shows that there exists $A$ with $|A|_\alpha < \infty$ a.s. and $\mbox{Law}([A])= \mu$.
\end{remark}

\begin{remark}
If $\mu$ were the invariant measure of $\SYM(\bar C,\cdot)$ itself, then
Corollary~\ref{cor:decomposition} would be a straightforward consequence of decompositions in regularity structures.
% (see the proof of~\cite[Thm.~2.4]{CCHS_2D}).
However, because the Markov process with invariant measure $\mu$ is defined by restarting $\SYM(\bar C,\cdot)$ at appropriate stopping times, care is needed in applying this decomposition.
\end{remark}
The final corollary that we present is a universality result for discrete approximations of the YM measure.
Since we are working only with trivial principal bundles, stating the result for non-simply connected $G$ requires some preparation.

Recall that $G$ is connected and compact. By the structure Theorem~\ref{thm:structure_compact_groups},  there exists $n\geq 0$ and a simply connected $L$ such that $G\simeq \T^n\times L / Z$ where $Z$ is a finite subgroup of the centre of $\T^n\times L$.
Denote $H\eqdef \T^n\times L$
and let $p\colon H\to G\simeq H/Z$ be the canonical projection.  
(We recommend the reader first considers the case that $G$ is simply connected, which is already interesting and in which case $n=0$ and $G\simeq H \simeq L$.)

We consider a family of probability measures $\mu_N$ on gauge fields $G^{\obonds_N}$ as follows.
Let $\mu_{N,\T^n}$ be the discrete YM probability measure on $(\T^n)^{\obonds_N}$
associated to the trivial principal $\T^n$-bundle over $\T^2$ as in~\cite[Sec.~2.3]{Levy06}.
We let $\mu_{N,L}$ be any probability measure on $L^{\obonds_N}$ as in~\eqref{eq:mu_N} with $S_{N,L}\colon L\to \R$ satisfying Assumption~\ref{assump:R} and
Assumption~\ref{assum:P_N} below.
Define $\mu_{N,H}=\mu_{N,\T^n}\times \mu_{N,L}$, understood canonically as a probability measure on $H^{\obonds}$.
Finally, let $\mu_N \eqdef p_* \mu_{N,H}$
be the pushforward of $\mu_{N,H}$ under $p\colon H^{\obonds} \to G^{\obonds}$.

\begin{remark}
The condition that $\mu_{N,\T^n}$ is distributed exactly as the Abelian YM measure for the trivial bundle can be significantly relaxed.
One way to do this is to work with families of actions $S_{N,\R^n}\colon \R^n\to \R$
and condition the corresponding plaquette variables to sum to zero, akin to~\cite[Thm.~1]{Levy06}.
Since the interesting component is $\mu_{N,L}$, we do not elaborate on this point.
\end{remark}
We say that a function $f\colon G^k\to \R$ is $\Ad$-invariant if, for all $h,g_1,\ldots, g_k\in G$,
$
f(g_1,\ldots,g_k) = f(hg_1h^{-1},\ldots, hg_kh^{-1})
$.
\begin{definition}\label{def:lattice_loop}
A path $\ell \in \CC([0,1],\T^2)$ is called a \textit{lattice loop} if there exist $N\geq 1$ and $x\in\Lambda_N$ such that $\ell(0)=\ell(1)=x$
and its image is contained in $\cup_{b\in\obonds_N}\iota(b)$ where $\iota(x,y) = \{x+t(y-x)\,:\,t\in[0,1]\} \subset \T^2$ is the natural embedding of $(x,y)\in\obonds_N$ into $\T^2$.
We call $x$ the \textit{base} of $\ell$.
\end{definition}
Observe that, for such $\ell$ and any $U\in Q_N$ and $A\in\Omega_{\gr\alpha}$, the holonomies  $\hol(U,\ell)$ and $\hol(A,\ell)$ are well-defined.

For $\Ad$-invariant $f\colon G^k\to\R$ and $k$-tuple of lattice loops $\ell=(\ell_1,\ldots, \ell_k)$,
define
$f_\ell\colon\Omega_{\gr\alpha}\to \R$ by
\begin{equ}
f_{\ell}(A) = f(\hol(A,\ell_1),\ldots,\hol(A,\ell_k))\;.
\end{equ}
By the same expression, we define $f_\ell\colon Q_N \to \R$, which is well-defined provided $N\geq 1$ is sufficiently large (depending on $\ell$).

Observe that for $A,B\in \Omega^1_\alpha$, the following statements are equivalent:
\begin{enumerate}[label=(\roman*)]
\item $A\sim B$
%\item $f_\ell(A)=f_\ell(B)$ for all $n\geq 1$,
%smooth $\Ad$-invariant functions $f\colon G^n\to \R$, and $n$-tuples $\ell=(\ell_1,\ldots,\ell_n)$ of lattice loops with base $0\in\T^2$,
\item\label{pt:all_f_ell} $f_\ell(A)=f_\ell(B)$ for all $k\geq 1$, continuous $\Ad$-invariant functions $f\colon G^k\to \R$, and $k$-tuples $\ell=(\ell_1,\ldots,\ell_k)$ of lattice loops with the same base. 
\end{enumerate}
In particular, every $f_\ell$ as in~\ref{pt:all_f_ell} descends to a function $f_\ell\colon\mfO_\alpha \to \R$.
A similar statement holds for $U,V\in Q_N$ (provided we restrict to loops supported on $\obonds_N$).
(For the proof of this equivalence for $\Omega^1_{\alpha}$, see~\cite[Prop.~3.35]{CCHS_2D}; for the case of $Q_N$, the proof is similar and simpler.)
The implication (ii) $\Rightarrow$ (i) implies that, for probability measures $\mu,\nu$ on $\mfO_\alpha$, one has $\mu=\nu \Leftrightarrow \mu(f_\ell)=\nu(f_\ell)$ for all $f,\ell$, see \cite[Exercise 7.14.79]{Bogachev07}.

The following corollary can be interpreted as a functional central limit theorem for random connections on $\T^2$.

\begin{corollary}[Universality of the measure]\label{cor:universality}
Let $k\geq 1$, $f\colon G^k\to \R$ a continuous $\Ad$-invariant function, and $\ell$ an $k$-tuple of lattice loops with a common base.
Then, for $\mu_N$ as above,
$
\lim_{N\to\infty}\mu_N(f_\ell) = \mu(f_\ell)
$,
where $\mu$ is the YM measure on $\mfO_\alpha$ from Theorem~\ref{thm:invar_measure}.
\end{corollary}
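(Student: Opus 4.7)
The plan is to deduce the corollary from Theorem~\ref{thm:invar_measure} (uniqueness of the continuum invariant measure $\mu$) and Theorem~\ref{thm:discrete_dynamics} (convergence of the lattice Langevin dynamic) via a Bourgain-type invariant measure argument. I would first treat the simply connected case ($n = 0$, $Z = \{\id\}$, $G = L$) and then reduce the general $G \simeq H/Z$ case to it.

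In the simply connected case, Proposition~\ref{prop:gauge_covar}\ref{pt:invar_measure} ensures that $\mu_N$ is invariant for the undeformed lattice Langevin dynamic $\tilde U^{(N)}$, while parts~\ref{pt:DeTurck_gauge_covar}--\ref{pt:DeTurck_to_DeTurck} show that the projection to gauge orbits of the DeTurck-regularised dynamic $\check U^{(N)}$ admits the pushforward of $\mu_N$ to orbits as an invariant probability measure on $\conf_N/{\sim}$. When $S_N \neq \check S_N$ outside $\mathring W$, one incurs a stopping at the exit time $\varpi$, but since Assumption~\ref{assump:R} is purely local and $\check U^{(N)}$ exits $W$ with vanishing probability on short time intervals (cf.\ Remark~\ref{rem:local_condition}), this causes no obstruction. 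The key analytic input is a uniform-in-$N$ moment bound on the push-forwarded $\mu_N$ in the norm $|\cdot|_\alpha$, in the spirit of~\cite{Chevyrev19YM} for the Villain case but extended to the full class of actions satisfying Assumption~\ref{assump:R}. Together with a compact embedding into a weaker H\"older space, this yields tightness of the orbit-measures on $\mfO_\alpha$.

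Starting the dynamic from $U_0^{(N)} \sim \mu_N$, so that the orbit-valued process is stationary, I would combine tightness with Theorem~\ref{thm:discrete_dynamics} (upgraded from smooth to $\mu_N$-distributed initial data via a short-time restart together with a Skorokhod embedding) to conclude that any subsequential weak limit of the orbit measures is invariant for the continuum Markov process $X$. By the uniqueness statement of Theorem~\ref{thm:invar_measure}, every such limit equals $\mu$, so the full sequence converges weakly to $\mu$. Since the holonomy $\hol(\cdot,\ell)$ along a lattice loop $\ell$ is continuous in $|\cdot|_\alpha$ and $f$ is bounded and $\Ad$-invariant, $f_\ell$ descends to a bounded continuous function on $\mfO_\alpha$, and weak convergence yields $\mu_N(f_\ell) \to \mu(f_\ell)$.

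For the general case, the product structure $\mu_{N,H} = \mu_{N,\T^n} \times \mu_{N,L}$ means the $L$-component is covered by the previous step, while the $\T^n$-component converges to the Abelian YM measure for the trivial $\T^n$-bundle by~\cite{Levy06}, so $\mu_{N,H}$ converges weakly to an $H$-valued YM measure $\mu_H$ on $H$-orbits. Pushing forward via $p \colon H \to G$ and using equivariance $\hol(p(U),\ell) = p(\hol(U,\ell))$ for lattice loops, one identifies $p_* \mu_H$ with $\mu$ by matching loop marginals against L\'evy's construction, yielding $\mu_N(f_\ell) \to \mu(f_\ell)$. The primary obstacle in the whole argument is the uniform moment bound on $\mu_{N,L}$ for actions satisfying only Assumption~\ref{assump:R}; a secondary technical issue is careful handling of the possible finite blow-up of $\SYM(\bar C,\cdot)$ when verifying that the subsequential limit is genuinely invariant for the restarted Markov process $X$ of Theorem~\ref{thm:invar_measure}.
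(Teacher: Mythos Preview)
Your overall strategy matches the paper's: tightness from uniform moment bounds on gauge-fixed $\mu_N$, then identification of subsequential limits as invariant for the continuum process $X$, then uniqueness of the invariant measure from Theorem~\ref{thm:invar_measure}. Two technical points need more care.

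First, the uniform moment bound on $\mu_{N,L}$ is not a consequence of Assumption~\ref{assump:R}. Assumption~\ref{assump:R} is purely local and drives the convergence of the \emph{dynamic} (Theorem~\ref{thm:discrete_dynamics}); the moment estimate on the \emph{measure} (Theorem~\ref{thm:YM_gauge_fixed}) requires the separate probabilistic Assumption~\ref{assum:P_N} on the convolution powers and Gaussian tails of $\P_N$. Your proposal treats the moment bound as the ``primary obstacle'' but attributes it to the wrong hypothesis; the paper's Section~\ref{sec:moment_estimates} is devoted to establishing this bound under Assumption~\ref{assum:P_N} via rough-path Gaussian tail estimates, and this is where both assumptions enter the corollary.

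Second, your handling of $S_N$ that is not globally $C^2$ (e.g.\ Manton) has a gap. Remark~\ref{rem:local_condition} concerns only the \emph{convergence} of the discrete dynamic; it does not address \emph{invariance}. Proposition~\ref{prop:gauge_covar} requires $S_N$ twice differentiable on all of $G$ with $S_N=\check S_N$, so without this the projected law of $\check U^{(N)}$ need not preserve $\pi_*\mu_N$. The paper resolves this by first treating globally $C^2$ actions, then approximating a general $S_N$ by the smoothed family $e^{-S_N^t}\propto e^{-S_N}\star e^{t\Delta}$, choosing $t_N\downarrow 0$ slowly enough that Assumptions~\ref{assump:R} and~\ref{assum:P_N} persist for $\{S_N^{t_N}\}$ while $\mu_N^{t_N}(f_\ell)-\mu_N(f_\ell)\to 0$.

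For general $G=(\T^n\times L)/Z$, the paper works directly with $\mu_N$ on $G$ by noting that Theorem~\ref{thm:YM_gauge_fixed} transfers to $\mu_N$ through the Lipschitz projection $p$, so Lemma~\ref{lem:unif_bound_M} and hence the full invariant-measure argument go through verbatim. Your route via separate convergence of the $\T^n$ and $L$ factors at the level of $H$ is also viable but slightly less direct.
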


\section{Lattice dynamics in the Lie algebra}
\label{sec:lattice_dynamics}

In this section, we derive a discrete SPDE
for $A\eqdef\e^{-1}A^{(N)}$
with $A^{(N)}$ as in Theorem~\ref{thm:discrete_dynamics}
(i.e. $e^{\eps A}=U$ with $U=U^{(N)}$ defined by~\eqref{eq:discrete_U})
which is in a form that ``approximates'' the limiting SPDE.
Recall that $\eps=2^{-N}$.
The derivation will be in {\it arbitrary} $d\ge 1$, although
we will only prove convergence in $d=2$.

This derivation requires several steps.
We first introduce some further notation to do with the lattice.
Some of the notation (e.g. \eqref{e:EWNS}-\eqref{e:def-CEtimes}) may depend on a fixed plane in our $d$-dimensional lattice that is parallel with the $i$th and $j$th axes for some $i\neq j\in [d]$, but we will often omit the dependence on $i,j$ in our notation.

\subsection{More notation}
\label{sec:more_notation}

%When $d=2$, 
For any edge $e=(x,y)\in \obonds_i$, and $j\neq i$, we write
\begin{equ}[e:EWNS]
%e^\east = e + \e_i\;,
%\quad 
%e^\west = e - \e_i\;,&
%\quad 
%e^\north = e + \e_j\;,
%\quad 
%e^\south = e - \e_j\;,
%\\
e^\northeast = (y,y+\e_j)\;,
\;\;
e^\northwest = (x,x+\e_j)\;,
\;\;
e^\southeast = (y-\e_j,y)\;,
\;\;
e^\southwest = (x-\e_j,x)\;.
\end{equ}
%In particular, $\{e^\east ,e^\west,e^\north,e^\south\}  \subset \obonds_i$ and 
One has $\{ e^\northeast ,e^\northwest ,e^\southeast,e^\southwest \}  \subset \obonds_j$,
%The notation is such that if we view $e$ as a ``horizontal'' bond
%%(by reflection and rotation by $\frac\pi2$ if necessary),
%then these bonds are its nearest neighbour ``to the east, west, etc.'', 
see Figure~\ref{fig:CE_times}.
%
%Still in $d=2$, 
Again for $i\neq j\in [d]$, %in addition to $\CE_\pm$
we write
\begin{equ}[e:def-CEtimes]
\CE_\times \eqdef \CE_\times^{(i,j)} \eqdef 
\{\tfrac12 (s_1 \e_i+ s_2 \e_j) : s_1,s_2\in \{\pm 1\}\}\;.
\end{equ}
If we identify a bond $e\in\obonds$ with its midpoint, then
$
\{ e^\northeast ,e^\northwest ,e^\southeast,e^\southwest \}
=\{e+\be:\be\in \CE_\times\}
$.
Finally for $e=(x,x+\e_i)\in \obonds_i$, we write $p\succ e$ if $p=(x,\e,\bar\e)\in \plaq$ with $\e=\e_i$. For a given $e$ there are $2(d-1)$ plaquettes $p$ such that $p\succ e$. 

\begin{figure}[h]
\centering
\begin{tikzpicture}[scale = 1.3]
\node [draw,dot,name=x] at (0,1) {};
\node at (0.15,1.15) {$x$};
\node [draw, dot, name=d00] at (0,0) {};
\node [draw, dot, name=d10] at (1,0) {};
\node [draw, dot, name=d11] at (1,1) {}; \node at (1.15,1.15) {$y$};
\node [draw, dot, name=d02] at (0,2) {};
\node [draw, dot, name=d12] at (1,2) {};

\node [draw,dot,name=L] at (-1,1) {};
\node [draw,dot,name=R] at (2,1) {};

\draw[-{Latex[length=3mm]}] (x) -- (d11)
node[draw=none, midway, above=0.5]{$e$};
\draw[-{Latex[length=3mm]}] (d11) -- (d12) node[draw=none, midway, right=0.5]{$e^\northeast$};
\draw[-{Latex[length=3mm]}] (d02) --(d12) node[draw=none, midway, below=0.5]{};
\draw[-{Latex[length=3mm]}] (x) -- (d02) node[draw=none, midway, left=0.5]{$e^{\northwest}$};

\draw[-{Latex[length=3mm]}] (d10) -- (d11) node[draw=none, midway, right=0.5]{$e^{\southeast}$};
\draw[-{Latex[length=3mm]}] (d00) -- (d10) node[draw=none, midway, above=0.5]{};
\draw[-{Latex[length=3mm]}] (d00) -- (x) node[draw=none, midway, left=0.5]{$e^{\southwest}$};

\draw[-{Latex[length=3mm]}] (L) -- (x) node[draw=none, midway, below=0.5]{};
\draw[-{Latex[length=3mm]}] (d11) -- (R) node[draw=none, midway, below=0.5]{};
\end{tikzpicture}
\qquad
\begin{tikzpicture}[scale = 1.3,baseline = {(0, 0.5)}]
\node [draw,dot,name=x] at (1,1) {};
\node at (1.15,1.15) {$x$};
\node [draw, dot, name=d10] at (1,0) {};
\node [draw, dot, name=d01] at (0,1) {};
\node [draw, dot, name=y] at (1,2) {};
\node at (1.15,1.85) {$y$};
\node [draw, dot, name=d02] at (0,2) {};
\node [draw, dot, name=d13] at (1,3) {};
\node [draw, dot, name=d22] at (2,2) {};
\node [draw, dot, name=d21] at (2,1) {};

\draw[-{Latex[length=3mm]}] (x) -- (y)
node[draw=none, midway, left=0.5]{$e$};
\draw[-{Latex[length=3mm]}] (d01) -- (x) node[draw=none, midway, below=0.5]{$e^\southwest$};
\draw[-{Latex[length=3mm]}] (d01) -- (d02) node[draw=none, midway, right=0.5]{};
\draw[-{Latex[length=3mm]}] (d02) -- (y) node[draw=none, midway, above=0.5]{$e^\southeast$};
\draw[-{Latex[length=3mm]}] (y) -- (d22) node[draw=none, midway, above=0.5]{$e^\northeast$};
\draw[-{Latex[length=3mm]}] (d21) -- (d22) node[draw=none, midway, left=0.5]{};
\draw[-{Latex[length=3mm]}] (x) -- (d21) node[draw=none, midway, below=0.5]{$e^\northwest$};

\draw[-{Latex[length=3mm]}] (d10) -- (x) node[draw=none, midway, right=0.5]{};
\draw[-{Latex[length=3mm]}] (y) -- (d13) node[draw=none, midway, right=0.5]{};
\end{tikzpicture}
\caption{
We have $i=1$, $j=2$   in the left figure, 
 and $i=2$, $j=1$ in the right figure.
 (Here horizontal bonds are in $\obonds_1$ and vertical bonds are in $\obonds_2$.)
}
\label{fig:CE_times}
\end{figure}
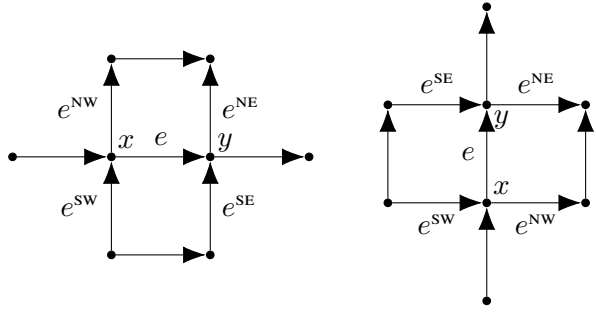

\subsubsection{Discrete derivatives}
\label{subsubsec:discrete_der}

We define a collection of discrete differentiation operators. Note that we will omit their dependence on $\e$ 
in our notation, since it will be clear from the context whether we are dealing with discrete or continuous operators in the sequel.

Consider $A\in\mfq$.
For $j\in[d]$, we define the ``forward and backward'' difference operator $\nabla^{\pm}_j A\in\mfq$ by
$
%\nabla^\pm_j A(x,y) &= A(x\pm\e_j,y\pm \e_j) - A(x,y)\;,
%\\
%\mbox{i.e.} \quad 
\nabla^\pm_j A(e) = A(e\pm\e_j) - A(e) 
$.
We define its rescaled version by $\partial^\pm_j A \eqdef \eps^{-1} \nabla^\pm_j A$. 
Similarly, for $f\colon\Lambda\to \mfg$, we define $\nabla^{\pm}_j f, \partial^\pm_j f\colon\Lambda\to \mfg$ by the same expression with $e$ replaced by $x\in\Lambda$.
%\begin{equ}
%\nabla^\pm_j f(x) = f(x\pm\e_j) - f(x)\;,
%\qquad
%\mbox{and}
%\qquad
%\partial^\pm_j f \eqdef \eps^{-1} \nabla^\pm_j f \;.
%\end{equ}

Moreover we define the second order derivative and the discrete Laplacian
\begin{equ}[e:laplacian]
\partial_j^2 A (e) \eqdef \frac{1}{\e^2}(A(e+\e_j)+A(e-\e_j) -2 A(e)),
\quad
\Delta A (e) \eqdef  \sum_{j=1}^d \partial_j^2 A (e). 
%=\frac{\e^{-2}}{d} \sum_{j=1}^d (A(e+\e_j)+A(e-\e_j) -2 A(e))\;.
\end{equ}
For $A\in\mfq$, we define the discrete divergence $\nabla\cdot A \colon\Lambda\to\mfg$ by   
\begin{equ}
\nabla \cdot A (x) \eqdef
\sum_{j=1}^d \nabla^+_j A(x-\e_j,x)
=\sum_{j=1}^d \big[ A(x,x+\e_j) - A(x-\e_j,x) \big]\;.
\end{equ}
%and its rescaled version $\div A \eqdef \eps^{-1} \nabla\cdot A$.

%For $A\in\mfq$, we define the discrete curvature $f_A\colon \plaq \to\mfg$
%by 
%\begin{equ}\label{eq:discrete_curv}
%f_A(p) = \sum_{i=1}^4 A(e_i) + \frac14[A(e_1)-A(e_3),A(e_2)-A(e_4)]\;,
%\end{equ}
%where $p=(e_1,e_2,e_3,e_4)$. Remark that $f_A(p)=-f_A(\overleftarrow{p})$ which mimics the anti-symmetry $F_{ij}=-F_{ji}$ of $2$-forms.
%
%Given $F\colon\plaq\to \mfg$ and $A\in\mfq$, we define the discrete adjoint covariant derivative $\nabla^*_A F\in\mfq$ by
%\begin{equ}[eq:nabalF]
%\nabla^*_A F(e) = \sum_{p\succ e}
%\Big(
%F(p)+
%\frac14[A(e_2)-A(e_4),F(p)]
%\Big)\;,
%\end{equ}
%where $e\in \obonds$ and
%we write $p=(e_1,\ldots,e_4)$.
%
%\begin{remark}\label{rem:d*fA}
%For any $A\in\mfq$ and $p=(e_1,\ldots, e_4)$, denoting $A_k=A(e_k)$,
%\begin{equs}
%f_A(p)
%+
%\frac14[A_2-A_4,f_A(p)]
%&=\sum_{k=1}^4 A_k + \frac12\Big([A_2,A_3]+[A_2,A_4]+[A_3,A_4]\Big)
%\\
%&\qquad\qquad + \frac1{16}[A_2-A_4,[A_1-A_3,A_2-A_4]]\;.
%\end{equs}
%\end{remark}

\subsubsection{Symmetrised derivatives and averaging}
\label{sec:ab-notation}

%Assume $d=2$ and $j\neq i\in[2]$.
Fix  $j\neq i\in[d]$.
For any $A_j\in \mfq_j$ and $e\in \obonds_i$, recall that 
$A_j(e)$ is not defined since $j\neq i$, but
 we will write\footnote{This ``averaging'' notation is prior to products, for instance,
 $(A_j (e^{(a)}))^2$ just means the square of $A_j (e^{(a)})$, 
rather than
$\frac14 \sum_{\be \in \CE_\times} A_j(e+\be)^2$.}
\begin{equ}[e:def-Aea]
A_j (e^{(a)}) 
\eqdef
\frac14 \sum_{\be \in \CE_\times} A_j(e+\be)\;.
\end{equ}
Namely, the script $(a)$  indicates
an ``average'' of $A_j$ over the four edges in direction $j$
which are neighbours of $e$.
Thus $\obonds_i \ni e \mapsto A_j(e^{(a)}) \in\mfg$ is an element of $\mfq_i$.

We also define a {\it symmetrised} difference for  functions on $\Lambda$ or $\obonds$
\begin{equ}[e:sym-diff]
\nabla_j \eqdef \frac12 (\nabla^+_j - \nabla^-_j)\;,
\qquad
\partial_j \eqdef \eps^{-1} \nabla_j \;.
\end{equ}
%That is,  %if $d=2$, 
%with $i\neq j$ and  notation \eqref{e:EWNS},
%for a function $A$ on $\obonds$ and $e \in \obonds_i$ 
%\[
%(\partial_j A_i) (e) = \frac{1}{2\eps} (A_i(e^\north) -A_i(e^\south) )
% \;, \qquad 
% (\partial_i A_i) (e) = \frac{1}{2\eps} (A_i(e^\east) -A_i(e^\west) )\;.
%\]

Again
with $i\neq j$, for a function $A_j \in\mfq_j$ and edge $e \in \obonds_i$, we define additional symmetrised
differences as follows:
\begin{equ}
(\bar\nabla_i A_j) (e) 
\eqdef 
\frac{1}{2} \Big(  A_j (e^\northeast)+A_j (e^\southeast)
- A_j (e^\northwest)-A_j (e^\southwest)\Big)\;,
\quad \bar\partial_i = \e^{-1}\bar\nabla_i\;,
\end{equ}
and\footnote{\eqref{e:djAj} will only be used
in the {\it ``remainder''} terms of our discrete equation.}
\begin{equ}[e:djAj]
(\bar\nabla_j A_j) (e)
 \eqdef 
\frac{1}{2} \Big(  A_j (e^\northeast)+A_j (e^\northwest)
- A_j (e^\southeast )- A_j (e^\southwest)
\Big)\;,
\quad
\bar\partial_j = \e^{-1}\bar\nabla_j\;.
\end{equ}
The assumption $j\neq i$ in the two definitions is important.
If $A\in\mfq$, we sometimes omit the index in $A_j$, e.g. write 
$(\bar\partial_j A) (e) \equiv (\bar\partial_j A_j) (e)$ for \eqref{e:djAj}.

\begin{remark}\label{rem:bar-partial}
The `bar' in the notation $\bar\partial_i$ indicates a ``switch'' of bond sets,
i.e. $\bar\partial_i, \bar\partial_j \colon \mfq_j \to\mfq_i$ for $i\neq j$.
\end{remark}

\begin{remark}\label{rem:loss-odd}
We remark that if $K$ is an even (resp. odd) function 
in $x_j$ (the $j$-th coordinate of space variable),
then $\partial_j K$ and  $\bar \partial_j K$ are odd  (resp. even) function in $x_j$,
but this is not true for $\partial^+_j K$.
Such `parity' considerations on symmetrised v.s. unsymmetrised derivatives
will be crucial when deriving the form of the renormalisations in later sections.
\end{remark}

\subsubsection{Plaquettes and \texorpdfstring{$1$}{1}-forms}
\label{subsubsec:plaq}

Recall from Section~\ref{sec:Notation} that we have fixed neighbourhoods $W = \mathring W^{\obonds_N}\subset\conf$
and $V = \mathring V^{\obonds_N} \subset \mfq$
for which $\exp \colon V\to W$
is a diffeomorphism.
Throughout this section, we only care about our actions on $W$ and $V$.

For the rest of this section, for  $U\in W$, we write $\A\eqdef \log U\in V$, and $A\eqdef \eps^{-1}\A \in \mfq$.
Consider a bond $e=(x,y)$ where $y=x+\eps_i$ for some $i\in [d]$.
Then
\begin{equ}
U(x,y)= e^{\A(x,y)}\;,\qquad \mbox{and}\quad \A (x,y) = \eps A(x,y)\;.
\end{equ}
When we fix $i\neq j\in [d]$, there are two plaquettes $p\succ e$ for a given $e\in\obonds_i$, which we often write $p$ and $\bar p$, namely
$
p=(x,\e_i,\e_j)$ and  $\bar p=(x,\e_i, - \e_j)$.
To simplify notation, we use the conventions $\A_{1,2,\cdots,9}$ for the values of $\A$ at the bonds around $p$ and $\bar p$ as in Figure~\ref{fig:plaquettes}.
(This should be distinguished from the notation $A_i$ 
where $i\in [d]$ is the spatial index.) For instance, for $i\neq j \in [d]$,
$
\A_1 = \A(x,x+\eps_i)$,
$\A_2 = \A(x+\eps_i,x+\eps_i+\eps_j)$, 
$\A_3 = \A(x+\eps_i+\eps_j,x+\eps_j)$, etc.
%$\A_4 = \A(x+\eps_j,x)$.

\begin{figure}[h]
\centering
\begin{tikzpicture}[scale = 1.5]
\node [draw,dot,name=x] at (0,1) {};  \node at (-0.15,0.85) {$x$};
\node [draw, dot, name=d00] at (0,0) {};
\node [draw, dot, name=d10] at (1,0) {};
\node [draw, dot, name=d11] at (1,1) {}; \node at (1.15,0.85) {$y$};
\node [draw, dot, name=d02] at (0,2) {};
\node [draw, dot, name=d12] at (1,2) {};

\node [draw,dot,name=L] at (-1,1) {};
\node [draw,dot,name=R] at (2,1) {};

\draw[-{Latex[length=3mm]}] (x) -- (d11) node[draw=none, midway, below=0.5]{$\A_1$}
node[draw=none, midway, above=0.5]{$e$};
\draw[-{Latex[length=3mm]}] (d11) -- (d12) node[draw=none, midway, right=0.5]{$\A_2$};
\draw[-{Latex[length=3mm]}] (d12) -- (d02) node[draw=none, midway, above=0.5]{$\A_3$};
\draw[-{Latex[length=3mm]}] (d02) -- (x) node[draw=none, midway, left=0.5]{$\A_4$};

\draw[-{Latex[length=3mm]}] (d11) -- (d10) node[draw=none, midway, right=0.5]{$\A_5$};
\draw[-{Latex[length=3mm]}] (d10) -- (d00) node[draw=none, midway, below=0.5]{$\A_6$};
\draw[-{Latex[length=3mm]}] (d00) -- (x) node[draw=none, midway, left=0.5]{$\A_7$};

\draw[-{Latex[length=3mm]}] (L) -- (x) node[draw=none, midway, below=0.5]{$\A_8$};
\draw[-{Latex[length=3mm]}] (d11) -- (R) node[draw=none, midway, below=0.5]{$\A_9$};

\node (bp) at (0.5,0.5) {$\bar p$};
\node (p) at (0.5,1.5) {$p$};
\end{tikzpicture}
\qquad
\begin{tikzpicture}[scale = 1.5,baseline = {(0, 0.3)}]
\node [draw,dot,name=x] at (1,1) {};
\node at (1.15,1.15) {$x$};
\node [draw, dot, name=d10] at (1,0) {};
\node [draw, dot, name=d01] at (0,1) {};
\node [draw, dot, name=y] at (1,2) {};
\node at (1.15,1.85) {$y$};
\node [draw, dot, name=d02] at (0,2) {};
\node [draw, dot, name=d13] at (1,3) {};
\node [draw, dot, name=d22] at (2,2) {};
\node [draw, dot, name=d21] at (2,1) {};

\draw[-{Latex[length=3mm]}] (x) -- (y)
node[draw=none, midway, right=0.5]{$\A_1$}
node[draw=none, midway, left=0.5]{$e$};
\draw[-{Latex[length=3mm]}] (d01) -- (x) node[draw=none, midway, below=0.5]{$\A_7$};
\draw[-{Latex[length=3mm]}] (d02) -- (d01) node[draw=none, midway, left=0.5]{$\A_6$};
\draw[-{Latex[length=3mm]}] (y) -- (d02) node[draw=none, midway, above=0.5]{$\A_5$};
\draw[-{Latex[length=3mm]}] (y) -- (d22) node[draw=none, midway, above=0.5]{$\A_2$};
\draw[-{Latex[length=3mm]}] (d22) -- (d21) node[draw=none, midway, right=0.5]{$\A_3$};
\draw[-{Latex[length=3mm]}] (d21) -- (x) node[draw=none, midway, below=0.5]{$\A_4$};

\draw[-{Latex[length=3mm]}] (d10) -- (x) node[draw=none, midway, right=0.5]{$\A_8$};
\draw[-{Latex[length=3mm]}] (y) -- (d13) node[draw=none, midway, right=0.5]{$\A_9$};

\node (bp) at (0.5,1.5) {$\bar p$};
\node (p) at (1.5,1.5) {$p$};
\end{tikzpicture}
\caption{
The plaquettes $p,\bar p$ with conventions for $\A_{1,2,\cdots,9}$,
%The left figure and right figure correspond to $i=1$ and $i=2$ respectively.
where $(i,j)$ in the two figures are as in Fig.~\ref{fig:CE_times}.
We omit the dependence of $\A_{1,2,\cdots,9}$ on $i,j$ in our notation.
}
\label{fig:plaquettes}
\end{figure}
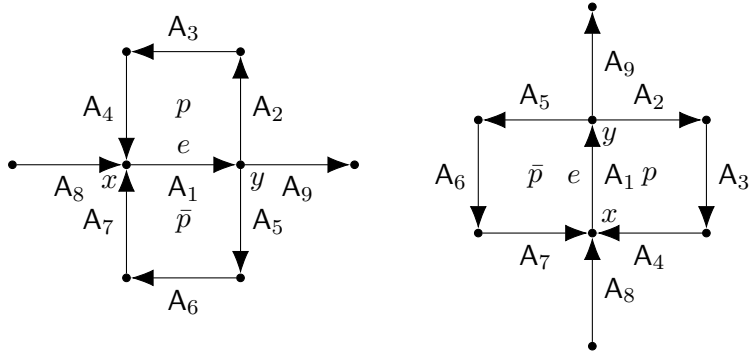

In calculating the cubic terms in the equation for $A$, it will be important to keep track of polynomials that contain factors $\A_1+\A_3$ and $\A_2+\A_4$ and other sums of oppositely pointing parallel bonds.
To this end, let $\CL(\R^m)$ denote the free Lie algebra generated by $\{\A_i\}_{i=1}^m$, where we view $\{\A_i\}_{i=1}^m$ as the canonical basis of $\R^m$.
We next define a class of polynomials called $\mcI_3$, which will allow us to treat error terms of the form $\e A^2\partial A$
(after rescaling) all together in Section~\ref{subsec:Mass renormalisation_1} without calculating their precise forms.

\begin{definition}\label{def:ideal}
Let $\mcI$ denote the Lie ideal in $\CL(\R^9)$ generated by $\{\A_i+\A_j\,:\,i\in \{1,8,9\}, j\in \{3,6\}\}$ and $\{\A_i+\A_j\,:\,i \in \{2,7\},j\in \{4,5\}\}$.
Let $\mcI_3$ denote the homogenous degree 3 polynomials in $\mcI$.

Given $i\neq j$, $e\in\obonds_i$ and  $Q\in\mcL(\R^9)$, we denote by the same symbol $Q\colon \mfq\to\mfg$ the map $Q(\A)=Q(\A_1,\ldots,\A_9)$ with $\A_{1,2,\ldots,9}$ related to $e$ and $(i,j)$ as in Fig.~\ref{fig:plaquettes}.

We also say that a polynomial $I\colon \mfq\to\mfq$ is in $\mcI_3$ to mean that there exist $I_{ij}\in \mcL(\R^9)$ with $I_{ij}\in \mcI_3$
for $i\neq j$ such that $I(\A)(e)=\sum_{j\neq i}I_{ij}(\A_1,\ldots\A_9)$ for all $e\in\obonds_i$, where $\A_{1,2,\ldots,9}$ are related to $e$ and $(i,j)$ as in Fig.~\ref{fig:plaquettes}.
\end{definition}
The following lemma is helpful in determining when Lie polynomials are in $\mcI$.
\begin{lemma}\label{lem:poly_vanish}
Suppose a Lie polynomial $Q(\A_1,\ldots,\A_9)$ vanishes upon the substitutions $\A_1,-\A_3,-\A_6,\A_8,\A_9\mapsto B_1$ and
$\A_2,-\A_4,-\A_5,\A_7\mapsto B_2$.
Then $Q\in \mcI$.
\end{lemma}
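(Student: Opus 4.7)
The plan is to recognise the substitution as a Lie-algebra homomorphism. Let $\phi \colon \CL(\R^9) \to \CL(\R^2)$ be the unique Lie-algebra morphism extending the assignment $\A_1, \A_8, \A_9 \mapsto B_1$, $\A_3, \A_6 \mapsto -B_1$, $\A_2, \A_7 \mapsto B_2$, $\A_4, \A_5 \mapsto -B_2$; this exists and is unique by the universal property of the free Lie algebra on the nine generators $\A_1,\ldots,\A_9$. The hypothesis on $Q$ is precisely that $\phi(Q) = 0$, so the claim reduces to the inclusion $\ker \phi \subseteq \mcI$.

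First I would verify the reverse inclusion $\mcI \subseteq \ker \phi$, which is immediate from a direct check on the ten listed generators: $\phi(\A_i + \A_j) = B_1 - B_1 = 0$ for $i \in \{1,8,9\}$ and $j \in \{3,6\}$, and $\phi(\A_i + \A_j) = B_2 - B_2 = 0$ for $i \in \{2,7\}$ and $j \in \{4,5\}$. Consequently $\phi$ factors through a Lie-algebra homomorphism $\bar\phi \colon \CL(\R^9)/\mcI \to \CL(\R^2)$, and the task becomes proving that $\bar\phi$ is injective.

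To that end I would construct a candidate inverse by invoking the universal property of $\CL(\R^2)$: let $\psi \colon \CL(\R^2) \to \CL(\R^9)/\mcI$ be the unique Lie-algebra homomorphism with $B_1 \mapsto [\A_1]$ and $B_2 \mapsto [\A_2]$. By construction $\bar\phi \circ \psi$ acts as the identity on $B_1$ and $B_2$, and hence on all of $\CL(\R^2)$. Conversely, the defining relations of $\mcI$ force $[\A_3] = -[\A_1] = [\A_6]$, $[\A_8] = [\A_9] = [\A_1]$, $[\A_4] = [\A_5] = -[\A_2]$ and $[\A_7] = [\A_2]$ in the quotient, so $\CL(\R^9)/\mcI$ is generated as a Lie algebra by $[\A_1]$ and $[\A_2]$ alone, which makes $\psi$ surjective. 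A surjection possessing a right inverse is a bijection, so $\psi$ is an isomorphism and $\bar\phi = \psi^{-1}$ is injective, as required.

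The argument is purely algebraic: beyond the universal property of free Lie algebras, all one needs is the bookkeeping check of $\phi$ on the ten generators of $\mcI$ plus the observation that the quotient collapses onto a two-generator Lie algebra, so I do not anticipate a real obstacle.
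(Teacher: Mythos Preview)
Your argument is correct and follows essentially the same route as the paper: both show that the substitution map factors through an isomorphism $\CL(\R^9)/\mcI \cong \CL(\R^2)$, the paper via a general change-of-basis lemma (the kernel of the Lie-algebra extension of a linear map $M$ is the ideal generated by $\Ker M$), you via an explicit section $\psi$. One terminological slip: from $\bar\phi \circ \psi = \id$ you should say $\psi$ has a \emph{left} inverse (hence is injective), not a right inverse; the conclusion that $\psi$ is bijective then follows from your surjectivity argument.
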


\begin{proof}
Let $M\colon \R^m \to \R^n$ be linear and denote by $\hat M\colon \CL(\R^m)\to\CL(\R^n)$ its extension to a Lie algebra morphism.
We claim that $\Ker(\hat M)$ is the Lie ideal generated by $\Ker(M)$.
Indeed, by changing coordinates,
we may suppose $M(X_i)=Y_i$ for $1\leq i\leq m'$ and $M(X_i)=0$ for $m'<i\leq m$ for some $0\leq m'\leq m$,
where $\{X_i\}_{i=1}^{m}$, $\{Y_i\}_{i=1}^n$ are canonical bases for $\R^{m},\R^n$.
By freeness of $\CL(\R^n)$, $\Ker(\hat M)$ consists of all Lie polynomials in which $X_i$ occurs for $m'<i\leq m$, which is the ideal generated by
$\Ker(M)$, thus proving the claim.

To conclude the proof, it suffices to apply the claim to the linear map $M\colon\R^9\to\R^2$ defined by the mappings in the statement.
\end{proof}

\subsection{Pull-back of Riemannian metric to Lie algebra}
\label{subsec:metric_pullback}

\begin{definition}
Consider $U \in W$ and $B\in T_{U}W$.
For $\A\eqdef \log U\in V$
we say that $\fancyC\in T_\A V \simeq \mfq$
is the \emph{pull-back} of $B$ if $D \exp_\A (\fancyC)=B$.
Similarly, denoting by $\eps\colon\mfq\to\mfq$ the map $A\mapsto \eps A$,
for $A\eqdef \eps^{-1} \log U$
we say that $C\in T_A\mfq\simeq\mfq$ is the \emph{rescaled pull-back} of $B$ if $D( \exp\circ\eps)_A(C) =B$,
i.e. $C=\e^{-1}\fancyC$.
\end{definition}
To find the discrete SPDE for $A=\e^{-1}A^{(N)}$, for $A^{(N)}$ as in Theorem~\ref{thm:discrete_dynamics},
we need to find the rescaled pull-back of the right-hand side of~\eqref{eq:discrete_hat_U}.
This can, in principle, be done directly, by computing the right-hand side in $T_U \conf$ and finding its pull-back.
This ends up being a rather lengthy
calculation, so we opt for a somewhat different way.
Namely, we first pull back the Riemannian metric on $W$, coming from the bi-invariant Riemannian metric $\scal{\act,\act}$ on $G$,
to $V$ via the 
Lie exponential map,
so that $\exp\colon V\to W$ becomes an isometry of Riemannian manifolds
(not just a diffeomorphism). 
This endows $V$ with a Riemannian metric $g$
(that differs from its Euclidean metric)
and we work with $V$ instead of $W$ right from the start.

\begin{remark}
For the rest of this subsection, $A,B,C$ denote generic elements in the Lie algebra which are not necessarily related to our equation.
\end{remark}
To write down the Riemannian metric $g$ on
$V$, recall the derivative of the exponential map \cite[Sec.~1.2, Thm.~5]{RossmannBook} 
\begin{equ}[e:d-exp]
\frac{\mrd}{\mrd \tau} e^{A(\tau)} =  e^{A(\tau)} \Phi(A(\tau)) \frac{\mrd}{\mrd \tau} A(\tau)\;,
\end{equ}
where
\begin{equ}
\Phi(A) = \frac{1-e^{-\ad_A}}{\ad_A}
= \sum_{k \geq 0} \frac{(-1)^k}{(k+1)!}(\ad_A)^k
\in L(\mfq,\mfq)\;.
\end{equ}
Therefore, given tangent vectors $B,C\in T_A\mfq \simeq \mfq$,
the required inner product is
\begin{equ}[e:def-g_A]
g_A(B,C) = \scal{e^A \Phi(A) B, e^A \Phi(A) C}_{e^A} = \scal{\Phi(A) B, \Phi(A) C}_{\mfq}\;,
\end{equ}
where in the final equality we used the identification of $T_{e^A} \conf \simeq \mfq$ as left invariant vector fields.
We have thus put all the structure from $W$ that we require onto $V$ and can take gradients of actions on $V$ with respective to the metric $g$.
The picture we keep in mind is Figure~\ref{fig:Riemannian_metric}.
The right-hand side of~\eqref{eq:discrete_hat_U} essentially contains three (unrelated) terms: the DeTurck term, the gradient flow, and the noise.
We derive each of these separately.
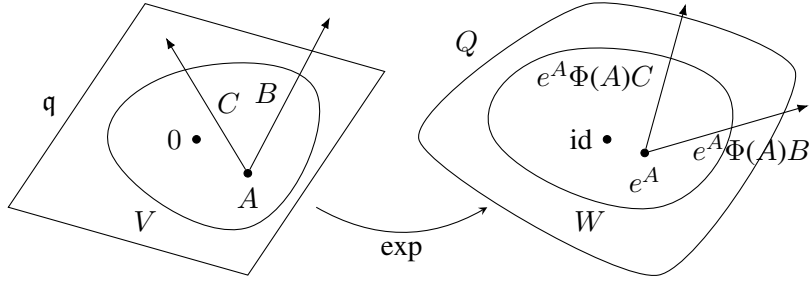
\begin{figure}[ht]
\centering
\begin{tikzpicture}[scale=0.9]
%mfq
\draw[smooth cycle, tension=0, fill=white] plot coordinates{(1.5,2) (-0.5,-1) (3,-2) (5,1)} node at (0.1,0.5) {$\mfq$};

%V
\draw[smooth cycle, tension=0.8, fill=white] plot coordinates{(2,1) (1,-0.2) (3,-1.3) (4,0.7)} node at (1.5,-1.2) {$V$};

%zero
\node [draw,dot,name=zero,label={180:$0$}] at (2.25,0) {};

%A
\node [draw,dot,name=A,label={-90:$A$}] at (3,-0.5) {};

%B arrow
\draw[-{Latex[length=1.5mm]}] (A) -- (4.2,1.8) node[draw=none, midway, left=0.5]{$B$};

%C arrow
\draw[-{Latex[length=1.5mm]}] (A) -- (1.8,1.5) node[draw=none, midway, right=0.5]{$C$};

%Q
\draw[smooth cycle, tension=0.4, fill=white] plot coordinates{(8,2) (5.5,0) (9,-2) (11,1)} node at (6.2,1.4) {$Q$};

%W
\draw[smooth cycle, tension=0.8, fill=white] plot coordinates{(7.6,1.3) (6.6,0) (9,-1) (10,0.7)} node at (8,-1.2) {$W$};

%id
\node [draw,dot,name=id,label={180:$\id$}] at (8.25,0) {};

%e^A
\node [draw,dot,name=eA,label={-90:$e^A$}] at (8.8,-0.2) {};

%eB arrow
\draw[-{Latex[length=1.5mm]}] (eA) -- (11.2,0.5) node[draw=none, midway, below,xshift=3mm,yshift=0.5mm]{$e^A\Phi(A)B$};

%eC arrow
\draw[-{Latex[length=1.5mm]}] (eA) -- (9.4,2) node[draw=none, midway, left=0.5]{$e^A\Phi(A)C$};

%exp
\path[->] (4, -1) edge [bend right] node[below] {$\exp$} (6.5, -1);
    %\draw[white,fill=white] (0.06,-0.57) circle (.15cm);
\end{tikzpicture}
\caption{Illustration of sets $V$, $W$ and tangent vectors $B,C\in T_A\mfq\simeq \mfq$.}
\label{fig:Riemannian_metric}
\end{figure}

\subsection{DeTurck term}

The DeTurck term in~\eqref{eq:discrete_hat_U} is $-\mrd_U\mrd^* U\in T_U\conf$.	
Recall our notation for discrete derivatives from Section~\ref{subsubsec:discrete_der}.
By~\eqref{eq:disc_adjoint_der} one has $(\mrd^* e^\A)(x)= \eps^{-1} (\nabla \cdot \A) (x)$.
%\begin{equ}
%(\mrd^* e^\A)(x)
%= \eps^{-1}\sum_{j=1}^d \big[ \A (x,x+\e_j) - \A(x-\e_j,x) \big]
%= \eps^{-1} (\nabla \cdot \A) (x)\;.
%\end{equ}
Together with \eqref{e:d_U-omega}
one has
\begin{equ}[e:divAeA]
(\mrd_{e^{\A}} \mrd^* e^{\A})(x,y)
 = \eps^{-2}
\big(
(\nabla \cdot \A) (x)\, e^{\A}(x,y) 
- e^{\A}(x,y)\, (\nabla \cdot \A) (y)
\big)
\;.
\end{equ}
Since $-\mrd_U\mrd^* U$ is in the tangent space $T_U \conf$,
we need to pull it back to $\A$ via the $\exp$ map.
More precisely, recalling \eqref{e:d-exp},
the pull back of the DeTurck term to $\A$ is the element  $h \in T_{\A}\mfq$ such that
$-\mrd_{e^{\A}} \mrd^* e^{\A} = e^{\A} \Phi(\A) h$, namely
\begin{equ}[e:-h]
-h = \Phi(\A)^{-1}e^{-\A}(\mrd_{e^{\A}} \mrd^* e^{\A})\;.
\end{equ}
To obtain a series representation for $\Phi(\A)^{-1}$, note that
\begin{equ}
\Phi(\A)^{-1} = (1+\psi(\A))^{-1} = 1+\sum\nolimits_{n\geq1}(-1)^n\psi(\A)^n\;,
\end{equ}
where
$\psi(\A) = \sum_{k \geq 1} \frac{(-1)^k}{(k+1)!}(\ad_{\A})^k
=-\frac12\ad_{\A} + \frac16(\ad_{\A})^2 + O(\A^3)$.
Hence
\begin{equ}\label{eq:Phi_inverse}
\Phi(\A)^{-1}
= 1+\frac12\ad_{\A} + \frac1{12}(\ad_{\A})^2
+O(\A^3)\;.
\end{equ}
We now list the contributions to $-\eps^2 h(e)$ for $e=(x,y)$ of the first three orders, using formulas \eqref{e:divAeA}, \eqref{e:-h}, and  \eqref{eq:Phi_inverse}
%Recall the notation $\A_1=\A(e)$ as in Figure~\ref{fig:plaquettes}.

\textbf{Order 1:}
Taking $e^{\A} = 1 + O(\A)$ and $\Phi(\A)^{-1}=1+O(\A)$, we find
\begin{equ}\label{eq:DeTurck_1}
\Delta_1(e)
\eqdef
 (\nabla\cdot \A)(x) - (\nabla\cdot \A)(y) = -\nabla^+_i (\nabla\cdot \A)(x)\;.
\end{equ}

\textbf{Order 2:}
From the series expansion of $\Phi(\A)^{-1}$,
$e^{-\A}$ and $e^{\A}$, it is easy to see that the 2nd order terms are
\begin{equs}
\Delta_2 (e)
%&\eqdef
%\Delta_2 (\A)(e)
%\\
&\eqdef
\frac12[\A(e),\Delta_1(e)]
-\A(e) \Delta_1(e)
+(\nabla\cdot \A)(x)\A(e) - \A(e)  (\nabla\cdot \A)(y)
\\
&=
-\frac12[\A(e),(\nabla\cdot \A)(x)+(\nabla\cdot \A)(y)]\;.\label{eq:DeTurck_2}
\end{equs}

\textbf{Order 3:} Due to $(\nabla \cdot \A) (x)$ and $(\nabla \cdot \A) (y)$ in \eqref{e:divAeA}, clearly the order 3 part of $\e^2h$ is in $\mcI_3$ (in the sense of Definition \ref{def:ideal}).
(One can verify that the order 3 term is $\Delta_3(e)
\eqdef
\frac1{12}[\A (e),[ \A (e),\Delta_1(e)]]$,
but we do not need its exact expression.)
%The contributions from the first two terms 
%in \eqref{eq:Phi_inverse} are
%\begin{equs}
% e^{-\A} (e) \,&
%\Big(
%(\nabla \cdot \A) (x)\,  e^{\A}(x,y) 
% - e^{\A}(x,y)\, (\nabla \cdot \A) (y)
%\Big)\in \mcI +  O(\A^4)
%\\
%&=
%\frac12 \A(e)^2\, \Delta_1(e)
%-\A(e)\Big((\nabla \cdot \A) (x)\,\A(e) - \A(e)\, (\nabla \cdot \A)(y)\Big)
%\\
%&\quad +\frac12 \Big((\nabla \cdot \A)(x)\, \A(e)^2 - \A(e)^2 \,(\nabla \cdot \A)(y)\Big) +O(\A^4)
%\end{equs}
%and
%\begin{equs}
%{}&\frac12 \ad_{\A(e)} 
%\Big( e^{-\A(e)} \,
%\Big(
%(\nabla \cdot \A) (x)\,  e^{\A}(x,y) 
% - e^{\A}(x,y)\, (\nabla \cdot \A) (y)
%\Big)\Big)\in \mcI + O(\A^4)\;.
%\\
%&=\frac12[ \A(e),-\A(e)\Delta_1(e)
%+(\nabla \cdot \A)(x)\A(e) - \A(e) (\nabla \cdot \A) (y)] +O(\A^4) \;.
%\end{equs}
%Straightforward calculation shows that the above two 
%expressions add up to zero, up to the error term $O(\A^4)$.
%Hence the final third order term only arises from
%\begin{equ}
% \frac1{12}(\ad_{\A(e)})^2
% \Big(
%(\nabla \cdot \A) (x)\,  e^{\A}(x,y) 
% - e^{\A}(x,y)\, (\nabla \cdot \A) (y)
%\Big)
%\end{equ}
%which is
%\begin{equ}\label{eq:DeTurck_3}
%\Delta_3(e)
%\eqdef
%\Delta_3(\A)(e)
%\eqdef
%\frac1{12}[\A (e),[ \A (e),\Delta_1(e)]]\;.
%\end{equ}

\begin{lemma}\label{lem:DT_terms}
There exists $\Delta_3\colon \mfq\to\mfq$ in $\mcI_3$
%(in the sense of Definition \ref{def:ideal})
and analytic $r_h\colon V \to\mfq$ such that $r_h(\A) = O(\A^4)$ 
and for every $e\in \obonds_i$,
\begin{equs}
\eps^2 h (e) &= -\Delta_1(e)-\Delta_2(e)-\Delta_3(\A)(e)+r_h(\A)(e)\;.
\label{e:Delta123}
%\\
%&=
%\nabla^+_i (\nabla\cdot \A)(x)
%+\frac12[\A(e),(\nabla\cdot \A)(x)+(\nabla\cdot \A)(y)] - \Delta_3(e) + r_h(\A)(e)\;,
\end{equs}
%\begin{equs}[e:Delta123]
%\eps^2 h (e) &= -\Delta_1(e)-\Delta_2(e)-\Delta_3(e)+r_h(\A)(e)
%\\
%&=
%\nabla^+_i (\nabla\cdot \A)(x)
%+\frac12[\A(e),(\nabla\cdot \A)(x)+(\nabla\cdot \A)(y)]
%\\
%&\qquad\qquad
%+\frac1{12}[\A(e),[\A(e), \nabla^+_i (\nabla\cdot \A)(x)]]
%+r_h(\A) (e)
%\end{equs}
\end{lemma}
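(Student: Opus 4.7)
The plan is to derive the claimed expansion directly from the closed-form expression \eqref{e:-h}, namely
\[
-h(e) \;=\; \Phi(\A(e))^{-1} \, e^{-\A(e)} \,(\mrd_{e^{\A}} \mrd^{*} e^{\A})(e)\,,
\]
combined with the plaquette-edge identity \eqref{e:divAeA}. Since all three factors $e^{\pm\A(e)}$ and $\Phi(\A(e))^{-1} = (1+\psi(\A(e)))^{-1}$ are analytic in $\A(e)\in\mathring V$, and $(\nabla\cdot\A)(x),(\nabla\cdot\A)(y)$ are already linear in $\A$, the right-hand side is analytic as a map $V\to\mfq$, so Taylor's theorem with remainder yields the existence of an analytic remainder $r_h(\A)=O(\A^4)$ once the first three Taylor coefficients are identified.

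The orders one and two are already computed in the excerpt above, giving $-\Delta_1(e)$ and $-\Delta_2(e)$ as in \eqref{eq:DeTurck_1}--\eqref{eq:DeTurck_2}; I would simply reproduce the calculation by substituting $e^{\pm \A(e)} = 1\pm\A(e)+\tfrac12\A(e)^2+O(\A^3)$ and $\Phi(\A(e))^{-1} = 1+\tfrac12\ad_{\A(e)}+\tfrac1{12}\ad_{\A(e)}^2+O(\A^3)$ from \eqref{eq:Phi_inverse} into \eqref{e:divAeA}, and collect terms by homogeneous degree in $\A$. This defines $\Delta_3$ implicitly as the unique degree-three polynomial contribution.

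The main step is to verify that $\Delta_3 \in \mcI_3$ in the sense of Definition~\ref{def:ideal}. The key observation is that every monomial in the expansion of $\eps^{2} h(e)$ inherits a factor of either $(\nabla\cdot\A)(x)$ or $(\nabla\cdot\A)(y)$ from \eqref{e:divAeA}; the remaining factors only involve $\A(e)=\A_1$. To apply Lemma~\ref{lem:poly_vanish}, I would fix $i\neq j$, write out $(\nabla\cdot\A)(x)$ and $(\nabla\cdot\A)(y)$ in terms of the labels $\A_1,\dots,\A_9$ of Figure~\ref{fig:plaquettes}, and check that they vanish under the substitution $\A_1,-\A_3,-\A_6,\A_8,\A_9\mapsto B_1$ and $\A_2,-\A_4,-\A_5,\A_7\mapsto B_2$. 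Indeed, writing $(\nabla\cdot\A)(x) = (\A_1-\A_8) + \bigl((-\A_4)-\A_7\bigr)$ and $(\nabla\cdot\A)(y) = (\A_9-\A_1) + (\A_2-(-\A_5))$, the substitution sends both expressions to $0$. By Lemma~\ref{lem:poly_vanish} both linear combinations lie in $\mcI$, and since $\mcI$ is a Lie ideal of the free Lie algebra, any Lie polynomial having one of these as a factor lies in $\mcI$ as well. The degree-three contributions to $\eps^{2} h$ are exactly such polynomials (the remaining $\A(e)$ factors being brackets or products with $\A_1$), so they belong to $\mcI_3$ after summing over $j\neq i$ to produce a map $\mfq\to\mfq$.

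The only step requiring genuine attention is the bookkeeping to ensure that when Taylor-expanding $\Phi(\A)^{-1}e^{-\A}(\cdot)$, no order-three term escapes the ideal; but this is immediate since $(\nabla\cdot\A)$ is the only source of $\A_{2,\dots,9}$ in the expression, and this factor never disappears upon further differentiation in $\A$. The analyticity of $\A\mapsto \Phi(\A(e))^{-1}e^{-\A(e)}$ on $V$ then gives a uniform bound $|r_h(\A)(e)|\lesssim|\A(e)|^{2}\bigl(|(\nabla\cdot\A)(x)|+|(\nabla\cdot\A)(y)|\bigr) = O(\A^4)$ in the sense of Remark~\ref{rem:big-O}, completing the proof.
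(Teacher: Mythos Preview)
Your proposal is correct and takes essentially the same approach as the paper: both arguments hinge on the fact that every term in the expansion of $\eps^2 h(e)$ carries a factor of $(\nabla\cdot\A)(x)$ or $(\nabla\cdot\A)(y)$, which lies in $\mcI$, so the degree-three part is automatically in $\mcI_3$; you make the substitution check from Lemma~\ref{lem:poly_vanish} explicit, which the paper leaves implicit. One minor slip: in your final bound you write $|r_h(\A)(e)|\lesssim|\A(e)|^{2}\bigl(|(\nabla\cdot\A)(x)|+|(\nabla\cdot\A)(y)|\bigr)$, but this would only give $O(\A^3)$; since you have already subtracted the order-three term $\Delta_3$ (which is the $|\A(e)|^2$-contribution), the Taylor remainder in $\A(e)$ is of order $|\A(e)|^3$, giving $|r_h(\A)(e)|\lesssim|\A(e)|^{3}\bigl(|(\nabla\cdot\A)(x)|+|(\nabla\cdot\A)(y)|\bigr)=O(\A^4)$ as required.
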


\begin{proof}
The leading terms are obtained by the above derivation.
The remainder 
 $r_h(\A)$ is $O(\A^4)$ (recall Remark~\ref{rem:big-O})
by analyticity of the functions $x\mapsto e^x, 1/(1+x), (1-e^{-x})/x$
on $\R$ in a neighborhood of the origin.
\end{proof}

\subsection{Gradient term}
\label{subsec:gradient_term}

We now turn to the gradient term in~\eqref{eq:discrete_hat_U}.
In view of Sections~\ref{sec:approx-YM} and~\ref{subsec:assumptions},
the action
$\mcS \colon W \to \R$  is  given by 
\begin{equ}[e:lpU]
\mcS(U) = \frac18\sum_{p\in\plaq} \{\eps^{d-4} l^p(U) + R^p_{\eps}(U)\}
\end{equ}
where (recalling $R_\e$ in \eqref{e:S=xR} and $U(\partial p)$ in \eqref{e:U-pp})
\begin{equ}
l^p(U) \eqdef |\log U(\partial p)|_\mfg^2\;, \qquad R^p_\e(U)\eqdef R_{\eps}(U(\d p))\;.
\end{equ}

For a function $f\colon W\to \R$ we write $\hat f \colon V\to \R$ for the pullback $\hat f(\A) \eqdef f(e^\A)$.
By construction, the gradient $\nabla \hat f(\A)\in T_\A V\simeq \mfq$ with respect to the Riemannian metric $g_\A$ is the pull-back of $\nabla f(e^\A)\in T_{e^\A}W$.

For any bond $e\in\obonds$ there are $8$ plaquettes containing $e$ and which enclose the same square in $\T^d$. Therefore
\begin{equ}\label{eq:gradient_mcS}
-\frac12\nabla\hat\mcS(\A)(e) 
= -\frac{\eps^{d-4}}2 \sum_{p\succ e} \nabla \hat l^p(\A)(e)
 - \frac12 \sum_{p\succ e}\nabla\hat R^p_\eps(\A)(e) \;.
\end{equ}
Focusing first on $
 -\frac12{\eps^{d-4}} \sum_{p\succ e} \nabla \hat l^p(\A)
\in T_{\A} V\simeq \mfq
$, which is the 
pull-back of $-\frac12\eps^{d-4} \sum_{p\succ e} \nabla l^p(U) \in T_U W$,
we compute in this subsection its linear, quadratic, and cubic terms.
The terms with $R^p_{\eps}(U)$  in \eqref{eq:gradient_mcS} will be dealt with in Section~\ref{subsec:rem_action}.

We can write
$\hat l^p(\A) = |P(\A)|_\mfg^2$,
where $P\colon V\to\mfg$ is the analytic function 
$P(\A) = \log (e^{\A}(\partial p))$.
To find the gradient $\nabla \hat l^p(\A) \in T_\A \mfq$, we compute for $B\in T_\A\mfq \simeq \mfq$,
\begin{equ}
D \hat l^p(\A) (B) = 2\scal{DP(\A)(B), P(\A)}_\mfg = \scal{B, 2DP(\A)^* P(\A)}_\mfq
\end{equ}
where $DP(\A) \in L(\mfq,\mfg)$ with adjoint $DP(\A)^* \in L(\mfg^*,\mfq^*) \simeq L(\mfg,\mfq)$
and where we used the inner products on $\mfg$ and $\mfq$ to identify them with their duals.

On the other hand by \eqref{e:def-g_A}
\begin{equ}
g_\A(B,\nabla \hat l^p(\A)) = \scal{\Phi(\A) B,\Phi(\A)\nabla \hat l^p(\A)}_\mfq
= \scal{B,\Phi(\A)^* \Phi(\A)\nabla \hat l^p(\A)}_\mfq
\end{equ}
where $\Phi(\A)^*\in L(\mfq^*,\mfq^*) \simeq L(\mfq,\mfq)$.
Hence $2DP(\A)^* P(\A) = \Phi(\A)^* \Phi(\A)\nabla \hat l^p(\A)$
and therefore:		
\begin{lemma}\label{lem:computeDlp}
$\frac12\nabla \hat l^p(\A) = (\Phi(\A)^* \Phi(\A))^{-1} DP(\A)^* (P(\A))$.
\end{lemma}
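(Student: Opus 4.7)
The statement is essentially a direct calculation that unwinds the definition of the Riemannian gradient in the pulled-back metric $g_\A$, and the text preceding the lemma already sketches the main computation. My plan is to make this sketch into a clean four-step argument.

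\textbf{Step 1: Differential of $\hat l^p$.} Since $\hat l^p(\A) = |P(\A)|_\mfg^2 = \scal{P(\A),P(\A)}_\mfg$ and $P\colon V\to\mfg$ is analytic, the chain rule gives
\begin{equ}
D\hat l^p(\A)(B) = 2\scal{DP(\A)(B),P(\A)}_\mfg
\end{equ}
for every $B\in T_\A V\simeq\mfq$. Using the inner products on $\mfq$ and $\mfg$ to identify them with their duals, one can rewrite this as $\scal{B, 2DP(\A)^*P(\A)}_\mfq$ with $DP(\A)^*\in L(\mfg,\mfq)$.

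\textbf{Step 2: Defining property of the gradient.} By definition of $\nabla\hat l^p(\A)$ as the gradient with respect to the Riemannian metric $g_\A$, one has $D\hat l^p(\A)(B) = g_\A(B,\nabla\hat l^p(\A))$ for all $B\in\mfq$. Applying the formula \eqref{e:def-g_A} for $g_\A$ and moving $\Phi(\A)$ to the other factor by its $\mfq$-adjoint $\Phi(\A)^*$, this becomes
\begin{equ}
g_\A(B,\nabla\hat l^p(\A)) = \scal{B,\Phi(\A)^*\Phi(\A)\nabla\hat l^p(\A)}_\mfq .
\end{equ}

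\textbf{Step 3: Equate and invert.} Combining Steps 1 and 2, the identity
\begin{equ}
\scal{B,\Phi(\A)^*\Phi(\A)\nabla\hat l^p(\A)}_\mfq = \scal{B, 2DP(\A)^*P(\A)}_\mfq
\end{equ}
holds for every $B\in\mfq$, so by non-degeneracy of $\scal{\cdot,\cdot}_\mfq$,
\begin{equ}
\Phi(\A)^*\Phi(\A)\nabla\hat l^p(\A) = 2DP(\A)^*P(\A).
\end{equ}
The only thing to check is that $\Phi(\A)^*\Phi(\A)\in L(\mfq,\mfq)$ is invertible for $\A\in V$, so that one may divide through. This is true because $\A\in V\subset\mathring V^{\obonds}$ with $\mathring V$ a small neighbourhood of $0$ on which $\exp$ is a diffeomorphism, so $\Phi(\A)$ is close to the identity by the series expansion \eqref{eq:Phi_inverse} (shrinking $\mathring V$ if necessary); then $\Phi(\A)^*\Phi(\A)$ is close to the identity and hence invertible. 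Dividing yields the claimed formula
\begin{equ}
\tfrac12\nabla\hat l^p(\A) = (\Phi(\A)^*\Phi(\A))^{-1}DP(\A)^*(P(\A)).
\end{equ}

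There is no real obstacle here: the computation is forced by the definitions, and the only technical point is the invertibility of $\Phi(\A)^*\Phi(\A)$, which follows from working in a small enough neighbourhood of $0$. The substantive work of identifying the terms in the gradient will come later, when one must compute $DP(\A)^*P(\A)$ and expand in powers of $\A$; this lemma just sets up the bookkeeping.
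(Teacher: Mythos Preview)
Your proof is correct and follows exactly the same route as the paper: the computation preceding the lemma in the paper is precisely your Steps 1--3, and the lemma is stated as an immediate consequence. Your remark on invertibility of $\Phi(\A)^*\Phi(\A)$ is a small addition the paper leaves implicit.
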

We proceed to calculate the series expansion of $\frac12\nabla \hat l^p(\A)(e)$.
For this we need to have 
more explicit expressions for $(\Phi(\A)^* \Phi(\A))^{-1}$
and $P(\A)$.

Since $\ad_\A$ is skew-Hermitian, i.e. $\ad_\A^*=-\ad_\A$, one has $(e^{-\ad_\A})^*=e^{\ad_\A}$ and
$\Phi(\A)^*
%= 1+\sum_{k\geq 1} \frac{1}{(k+1)!}(\ad_\A)^k
= \frac{e^{\ad_\A}-1}{\ad_\A}$.
Hence  
\begin{equ}
\Phi(\A)^* \Phi(\A)
=
\Big(\frac{e^{\ad_\A}-1}{\ad_\A}\Big)
\Big(\frac{1-e^{-\ad_\A}}{\ad_\A}\Big)
=
\frac{e^{-\ad_\A}+e^{\ad_\A}-2}{(\ad_\A)^2}
=
1 + \phi(\A)\;,
\end{equ}
where $
\phi(\A) \eqdef
\sum_{k \geq 1}\frac{2}{(2k+2)!} (\ad_\A)^{2k}
$.
Hence expanding $(1+\phi(\A))^{-1}$ in a Neumann series one has
\begin{equ}[e:PhiA2-1]
(\Phi(\A)^* \Phi(\A))^{-1}
%= (1+\phi(\A))^{-1}
= 1 + \sum_{k\geq 1} (-1)^k \phi(\A)^k =
1-\frac{1}{12}\ad_{\A}^2+O(\A^3)\;.
\end{equ}
We calculate $P(\A)$ using
the following  
Baker-Campbell-Hausdorff (BCH) formula.

\begin{lemma}\label{lem:BCH}
For any $A_1,A_2,A_3,A_4\in \mfg$ we have
\begin{equs}
e^{A_1}e^{A_2}e^{A_3}e^{A_4} 
& = \exp\Big(\sum_a A_a + \frac12 \sum_{a<b} [A_a,A_b] + \frac{1}{12} \sum_{a,b}[A_a,[A_a,A_b]]		%\label{e:4BCH}  
\\
&\qquad\qquad
+ \frac16 \sum_{a<b<c} \left([A_a,[A_b,A_c]] + [A_c,[A_b,A_a]]\right) +O(A^4) \Big)	.	
\end{equs}
Here $a,b,c \in \{1,2,3,4\}$
and $O(A^4) $ is a remainder in the sense of Remark~\ref{rem:big-O}
and is analytic in $\{A_i\}_{i=1}^4$ in a neighbourhood of $0$.
\end{lemma}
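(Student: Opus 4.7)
The plan is to prove the formula by iterating the two-term BCH identity three times. Set
\[
B_{12} \eqdef \log(e^{A_1}e^{A_2}), \qquad B_{123} \eqdef \log(e^{B_{12}}e^{A_3}),
\]
so that $e^{A_1}e^{A_2}e^{A_3}e^{A_4} = \exp\bigl(\log(e^{B_{123}}e^{A_4})\bigr)$. At each of the three BCH applications I would use the classical expansion
\[
\log(e^X e^Y) = X + Y + \tfrac12[X,Y] + \tfrac1{12}\bigl([X,[X,Y]] + [Y,[Y,X]]\bigr) + O(4),
\]
valid with an analytic $O(4)$ remainder in a neighbourhood of the origin. Grading each $A_a$ by degree $1$, I would retain only monomials of total degree at most $3$ at each step; the composed remainder remains analytic and $O(A^4)$, since BCH converges on a ball about the origin and degree-$\geq 4$ contributions can only grow in degree under further BCH applications.

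The linear and quadratic pieces fall out immediately: each iteration contributes $\tfrac12[X,Y]$ with $X$ and $Y$ carrying their degree-$1$ parts, so the totals are $\sum_a A_a$ and $\tfrac12\sum_{a<b}[A_a,A_b]$. The cubic piece is where the real work lies. For each of the three iterations there are four sources feeding into the total degree-$3$ output: (i) the cubic part already present in $X$; (ii) cross terms $\tfrac12[X,Y]$ with $X$ quadratic and $Y$ linear; (iii) $\tfrac1{12}[X,[X,Y]]$ evaluated on degree-$1$ pieces; and (iv) $\tfrac1{12}[Y,[Y,X]]$ evaluated similarly. Tabulating these for the three iterations produces an explicit $\Q$-linear combination of double commutators in the $A_a$.

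To match the stated form I would then normalise in two stages. The commutators with a repeated index combine into $\tfrac1{12}\sum_{a,b}[A_a,[A_a,A_b]]$: the $a=b$ terms vanish, the $a<b$ terms come from $[X,[X,Y]]$-type contributions and the $a>b$ terms from $[Y,[Y,X]]$-type ones, all with the correct coefficient after collecting. The commutators with three distinct indices $a<b<c$ appear in the shapes $[[A_a,A_b],A_c]$, $[A_a,[A_b,A_c]]$ and $[A_c,[A_a,A_b]]$, with various coefficients. The Jacobi identity
\[
[A_a,[A_b,A_c]] + [A_b,[A_c,A_a]] + [A_c,[A_a,A_b]] = 0
\]
reduces every such combination to a $\Q$-linear combination of $[A_a,[A_b,A_c]]$ and $[A_c,[A_b,A_a]]$, and a direct bookkeeping check identifies the resulting coefficient as $\tfrac16$, yielding the claimed expression.

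The main obstacle is purely combinatorial: tracking the cubic contributions across the three iterations and carrying out the Jacobi-based reduction without error. A useful sanity check, which I would perform first, is to specialise to $A_3=A_4=0$ and to $A_4=0$, recovering the classical two- and three-term BCH formulas at order $3$; this pins down all the fractions $\tfrac12$, $\tfrac1{12}$, $\tfrac16$ and guards against arithmetic slips. Analyticity and the $O(A^4)$ bound for the remainder require no extra argument: each iterated BCH is analytic near the origin, and compositions of analytic maps preserve both analyticity and the vanishing order.
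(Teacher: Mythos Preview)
Your approach is correct and matches the paper's: the paper simply states that the identity ``can be checked via direct calculation, for instance starting from the well-known formula for two matrices'' and omits the details, citing Rossmann for analyticity and the $O(A^4)$ remainder. Your iteration of the two-term BCH, bookkeeping of the cubic terms, and Jacobi reduction is exactly the omitted computation, and your sanity checks at $A_3=A_4=0$ and $A_4=0$ are a sensible safeguard.
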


\begin{proof}
This can be checked via direct calculation, for instance starting from the well-known formula for two matrices, so we omit the details.
%One could also obtain this identity from \cite[Lem.~3.10]{reutenauer93}. 
By %\cite[Prop.~2.2]{MR4137047},
\cite[p.24]{RossmannBook}, the series in the parenthesis on the right-hand side is absolutely convergent for $A_i$ sufficiently small and 
the truncated error is $O(A^4)$ in the sense of Remark~\ref{rem:big-O}.
\end{proof}
Since $(\Phi(\A)^* \Phi(\A))^{-1}$ acts on the ``diagonal'', to calculate $\frac12\nabla \hat l^p(\A)(e)$
we need to find $DP(\A)^*(X)(e)$ for an element $X\in\mfg$
(which is a placeholder for $P(\A)$).
Consider $\bar{\A}\in \mfq \cong\mfg^{\obonds_N}$ such that $\supp \bar{\A}= \{e\}$.
Then, recalling \eqref{e:mfq-inner},
\begin{equ}
\e^{d-2} \scal{DP(\A)^*(X)(e),\bar \A(e)}_\mfg
=\scal{DP(\A)^*(X),\bar \A}_\mfq
= \scal{X,DP(\A)(\bar \A)}_\mfg\;.
\end{equ}
If $p$ does not contain $e$, one has $DP(\A)(\bar \A)=0$ and thus $\nabla \hat l^p(\A)(e)=0$.
It therefore suffices to consider one plaquette $p$ as in Figure~\ref{fig:plaquettes}, and the derivation for the other $p\succ e$ (such as  $\bar p$  Figure~\ref{fig:plaquettes}) simply follows by suitable flipping of components.
Recall our notation $\A_{1,\cdots,4}$ as in Figure~\ref{fig:plaquettes}.
Since $P(\A) = \log (e^{\A_1}e^{\A_2}e^{\A_3}e^{\A_4})$, by the BCH formula (Lemma~\ref{lem:BCH}),
%\begin{equ}
%P(\A) =
%f_{\A}(p) + R_2^p(\A)+ O(\A^3)	\;,	\label{e:PA}
%\end{equ}
%where we recall $f_\A(p)$ from~\eqref{eq:discrete_curv} and where
%$
%R_2^p(\A)=\frac{1}{4}\sum_{a=1}^4[\A_a,\A_2+2\A_3+3\A_4] \in \mcI
%$.
\begin{equ}
P(\A) =
\sum_a \A_a +\frac12 \sum_{a<b}[\A_a,\A_b]+ O(\A^3)	\;.	\label{e:PA}
\end{equ}
Hence
\begin{equ}\label{eq:DP(A)(bar_A)}
DP(\A) (\bar{\A}) 
=
\bar{\A}(e) 
+ \frac12\sum_{b=2}^4 [\bar{\A}(e),\A_b] 
+O(\A^2\bar{\A}(e))\;.
%\\
%&+\frac1{12} \sum_{b>1} 
%\Big([\A_b,[\A_b, \bar{\A}(e)]] 
%+
%[\bar{\A}(e),[\A_1,\A_b]] + [\A_1,[\bar{\A}(e),\A_b]]\Big)
%\\
%&+ \frac16 \sum_{1<b<c} 
%\Big([\bar{\A}(e),[\A_b,\A_c]] + [\A_c,[\A_b,\bar{\A}(e)]]\Big)
%+O(\A^3\bar{\A})
%\\
%&=
%\bar{\A}(e) 
%+ \frac14[\bar{\A}(e),\A_2-\A_4] 
%+\frac{1}{4}[\bar \A(e),\A_2+2\A_3+3\A_4] + O(\A^2\bar \A)\;.
\end{equ}
%\begin{equs}
%\e^{d-2} DP(\A)^* & (X)(e)		\label{e:DPAstar}
%=
%X
%- \frac12\sum_{b=2}^4 [X,\A_b] + O(\A^2 X)
%%\\
%%&+\frac1{12} \sum_{b>1} 
%%\Big([\A_b,[\A_b, X]] 
%%-
%%[X,[\A_1,\A_b]] + [[\A_1,X],\A_b]\Big)
%%\\
%%&+ \frac16 \sum_{1<b<c} 
%%\Big(-[X,[\A_b,\A_c]] + [\A_b,[\A_c,X]]\Big)
%%+O(\A^3X)
%\\
%&=
%X - \frac14[X,\A_2-\A_4]
%-\frac{1}{4}[X,\A_2+2\A_3+3\A_4]
% + O(\A^2 X)\;.
%\end{equs}

\begin{lemma}\label{lem:third_order}
Recalling Definition \ref{def:ideal}, there exist $R^p_3 \in\mcI_3$ and analytic $r^p_l\colon V \to\mfq$ such that $r^p_l(\A)=O(\A^4)$ 
and
%\begin{equ}[e:le]
%- \frac{\e^{d-2}}{2} \sum_{p \succ e}  \nabla \hat l^p(\A) (e)
% = -\nabla^*_\A f_\A(e) -  \sum_{p \succ e}   R_3^p(\A)+ r_l(\A)(e)\;.
%\end{equ}
\begin{equs}
- \frac{\e^{d-2}}{2} \nabla \hat l^p(\A) (e)
 &=
-\sum_{a=1}^4 \A_a - \frac12\big([\A_2,\A_3] + [\A_2,\A_4]+[\A_3,\A_4]\big)\\
&\qquad\qquad + \frac12 [\A_2,[\A_2,\A_1]] 
+ R^p_3(\A)(e) + r^p_l(\A)(e)\;.
\end{equs}
\end{lemma}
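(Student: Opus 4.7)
The plan is to apply Lemma~\ref{lem:computeDlp} and expand each factor to third order in $\A$, then bypass a tedious enumeration of the cubic contributions by invoking Lemma~\ref{lem:poly_vanish}. For $e$ the $\A_1$-edge, I would begin by converting $\e^{d-2}DP(\A)^*(P(\A))(e)\in\mfg$ into an explicit Lie polynomial in $\A_1,\ldots,\A_4$ using $\Ad$-invariance of the inner product, namely $\scal{X,[Y,Z]}_\mfg=\scal{[X,Y],Z}_\mfg$, applied to $\scal{P(\A),DP(\A)(\bar\A)}_\mfg$ with $\bar\A$ supported on $\{e\}$. Combining this with \eqref{eq:DP(A)(bar_A)} and its natural extension including the $\partial_{\A_1}P_3(\A)$ contribution (where $P_3$ is the cubic BCH term from Lemma~\ref{lem:BCH}), together with $(\Phi(\A_1)^*\Phi(\A_1))^{-1}=1-\tfrac{1}{12}\ad_{\A_1}^2+O(\A_1^4)$ from \eqref{e:PhiA2-1}, yields the full expansion of $-\tfrac{\e^{d-2}}{2}\nabla\hat l^p(\A)(e)$ through third order.

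The linear term is immediate, $-\sum_a\A_a$, and the quadratic part is obtained by adding the BCH quadratic piece $-\tfrac12\sum_{a<b}[\A_a,\A_b]$ of $-P(\A)$ to the leading contribution $-\tfrac12\sum_{b=2}^{4}[\A_b,\sum_c\A_c]$ from $-\tfrac12\sum_{b=2}^4[\A_b,P(\A)]$; using antisymmetry and $[\A_b,\A_b]=0$, the latter reduces to $\tfrac12\sum_{b=2}^4[\A_1,\A_b]$, and the combination simplifies to the stated $-\tfrac12([\A_2,\A_3]+[\A_2,\A_4]+[\A_3,\A_4])$.

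For the cubic part, denote by $C_3(\A)$ the full homogeneous cubic contribution to $-\tfrac{\e^{d-2}}{2}\nabla\hat l^p(\A)(e)$, viewed as a Lie polynomial in $\A_1,\A_2,\A_3,\A_4$ and hence an element of $\mcL(\R^9)$. I would show $C_3(\A)-\tfrac12[\A_2,[\A_2,\A_1]]\in\mcI_3$ by invoking Lemma~\ref{lem:poly_vanish}, which reduces the verification to checking that this difference vanishes under the substitution $\A_1,-\A_3\mapsto B_1$ and $\A_2,-\A_4\mapsto B_2$. Under the substitution, $P(\A)$ becomes $F_0\eqdef\log(e^{B_1}e^{B_2}e^{-B_1}e^{-B_2})$, and a direct application of Lemma~\ref{lem:BCH} (using that $\sum Z_i$ and $\sum_{i,j}[Z_i,[Z_i,Z_j]]$ both vanish after the substitution, so only $\tfrac12\sum_{i<j}[Z_i,Z_j]$ and the triple-cyclic cubic BCH term survive) gives $F_0=[B_1,B_2]+\tfrac12[B_1,[B_1,B_2]]-\tfrac12[B_2,[B_2,B_1]]+O(4)$. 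Since $P|_{\text{subst}}$ has no linear part, the $(\Phi^*\Phi)^{-1}$ correction produces no contribution at cubic order. Pairing $F_0$ against the corresponding $DP$-expansion and using $\scal{[B_1,B_2],[\bar\A,B_1]}_\mfg=\scal{\bar\A,[B_1,[B_1,B_2]]}_\mfg$ (a direct consequence of $\Ad$-invariance) causes the $[B_1,[B_1,B_2]]$ pieces to cancel, leaving $C_3(\A)|_{\text{subst}}=\tfrac12[B_2,[B_2,B_1]]$, which matches $\tfrac12[\A_2,[\A_2,\A_1]]|_{\text{subst}}$ exactly. Finally, the analyticity and $O(\A^4)$ bound on $r^p_l$ follow from analyticity of $P$, $\Phi^{-1}$, and $(\Phi^*\Phi)^{-1}$ on $V$.

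The main obstacle is the bookkeeping of the several distinct cubic contributions — the BCH cubic piece of $P$, the differential of $P_3$ in the $\A_1$-direction, and the $(\Phi^*\Phi)^{-1}$ correction acting on the lower-degree pieces. The use of Lemma~\ref{lem:poly_vanish} short-circuits this enumeration by reducing everything to a single two-variable identity obtained after the substitution, which is a clean BCH computation.
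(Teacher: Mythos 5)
Your proposal is correct and follows essentially the same route as the paper's proof: both rely on the reduction Lemma~\ref{lem:computeDlp}, the BCH expansion Lemma~\ref{lem:BCH}, and Lemma~\ref{lem:poly_vanish} to identify cubic errors as elements of $\mcI_3$. The only difference is presentational — the paper enumerates the four sources of cubic terms and applies Lemma~\ref{lem:poly_vanish} to cases 2 and 4 separately (so that an intermediate $[\A_1,[\A_1,\A_2]]$ cancellation appears explicitly), whereas you apply the substitution once to the full cubic polynomial, folding that cancellation into a single BCH computation in $B_1,B_2$; both are correct and yield $\tfrac12[\A_2,[\A_2,\A_1]]$ modulo $\mcI_3$.
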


\begin{proof}
By \eqref{eq:DP(A)(bar_A)}, we obtain
\begin{equs}
\e^{d-2} DP(\A)^* & (X)(e)		\label{e:DPAstar}
=
X
- \frac12\sum_{b=2}^4 [X,\A_b] + O(\A^2 X)\;.
%\\
%&+\frac1{12} \sum_{b>1} 
%\Big([\A_b,[\A_b, X]] 
%-
%[X,[\A_1,\A_b]] + [[\A_1,X],\A_b]\Big)
%\\
%&+ \frac16 \sum_{1<b<c} 
%\Big(-[X,[\A_b,\A_c]] + [\A_b,[\A_c,X]]\Big)
%+O(\A^3X)
\end{equs}
\textbf{Orders 1 and 2:}
Lemma \ref{lem:computeDlp} and \eqref{e:PhiA2-1}+\eqref{e:PA}+\eqref{e:DPAstar}
immediately imply that
\begin{equs}[eq:1-2_terms_A]
 \frac{\e^{d-2}}{2} \nabla \hat l^p(\A)(e)
%&=\e^{d-2} DP(\A)^*(P(\A))(e)+O(\A^3)
%\\
&= \sum_a \A_a + \frac12\sum_{a<b}[\A_a,\A_b] - \frac12\sum_{a}\sum_{b=2}^4 [\A_a,\A_b]
+O(\A^3)
\\
&=  \sum_a \A_a + \frac12\big([\A_2,\A_3]+ [\A_2,\A_4]+[\A_3,\A_4]\big)
+ O(\A^3)\;.
\end{equs}
\textbf{Order 3:}
The terms at order 3 are obtained as follows:
\begin{enumerate}
\item
The $\A^2$ term of $(\Phi(\A)^* \Phi(\A))^{-1}$ with the $\A$ part (i.e. linear part) of $P(\A)$. 
\item The quadratic part of $P(\A)$ with the linear part of $DP(\A)^*$.
\item
The $\A$ part of $P(\A)$ with the $\A^2$ part of $DP(\A)^*$.
\item
The $\A^3$ part of $P(\A)$.
\end{enumerate}
Since the linear part of $P(\A)$ is in $\mcI$, the 1st and 3rd cases yield terms in $\mcI_3$.
The 4th case gives 
$ \frac12\big([\A_1,[\A_1,\A_2]] -  [\A_2,[\A_2,\A_1]]\big) + \mcI_3$ as can be easily checked by Lemmas~\ref{lem:poly_vanish} and~\ref{lem:BCH}.
Indeed, by Lemma~\ref{lem:poly_vanish} we can replace $\A_{1,\cdots,9}$ by either $\pm \A_1$ or $\pm \A_2$ up to terms in $\mcI_3$, which gives the result.
%The 2nd case gives $- \frac14\sum_{b=2}^4 [ [\A_2,\A_3]+ [\A_2+\A_3,\A_4],\A_b]$,
%which, by Lemma~\ref{lem:poly_vanish}, equals $- \frac14[ [\A_2,\A_3]+ [\A_3,\A_4],\A_3]+\mcI_3=-\frac12 [\A_1,[\A_1,\A_2]]+\mcI_3$,

The 2nd case gives
\[
-\frac14 \sum_{c=2}^4 \Big[\sum_{a<b} [\A_a,\A_b], \A_c\Big] \;,
\]
and again using Lemma~\ref{lem:poly_vanish} in the same way as in the 4th case above, this is equal to $-\frac12 [\A_1,[\A_1,\A_2]]+\mcI_3$,
which cancels the first term in the contribution from the 4th case modulo $\mcI_3$.

%\eqref{e:le} follows from \eqref{eq:1-2_terms_A}-\eqref{eq:A^2_A_part1} and the error bound $r_l(\A)=O(\A^4)$
%follows from Lemma~\ref{lem:BCH}
%and analyticity of $(e^{-x}+e^x-2)/x^2$ and $1/(1+x)$.
We thus obtain the desired equality in which the error bound $r^p_l(\A)=O(\A^4)$
follows from Lemma~\ref{lem:BCH}
and analyticity of $(e^{-x}+e^x-2)/x^2$ and $1/(1+x)$.
\end{proof}

%Note that for any $p\in \plaq$ with $p=(e_1,e_2,e_3,e_4)$ and any $A\in \mfq$, we can define $R_2^p(A)$ as in  \eqref{eq:R2def} and
% $R_3^p(A)$ as in \eqref{eq:def-R3p} with $\A_{1,\cdots,4}$ therein
%replaced by $A(e_1),\cdots,A(e_4)$.

\subsection{Gradient of the remainder}\label{subsec:rem_action}

We now turn to the term $R^p_{\eps}(U)$ in the action~\eqref{e:lpU}.
Recall the pullback  to $V$
\begin{equ}
\hat R_\e^p \colon V\to \R, \quad \hat R_\e^p(\A)= R^p_\e(e^\A) = R_{\eps}(e^\A(\d p))\;.
\end{equ}
To complete the computation of the gradient term, we find $\nabla \hat R_\e^p(\A) \in T_\A V \simeq \mfq$.

\begin{lemma}\label{lem:gradient_R}
Suppose that Assumption~\ref{assump:R} holds and
let $E_{\eps}^{(1)}\in L(\mfg,\mfg^*)$ and $E_{\eps}^{(2)}\in L(\mfg^{\otimes 3},\mfg^*)$ be the corresponding linear operators.
For $p \succ e$ as in Fig. \ref{fig:plaquettes},
\begin{equs}[eq:R_grad_expansion]
\nabla \hat R^p_\e(\A)(e) 
=\e^{2-d} E_{\eps}^{(1)}\Big(\sum_{a=1}^4 \A_a\Big)^* 
&+ \e^{-2} E_{\eps}^{(2)}\Big[\Big(\sum_{a=1}^4 \A_a\Big)^{\otimes 3}\Big]^*
\\
&+ O(\e^{2-d}\A^2) 
+ \e^{-2}O(\A^4)
\end{equs}
where  $F^*\in\mfg$ is the dual of $F\in \mfg^*$, i.e. $\scal{F^*,Y}_\mfg=F(Y)$.
\end{lemma}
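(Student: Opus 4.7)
The strategy is to mimic the derivation of Lemma~\ref{lem:third_order}, treating $\hat R^p_\e$ as a composition of two maps and applying the chain rule, but then, instead of expanding a polynomial, using directly the abstract expansion~\eqref{eq:DR_expan} from Assumption~\ref{assump:R} for $D\hat R_\e$. Writing $\hat R^p_\e(\A) = \hat R_\e(P(\A))$ with $P(\A) = \log(e^{\A_1}e^{\A_2}e^{\A_3}e^{\A_4})$, the chain rule gives $D\hat R^p_\e(\A)(B) = D\hat R_\e(P(\A))(DP(\A)(B))$, and the same Riesz-representation argument that produced Lemma~\ref{lem:computeDlp} then yields
\begin{equ}[eq:grad_formula_R]
\nabla \hat R^p_\e(\A) = (\Phi(\A)^*\Phi(\A))^{-1} DP(\A)^*\bigl(D\hat R_\e(P(\A))^*\bigr)\;,
\end{equ}
where $D\hat R_\e(P(\A))^*\in \mfg$ denotes the dual of the covector $D\hat R_\e(P(\A))\in \mfg^*$.

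Next I would evaluate the right-hand side of~\eqref{eq:grad_formula_R} at the edge $e$ using the formulas~\eqref{e:PhiA2-1} and~\eqref{e:DPAstar} already established in the proof of Lemma~\ref{lem:third_order}, namely
\begin{equ}
(\Phi(\A)^*\Phi(\A))^{-1} = 1 + O(\A^2)\;,\qquad
\e^{d-2} DP(\A)^*(X)(e) = X + \tfrac12\sum_{b=2}^4 \ad_{\A_b}(X) + O(\A^2 X)\;,
\end{equ}
together with $P(\A) = \sum_{a=1}^4 \A_a + O(\A^2)$ from the BCH expansion~\eqref{e:PA}. Plugging the latter into Assumption~\ref{assump:R} and using linearity plus the uniform bound on $E^{(1)}_\e$ and $E^{(2)}_\e$ gives
\begin{equ}
D\hat R_\e(P(\A))^* = E^{(1)}_\e\Bigl(\sum_a \A_a\Bigr)^{\!*} + \e^{d-4} E^{(2)}_\e\Bigl[\Bigl(\sum_a \A_a\Bigr)^{\!\otimes 3}\Bigr]^{*} + O(\A^2) + \e^{d-4}O(\A^4)\;,
\end{equ}
since each replacement of $\sum_a\A_a$ by $P(\A)$ in either the linear or the cubic leading term costs at most one extra factor of $\A$, and the analytic remainders in~\eqref{eq:DR_expan} contribute error at the same two scales.

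The final step is bookkeeping: substitute the previous display for $X$ in the formula for $\e^{d-2}DP(\A)^*(X)(e)$, and then multiply by the expansion $(\Phi^*\Phi)^{-1} = 1 + O(\A^2)$. The correction $\tfrac12\sum_b \ad_{\A_b}(X)$ applied to the $E^{(1)}_\e$-term produces an $O(\A^2)$, applied to the $\e^{d-4}E^{(2)}_\e$-term produces an $\e^{d-4}O(\A^4)$, and analogously for the $O(\A^2 X)$ remainder of $DP^*$ and the $O(\A^2)$ remainder of $(\Phi^*\Phi)^{-1}$. Dividing both sides by $\e^{d-2}$ converts the two leading terms into $\e^{2-d}E^{(1)}_\e(\sum_a \A_a)^*$ and $\e^{-2}E^{(2)}_\e[(\sum_a \A_a)^{\otimes 3}]^*$, and converts the error terms into $O(\e^{2-d}\A^2)+\e^{-2}O(\A^4)$, which matches~\eqref{eq:R_grad_expansion}.

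The main obstacle is not any single calculation but the accounting of the two simultaneous scales $\e^{2-d}$ and $\e^{-2}$: one must check that every correction introduced by the $(\Phi^*\Phi)^{-1}$ factor, by the $DP^*$ operator beyond its identity leading term, and by the higher-order BCH terms in $P(\A)$, lands in one of these two error groups. This is essentially a homogeneity count, and it is made possible by the fact that the two leading terms in Assumption~\ref{assump:R} differ by exactly $\e^{d-4}$ and by two powers of $\A$, which matches the structure of the correction factors.
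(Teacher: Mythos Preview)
Your proof is correct and follows essentially the same route as the paper. The paper derives the identity $\nabla \hat R^p_\e(\A) = [\Phi(\A)^*\Phi(\A)]^{-1}[D\hat R_\e^p(\A)]^*$ and then computes $[D\hat R^p_\e(\A)]^*(e)$ by pairing with $\bar\A$ supported on $\{e\}$ and using the chain rule $D\hat R^p_\e(\A)(\bar\A)=D\hat R_\e(P(\A))(DP(\A)(\bar\A))$, which is exactly your formula~\eqref{eq:grad_formula_R} after taking adjoints; the subsequent expansions of $(\Phi^*\Phi)^{-1}$, $DP^*$, $P(\A)$, and $D\hat R_\e$ and the two-scale bookkeeping are identical.
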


\begin{proof}
By a similar derivation as above Lemma~\ref{lem:computeDlp}, for $\bar \A\in T_\A\mfq \simeq \mfq$,
\begin{equ}
D\hat R_\e^p(\A)(\bar \A)
= g_\A(\bar \A,\nabla \hat R_\e^p(\A))
=\scal{\bar \A,\Phi(\A)^*\Phi(\A)\nabla \hat R_\e^p(\A)}_\mfq\;,
\end{equ}
and therefore
\begin{equ}\label{eq:nabla_hat_R_p}
\nabla \hat R^p_\e(\A) = [\Phi(\A)^*\Phi(\A)]^{-1}[D\hat R_\e^p(\A)]^* \in \mfq\;,
\end{equ}
where $F^*\in\mfq$ is the dual of $F\in\mfq^*$.
Since $[\Phi(\A)^*\Phi(\A)]^{-1}$ acts on the diagonal,
it suffices to find $[D\hat R^p_\e(\A)]^*(e)\in\mfg$, which is characterised by
\begin{equ}
\e^{d-2} \scal{[D\hat R^p_\e(\A)]^*(e), \bar \A(e)}_\mfg = D\hat R^p_\e(\A)(\bar \A)
\end{equ}
for all $\bar \A\in\mfq$ supported on $\{e\}$.
Fixing $\bar \A$ supported on $\{e\}$,
by the chain rule
\begin{equ}\label{eq:R_chain_rule}
D\hat R^p_\e(\A)(\bar \A) = [D\hat R_\e(P(\A))](DP(\A)(\bar \A))
\end{equ}
where $P(\A) = \log e^\A (\d p)$ and $\hat R_\e(X) = R_{\eps}(e^X)$ as before.
By~\eqref{eq:DP(A)(bar_A)}, we have
\begin{equ}\label{eq:DPA_bar_A}
DP(\A)(\bar \A) = \bar \A(e) + O(\A\bar \A)\;.
\end{equ}
Furthermore, by Assumption~\ref{assump:R}, we have that $D\hat R_\e(P(\A))](Y)$ for $Y\in\mfg$ equals
\begin{equs}{}
&E_{\eps}^{(1)}(P(\A))(Y) + \e^{d-4}E_\e^{(2)}(P(\A)^3)(Y)
+ O(P(\A)^2Y) + \e^{d-4}O(P(\A)^4Y)
\\
&= E_{\eps}^{(1)}\Big(\sum_{a=1}^4 \A_a\Big)(Y)
+\e^{d-4} E_{\eps}^{(2)}\Big[\Big(\sum_{a=1}^4 \A_a\Big)^{\otimes 3}\Big](Y)
+ O(\A^2Y)
+ \e^{d-4}O(\A^4Y)
\end{equs}
where we used $P(\A)=\sum_{a=1}^4 \A_a + O(\A^2)$ in the final equality.
Combining with~\eqref{eq:R_chain_rule} and~\eqref{eq:DPA_bar_A}, we obtain
\begin{equs}[eq:D_hat_R_A_bar_A]
D\hat R^p_\e(\A)(\bar \A)
= E_{\eps}^{(1)}\Big(\sum_{a=1}^4 \A_a\Big)(\bar \A(e)) &+ 
\e^{d-4} E_{\eps}^{(2)}\Big[\Big(\sum_{a=1}^4 \A_a\Big)^{\otimes 3}\Big](\bar \A(e))
\\
&+O(\A^2\bar \A)
+\e^{d-4}O(\A^4\bar\A)\;.
\end{equs}
Finally, recall from~\eqref{e:PhiA2-1} that
$[\Phi(\A)^*\Phi(\A)]^{-1} = 1+ O(\A^2)$.
The desired expansion~\eqref{eq:R_grad_expansion}
thus follows from~\eqref{eq:nabla_hat_R_p}
and~\eqref{eq:D_hat_R_A_bar_A}.
\end{proof}

\subsection{Noise term}
\label{sec:Noise term}

The final term in~\eqref{eq:discrete_hat_U}
is the noise term $U \circ \dt \BM$.
%Since each $\BM(e)$ for $e\in\obonds$ is a standard $\mfg$-valued Brownian motion,
%$\eps^{-1} \dt \BM(e)$ approximates the value of a white noise $\xi$ at $e$ as $\eps\to0$.
%We therefore write $\xi= \eps^{-1}\circ\dt \BM$.
Let $\xi^\e= \eps^{-1}\dt \BM$ (cf. \eqref{e:couple-noise}).
% $\xi^\e_e = \e^{-d}\scal{\xi,1_{B(e,\e)}}$, which is the average of $\xi$ near $e$.
Below we omit $\circ$ in Stratonovich products.
As with the DeTurck term, we need to pull the vector $U\eps \xi^\e\in T_U \conf$ back to $\A$ via the $\exp$ map.
More precisely, recalling \eqref{e:d-exp},
the pull back of the noise term to $\A$ is the element  $z \in T_{\A}\mfq$ such that
$e^{\A}\eps\xi^\e= e^{\A} \Phi(\A) z$, namely
$z = \eps\Phi(\A)^{-1}\xi^\e$.
We rewrite this as 
\begin{equ}[e:z-terms]
z(e)
=\eps\xi^\e (e)+
\e F(\A(e))[\A(e),\xi^\e(e)]
\end{equ}
where (after possibly shrinking $\mathring V\subset\mfg$), $F\colon \mathring V\to L(\mfg,\mfg)$ is the analytic function
\begin{equ}\label{eq:def_F}
F(X) = \frac{\Phi(X)^{-1}-1}{\ad_X} = \frac12 + \frac{1}{12}\ad_X + O(X^2)\;,
\end{equ}
where we recall the series expansion for $\Phi(X)^{-1}$ from~\eqref{eq:Phi_inverse}.
Below, whenever clear from the context, we often write $\xi$ for $\xi^\e$.
\begin{remark}\label{rem:noise_terms}
Recalling further that $\eps A=\A$ and that $Z\eqdef\eps^{-1}z$ is the rescaled pull-back of $U \circ \dt \BM$,
we obtain
\begin{equs}[e:Z-terms]
Z(e) &= \xi(e) +  \e F(\e A(e))[A(e),\xi(e)]
\\
&= \xi(e)+ \frac\eps2[A(e),\xi(e)] + \frac{\eps^2}{12}[A(e),[A(e),\xi(e)]] + O((\eps A)^3\xi)\;.
\end{equs}
Note that the second and third terms in the 2nd line of~\eqref{e:Z-terms} have negative power counting
and that even $O((\eps A)^3\xi)$ is classically ill-defined (requiring the Stratonovich product in time to make sense).
%Therefore, besides the additive noise $\xi$, we also have a multiplicative noise term.
To handle this `multiplicative noise term', we will introduce a new `abstract' noise modelling $\eps^{1-\kappa} \xi$ and a `multiplication by $\eps$' map sending $A$ to a function-like modelled distribution $\e A$,
and show that $\e F(\e A(e))[A(e),\xi(e)]$ vanishes on the level of modelled distributions.
\end{remark}

\subsection{Rescaled dynamics in Lie algebra}

We collect all the terms obtained from previous subsections.
In the sequel, we let $\A = \log \check U^{(N)}$ for $\check U^{(N)}$ as in~\eqref{eq:discrete_hat_U}.
Recall the exit time $\varpi$ of $\check U$ from $W$.

We rewrite our equation in terms of the ``macroscopic'' variable $A(t,e)= \eps^{-1}\A (t, e)$
%\begin{equ}[e:macro-A]
% A(t,e)= \eps^{-1}\A (\e^2 t, e) 
%\end{equ}
  which will have a form that at least formally ``approximates'' the limiting SPDE,
 modulo various ``remainder'' terms.
 Recall our notation $\Delta$ for discrete Laplacian in Section~\ref{subsubsec:discrete_der} and
various notation
  in Section~\ref{sec:ab-notation}.
%We also recall that 
%$R_3^{\bar p}$ is  defined 
%as in \eqref{eq:def-R3p} but with everything reflected, namely with the replacement 
%$(\A_2,\A_3,\A_4) \mapsto (\A_5,\A_6,\A_7)$.
%
%For the rest of the paper
%the following shorthand notation will be convenient.
%For a function $R^p\colon V\to \mfg$ depending on $p\in\plaq$ we denote
%$R(A) (e) \eqdef \sum_{p\succ e} R^p(A)$ so that $R\colon V\to\mfq$.
We further define
%($p,\bar p$ are again related to $e$ as before)
\begin{equ}[e:from-pp-to-e]
\hat R^{\nabla}_{\eps}\colon V\to \mfq\;\qquad
 \hat R^\nabla_{\eps}(A)(e)
\eqdef 
 \sum_{p\succ e}  \nabla \hat R^p_{\eps}(A)(e) \;.
\end{equ}

\begin{proposition}\label{prop:rescaled-equ}
%Suppose that Assumption~\ref{assump:R} holds. 
For $i\in [d]$ and every $e\in \obonds_i$,
one has on the interval $[0,\varpi]$
\begin{equs}[e:Aeps]
\partial_t A_i  (e)
&= \Delta A_i (e)
+ \sum_{j\neq i}
\big[A_j (e^{(a)}), 
 (2\partial_j A_i)(e)  - (\bar\partial_i A_j) (e) \big] 
\\
&+ [A_i (e), (\partial_i A_i)(e) ]
+  \sum_{j\neq i}\big[A_j(e^{(a)}) ,[A_j(e^{(a)}) ,A_i(e) ]\big]
+ \xi_i (e) 
\\
& 
 + \e F(\e A_i(e))[A_i(e),\xi_i(e)]
%+ \mathrm{Rem}_{e} (A)
+\tilde{R}_2(A)(e)
+I_3(A)(e)
+ \e^{-1} r(\e A)(e)\;,
\end{equs}
%Here, $\mathrm{Rem}_{e}(A)
%=
%\tilde{R}_2(A)(e)
%- R_3(A)(e)
%-\Delta_3 (A)(e)
%+\tilde{R}_3(A)(e)$ 
where $\tilde{R}_2(A)$ is defined
in \eqref{e:tildeR2}
and
$I_3\colon\mfq\to\mfq$ is in $\mcI_3$ (in the sense of Definition \ref{def:ideal}).
%\begin{itemize}
%\item
%$\tilde{R}_2(A)$, $\tilde{R}_3(A)$ are given in \eqref{e:tildeR2} and \eqref{e:tildeR3} below;
%\item
%$R_3(A)$ is  defined 
%by \eqref{e:from-pp-to-e};
%\item
%$\Delta_3(A)$ is  given  in \eqref{e:Delta3r} below.
%\end{itemize}
Moreover, with
%$\hat R^{\nabla}_{\eps}$ as in \eqref{e:from-pp-to-e},
$r_h$ as in \eqref{e:Delta123} and $r_l = \sum_{p\succ e} r_l^p$ for $r_l$ as in Lemma \ref{lem:third_order},
\begin{equ}[e:def-rem-r]
\e^{-1} r(\e A) (e)
=
 -\frac{\e^{-1}}2  \hat R^{\nabla}_{\eps}(\e A)(e)
 +  \e^{-3} r_h ( \e A)(e)
 +  \e^{-3}  r_l(\e A)  (e)\;.
\end{equ}
%$\nabla \hat R^p_{\eps}$ and $\nabla \hat R^{\bar p}_{\eps}$ are as in  \eqref{eq:R_grad_expansion}, and
\end{proposition}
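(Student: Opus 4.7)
The plan is to substitute the three contributions to \eqref{eq:discrete_hat_U} (DeTurck, gradient of the action, noise) into an equation for $\A(e)=\e A(e)$, and then verify that the terms at each polynomial order in $A$ combine correctly to give \eqref{e:Aeps}.

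First, I would write the equation on $[0,\varpi]$ for $\dt \A$ by applying Lemmas~\ref{lem:DT_terms}, \ref{lem:third_order}, \ref{lem:gradient_R} and the noise expansion \eqref{e:z-terms}-\eqref{e:Z-terms}. More precisely, since $\dt e^{\A}=e^{\A}\Phi(\A)\dt\A$, the pull-back identities give
\[ \dt\A(e) = -h(e)\,dt - \tfrac12\sum_{p\succ e}\nabla\hat l^p(\A)(e)\,dt - \tfrac12 \hat R^{\nabla}_\e(\A)(e)\,dt + z(e), \]
where each of the four pieces has a ready-made expansion. Dividing by $\e$ then gives the equation for $\partial_t A_i(e)$, and the remainder packaging in \eqref{e:def-rem-r} is exactly what is left over once we isolate the leading polynomial terms up to order~3.

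The heart of the argument is the combinatorial identification of the leading terms once we sum over the $2(d-1)$ plaquettes $p\succ e$. At order one, $-\Delta_1(e)/\e^{2}$ from Lemma~\ref{lem:DT_terms} contains $(\nabla\cdot\A)(x)-(\nabla\cdot\A)(y)$, which picks up $\partial_i^2 A_i$ as well as cross terms of the form $\partial_i\partial_j A_j$; combined with the linear $-\sum_a\A_a$ contributions from Lemma~\ref{lem:third_order} summed over $p\succ e$, the $\partial_i\partial_j A_j$-type cross pieces cancel and one is left with $\Delta A_i(e)$ (using \eqref{e:laplacian}). At order two, the bracket $-\tfrac12[\A_2,\A_3]-\tfrac12[\A_2,\A_4]-\tfrac12[\A_3,\A_4]$ from Lemma~\ref{lem:third_order}, together with the corresponding brackets from $\bar p$ and from $\Delta_2$, must be reassembled into $[A_j(e^{(a)}),2\partial_jA_i(e)-\bar\partial_i A_j(e)]$ and $[A_i(e),\partial_iA_i(e)]$: this is where the symmetrised discrete derivatives \eqref{e:sym-diff}-\eqref{e:djAj} and the averaging notation \eqref{e:def-Aea} are essential. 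Concretely, each $\A_2,\A_3,\A_4$ at a plaquette gets rewritten as $\A(e)+\e\cdot\mbox{discrete derivative}$ via a telescoping identity, and the sum over $p\succ e$ symmetrises the one-sided differences into $\partial_j$ and $\bar\partial_i$ (cf. Remark~\ref{rem:loss-odd}). At order three, Lemma~\ref{lem:third_order} isolates the term $\frac12[\A_2,[\A_2,\A_1]]$ which rescales to $[A_j(e^{(a)}),[A_j(e^{(a)}),A_i(e)]]$ plus lower-order corrections, with everything else living in $\mcI_3$ by construction.

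The error bookkeeping then goes as follows: the $\e^{-3}r_h(\e A)$ and $\e^{-3}r_l(\e A)$ terms in \eqref{e:def-rem-r} inherit the $O(\A^4)$ bounds from Lemmas~\ref{lem:DT_terms} and~\ref{lem:third_order}, which after $\A=\e A$ become $\e\cdot O(A^4)$ and can be absorbed into $r$; the $-\tfrac12 \e^{-1}\hat R^{\nabla}_\e(\e A)(e)$ piece is controlled using Lemma~\ref{lem:gradient_R}, whose leading linear piece scales as $\e^{-1}\cdot\e^{2-d}\cdot \e A=\e^{2-d}A$ (innocuous for $d=2$) and whose cubic piece scales as $\e^{-1}\cdot \e^{-2}\cdot\e^{3}A^{3}=A^{3}$ -- these are precisely the two non-trivial contributions that get absorbed into the unknown $\tilde R_2$ (defined in \eqref{e:tildeR2}, which we do not expand further here) and $I_3$; the $O(\e^{2-d}\A^2)$ and $\e^{-2}O(\A^4)$ errors from Lemma~\ref{lem:gradient_R} become $O(A^2)$ (harmless) and $\e\cdot O(A^4)$, again absorbed into $r$. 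For the noise, the leading $\xi_i(e)$ comes from $\e^{-1}\cdot\e\xi^\e$, and the remaining $\e F(\e A(e))[A(e),\xi(e)]$ is kept verbatim as the multiplicative-noise term in \eqref{e:Aeps}.

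The main obstacle is the second-order symmetrisation step: one must carefully track how the four brackets contributed by each pair $(p,\bar p)$ combine, and then how these combine further when summing over the $d-1$ transverse directions, in order to produce the symmetric operators $\partial_j$, $\bar\partial_i$ and the averages $A_j(e^{(a)})$ rather than the one-sided analogues. The cleanest way to handle this is to write $\A_a=\A(e)+\e R_a$ with $R_a$ an explicit linear combination of discrete derivatives evaluated along the plaquette, expand the brackets, use the Jacobi identity to re-group $[\A(e),R_a]+[R_a,\A(e)]=0$ type cancellations, and invoke Lemma~\ref{lem:poly_vanish} to place any residual antisymmetric combinations (those proportional to $\A_1+\A_3$ or $\A_2+\A_4$) into $\mcI_3$; the remaining symmetric combinations then rewrite exactly as the symmetrised derivatives and averages, completing the proof.
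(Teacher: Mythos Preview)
Your overall strategy---pull back each of the three contributions, expand in powers of $\A=\e A$, and match order by order---is exactly right, and the linear and noise parts are handled correctly. But there are two concrete misattributions.

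First, your treatment of $\hat R^{\nabla}_\e$ is off. You say its linear and cubic pieces ``get absorbed into the unknown $\tilde R_2$ \ldots\ and $I_3$''. This is not what happens: by \eqref{e:def-rem-r} the \emph{entire} term $-\tfrac12\e^{-1}\hat R^\nabla_\e(\e A)$ sits inside the remainder $\e^{-1}r(\e A)$; none of it is fed into $\tilde R_2$ or $I_3$. The point (cf.\ Remark~\ref{rem:power_of_eps}) is only that, under Assumption~\ref{assump:R} and after rescaling, every piece of $\hat R^\nabla_\e$ carries an explicit factor of $\e$, so it is legitimately a remainder---there is nothing to extract from it.

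Second, and relatedly, $\tilde R_2$ is not ``unknown'': it is explicitly defined in \eqref{e:tildeR2} and arises purely from the quadratic part $Q_j(\A)$ of the DeTurck plus main-gradient contribution. The paper writes $Q_j$ out exactly in \eqref{e:def-Q} and then observes that $\e^{-1}Q(\e A)$ equals the leading quadratic brackets in \eqref{e:Aeps} plus the two concrete error brackets in \eqref{e:tildeR2}, which come specifically from the last line of \eqref{e:def-Q}. Your proposed approach of writing $\A_a=\A(e)+\e R_a$ and regrouping is reasonable in spirit, but the actual argument is a direct identification starting from the closed-form $Q_j$.

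For the cubic step you also miss a point. The term $\tfrac12[\A_2,[\A_2,\A_1]]$ does \emph{not} rescale directly to $[A_j(e^{(a)}),[A_j(e^{(a)}),A_i(e)]]$; one first invokes Lemma~\ref{lem:poly_vanish} to rewrite $\tfrac12[\A_2,[\A_2,\A_1]]+\tfrac12[\A_5,[\A_5,\A_1]]$ as $\tfrac1{16}[\A_2-\A_4+\A_7-\A_5,[\A_2-\A_4+\A_7-\A_5,\A_1]]$ modulo $\mcI_3$ (this is \eqref{e:T3}), and only then uses $\tfrac14(\A_2-\A_4+\A_7-\A_5)=\e A_j(e^{(a)})$. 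The residual $\mcI_3$ pieces from this rewriting, together with $R_3^p$ and $\Delta_3$, constitute $I_3$.
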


\begin{proof}
For most of the proof 
we omit dependence on $e$ 
and write e.g. $r(\A)=r(\A)(e)$.
Following~\eqref{eq:gradient_mcS}, 
$
\partial_t \A(e) 
= - \frac{\e^{d-4}}{2} \sum_{p\succ e} \nabla \hat l^p(\A) 
-\frac12 \hat R^{\nabla}_\e(\A)
 + h(e) + z(e)
$.
We now add up the linear, quadratic and cubic terms.
We  claim that
\begin{equ}[e:LQT]
\partial_t \A (e) = %L (\A)
\Delta \A (e)
+ Q (\A)+ T(\A) +r(\A) + \eps \xi + \e F(\A(e))[\A(e),\xi]
\end{equ}
where $F$ is given by~\eqref{eq:def_F}.
Here,
$Q(\A) = \sum_{j\neq i}Q_j(\A)$ where for each $j\neq i$
\begin{equs}
 Q_j(\A) &\eqdef 
\e^{-2} \Big[ \frac14  (\A_2-\A_4 -\A_5+\A_7)\; , \;  (\A_6 - \A_3)  -  \frac12 (\A_2 + \A_4 - \A_5 -\A_7)\Big]
 \\
& +\frac{\e^{-2}}{2(d-1)}[\A_1,\A_9 - \A_8] 			\label{e:def-Q}
\\
& +\frac{\e^{-2}}4 [2\A_1 + \A_3 + \A_6 , \A_2+\A_5-\A_4-\A_7] 
 - \frac{\e^{-2}}4 [\A_2 +\A_5, \A_4+\A_7]  
\end{equs}
with
$\A_{1,2,\cdots,9}$
as in Fig.~\ref{fig:plaquettes} (understood as in the $(i,j)$ plane), 
and $T(\A)$ is equal to $\big(\sum_{j\neq i}T_j(\A)\big) -\e^{-2} \big( \sum_{p\succ e}R_3^p(\A) + \Delta_3(\A)\big)$ where $R^p_3$ and $\Delta_3$ are as in Lemmas \ref{lem:third_order}
and \ref{lem:DT_terms} respectively, and
\begin{equ}[e:def-T]
 T_j(\A) 
 \eqdef 
\frac{\e^{-2}}{2} 
\big(
[\A_2, [\A_2, \A_1]]		
+
[\A_5 , [\A_5 , \A_1]]
\big)\;.
\end{equ}
Finally, 
$ r (\A)\eqdef
 -\frac{1}{2} \hat R^{\nabla}_\e(\A)
 +   \e^{-2} r_h (\A)+  \e^{-2}  r_l(\A) $.

To prove the claim,
the linear terms in $- \frac{\e^{d-4}}{2} \sum_{p\succ e} \nabla\hat l^p(\A)$ in Lemma~\ref{lem:third_order}
% are 
%\begin{equ}
%- 2 \A_1 -\sum_{a=2}^7 \A_a\;.
%\end{equ}
plus the 1st order DeTurck term  $-\e^{-2}\Delta_1$
from Lemma~\ref{lem:DT_terms} or \eqref{eq:DeTurck_1}
%\[
%-\Delta_1 = \nabla^+_i(\nabla \cdot \A)(x) = 
% (\A_2-\A_1+\A_5+\A_9)+ (\A_4-\A_1+\A_7+\A_8)
%\]
%we obtain $\e^2 L(\A)$.
clearly gives the discrete Laplacian $\Delta \A (e)$.

For $j\neq i$,
 the quadratic terms from
$- \frac{\e^{d-4}}{2} \sum_{p\succ e} \nabla\hat l^p(\A)$
in the $(i,j)$ plane are
\begin{equ}\label{eq:2nd_order_A}
-\frac{\e^{-2}}2
\Big(
 [\A_2,\A_3] + [\A_2, \A_4] + [\A_3,\A_4] + [\A_5,\A_6] + [\A_5,\A_7]  + [\A_6,\A_7]
\Big)\;.
\end{equ}
The 2nd order DeTurck term $-\e^{-2}\Delta_2$ in Lemma~\ref{lem:DT_terms} is the sum over $j\neq i$ of
\begin{equ}\label{eq:DT_2_As}
%-\Delta_2=
 \frac{\e^{-2}}2
\Big[\A_1\;,\; \frac{1}{d-1}\A_9+\A_2+\A_5 - \Big(\A_4+\frac{1}{d-1}\A_8 + \A_7\Big)\Big]\;.
\end{equ}
By straightforward computation, 
the sum of \eqref{eq:2nd_order_A} and~\eqref{eq:DT_2_As} is equal to $Q_j(\A)$ as in \eqref{e:def-Q}. Here, note that 
the term $[\A_1,\A_9-\A_8]$ is shared 
among all choices of $j\neq i$, 
which is responsible for the factor $1/(d-1)$.

The form of the cubic term  $T(\A)$
in \eqref{e:def-T}
follows from Lemma~\ref{lem:third_order}.
The noise terms and the term
$r(\A)$
also clearly follow from the above calculations.
%The remainders are of the claimed orders by Lemma~\ref{lem:DT_terms}, Lemma~\ref{lem:third_order}.
%and \eqref{e:z-terms}.
This proves the claim.
%%%%%

Next, by Lemma \ref{lem:poly_vanish}, we can 
rewrite $T_j(\A)$ in \eqref{e:def-T}
as
\begin{equ}\label{e:T3}
\e^2 T_j (\A) = 
\frac1{16} [\A_2-\A_4+\A_7-\A_5 , [\A_2-\A_4+\A_7-\A_5,\A_1]]+\tilde{R}_{3,j}\;,
%\frac1{16}  [\A_2-\A_4 , [\A_7-\A_5 , \A_1]]	
%+
%\frac1{16}  [\A_2-\A_4 , [\A_2-\A_4 , \A_1]]	
%\\
%&+\frac1{16}  [\A_7-\A_5 , [\A_2-\A_4 , \A_1]]
%+
%\frac1{16}  [\A_7-\A_5 , [\A_7-\A_5 , \A_1]] +\tilde{R}_{3,j}(\A)
\end{equ}
%which equals $\frac1{16} [\A_2-\A_4+\A_7-\A_5 , [\A_2-\A_4+\A_7-\A_5,\A_1]]+\tilde{R}_{3,j}$
where
$\tilde{R}_{3,j}\in \mcI_3$.
Remark that $\frac14(\A_2-\A_4+\A_7-\A_5) = \e A_j(e^{(a)})$.
%\begin{equs}[e:def-tildeR3]
%\tilde{R}_{3,j}(\A)(e)
%&\eqdef
%\frac1{16}  [\A_2-\A_4 , [\A_2-\A_4 +\A_5-\A_7 , \A_1]]	
%\\
%&-
%\frac1{16}  [\A_2-\A_4 , [\A_2-\A_4 , \A_1+\A_3]]	
%\\
%&+\frac1{16}  [\A_7-\A_5 , [\A_7-\A_5 -\A_2+\A_4 , \A_1]]
%\\
%&-
%\frac1{16}  [\A_7-\A_5 , [\A_7-\A_5 , \A_1+\A_6]] \;.
%\end{equs}
%For instance, the 
%first two terms on RHS of \eqref{e:T3} + the first two terms in $\tilde R_{3,j}(\A)$
%is equal to the first term of RHS of \eqref{e:def-T}.

%%%%
Let $j\neq i$ for the rest of the proof.
%As in the proof of Proposition~\ref{prop:LQT}, 
%all unspecified terms are evaluated at $e$.
We rewrite the above equation 
%obtained in Proposition~\ref{prop:LQT}
with $A= \eps^{-1}\A $.
The linear term remains as the discrete Laplacian.
By \eqref{e:def-Q} and recalling the notation
 $A_j (e^{(a)})$, $\partial$, $\bar\partial$, $\d^2$, from Section \ref{sec:more_notation},
the term $\e^{-1}Q(\e A)$ is equal to 
the 2nd and 3rd term on the right-hand side of \eqref{e:Aeps} plus
\begin{equ}[e:tildeR2]
\tilde{R}_2(A)(e) \eqdef  \sum_{j\neq i}
- \frac{\eps^2}{2} [\partial_j^2 A_i(e), \bar\partial_j A_j(e)]
+ \frac{\eps}4 
[ (\partial^+_j A_j) (e^\southeast)   , 
(\partial^+_j A_j)(e^\southwest)    ] 
\end{equ}
(which arises from the last line in  \eqref{e:def-Q}).
%(recall the notation $\bar\partial_j A_j$ in  \eqref{e:djAj} and $\d_j^2 A_i$ in~\eqref{e:laplacian})
%\begin{equ}
% \big[ A_j (e^{(a)}), 
%(2\partial_j A_i)(e)  - (\bar\partial_i A_j) (e) \big] 
%+ [A_i (e), (\partial_i A_i)(e) ] + \tilde{R}_2(A)\;.
%\end{equ}
Moreover, since all the terms on the right-hand side of \eqref{e:T3} are cubic in $\A$,
we see that $\e^{-1} T(\e A) $ is equal to the cubic terms in
\eqref{e:Aeps} with $I_3(A)(e) =  \sum_{p\succ e} R_3^p(A) - \Delta_3(A)(e) +\sum_{j\neq i}\tilde{R}_{3,j}(A)(e)$.
%\[
%\e^{-1} T(\e A) = \Big(\sum_{j\neq i} \big[A_j(e^{(a)}) ,[A_j(e^{(a)}) ,A_i(e) ]\big]\Big)
%-   R_3(A) - \Delta_3(A) +\tilde{R}_3(A)\;.
%\]
%where by \eqref{e:def-tildeR3} we can rewrite
%\begin{equs}[e:tildeR3]
%\tilde{R}_3(A)(e)
%&=\sum_{j\neq i}
%\frac\eps{16}  [A_j(e^\northeast)+A_j(e^\northwest)\; , \; 
%		[\bar\partial_j A_j(e), A_i(e)]]	
%\\
%&\qquad+
%\frac\eps{16} [A_j(e^\northeast)+A_j(e^\northwest) \; ,\;
%		 [A_j(e^\northeast)+A_j(e^\northwest) , \partial_j^+ A_i(e)]]	
%\\
%&\qquad+
%\frac\eps{16}  [A_j(e^\southeast)+A_j(e^\southwest ),
%		 [-\bar\partial_j A_j(e) , A_i(e)]]
%\\
%&\qquad+
%\frac\eps{16}  [A_j(e^\southeast)+A_j(e^\southwest ) ,
%		 [A_j(e^\southeast)+A_j(e^\southwest ) ,  \partial_j^- A_i(e)]] \;.
%\end{equs}
%Recall from
%\eqref{eq:DeTurck_3} that
%\begin{equ}%[e:Delta3-2]
%\Delta_3 (\A)(e) =
%- \frac1{12}[\A_1,[ \A_1,
%\A_4+\A_2+\A_7+\A_5 ]]
%+
%\frac1{12}[\A_1,[ \A_1,
%2\A_1 -\A_8  - \A_9]]
%\end{equ}
%Thus
%Recalling \eqref{eq:DeTurck_3}, one has, for $e=(x,y)$,
%\begin{equ}[e:Delta3r]
%-\Delta_3 (A)(e)=
%% \frac{\eps}{6}[A_i(e),[A_i(e),
%%\bar\partial_j A_j(e) ]]
%%+\frac{\eps^2}{12}[A_i(e),[A_i(e),
%%\partial_i^2 A_i (e)]] \;.
% \frac{\eps^2}{12}[A_i(e),[A_i(e),
%\partial_i^+ \div A(x) ]] \;.
%\end{equ}
Finally the expression for $\e^{-1} r(\e A) $
follows obviously
and the terms with $\xi$ follow from~\eqref{e:LQT} (or from~\eqref{e:Z-terms}).
\end{proof}

\begin{remark}\label{rem:power_of_eps}
%The first two lines of \eqref{e:Aeps} obviously have a form which 
%``approximates'' \eqref{eq:SYM_DeTurck_coord} in any $d\ge 1$.
%Every term in  $\mathrm{Rem}_{e}$ actually contains at least one power
%of $\eps$. Indeed, 
%this is clear 
%for $\tilde{R}_2, \tilde{R}_3$ and $\Delta_3$ from their expressions.  
%It will also be clear for 
%$R_3$ when we rewrite them in Section~\ref{subsec:Mass renormalisation_1}.

The terms in \eqref{e:def-rem-r} all contain a power of $\e$.
Indeed,
this is the case for $\e^{-3} r_h ( \e A)$ and  $\e^{-3}  r_l(\e A)$ 
since $r_h(\A)$ in  \eqref{e:Delta123} and $r_l(\A)$ in  Lemma \ref{lem:third_order} are both $O(\A^4)$.
Regarding $\e^{-1}  \hat R^{\nabla}_{\eps}(\e A)$, under Assumption~\ref{assump:R} and by
Lemma~\ref{lem:gradient_R},
$ \hat R^{\nabla}_{\eps}(\A)$ is of the form 
\eqref{eq:R_grad_expansion}.
So assuming $d=2$, 
the terms  $O(\A^2)$ and $\e^{-2}O(\A^4)$ in \eqref{eq:R_grad_expansion} contribute a term $\eps O(A^2)$ and $\eps O(A^4)$
 to $\e^{-1} r(\e A) $,
which contains a power of $\eps$.
For the other terms in \eqref{eq:R_grad_expansion},
upon identifying $\mfg^*$ with $\mfg$,
the term $E^{(1)}_{\eps}(\sum_{a=1}^4 \A_a)$ contributes to $\e^{-1} r(\e A) $ a term
$\e E^{(1)}_{\eps}(\d_i A_j - \d_j A_i)$
which does have a factor $\e$.
Lastly, the term $\e^{-2} E^{(2)}_{\eps}(\sum_{a=1}^4 \A_a)^{\otimes 3}$ contributes to $\e^{-1} r(\e A) $ a term
$
\e E^{(2)}_{\eps}\big[(\d_i A_j - \d_j A_i)\otimes A\otimes A\big]
$
again containing a power of $\e$.
\end{remark}

%\begin{remark}
%The first two lines of \eqref{e:def-Q} 
%are the ``natural symmetric discretizations''
%to the quadratic terms in the $B_i$ component of the DeTurck-YM equation
%\begin{equ}\label{eq:2nd_order_B}
%[B_j, 2\partial_jB_i   -\partial_iB_j] + [B_i, \partial_i B_i ] \;, \quad j\neq i\;.
%\end{equ}
%\end{remark}
%

\section{Discrete regularity structures}
\label{sec:DiscreteRS}

\subsection{Discrete function spaces}\label{sec:funcSpaces}

Denote by $\T_\e^d \subset \T^d$ a generic square lattice with lattice spacing $\e$ which is embedded in $\T^d$.
Note that $\Lambda_N$ (where $\e=2^{-N}$) and $\obonds_j$
for $j\in [d]$ are all incidences of
 $\T_\e^d$, which are embedded in $\T^d$ in the natural way (i.e. identify midpoint of a bond with a point in $\T^d$).
We will write $A\asymp B$ if $A$ is bounded from above and below by $B$ up to proportionality constants uniform in $\e$ (or equivalently, in $N$).

We will write $\int_{\T_\e^d}$ for $\e^d \sum_{\T_\e^d}$.
Let $(E, |\cdot |)$ be a Banach space.
Given a function $f^\e\in E^{\T_\e^d}$ and  a smooth function $\phi\colon \T^d\to \R$,
we write $ \langle f^\eps, \varphi \rangle_\eps $ or $f^\e(\varphi)$
for $ \int_{\T_\eps^d} f^\eps(y) \varphi(y)\mrd y$.
Whenever it is clear from the context, 
for space-time functions $f^\e$ on $[0,T]\times \T_\e^d$ and smooth $\phi$ on $[0,T]\times \T^d$
we also write pairings as $ \langle f^\eps, \varphi \rangle_\eps $ or $f^\e(\varphi)$
with an additional integral in time.

Define the parabolic scaling $\s=(\s_0,\cdots,\s_d)=(2,1,\cdots,1)$ and write $|\s|=d+1$.
For a multi-index $k=(k_0,\ldots,k_d)\in \N^{d+1}$,
we write $|k|_\s = \sum_{i=0}^{d}\s_ik_i$.
For $z=(z_0,\cdots,z_d)\in\R^{d+1}$, let
\[
\|z\|_\s = \sup\{ |z_i|^{1/\s_i}:0\le i\le d\}\;,
\qquad
\CS_\s^\lambda z = (\lambda^{-\s_0} z_0,\cdots, \lambda^{-\s_d} z_d)\;.
\]
For $z=(z_0,\cdots,z_d)\in\R\times \T^d$, we define $\|z\|_\s$ in the same way.

For $\phi:\R^{d+1}\to \R$, we write $\phi^\lambda_z (y) = \lambda^{-|\s|} \phi(\CS_\s^\lambda(y-z))$.
We will also use rescaled test functions in time or space only:

(1) For $\phi:\R\to \R$, recalling $\s_0=2$, we write
 $\CS_{2,t}^{\lambda}\phi = \lambda^{-2} \phi(\frac{\cdot - t}{\lambda^2})$
for a rescaled temporal test function centred at $t$.

(2) For $\phi:\R^d\to \R$, we write
 $\CS_{x}^{\lambda}\phi = \lambda^{-d} \phi(\frac{\cdot - x}{\lambda})$
for a rescaled spatial test function centred at $x$,
and with a slight abuse of notation we also write it as $\phi^\lambda_x$.

Now we define some norms and distances 
for functions over space or over space-time.
Let $\alpha\in [0,1]$.
To compare functions $f\in\CC_\s^\alpha ([0,T]\times \T^d,E)$ 
 and $f^\e\colon [0,T]\times \T_\e^d \to E$, we define
\begin{equs}[e:discHolderT]
\|f;f^\e\|_{\CC_\e^{\alpha,T}}
&\eqdef 
\sup_{z\in [0,T]\times \T_\e^d}|f(z)-f^\e(z)|
+ \sup_{\substack{z, w\in [0,T]\times\T^d\\ \|z-w\|_\s < \e}}
\frac{|f(z)-f(w)|}{\|z-w\|_\s^\alpha}
\\
& + \sup_{\substack{z, w\in [0,T]\times\T_\e^d\\ \|z-w\|_\s \geq \e}} 
	\frac{|(f(z)-f(w))-(f^\e(z)-f^\e(w))|}{\|z-w\|_\s^\alpha}\;.
\end{equs}
We then define $\|f^\e\|_{\CC_\e^{\alpha,T}} = \|0;f^\e\|_{\CC_\e^{\alpha,T}}$ 
and spaces $\CC_\e^{\alpha,T}$ with this norm.

For any $f^\e\in E^{\T_\e^d}$ and $f\in\CC^\alpha(\T^d,E)$ in the usual H\"older space, 
we define $\|f;f^\e\|_{\CC^\alpha_\e}$ as in \eqref{e:discHolderT} 
but with $\|z-w\|_\s$ replaced by $|z-w|$ and with supremums over $\T^d_\e$ and $\T^d$.
%\begin{equs}[e:discHolder]
%\|f;f^\e\|_{\CC^\alpha_\e}
%\eqdef
%&\sup_{x\in\T_\e^d}|f(x)-f^\e(x)| +\sup_{\substack{x\neq y\in\R^d:\, |x-y|< \e}}
%\frac{|f(x)-f(y)|}{|x-y|^\alpha}\\
% &+ \sup_{x\neq y\in\T_\e^d}
%\frac{|(f(x)-f(y))-(f^\e(x)-f^\e(y))|}{|x-y|^\alpha}\;.
%\end{equs}
We then define $\|f^\e\|_{\CC_\e^\alpha}=\|0;f^\e\|_{\CC_\e^\alpha}$
and define discrete H\"older spaces $\CC_\e^\alpha(\T_\e^d,E)$ as the space of all elements $f^\e\in E^{\T_\e^d}$ with this norm.

Note that $\|f^\e\|_{\CC_\e^{0,T}} \asymp \|f^\e \|_{L^\infty([0,T]\times \T^d_\e)}$,
and likewise for the spatial norms.
We sometimes write $L^\infty_\e$ for $\CC^0_\e$.

\begin{remark}\label{rem:small_scales}
Although our time is continuous, the norms on $\CC_\e^{\alpha,T}$ do not measure oscillations when
$z=(t,x)$ and $w=(s,x)$
get very close (i.e. when $|t-s|<\e^2$). 
These oscillations in very small time scales will be measured by 
another collection of norms $\|\cdot \|_{\alpha;\K_\eps;z;\eps}$
defined in \eqref{eq:seminorm}.
\end{remark}
Now let $\alpha<0$.
For discrete distributions $f^\e$ on $[-1,T+1]\times\T^d_\e$
and distributions $f$ on $[-1,T+1]\times\T^d$ 
we define 
\begin{equ}\label{eq:f_e_f_alpha_neg}
\Vert f^\e ;f \Vert_{\CC^{\alpha,T}_\e} 
\eqdef 
\sup_{\varphi \in \CB^r_0} 
\sup_{z \in [0,T]\times \T_\e^d} 
\sup_{\lambda \in [\e,1]} 
\lambda^{-\alpha} 
|\langle f^\e, \varphi_z^\lambda \rangle_\e
-\langle f, \varphi_z^\lambda \rangle | \;,
\end{equ}
where $r$ is the smallest integer such that $r>-\alpha$ and $\CB^r_0$ is the set of smooth functions $\phi\in \CC^\infty(\R\times \T^d)$ with $|\phi|_{\CC^r}\leq 1$
and $\supp(\phi) \subset \{z\in\R\times\T^d\, :\, \|z\|_\s\leq 1/4\}$.
We define $\Vert f^\e \Vert_{\CC^{\alpha,T}_\e} =\Vert f^\e;0 \Vert_{\CC^{\alpha,T}_\e} $.

For discrete functions $f^\e$ on $\T^d_\e$ and distributions $f$ on $\T^d$
we define
%\begin{equ}[eq:f_e_f_alpha_neg_spatial]
%\Vert f^\e ;f \Vert_{\CC^{\alpha}_\e}
%\eqdef 
%\sup_{\varphi \in \CB^r_{\T^d}} 
%\sup_{z \in \T_\e^d} 
%\sup_{\lambda \in [\e,1]} 
%\lambda^{-\alpha} 
%|\langle f^\e, \varphi_z^\lambda \rangle_\e
%-\langle f, \varphi_z^\lambda \rangle | \;,
%\end{equ}
$\Vert f^\e ;f \Vert_{\CC^{\alpha}_\e}$ as in \eqref{eq:f_e_f_alpha_neg}
but with $z\in \T^d_\e$ and $\phi \in \CB^r_{\T^d}$,
where 
$\CB^r_{\T^d}$ is the set of smooth functions $\phi\in \CC^\infty(\T^d)$ with $|\phi|_{\CC^r}\leq 1$
and $\supp(\phi) \subset \{z\in\T^d\, :\, |z|\leq 1/4\}$.
We define $\Vert f^\e \Vert_{\CC^{\alpha}_\e} =\Vert f^\e;0 \Vert_{\CC^{\alpha}_\e} $.
Note that if we take $\alpha=0$ in this definition, it defines a norm equivalent to $\|f^\eps\|_{L^\infty(\T^d_\e)}$ uniformly in $\eps$ (which is then also equivalent with $\CC_\e^0$ defined above).
In contrast, if we take $\alpha=0$ and $f=0$ in~\eqref{eq:f_e_f_alpha_neg}, this does \textit{not} define a norm equivalent with $\|f\|_{L^\infty([0,T]\times \T^d_\e)}$.

%\begin{remark}
%When specifically referring to $\Lambda_N$ or $\obonds_i$ instead of a generic lattice $\T^d_\e$,
%we may write $\CC_\e^\alpha(\Lambda_N)$ or $\CC_\e^\alpha(\obonds_i)$, or omit these specific lattices when it is clear from the context.
%\end{remark}
We write $\CC_\e^\alpha(\Lambda_N)$ or $\CC_\e^\alpha(\obonds_i)$
when specifically referring to a particular lattice. 

The following simple facts will be helpful in the bounds for
the ``remainder terms'' showing up in our discrete equation. We will assume that $f$ is either a function on $\T^d_\e$ or a distribution on $[0,T]\times \T^d_\e$, and 
consider
a general discrete derivate $D^\eps f =\frac1{\eps}\sum_k a_k f(\cdot+k)$ with $\sum_k a_k=0$, where $k$ sums over  a finite set of  $\eps$-vectors.
For instance,  $k$ sums over $\{0,\eps_j\}$ for $\partial_j^+$.
%and over $\{0,\eps_1,\eps_2,\eps_1+\eps_2\}$ for $\bar\partial_1$.

\begin{lemma}\label{lem:eps-improve-reg}

\begin{enumerate}[label=(\roman*)] 
\item \label{pt:eps_f_vanish} There exists $C>0$ such that, for all $\alpha \leq \bar\alpha\leq  0$,
one has $\|f\|_{\CC^{\bar \alpha}_\eps} \leq C \eps^{\alpha-\bar\alpha}\|f\|_{\CC^\alpha_\eps}$
(if $\bar\alpha<0$, we can take $C=1$).
The same holds for the space-time norm $\|f\|_{\CC^{\alpha,T}_\e}$ provided that $\alpha\le\bar\alpha<0$.

\item\label{pt:eps_d_f_vanish} 
One has $\| \e D^\eps f \|_{\CC^{\bar\alpha}_\e}
\lesssim \e^{\alpha-\bar\alpha}\|f\|_{\CC^\alpha_\eps}$
for $\alpha,\bar\alpha\in[0,1]$.
The same holds for the space-time norm $\|f\|_{\CC^{\alpha,T}_\e}$.

\item\label{pt:eps_d_f} 
One has $\|D^\eps f\|_{\CC_\e^{-1}} \lesssim \|f\|_{L^\infty(\T^d_\eps)}$.
Moreover,
 $\| \e D^\eps f \|_{\CC^{\bar\alpha}_\e}
\lesssim \e^{\alpha-\bar\alpha}\|f\|_{\CC^\alpha_\eps}$
 for $\alpha,\bar\alpha\in [-1,0]$.
\end{enumerate}
\end{lemma}

\begin{proof}
%\ref{pt:eps_f_vanish} essentially follows from definitions; one needs only recall that $\lambda\in[\e,1]$ in the definition of $\|\cdot\|_{\CC^\alpha}$ for $\alpha<0$.
For \ref{pt:eps_f_vanish}, recalling the definition \eqref{eq:f_e_f_alpha_neg}
where $\lambda \ge \e$,
if $\alpha\leq\bar\alpha<0$
the bound follows from $\lambda^{-\bar\alpha}\le \e^{\alpha-\bar\alpha}\lambda^{-\alpha}$.
If $\alpha\leq \bar\alpha=0$, the case $\alpha=\bar\alpha$ is trivial, while if $\alpha<\bar\alpha$, then
taking $\varphi \in \CB^r_{\T^d}$ with $\varphi(0)=1$,
one has $|f(x)|=|\langle f, \varphi_x^\e \rangle_\e| 
\le \e^{\alpha}\|f\|_{\CC^\alpha_\eps}$.
For \ref{pt:eps_d_f_vanish}-\ref{pt:eps_d_f}, note that it suffices to consider $D^\e=\d^{\pm}_j$ due to the triangle inequality.
% and because in general $D^\eps f (x)=\frac1{\eps}\sum_k a_k (f(x+k)-f(x))$.
For \ref{pt:eps_d_f_vanish}, recalling \eqref{e:discHolderT} 
\begin{equs}
\|\e \partial_j^\pm f\|_{L^\infty(\T^d_\e)}
&= \| f(\cdot \pm \e_j)-f(\cdot)\|_{L^\infty(\T^d_\e)}
 \le \e^\alpha \|f\|_{\CC^\alpha_\e}
\\
\frac{|\eps\partial_j^\pm f(x)-\eps\partial_j^\pm f(y)|}{|x-y|^{\bar\alpha}}
&\le 
\frac{|f(x)-f(x\pm \e_j)| + |f(y)-f(y\pm \e_j)|}{|x-y|^{\bar\alpha}}
\lesssim \frac{ \e^\alpha \|f\|_{\CC^\alpha_\e}}{\e^{\bar\alpha}}.
\end{equs}
% and $|x-y|\geq \e$
%for all $x\neq y\in\T^d_\e$, which leads to the desired bound on $\| \e \partial f \|_{\CC^{\bar\alpha}_\e}$ by the triangle inequality.
The first bound in \ref{pt:eps_d_f} follows from summation by parts and the fact that $\|\phi^\lambda_z\|_{\CC^1}$ appearing in \eqref{eq:f_e_f_alpha_neg}
is bounded by $\lambda^{-1}$.
The second bound in \ref{pt:eps_d_f} then follows from
\[
\|\e \d_j^{\pm} f\|_{\CC^{\bar\alpha}_\e}
\lesssim \e^{-\bar\alpha}\|\d_j^{\pm} f\|_{\CC^{-1}_\e} \lesssim \e^{-\bar\alpha}\|f\|_{L^\infty(\T^d_\e)} \lesssim \e^{\alpha-\bar\alpha}\|f\|_{\CC_\e^{\alpha}}\;,
\]
where the first and third bounds follow from \ref{pt:eps_f_vanish}.
\end{proof}
We will also need ``inhomogeneous'' norms which distinguish time and space and allow possible blow-up at $t=0$.
For $f^\e\colon [0,T]\times \T^d_\e \to E$, $\alpha\leq 1$, $\eta \le 0$, define
\begin{equ}[e:def-C-alpha-eta]
\| f^\e\|_{\CC^{\alpha,T}_{\eta, \e}} \eqdef \sup_{t \in (0, T]}
(t^{\frac12} \wedge 1)^{-\eta} \|f^\e (t)\|_{\CC^\alpha_\e} \;.
\end{equ}
We remark that Young's theorem holds 
for our discrete spaces on $\T^d_\e$. 

\begin{lemma}\label{lem:discrete_Young}
Let $\alpha<0$ and $\beta \in (0,1)$ with  $\alpha+\beta>0$. 
Let $f^\e,g^\e\colon \T^d_\e\to \R$.
Then $\|f^\eps g^\eps\|_{\CC^\alpha_\e} \lesssim \|f^\e \|_{\CC^\alpha_\e} \|g^\e\|_{\CC^\beta_\e} $ uniformly in $\e$.
\end{lemma}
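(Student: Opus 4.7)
\medskip

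\noindent\textbf{Proof plan.}
The approach is to test against $\phi \in \CB^r_{\T^d}$, $x \in \T^d_\e$, $\lambda \in [\e,1]$, and split
\[
\langle f^\e g^\e, \phi^\lambda_x\rangle_\e
= g^\e(x)\langle f^\e, \phi^\lambda_x\rangle_\e
+ \langle f^\e,(g^\e-g^\e(x))\phi^\lambda_x\rangle_\e\;.
\]
The first term is immediately bounded by $\|g^\e\|_\infty\lambda^\alpha\|f^\e\|_{\CC^\alpha_\e}$, and $\|g^\e\|_\infty$ is controlled by $\|g^\e\|_{\CC^\beta_\e}$ since $\beta\geq 0$. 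The second term is where the interplay $\alpha+\beta>0$ enters.

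For the second term, the natural attempt is to treat $(g^\e-g^\e(x))\phi^\lambda_x$ as a rescaled test function, using the Hölder bound $|g^\e(y)-g^\e(x)|\lesssim \|g^\e\|_{\CC^\beta_\e}|y-x|^\beta$ to extract a factor $\lambda^\beta$. This fails directly because $g^\e$ is only $\beta$-Hölder and one cannot control $r$ derivatives. The standard remedy is a dyadic localisation: pick a partition of unity $\sum_{k\geq 0}\chi_k\equiv 1$ on $\supp\phi$ with $\chi_k$ supported in the annulus $\{2^{-k-1}\le|z|\le 2^{-k+1}\}$ (and $\chi_0$ absorbing the inner region), satisfying $\|\partial^l\chi_k\|_\infty\lesssim 2^{k|l|}$. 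Writing $\Psi_k(z)=\phi(z)\chi_k(z)$, one checks by direct rescaling that $\lambda^{-d}\Psi_k((y-x)/\lambda) = 2^{-kd}\tilde\Psi_k^{\lambda 2^{-k}}_x(y)$ where $\tilde\Psi_k(w)=\Psi_k(2^{-k}w)$ lies in $\CB^r_{\T^d}$ uniformly in $k$ (the $2^{k|l|}$ growth of derivatives of $\chi_k$ cancels the shrinking support).

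Now for each $k$ with $\lambda 2^{-k}\ge \e$, the definition of $\|f^\e\|_{\CC^\alpha_\e}$ gives
\[
\bigl|\langle f^\e,(g^\e-g^\e(x))\chi_k((\cdot-x)/\lambda)\lambda^{-d}\phi((\cdot-x)/\lambda)\rangle_\e\bigr|
\lesssim \|f^\e\|_{\CC^\alpha_\e}\|g^\e\|_{\CC^\beta_\e}\,2^{-kd}(\lambda 2^{-k})^{\alpha+\beta}\;,
\]
and summing in $k$ gives a geometric series in $2^{-k(d+\alpha+\beta)}$ that converges by $\alpha+\beta>0$ and $d\geq 1$, contributing a total bound of $\|f^\e\|_{\CC^\alpha_\e}\|g^\e\|_{\CC^\beta_\e}\lambda^{\alpha+\beta}$.

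The main subtlety, and the step requiring most care, is the tail $k$ for which $\lambda 2^{-k}<\e$: here the discrete test-function definition no longer directly applies, so I truncate the dyadic sum at $k^*\eqdef\lfloor\log_2(\lambda/\e)\rfloor$ and estimate the remainder in one shot. On the region $\{|y-x|<\e\}\cap\T^d_\e$ there are only $O(1)$ lattice points, and on each we bound $|f^\e(y)|\le \|f^\e\|_{L^\infty_\e}\lesssim \e^\alpha\|f^\e\|_{\CC^\alpha_\e}$ by Lemma~\ref{lem:eps-improve-reg}\ref{pt:eps_f_vanish} and $|g^\e(y)-g^\e(x)|\lesssim \|g^\e\|_{\CC^\beta_\e}\e^\beta$. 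Combined with $\lambda^{-d}|\phi((y-x)/\lambda)|\lesssim \lambda^{-d}$ and the factor $\e^d$ from the lattice measure, this contribution is at most $\|f^\e\|_{\CC^\alpha_\e}\|g^\e\|_{\CC^\beta_\e}\e^{\alpha+\beta}(\e/\lambda)^d\lesssim \|f^\e\|_{\CC^\alpha_\e}\|g^\e\|_{\CC^\beta_\e}\lambda^{\alpha+\beta}$, using $\alpha+\beta>0$ and $\e\le\lambda$. Taking the supremum over $\phi,x,\lambda$ finishes the proof.
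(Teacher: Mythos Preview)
Your proof has a gap at the central step. You correctly identify in the second paragraph that one cannot directly treat $(g^\e-g^\e(x))\phi^\lambda_x$ as a test function because $g^\e$ lacks $\CC^r$ control, and you introduce the dyadic annular decomposition as a remedy. But the decomposition does not cure the problem: when you then claim that ``the definition of $\|f^\e\|_{\CC^\alpha_\e}$ gives'' the bound $2^{-kd}(\lambda 2^{-k})^{\alpha+\beta}$ on the $k$-th piece, you are implicitly treating $(g^\e-g^\e(x))\,\tilde\Psi_k^{\lambda 2^{-k}}_x$, divided by $(\lambda 2^{-k})^\beta\|g^\e\|_{\CC^\beta_\e}$, as an element of $\CB^r_{\T^d}$. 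It is not, for exactly the reason you flagged: $g^\e$ is only $\beta$-H\"older on the lattice, so the product has no $\CC^r$ control. Concretely, adjacent lattice points $y,y'$ on the annulus give a discrete first difference of order $\e^{\beta-1}(\lambda 2^{-k})^{-d}$, not $(\lambda 2^{-k})^{\beta-1-d}$, so no smooth interpolant with the same lattice values can sit in $\CB^r$ at scale $\lambda 2^{-k}$. The localisation to an annulus delivers the pointwise size $(\lambda 2^{-k})^\beta$ but not smoothness; falling back on $\|f^\e\|_{L^\infty_\e}\lesssim\e^\alpha\|f^\e\|_{\CC^\alpha_\e}$ only yields $\e^\alpha$ in place of $(\lambda 2^{-k})^\alpha$, which after summing in $k$ gives $\e^\alpha\lambda^\beta\not\lesssim\lambda^\alpha$.

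The paper avoids this by a different and much shorter route: extend $f^\e$ to $\T^d$ by piecewise-constant interpolation and $g^\e$ by bilinear interpolation, check that the continuum $\CC^\alpha$ and $\CC^\beta$ norms of the extensions are controlled by the discrete norms uniformly in $\e$, apply the standard continuum Young product estimate, and verify that $\langle f^\e g^\e,\phi^\lambda_x\rangle_\e$ differs from the continuum pairing of the product of extensions by an error of order $\e^{\alpha+\beta}+\e^{\alpha+1}\lambda^{-1}\lesssim\lambda^\alpha$. A direct proof in your spirit is possible, but it requires a frequency decomposition of $g^\e$ (paraproducts, so that each piece is genuinely smooth), not a spatial annular decomposition of $\phi^\lambda_x$.
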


\begin{proof}
The proof follows in a straightforward way by extending $f^\e,g^\e$ to the continuum via piecewise constant and bilinear interpolation respectively and applying (the continuum) Young's product theorem.
%We extend $f^\e$ to $f\in L^\infty(\T^d)$ via piecewise constant (nearest neighbour) extension and extend $g^\e$ to $g\in L^\infty(\T^d)$ via bilinear interpolation.
%Note that $\|f\|_{\CC^\alpha} \lesssim \|f^\e \|_{\CC^\alpha_\e}$ and $\|g\|_{\CC^\beta}\lesssim  \|g^\e\|_{\CC^\beta_\e}$
%uniformly in $\eps$.
%Furthermore, for any $\phi\in \CB^1_{\T^d}$ and $\lambda \in [\eps,1]$, $z\in \T^d$, $x\in\T^d_\e$, and $y$ in the hypercube centred at $x$ with side length $\e$, we have $f(x)=f(y)$ and
%\begin{equ}
%|g(x)-g(y)|\lesssim \e^\beta\|g^\e\|_{\CC^\beta_\e}\;,
%\quad
%|\phi^\lambda_z(x)-\phi^\lambda_z(y)| \lesssim
%\lambda^{-d-1}\e \|\phi\|_{\CC^1}\;,
%\end{equ}
%from which it readily follows, using $\|f^\e\|_\infty \lesssim \eps^{\alpha}\|f^\e\|_{\CC^\alpha_\e}$, that
%\begin{equ}
%\lambda^{-\alpha}|\scal{f^\e g^\e,\phi^\lambda_z}_\e| \lesssim
%\lambda^{-\alpha}|\scal{fg,\phi^\lambda_z}|
%+
%\e^{\alpha+\beta}\lambda^{-\alpha-\beta}\|f^\e\|_{\CC^\alpha_\e}\|g^\e\|_{\CC^\beta_\e}\|\phi\|_{\CC^1} \;.
%\end{equ}
%Since $\alpha+\beta>0$, $\lambda\geq \eps$, and
%$\lambda^{-\alpha}|\scal{fg,\phi^\lambda_z}|\lesssim \|f\|_{\CC^\alpha}\|g\|_{\CC^\beta}$
%by the classical Young's product theorem, the conclusion follows.
\end{proof}

\subsubsection{Comparison with line integral spaces}
\label{subsubsec:compare_line_ints}

Recall the discrete spaces $\Omega_{N;\alpha}$ with projective limit $\Omega_\alpha$ from Section~\ref{sec:norms}.
We will identify every $A\in\Omega_N$ with the family of functions $(A_i)_{i\in[d]}$ with $A_i\colon  \obonds_i\to \mfg$ given by $A_i(b) = \e^{-1} A(b)$.
Note that $A_i$ differs from $A\restr_{\obonds_i}$ for $A\in\mfg^{\obonds}\simeq \Omega_N$ (see under \eqref{eq:A_def_1_form})
by a factor of $\e^{-1}$; we will always denote this difference through the subscript $A_i$.

We also denote $\|A\|_{\CC^\eta_\e}=\max_{i\in[d]}\|A_i\|_{\CC^\eta_\e}$ for $A\in\Omega_{N}$,
%where $\T^d_{\e,i}$ is the lattice given by the midpoints of $\obonds_i$ and $b_m$ is the midpoint of $b\in \obonds_i$.
for which we have the bounds  $|A|_{N;\alpha} \lesssim \|A\|_{\CC_\e^{\alpha/2}}$
and $\|A\|_{\CC^{\alpha-1}_\e} \lesssim  |A|_{N;\alpha}$
uniformly in $\e=2^{-N}$, which are the discrete analogues of~\eqref{eq:cont_embedding} and \eqref{eq:dist_embedding} respectively
(the latter of these is proved in the same way as~\cite[Prop.~3.21]{Chevyrev19YM}, see also the proof of Lemma~\ref{lem:A_pi_N_A} below).
We therefore have the uniform in $\eps$ embeddings
\begin{equ}[eq:embeddings]
\CC^{\alpha/2}_\e \hookrightarrow \Omega_{N;\alpha} \hookrightarrow \CC^{\alpha-1}_\e\;.
\end{equ}
The following lemma 
compares $A\in\Omega_\alpha^1$ with its projection $\pi_N A\in\Omega_N$.

\begin{lemma}\label{lem:A_pi_N_A}
For $\alpha\in (0,1]$, $\eta\in [\frac\alpha2-1,\alpha-1]$, $i\in[d]$,
and $A=(A_1,\ldots,A_d)\in\Omega_\alpha^1$, we have $\|A_i;\pi_N A_i\|_{\CC^\eta_\e} \lesssim \e^{\alpha-1-\eta}|A|_\alpha$ uniformly in $\e$.
\end{lemma}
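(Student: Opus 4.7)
\emph{Plan.} I will prove the lemma by establishing the two endpoint estimates, at $\eta=\alpha-1$ and at $\eta=\alpha/2-1$, and interpolating between them. By density of $\Omega\CC^\infty$ in $\Omega^1_\alpha$, it suffices to treat smooth $A$ and obtain bounds uniform in the smoothness.

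At the endpoint $\eta=\alpha-1$, the triangle inequality combined with the continuum embedding \eqref{eq:dist_embedding}, the discrete embedding in \eqref{eq:embeddings}, and contractivity of $\pi_N$ (i.e.\ $|\pi_N A|_{N;\alpha}\leq |A|_\alpha$) gives directly
\[
\|A_i;\pi_N A_i\|_{\CC^{\alpha-1}_\e}\leq \|A_i\|_{\CC^{\alpha-1}_\e}+\|\pi_N A_i\|_{\CC^{\alpha-1}_\e}\lesssim |A|_\alpha\;.
\]

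At the endpoint $\eta=\alpha/2-1$, the goal is to show $|\langle A_i,\phi^\lambda_z\rangle-\langle \pi_N A_i,\phi^\lambda_z\rangle_\e|\lesssim \lambda^{\alpha/2-1}\e^{\alpha/2}|A|_\alpha$ uniformly for test functions $\phi^\lambda_z$ at scales $\lambda\in[\e,1]$. Following the approach of \cite[Prop.~3.21]{Chevyrev19YM}, I would apply a discrete summation-by-parts in the directions transverse to $e_i$, exploiting the fact that $\pi_N A_i$ is the $e_i$-line average of $A_i$ over each bond. After regrouping, the difference is a sum indexed by pairs of parallel $e_i$-lines of length $\sim\lambda$ at transverse Hausdorff distance $\e$, weighted by transverse derivatives of $\phi^\lambda_z$. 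The rectangle semi-norm bound $|A(\ell)-A(\bar\ell)|\leq |A|_{\alpha;\rho}\rho(\ell,\bar\ell)^\alpha = |A|_{\alpha;\rho}(\lambda\e)^{\alpha/2}$ combined with standard estimates on derivatives of $\phi^\lambda_z$ then produces the required $\e^{\alpha/2}\lambda^{\alpha/2-1}$ factor.

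For general $\eta\in[\alpha/2-1,\alpha-1]$, writing $\eta=\theta(\alpha-1)+(1-\theta)(\alpha/2-1)$ with $\theta\in[0,1]$ and applying $\min(a,b)\leq a^\theta b^{1-\theta}$ to the two endpoint bounds yields
\[
\lambda^{-\eta}|\langle A_i-\pi_N A_i,\phi^\lambda_z\rangle|\lesssim \e^{(1-\theta)\alpha/2}|A|_\alpha=\e^{\alpha-1-\eta}|A|_\alpha\;,
\]
which is the claimed estimate. The main obstacle is the second endpoint: the summation-by-parts must be arranged so that the line-integral differences arise at the coarse length scale $\lambda$ rather than only at the bond scale $\e$; only then does $\rho(\ell,\bar\ell)^\alpha=(\lambda\e)^{\alpha/2}$ extract the needed $\e^{\alpha/2}$ factor, whereas bond-scale differences alone give only $\e^\alpha$, which is insufficient to reach the claimed interpolation endpoint.
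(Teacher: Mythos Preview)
Your interpolation scheme is sound and the first endpoint $\eta=\alpha-1$ via the triangle inequality is correct. The gap is at the second endpoint $\eta=\alpha/2-1$. The manipulation you describe---summation by parts transverse to $e_i$ so that only pairs of parallel lines at distance $\e$ remain---does not yield such a representation: besides transverse shifts of line integrals (which the rectangle semi-norm indeed controls by $(\lambda\e)^{\alpha/2}$), the difference carries a \emph{longitudinal} discretization error, from comparing the continuum row integral $\int A_i(\cdot,y_\perp)\,\phi^\lambda_z(\cdot,y_\perp)$ with its Riemann sum at the \emph{same} transverse coordinate $y_\perp$. An integration by parts along $e_i$ converts this into a comparison of $\partial_i\phi^\lambda_z$ with its discrete counterpart, and extracting the needed $\e$-factor from that would cost $|\phi|_{\CC^2}$; for $\eta\in(-1,0)$ the comparison norm is taken against test functions with only $|\phi|_{\CC^1}\leq 1$, so this route stalls at $\lambda^{\alpha-1}$ with no $\e$-improvement.

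The paper resolves this by never integrating by parts. Each row integral is kept as a Young integral $\int Y\,\mrd X$ (with $Y$ the test function along the line and $X$ the line functional of $A$), and the continuum--discrete comparison $\bar L(\bar z)-L(z)$ is estimated via Young's inequality \eqref{eq:Young_int} combined with the $p$-variation interpolation $|Z|_{\var\beta}\leq|Z|_{\var 1}^{1/\beta}|Z|_\infty^{1-1/\beta}$. This interpolation is precisely what extracts $\e^{1-1/\beta}$ from $|Y^z-\bar Y^{\bar z}|_{\var\beta}$ using only $|Y^z-\bar Y^{\bar z}|_\infty\lesssim\e\lambda^{-d-1}|\phi|_{\CC^1}$, while the $(\e\lambda)^{\alpha/2}$ factor you correctly identified enters through $|X^z-\bar X^{\bar z}|_\infty$ via the rectangle norm. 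The variation exponent $\beta$ plays the role of your interpolation parameter and delivers all $\eta\in[\alpha/2-1,\alpha-1]$ at once.
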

For the proof, let $\CD^{\var p}([a,b],E)$ denote the space of c\`adl\`ag (right-continuous with left-limits) functions $X\colon[a,b]\to E$ with finite $p$-variation\label{page:variation_norm}
\begin{equ}[eq:p-var]
|X|_{\var p;[a,b]} \eqdef \sup_{P\subset [a,b]} \Big(\sum_{[s,t]\in P} |X_t - X_s|^p\Big)^{1/p}
\end{equ}
for $p\in[1,\infty)$ and where the $\sup$ is over all partitions $P$ of $[a,b]$ into disjoint (modulo endpoints) intervals.
We also denote $|X|_{\var\infty}= \sup_{s,t\in[a,b]} |X_t-X_s|$.
Denote by $\CC^{\var p}\subset \CD^{\var p}$ the subspace of continuous functions.

Consider $1\leq p,q  \leq \infty$
such that $\frac1p+\frac1q\geq 1$ where equality is allowed only if $p,q\in\{1,\infty\}$.
For $Y\in\CD^{\var q}([a,b],\R)$ and $X\in\CD^{\var p}([a,b],E)$,
recall Young's estimate for integrals
(see \cite[Prop.~1.9,~2.4]{FZ18} and also \cite[Thm.~6.8]{FV10})
\begin{equ}[eq:Young_int]
\Big|\int_a^\cdot Y_t\mrd X_t\Big|_{\var p;[a,b]} \leq C_{p,q} (|Y_0|+|Y|_{\var q;[a,b]})|X|_{\var p;[a,b]}\;,
\end{equ}
where the integral is understood as the limit of Riemann sums
with $Y$ evaluated at the left ends of the subintervals.
%\begin{equ}
%\int_a^s Y_t\mrd X_t = \lim_{|P| \to 0} \sum_{[u,v]\in P} Y^-_u(X_v-X_u)\;,
%\end{equ}
%where $|P|=\max_{[u,v]\in P}|v-u|$ is the mesh of a partition $P$ of $[a,s]$
%and $Y_u^- \eqdef \lim_{r\uparrow u} Y_r$.

\begin{proof}[of Lemma~\ref{lem:A_pi_N_A}]
For $\phi\in \CB^1(\T^d)$, $\lambda\in[\eps,1]$, and $x\in \T^d$,
%as in the proof of~\cite[Prop.~3.21]{Chevyrev19YM},
we can write $\scal{A_i,\phi^\lambda_x} = \int_{\bar B}\bar L(\bar z) \bar \nu(\mrd \bar z)$
where $\bar B\subset \T^d$ is the $(d-1)$-dimensional hypercube of side-length $\asymp \lambda$ centred at $x$ and perpendicular to $e_i$ and $\bar \nu$ is the $(d-1)$-dimensional Lebesgue measure.
Here $\bar L(\bar z) = \int_{-1}^1 \bar Y^{\bar z}_t\mrd  \bar X^{\bar z}_t$ where
$\bar Y^{\bar z} \in \CC^{\var 1}([-1,1],\R)$
and $\bar X^{\bar z} \in \CC^{\var{1/\alpha}}([-1,1],\mfg)$
are defined by
\begin{equ}
\bar Y^{\bar z}_t = \phi^\lambda_x(\bar z+t\lambda e_i)\;,
\qquad
\bar X^{\bar z}_t = \int_{-1}^t \lambda A_i(z+t\lambda e_i)\mrd t\;,
\end{equ}
where the second term is well-defined since $A\in\Omega^1_\alpha$.
It is simple to verify that
\begin{equ}[eq:bar_YX_var_bounds]
|\bar Y^{\bar z}|_{\var 1} \lesssim \lambda^{-d}|\phi|_{\CC^1}\;,
\qquad
|\bar X^{\bar z}|_{\var{1/\alpha}}\lesssim \lambda^{\alpha}|A|_{\gr\alpha}\;.
\end{equ}

We can likewise write $\scal{\pi_N A_i,\phi^\lambda_x}_\e=\int_{B} L(z) \nu(\mrd z)$, where $B$ is the intersection of $\bar B$ with $\obonds_i$ (recall we identify bonds with their midpoints)
and
$\nu$ is $\eps^{d-1}$ times the counting measure on $B$.
Here $L(z)=\int_{-1}^1 Y^z_t\mrd X^z_t$ where
$Y^z \in \CD^{\var 1}([-1,1],\R)$
and $X^z \in \CD^{\var{1/\alpha}}([-1,1],\mfg)$
are defined by
\begin{equ}
Y_t^z = \phi^\lambda_x(z + \floor{t 2^N \lambda}\e_i)\;,
\qquad
X^{z}_t = \sum_{j=\floor{-2^N\lambda}}^{\floor{t 2^N\lambda}} \e (\pi_N A)_i(z+(j-1)\e_i)\;,
\end{equ}
and we have the bounds as~\eqref{eq:bar_YX_var_bounds} for $Y^z,X^z$.

Furthermore, we can take $B,\bar B$ such that
nearest neighbour approximation partitions $\bar B$ into subsets $\{P_z\}_{z\in B}$,
each with volume $\bar\nu (P_z) = \e^{d-1}$.
Then for every $z\in B$ and $\bar z\in P_z$,
\begin{equ}
|Y^z-\bar Y^{\bar z}|_\infty \lesssim \eps\lambda^{-d-1}|\phi|_{\CC^1}\;,\qquad
|X^z-\bar X^{\bar z}|_\infty \lesssim (\eps\lambda)^{\alpha/2}|A|_\alpha\;.
\end{equ}
Recall the interpolation estimate for $1\leq\gamma\leq\beta\leq \infty$ 
\begin{equ}[eq:var_interpolate]
|Z|_{\var\beta}
\leq |Z|_{\var\gamma}^{\gamma/\beta}|Z|^{1-\gamma/\beta}_{\var\infty}
\lesssim |Z|_{\var\gamma}^{\gamma/\beta}|Z|^{1-\gamma/\beta}_{\infty}\;.
\end{equ}
Applying~\eqref{eq:var_interpolate}
with $\gamma=1$,
we obtain, for any $\beta\in[1,\infty]$ and $z\in B,\bar z\in P_z$
\begin{equ}
|Y^z-\bar Y^{\bar z}|_{\var\beta}
\lesssim \lambda^{-d/\beta} \eps^{1-1/\beta}\lambda^{(-d-1)(1-1/\beta)}|\phi|_{\CC^1}
= \eps^{1-1/\beta}\lambda^{-d - (1-1/\beta)}|\phi|_{\CC^1}\;.
\end{equ}
Furthermore, applying~\eqref{eq:var_interpolate}
with $\gamma=1/\alpha$, we obtain for any $\beta\in[\frac1\alpha,\infty]$
\begin{equ}
|X^z-\bar X^{\bar z}|_{\var\beta}
\lesssim \lambda^{1/\beta} (\eps\lambda)^{\frac\alpha2(1-\frac1{\alpha\beta})}
|A|_\alpha
= \eps^{\frac\alpha2-\frac1{2\beta}}
\lambda^{\frac\alpha2 +\frac1{2\beta}}|A|_\alpha\;.
\end{equ}
Therefore, taking $1/\beta=1-\alpha+\kappa$ for $\kappa\in(0,\alpha)$, we obtain from~\eqref{eq:Young_int}
\begin{equ}
\Big|\int_{-1}^1(\bar Y^{\bar z}_t -Y^z_t) \mrd X^z_t\Big| \lesssim \e^{\alpha-\kappa}\lambda^{-d+\kappa}|\phi|_{\CC^1}|A|_\alpha\;,
\end{equ}
while taking $\beta\geq 1/\alpha$, we obtain from~\eqref{eq:Young_int} for any $\kappa = \frac1{2\beta}\in[0,\frac\alpha2]$
\begin{equ}
\Big|\int_0^1 \bar Y^{\bar z}_t \mrd(X^z_t-\bar X^{\bar z}_t)\Big|
\lesssim \e^{\frac\alpha2-\kappa}\lambda^{-d+\frac\alpha2+\kappa}|\phi|_{\CC^1}|A|_\alpha\;.
\end{equ}
Since $\frac\alpha2-\kappa$ is the smaller exponent of $\eps$ and $\lambda\geq \eps$,
we obtain for any $z\in B$ and $\bar z \in P_z$ that $|L(z) - \bar L(\bar z)|\lesssim \e^{\frac\alpha2-\kappa}\lambda^{-d+\frac\alpha2+\kappa}$.
Therefore, since $|B|\asymp (\lambda/\e)^{d-1}$, we obtain for $\eta \in [\frac\alpha2-1,\alpha-1]$ (playing the role of $\frac\alpha2+\kappa-1$)
\begin{equ}
|\scal{\pi_N A_i,\phi^\lambda_x}_\e-\scal{A_i,\phi^\lambda_x}|
\leq \sum_{z\in B} \int_{P_z} |L(z) -\bar L(\bar z)| \bar \nu(\mrd z)
\lesssim
% \e^{\frac\alpha2-\kappa}\lambda^{\frac\alpha2+\kappa-1}
%\\
%&\leq
\e^{\alpha-1-\eta} \lambda^\eta\;.
\end{equ}
\end{proof}

\subsection{Trees and regularity structure}
\label{sec:tr-reg}

In the rest of the article, we consider only $d=2$.
In this subsection, we define the regularity structure used to analyse the discrete equation~\eqref{e:Aeps}.
Our construction uses ideas from the continuum 2D YM in~\cite[Sections~5-6]{CCHS_2D} but with a number of significant differences.
We first recall briefly the important steps in the construction in \cite{CCHS_2D}.
In \cite[Sec.~6.1]{CCHS_2D}, we define the label set $\mfL^\YM=\mfL_-^\YM\sqcup\mfL_+$ with noise labels 
$\mfL^\YM_-=\{\mfl_1,\mfl_2\}$ and solution/kernel labels $\mfL_+=\{\mfa_1,\mfa_2\}$,
and consider trees associated with these labels.
We use a rule coming from the equation~\eqref{eq:SYM_moll} to build an ``abstract'' regularity structure $\ST$ consisting of trees.
The trees with negative degree are of the following form (the degree of a tree is given below it).\footnote{With
the formulation in \cite{BHZ19,CH16} one would also have $\<I'XXi_notriangle>$, $\<IXiI'XXi_notriangle>$, $\<IXXiI'Xi_notriangle>$ where crossed circles $\<XXi>$ denote 
$\mbX \Xi$. Here, since we essentially build the trees by hand as in \cite{Hairer14}, we drop these trees.}

\begin{equ}[e:tree-list-YM]
\begin{tabular}{ccccccc}
\toprule
$\<Xi>$
&
$\<IXiI'Xi_notriangle>$
&
 $\<I'Xi_notriangle>$
&
$\begin{array}{c}
\<IXi^3_notriangle>\ \<IXiI'[IXiI'Xi]_notriangle> \  \<I[IXiI'Xi]I'Xi_notriangle>
\end{array}$
&
$\begin{array}{c}
\<I'[IXiI'Xi]_notriangle>\  \<I[I'Xi]I'Xi_notriangle>\ \<IXiI'[I'Xi]_notriangle> \ \<IXi^2>
\end{array}$
&
$\begin{array}{c}
\<IXi> \; \<I'[I'Xi]_notriangle> \;\; {\bf X}\<I'Xi_notriangle>
\end{array}$
\\
\midrule
$-2-\kappa$
&
$-1-2\kappa$
&
$-1-\kappa$
&
$-3\kappa$
&
$-2\kappa$
&
$-\kappa$
\\
\bottomrule
\end{tabular}
\end{equ}
Here, the noises which will be denoted by $\Xi_{i}$ are circles $\<Xi>$, 
edges $\mcb{I}_{i}$ and $\partial_j\mcb{I}_{i}$ 
denoting abstract integration operators
are thin and thick grey lines respectively,
and $\kappa>0$ is sufficiently small and fixed, e.g. $\kappa=\frac1{100}$ suffices.
We follow standard conventions that joining the roots of two trees denotes multiplication which is commutative, i.e. $\<IXiI'Xi_notriangle>=\<I'XiIXi_notriangle>$.
Noises are treated as edges that are terminal, so we draw them as simply as $\<Xi>$ (and later as other symbols, e.g. $\<Xi1>$).
See~\cite[Sec.~1.2]{ESI_lectures} for the formal correspondence between graphical trees and symbols.
Recall that each symbol above actually corresponds to a family of trees, determined by assigning indices in a way that conforms to the rule that generates the trees. 
For instance, when we say that $\tau$ is of the form $\<I[IXiI'Xi]I'Xi_notriangle> $, then $\tau$ could be 
the following tree
\begin{equ}[e:exam-chicken-tree]
\mcb{I}_{1}\big(\mcb{I}_{2}(\Xi_{2}) \partial_{1}\mcb{I}_{2}(\Xi_{2})\big)
\partial_{1}\mcb{I}_{1}(\Xi_{1})\;,
\end{equ}
or other choices of the indices conforming to the structure of the quadratic nonlinear terms in our equation.

One then defines target space assignment  $W_\mft$ and corresponding space assignment $V_\mft$
(in the terminologies as in \cite[Sec.~5]{CCHS_2D}) by
\begin{equ}[e:space-assig]
W_\mft=\mfg\;,\quad (\mft\in\mfL^\YM)\;,
\qquad
V_\mfa=\R \;, \quad (\mfa\in\mfL_+)\;,
\qquad
V_\mfl=\mfg^*\;,  \quad (\mfl\in\mfL^\YM_-)\;.
\end{equ}
Recall that the ``concrete'' regularity structure $\CT$ is then given by a functor $\Func_V$ applied to $\ST$ (likewise for all other algebraic objects).
The concrete regularity structure is a direct sum $\bigoplus_\tau\CT[\tau]$ of vector spaces indexed by trees where $\CT[\tau]=\Func_V(\tau)$.
For example, $\Func_V( \<IXi^2> )\simeq (\mfg^*)^{\otimes_s 2}$, the space of symmetric $2$-tensors of $\mfg^*$.
We write $\CT_\alpha = \bigoplus_{|\tau|=\alpha}\CT[\tau]$ for the subspace of degree $\alpha$.

There are, however, several reasons that 
our regularity structure will actually be an enlargement of the above one used in \cite[Sec.~6.1]{CCHS_2D} for the continuum YM,
which we describe now.

First,
our regularity structure $\CT$ will have a splitting  
%\begin{equ}[e:T1T2_heuristic]
$\CT=\CT^{(1)}\oplus \CT^{(2)}$.
%\end{equ}
This splitting encodes the information on whether the trees will be realised as functions on horizontal or vertical bonds,
which will be important for defining models. In particular we will have two copies of abstract polynomials,
written as $\bone^{(1)},\X_1^{(1)},\X_2^{(1)} \in \CT^{(1)}$, and
$\bone^{(2)},\X_1^{(2)},\X_2^{(2)}\in \CT^{(2)}$.   (We will only need abstract polynomials of degree at most $1$ in this paper.)
We recall at this stage that each polynomial encodes a tree $\tau$ with a single vertex and that $\CT[\tau]\simeq \R$.
We will not distinguish between $\R \tau$ and $\CT[\tau]$, i.e. we treat $\bone^{(1)}, \X_1^{(1)}$, etc.
both as trees and as basis vectors for $\CT[\bone^{(1)}]$, $\CT[\X_1^{(1)}]$ and so on.

More importantly, we will need to include more trees due to the ``error'' terms
\begin{equ}[e:deg-1errors]
\e F(\e A)[A,\xi] \;\; \mbox{in} \;\; \eqref{e:Z-terms}
\qquad \mbox{and} \qquad
\tilde{R}_2(A) \;\; \mbox{in} \;\; \eqref{e:tildeR2}\;.
\end{equ}

\begin{remark}
The quadratic terms in \eqref{e:deg-1errors} are the most singular ``error'' terms, which have scaling dimension $-1$.
% and which do not appear amongst the ``main'' terms of the discrete SPDE  \eqref{e:Aeps}.
Since the choice of the discrete DeTurck term $-\mrd_U \mrd^* U$ in~\eqref{eq:discrete_hat_U} is not unique, 
%it is not clear to us
one may wonder if a different choice would simplify  the  terms in \eqref{e:deg-1errors}.
In any case, the general form the DeTurck term is $-\mrd_U (\mrd^* U + \omega)$ for a function $\omega\in\mfg^\Lambda$,
which, up to 2nd order, would amount to replacing $\nabla \cdot \A$ by $\nabla \cdot \A + \omega$ in~\eqref{eq:DeTurck_1}-\eqref{eq:DeTurck_2}.
Considering that~\eqref{e:Z-terms} and~\eqref{e:tildeR2} consist of terms of the form $\e A_i\xi_i$, $\e(\partial_j A_j)^2$, and $\e^2\d_j^2A_i\d_jA_j$ with $j\neq i$,
it seems unlikely that a significant simplification is possible.
\end{remark}
Moreover, our discrete SPDE has nonlocal nonlinearities. To encode this, we introduce shifting operators $\CS_{\be}$, indexed by $\be\in\CE_\times$ (recall the notation in Section~\ref{sec:more_notation}), that will be defined on sectors $V_{\be}$ of our regularity structure.
These maps will satisfy the following properties:
\begin{itemize}
\item
for all $i\neq j\in\{1,2\}$ and $\alpha\in\R$,
$\CS_{\be}$ maps $\CT_\alpha^{(i)}\cap V_{\be}$ to $\CT_\alpha^{(j)}$,
\item
$\CS_{\be}$ is norm preserving,

\item for each $i\neq j\in \{1,2\}$,
 $\CS_{\be}$ maps $\bone^{(i)}$  to $\bone^{(j)}$ and 
 $ {\bf X}^{(i)}_k$ to ${\bf X}^{(j)}_k$.
\end{itemize}
Finally, we have multiple types of derivatives appearing in our discrete equation which will be encoded by the set of abstract differentiation 
operators 
\begin{equ}\label{eq:Diff_def}
\Diff \eqdef  \{ \CD_1,\CD_2,\bar\CD_1,\bar\CD_2 , \CD_1^+ , \CD_2^+, \CD_1^-, \CD_2^-\}\;,
\end{equ}
which are defined on sectors of $\CT$ that we soon specify.
We impose that, for every $\CD\in \Diff\backslash \{\bar\CD_1,\bar\CD_2\}$ with domain $V_\CD$,
$\CD\colon V_\CD\cap \CT_\alpha^{(i)} \to \CT_{\alpha-1}^{(i)}$,
while for $\CD\in \{\bar\CD_1,\bar\CD_2\}$
we have
$\CD\colon V_\CD\cap \CT_\alpha^{(i)} \to \CT_{\alpha-1}^{(3-i)}$.
In particular, %unlike e.g. \cite{Hairer14} where one only has $\CD_i$ such that $\CD_i {\bf X}_j = \delta_{ij}\bone$, here 
\begin{equ}[e:DX=1]
\CD_i^\pm {\bf X}_j^{(k)}  = \delta_{ij} \bone^{(k)}\;,
\qquad
\CD_i {\bf X}_j^{(k)}  = \delta_{ij} \bone^{(k)}\;,
\qquad
\bar\CD_i {\bf X}_j^{(k)}  = \delta_{ij} \bone^{(3-k)}\;,
\end{equ}
for $i,j,k\in \{1,2\}$.  See Remark~\ref{rem:bar-partial} for motivation, as well as Remark~\ref{rem:PiDX}.

\begin{remark}\label{rem:splitting}
The picture one should keep in mind is that $\CT^{(1)}$ and $\CT^{(2)}$ are isomorphic but distinct regularity structures, with isomorphic structure groups,
that communicate only through the shifts $\CS_\be$ and through the ``switching'' derivatives $\bar\CD_i$.
In particular, products of the form $\tau_1\tau_2$ with $\tau_1\in\CT^{(1)}$, $\tau_2\in\CT^{(2)}$ never appear,
and the polynomials $\bone^{(1)}$ and $\bone^{(2)}$ are units for multiplication in $\CT^{(1)}$ and $\CT^{(2)}$ respectively (i.e. $\bone^{(i)}\tau=\tau$ for $\tau\in\CT^{(i)}$).
\end{remark}
We next describe the full regularity structure $\CT$ for our discrete equation.
We use a recursive construction akin to~\cite[Sec.~8]{Hairer14} (though we could have
equivalently used the
formalism of rules~\cite{BHZ19}).
For clarity, we proceed in three steps
(it is possible to combine these steps, but we
separate them for clarity).

\subsubsection{Step 1: continuum YM trees}
\label{subsubsec:step1}

We recall the usual noise labels
%\begin{equ}\label{eq:mfL_YM}
$\mfL^{\YM}_-=\{\mfl_1,\mfl_2\}$
%\end{equ}
corresponding to the continuum 2D YM (we will later add more noise labels to form $\mfL_- = \mfL^\YM_- \sqcup\mfL^\rem_-$),
as well as the solution/kernel labels $\mfL_+=\{\mfa_1,\mfa_2\}$. 
%We define degrees as usual
%\begin{equ}
%\deg(\mfl_i) = -2-\kappa \;,
%\quad
%\deg(\mfa_i) = 2\;.
%\end{equ}
In the first step towards defining $\CT$, we describe a set of trees $\mfT^\YM=\mfT_1^\YM\sqcup \mfT_2^\YM$\label{page:mfT_YM_-}
which are similar to the trees for the continuum 2D YM.
For motivation behind the construction below, we recommend the reader reviews the nonlinearities in Proposition~\ref{prop:rescaled-equ} (see also the `abstract' fixed-point problem~\eqref{e:fix-pt-Ae} below).

\smallskip
\noindent
\textbf{Base Case (Step 1).}
For each $i \in \{1,2\}$, we let
\begin{equ}[e:rule-main1]
\bar{\mfT}_i \eqdef \{\bone^{(i)}, {\bf X}^{(i)}_1, {\bf X}^{(i)}_2 \} \subset \mfT_i^\YM \;,   \qquad
 \Xi_i \eqdef \Xi_{\mfl_i} \in \mfT_i^\YM \;.
\end{equ}

\smallskip
\noindent
\textbf{Integration (Step 1).}
Let $\mcb{I}_1$, $\mcb{I}_2$ be abstract integration operators.
For all $i\neq j\in\{1,2\}$, $\be\in\CE_\times$ and $\tau \in \mfT^\YM_i$ with $|\tau|<-1$, we enforce that,
\begin{equ}[e:rule-main3]
\mcb{I}_i (\tau) \in \mfT^\YM_i \;,
\qquad 
\CS_\be \mcb{I}_i (\tau) \in \mfT^\YM_j \;.
\end{equ}

\smallskip
\noindent
\textbf{Nonlinearity (Step 1).}
For all
\begin{itemize}
\item $i \neq j\in \{1,2\}$, $\be,\be'\in\CE_\times$, and $\bw\in\{\Northwest,\Southwest\}$,

\item $\tau_i,\sigma_i\in \mfT^\YM_i$ of the form $\tau_i=\mcb{I}_i(\bar\tau_i),\sigma_i=\mcb{I}_i(\bar\sigma_i)$ or $\tau_i,\sigma_i \in \bar{\mfT}_i$, and

\item $\tau_j,\sigma_j,\omega_j\in \mfT^\YM_j$ of the form 
$\tau_j=\mcb{I}_j(\bar\tau_j),\sigma_j=\mcb{I}_j(\bar\sigma_j),\omega_j=\mcb{I}_j(\bar\omega_j)$
where $\bar\omega_j\neq \Xi_j$, or $\tau_j,\sigma_j,\omega_j \in \bar{\mfT}_j$,
\end{itemize}
we enforce that every
\begin{equs}[e:rule-main2]
\sigma \in
&\{\;
(\CS_{\be} \tau_j )(\CD_j \tau_i) \;, \;
(\CS_{\be} \tau_j )(\bar\CD_i \CI_j\Xi_j) \;,\;
(\CS_{\be} \tau_j )(\CD_i^+ \CS_\bw \omega_j) \;,\;
\\
&\quad
\tau_i \CD_i \sigma_i\;,\;
(\CS_{\be} \tau_j )(\CS_{\be'} \sigma_j )\tau_i\;\}
\end{equs}
belongs to $\mfT_i^\YM$ provided that $|\sigma| \leq 0$.

Above, we assign degrees using the rules
\begin{equ}\label{eq:degree_step_1}
|\CI(\tau)| = |\tau|+2\;,\quad |\CD\tau|=|\tau|-1\;, \quad
|\CS_\be\tau|=|\tau|\;,
\quad |\tau_1\tau_2| = |\tau_1|+|\tau_2|\;,
\end{equ}
along with the base cases $|\Xi|=-2-\kappa$, $|\bone|=0$, and $|\X|=1$.

\begin{remark}
The terms in~\eqref{e:rule-main2} arise from the main nonlinearities in~\eqref{e:Aeps}.
Since $\tau_i,\sigma_i,\tau_j,\sigma_j$ can be any polynomial and $\CS_\be\colon\bone^{(j)} \mapsto \bone^{(i)}, \X^{(j)}\mapsto \X^{(i)}$, the right-hand side of~\eqref{e:rule-main2} includes $\CD_i \tau_i$,
$\CD_j \tau_i$,
$\bar\CD_i \CI_j\Xi_j$, $(\CS_\be\tau_j)\tau_i$, $\X^{(i)}\CD_j\tau_i$, and so on.
\end{remark}

\begin{remark}\label{rem:barCD_term1}
Since $\bar\d_iA_j = \frac12(\d_i^+A_j(e^{\northwest})+\d_i^+A_j(e^{\southwest}))$,
the average of the 3rd term in $\{\ldots\}$ in~\eqref{e:rule-main2} over $\bw\in\{\Northwest,\Southwest\}$
is \textit{formally} $(\CS_{\be}\tau_j) (\bar\CD_i \CI_j\omega_j)$, which would be a more natural term to include (cf. the 2nd term);
our current formulation of the 3rd term,
however, assists in deriving the renormalised equation in Section~\ref{subsec:renorm_eq} (see in particular~\eqref{e:quad-exp-3}-\eqref{eq:non_local_A_j}).
\end{remark}
\textit{Integration (Step 1)} and \textit{Nonlinearity (Step 1)}, i.e.~\eqref{e:rule-main3}-\eqref{e:rule-main2},
are implemented recursively with~\eqref{e:rule-main1} as the base case.
One can check that the recursion terminates after applying~\eqref{e:rule-main3}-\eqref{e:rule-main2} two times and that
every symbol in $\mfT^\YM_i$ corresponds to either a polynomial, a tree of the form given in~\eqref{e:tree-list-YM}, or a tree of the form
\begin{equ}\label{eq:pos_trees-YM}
\<I(cherry)>\;,\qquad \<IDPsi>\;,
\end{equ}
with degrees $1-2\kappa$, $1-\kappa$ respectively.
We use the same graphic notation as  in~\eqref{e:tree-list-YM} to describe the form of the trees (in particular the  graphic notation 
does not indicate insertions of shifting operator or choice of differentiation operator).

The set of trees obtained this way 
is strictly larger than that for the continuum YM because we have trees of the form  $\CS_\be\mcb{I}(\cdots)$, four types of directional derivatives $\CD_i,\bar \CD_i,\CD_i^{\pm}$,
and the abstract monomials are doubled.
But otherwise,  the trees obtained here are just the trees for continuum YM in~\cite[Sec.~6]{CCHS_2D} with proper insertions of shifting operators
and choice of differentiation operators.
For instance,
\eqref{e:exam-chicken-tree} may now become
$
\mcb{I}_{1}\big(\CS_\be \mcb{I}_{2}(\Xi_{2}) \bar\CD_{1}\mcb{I}_{2}(\Xi_{2})\big)
\CD_{1}\mcb{I}_{1}(\Xi_{1})
$.
We define the negative trees $\mfT^\YM_- = \{\tau\in\mfT^\YM\,:\,|\tau|<0\}$.

\subsubsection{Step 2: remainder trees}
\label{subsubsec:step2}

In our second step towards defining $\CT$, we add more trees which will allow us to handle the ``remainder terms'' in \eqref{e:deg-1errors}.
We introduce additional noise labels
\begin{equ}\label{eq:mfL_rem}
\mfL^{\rem}_-=\{\bar\mfl_1,\bar\mfl_2\}\sqcup \{ \mfl^{=}_i,\mfl^{+}_i,\mfl^{-}_i\}_{i=1}^2\;.
\end{equ}
In what follows, we assign degree to symbols using using~\eqref{eq:degree_step_1} together with $|\Xi_{\bar\mfl_i}| = -1-2\kappa$
and
$|\Xi_{\mfl^{=,\pm}_i}| = -1-3\kappa$.

The label $\bar\mfl_i$
corresponds to the noise $\e^{1-\kappa} \xi_i$.
The motivation is that we can write $ \eps [ A_i,\xi_i]  = \eps^\kappa [A_i, \eps^{1-\kappa}\xi_i] $
in the term $\eps F(\e A)[A,\xi]$ in~\eqref{e:Aeps}.
The labels $\mfl^{=,\pm}_{1,2}$
are motivated by the terms in $\tilde{R}_2$ in~\eqref{e:tildeR2}, the first of which we can write as
\begin{equ}\label{eq:R_2_other_form}
- \frac{\eps^2}{2} [\partial_j^2 A_i(e), \bar\partial_j A_j(e)]
=
- \frac{\eps}{2} [(\partial_j^+  +\partial_j^-)A_i(e), \bar\partial_j A_j(e)] \;,
\end{equ}
so $\tilde{R}_2$ has a form $\e \partial A \partial A$.
With $A=\Psi+v$ where $\Psi$ is SHE,
 we can then write
\begin{equ}\label{eq:edAdA}
\e \partial A \partial A = \e^\kappa \partial (\e^{\frac{1-\kappa}{2}}\Psi) \partial (\e^{\frac{1-\kappa}{2}}\Psi)
+ \e^\kappa \partial (\e^{1-\kappa}\Psi) \partial v
+ \e^\kappa \partial v \partial (\e^{1-\kappa}\Psi) 
+ \e \partial v\partial v\;.
\end{equ}
The 6 noises $\mfl^{=,\pm}_{1,2}$ arise from  $\d(\e^{\frac{1-\kappa}{2}}\Psi) \partial (\e^{\frac{1-\kappa}{2}}\Psi)$
with the different choices for
$\d$ and $\Psi$.
(We will treat the last term $ \e \partial v\partial v$ in the ``classical'' way and do not lift it to the space of modelled distributions.
See also~\eqref{e:fix-pt-Ae}-\eqref{e:def-tildeCR2} below for this decomposition at the level of modelled distributions.)

With these motivations, we now describe the full set of trees $\mfT = \mfT_1\sqcup \mfT_2$.

\smallskip
\noindent
\textbf{Base case (Step 2).}
We enforce $\mfT^\YM_i\subset \mfT_i$, and
for each $i\neq j\in\{1,2\}$
\begin{equ}\label{eq:base_case_2}
\bar\Xi_i \eqdef \Xi_{\bar\mfl_i}\;,
\;\;
\big(\CD_j^\pm \mcb{I}\tilde\Xi_i \big)  \big(\bar{\CD}_j \mcb{I}\tilde\Xi_j\big) \eqdef \Xi_{\mfl^\pm_i} \;,
\;\;
\big(\CD_j^+ \CS_{\southeast} \mcb{I}\tilde\Xi_j \big) \big( \CD_j^+ \CS_{\southwest} \mcb{I}\tilde\Xi_j \big) \eqdef \Xi_{\mfl^=_i}
\end{equ}
all belong to $\mfT_i$.
Here $\tilde\Xi_{i}$, which formally models $\eps^{\frac12-\frac\kappa2}\xi_{i}$, is purely notational and we do \textit{not} have a tree $\tilde\Xi_{i}$ in $\mfT_i$.
When drawing trees, we will denote $\bar\Xi_i$ by $\<Xi1>$ and $\tilde\Xi_i$ by $\tikz \node[xi2] at (0,0) {};$,
so that $\Xi_{\mfl_{1,2}^{=,\pm}}$ are all drawn as $\<R2-1new>$ (understood as a single terminal edge).

\smallskip
\noindent
\textbf{Integration (Step 2).}
For each $i\neq j\in\{1,2\}$, if $\tau\in\mfT_i$ with $|\tau|<-1$, we enforce that~\eqref{e:rule-main3} holds with $\mfT^\YM$ replaced by $\mfT$.

\smallskip
\noindent
\textbf{Nonlinearity (Step 2).}
To encode the main nonlinearities,
we enforce that \textit{Nonlinearity (Step 1)} holds with $\mfT^\YM$ replaced by $\mfT$.

Next, to encode the term $\tilde{R}_2$ in~\eqref{e:tildeR2},
for each $i\neq j\in\{1,2\}$ and $\tau_i\in\mfT_i$, $\tau_j\in\mfT_j$ of the form $\tau_i=\CI_i(\bar\tau_i)$ and $\tau_j=\CI_j(\bar\tau_j)$
we enforce that every
\begin{equ}[e:rule-more3]
\sigma \in \{\;\bar\CD_j \CI_j\Xi_j \;,\;
\CD_j^+ \CS_{\southwest} \tau_j \;, \;
 \CD_j^+ \CS_{\southeast} \tau_j \;,\;
\CD_j^\pm \tau_i\;\}
\end{equ}
belongs to $\mfT_i$ provided that $|\sigma|\leq 0$.
Furthermore, in view of \eqref{e:tildeR2}, \eqref{eq:R_2_other_form}, and the 2nd and 3rd terms on the right-hand side of~\eqref{eq:edAdA},
for all
\begin{itemize}
\item $i\neq j \in \{1,2\}$ and $\bs\in\{\Southeast,\Southwest\}$,
\item $\tau_i=\mcb{I}_i (\bar\tau_i) \in \mfT_i \backslash \{\mcb{I}_i (\Xi_i)\}$  or $\tau_i \in \bar{\mfT}_i$,
\item $\tau_j=\mcb{I}_j (\bar\tau_j)\in \mfT_j  \backslash \{\mcb{I}_j (\Xi_j)\}$   or $\tau_j \in \bar{\mfT}_j$, 
\end{itemize}
we enforce that every
\begin{equs}[e:rule-more2]
\sigma \in \big\{\;
&(\CD_j^\pm \mcb{I}_i{\bar\Xi_i}) (\CD_j^+ \CS_{\bs} \tau_j)\;,
\;\;
(\CD_j^\pm \tau_i )(\bar\CD_j  \mcb{I}_j{\bar\Xi_j})\;,
\\
&( \CD_j^+ \CS_{\southeast} \mcb{I}_j{\bar\Xi_j}) ( \CD_j^+ \CS_{\southwest} \tau_j) \;,\;\;
(\CD_j^+ \CS_{\southeast} \tau_j  ) ( \CD_j^+ \CS_{\southwest} \mcb{I}_j{\bar\Xi_j})\;
\big\}
\end{equs}
belongs to $\mfT_i$ provided that $|\sigma|\leq0$.

%\begin{remark}\label{rem:dVdV_trees}
%The terms in~\eqref{e:rule-more2} arise from the 2nd and 3rd terms on the right-hand side of~\eqref{eq:edAdA} (recall also~\eqref{e:tildeR2}).
%Note that $\tau=\CI(\Xi)$ is excluded from~\eqref{e:rule-more2} because it corresponds to the 1st term on the right-hand side in~\eqref{eq:edAdA}
%and is taken care of by~\eqref{eq:base_case_2}.
%Likewise, we do not include terms like
%$(\CD_j^\pm \tau_i )(\bar\CD_j  \tau_j)$ or $(\CD_j^+ \CS_{\southeast} \tau_j  ) (\CD_j^+ \CS_{\southwest} \sigma_j)$
%as these would correspond to the final term $\e\d v \d v$ in~\eqref{eq:edAdA}, which we instead deal with classically.
%\end{remark}

\begin{remark}\label{rem:barCD_term2}
Like in Remark~\ref{rem:barCD_term1},
the average of the 1st term in $\{\ldots\}$ in~\eqref{e:rule-more2} over $\bs\in\{\Southeast,\Southwest\}$
is \textit{formally} $(\CD_j^\pm \mcb{I}_i{\bar\Xi_i}) (\bar\CD_j \tau_j)$, which may appear more natural;
the current choice, however, helps in deriving the renormalised equation.
\end{remark}
To encode the nonlinearity $\e F(\e A)[A,\xi]$,
for each $i \in \{1,2\}$,
$\chi \in \bar\mfT_i$, and $\tau,\omega \in \mfT_i$ of the form
$\tau=\CI_i(\bar\tau),\omega=\CI_i(\bar\omega)\in\mfT_i$  or
$\tau,\omega\in\bar\mfT_i$
we enforce that every
\begin{equ}\label{eq:F_trees}
\sigma \in \{\;\tau\bar \Xi_i\;,
\;
\tau\chi\bar\Xi_i\;,
\;
\tau  \CI_i (\omega\bar \Xi_i) \bar \Xi_i
\;
\}
\end{equ}
belongs to $\mfT_i$ provided that $|\sigma|\leq 0$.

\begin{remark}\label{rem:F_trees}
The final definition \eqref{eq:F_trees} requires some explanation.
Recalling the expansion~\eqref{e:Z-terms},
the term $\tau\bar \Xi_i$ is due to the product $\e^\kappa[A,\e^{1-\kappa}\xi]$.
The next order term, $\e^{1-\kappa}A^2\e^{1-\kappa}\xi$, should naively
entail a general term of the form $\CI_i(\bar\tau)\CI_i (\omega)\bar \Xi_i$.
However, by defining  a `multiplication by $\eps$' map on our regularity structure (see Section~\ref{subsubsec:mult_eps_models})
we can use the remaining $\e^{1-\kappa}$ to increase the degree of $\omega$ by $1-\kappa$
\textit{unless} $\omega$ has $\bar\Xi_i$ at the root --
this is because the degree $|\bar\Xi| = -1-2\kappa$ is limited by the temporal regularity of $\e^{1-\kappa}\xi$ (white-in-time)
and further multiplication by $\eps$ will not increase this small-scale regularity (see also Remark~\ref{rem:cE_motivation}).
The higher powers $O(\e^3A^3\xi)$ in~\eqref{e:Z-terms} do not contribute any new trees for the same reason.
\end{remark}
We implement recursively \textit{Integration (Step 2)}
and \textit{Nonlinearity (Step 2)},
i.e. \eqref{e:rule-main3}+\eqref{e:rule-main2}+\eqref{e:rule-more3}+\eqref{e:rule-more2}+\eqref{eq:F_trees}  
together with \textit{Base case (Step 2)}.
One can check that the recursion terminates after applying \textit{Integration (Step 2)}
and \textit{Nonlinearity (Step 2)}
two times each.
The new trees with degrees $-1-$ (i.e. just below $-1$) are of the following form (degrees are in parentheses):\footnote{The noises in  $\<R2-1new>$  are $\tikz \node[xi2] at (0,0) {};$, {\it not} $\<Xi1>$ !
In other words in our graphic notation, rotating lines and thick lines by $\frac{\pi}{4}$ does not change their meaning, but we never rotate the notation for noises.}
\begin{equ}[e:trees-deg-1-rem]
\<Xi1>\quad(-1-2\kappa)\;,\qquad
\<PsiXi1>\quad (-1-3\kappa)\;,\qquad
\<R2-1new> \quad (-1-3\kappa)\;.
\end{equ}
Here the second tree arises from $\e A \xi$, i.e.~\eqref{eq:F_trees}, and the third one 
 arises from $\e \partial A \partial A$, i.e.~\eqref{eq:base_case_2}.
The new trees with degrees $0-$ are of the following form:
\begin{equ}[e:trees-deg-0-rem]
\begin{tabular}{cc}
\toprule
\hypertarget{(1-2)}{(1-2)}:
&
$ \<I'XiI'[IXiI'Xi]less>$
\qquad
$\<I'XiI'[IXiI'Xi]>$
\qquad
$\<I'(R2-1)new> $
\qquad
$\<IXiI'(R2-1)new>$ 
\qquad
$ \<I'XiI(R2-1)new>$
\\
\midrule
\hypertarget{(1-3)}{(1-3)}:
&
 $\<cherryY>$
\quad
$\<cherryYless>$
\quad
$\<I'Xibar>$
\quad
$\<IXiI'Xibar>$ 
\quad 
$\<I'XiIXibar>$ 
\quad 
$\<supercherry1less>$
\quad
$\<supercherry1>$ 
\quad  
$\<supercherry2>$
\quad
$ \X^{(i)} \bar\Xi_i$
\quad
$\X^{(i)} \<PsiXi1>$
\\
\midrule
\hypertarget{(2-2)}{(2-2)}:
&
$ \<I'Xi1I'(R2-1)new>$
\quad 
\hypertarget{(2-3)}{(2-3)}:
\quad
$\<I(R2-1)Xi1new>$
\quad
$\<cherry231>$
\quad
$\<cherry232>$
\quad
\hypertarget{(3-3)}{(3-3)}:
\quad
$\<I(PsiXi1)Xi1>$
\quad
$\<I(PsiXi1)Xi1less>$
\quad
$\<PsiPsibarXibar>$
\quad
$\<r_z_large>$
\\
\bottomrule
\end{tabular}
\end{equ}
In plain words, these trees are obtained by substitutions among three types of terms:
\begin{equ}
\text{(1)}\;\; A\partial A\;,\quad \text{(2)}\;\; \e \partial A \partial A\;,\quad 
\text{(3)}\;\;
\e^\kappa F(\e A) [A,\e^{1-\kappa} \xi]\;,
\end{equ}
subject to the conditions in~\eqref{e:rule-more2} and~\eqref{eq:F_trees} 
%(see also Remarks~\ref{rem:dVdV_trees} and~\ref{rem:F_trees}).
(see also Remark~\ref{rem:F_trees}).
We classified them into 5 groups, with group labels indicating how we obtained the trees in the group,
for instance trees in group \hyperlink{(1-3)}{(1-3)} are obtained by 
substituting between (1) $A\partial A$ and (3) $\e^\kappa F(\e A) [A,\e^{1-\kappa} \xi]$,
and group \hyperlink{(3-3)}{(3-3)} are obtained by 
substituting  (3) $\e^\kappa F(\e A) [A,\e^{1-\kappa} \xi]$ into itself.
%Remark that some trees appearing from naively substituting the non-linearities into each other, like $\<r_z_avoid1>$, do not appear
%due to the particularity of~\eqref{eq:F_trees}.

%(see Remarks~\ref{rem:F_trees} and~\ref{rem:cE_motivation}).

Remark also that, due to \eqref{e:rule-more3},
 we should in principle also include $\<I'[IXiI'Xi]_notriangle>$, $\<I'[I'Xi]_notriangle>$ in group  \hyperlink{(1-2)}{(1-2)},
 but these graphical trees are already in \eqref{e:tree-list-YM},
except that the differentiation at the bottom of the trees  should be the differentiations appearing in \eqref{e:rule-more3},
so we do not duplicately list them in  \hyperlink{(1-2)}{(1-2)}.
Similarly, $\<I'(R2-1)new> $ in  \hyperlink{(1-2)}{(1-2)}
should also be  in \hyperlink{(2-2)}{(2-2)},
 and $\<I'Xibar>$ and $\<supercherry1less>$ in \hyperlink{(1-3)}{(1-3)} should also be in \hyperlink{(2-3)}{(2-3)}.

\subsubsection{Step 3: two more trees}
\label{subsubsec:Step_3}

Finally, we add two more types of trees to $\mfT_i$ for reasons apart from the non-linearity.

The first is due to (negative) renormalisation.
There is one tree in $\mfT_i$ of the form $\<r_z_large>$, namely $\Psi_i\bar\Xi_i\CI_i[\Psi_i\bar\Xi_i]$ where we write $\Psi_i\eqdef\CI_i\Xi_i\in\mfT_i^\YM$.
These trees have a sub-divergence $\bar\Xi_i\CI_i[\bar\Xi_i]$ of the form $\<I(PsiXi1)Xi1less>$ that we will be forced to renormalise (see Section~\ref{subsec:renorm_group}) and which produces a contracted tree $\Psi_i\Psi_i=\Psi_i^2$ of the form $\<IXi^2>$, which we now add to $\mfT_i$ for $i\in\{1,2\}$.
(We already added trees of the form $\<IXi^2>$ to $\mfT_i$ due to~\eqref{e:rule-main2}, but these are of the form $\CS_\be\Psi_j\CS_{\be'}\Psi_j$ or $\CS_\be\Psi_j \Psi_i$ for $j\neq i$.)

The second type of tree we add is of the form $\<IXiI'Xibar>$
which we obtain by replacing the factor $\Xi$ `on the right' in all trees of the form $\<IXiI'Xi_notriangle>$ in $\mfT^\YM$ by $\bar\Xi$.
Specifically,
recalling \eqref{e:rule-main2}, the new trees appearing in $\mfT_i$
are $\CS_{\be} \Psi_j \CD_j \bar\Psi_i$,
$\CS_{\be} \Psi_j  \bar\CD_i \bar \Psi_j$,
and
$\Psi_i \CD_i \bar \Psi_i$,
where $\bar\Psi_i = \CI_i\bar\Xi_i$,
$i\neq j$, and $\be \in\CE_\times$.
Note $\<IXiI'Xibar>$ already appears in \eqref{e:trees-deg-0-rem} but these new trees did not arise in Step 2.
We add these trees to make a multiplication by $\e$ map in Section \ref{subsubsec:mult_eps_models} well-defined (see also Remark \ref{rem:F_trees}).

This concludes our definition of $\mfT = \mfT_1\sqcup \mfT_2$.
We denote $\mfT_- = \{\tau\in\mfT\,:\,|\tau|<0\}$,
$\mfT^\ren = \{\Psi_i^2\in\mfT\,:\,i\in\{1,2\}\}\subset\mfT_-$,
and
$\mfT^\rem = \mfT\setminus(\mfT^\YM\sqcup\mfT^\ren)$ and $\mfT^\rem_- = \mfT_- \setminus (\mfT^\YM_-\sqcup \mfT^\ren)$.
Note that the trees of the form $\<IXiI'Xibar>$ added in Step 3 are in $\mfT^\rem$
and we will treat them as belonging to group \hyperlink{(1-3)}{(1-3)}.
The positive trees in $\mfT^\rem$ are of the form (recall that we enforce $|\sigma|\leq 0$ in \eqref{e:rule-more2})
\begin{equ}\label{eq:pos_trees_rem}
 \<IXibar>\;,\qquad \<I(R2-1new)>\;, \qquad \<IPsiXi1>\;.
\end{equ}
%and the positive non-planted trees in $\mfT^\rem$ are of the form

\subsubsection{Regularity structure}
\label{subsubsec:RS}

To construct our regularity structure, we denote $\mfL_- = \mfL^\YM_-\sqcup \mfL^\rem_-$,
%with $\mfL^\YM_-$ and $\mfL^\rem_-$\label{page:mfT_rem_-}
%defined in~\eqref{eq:mfL_YM} and~\eqref{eq:mfL_rem},
with $\mfL^\rem_-$\label{page:mfT_rem_-}
defined in~\eqref{eq:mfL_rem},
and extend the space assignment~\eqref{e:space-assig}
by
$
W_{\bar\mfl_i} = \mfg$ and  $W_{\mfl^{=,\pm}_i} = \mfg\otimes \mfg
$
so that, in the notation of~\cite[Sec.~5.6]{CCHS_2D},
$
V_{\bar\mfl_i} = \mfg^*$ and  $V_{\mfl^{=,\pm}_i} = \mfg^* \otimes \mfg^*
$.

The regularity structure is then constructed by applying the functor $\Func_V$, i.e. 
$\CT \eqdef \bigoplus_{\tau\in\mfT} \CT[\tau]$
where
$
\CT[\tau]\eqdef \Func_V[\tau]
$.
More precisely, we consider symmetric sets built from finite labelled rooted trees exactly as in~\cite[Sec.~5.4]{CCHS_2D} with the addition that kernel edges may have an additional decoration $\CS_{\be}$ and several types of derivatives from $\Diff$ (recall~\eqref{eq:Diff_def}),
and that isomorphisms between trees are required to preserve these additional decorations (in addition to the usual node and edge decorations).
For example,
$
\CT[ \CD_j^+ \CS_{\southeast} \CI_j(\sigma_j)\CD^+_j\CS_{\southwest}\CI_j\bar\Xi_j] \simeq \mfg^*\otimes\mfg^* \otimes \mfg^*
$
where $\sigma_j = \Xi_{\mfl_j^=}= \big(\CD_i^+ \CS_{\southeast} \mcb{I}\tilde\Xi_i \big) \big( \CD_i^+ \CS_{\southwest} \mcb{I}\tilde\Xi_i \big) \in \mfT_j$.
(The reader may take these outputs of \cite[Sec.~5]{CCHS_2D} 
for granted since the precise definition of $\CT[\tau]$ is seldom used in this article.)
For $L\in\{\ym,\rem,\ren\}$,
we define
\begin{equ}
\CT^{L} = \bigoplus_{\tau\in\mfT^L}\CT[\tau]\;,\quad
\CT^{L}_- = \bigoplus_{\tau\in\mfT^L_-}\CT[\tau]\;,\quad
\CT_- = \CT^{\YM}_-\oplus \CT^{\rem}_-\oplus \CT^{\ren}_-\;.
\end{equ}
The splitting discussed in the beginning of Sec.~\ref{sec:tr-reg} %\eqref{e:T1T2_heuristic} 
is given by
\begin{equ}[e:T1T2]
\CT=\CT^{(1)}\oplus\CT^{(2)}\;,\qquad
\CT^{(i)} = \bigoplus_{\tau\in\mfT_i} \CT[\tau]\;.
\end{equ}
Recalling $\bar\mfT_i$ from~\eqref{e:rule-main1},
the polynomial regularity structure is denoted by
$
\bar \CT = \bar\CT^{(1)}\oplus \bar\CT^{(2)}
$
where
$\bar\CT^{(i)}= \bigoplus_{\tau\in\bar\mfT_i}\CT[\tau] \simeq \R^3\;.
$
For $H \in \{\CI_i,\CD,\CS_\be\}$, where $i\in\{1,2\}$, $\CD\in\Diff$ and $\be\in\CE_\times$,
we view $H$ as a map $D_H\to\mfT$ defined on the subset $D_H\eqdef\{\tau\in\mfT\,:\,H\tau\in\mfT\}$.
Then $H$ lifts to an operator, denoted by the same symbol, $H\colon V_H \to \CT$
via the functor $\Func_V$ as in~\cite[Sec.~5.2.1]{CCHS_2D},
where $V_H\eqdef\bigoplus_{\tau\in D_H}\CT[\tau]$.
We use the shorthand $V_\be = V_{\CS_\be}$.

The structure group $\CG$ is determined by the usual Connes--Kreimer-type procedure as in~\cite[Sec.~8.1]{Hairer14} or~\cite[Sec.~2]{ESI_lectures}
upon treating $\CD \CI_i$, $\CS_\be \CI_i$, and $\CD\CS_\be\CI_i$ for $\CD\in\Diff$, $\be\in\CE_\times$, and $i\in\{1,2\}$ as normal kernel edge decorations
(recall also Remark~\ref{rem:splitting}).

For example, denoting $\Psi_i=\CI_i\Xi_i$, 
for $i\neq j\in\{1,2\}$ and $\tau\in\CT[\CS_\be\Psi_i] \subset\CT^{(j)}$ we have $\Gamma\tau=\tau$ for all $\Gamma\in \CG$,
while for $\sigma =  \CS_\be\CI_i[(\CS_{\be'}\Psi_j)(\CD_j\Psi_i)]\bar\CD_j\Psi_i \in\mfT_j$
(which is of the form $\<I[IXiI'Xi]I'Xi_notriangle>$), and
\begin{equ}
\CT^{(j)}\ni\tau = \tau_1\otimes\tau_2\otimes \tau_3\in \CT[\sigma] \simeq (\mfg^*)^{\otimes 3}
\end{equ}
we have
$
\Gamma \sigma = \sigma + h \otimes \tau_3 \in \CT[\sigma]\oplus \CT[\bar\CD_j\Psi_i]
$,
where $h\in\CT[\bone^{(j)}] \simeq \R$ and $\tau_3 \in \CT[\bar\CD_j\Psi_i ]\subset \CT^{(j)}$.

One can readily check that the domain $V_H$ of $H\in \{\CI_i,\CS_\be,\CD\}$ with $\CD\in\Diff$ are all sectors (i.e. subspace closed under the action of $\CG$).
Furthermore, for all $\Gamma\in\CG$, $\CD\in\Diff$, and $\tau\in V_{\CD}$, we have 
\begin{equ}\label{eq:Gamma_Diff_commute}
\Gamma\CD\tau = \CD\Gamma\tau\;.
\end{equ}
The following is an elementary but useful lemma.

\begin{lemma}\label{lem:Gamma_simple}
Let $\tau\in\mfT\setminus\bar\mfT$ with $|\tau|>0$.
Then $
\Gamma\sigma=\sigma + \CT[\bone]
$ for all
$\Gamma\in\CG$ and $\sigma\in\CT[\tau]$.
\end{lemma}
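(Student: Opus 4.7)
The plan is to verify the lemma by direct inspection of the explicit list of positive-degree trees together with a standard recursion for the positive coproduct defining $\CG$. The starting observation is that every $\tau\in\mfT\setminus\bar\mfT$ with $|\tau|>0$ is a \emph{planted} tree: by inspection of \eqref{eq:pos_trees-YM} (i.e.\ $\<I(cherry)>$, $\<IDPsi>$) and \eqref{eq:pos_trees_rem} (i.e.\ $\<IXibar>$, $\<I(R2-1new)>$, $\<IPsiXi1>$) these are all of the form $\tau=\CI_i(\sigma)$ for some $i\in\{1,2\}$ and $\sigma\in\mfT_i$ with $|\sigma|=|\tau|-2\leq -1-\kappa<0$. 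Moreover, each of these trees has degree in $(0,1)$, the largest being $|\<IDPsi>|=1-\kappa$. In each case, a second inspection shows that every proper subtree of $\sigma$ also has strictly negative degree: the factors of $\sigma$ are products of noises (all of degree $<-1$) and planted subtrees of the form $\CI_j\Xi_j$, $\CD\CI_j\Xi_j$, $\bar\CD\CI_j\Xi_j$, or $\Psi_i\bar\Xi_i$, each of negative degree (this uses only $|\CI|=+2$ together with the base degrees of the noises).

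The second ingredient is the following standard recursive statement about the positive (Connes--Kreimer type) coproduct $\Delta^+$ defining $\CG$ (see e.g.\ \cite[Sec.~8]{Hairer14} or \cite[Sec.~2]{ESI_lectures}): for a planted tree $\rho=\CI_k(\mu)$,
\begin{equ}
\Delta^+\rho = (\CI_k\otimes \id)\Delta^+\mu + \sum_{|l|_\s<|\rho|}\tfrac{X^l}{l!}\otimes \mathcal{J}_l^k\mu,
\end{equ}
and $\Delta^+$ is multiplicative on tree products. Note in particular that when $|\rho|<0$ the polynomial sum is empty. From this and induction on tree complexity one obtains the key algebraic claim: if $\mu\in\mfT$ has $|\mu|<0$ and every proper subtree of $\mu$ also has negative degree, then $\Delta^+\mu=\mu\otimes\bone$ and hence $\Gamma\mu'=\mu'$ for every $\Gamma\in\CG$ and $\mu'\in\CT[\mu]$. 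Applied to our $\sigma$ inside $\tau=\CI_i(\sigma)$ this gives $\Delta^+\sigma=\sigma\otimes\bone$.

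Combining these two ingredients yields the lemma: since $|\tau|<1$ only the term $l=0$ survives in the polynomial sum at the root, so
\begin{equ}
\Delta^+\tau \;=\; \tau\otimes\bone \;+\; \bone\otimes \mathcal{J}_0^i\sigma,
\end{equ}
and pairing with an arbitrary character $f$ on the positive Hopf algebra gives $\Gamma_f\sigma' = \sigma' + f(\mathcal{J}_0^i\sigma)\,\bone\in \sigma'+\CT[\bone]$ for every $\sigma'\in\CT[\tau]$, which is the claim. The shift operators $\CS_\be$ and derivatives in $\Diff$ play no role here: they are carried as extra edge decorations (recall Section~\ref{subsubsec:RS}) and do not alter the Connes--Kreimer recursion, since the structure group is defined by treating each $\CD\CI_i$, $\CS_\be\CI_i$, $\CD\CS_\be\CI_i$ as a single kernel-type edge.

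The only real obstacle is bookkeeping in the second step, namely checking case-by-case that none of the subtrees appearing inside any of the positive-degree trees in \eqref{eq:pos_trees-YM}--\eqref{eq:pos_trees_rem} can themselves have non-negative degree; this is routine because the only non-negative-degree symbols in $\mfT$ outside $\bar\mfT$ are the positive planted trees themselves, and these do not appear as subtrees of one another (they are all of the form $\CI_i(\text{negative})$, and all of the occurring factors inside the $\CI_i$ are plainly of negative degree).
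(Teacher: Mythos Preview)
Your proof is correct and follows essentially the same approach as the paper. The paper's one-line argument invokes the fact that $\Gamma\bar\tau=\bar\tau$ for all $\bar\tau\in\CT_\alpha$ with $\alpha<-1$; you unpack this fact explicitly by writing out the positive coproduct recursion and checking that the trunk $\sigma$ of each positive planted tree $\tau=\CI_i(\sigma)$ (or $\CS_\be\CI_i(\sigma)$) has only negative-degree subtrees, hence trivial coproduct. Both arguments then use $|\tau|\in(0,1)$ to conclude that only the $l=0$ polynomial term survives.
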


\begin{proof}
All non-polynomial positive trees are of the form~\eqref{eq:pos_trees-YM} or~\eqref{eq:pos_trees_rem},
and the proof follows from the fact that $\Gamma \bar\tau = \bar\tau$ for all $\bar\tau\in\CT_\alpha$ with $\alpha<-1$.
\end{proof}

\subsection{Discrete framework}
\label{sec:Discrete framework}

In  \cite{EH19}, given a regularity structure, a discretization is a collection of data 
\begin{equ}[e:framework]
(\CX_\eps,\|\cdot \|_{\alpha;\K_\eps;z;\eps}, \$\cdot\$_{\gamma;\eta;\K;\eps})
\end{equ}
which we now specify.
We set 
$\CX_\eps^{(i)}= \CC^{-1+\kappa}(\R,\R^{\obonds_i})$ for $i\in \{1,2\}$.
%where we recall that $\obonds_i$ is the set of positively oriented bonds in the $i$-th direction.
%We view $\obonds_i$ as a copy of $\T^2_\e$.
Write $\CX_\e\eqdef \CX_\eps^{(1)}\oplus \CX_\eps^{(2)}$.
As in \cite[Sec.~2]{EH19} we view $\CX_\eps^{(i)}$ as a subspace 
of the space of distributions,
namely we define an injection map $\iota_\eps \colon\CX_\eps^{(i)} \to \CD'(\R\times\T^2)$ by setting
\begin{equ}
\label{eq:iota}
(\iota_\eps f)(\varphi)
= \eps^2\sum_{e\in\obonds_i}\int_{\R} f(t,e)\varphi(t, e)\mrd t\;,
\qquad
\forall \phi\in\CC^\infty_c(\R\times \T^2)
\end{equ}
 in distributional sense.
We will often omit $\iota_\eps$
and simply write $f(\varphi)$. % for $(\iota_\eps f)(\varphi)$.

We further endow $\CX_\eps$ with a family of seminorms $\|\cdot\|_{\alpha;\K_\eps;z;\eps}$, where $\alpha\in \R$, $\K_\eps$ are compact subsets in $\R\times \T^2$ with diameter bounded by $2\eps$, and $z=(t,x)\in\R\times \obonds$ is such that $z\in\K_\eps$. 
We then define for $A=(A_1,A_2)\in \CX_\eps$ with $A_i \in \CX_\eps^{(i)}$  
\begin{equ}[eq:seminorm]
\|A_i\|_{\alpha;\K_\eps;z;\eps}^{(i)}= \sup_{\lambda\in (0,\eps]}\sup_{\phi\in \Phi_{\eps,z}^{\lambda}}
\lambda^{-\alpha}\Big|\int_\R A_i(s,x)(\CS_{2,t}^{\lambda}\phi)(s)\mrd s\Big|\;,
\end{equ}
where $z=(t,x)\in\R\times \obonds_i$, and $\|A_j\|_{\alpha;\K_\eps;z;\eps}^{(j)}=0$ for $j\neq i$.
Here, for $\lambda\in (0,\eps]$ we denote by $\Phi_{\eps,z}^{\lambda}$
the set of all functions $\phi\colon\R\to\R$ with $\|\phi\|_{\CC^3}\leq 1$
and support contained in the unit interval,
so that  moreover the support of 
$\CS_{2,t}^{\lambda}\phi \eqdef \lambda^{-2} \phi(\frac{\cdot - t}{\lambda^2})$
 is contained in the set $\{s\in\R\,:\, (s,x)\in\K_\eps\}$.
We write $\|A\|_{\alpha;\K_\eps;z;\eps}=\sum_{i=1}^2 \|A_i\|_{\alpha;\K_\eps;z;\eps}^{(i)}$.

\begin{remark}\label{rem:small-scale-norm}
 In~\cite{EH19}
a simpler example of seminorms was given in the ``semi-discrete case'' as
$\|f\|_{\alpha;\K_\eps;z;\eps}
=
\eps^{-\alpha} \sup_{y} |f(y)|$ where
$y$ is over lattice points in $\K_\eps$.
Here we instead take
\eqref{eq:seminorm} from \cite[(2.3)]{EH21}. 
One reason is that our noise $\dt \BM$ is not bounded in $L^\infty$.
Also,
choosing  \eqref{eq:seminorm}  has the advantage  that the bounds on large scales, i.e. scales larger than $\eps$,  and on small scales, can be obtained in the same way, which simplifies moment bounds for discrete models, see
Section~\ref{subsec:moments_discrete_models}.
\end{remark}

\begin{remark}
We remark that our choice of the small scale norm \eqref{eq:seminorm} actually does not 
satisfy the exact statement of \cite[Assumption~5.4]{EH19} (which is used to prove  \cite[Proposition~5.6]{EH19}) 
for the operators $\partial_i^\pm,\partial_i,\bar\partial_i$
defined in Sec.~\ref{sec:ab-notation}. It was assumed therein that 
for all functions $f$ on $\obonds$,  all $\alpha\in\R$, all compact sets $\K_\eps$, and all $z\in\R^d$,
\begin{equ}\label{eq:Dismallscale}
\|\partial_i f\|_{\alpha-1;\K_\eps;z;\eps}\lesssim \sup_{h\in\CB_\s(0,\eps)} \|f\|_{\alpha;\tau_h\K_{\eps};z;\eps}
\end{equ}
uniformly in $\eps$,
with $\tau_h\K_{\eps} = \K_{\eps}+h$,
and likewise for $\bar\partial_i$ and $\partial_i^\pm$, which does not hold for us
because the right-hand side of \eqref{eq:seminorm} only depends on $A_i(\cdot,x)$ where $x$ is the spatial coordinate of the given point $z$, but not on the values of $A_i$ at neighboring points. 
However, it can be shown that \eqref{eq:Dismallscale} holds with $z$ on the right-hand side modified to $z+h$,
and that with this modification  \cite[Proposition~5.6]{EH19} is still valid.
In any case, we will not use  \cite[Proposition~5.6]{EH19}, and instead we will prove Lemma~\ref{lem:DRcommute} -- see Remark~\ref{rem:D-nonlocal} on the differences between our construction of differentiations and that of \cite{EH19}.
%It is easy to check that the small scale norm \eqref{eq:seminorm} behaves in the right way with respect to  these  differentiations
%(\cite[Assumption~5.4]{EH19}).
%\haoText{Just trying to understand what is going wrong:
%recalling $z=(t,x)$,
%\begin{equs}
%\|\partial_i f\|_{\alpha-1;\K_\eps;z;\eps}
%=\sup_{\lambda\in (0,\eps]}\sup_{\phi\in \Phi_{\eps,z}^{\lambda}}
%\lambda^{-\alpha+1}\Big|\int_\R 
%\frac{1}{\eps} (f(s,x+\eps)-f(s,x))(\CS_{2,t}^{\lambda}\phi)(s)\mrd s\Big|
%\\
%\lesssim
%\sup_{\lambda\in (0,\eps]}\sup_{\phi\in \Phi_{\eps,z}^{\lambda}}
%\lambda^{-\alpha}\Big|\int_\R 
%(f(s,x+\eps)-f(s,x))(\CS_{2,t}^{\lambda}\phi)(s)\mrd s\Big|
%\end{equs}
%Here, by definition, the integral doesn't have any contribution unless $s$ is such that $(s,x)\in \K_\eps$. 
%Maybe we want to bound the two terms separately. But note that by definition
%\begin{equs}
% \|f\|_{\alpha;\K_{\eps}+\eps ;z;\eps}
% =\sup_{\lambda\in (0,\eps]}\sup_{\phi\in \tilde\Phi_{\eps,z}^{\lambda}}
%\lambda^{-\alpha}\Big|\int_\R 
%f(s,x)(\CS_{2,t}^{\lambda}\phi)(s)\mrd s\Big|
%\end{equs}
%where $\tilde\Phi_{\eps,z}^{\lambda}$ is defined as for $\Phi_{\eps,z}^{\lambda}$ with $\K_{\eps}$ replaced by $\K_{\eps}+\eps$.
%So basically the problem is that the RHS of \eqref{eq:Dismallscale} 
%doesn't involve $f(\cdot,x+\eps)$ at all. 
%
%But then the following bound should hold?
%\begin{equ}
%\|\partial_i f\|_{\alpha-1;\K_\eps;z;\eps}\lesssim \sup_{h\in\CB_\s(0,\eps)} \|f\|_{\alpha;\tau_h\K_{\eps};z+h;\eps}
%\end{equ}
%}
\end{remark}
The fixed point problem will be
 posed in a space $\cD_\e^{\gamma,\eta}$
  of $\CT_{<\gamma} \eqdef \bigoplus_{\alpha <\gamma} \CT_{\alpha}$ valued 
functions, which allows for a blow-up of order $\eta$ near $t=0$. 
More precisely, we write $P = \{(0,x)\,:\,x\in\R^2\}$ for the ``$t=0$''-subspace.
We further let 
$\|z\|_{P}= 1\wedge \inf_{y \in P} \|z-y\|_\s$ and $\|y,z\|_{P}= \|y\|_{P}\wedge \|z\|_{P}$,
as well as
\begin{equation}
\label{eq:kp}
\K_P=\{(y,z)\in (\K\setminus P)^2:\, y\neq z,\, \|y-z\|_\s\leq \|y,z\|_{P}\}.
\end{equation}
Consider now a function $f\colon\R^3\setminus P\to\CT_{<\gamma}$  for $\gamma>0$.
By  \eqref{e:T1T2} we write $f=(f_1,f_2)$ with $f_i $ valued in $\CT^{(i)}$.
As the last component of the discrete setup \eqref{e:framework}, we define for any compact set $\K\subset\R^3$
and map $\Gamma^\e \colon\R^3\times\R^3\to \CG$ 
\begin{equs} [e:smallscale-norm]
\$ f_i\$_{\gamma,\eta;\K;\e}^{(i)}
&\eqdef
\sup_{\substack{z\in\{\K\setminus P\}\cap\{\R\times \obonds_i\} \\ \|z\|_P < \e}}
\sup_{\beta <\gamma}
\frac{\|f_i(z)\|_\beta}{\|z\|_{P}^{(\eta-\beta)\wedge 0}}
\\
&+ \sup_{\substack{y,z\in\K_P\cap\{\R\times \obonds_i\}  \\ \|y-z\|_\s<\e}}
\sup_{\beta <\gamma}
\frac{\|f_i(z)-\Gamma^\e_{zy} f_i(y)\|_\beta}{\|y-z\|_\s^{\gamma-\beta}\|y,z\|_{P}^{\eta-\gamma}}\;,
\end{equs}
which only depends on the values of $f_i$ in $\K$.  
We then write $\$ f\$_{\gamma,\eta;\K;\e}=\$ f_1\$_{\gamma,\eta;\K;\e}^{(1)}+\$ f_2\$_{\gamma,\eta;\K;\e}^{(2)}$.
We also define  
$\$ f\$_{\gamma;\K;\e}^{(i)}$ as in  \eqref{e:smallscale-norm}, but with $\eta=\gamma$ and with $\K\setminus P$ and $\K_P$ replaced by $\K$,
and similarly for $\$ f\$_{\gamma;\K;\e}$.

Given another map $\bar\Gamma^\e\colon\R^3\times\R^3\to \CG$ and $\bar f\colon\R^3\setminus P \to \CT_{<\gamma}$, we define the small scale distance $\$ f;\bar f\$_{\gamma,\eta;\K;\eps}$ in the same way as~\eqref{e:smallscale-norm} except, in the first term, $f_i$ is replaced by $f_i-\bar f_i$ in the first term, and, in the second term,
\begin{equ}\label{eq:increment_shorthand}
f_i(z,y) \eqdef f_i(z)-\Gamma^\e_{zy}f_i(y)
\end{equ}
is replaced by $f_i(z,y) - \bar f_i(z,y)$ where $\bar f_i(z,y) = \bar f_i(z)-\bar\Gamma^\e_{zy}\bar f_i(y)$.

\subsection{Models, integration, shifting and multiplication by \texorpdfstring{$\eps$}{epsilon}}
\label{sec:Models}

A discrete model $Z^\eps=(\Pi^\eps,\Gamma^\eps)$ consists of a collection of maps 
\begin{equ}[e:model-split]
\R^3\ni
z\mapsto \Pi_z^\eps\in 
L(\CT^{(1)}, \CX_\eps^{(1)})\oplus L(\CT^{(2)}, \CX_\eps^{(2)})
\subset L(\CT, \CX_\eps)
\end{equ}
 and $\Gamma^\eps\colon\R^{3}\times\R^{3}\to \CG$ satisfying the usual algebraic conditions
\begin{equ}\label{eq:model_algeba}
\Pi_z \Gamma_{zy} = \Pi_y\;,\qquad
\Gamma_{zy}\Gamma_{yx} = \Gamma_{zx}\;,
\end{equ}
 and for any compact set $\K\subset\R^3$ every $i\in \{1,2\}$ and every $\tau\in \CT_{\ell}^{(i)} \cap \{\CT^\YM\oplus\CT^\rem\}$ one imposes
the analytical estimates
\begin{equs}   \label{eq:Pi}
\bigl|\bigl(\iota_\eps\Pi^{\eps}_z \tau\bigr)(\phi_{z}^{\lambda})\bigr|
 \lesssim \|\tau\|_\ell\lambda^{\ell}\;,&
 \quad 
\|\Pi^{\eps}_z \tau\|_{\ell;\K_\eps,z;\eps}
 \lesssim \|\tau\|_\ell\;,
\\
\label{eq:Gamma}
\|\Gamma^\eps_{z\bar{z}}\tau\|_m\lesssim \|\tau\|_\ell \|z-\bar{z}\|_\s^{\ell-m}\;,&\quad
\$ y\mapsto \Gamma^{\eps}_{yz}\tau\$_{\ell;\K; \eps}\lesssim \|\tau\|_\ell\;.
\end{equs}
For some fixed $\gamma > 0$, these bounds are assumed to be uniform over $\lambda \in (\eps,1]$,
all $\phi \in \CB^r_0$,
all
$\ell <\gamma$, and $m<\ell$, all $\tau\in \CT_{\ell}\cap\{\CT^\YM\oplus\CT^\rem\}$,
and uniform over $z,\bar{z}\in\K$ such that $\|z-\bar{z}\|_\s\in (\eps,1]$.
The second bound in \eqref{eq:Pi} is also uniform over all compact subsets $\K_\eps$ of diameter bounded by $2\eps$ and over $z\in\K_\eps$.

Note that for $\tau=(\tau_1,\tau_2)\in \CT$ where $\tau_i \in \CT^{(i)}$, $(\Pi_z^\e \tau)(\phi)$ 
is understood as  $((\Pi_z^\e \tau_1)(\phi),(\Pi_z^\e \tau_2)(\phi))$.
Furthermore,
we require that $\Pi^\eps$
acts on abstract polynomials of order at most $1$ by
\begin{equ}[eq:poly_model]
\Pi_z^\eps \bone^{(i)} (y) = 1\;,
\quad
\Pi_z^\eps \mbX^{(i)}_j (y)=y_j-z_j \;,
\qquad
y\in \R\times \obonds_i \;,
\quad
i,j \in \{1,2\}\;,
\end{equ}
where in the second expression we recall that we identify a bond with its midpoint,
and that
\begin{equ}\label{eq:Gamma_polys}
\Gamma_{zy}^\eps\bone^{(i)}=\bone^{(i)}\;,
\quad
\Gamma_{zy}^\eps\X^{(i)}_j=\X^{(i)}_j + (z_j-y_j) \bone^{(i)}\;.
\end{equ}
(As mentioned earlier,
we will only need polynomials up to order $1$.)

We denote by $\|\Pi^\eps\|_{\gamma;\K}^{(\eps)}$ and $\|\Gamma^\eps\|_{\gamma;\K}^{(\eps)}$ the smallest proportionality constants for which~\eqref{eq:Pi} and~\eqref{eq:Gamma} hold respectively
and set $\$Z^\eps\$_{\gamma;\K}^{(\eps)}=\|\Pi^\eps\|_{\gamma;\K}^{(\eps)}+ 
\|\Gamma^\eps\|_{\gamma;\K}^{(\eps)}$.
We can compare two discrete models by $\$Z^\eps;\bar{Z}^\eps\$_{\gamma;\K}^{(\eps)}$ and a discrete model with a continuous model
by  $\$Z;Z^\eps\$_{\gamma;\K;\ge \e}^{(\eps)}$ as 
in   \cite[Remarks~2.9-2.10]{EH19}.

\begin{remark}\label{rem:model_bounds_sector}
We impose~\eqref{eq:Pi}-\eqref{eq:Gamma} only on the sector $\CT^\YM\oplus\CT^{\rem}\subset \CT$ since
the fixed point problem associated to~\eqref{e:Aeps} will be formulated on this sector.
\end{remark}
\begin{definition}[\cite{EH19} Def.~5.5]\label{def:compat_diff}
A model $(\Pi,\Gamma)$ is called
 \textit{compatible with differentiations} if for all $z\in \R^3$, $i \in \{1,2\}$,
\begin{equ}[e:compatible-D]
\partial_i^\pm \Pi_z  =  \Pi_z  \CD_i^\pm \;,
\qquad
\partial_i \Pi_z  =  \Pi_z  \CD_i \;,
\qquad
\bar\partial_i \Pi_z  =  \Pi_z  \bar\CD_i\;.
\end{equ}
\end{definition}
\begin{remark}\label{rem:PiDX}
The specifications~\eqref{e:DX=1} and~\eqref{eq:Gamma_polys} are consistent in that~\eqref{eq:Gamma_Diff_commute}
is satisfied on the polynomial sector.
Likewise the specifications~\eqref{e:DX=1} and~\eqref{e:compatible-D} are consistent in that, restricted to the polynomial sector, every model is compatible with differentiations.
For instance, ${\bf X}_1^{(2)}$ encodes $x_1$ living on $\obonds_2$,
and by \eqref{e:DX=1} $\bar\CD_1 {\bf X}_1^{(2)}$ should yield $1$ on $\obonds_1$. 
Indeed, 
%recalling the definition of $\bar\partial_1$,
for any $z\in \R^3$ and $e\in \R\times \obonds_1$, by \eqref{e:compatible-D},
\begin{equ}
\Pi_z (\bar\CD_1 {\bf X}_1^{(2)} ) (e) 
=\tfrac{1}{2\e}
\big(\Pi_z  {\bf X}_1^{(2)} (e^\northeast) 
+\Pi_z  {\bf X}_1^{(2)} (e^\southeast) 
-\Pi_z  {\bf X}_1^{(2)} (e^\northwest) 
-\Pi_z  {\bf X}_1^{(2)} (e^\southwest) \big)
\end{equ}
which is $\frac{1}{2\e}
(\frac\e2+\frac\e2-(-\frac\e2)-(-\frac\e2))=1$.
\end{remark}
In Section~\ref{sec:tr-reg} we built the regularity structure $\CT$ with
 integration operators $\mcb{I}_i$  and shifting operators $\CS_\be$.
Below we describe how they interact with models.

\subsubsection{Shifting}

Recall
from Section~\ref{subsubsec:RS} that we have
shifting operators $\CS_{\be}\colon V_\be\to \CT$ for $\be\in\mcE_\times$
defined on a sector
 $V_\be\subset \CT$.

\begin{definition}\label{def:compat_shifts}
We say that a model $(\Pi,\Gamma)$ is \textit{compatible with shifts}
if for all $\be\in \CE_\times$, $z,x,y\in\R^3$, and test functions $\phi$,
\minilab{Model-sf}
\begin{equs}  	 \label{e:Pi-sf}
(\Pi_z \CS_{\be}\tau) (\phi) 
& =  (\Pi_{z+\be} \tau) (\phi(\cdot -\be)) \;,
\\
\Gamma_{xy} \left( \CS_{\be}\tau \right)
 &=  
\CS_{\be}\left( \Gamma_{x+\be , y+\be} \tau \right) \;.
			\label{e:Gamma-sf}
\end{equs}
\end{definition}
\begin{remark}
Since $V_\be$ is a sector, $\CS_\be \Gamma_{x+\be , y+\be} \tau$ is well-defined whenever $\CS_\be\tau$ is.
\end{remark}
If $\Pi_z\tau$ is continuous for all $z\in\R^3$ and $\tau\in V_\be$,
then taking $\phi=\delta_y$,
the relation~\eqref{e:Pi-sf} implies 
\[
(\Pi_z \CS_{\be}\tau) (y) =  (\Pi_{z+\be} \tau) (y+\be)\;.
\]
Here $y\in \R\times \obonds_i$ if $\CS_{\be}\tau \in \CT^{(i)}$.
The fact that the ``base points'' $z$ are also shifted is crucial. 
Here we recall that $\Pi_z \tau$ is a vector notation (i.e. $1$-form),
namely if $\tau\in \CT^{(i)}$, $\be\in \CE_{\times}$, then $\CS_{\be} \tau\in  \CT^{(j)}$,
and then  the above expression is understood as $y\in \obonds_j$ and $y+\be\in\obonds_i$,
where $i\neq j$.

\subsubsection{Admissible models}
\label{sec:Admissible models}

%As in \cite[Sec.~5]{HM18}
%or \cite{EH21} 
%% \cite[Sec.~A.1]{KPZJeremy}
% it is possible to decompose the Green's function $P^\e$
% of  $\partial_t-\Delta$ 
% on $\R\times \T_\e^2$ as 
%$P^\e=K^\eps+R^\eps$
%where $K^\eps= \sum_{n=1}^{N}K_n^\e$ (recall $\e=2^{-N}$).
%Here each $K_n^\e$ and $R^\e$ can be extended to functions on $\R^3$ with bounded derivatives up to second order (see \cite[Sec.~5]{HM18}),
%and $K_n^\e$ has support in $\{z\in\R^3: \|z\|_\s \lesssim 2^{-n}\}$,
%is non-anticipative, i.e. $K_n^\eps(\cdot,t)=0$ for $t<0$,
%annihilates polynomials of degree $k$ with $|k|_\s \leq 3$,
%and
%\begin{equ}  
%|D^kK_n^\e(z)|\lesssim 2^{n(2+|k|_\s)} \quad (n<N,|k|_\s\leq 3)\;,
%\qquad
%|K_N^\e(z)|\lesssim 2^{2N} \;,
%\end{equ}
%uniformly in all parameters. Here $D^k$ is the usual (i.e. smooth) derivative.
%Furthermore, $R^\eps$ is supported outside a neighbourhood of zero, is non-anticipative, and $\|R^\eps\|_{\CC^3}$ is bounded uniformly in $\eps$.
%
%In our case, $\obonds_1,\obonds_2$ are two copies of $\T_\e^2$, so 
%we have two kernels $K^{1;\eps}$ and $K^{2;\eps}$ for horizontal and vertical bonds respectively
%and we extend $K^{i;\eps}(x,y)=K^{i;\eps}(y-x)$ to all of $\R^3$.
%(As functions of a single variable $K^{1;\eps}(z)=K^{2;\eps}(z)$.)
%We also write $K_n^{i;\e}$ and $R^{i;\e}$ for $i\in \{1,2\}$.		\label{page:K12}

As in \cite[Sec.~5]{HM18}
or \cite{EH21} 
% \cite[Sec.~A.1]{KPZJeremy}
 it is possible to decompose the Green's function $P^\e$
 of  $\partial_t-\Delta$ 
 on $\R\times \T_\e^2$ as 
$P^\e=K^\eps+R^\eps$,
where $K^\eps$ is supported in a ball of radius $\frac1{10}$
and has a multiscale decomposition satisfying suitable bounds.
Furthermore, $R^\eps$ is supported outside a neighbourhood of zero, is non-anticipative, and $\|R^\eps\|_{\CC^3}$ is bounded uniformly in $\eps$.
In our case, $\obonds_1,\obonds_2$ are two copies of $\T_\e^2$, so 
we have two kernels $K^{1;\eps}$ and $K^{2;\eps}$ correspondingly,
and we extend $K^{i;\eps}(x,y)=K^{i;\eps}(y-x)$ to all of $\R^3$.
(As functions of a single variable $K^{1;\eps}(z)=K^{2;\eps}(z)$.)
We also have $R^{i;\e}$ for $i\in \{1,2\}$.		\label{page:K12}

%Recall that $\bar\CT=\bar\CT^{(1)}\oplus\bar\CT^{(2)}$ is the polynomial regularity structure.
%For $\zeta\in\R$ and $i\in\{1,2\}$,
%define the linear map $T^{i;\e}_\zeta \colon \CX_\eps^{(i)} \to (\bar\CT^{(i)})^{\R^3}$ by
%\begin{equs}
%(T^{i;\e}_\zeta f)(z) &\eqdef \sum_{|k|_\s <\zeta+2} \frac{(\X^{(i)})^k}{k!}   \CQ_k\big[\big(T^{i;\e}_{\zeta}f\big) (z) \big]\;,
%\label{eq:T_eps_def}
%\\
%\CQ_k\big[\big(T^{i;\e}_{\zeta}f\big) (z) \big] &\eqdef
% \sum_{n<N} (D_1^k K_n^{i;\e}*_{(i)} f)(z)
% + \delta_{k=0}( K_N^{i;\e}*_{(i)} f)(z)\;,
%\end{equs}
%where $(\X^{(i)})^k\eqdef 0$ if $|k|_\s>1$
%and where the second line has the
%``Taylor terms''  as in  
%\cite[Sec.~2]{EH21}, and they satisfy
%\cite[Assumptions 4.3 and 4.5]{EH19}.
%Here $*_{(i)}$ is (semi)discrete convolution over $\R\times \obonds_i$,   \label{page:convolution-i}
%then evaluated at $z\in \R^3$ since $K_n^{i;\e}$ are extended.
%When the index $i$ is clear for the context, we will simply write $*$ for $*_{(i)}$.
%
%Following~\cite[Sec.~4]{EH19}, a model $(\Pi^\eps,\Gamma^\eps)$ is called \textit{admissible} if
%for all $\zeta\in\R$, $\tau\in\cbT^{(i)}_\zeta$ with $\CI_i\tau\in\CT^{(i)}$,
%$z\in\R^3$, $i\in\{1,2\}$, and $w\in \R\times \obonds_i$,
%\begin{equ}[eq:admissible]
%(\Pi^\e_z\cbI_i \tau ) (w) = (K^{i;\eps} *_{(i)} \Pi^\e_z\tau)(w)
%-\big(\Pi_z^\e \big[(T^{i;\e}_\zeta \Pi^\e_z\tau)(z)\big]\big)(w)\;.
%\end{equ}

Following~\cite[Sec.~4]{EH19}, a model $(\Pi^\eps,\Gamma^\eps)$ is called \textit{admissible} if
for all  $\tau\in\cbT^{(i)}_\zeta$ with $\CI_i\tau\in\CT^{(i)}$,
$z\in\R^3$, $i\in\{1,2\}$, and $w\in \R\times \obonds_i$,
\begin{equ}[eq:admissible]
(\Pi^\e_z\cbI_i \tau ) (w) = (K^{i;\eps} *_{(i)} \Pi^\e_z\tau)(w)
- \mbox{``Taylor terms''}
\end{equ}
where the ``Taylor terms'' are as in \cite[Sec.~4]{EH19} which satisfies \cite[Assumptions 4.3 and 4.5]{EH19} (see, however, footnote~\ref{foot:Rhys}).
For the rest of the article $*_{(i)}$ denotes (semi)discrete convolution over $\R\times \obonds_i$,   \label{page:convolution-i}
and when it is clear from the context, we simply write $*$ for $*_{(i)}$.

\begin{remark}
\label{rem:admissible_Holder}
Recall that $\CX_{\e} = \CC^{-1+\kappa}(\R,\R^{\obonds})$.
Admissibility of $(\Pi^\e,\Gamma^\e)$ therefore implies that $\Pi_x\CI_i\tau$ is $\kappa$-H\"older continuous (in time) for all $\CI_i\tau\in\CT$ and $x\in\R^3$.
\end{remark}

\subsubsection{Multiplication by \texorpdfstring{$\eps$}{epsilon}}
\label{subsubsec:mult_eps_models}

Recall from~\eqref{e:rule-main1} and~\eqref{eq:base_case_2} that we have three types of noises,
$\Xi_i = \Xi_{\mfl_i}$, $\bar\Xi_i = \Xi_{\bar\mfl_i}$, and $\Xi_{\mfl^{=,\pm}_i}$, corresponding to $\xi$, $\eps^{1-\kappa}\xi$, and a (quadratic function of)
$\eps^{\frac12-\frac\kappa2}\xi$ respectively.
These noises should therefore be treated as closely related, and we do this by
imposing suitable conditions on models.

Recall that a graphical tree $\sigma$ denotes the set of trees $\{\tau_1,\ldots,\tau_n\}\subset \mfT$
which are of the form $\sigma$, e.g. $\<IXibar>$ denotes the set
\begin{equ}
\{\CI_1\bar\Xi_1, \CI_2\bar\Xi_2\}\cup \{\CS_\be \CI_i\bar\Xi_i\,:\,i\in\{1,2\},\be\in\CE_\times\}\;,
\end{equ}
which contains $10$ trees.
To lighten notation, we write $\CT[\sigma]\eqdef \bigoplus_{\tau \in \sigma}\CT[\tau]$,
e.g.
$
\CT[\<IXibar>]=\bigoplus_{\tau \in \<IXibar>}\CT[\tau] \simeq (\mfg^*)^{\oplus 10}
$.
Consider the sets of negative trees
\begin{equ}\label{eq:def_mfT_eps}
\mfT_\eps \eqdef \{ \tau \in \sigma\,:\, \sigma\in \{\<IXiI'Xi_notriangle>\;,\;
\<I'Xi_notriangle>\;,\;
\<R2-1new>\}\}\;,
\quad \mfT_{\<IXi>} \eqdef \{ \tau \in \<IXi>\}
\end{equ}
(see Remark~\ref{rem:cE_motivation} for motivation).
Define the map $\sigma\mapsto \sigma_\eps$, $\mfT_\eps\sqcup \mfT_{\<IXi>} \to \mfT$, by
\begin{equ}\label{eq:tau_to_bar_tau}
\<IXiI'Xi_notriangle> \mapsto \<IXiI'Xibar>\;,\quad
\<I'Xi_notriangle> \mapsto \<I'Xibar>\;,\quad
\<R2-1new> \mapsto \<cherry232>\;,
\quad
\<IXi> \mapsto \<IXibar>\;,
\end{equ}
understood as replacing $\Xi_i$ by $\bar\Xi_i$ for the 1st, 2nd, and 4th terms, and by replacing every $\tilde\Xi_{i}$ by $\bar \Xi_{i}$ in~\eqref{eq:base_case_2} for the 3rd term.
We denote in the same way $\tau \mapsto \tau_\eps$ the lifting of this map to $\bigoplus_{\sigma\in \mfT_\eps\sqcup\mfT_{\<IXi>}} \CT[\sigma]\to \CT$ via the functor $\Func_V$.
Note that $\tau\mapsto \tau_\eps$
is well-defined
due to the trees $\<IXiI'Xibar>$ added in Step 3 in Section \ref{subsubsec:Step_3} and
since the terms in~\eqref{eq:base_case_2} with $\tilde \Xi_i$ replaced by $\bar\Xi_i$ are in $\mfT_-$.
The map  $\tau\mapsto \tau_\eps$ is furthermore an isomorphism onto its image.

For the following definition, recall from Remark~\ref{rem:admissible_Holder} that
$\Pi_x\tau$ is $\kappa$-H\"older continuous in time for all $\tau\in\CT[\<IXi>]\simeq \mfg^*\oplus\mfg^*$, $x\in\R^3$, and admissible models $(\Pi,\Gamma)$.
In particular, pointwise evaluation
$\Pi_x\tau(x)$ makes sense.

\begin{definition}\label{def:compat}
A model $(\Pi,\Gamma)$ is called \textit{compatible with multiplication by $\eps$} if
\begin{itemize}
\item it is admissible,
\item for all $\tau\in \bigoplus_{\sigma\in \mfT_\eps}\CT[\sigma]$ and $x\in \R^3$, one has
$
\Pi_x \tau_\eps = \eps^{1-\kappa} \Pi_x \tau
$,
\item for all $\tau\in\CT[\<IXi>]$ and all $x,y\in\R^3$
\begin{equs}[eq:IXi_compat]
\Pi_x \tau_\eps &= \eps^{1-\kappa} \Pi_x \tau - \eps^{1-\kappa} (\Pi_x \tau) (x)\;,
\\
\Gamma_{xy}\tau_\eps &= \tau_\eps + \{\eps^{1-\kappa}(\Pi_x\tau)(x) - \eps^{1-\kappa} (\Pi_y\tau)(y)\} \in \CT[\tau_\eps]\oplus \CT[\bone]\;,
\end{equs}
where, recalling the splitting $\tau=(\tau^{(1)},\tau^{(2)})\in\CT^{(1)}\oplus\CT^{(2)}$,
the term in the parentheses $\{\ldots\}$
is understood as an element of $\R^2$ that is canonically identified with an element of
$\CT[\bone]$.
\end{itemize}
We call $(\Pi,\Gamma)$ simply \textit{compatible} if it is compatible with differentiations (Definition~\ref{def:compat_diff}), with shifts (Definition~\ref{def:compat_shifts}),
and with multiplication by $\eps$.
\end{definition}
\begin{remark}
Condition~\eqref{eq:IXi_compat}, using $\Pi_x \tau= \Pi_y\tau$, gives the usual algebraic identity
$
\Pi_x \Gamma_{xy}\tau_\eps = \Pi_y \tau_\eps
$.
\end{remark}

\subsubsection{Canonical models}
\label{sec:canonical}

Consider a $\mfq$-valued Brownian motion $\BM=\{\BM_e\}_{e\in\obonds}$ where $\BM_e\in\CC(\R,\mfg)$ as in Section \ref{sec:approx-YM}.
Let $\xi^\e = \e^{-1} \dt \BM$.
We use $\xi^\e$ to build a canonical model as follows.
For every $x\in \R\times\T^2$ we set 
\begin{equ}[e:Pi-noise]
\Pi^\e_x\Xi = \xi^\e \;,
\qquad
\Pi^\e_x \bar\Xi = \e^{1-\kappa} \xi^\e\;,
\qquad
\Pi^\e_x \tilde\Xi = \e^{\frac12-\frac\kappa{2}} \xi^\e
\end{equ}
 understood in the natural way -- for every $\tau=(\tau_1,\tau_2)\in\cbT[\Xi_1]\oplus\cbT[\Xi_2] \simeq \mfg^*\oplus\mfg^*$ 
we define $\Pi_{x}^\e \tau\in\CX_\eps=\CC^{-1}(\R,\R^{\obonds})$ by
\begin{equ}[e:Pi noise]
(\Pi_{x}^\e \tau)(s,e) = \tau_{i}(\xi^\e (s,e)) \in \R\;,\qquad \mbox{if }e\in \obonds_i
\end{equ}
and similarly for $\bar\Xi $, $\tilde\Xi$.
We emphasise that $\tilde\Xi$ is \textit{not} part of our regularity structure, and the final term in~\eqref{e:Pi-noise} is only used as input into a recursive definition as follows,
from which $\Pi^\e_x \Xi_{\mfl^{=,\pm}_i}$ are defined.

The canonical model is then 
defined recursively by $\Pi_x^\e \tau \bar \tau = \Pi_x^\e\tau \Pi_x^\e\bar \tau$
for any  $\tau,\bar\tau\in\cbT$ for which $\tau\bar\tau\in\cbT$, where the product $\Pi_x^\e\tau \Pi_x^\e\bar \tau$ is either classical (e.g. for $\<IXiI'Xi_notriangle>$) or in the Stratonovich sense (e.g. for $\<PsiXi1>$), together with~\eqref{e:compatible-D},  \eqref{Model-sf},
\eqref{eq:poly_model}, \eqref{eq:admissible}, \eqref{e:Pi-noise}.
These conditions on $\Pi^\eps_x$ uniquely determine $\Gamma^\eps_{xy}$ in the usual way as in~\cite[Sec.~8.2]{Hairer14} or~\cite[Sec.~2.2]{ESI_lectures}.

\begin{proposition}\label{prop:canonical_model}
The recursion above yields a model $(\Pi^\e,\Gamma^\e)$ that is compatible.
\end{proposition}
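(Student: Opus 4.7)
The construction is entirely recursive on the size of trees, so my plan is to verify, step by step, that (a) the recursion is well-posed, (b) the resulting $\Pi^\e$ uniquely determines a consistent $\Gamma^\e$ satisfying the model axioms, and (c) the three compatibility conditions of Definition~\ref{def:compat} hold. I will first observe that the definition of $\Pi^\e_x\tau$ for $\tau\in\mfT$ is unambiguous: each $\tau$ is obtained by a finite sequence of the primitive operations (noise, multiplication, $\CI_i$, $\CD\in\Diff$, and $\CS_\be$) and inspection of \textit{Integration/Nonlinearity (Step 1--2)} in Section~\ref{sec:tr-reg} shows that $\CS_\be$ only ever acts on trees of the form $\CI_i(\bar\tau)$, so there is no conflict between the ``product'' branch and the ``shift'' branch of the recursion. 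The analytic bounds~\eqref{eq:Pi}--\eqref{eq:Gamma} follow pathwise from the boundedness of $\xi^\e$ for each realisation, the multiscale decomposition of $K^{i;\eps}$, and standard small-scale kernel estimates for discrete models (as in~\cite{EH19}); they are \emph{not} uniform in $\e$ at this stage, but the pathwise statement suffices to call $(\Pi^\e,\Gamma^\e)$ a model. The maps $\Gamma^\e_{xy}$ are then defined by the standard Connes--Kreimer recursion, and the algebraic relations $\Pi^\e_x\Gamma^\e_{xy} = \Pi^\e_y$ and $\Gamma^\e_{xy}\Gamma^\e_{yz}=\Gamma^\e_{xz}$ follow as in~\cite[Sec.~8]{Hairer14}.

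For compatibility with differentiations~\eqref{e:compatible-D}, the identities $\partial^{\pm}_i \Pi^\e_z = \Pi^\e_z \CD^{\pm}_i$, $\partial_i\Pi^\e_z = \Pi^\e_z\CD_i$, $\bar\partial_i\Pi^\e_z = \Pi^\e_z\bar\CD_i$ are built directly into the recursion. For compatibility with shifts, I will inductively verify~\eqref{e:Pi-sf} by checking it on noise labels and polynomials (trivial from translation invariance), propagating through products $\tau\bar\tau$ (linearity of $\CS_\be$ on the relevant sector and the product rule), through integrations $\CI_i$ (using that $K^{i;\eps}$ is translation invariant, so convolution commutes with the shift on the base point), and through the differentiation operators (commutation of $\partial$ with translation). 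The relation~\eqref{e:Gamma-sf} then follows from $\Pi_z\Gamma_{zy}=\Pi_y$ together with uniqueness of $\Gamma$.

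The heart of the matter is compatibility with multiplication by $\e$. For $\tau\in\bigoplus_{\sigma\in\mfT_\e}\CT[\sigma]$, i.e.\ $\tau_\e$ one of $\<IXiI'Xibar>$, $\<I'Xibar>$, $\<cherry232>$, I will verify $\Pi^\e_x\tau_\e = \e^{1-\kappa}\Pi^\e_x\tau$ by unwrapping the definitions: since $\Pi^\e_x\bar\Xi_i = \e^{1-\kappa}\Pi^\e_x\Xi_i$ by~\eqref{e:Pi-noise}, and the map $\tau\mapsto\tau_\e$ only replaces one $\Xi$-leaf (or one $\tilde\Xi$-pair) by $\bar\Xi$, linearity of convolution against $K^{i;\eps}$ and the product rule (classical or Stratonovich, as appropriate) immediately factor out the $\e^{1-\kappa}$. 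The slightly more delicate case is $\tau\in\mfT_{\<IXi>}$, because $|\CI_i\Xi_i|=-\kappa<0$ carries no Taylor correction in~\eqref{eq:admissible}, whereas $|\CI_i\bar\Xi_i|=1-2\kappa\in(0,1)$ does; writing out~\eqref{eq:admissible} in both cases yields
\[
\Pi^\e_x\CI_i\bar\Xi_i = K^{i;\eps}\!*\e^{1-\kappa}\xi^\e - \bigl(K^{i;\eps}\!*\e^{1-\kappa}\xi^\e\bigr)(x)\bone
= \e^{1-\kappa}\Pi^\e_x\CI_i\Xi_i - \e^{1-\kappa}\bigl(\Pi^\e_x\CI_i\Xi_i\bigr)(x)\bone,
\]
which is exactly the first line of~\eqref{eq:IXi_compat}. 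The $\Gamma$-identity on the second line of~\eqref{eq:IXi_compat} is then forced by $\Pi^\e_x\Gamma^\e_{xy}\tau_\e = \Pi^\e_y\tau_\e$, using $(\Pi^\e_y\CI_i\Xi_i)(y)=(\Pi^\e_x\CI_i\Xi_i)(y)$ which follows from $\Gamma^\e_{xy}\CI_i\Xi_i=\CI_i\Xi_i$ (no Taylor correction at the lower degree).

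The one point that needs attention, which I anticipate to be the main bookkeeping obstacle, is ensuring that the Step~3 trees of Section~\ref{subsubsec:Step_3} (i.e.\ $\Psi_i^2$ and the new $\<IXiI'Xibar>$ trees) do not create consistency issues: for $\Psi_i^2$ compatibility is automatic since no $\bar\Xi$ is involved, while for the Step~3 $\<IXiI'Xibar>$ trees one must verify that the image of the map $\tau\mapsto\tau_\e$ in~\eqref{eq:tau_to_bar_tau} indeed hits each of them and that the compatibility identity derived above carries through unchanged. This is precisely why these trees were included in $\mfT$ to begin with, and the verification is a direct unfolding of definitions once the inductive framework is set up.
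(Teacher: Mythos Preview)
Your plan is broadly correct and aligns with the paper's proof, but there is one conceptual misalignment worth flagging. In the paper, compatibility with shifts~\eqref{Model-sf} is \emph{imposed} as part of the recursive definition of $\Pi^\e_x(\CS_\be\tau)$; it is not a property to be verified inductively. The shift operator $\CS_\be$ is defined only on the sector $V_\be$ (integrated trees and polynomials), so your proposal to ``check~\eqref{e:Pi-sf} on noise labels and propagate through products'' does not match the structure: there is no $\CS_\be\Xi$ or $\CS_\be(\tau\bar\tau)$ in $\mfT$ to check. What the paper actually verifies for shifted trees is that the model axioms~\eqref{eq:Pi}--\eqref{eq:Gamma} and the algebraic relations~\eqref{eq:model_algeba} hold: the analytic bounds follow from those for $\tau$ via the change of base point $z\mapsto z+\be$ in~\eqref{e:Pi-sf}, and the algebraic relations $\Gamma^\e_{xy}\Gamma^\e_{yz}(\CS_\be\tau)=\Gamma^\e_{xz}(\CS_\be\tau)$ and $\Pi^\e_x\Gamma^\e_{xy}(\CS_\be\tau)=\Pi^\e_y(\CS_\be\tau)$ are checked directly using~\eqref{e:Gamma-sf}. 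This is the substantive content of the proof that you are glossing over.

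Your treatment of compatibility with multiplication by $\e$ is in fact more detailed than the paper's (which only remarks that~\eqref{eq:IXi_compat} follows from $|\<IXibar>|\in(0,1)$), and your computation of the Taylor correction for $\CI_i\bar\Xi_i$ is exactly right. One minor point: the phrase ``boundedness of $\xi^\e$'' is imprecise since $\xi^\e=\e^{-1}\dt\BM$ is white in time; what you mean is that for fixed $\e$ and fixed realisation, $\xi^\e\in\CX_\e=\CC^{-1+\kappa}(\R,\R^{\obonds})$ almost surely, which suffices for pathwise (non-uniform in $\e$) model bounds.
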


\begin{proof}
Admissibility and compatibility with differentiations is clear since we impose~\eqref{eq:admissible} and~\eqref{e:compatible-D} at every step in the recursion.
Compatibility with multiplication by $\eps$ is also clear by~\eqref{e:Pi-noise}
(only the relation~\eqref{eq:IXi_compat} is not completely trivial but follows from $|\<IXibar>|\in (0,1)$).

Compatibility with shifts is clear since we impose \eqref{Model-sf},
and the only thing to verify is that the recursion indeed yields a model.
To verify this, consider $\CS_\be\tau\in \CT^{(i)}_\ell \cap \CT[\CS_{\be}\sigma]$ for some $\CS_\be\sigma\in\mfT_i$ and $i\in\{1,2\}$, $\ell\in\R$.
By the recursive procedure, we have a uniform bound
$|\left( \Pi^\eps_z \tau\right) (\varphi^\lambda_z)|\lesssim \lambda^{|\tau|}$,
and therefore by \eqref{e:Pi-sf}
\begin{equ}
|\left( \Pi^\eps_z \CS_{\be}\tau  \right) (\varphi^\lambda_z)|
= |\left(\Pi^\eps_{z+\be} \tau \right) (\varphi^\lambda_z(\cdot-\be)) |
=  |\left(\Pi^\eps_{z+\be} \tau \right)(\varphi^\lambda_{z+ \be} ) |
\lesssim \lambda^{|\tau|}\;,
\end{equ}
where the second equality is by definition of $\varphi^\lambda_z$.
The required bounds \eqref{eq:Gamma} for $\Gamma^\eps$ 
also hold simply because $|(x+\be)-(y+\be)|=|x-y|$.
In fact the optimal constants in these bounds
are the same for $\tau$ and $\CS_{\be}\tau$ since $\|\tau\|_\ell=\|\CS_{\be}\tau\|_\ell$.
The required bound on 
$\|\Pi^\eps_z \CS_{\be}\tau\|_{\ell;\K_\eps,z;\eps}$
also clearly holds given the bound on 
$\|\Pi^{\eps}_z \tau\|_{\ell;\K_\eps,z;\eps}$.

Assuming that the algebraic requirements 
 hold for $\tau$, one has
\begin{equs}
\Gamma^\eps_{xy} \Gamma^\eps_{yz}\left(\CS_{\be} \tau \right)
&  =
\Gamma^\eps_{xy}  \CS_{\be}\left( \Gamma^\eps_{y+ \be , z+\be} \tau \right)
=
\CS_{\be} \left( \Gamma^\eps_{x+ \be,y+ \be}   \Gamma^\eps_{y+ \be , z+\be} \tau \right)\\
& =
\CS_{\be} \left(    \Gamma^\eps_{x+ \be , z+\be} \tau \right)  
= 
\Gamma^\eps_{x z} \left(  \CS_{\be} \tau \right) \;,
\end{equs}
and
\begin{equs}
\Pi^\eps_x \Gamma^\eps_{xy} \left( \CS_{\be}\tau \right) (\phi)
& =
\Pi^\eps_x \CS_{\be} \left( \Gamma^\eps_{x+ \be , y+\be} \tau \right) (\phi)
=
\Pi^\eps_{x+ \be} \left( \Gamma^\eps_{x+ \be , y+\be} \tau \right) (\phi(\cdot-\be)) 
\\
& =
\left( \Pi^\eps_{y+ \be} \tau \right)(\phi (\cdot-\be))
 =
\left(\Pi^\eps_{y} \CS_{\be} \tau\right) (\phi) \;.
\end{equs}
So the algebraic relations also hold for $ \CS_{\be}\tau$.
\end{proof}

\subsection{Modelled distributions and reconstruction}
\label{sec:reconstruction}

Recall the notation $\$ f\$_{\gamma,\eta;\K;\eps}$ from \eqref{e:smallscale-norm}, and $\|z\|_P$, $\|y,z\|_P$ therein.

\begin{definition}
\label{def:Dgamma}
Let $\gamma,\eta\in\R$ and fix a discrete model $(\Pi^{\eps},\Gamma^{\eps})$. The space $\cD_\eps^{\gamma,\eta} \equiv \cD_{\Gamma^\e,\eps}^{\gamma,\eta}$
consists of all maps $f\colon\R^3\setminus P\to \CT_{<\gamma}$ 
such that
$\$ f \$^{(\eps)}_{\gamma,\eta;\K}  <\infty$
 for every compact set $\K\subset\R^3$ where $\$ f \$^{(\eps)}_{\gamma,\eta;\K}$ is defined as
\begin{equ} [e:Dgamma]
\sup_{\substack{z\in \K \backslash P\\
\|z\|_P\geq \eps}}
\sup_{\beta<\gamma} 
\frac{\|f(z)\|_{\beta}}{\|z\|_P^{(\eta-\beta)\wedge 0}}
+
\sup_{\substack{(y,z)\in \K_P\\
\eps \leq \|y-z\|_\s\leq 1}}
\sup_{\beta<\gamma} 
\frac{\|f(z)-\Gamma^{\eps}_{zy}f(y)\|_{\beta}}{\|y-z\|_\s^{\gamma-\beta} \|y,z\|_P^{\eta-\gamma}}
+
\$ f\$_{\gamma,\eta;\K;\eps} \;.
\end{equ}
We say that $f\in\cD_\eps^{\gamma,\eta}$ is a modelled distribution. We write 
$\cD_\eps^{\gamma,\eta;i}$ for the space of such $f$ taking values in $\CT_{<\gamma}^{(i)}$,
and $\cD_{\alpha,\eps}^{\gamma,\eta;i}$ if $\|f(z)\|_\beta=0$ for all $\beta<\alpha$ and $z\in\R^3$.
If $f$ takes values in a sector $V\subset\CT$, we write $f\in \cD_\eps^{\gamma,\eta}(V)$.
We also write $f\in \cD^\gamma_\e$ if $\$ f \$^{(\eps)}_{\gamma;\K} <\infty$ where $\$ f \$^{(\eps)}_{\gamma;\K}$
is defined as in  \eqref{e:Dgamma} with $\eta=\gamma$ and with $\K\setminus P$ and $\K_P$ replaced by $\K$ in the first and second terms respectively and with
$\$ f\$_{\gamma;\K;\eps}$ in the last term.

For another discrete model $(\bar\Pi^\eps,\bar\Gamma^\eps)$ with modelled distribution $\bar f$, we define the distance $\$f;\bar f\$^{(\e)}_{\gamma,\eta;\K}$ in the
same way as~\eqref{e:Dgamma} except $f(z)$ is replaced by $f(z)-\bar f(z)$ in the first term,
$f(z,y)$ is replaced by $f(z,y)-\bar f(z,y)$ in the second term (where we recall the notation~\eqref{eq:increment_shorthand}),
and $\$ f\$_{\gamma,\eta;\K;\eps}$ is replaced by $\$ f;\bar f\$_{\gamma,\eta;\K;\eps}$.
\end{definition}
Now we define the reconstruction operator $\CR^\eps\colon\cD_\eps^{\gamma}\to\CX_\eps$.
The processes we consider are in general distributional in time.
For each fixed $x \in \obonds$, by the reconstruction theorem
\cite[Thm.~3.10]{Hairer14}, there is an operator 
$\CR_x$ (depending on $x$) such that for $f\in \cD_\e^\gamma$
\[
|(\CR_x (f(\cdot,x)) - \Pi^\e_{(t,x)} f(t,x))(\CS_{2,t}^{\lambda}\phi)| \lesssim \lambda^\gamma
\]
uniformly in $\lambda\in(0,1)$ where $\phi$ is a test function only in time.
Moreover, if $f$ takes values in a sector $V$ and
$\Pi^\e_{(t,x)} \tau$ is continuous for all $\tau\in V$, 
then
\[
\CR_x (f(\cdot,x))(t) = (\Pi_{(t,x)}^\eps f(t,x))(t,x)\;.
\]
Using this family $(\CR_x)_{x \in \obonds}$ 
we then define, for any space-time test function $\psi$ and $f\in \cD_\eps^{\gamma}$,
\[
(\CR^\e f)(\psi) = \Big(\e^2 \sum_{x \in \obonds_1} (\CR_x (f_1(\cdot,x)))(\psi(\cdot, x))\;,\;
\e^2 \sum_{x \in \obonds_2} (\CR_x (f_2(\cdot,x)))(\psi(\cdot, x))\Big)\;.
\]
If $f$ takes values in a sector $V$ and
$\Pi^\e_{(t,x)} \tau$ is continuous for all $\tau\in V$, then
\begin{equ}\label{eq:rec}
(\CR^\eps f)(z)= (\Pi_z^\eps f_i(z))(z)\;,
\qquad \forall i\in\{1,2\}\;, \; \forall z\in\R\times \obonds_i\;.
\end{equ}
Remark that, due to our definition of models~\eqref{e:model-split},
if $f \in \cD_\eps^{\gamma;i}$,
then $\CR^\eps f \in \CX_\eps^{(i)}$.

\begin{lemma}\label{lem:reconstruct}
For $\gamma>0$, $\alpha\in(-2,0]$, $\eta>-2$, $f\in \cD^{\gamma,\eta}_{\alpha,\e}$, a compact set $\K\subset\R^3$ of diameter at most $2\eps$, and $z\in\K$, one has
\begin{equs}
\|\CR^\e f - \Pi^\e_z f(z)\|_{\gamma;\K_\e;z;\e}
&\lesssim
\|\Pi^\e\|_{\gamma;\bar\K_\e}^{(\e)} 
\$ f\$_{\gamma;\K_\e;\e}\;,	\label{e:CR-approx-small-scale}
\\
\|\CR^\e f\|_{\alpha\wedge\eta;\K_\e;z;\e}
&\lesssim
\|\Pi^\e\|_{\gamma;\bar\K_\e}^{(\e)}\$ f\$_{\gamma,\eta;\K_\e;\e}\;,
\label{e:CR-small-scale}
\end{equs}
uniformly in $z$, $\K_\e$, and $\eps>0$, where $\bar\K_\e$ is the $1$-fattening of $\K_\e$.
\end{lemma}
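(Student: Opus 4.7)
The strategy is to reduce both estimates to a one-dimensional reconstruction problem in time at a fixed spatial site and then invoke, respectively, the reconstruction theorem and a component-wise bound using the small-scale model estimates in~\eqref{eq:Pi}. The observation that makes this reduction possible is that both sides of the two inequalities, by the definition \eqref{eq:seminorm} of $\|\cdot\|_{\alpha;\K_\eps;z;\eps}$, only test against functions $\CS_{2,t}^\lambda\phi$ which are supported at the single spatial point $x$ when $z=(t,x)\in\R\times\obonds_i$. In particular, by construction of $\CR^\eps$,
\begin{equ}
\int_\R (\CR^\eps f - \Pi^\eps_z f(z))_i(s,x)\,\CS_{2,t}^\lambda\phi(s)\,\mrd s
= \big(\CR_x f_i(\cdot,x) - \Pi^\eps_{(t,x)} f(t,x)\big)(\CS_{2,t}^\lambda\phi)\;,
\end{equ}
which is precisely the object controlled by the reconstruction theorem applied in the temporal variable at fixed $x$.

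For \eqref{e:CR-approx-small-scale}, I would apply the reconstruction theorem for the one-dimensional family of modelled distributions $s\mapsto f_i(s,x)\in \CT_{<\gamma}$ at scale $\lambda \in (0,\eps]$. The key point is that the two-tier model bounds in~\eqref{eq:Pi}--\eqref{eq:Gamma} (which separately control large scales $\lambda\in(\eps,1]$ via the first bound and small scales $\lambda\in(0,\eps]$ via the second via $\|\Pi^\e_z\tau\|_{\ell;\K_\eps,z;\eps}$) together with the definition of $\cD^\gamma_\eps$ (incorporating $\$f\$_{\gamma;\K_\eps;\eps}$) are tailored precisely so that the discrete reconstruction theorem from~\cite{EH19,EH21} yields
\begin{equ}
\big|(\CR_x f_i(\cdot,x) - \Pi^\eps_{(t,x)} f(t,x))(\CS_{2,t}^\lambda\phi)\big|
\lesssim \|\Pi^\eps\|_{\gamma;\bar\K_\e}^{(\e)}\,\$f\$_{\gamma;\K_\e;\e}\,\lambda^\gamma
\end{equ}
uniformly in $\lambda\in(0,\eps]$ and admissible $\phi$. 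Dividing by $\lambda^\gamma$ and taking suprema yields \eqref{e:CR-approx-small-scale}.

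For \eqref{e:CR-small-scale}, I would use the triangle inequality
\begin{equ}
\|\CR^\e f\|_{\alpha\wedge\eta;\K_\eps;z;\eps}
\le \|\CR^\e f - \Pi^\eps_z f(z)\|_{\alpha\wedge\eta;\K_\eps;z;\eps}
+ \|\Pi^\eps_z f(z)\|_{\alpha\wedge\eta;\K_\eps;z;\eps}\;.
\end{equ}
Since $\alpha\wedge\eta \le \gamma$ and $\lambda\le\eps\le 1$, one has $\lambda^{-(\alpha\wedge\eta)} \le \lambda^{-\gamma}$, so the first summand is controlled by the left side of \eqref{e:CR-approx-small-scale} and therefore by $\|\Pi^\eps\|^{(\eps)}\$f\$_{\gamma,\eta;\K_\eps;\eps}$ (using the easy inequality $\$f\$_{\gamma;\K_\eps;\eps} \le \$f\$_{\gamma,\eta;\K_\eps;\eps}$ once the $\|z\|_P$ factors are absorbed by the diameter constraint). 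The second summand I would handle by expanding $f(z) = \sum_{\beta\in[\alpha,\gamma)} f_\beta(z)$ with $f_\beta(z)\in\CT_\beta$, applying the small-scale model bound from the second part of~\eqref{eq:Pi} to obtain
\begin{equ}
\|\Pi^\eps_z f_\beta(z)\|_{\alpha\wedge\eta;\K_\eps;z;\eps}
\le \eps^{\beta-(\alpha\wedge\eta)}\,\|\Pi^\eps\|_{\gamma;\bar\K_\e}^{(\e)}\,\|f_\beta(z)\|_\beta\;,
\end{equ}
and then using the bound $\|f_\beta(z)\|_\beta \lesssim \$f\$_{\gamma,\eta;\K_\eps;\eps}$ from the definition of $\cD^{\gamma,\eta}_\eps$; here the factor $\eps^{\beta-(\alpha\wedge\eta)}$ combined with any $\|z\|_P^{(\eta-\beta)\wedge 0}$ factor is bounded uniformly because $\beta\ge\alpha\ge\alpha\wedge\eta$ and the diameter of $\K$ is at most $2\eps$.

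\textbf{Main obstacle.} The only genuinely delicate step is the first one: the discrete reconstruction theorem in the small-scale regime $\lambda\in(0,\eps]$, where the lattice structure is visible and the standard continuum proof (via wavelet analysis or telescoping sums) must be replaced by the argument of~\cite{EH19,EH21} that exploits precisely the second bound in~\eqref{eq:Pi}. In particular, it is the interaction between the $\$\cdot\$_{\gamma;\K_\eps;\eps}$-component of the modelled-distribution norm and the small-scale component of the model norm that makes this 1D reconstruction hold uniformly in $\eps$; fortunately this is exactly why the framework~\eqref{e:framework} was set up in~\cite{EH19} (see Remark~\ref{rem:small-scale-norm}).
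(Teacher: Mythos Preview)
Your approach to \eqref{e:CR-approx-small-scale} matches the paper's: reduce to one-dimensional reconstruction at a fixed spatial point and invoke the reconstruction theorem at scales $\lambda\le\eps$.

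Your triangle-inequality route to \eqref{e:CR-small-scale}, however, has a genuine gap in the regime $\|z\|_P\ll\eps$, i.e.\ when $z$ is very close to the singular hyperplane $P=\{t=0\}$. The claimed inequality $\$f\$_{\gamma;\K_\eps;\eps}\le\$f\$_{\gamma,\eta;\K_\eps;\eps}$ is false for $\eta<\gamma$: the singular norm \eqref{e:smallscale-norm} divides by the weights $\|z\|_P^{(\eta-\beta)\wedge 0}\ge 1$ and $\|y,z\|_P^{\eta-\gamma}\ge 1$, so it is \emph{smaller} than the non-singular one, and the diameter constraint on $\K_\eps$ says nothing about its distance to $P$. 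Likewise your bound on the second summand yields $\eps^{\beta-(\alpha\wedge\eta)}\|z\|_P^{\eta-\beta}$ for $\beta>\eta$, which diverges as $\|z\|_P\to 0$. Both pieces of the triangle inequality blow up near $P$; the reconstruction $\CR^\e f$ stays bounded only because of a cancellation between them, which the triangle inequality destroys. The paper instead appeals directly to the reconstruction theorem for \emph{singular} modelled distributions \cite[Prop.~6.9]{Hairer14} (applied in the temporal variable at fixed $x$), which is built to capture exactly this cancellation and outputs regularity $\alpha\wedge\eta$ near $P$ without ever separating $\CR^\e f$ from $\Pi^\e_z f(z)$.
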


\begin{proof}
The bound~\eqref{e:CR-approx-small-scale}, namely
$\CR^\e$ approximates $\Pi^\e_z f(z)$ up to order $\lambda^\gamma$
at scales smaller than $\e$,
follows from reconstruction theorem for $\CR^\e_x$.
The small scale bound \eqref{e:CR-small-scale} on $\CR^\e f$
follows from the reconstruction theorem for singular modelled distributions~\cite[Prop.~6.9]{Hairer14} together with the fact that
the definition \eqref{eq:seminorm} coincides with the usual way of measuring $\alpha$ regularity
(at small scales).
\end{proof}
By \eqref{e:CR-approx-small-scale}, \cite[Assumption~3.1]{EH19}  is satisfied.
\cite[Assumption~3.4]{EH19}  can be checked in the same way.
Furthermore, by~\eqref{e:CR-small-scale} and similar bounds involving a second discrete model and continuous models,~\cite[Assumption~3.13]{EH19}
is satisfied.
It follows that the reconstruction theorem for $\CR^\e$ holds,
with continuity properties in $f^\e\in \cD^{\gamma,\eta}_{\alpha,\e}$
(for the same $\gamma,\alpha,\eta$
as in Lemma~\ref{lem:reconstruct})
and in discrete models, and approximates the limiting reconstruction operator $\CR$,
in the sense of \cite[Theorems~3.6,~3.15]{EH19}.
\begin{remark}\label{rem:time_restrict}
For $f\in\cD^\gamma_\alpha$, due to our definitions $\CX_\e \eqdef \CC^{-1+\kappa}(\R,\R^{\obonds})$ and~\eqref{e:model-split}, which imply $\CR^\e f \in \CC^{-1+\kappa}(\R,\R^{\obonds})$,
we can multiply $\CR^\e f$ by the indicator function $1_{t>0}$ even if $\alpha\leq -2$
(which is not true in general in the continuum setting),
but $f\mapsto 1_{t>0}\CR^\e f$ may lack continuity properties that are uniform in $\eps>0$.
\end{remark}
\begin{remark}\label{rem:canonical_R_mult}
If $(\Pi^\e,\Gamma^\e)$ is the canonical model from Section~\ref{sec:canonical},
then $\CR^\e$ is multiplicative.
More precisely, if $f\in\cD^\gamma_\alpha,g\in\cD^{\bar\gamma}_{\bar\alpha}$
with $(\gamma+\bar\alpha)\wedge (\bar\gamma+\alpha)>0$ and taking values in sectors such that $fg$ is well-defined, then
$\CR^\e (fg)=(\CR^\e f) (\CR^\e g)$ almost surely,
where the product is either classical or in the sense of Stratonovich.
\end{remark}
%
%The integration operator $\CK^{i;\eps}_{\gamma}$ is given by, for $f\in\cD^{\gamma;i}_\eps$,
%\begin{equ}\label{eq:conv}
%\CK_{\gamma}^{i;\eps} f(z)=\mcb{I}_i f(z) +\sum_{\zeta\in A}\big( T_{\zeta}^{i;\eps}\big[\Pi_z^\eps f(z)\big]\big)(z)
%+\big(T_{\gamma}^{i;\eps}\big[\CR^\eps f-\Pi_z^\eps f(z)\big]\big)(z)\;.
% \end{equ}
%Here $A=\{|\tau|\,:\, \tau\in\mfT\}$ and
%$T^{i;\eps}_\zeta \colon \mcX_\e^{(i)} \to (\bar\CT^{(i)})^{\R^3}$ is given by~\eqref{eq:T_eps_def}.

For admissible models,  as in \cite[Section~4]{EH19}, there is an integration operator $\CP^{i;\eps}_{\gamma}$ for $i\in\{1,2\}$ on $\cD^{\gamma;i}_\eps$, such that 
by \cite[Remarks~4.13-4.14]{EH19}, one has 
$\CR^\e \CP_\gamma^{i;\e} f = P^{i;\e} *_{(i)} \CR^\e f$,
and the multi-level Schauder estimate \cite[Theorems~4.9,~4.26]{EH19} hold.
Namely, for $\gamma\in (0,1/2)$, $\alpha\in(-2,0]$ and $\eta>-2$,
\begin{equ}\label{eq:conv_K}
\CP^{i;\e}  \eqdef \CP^{i;\e}_\gamma 
\colon \cD^{\gamma,\eta;i}_{\alpha,\e} \to \cD^{\gamma+3/2,(\eta\wedge\alpha)+2}_{(\alpha+2)\wedge 0,\e}
\end{equ}
is a continuous operator \label{CP_page_ref}
with continuity properties in the discrete model.
%
%\begin{remark}
The restriction $\gamma<1/2$ and the gain in regularity $\gamma \to \gamma+\frac32$  rather than the usual $\gamma \to \gamma+2$ arises from our choice to only keep degree $0$ and $1$ polynomials in $\CT$.
%Indeed, this choice
%implies that $T_\zeta^{i;\eps}$ never produces polynomials of degree $2$ and higher, and thus $\CK^{i,\eps}_\gamma$ cannot in general output a function in $\cD^\gamma$ with $\gamma\geq 2$.
%\end{remark}

\subsubsection{Differentiation}
\label{sec:Differentiation}

Let $\gamma\in (1,2)$.
Fix  an admissible (discrete) model $Z=(\Pi^\e,\Gamma^\e)$ which is compatible with differentiations (Definition~\ref{def:compat_diff}).
Recall from Remark~\ref{rem:admissible_Holder} that
admissibility implies H\"older continuity of $\Pi^\e_x\CI_i\tau$ , which,
by compatibility with differentiations, further implies H\"older continuity (in time) of $\Pi^\e_x\CD\CI_i\tau$
% for $\CD\in\Diff$
for $\CD\CI_i\tau\in\CT$.

For $f\in \cD^\gamma$, we 
define  (nonlocal) differentiation operators $\mcb{D}_i^\pm$, $\mcb{D}_i$ and $\bar{\mcb{D}}_i$
by 
\[
\mcb{D}_i^\pm f \eqdef 
\CD_i^\pm f  +
 H_i^\pm f \bone \;,
 \qquad
\mcb{D}_i f \eqdef 
\CD_i f  +
 H_i f \bone \;,
\qquad
\bar{\mcb{D}}_i f \eqdef 
\bar\CD_i f  +
 \bar H_i f \bone \;,
\]
where we suppose that $f$ takes values in the domain of the respective abstract derivative
and, for $z\in \R\times \T^2_\e$,
%\begin{equ}\label{eq:H_Diff_def}
$H_i f (z)
\eqdef 
\partial_i
\left( \CR^\e f
 - \Pi_z^\e  f(z) \right) (z)
$
%\end{equ}
and is extended to a function on $ \R\times \T^2$ by bilinear interpolation.
(We let $\T^2_\e$, $\bone$ denote in this subsection either $\obonds_1$, $\bone^{(1)}$ or $\obonds_2$, $\bone^{(2)}$ respectively depending on whether we look at $f^{(1)}$ or $f^{(2)}$ in the decomposition %$f=(f^{(1)},f^{(2)})$ 
arising from %$\CT=\CT^{(1)}\oplus\CT^{(2)})$
\eqref{e:T1T2}).
$H_i^{\pm}$ and $\bar H_i$ are defined in the same way 
with $\partial_i$ replaced by $\partial_i^\pm$ and $\bar\partial_i$ respectively.
Note that
%~\eqref{eq:H_Diff_def}
$H_if$ is well-defined since $\Pi_z^\e f(z)$ is continuous and~\eqref{eq:rec} holds.

\begin{remark}\label{rem:D-nonlocal}
 \cite[Sec.~5.3]{EH19} defines $\mcb{D}$ to be the local operator $\CD$, which then does not exactly commute with $\CR^\e$ (see Remark~5.10 therein). 
 Here %our definition of these differentiation operators is a modification  
we modify the definition  by incorporating the discrepancy
to ensure 
\eqref{e:RDDR} which will be convenient to derive the renormalised equation.
%(so that the reconstruction would not give  the desired SPDE ``up to an error''). 
(See~\cite[Remark~4.13]{EH19} for related ideas of modifying integration operators.)
We only focus on the case $\gamma\in (1,2)$
(so only $\bone$ appears above)
 which is sufficient for our purpose.
\end{remark}
Let $\cD_{\<IXi>,\e}^{\gamma,\eta}$ denote the subspace of all modelled distributions $f\in\cD^{\gamma,\eta}_\e$
of the form
\begin{equ}\label{eq:A_expansion}
f = f_{\<IXi>} + f_\bone + \sum\nolimits_{0<|\sigma|<1} f_{\sigma} + f_\X\;,
\end{equ}
where $\sigma$ refers to a graphical tree (so the sum over $\sigma$ contains 5 terms given
by~\eqref{eq:pos_trees-YM} and~\eqref{eq:pos_trees_rem})
and $f_{\sigma}\colon \{\R\times \T^2\}\setminus P \to \CT[\sigma]$
(we follow the conventions from Section~\ref{subsubsec:mult_eps_models})
and where $f_{\<IXi>}$ is constant on each of the two connected components of $\{\R\times \T^2\}\setminus P$.
Here $\CT[\bone]\simeq \R^2$ and $\CT[\X]\simeq \R^4$ denote the 
polynomial subspaces spanned by $\bone^{(i)}$ and $\X^{(i)}_j$, $i,j\in\{1,2\}$, respectively.
Note that $|\sigma|$ is well-defined as $|\tau|$ is constant for all $\tau\in\sigma$.

\begin{lemma}\label{lem:DRcommute}
Let $\gamma \in (1,2-4\kappa)$, $\eta\in\R$, and
$f\in \cD^{\gamma,\eta}_{\<IXi>,\e}$.
Let $\K = I\times\T^2$ where $I\subset\R$ is compact.
Then $\mcb{D}_i f \in \cD^{\gamma-1,\eta-1}_{-1-\kappa,\e}$ for $i\in\{1,2\}$ with $\$\mcb{D}_i f\$^{(\e)}_{\gamma-1,\eta-1;\K} \lesssim \$f\$_{\gamma,\eta;\K}^{(\e)}\|\Gamma^\e\|^{(\e)}_{\gamma;\K}$, 
and likewise for $\mcb{D}_i^\pm f ,\bar{\mcb{D}}_if$, and
\begin{equ}[e:RDDR]
\CR^\e \mcb{D}_i^\pm f
= \partial_i^\pm \CR^\e f \;,
\qquad
\CR^\e \mcb{D}_i f
= \partial_i \CR^\e f \;,
\quad
\CR^\e \bar{\mcb{D}}_i f 
= \bar\partial_i \CR^\e f\;,
\end{equ}
provided that $f$ takes values in the domain of the respective abstract derivative.
If $\bar Z=(\bar\Pi^\eps,\bar\Gamma^\eps)$ is another model that is compatible with differentiations and $\bar f\in \cD^{\gamma,\eta}_{\bar\Gamma^\e,\<IXi>,\e}$ such that $\mcb{D}_i\bar f$ is well-defined, then
\begin{equ}\label{eq:Df_diff_bound}
\$\mcb{D}_if;\mcb{D}_i\bar f\$_{\gamma-1,\eta-1;\K}^{(\e)} \lesssim \| \Gamma^\e\|^{(\e)}_{\gamma;\K}\$f;\bar f\$_{\gamma,\eta;\K}^{(\e)} + 
\|\Gamma^\e-\bar \Gamma^\e\|^{(\e)}_{\gamma;\K} \$\bar f\$_{\gamma,\eta;\K}^{(\e)} \;,
\end{equ}
%where $\bar\K$ is the $1$-fattening of $\K$,
and likewise for $\mcb{D}_i^\pm,\bar{\mcb{D}}_i$.
The proportionality constants do not depend on $Z,\bar Z,f,\bar f$.
\end{lemma}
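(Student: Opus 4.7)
The plan is to prove three things in sequence: (a) the reconstruction identities \eqref{e:RDDR}; (b) the claimed membership $\mcb{D}_i f \in \cD^{\gamma-1,\eta-1}_{-1-\kappa,\e}$ with the stated bound (and analogously for $\mcb{D}_i^\pm$ and $\bar{\mcb{D}}_i$); and (c) the two-model estimate \eqref{eq:Df_diff_bound}.

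For (a), I would first observe that since $f$ lies in $\cD^{\gamma,\eta}_{\<IXi>,\e}$ with the expansion \eqref{eq:A_expansion}, the leading term $f_{\<IXi>}$ is locally constant, so $\Pi_z^\e f_{\<IXi>}(z)$ is time-continuous by Remark \ref{rem:admissible_Holder} and the pointwise quantity $(\CR^\e f - \Pi_z^\e f(z))(z)$ is well-defined. Consequently $H_i^\pm f(z)$ is well-defined and the bilinear extension is harmless. Now $\mcb{D}_i^\pm f$ takes values in a sector of minimum degree $-1-\kappa > -2$, so formula \eqref{eq:rec} yields, for $z\in \R\times \T^2_\e$,
\begin{equ}
(\CR^\e \mcb{D}_i^\pm f)(z) = (\Pi_z^\e \CD_i^\pm f(z))(z) + H_i^\pm f(z)\;.
\end{equ}
Using compatibility with differentiations \eqref{e:compatible-D} on the first summand and the very definition of $H_i^\pm f(z)$ on the second yields $\partial_i^\pm(\CR^\e f)(z)$; the identities for $\mcb{D}_i$ and $\bar{\mcb{D}}_i$ are identical modulo swapping $\obonds_i$ and $\obonds_{3-i}$ in the latter case.

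For (b), the abstract part $\CD_i^\pm f$ is handled by a textbook argument: $\CD_i^\pm$ lowers all component degrees by exactly one, commutes with the structure group by \eqref{eq:Gamma_Diff_commute}, and so
\begin{equ}
\$\CD_i^\pm f\$^{(\e)}_{\gamma-1,\eta-1;\K} \lesssim \$ f\$^{(\e)}_{\gamma,\eta;\K}\;.
\end{equ}
The substantive work is to bound $H_i^\pm f$ as a function on $\R\times \T^2$ (then multiplied by $\bone$). Applying the singular reconstruction theorem for discrete models (Lemma \ref{lem:reconstruct} together with the discrete counterpart of \cite[Prop.~6.9]{Hairer14}), one gets, for all $z\in\K\setminus P$ and $\phi\in \CB^r_0$,
\begin{equ}
\bigl|(\CR^\e f - \Pi_z^\e f(z))(\phi_z^\lambda)\bigr| \lesssim \|\Pi^\e\|^{(\e)}_{\gamma;\bar\K}\$ f\$^{(\e)}_{\gamma,\eta;\K}\,\lambda^{\gamma}\|z\|_P^{\eta-\gamma}
\end{equ}
uniformly in $\lambda\in(\e,1]$, and the analogous small-scale bound at $\lambda\le\e$ via the seminorm \eqref{eq:seminorm}. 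Applying the discrete derivative $\partial_i^\pm$ then costs one power of $\lambda^{-1}$ (at large scales) or one power of $\e^{-1}$ (at small scales), which gives the pointwise bound $|H_i^\pm f(z)|\lesssim \|z\|_P^{\eta-1}$. For the increment $H_i^\pm f(z) - H_i^\pm f(y)$, I would split
\begin{equ}
H_i^\pm f(z)-H_i^\pm f(y) = \partial_i^\pm(\Pi_y^\e f(y) - \Pi_z^\e f(z))(z) + \bigl[\partial_i^\pm(\CR^\e f - \Pi_y^\e f(y))(z) - \partial_i^\pm(\CR^\e f - \Pi_y^\e f(y))(y)\bigr]\;,
\end{equ}
rewrite $\Pi_y^\e f(y) - \Pi_z^\e f(z) = \Pi_z^\e(\Gamma^\e_{zy}f(y) - f(z))$ and bound each piece by \eqref{eq:Pi}, \eqref{eq:Gamma} and \eqref{e:Dgamma}, obtaining
\begin{equ}
|H_i^\pm f(z) - H_i^\pm f(y)| \lesssim \|\Pi^\e\|^{(\e)}_{\gamma;\bar\K}\|\Gamma^\e\|^{(\e)}_{\gamma;\bar\K}\$ f\$^{(\e)}_{\gamma,\eta;\K}\,\|z-y\|_\s^{\gamma-1}\|y,z\|_P^{\eta-\gamma}\;.
\end{equ}
This is precisely the $\bone$-component requirement for membership in $\cD^{\gamma-1,\eta-1}_{-1-\kappa,\e}$; the small-scale part of $\$\cdot\$_{\gamma-1,\eta-1;\K;\e}$ is handled identically using \eqref{eq:Dismallscale}. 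The cases $\mcb{D}_i$ and $\bar{\mcb{D}}_i$ are completely analogous, with the only difference being that for $\bar{\mcb{D}}_i$ the output lives on the opposite bond set.

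For (c), I would repeat the same scheme applied to the differences $f-\bar f$, $\Pi^\e-\bar\Pi^\e$, $\Gamma^\e-\bar\Gamma^\e$, invoking the comparison version of the singular reconstruction theorem (as in \cite[Thm.~3.15]{EH19}); the two terms on the right of \eqref{eq:Df_diff_bound} appear respectively from the $\$ f;\bar f\$$ contribution and from the model-difference contribution via the Leibniz-type splitting
\begin{equ}
\Gamma^\e_{zy}f(y) - \bar\Gamma^\e_{zy}\bar f(y) = \Gamma^\e_{zy}(f(y)-\bar f(y)) + (\Gamma^\e_{zy}-\bar\Gamma^\e_{zy})\bar f(y)\;.
\end{equ}

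The main obstacle I anticipate is the careful bookkeeping of the weights $\|z\|_P^{\eta-\gamma}$ and the $\e$-dependent scales when the discrete derivative $\partial_i^\pm$ is applied to the reconstruction remainder: one must check that, after differentiating, the bound at scales $\lambda\in(\e,1]$ matches the bound at scales $\lambda\leq \e$ (measured by \eqref{eq:seminorm}) so that the resulting object actually satisfies \emph{both} the large-scale modelled-distribution bound \eqref{e:Dgamma} and the small-scale seminorm bound \eqref{e:smallscale-norm}, and that the bilinear interpolation used to extend $H_i^\pm f$ off the lattice does not spoil the $\|y,z\|_P$ weights when $y,z$ straddle a cube edge. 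This is the only place where the specific form \eqref{eq:A_expansion} of the space $\cD^{\gamma,\eta}_{\<IXi>,\e}$ enters in an essential way, because it guarantees continuity of $\Pi_z^\e f(z)$ needed for pointwise evaluation.
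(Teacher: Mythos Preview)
Your treatment of the identities \eqref{e:RDDR} and of the local piece $\CD_i^\pm f$ is correct and matches the paper. The gap is in your bound on the correction $H_i^\pm f$. You invoke the reconstruction estimate $|(\CR^\e f-\Pi_z^\e f(z))(\phi_z^\lambda)|\lesssim\lambda^\gamma\|z\|_P^{\eta-\gamma}$ and then assert that applying $\partial_i^\pm$ ``costs one power of $\lambda^{-1}$ or $\e^{-1}$'' and yields the pointwise bound on $H_i^\pm f(z)$. But $H_i^\pm f(z)=\e^{-1}(\CR^\e f-\Pi_z^\e f(z))(z\pm\e_i)$ is a \emph{pointwise} value at a spatially shifted lattice point, not a pairing against a test function centred at $z$; neither the large-scale reconstruction bound nor the small-scale seminorm \eqref{eq:seminorm} (which tests only in time at the fixed spatial coordinate of $z$) controls it. Your increment decomposition has the same issue: both summands still require pointwise evaluation of $\Pi^\e$-type objects at displaced lattice points.

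The paper closes this by an elementary direct computation rather than the abstract reconstruction theorem. Applying \eqref{eq:rec} at $z+\e_i$ and the model identity $\Pi_z^\e=\Pi_{z+\e_i}^\e\Gamma_{z+\e_i,z}^\e$ gives
\begin{equ}
(\CR^\e f-\Pi_z^\e f(z))(z+\e_i)=\bigl(\Pi_{z+\e_i}^\e\{f(z+\e_i)-\Gamma_{z+\e_i,z}^\e f(z)\}\bigr)(z+\e_i)\;.
\end{equ}
Here the special structure of $\cD^{\gamma,\eta}_{\<IXi>,\e}$ enters in full force, not just for continuity: by Lemma~\ref{lem:Gamma_simple} and the constancy of $f_{\<IXi>}$, the right-hand side reduces to the $\bone$-projection $\scal{f(z+\e_i)-\Gamma_{z+\e_i,z}^\e f(z),\bone}$, bounded by $\e^\gamma\$f\$_{\gamma;\K}$ from the modelled-distribution norm. (Without constancy of $f_{\<IXi>}$ its component would survive and contribute an uncontrolled factor $\Psi(z+\e_i)$.) This gives the uniform pointwise bound $|H_i^\pm f(z)|\le\e^{\gamma-1}\$f\$_{\gamma;\K}$, from which the large-scale increment is immediate since $\|y-z\|_\s\ge\e$. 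The small-scale time-H\"older bound is a separate careful argument comparing $\Delta_1=f(z+\e_i)-\Gamma_{z+\e_i,z}f(z)$ with its time-shifted analogue $\Delta_2$ and using that every positive degree $\beta$ appearing satisfies $\beta>1-4\kappa$ together with $\gamma<2-4\kappa$; this is where the hypothesis $\gamma<2-4\kappa$ is actually used and it is not captured by your sketch via \eqref{eq:Dismallscale}.
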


\begin{remark}
We believe the condition $f\in\cD^{\gamma,\eta}_{\<IXi>,\e}$ in Lemma~\ref{lem:DRcommute} is not fundamental,
but it suffices for our purposes and allows for an elementary proof without further assumptions.
Moreover, the space $\cD^{\gamma,\eta}_{\<IXi>,\e}$ plays an important role in Section~\ref{subsubsec:multiplication_by_eps}.
\end{remark}

\begin{proof}
We prove the lemma for $\mcb{D}_i$ and it follows in the same way for the other differentiation operators.
We also prove without $\eta$ and the general case with the ``$t=0$''-subspace follows analogously.
We drop reference to $\eps$ for simplicity.

Using the compatibility of the model with $\CD_i$ and the fact that~\eqref{eq:rec} holds for $f\in \cD^\gamma(V_{\CD_i})$ and for $\CD_i f$, we have for
$z\in \R\times \T^2_\e$
\begin{equ}
\CR \CD_i f (z) 
 = \Pi_z (\CD_i(f(z)))(z) 
 = \partial_i \left(\Pi_z (f(z))\right)(z) \;.
\end{equ}
Hence $\CR \CD_i f (z) + H_i f (z)$ is precisely $\partial_i \CR f(z)$, which proves \eqref{e:RDDR}.

To prove 
$\mcb{D}_i f \in \cD^{\gamma-1}_{\alpha-1}$,
recall the norm $\$ \cdot \$^{(\eps)}_{\gamma;\K} $ on $\cD^\gamma_\e$ defined below \eqref{e:Dgamma}.
By these definitions and %the fact that every $\Gamma\in\CG$ commutes with $\CD_i$ by
\eqref{eq:Gamma_Diff_commute},
it is clear that the ``local term'' $\CD_i f \in \cD^{\gamma-1}_{\alpha-1}$ and that we have~\eqref{eq:Df_diff_bound} with $\mcb{D}_i$ replaced by $\CD_i$.

It remains to bound $\$ H_i f\bone \$_{\gamma-1;\K} $ and $\$ H_i f\bone ;H_i\bar f\bone\$_{\gamma-1;\K} $.
For $\tau\in\CT$, we write $\scal{\tau,\bone}$ for the projection of $\tau$ to $\CT[\bone]$ under the decomposition $\CT=\bigoplus_{\sigma\in\mfT}\CT[\sigma]$.
By our choice of $\K$, note that if $z\in(\R\times \T^2_\e)\cap\K$, then also $z\pm \e_i \in (\R\times \T^2_\e)\cap\K$.
Therefore, by~\eqref{eq:rec},
one has
\begin{equs}[eq:Rf_Pi_f_grad]
|\left( \CR f
- \Pi_z  f(z) \right)   (z + \e_i)|
&=
|\left( \Pi_{z+\e_i}\{ f(z+\e_i) - \Gamma_{z+ \e_i,z} f(z)\}\right)(z+ \e_i)|
\\
&= |\scal{f(z+\e_i) - \Gamma_{z+ \e_i,z} f(z),\bone }|
\leq \e^\gamma\$ f\$_{\gamma;\K}
\end{equs}
and likewise for $z-\e_i$,
where we used Lemma~\ref{lem:Gamma_simple} and the fact that $f_{\<IXi>}$ is constant.
Therefore 
\begin{equ}[e:dR-Pi-bound]
|H_i f (z)|=
\left|
\partial_i
\left( \CR f
 - \Pi_z  f(z) \right) (z)   
 \right|
 \leq  \e^{\gamma-1} \$ f\$_{\gamma;\K}\;,
\end{equ}
and thus
\begin{equs}
\sup_{\substack{(y,z)\in  (\R\times \T^2_\e)\cap \K\\
\eps \leq \|y-z\|_\s\leq 1}}
\frac{\|H_i f(z)\bone- H_i f(y)\bone\|_0}{\|y-z\|_\s^{\gamma-1}}
\lesssim
\sup_{\substack{(y,z)\in  (\R\times \T^2_\e)\cap \K\\
\eps \leq \|y-z\|_\s\leq 1}}
\frac{\e^{\gamma-1}\$ f\$_{\gamma;\K}
}{\|y-z\|_\s^{\gamma-1}}
\leq
\$ f\$_{\gamma;\K}\;.
\end{equs}
We then bound 
 $\$ H_i f \bone\$_{\gamma-1;\K;\eps} $ given by \eqref{e:smallscale-norm},
 which amounts to proving that 
$ H_i f $ is $(\gamma-1)$-H\"older at scales smaller than $\e$.
For this time H\"older regularity, write $z+t\eqdef (s+t,x)\in\K\cap(\R\times\T^2_\e)$ for $z=(s,x)\in\K\cap(\R\times\T^2_\e)$ and $t\in\R$.
By a similar argument as in~\eqref{eq:Rf_Pi_f_grad},
the desired bound follows once we show, for $t\leq \eps^2$,
\begin{equs}{}
&|\scal{\Delta_1-\Delta_2,\bone}|\lesssim \e t^{(\gamma-1)/2}\$f\$_{\gamma;\K}\|\Gamma\|_{\gamma;\K}\;,\label{eq:Deltas_diff}
\\
&\Delta_1\eqdef f(z+\e_i) - \Gamma_{z+\e_i,z}f(z)\;,\quad \Delta_2 \eqdef f(z+\e_i+t) - \Gamma_{z+\e_i+t,z+t}f(z+t) \;.
\end{equs}
Remark that $\|\Delta_2\|_\beta \leq \e^{\gamma-\beta}\$f\$_{\gamma;\K}$ for all $\beta<\gamma$
and thus
\begin{equs}
|\scal{\Delta_2-\Gamma_{z+\e_i,z+\e_i+t}\Delta_2,\bone}|
&\leq \sum_{0<\beta<\gamma} \e^{\gamma-\beta}\$f\$_{\gamma;\K} t^{\beta/2} \|\Gamma\|_{\gamma;\K}
\\
&\lesssim
\e t^{(\gamma-1)/2}\$f\$_{\gamma;\K}\|\Gamma\|_{\gamma;\K}\;,
\end{equs}
where we used that $\beta > 1-4\kappa$ in the sum and $\gamma<2-4\kappa$.
On the other hand
$
|\scal{f(z+\e_i)-\Gamma_{z+\e_i,z+\e_i+t}f(z+\e_i+t),\bone}| \leq t^\gamma\$f\$_{\gamma;\K}
$
and
\begin{equs}{}
&|\scal{\Gamma_{z+\e_i,z}f(z) - \Gamma_{z+\e_i,z+\e_i+t}\Gamma_{z+\e_i+t,z+t}f(z+t),\bone}|
\\
&\qquad =
|\scal{\Gamma_{z+\e_i,z}(f(z) - \Gamma_{z,z+t}f(z+t)),\bone}|
\\
&\qquad \leq 
\sum_{\beta<\gamma} \e^\beta \|\Gamma\|_{\gamma;\K}
 t^{(\gamma-\beta)/2}\$f\$_{\gamma;\K}
\lesssim \e t^{(\gamma-1)/2}\|\Gamma\|_{\gamma;\K}
\$f\$_{\gamma;\K}
\end{equs}
where we used that $\beta\leq 1$ in the sum.
Combining the above estimates proves~\eqref{eq:Deltas_diff}.
For this H\"older regularity in space, recalling that we have extended   $ H_i f $ with bilinear interpolation,
for $z,y\in\K$ with $\|z-y\|_\s \leq \e$ and identical time coordinate, one can find $\bar z,\bar y \in (\R\times \T^2_\e)\cap \K$ with
$\|\bar y-\bar z\|_\s$ being a positive multiple of $\e$
such that
\[
\frac{|H_i f(z)- H_i f(y)|}{\|y-z\|_\s^{\gamma-1}}
\lesssim
\frac{|H_i f(\bar z)- H_i f(\bar y)|}{\|y-z\|_\s^{\gamma-1}} \frac{\|y-z\|_\s}{\|\bar y-\bar z\|_\s}
\lesssim
\|y-z\|_\s^{2-\gamma} \e^{\gamma-2} \leq 1
\]
where we used \eqref{e:dR-Pi-bound} and $\gamma<2$ and the second proportionality constant is a multiple of $\$ f\$_{\gamma;\K}$. 
The bound~\eqref{eq:Df_diff_bound} for $\$H_i f\bone;H_i\bar f\bone\$_{\gamma;\K}$ follows similarly.
\end{proof}

\subsubsection{Shifting operator}

Fix %throughout this subsection 
a model $Z=(\Pi^\eps,\Gamma^\eps)$ that is compatible with shifts.
%We define shifting operators $\cS_\be$  on the space $\cD^{\gamma,\eta}_\e$ as follows.
Recall that $\CS_{\be}$ is defined on a sector $V_\be$ for $\be\in  \CE_\times$ (where $\CE_\times$ is defined in \eqref{e:def-CEtimes}).
For $f\in\cD^{\gamma,\eta}_\e(V_\be)$
we define the shifting operator
\begin{equ} [e:shift-Dga]
(\cS_{\be} f) (x)\eqdef \CS_{\be} (f(x+\be)) \;, \qquad \forall x\in \R^{3}\;.
\end{equ}
We will also often write  $\cS_{\northwest}$,  $\cS_{\northeast}$,  $\cS_{\southwest}$,  $\cS_{\southeast}$ understood in the obvious way.

\begin{lemma} \label{lem:sf-is-Lip}
For $\be\in\CE_\times$, another model $\bar Z=(\bar\Pi^\eps,\bar\Gamma^\eps)$ that is compatible with shifts,
and $f\in \cD^{\gamma,\eta}_{\Gamma^\e,\e}(V_\be)$ and $\bar f\in \cD^{\gamma,\eta}_{\bar\Gamma^\e,\e}(V_\be)$,
one has
\begin{equ}
\$\cS_{\be}f\$_{\gamma,\eta;\K}^{(\eps)} = \$f\$_{\gamma,\eta;\K+\be}^{(\eps)}\;,
\quad 
\$\cS_\be f;\cS_\be\bar f\$_{\gamma,\eta;\K}^{(\eps)} = \$ f; \bar f\$_{\gamma,\eta;\K+\be}^{(\eps)}\;.
\end{equ}
\end{lemma}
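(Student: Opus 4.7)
The plan is to show that the equalities hold by directly unpacking the definition of $\$\cdot\$_{\gamma,\eta;\K}^{(\e)}$ from~\eqref{e:Dgamma} and exploiting three facts: (i) $\CS_\be$ is norm-preserving on each homogeneity, (ii) $\be\in\CE_\times$ is purely spatial so shifting $z\mapsto z+\be$ leaves the time coordinate (hence $\|z\|_P$ and $\|y,z\|_P$) unchanged, and (iii) the compatibility relation~\eqref{e:Gamma-sf} between $\Gamma^\e$ and $\CS_\be$.

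First I would handle the ``large scale'' pieces, i.e.\ the first two terms on the right of~\eqref{e:Dgamma}. For the first term, using~\eqref{e:shift-Dga} and norm preservation of $\CS_\be$ on $\CT_\beta$,
\[
\|(\cS_\be f)(z)\|_\beta = \|\CS_\be f(z+\be)\|_\beta = \|f(z+\be)\|_\beta\;,
\]
and since $\be$ is purely spatial, $\|z\|_P = \|z+\be\|_P$. Replacing the supremum over $z\in\K\setminus P$ with a supremum over $z+\be\in(\K+\be)\setminus P$ gives exactly the first term of $\$f\$_{\gamma,\eta;\K+\be}^{(\e)}$. For the second term, by the compatibility identity~\eqref{e:Gamma-sf},
\[
(\cS_\be f)(z) - \Gamma^\e_{zy}(\cS_\be f)(y)
= \CS_\be\bigl(f(z+\be) - \Gamma^\e_{z+\be,y+\be}f(y+\be)\bigr)\;,
\]
so norm preservation together with $\|y-z\|_\s = \|(y+\be)-(z+\be)\|_\s$ and $\|y,z\|_P = \|y+\be,z+\be\|_P$ again reduces the supremum over $(y,z)\in\K_P$ to the supremum over $(y+\be,z+\be)\in(\K+\be)_P$.

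Next I would treat the small-scale term $\$\cS_\be f\$_{\gamma,\eta;\K;\e}$ from~\eqref{e:smallscale-norm} by exactly the same rewriting: each of the two suprema is over $z$, resp.\ $(y,z)$, with constraints $\|z\|_P<\e$ or $\|y-z\|_\s<\e$, all of which are preserved under the spatial shift $z\mapsto z+\be$, and the integrands transform via norm preservation of $\CS_\be$ and~\eqref{e:Gamma-sf} as above. Summing the three contributions yields $\$\cS_\be f\$_{\gamma,\eta;\K}^{(\e)} = \$f\$_{\gamma,\eta;\K+\be}^{(\e)}$.

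For the second equality of the lemma, I would repeat the same computation applied to the difference $\cS_\be f - \cS_\be \bar f$, noting that by linearity of $\CS_\be$ one has $(\cS_\be f)(z) - (\cS_\be \bar f)(z) = \CS_\be(f(z+\be)-\bar f(z+\be))$, and that~\eqref{e:Gamma-sf} applies to both $\Gamma^\e$ and $\bar\Gamma^\e$, so that the increments appearing in the definition of $\$f;\bar f\$_{\gamma,\eta;\K+\be}^{(\e)}$ match exactly (after applying $\CS_\be$) with those defining $\$\cS_\be f;\cS_\be\bar f\$_{\gamma,\eta;\K}^{(\e)}$. There is no real obstacle here; the whole proof is essentially a change of variables $z\mapsto z+\be$ combined with isometry of $\CS_\be$, and the only point to be careful about is that the sets $\K\setminus P$ and $\K_P$ (defined in \eqref{eq:kp}) transform to $(\K+\be)\setminus P$ and $(\K+\be)_P$ respectively, which holds precisely because $\be\in\CE_\times$ has zero time component.
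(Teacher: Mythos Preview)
Your proposal is correct and takes essentially the same approach as the paper: the paper's proof computes $\|\cS_\be f(z) - \Gamma^\e_{zy}\cS_\be f(y)\|_\beta$ via~\eqref{e:shift-Dga} and~\eqref{e:Gamma-sf}, invokes norm preservation of $\CS_\be$, and concludes that every term in~\eqref{e:Dgamma} matches after the change of variable $z\mapsto z+\be$. Your version is simply more explicit (in particular about $\be$ being purely spatial so that $P$, $\|z\|_P$, $\K_P$ are preserved under the shift), which the paper leaves implicit.
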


\begin{proof}
For all $z,y\in \R^{3}$, by \eqref{e:Gamma-sf} and the definition \eqref{e:shift-Dga},
\begin{equs}
\|\cS_{\be}f(z) & -\Gamma^{\eps}_{zy} \cS_{\be}f(y)\|_{\beta}
=
\|\CS_{\be}f(z+\be) -\Gamma^{\eps}_{zy} \CS_{\be}f(y+\be)\|_{\beta}
\\
&=
\|\CS_{\be}f(z+\be) - \CS_{\be} \Gamma^{\eps}_{z+\be,y+\be} f(y+\be)\|_{\beta}
\;,
\end{equs}
and since $\CS_{\be}$
preserves the norm on $\CT_\beta$,
it follows that all the terms for $\cS_\be f$ in~\eqref{e:Dgamma} are equal to the corresponding terms for $f$ with the change of variable $x\mapsto x+\be$,
and likewise for $\$\cS_\be f;\cS_\be\bar f\$_{\gamma,\eta;\K}^{(\eps)}$.
\end{proof}
The following `average' of shiftings will be useful.
We
define\footnote{Here, at $x\in \R^3$, for each $\be\in\CE_\times$,  we shift the tree by $\CS_{\be}$ and {\it also} shift $x$ by $\be$ (see \eqref{e:shift-Dga}).
This is the reason why we did not define $\frac14 \sum_{\be\in \CE_\times} \CS_{\be}$ as  ``one single'' operator on the level of regularity structures (which one might think is convenient).}
\begin{equ}[e:shift-a]
\cS_{a} f \eqdef \frac14 \sum_{\be\in \CE_\times} \cS_{\be} f\;,
\qquad
\forall f\in \cap_{\be\in\CE_\times}\cD^{\gamma,\eta}_\e(V_\be)\;.
\end{equ}
%In the next lemma we assume that the reconstruction of the given modelled distribution yields continuous functions, which is sufficient for our purpose.

\begin{lemma}\label{lem:RecShft}
Let $\be\in\CE_\times$, $j\in\{1,2\}$, and $\CA_j^\e \in \cD^{\gamma,\eta;j}_\e(V_\be)$ and suppose that~\eqref{eq:rec} holds for $\CA^\e_j$.
Then for $i\neq j$ and any $e\in \obonds_i$, one has
$(\CR^\e \cS_{\be} \CA_j^\e  )(e)=A_j^{\eps} (e+\be)$ 
where $A_j^\e \eqdef \CR^\e \CA_j^\e$.
If furthermore $\CA_j^\e \in\cap_{\be\in\CE_\times} \cD^{\gamma,\eta;j}_\e(V_\be)$, then
$(\CR^\e \cS_{a} \CA_j^\e  )(e)=A_j^{\eps} (e^{(a)})$
where we recall the notation $A_j^{\eps} (e^{(a)})$ from~\eqref{e:def-Aea}.
\end{lemma}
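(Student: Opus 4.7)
The statement is a direct application of compatibility with shifts (Definition~\ref{def:compat_shifts}) combined with the pointwise formula~\eqref{eq:rec} for reconstruction of a function-valued modelled distribution. The plan is as follows.

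First, I would unwind the definitions. By~\eqref{e:shift-Dga}, for every $x \in \R^3$ one has $(\cS_\be \CA_j^\e)(x) = \CS_\be(\CA_j^\e(x+\be))$. Since $\CS_\be$ maps $V_\be \cap \CT^{(j)}$ into $\CT^{(i)}$ for $i \neq j$, the shifted modelled distribution $\cS_\be \CA_j^\e$ takes values in $\CT^{(i)}$, so by the model splitting~\eqref{e:model-split} its reconstruction is a function on $\R \times \obonds_i$, and the formula $(\CR^\e f)(z) = (\Pi_z^\e f(z))(z)$ from~\eqref{eq:rec} applies at points $z = e \in \R \times \obonds_i$ (assuming the hypothesis that~\eqref{eq:rec} holds for $\CA_j^\e$ is preserved under shift, which follows from the continuity of $\Pi_z^\e \CS_\be \tau$ via~\eqref{e:Pi-sf}).

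Next, for $e\in \R\times\obonds_i$, I would chain together the identities
\begin{equs}
(\CR^\e \cS_\be \CA_j^\e)(e)
&= \bigl(\Pi_e^\e (\cS_\be \CA_j^\e)(e)\bigr)(e)
= \bigl(\Pi_e^\e \CS_\be \CA_j^\e(e+\be)\bigr)(e)\\
&= \bigl(\Pi_{e+\be}^\e \CA_j^\e(e+\be)\bigr)(e+\be)
= (\CR^\e \CA_j^\e)(e+\be) = A_j^\e(e+\be),
\end{equs}
where the third equality uses~\eqref{e:Pi-sf} with the test function chosen as (an approximation of) a delta at $e$, so that pairing against $\phi(\cdot-\be)$ recentres the evaluation point to $e+\be$; this is legitimate at the level of continuous functions by Remark~\ref{rem:admissible_Holder} together with admissibility (note that $\CA_j^\e$ takes values in a sector where pointwise evaluation makes sense, which is the content of the hypothesis that~\eqref{eq:rec} holds).

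Finally, the statement for $\cS_a$ is immediate by linearity: averaging the first identity over $\be \in \CE_\times$ gives
\begin{equ}
(\CR^\e \cS_a \CA_j^\e)(e) = \tfrac14 \sum_{\be \in \CE_\times} A_j^\e(e+\be) = A_j^\e(e^{(a)})
\end{equ}
by the very definition~\eqref{e:def-Aea} of $A_j^\e(e^{(a)})$. There is no serious obstacle here; the only subtle point is the justification of the delta-evaluation step, which in a fully rigorous version would be carried out by testing against a narrow bump and passing to the limit using the continuity of $\Pi_z^\e \tau$ for the relevant $\tau$.
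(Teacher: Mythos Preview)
Your proposal is correct and follows essentially the same approach as the paper: the same chain of identities $(\CR^\e \cS_\be \CA_j^\e)(e) \stackrel{\eqref{eq:rec}}{=} (\Pi_e^\e (\cS_\be \CA_j^\e)(e))(e) \stackrel{\eqref{e:shift-Dga}}{=} (\Pi_e^\e \CS_\be \CA_j^\e(e+\be))(e) \stackrel{\eqref{e:Pi-sf}}{=} (\Pi_{e+\be}^\e \CA_j^\e(e+\be))(e+\be) \stackrel{\eqref{eq:rec}}{=} A_j^\e(e+\be)$, followed by averaging over $\be \in \CE_\times$ via~\eqref{e:shift-a} for the $\cS_a$ statement.
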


\begin{proof}
Fixing $\be\in \CE_\times$, one has
\begin{equs}
(\CR^\e \cS_{\be} \CA_j^\e  )(e)
&
 \stackrel{\eqref{eq:rec}}{=}
 \Pi_{e}  \left(  (\cS_{\be} \CA_j^\e)(e)\right) (e)
 \stackrel{\eqref{e:shift-Dga}}{=}
 \Pi_{e}  \left( \CS_{\be} (\CA_j^\e(e+\be))  \right) (e)
 \\
&\stackrel{\eqref{e:Pi-sf}}{=}
 \Pi_{e+\be}  \left( \CA_j^\e(e+\be)  \right) (e+\be) 
  \stackrel{\eqref{eq:rec}}{=}
A_j^{\eps} (e+\be) \;.
\end{equs}
From this and \eqref{e:shift-a} one also obtains the identity for $\cS_{a}$.
%\begin{equs}
%(\CR^\e \cS_{a} \CA_j^\e  )(e)
%=\frac14 \sum_{\be \in \CE_\times} 
%	(\CR^\e \cS_{\be} \CA_j^\e  )(e)
% =
% \frac14 \sum_{\be \in \CE_\times} 
% A_j^{\eps} (e+\be)=
%A_j^{\eps} (e^{(a)})\;.
%\end{equs}
\end{proof}

\subsubsection{Multiplication by \texorpdfstring{$\eps$}{epsilon}}
\label{subsubsec:multiplication_by_eps}

To handle the terms $\e F(\e A)[A,\xi]$ in Proposition~\ref{prop:rescaled-equ},
we represent $\eps A$ as a function-like modelled distribution which we can compose with the function $F$.

We fix a model $Z=(\Pi,\Gamma)$ throughout this subsection that we suppose is compatible with multiplication by $\eps$ (Definition~\ref{def:compat}).
Recall the space $\cD^{\gamma,\eta}_{\<IXi>,\e}$ from Section~\ref{sec:Differentiation}.
We define a linear map $\cE \colon \cD^{\gamma,\eta}_{\<IXi>,\e} \to \cD^{\gamma,\eta}_{0,\e}$ with the property that $\CR\cE f = \eps \CR f$.
Recalling~\eqref{eq:pos_trees-YM} and~\eqref{eq:pos_trees_rem}, there are 5 graphical trees in the sum over $\sigma$ in~\eqref{eq:A_expansion} that we group into the two sets
\begin{equ}
T_1 = \{\<IXibar>\;,\;\;
\<IPsiXi1>\}\;,\quad
T_2 = \{\<I(cherry)>\;,\;\;
\<IDPsi>\;,\;\;
\<I(R2-1new)>\}
\end{equ}
(note the relation between $T_2$ and $\mfT_\eps$ from~\eqref{eq:def_mfT_eps}).
For $f$ as in~\eqref{eq:A_expansion},
we define
\begin{equs}[eq:B_expand]
\cE f &\eqdef b_{\bone} + b_{\<IXibar>} + b_{\<IPsiXi1>}  + b_{\X}\;,
\\
b_\bone(x) &= \eps f_\bone(x) + \eps [\Pi_x f_{\<IXi>} (x)](x) \in \CT[\bone]\;,
\\
b_{\<IXibar>}(x) &= \e^\kappa [f_{\<IXi>}(x)]_\e + \e f_{\<IXibar>}(x)\;,
\quad
b_{\<IPsiXi1>}(x) = \e f_{\<IPsiXi1>}(x)\;,\quad
b_\X(x) = \eps f_\X(x)\;,
\end{equs}
where, in the definition of $b_\bone(x)$, $\eps [\Pi_x f_{\<IXi>} (x)](x)$ is
understood as an element of $\CT[\bone]$ in the same way as described after~\eqref{eq:IXi_compat},
and, in the definition of $b_{\<IXibar>}(x)$, we recall the linear isomorphism
$\tau\mapsto \tau_\eps, \CT[\<IXi>] \to \CT[\<IXibar>]$ from Section~\ref{subsubsec:mult_eps_models}.
Note that $b_\bone(x)$ is well-defined since $\Pi_x\tau$ is continuous for all $\tau\in\<IXi>$ (Remark~\ref{rem:admissible_Holder}).

\begin{remark}\label{rem:cE_motivation}
Trees in $T_1$ are kept in the definition of $\cE f$ in~\eqref{eq:B_expand}
since the map $\tau\mapsto \tau_\eps$ is not defined on these trees;
for $\<IXibar>$, this is because multiplication by $\eps^{1-\kappa}$ does not increase the small scale regularity of $\eps^{1-\kappa}\xi$ (Remark~\ref{rem:F_trees}),
while for $\<IPsiXi1>$, this is because multiplication by $\eps^{1-\kappa}$ does not commute with $\Pi$, i.e.
$\Pi_x \<I(PsiXi1)Xi1less> \neq \eps^{1-\kappa} \Pi_x \<PsiXi1>$.

The reason that the trees in $T_2$ are dropped in~\eqref{eq:B_expand} is that multiplication by $\eps^{1-\kappa}$ 
on these trees \textit{does} increase degree by $1-\kappa$ (and we can indeed prove the corresponding analytical bounds)
and commutes with $\Pi$ (see~\eqref{eq:Gamma_one_comp} below where this is used).
%This is not true for $\<Xi1>$ because $\eps^{2-\kappa}\xi$ in our setting
%essentially has the same small scale regularity as $\eps^{1-\kappa}\xi$ (i.e. that of white-in-time noise), so it is not sensible to define a new symbol representing $\eps^{2-\kappa}\xi$ with degree $-2\kappa$; this is a manifestation of the fact that multiplication by $\eps$ increases spatial but not temporal regularity.
Note that we could have alternatively kept \textit{all} trees $\sigma$ for $0<|\sigma|<1$ in~\eqref{eq:B_expand} by defining $b_\sigma = \eps f_\sigma$ for $\sigma\in T_2$,
but this would require enlarging our regularity structure by including the negative trees $\<r_z_avoid1>$, $\<r_z_avoid2>$, and $\<r_z_avoid3>$
that would belong to the groups
\hyperlink{(1-3)}{(1-3)}, \hyperlink{(1-3)}{(1-3)}, and \hyperlink{(2-3)}{(2-3)} respectively in table~\eqref{e:trees-deg-0-rem}.
\end{remark}

\begin{remark}
For a `multiplication by $\eps$' map on modelled distributions with similarities to ours, see~\cite{KPZJeremy}.
Note that Lemma~\ref{lem:B_bound_by_A}
is stronger but more specialised than the analogous result~\cite[Prop.~3.16]{KPZJeremy}.
We believe our construction can be substantially generalised but refrain from doing so here.
\end{remark}

\begin{lemma}\label{lem:B_bound_by_A}
Consider $\gamma\in (1,2-4\kappa)$, $\eta\in\R$, a compact set $\K\subset\R\times\T^2$, and $f\in\cD^{\gamma,\eta}_{\<IXi>,\e}$.
Then
\begin{equ}\label{eq:B_bound_A}
\$\cE f\$_{\gamma,\eta;\K}^{(\eps)} \lesssim \eps^\kappa \$ f\$_{\gamma,\eta;\K}^{(\eps)}(1+\|\Pi\|^{(\eps)}_{\gamma;\bar \K})\;,
\end{equ}
where $\bar\K$ is the $1$-fattening of $\K$.
Furthermore, if $\bar Z = (\bar\Pi,\bar\Gamma)$ is another model that is compatible with multiplication by $\eps$ and $\bar f\in\cD^{\gamma,\eta}_{\bar\Gamma,\<IXi>,\e}$, then
\begin{equ}\label{eq:B_bar_B_bound}
\$\cE f;\cE\bar f\$_{\gamma,\eta;\K}^{(\eps)} \lesssim 
\e^\kappa\| f;\bar f\|_{\gamma,\eta;\K}^{(\eps)}(1+\|\Pi\|^{(\eps)}_{\gamma;\bar \K})
+ \e^\kappa \|\bar f\|_{\gamma,\eta;\K}^{(\e)}\|\Pi-\bar\Pi\|_{\gamma;\bar\K}^{(\e)}\;.
\end{equ}
The proportionality constants do not depend on $Z,\bar Z,  f$ or $\bar  f$.
\end{lemma}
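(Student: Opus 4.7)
The plan is to verify each piece of the norm $\$\cE f\$^{(\eps)}_{\gamma,\eta;\K}$ from Definition~\ref{def:Dgamma} separately, exploiting the cancellation mechanism baked into the axiom \eqref{eq:IXi_compat} of compatibility with multiplication by $\eps$. First, I would establish the pointwise bounds on each component $b_\bone, b_{\<IXibar>}, b_{\<IPsiXi1>}, b_\X$. The components with an explicit $\eps$ prefactor are handled trivially (using $\eps\leq \eps^\kappa$). For $b_{\<IXibar>}$, the $\eps^\kappa[f_{\<IXi>}]_\eps$ part is fine because the isomorphism $\tau\mapsto \tau_\eps$ is norm-preserving, so $\|[f_{\<IXi>}]_\eps\|_{1-2\kappa} = \|f_{\<IXi>}\|_{-\kappa}$. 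The delicate piece is the correction $\eps(\Pi_x f_{\<IXi>}(x))(x)$ inside $b_\bone$: here I would use admissibility of the model together with Remark~\ref{rem:admissible_Holder} (time-H\"older continuity of $\Pi_x\CI_i\tau$) and the small-scale norm from \eqref{eq:seminorm} applied at scale $\lambda=\eps$ to obtain $|(\Pi_x\CI\Xi)(x)|\lesssim \eps^{-\kappa}\|\Pi\|^{(\eps)}$, so that $|\eps(\Pi_x f_{\<IXi>}(x))(x)|\lesssim \eps^{1-\kappa}\|\Pi\|^{(\eps)}\$f\$\leq \eps^\kappa \|\Pi\|^{(\eps)}\$f\$$.

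Next I would attack the translation estimate, which is the core of the proof. For $x,y$ in the same connected component of $\{\R\times \T^2\}\setminus P$ (so $f_{\<IXi>}(x)=f_{\<IXi>}(y)$), expanding $\cE f(x)-\Gamma_{xy}\cE f(y)$ component-by-component and applying the second identity in \eqref{eq:IXi_compat} to $\Gamma_{xy}[f_{\<IXi>}(y)]_\eps$ yields a crucial cancellation: the term $\eps(\Pi_x f_{\<IXi>}(x))(x)$ appearing in $b_\bone(x)$ is annihilated by the $\bone$-projection of $\e^\kappa\Gamma_{xy}[f_{\<IXi>}(y)]_\eps$, and similarly at $y$. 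After simplification I expect
\begin{equ}\label{eq:plan-cancellation}
(\cE f(x)-\Gamma_{xy}\cE f(y))_\bone = \eps(f(x)-\Gamma_{xy}f(y))_\bone + \eps\sum_{\sigma\in T_2}\scal{(\Gamma_{xy}-\id)f_\sigma(y),\bone}\;,
\end{equ}
where $T_2 = \{\<I(cherry)>, \<IDPsi>, \<I(R2-1new)>\}$ is the set of $T_2$-trees dropped in the definition of $\cE f$ (see Remark~\ref{rem:cE_motivation}). For non-$\bone$ components, Lemma~\ref{lem:Gamma_simple} ensures there is no mixing across positive trees, so the analogous identities reduce to $\eps$ times the corresponding component of $f(x)-\Gamma_{xy}f(y)$, which gives the required bound immediately via $\eps\leq \eps^\kappa$.

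To close the translation estimate at large scales ($\|x-y\|_\s\geq \eps$), the first term in \eqref{eq:plan-cancellation} is bounded directly by $\eps\|x-y\|_\s^\gamma\|x,y\|_P^{\eta-\gamma}\$f\$$, which is better than needed. The sum over $T_2$ gives $\eps\|x-y\|_\s^{|\sigma|}$, which rewritten as $\eps^{1+|\sigma|-\gamma}\|x-y\|_\s^\gamma$ (using $\|x-y\|_\s\geq \eps$) is bounded by $\eps^\kappa\|x-y\|_\s^\gamma$ because $1+|\sigma|-\gamma\geq 1+(1-2\kappa)-(2-4\kappa)=2\kappa>\kappa$ thanks to $|\sigma|\geq 1-2\kappa$ (the minimum degree of the three trees in $T_2$) and $\gamma<2-4\kappa$. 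The small-scale bounds encoded by $\$\cdot\$_{\gamma,\eta;\K;\eps}$ follow the same algebraic identity \eqref{eq:plan-cancellation} but use $\|y-z\|_\s<\eps$; here the gain of a power of $\eps^\kappa$ in the sum over $T_2$ uses $|\sigma|-\gamma\geq -1+\kappa$ combined with $\|y-z\|_\s<\eps$. Finally, the two-model bound \eqref{eq:B_bar_B_bound} follows by expanding each component difference into a term controlled by $\$f;\bar f\$$ and a term of the form $\eps(\Pi_x-\bar\Pi_x)f_{\<IXi>}(x)(x)$, controlled by $\|\Pi-\bar\Pi\|^{(\e)}\|\bar f\|$.

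The main obstacle, in my view, is the careful accounting of $\eps$-powers in the sum over $T_2$ and ensuring the cancellation \eqref{eq:plan-cancellation} produces \emph{exactly} the right terms. Any asymmetry, e.g. if $f_{\<IXi>}$ were not constant across components, would break the argument at the $\bone$-level; likewise the choice of powers $\eps^\kappa$ and $\eps^{1-\kappa}$ in the definitions of $b_{\<IXibar>}$ and in \eqref{eq:IXi_compat} is precisely what allows the $\e^\kappa\cdot \e^{1-\kappa}=\eps$ combination to cancel the $\Pi_x f_{\<IXi>}(x)(x)$ correction. A secondary subtlety is controlling the small-scale norm of the non-local correction $(\Pi_z f_{\<IXi>}(z))(z)$ as a function of $z$; this has to be done in the time variable via Remark~\ref{rem:admissible_Holder}, and the condition $\gamma<2-4\kappa$ from Lemma~\ref{lem:DRcommute} will again be essential.
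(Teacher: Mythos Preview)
Your cancellation identity \eqref{eq:plan-cancellation} is correct and matches the paper's \eqref{eq:CB_zero}, and your large-scale treatment of the $T_2$ sum is essentially right (though the minimum degree among $T_2$ trees is $1-3\kappa$, coming from $\<I(R2-1new)>$, not $1-2\kappa$; the conclusion $1+|\sigma|-\gamma\geq\kappa$ still holds).

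There is, however, a genuine gap in your small-scale argument for the $T_2$ sum. You claim that ``the gain of a power of $\eps^\kappa$ in the sum over $T_2$ uses $|\sigma|-\gamma\geq -1+\kappa$ combined with $\|y-z\|_\s<\eps$,'' but this cannot close: the contribution is $\eps\,\scal{(\Gamma_{xy}-\id)f_\sigma(y),\bone}\lesssim \eps\|x-y\|_\s^{|\sigma|}$, and after dividing by $\|x-y\|_\s^\gamma$ you get $\eps\|x-y\|_\s^{|\sigma|-\gamma}$, which is \emph{unbounded} as $\|x-y\|_\s\to 0$ since $|\sigma|<\gamma$. No manipulation using $\|x-y\|_\s<\eps$ fixes this, because the exponent $|\sigma|-\gamma$ is negative.

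The paper's remedy is to use the \emph{other} part of the compatibility axiom (Definition~\ref{def:compat}), namely $\Pi_x\tau_\eps=\eps^{1-\kappa}\Pi_x\tau$ for $\tau\in\mfT_\eps$, which you do not invoke. Writing $\sigma=\CI[\tau]$ for $\sigma\in T_2$ (so $\tau\in\mfT_\eps$), admissibility gives
\[
\scal{\eps\Gamma_{xy}\CI[\tau],\bone}
=\eps[(K*\Pi\tau)(x)-(K*\Pi\tau)(y)]
=\eps^\kappa[(K*\Pi\tau_\eps)(x)-(K*\Pi\tau_\eps)(y)]\;.
\]
Since $|\tau_\eps|=|\tau|+1-\kappa\geq -4\kappa$, the convolution $K*\Pi\tau_\eps$ has H\"older exponent $\geq 2-4\kappa>\gamma$ at \emph{all} scales, giving the bound $\eps^\kappa\|x-y\|_\s^{2-4\kappa}\|\Pi\|$ (equation~\eqref{eq:eGamma_bound}), which is now safely divisible by $\|x-y\|_\s^\gamma$. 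This degree upgrade via $\tau\mapsto\tau_\eps$ is the whole reason the $T_2$ trees were dropped from $\cE f$ in the first place (Remark~\ref{rem:cE_motivation}), and it is the missing ingredient in your proposal.
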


\begin{proof}
To prove~\eqref{eq:B_bound_A}, dropping reference to $\e$ in the norms,
the first terms in~\eqref{e:Dgamma} and~\eqref{e:smallscale-norm} for $\|\cE f\|_{\gamma,\eta;\K}$ are bounded by the corresponding terms for $\| f\|_{\gamma,\eta;\K}$.
We handle the second terms in~\eqref{e:Dgamma} and~\eqref{e:smallscale-norm} simultaneously.

Consider $(x,y)\in \K_P$.
Then $\cE f(x) - \Gamma_{xy} \cE f(y)$ has non-zero components only of degrees $|\bone|=0$, $|\<IXibar>|=1-2\kappa$, $|\<IPsiXi1>|=1-3\kappa$, and $|\X|=1$.
Clearly
\begin{equs}
\|\cE f(x) - \Gamma_{xy} \cE f(y)\|_{1-2\kappa}
&\leq |b_{\<IXibar>}(x) - b_{\<IXibar>}(y)|
\\
&\leq \eps|f_{\<IXibar>}(x) - f_{\<IXibar>}(y)|
+ \eps^\kappa|f_{\<IXi>}(x)-f_{\<IXi>}(y)|
\\
&\leq
\eps\| f(x)-\Gamma_{xy} f(y)\|_{1-2\kappa}
%\\
%&\qquad+ \eps^\kappa\| f(x)-\Gamma_{xy} f(y)\|_{-\kappa}
\\
&\leq
\eps\|x-y\|_\s^{\gamma-1+2\kappa}
%+ \eps^\kappa\|x-y\|_\s^\gamma)
%\\
%&\qquad
\|x,y\|^{\eta-\gamma}_P\|f\|_{\gamma,\eta;\K}\;,
\end{equs}
where in the third inequality we used $f_{\<IXi>}(x)=f_{\<IXi>}(y)$ since $f\in\cD^{\gamma,\eta}_{\<IXi>,\e}$
and Lemma~\ref{lem:Gamma_simple} to see that, in $f$, no terms except $f_{\<IXi>}$ contribute to $\|\Gamma_{xy} f(y)\|_{-\kappa}$
and no terms except $f_{\<IXibar>}$
contribute to the subspace $\CT[\<IXibar>]$.
The bounds at degrees $1$ and $1-3\kappa$ follow in a similar and simpler way.
%Furthermore
%\begin{equ}
%\|\cE f(x) - \Gamma_{xy} \cE f(y)\|_1 \leq |b_\X(x) - b_\X (y)| 
%%= \eps |\nabla a(x) - \nabla a(y)|
%\leq \eps \|x-y\|_\s^{\gamma-1}\|x,y\|_P^{\eta-\gamma}\| f\|_{\gamma,\eta;\K}\;,
%\end{equ}
%and
%\begin{equs}
%\|\cE f(x) - \Gamma_{xy} \cE f(y)\|_{1-3\kappa}
%&\leq |b_{\<IPsiXi1>}(x) - b_{\<IPsiXi1>}(y)|
%= \eps|f_{\<IPsiXi1>}(x) - f_{\<IPsiXi1>}(y)|
%\\
%&\leq
%\eps\| f(x)-\Gamma_{xy} f(y)\|_{1-3\kappa}
%\\
%&\leq
%\eps\|x-y\|_\s^{\gamma-1+3\kappa}\|x,y\|_P^{\eta-\gamma}
%\|f\|_{\gamma,\eta;\K}\;,
%\end{equs}
%where we again used Lemma~\ref{lem:Gamma_simple} in the third bound as before.

It remains to consider the component in $\CT[\bone]$. For $\tau\in\CT$, we write $\scal{\tau,\bone}$ for the projection of $\tau$ to $\CT[\bone]$ under the decomposition $\CT=\bigoplus_{\sigma\in\mfT}\CT[\sigma]$.
Then
\begin{equs}[eq:CB_zero]
\scal{\cE f(x) - \Gamma_{xy} \cE f(y),\bone} &= b_\bone(x) - b_\bone(y) -\scal{\Gamma_{xy}b_{\<IXibar>}(y) +  \Gamma_{xy}b_{\<IPsiXi1>}(y) ,\bone}
\\
&\quad - \scal{b_\X(y), x-y}
\\
&= \scal{\e f(x) - \e\Gamma_{xy} f(y) + \eps \Gamma_{xy}\sum_{\sigma\in T_2}f_\sigma(y),\bone}\;,
\end{equs}
where the final equality follows from~\eqref{eq:IXi_compat},
the definition of $b_\sigma$ %in~\eqref{eq:b_def},
in~\eqref{eq:B_expand},
and the fact that $f_{\<IXi>}(x)=f_{\<IXi>}(y)$ since $f\in\cD^{\gamma,\eta}_{\<IXi>,\e}$.
Next, for all $\sigma \in T_2$ and $\CI[\tau]\in\CT[\sigma]$ with $\CI\in\{\CI_1,\CI_2\}$,
it follows from  compatibility with multiplication by $\eps$ of $(\Pi,\Gamma)$ that
\begin{equs}[eq:Gamma_one_comp]
\scal{\eps\Gamma_{xy} \CI[\tau],\bone}
&= \eps[(K*\Pi \tau)(x) - (K*\Pi \tau)(y)]
\\
&= \eps^\kappa [(K*\Pi \tau_\eps)(x) - (K*\Pi \tau_\eps)(y)]\;,
\end{equs}
where $\tau\mapsto \tau_\eps$ is the mapping $\bigoplus_{\sigma\in \mfT_\eps}\CT[\sigma] \to \CT_-$ from Section~\ref{subsubsec:mult_eps_models}.
%which is pictorially given by
%\begin{equ}
%\<IXiI'Xi_notriangle> \mapsto \<IXiI'Xibar>\;,\quad
%\<I'Xi_notriangle> \mapsto \<I'Xibar>\;,\quad
%\<R2-1new> \mapsto \<cherry232>\;.
%\end{equ}
In~\eqref{eq:Gamma_one_comp} we drop the index $x$ in $\Pi_x$ since $\Pi_x\tau = \Pi_y\tau$ and $\Pi_x\tau_\eps = \Pi_y\tau_\eps$ for all $\tau\in\bigoplus_{\sigma\in \mfT_\eps}\CT[\sigma]$.
We now remark that, for all $\tau\in\bigoplus_{\sigma\in \mfT_\eps}\CT[\sigma]$, the 1st line in~\eqref{eq:Gamma_one_comp}
and the fact that $|\sigma|\geq -1-3\kappa$ for all $\sigma\in\mfT_\e$ implies
\begin{equ}
\scal{\eps\Gamma_{xy} \mcb{I}[\tau],\bone}\lesssim \eps\|x-y\|_\s^{1-3\kappa}\|\Pi\|_{\gamma;\bar\K} \;.
\end{equ}
On the other hand, if $\|x-y\|_\s < \eps$ (which means in particular that $x,y$ differ only in their time coordinates) then the 2nd line in~\eqref{eq:Gamma_one_comp}
and the fact that $|\sigma_\e|\geq -4\kappa$ for all $\sigma\in\mfT_\e$
implies \begin{equ}[eq:eGamma_bound]
\scal{\eps\Gamma_{xy} \mcb{I}[\tau],\bone}\lesssim \eps^\kappa \|x-y\|_\s^{2-4\kappa}\|\Pi\|_{\gamma;\bar\K}\;.
\end{equ}
%(This final part uses already existing assumptions on the model, namely that $\Pi\<IXiI'Xibar>$ has the correct bounds at order $-3\kappa$,
%which itself exploits that the irregularity of $\<IXiI'Xi_notriangle>$ is primarily spatial).
Consequently, for all $\tau\in\bigoplus_{\sigma\in \mfT_\eps}\CT[\sigma]$ and $(x,y)\in\K_P$, we have~\eqref{eq:eGamma_bound}.
%\begin{equ}
%\scal{\eps\Gamma_{xy} \mcb{I}[\tau],\bone} \lesssim \eps^\kappa\|x-y\|_\s^{2-4\kappa}\|\Pi\|_{\gamma;\bar\K}\;.
%\end{equ}
Now \eqref{eq:CB_zero} implies the following bound which concludes the proof of~\eqref{eq:B_bound_A}:
\begin{multline*}
\scal{\cE f(x) - \Gamma_{xy} \cE f(y),\bone}
\\
\lesssim
\e\|x-y\|_\s^\gamma\|x,y\|_P^{\eta-\gamma}\| f\|_{\gamma,\eta;\K}
+\e^\kappa\|x-y\|_\s^{2-4\kappa}\| f\|_{\gamma,\eta;\K}\|\Pi\|_{\gamma;\bar\K}\;.
\end{multline*}

The proof of~\eqref{eq:B_bar_B_bound} is almost identical except the final term in~\eqref{eq:CB_zero} becomes
$
\scal{\eps \Gamma_{xy}\sum_{\sigma\in T_2}f_\sigma(y) - \eps \bar\Gamma_{xy}\sum_{\sigma\in T_2}\bar f_\sigma(y),\bone}
$,
which is bounded above by a multiple of
$
\e^\kappa\|x-y\|^{2-4\kappa}_\s(\| f;\bar f\|_{\gamma,\eta;\K}\|\Pi\|_{\gamma;\bar\K} + \|\bar f\|_{\gamma,\eta;\K}\|\Pi-\bar\Pi\|_{\gamma;\K})
$.
\end{proof}

\begin{lemma}\label{lem:Rec_Mult_eps}
Suppose we are in the setting of Lemma~\ref{lem:B_bound_by_A}. Then $\CR\cE f = \eps \CR f$.
\end{lemma}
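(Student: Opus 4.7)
\medskip

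The plan is to reduce both sides to pointwise evaluations via the reconstruction formula and then observe that the positive-degree components vanish on the diagonal.

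First, since $\cE f \in \cD^{\gamma,\eta}_{0,\e}$ is function-like (taking values in $\CT[\bone]\oplus\CT[\<IXibar>]\oplus\CT[\<IPsiXi1>]\oplus\CT[\X]$), and all trees appearing in it satisfy that $\Pi^\e_z\tau$ is continuous (for $\<IXibar>$ and $\<IPsiXi1>$ this uses admissibility together with Remark~\ref{rem:admissible_Holder} applied to the heat-kernel smoothing of $\e^{1-\kappa}\xi^\e$), the reconstruction identity~\eqref{eq:rec} applies: $(\CR^\e \cE f)(x) = (\Pi^\e_x \cE f(x))(x)$ for $x\in\R\times\obonds_i$. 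Similarly, $f\in\cD^{\gamma,\eta}_{\<IXi>,\e}$ takes values in a sector on which every $\Pi^\e_x\tau$ is continuous (again using admissibility and Remark~\ref{rem:admissible_Holder} for the $\<IXi>$ component), so $(\CR^\e f)(x) = (\Pi^\e_x f(x))(x)$.

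The key observation is that if $\tau$ is any element of $\CT[\sigma]$ with $\sigma\in\mfT\setminus\{\bone^{(1)},\bone^{(2)}\}$ and $|\sigma|>0$, then $(\Pi^\e_x\tau)(x)=0$. Indeed, by the model bound~\eqref{eq:Pi}, $|(\Pi^\e_x\tau)(\phi^\lambda_x)|\lesssim \lambda^{|\sigma|}$ for all $\phi\in\CB^r_0$ and $\lambda\in(\e,1]$, and the small-scale seminorm~\eqref{eq:seminorm} combined with the bound on $\|\Pi^\e_z\tau\|_{|\sigma|;\K_\e,z;\e}$ gives the analogous control for $\lambda\le\e$ in the time variable. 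Combined with the continuity of $\Pi^\e_x\tau$ just noted, and taking $\lambda\downarrow 0$ against test functions of unit mass concentrated near $x$, this forces pointwise vanishing at $x$. (For polynomial $\X^{(i)}_j$ this is immediate from~\eqref{eq:poly_model}.)

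Applying this to the decomposition~\eqref{eq:A_expansion} of $f$, the only components contributing to $(\Pi^\e_x f(x))(x)$ are $f_\bone(x)$ (degree $0$) and $f_{\<IXi>}(x)$ (negative degree), so
\begin{equ}
(\CR^\e f)(x) = f_\bone(x) + (\Pi^\e_x f_{\<IXi>}(x))(x)\;.
\end{equ}
Applying the same observation to~\eqref{eq:B_expand}, the terms $b_{\<IXibar>}$, $b_{\<IPsiXi1>}$, $b_\X$ all take values in purely positive-degree subspaces of $\CT$ without a $\bone$ component, so they contribute $0$ to $(\Pi^\e_x\cE f(x))(x)$, leaving
\begin{equ}
(\CR^\e \cE f)(x) = b_\bone(x) = \e f_\bone(x) + \e[\Pi^\e_x f_{\<IXi>}(x)](x) = \e(\CR^\e f)(x)\;,
\end{equ}
by the definition of $b_\bone$. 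This is the desired identity.

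The only step requiring care is the vanishing claim $(\Pi^\e_x\tau)(x)=0$ for positive-degree $\tau$, and in particular verifying that it is consistent with the small-scale behaviour of discrete models (i.e.\ that the small-scale seminorm~\eqref{eq:seminorm} really does give the $\lambda^{|\sigma|}$ decay down to $\lambda\downarrow 0$ when paired against a unit-mass test function); this is where the specific form of~\eqref{eq:seminorm} and the H\"older continuity in time from Remark~\ref{rem:admissible_Holder} are used. Notably, compatibility with multiplication by $\e$ is not needed for the equality $\CR^\e\cE f=\e\CR^\e f$ itself; that compatibility is used in Lemma~\ref{lem:B_bound_by_A} to ensure that $\cE f$ is indeed in $\cD^{\gamma,\eta}_{0,\e}$, and implicitly here via the fact that $b_{\<IXibar>}$ is defined to be a genuine element of $\CT[\<IXibar>]$.
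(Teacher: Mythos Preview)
Your proof is correct and follows the same approach as the paper's. The paper's proof is a single line invoking the identities $\CR f(x) = [\Pi_x f_{\<IXi>}(x)](x) + f_\bone(x)$ and $\CR\cE f(x) = b_\bone(x)$, which you establish in detail by showing that the positive-degree components vanish on the diagonal; combining these with the definition of $b_\bone$ in~\eqref{eq:B_expand} gives the result exactly as you wrote.
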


\begin{proof}
Obvious from the definition and the identities $\CR  f(x) = [\Pi_x f_{\<IXi>}(x)](x) + f_\bone(x)$ and $\CR \cE f(x) = b_\bone(x)$.
\end{proof}

\subsection{Renormalisation group}
\label{subsec:renorm_group}

%Unfortunately, the canonical models $(\Pi^\e,\Gamma^\e)$ from Section~\ref{sec:canonical} are not bounded uniformly in $\eps>0$.
We now define the renormalisation maps $M^\e \in L(\CT,\CT)$
that we later apply to $(\Pi^\e,\Gamma^\e)$ to obtain models that are bounded uniformly in $\e>0$.
These maps are invertible and form an Abelian group
isomorphic to $(\mfg^{\otimes_s 2})^{\oplus 4}\oplus(\mfg^{\otimes 2})^{\oplus 50}\oplus\mfg^{\otimes 4}$,
elements of which have 55 components denoted by 
%symmetric tensors: 4
%general 2-tensors: 6+4*2+4*2+1+1+4+1+1+2*2+2+4+4*2+1+1
%4-tensors: 1
\begin{equs}[eq:all_C]{}
&\bar C^\e_{\be\be'}\;, \;
\hat C^\e_{1,\be}\;,\;
\hat C^\e_{2,\be}\;,\;
\hat C^\e_{3,\be,\bw}\;,\;
\hat C^\e_4\;,\;
\hat C^\e_5 \;,\;
\tilde C_\be\;, \;
C^\e [ \,\<PsiXi1>\, ]\;,\;
C^\e [\,\<I(PsiXi1)Xi1less>\,] \;
 \in \mfg^{\otimes 2}\;,
\\
&C_{\bs,\pm}^\e[ \, \<I'XiI'[IXiI'Xi]less>\,]\;,\;
C_{\bs}^\e[ \, \<I'XiI'[IXiI'Xi]less>\,]\;,\;
C^\e_\be [\<I'XiIXibar>]\;,\; C^\e_{\be,\bw}[\<IXiI'Xibar>] \;,\;
C^\e [\<R2-1new>] \;,\;
C^\e [\<cherry232>]  \; \in \mfg^{\otimes 2} \;,
\\
& C^\e [\<r_z_large>]\in\mfg^{\otimes 4}\;,
\end{equs}
where $\be,\be'\in\mcE_\times$, %$j\in\{1,2\}$, 
$\bw\in\{\Northwest,\Southwest\}$,
and $\bs\in\{\Southeast,\Southwest\}$.
Here the tensors $\bar C^\e_{\be\be'}$ satisfy
$\bar C^\e_{\be'\be} = T(\bar C^\e_{\be\be'})$, where $T\in L(\mfg^{\otimes 2},\mfg^{\otimes 2})$ is the switching map $T(x\otimes y)=y\otimes x$.
In particular, $\bar C^\e_{\be\be}\in \mfg^{\otimes_s 2}$.
We will later choose these components as in Section~\ref{sec:renorm_constants}, but for now they are arbitrary.

For $\tau\in \CT$, $M^\e(\tau)$ is of the form
\begin{equ}[eq:M_tau_def]
M^\e(\tau) = \tau - \text{contractions}\;,
\end{equ}
where `contractions' are
terms obtained by contracting components of $\tau$ with suitable constants in~\eqref{eq:all_C}.
We describe these contractions in several steps.
Write $\Psi_i =\mcb{I}_i\Xi_i\in\mfT^\YM_-$ and $\bar\Psi_i =\mcb{I}_i\bar\Xi_i\in\mfT^\rem_-$ as shorthands.

We first define $M^\e\colon \CT^\YM \cap \CT^{(i)} \to \CT^\YM \cap \CT^{(i)}$.
Consider $\tau\in \CT[\sigma]$ for $\sigma\in\mfT_i^\YM$.
First of all,
$M^\e$
contracts occurrences of the following expressions in $\tau$
and replaces them by the right-hand sides:
\begin{equs}[e:M-on-YM]
(\CS_{\be} \Psi_j)(\CS_{\be'} \Psi_j ) & \mapsto \bar C^\e_{\be\be'} \bone^{(i)}\;,
\\
(\CS_{\be} \mcb{I}_j (\CS_{\be'} (\cdot ) \bar\CD_j \Psi_i))  &   (\CD_j \Psi_i) \mapsto \hat C^\e_{1,\be} \bone^{(i)}\;, 
\\
(\bar\CD_i \Psi_j)  ( \CS_{\be} \mcb{I}_j (\CS_{\be'}(\cdot) & \CD_i \Psi_j  ))   \mapsto \hat C^\e_{2,\be} \bone^{(i)}\;, 
\\
(\CS_{\be} \Psi_j )
( \CD_i^+ \CS_{\bw} \mcb{I}_j (\CS_{\be'} (\cdot ) & \CD_i \Psi_j  )) \mapsto \hat C^\e_{3,\be,\bw} \bone^{(i)}\;,
\\ 
  \CD_i \Psi_i  (\mcb{I}_i \CD_i \Psi_i ) \mapsto \hat C^\e_4 \bone^{(i)}\;,
& \qquad
 \Psi_i  \CD_i \mcb{I}_i (\CD_i \Psi_i) \mapsto \hat C^\e_5 \bone^{(i)}\;,
\end{equs}
where $i\neq j\in\{1,2\}$, $\be,\be'\in \CE_\times$, and $\bw\in \{\Northwest,\Southwest\}$.
The contractions are made with suitable copies of $\mfg^*\otimes\mfg^*$ that appear inside 
$\CT[\sigma]$ in the way that respects the order in which we write them on the left-hand side
(see Example~\ref{ex:M_tau}
and recall the construction of $\CT=\bigoplus_{\tau\in\mfT}\CT[\tau]$ from Section~\ref{subsubsec:RS}).
A more correct notation would therefore be, e.g.
\begin{equ}
(\mfg^*)^{\otimes 2}\simeq \CT[(\CS_{\be} \Psi_j)(\CS_{\be'} \Psi_j )]\ni a \mapsto \scal{a,\bar C^\e_{\be\be'}} \bone^{(i)}\in \CT[\bone^{(i)}]\;,
\end{equ}
but we prefer to use the shorthand as in~\eqref{e:M-on-YM}. 
Note that the terms on the left-hand side of~\eqref{e:M-on-YM} correspond respectively to trees of the form
$
\<IXi^2>\,,\, \<I[I'Xi]I'Xi_notriangle>\,,\,
\<I[I'Xi]I'Xi_notriangle>\,,\, \<IXiI'[I'Xi]_notriangle>\,,\,
\<I[I'Xi]I'Xi_notriangle>\,,\,
\<IXiI'[I'Xi]_notriangle>\,$.
\begin{example}\label{ex:M_tau}
Suppose
$\mfT\ni\sigma = \CS_{\be} \mcb{I}_j (\CS_{\be'} \Psi_i  \bar{\CD}_j \Psi_i )
 \CD_j \Psi_i   \sim \<I[IXiI'Xi]I'Xi_notriangle>$, and
\begin{equ}
\tau = x_1\otimes x_2 \otimes x_3 \in \CT[\sigma ] \simeq (\mfg^*)^{\otimes 3}\;.
\end{equ}
Then the 2nd line of~\eqref{e:M-on-YM} implies (noting the minus sign in~\eqref{eq:M_tau_def})
\begin{equ}
M^\e(\tau) = \tau - \scal{x_2\otimes x_3,\hat C^\e_{1,\be}} x_1 \in \CT[\sigma] \oplus 
\CT[\Psi_i] \;.
\end{equ}
Next, suppose
\begin{equ}
\sigma = \CS_{\be} \mcb{I}_j (\bar{\CD}_j \Psi_i ) \CD_j \Psi_i 
\sim \<I[I'Xi]I'Xi_notriangle>\;,
\qquad
\tau = x_2 \otimes x_3 \in \CT[\sigma] \simeq (\mfg^*)^{\otimes 2}\;.
\end{equ}
Then, writing $\sigma=\CS_{\be} \mcb{I}_j (\CS_{\be'} \bone^{(i)} \bar{\CD}_j \Psi_i ) \CD_j \Psi_i$ where we used
$\bar{\CD}_j \Psi_i=\bone^{(j)}\bar{\CD}_j \Psi_i=\CS_{\be'}\bone^{(i)}\bar{\CD}_j \Psi_i$ for an arbitrary $\be'\in\CE_\times$,
the 2nd line of~\eqref{e:M-on-YM} now implies
\begin{equ}
M^\e(\tau) = \tau - \scal{x_2\otimes x_3, \hat C^\e_{1,\be}}\bone^{(i)} \in \CT[\sigma]\oplus \CT[\bone^{(i)}]\;.
\end{equ}
\end{example}
Moreover, 
%we also introduce  ``superficial renormalisations'' by imposing that  $M^\e$
we also impose that $M^\e$
makes the following contractions %in $\tau$ 
for $\be\in\CE_\times$
\begin{equ}[eq:tilde_C_renorm]
\CS_{\be} \Psi_j \bar\CD_i \Psi_j   \mapsto \tilde C_{\be} \bone^{(i)} \;,
\quad
\CS_{\be} \Psi_j \bar\CD_i \bar\Psi_j   \mapsto 
\eps^{1-\kappa}
\tilde C_{\be} \bone^{(i)} \;.
\end{equ}

We now define $M^\e \colon \CT^\rem \cap \CT^{(i)} \to \CT^\rem \cap \CT^{(i)}$.
For $\sigma\in\mfT^{\rem}$ and $\tau\in\CT[\sigma]$,
the map $M^\e$ makes the following contractions in $\tau$
\begin{equ}[e:M-XiIXi]
\bar\Xi_i \Psi_i \mapsto C^\e [ \,\<PsiXi1>\, ] \bone^{(i)}\;,
\qquad
\bar\Xi_i \bar\Psi_i
\mapsto
C^\e [\,\<I(PsiXi1)Xi1less>\,] \bone^{(i)}\;,
\end{equ}
where $C^\e [ \,\<PsiXi1>\, ],C^\e [\,\<I(PsiXi1)Xi1less>\,] \in\mfg^{\otimes 2}$
as well as, for $\bs\in\{\Southeast,\Southwest\}$
and $j\neq i$,
%(they are all of the form $\<I'XiI'[IXiI'Xi]less>$ in group \hyperlink{(1-2)}{(1-2)}):
\begin{equs}[e:div-R2]
( \CD_j^{\pm} \bar\Psi_i)
\CD_j^+ \CS_{\bs}
  \mcb{I}_j (\CS_{\be} (\cdot)\bar\CD_j \Psi_i)  &\mapsto C_{\bs,\pm}^\e[ \, \<I'XiI'[IXiI'Xi]less>\,] \bone^{(i)}\;, 
\\
(\CD_j^+ \CS_{\southeast} \bar\Psi_j)
 (\CD_j^+ \CS_{\southwest}
  \mcb{I}_j  (\CS_{\be} (\cdot) \CD_i \Psi_j ))
&\mapsto C_{\southwest}^\e[ \, \<I'XiI'[IXiI'Xi]less>\,] \bone^{(i)}\;, 
\\
( \CD_j^+ \CS_{\southwest} \bar\Psi_j)
( \CD_j^+ \CS_{\southeast} 
  \mcb{I}_j  (\CS_{\be} (\cdot )\CD_i \Psi_j ) )
 &\mapsto C_{\southeast}^\e[ \, \<I'XiI'[IXiI'Xi]less>\,] \bone^{(i)}\;,
\end{equs}
and
\begin{equ}[e:M-four]
\mcb{I}_i (\Psi_i \bar\Xi_i)(\Psi_i \bar\Xi_i) \mapsto C^\e [\<r_z_large>] \bone^{(i)}\;.
\end{equ}
We recall that $C_{\bs,\pm}^\e[ \, \<I'XiI'[IXiI'Xi]less>\,], C_{\bs}^\e[ \, \<I'XiI'[IXiI'Xi]less>\,] \in \mfg^{\otimes 2}$ and are independent of $\be$ appearing on the left-hand sides, and that $C^\e [\<r_z_large>]\in\mfg^{\otimes 4}$.
These contractions are understood in the same sense as described following~\eqref{e:M-on-YM}.
Furthermore, for $\bw\in\{\Northwest,\Southwest\}$ and $j\neq i$,
$M^\e$ makes the contractions
\begin{equs}[eq:square_cherry_consts]
\bar\CD_i \Psi_j \CS_\be \CI_j(\cdot\, \bar\Xi_j ) &\mapsto C^\e_\be [\<I'XiIXibar>] \CS_\be (\cdot)\;,
\\
\CS_{\be}\Psi_j \CD_i^+\CS_{\bw}\CI_j(\cdot\, \bar\Xi_j) & \mapsto C^\e_{\be,\bw}[\<IXiI'Xibar>]\CS_{\bw}(\cdot)\;,
\\
\CD_j^+\CS_{\southeast} \tilde\Psi_j
 \CD_j^+\CS_{\southwest} \tilde\Psi_j
&\mapsto
C^\e [\<R2-1new>] \bone^{(i)}\;,
\\
\CD_j^+\CS_{\southeast} \bar\Psi_j
 \CD_j^+\CS_{\southwest} \bar\Psi_j
&\mapsto 
C^\e [\<cherry232>] \bone^{(i)} \;.
\end{equs} 
Finally, we define $M^\e=\id$ on $\CT^\ren$, which concludes the definition of $M^\e$ on $\CT$.

A tree $\tau\in\mfT$ is called \textit{planted} if it is of the form $\tau=\CS_\be\CI_i[\bar\tau]$ or $\tau=\CI_i[\bar\tau]$ for some $i\in\{1,2\}$, $\be\in\CE_\times$, and $\bar\tau\in\mfT$.

\begin{remark}\label{rem:renorm_planted}
Since we did not include the symbol $\CI_i\bone^{(i)}$ and its derivatives in $\mfT$,
$M^\e\tau=\tau$ all $\tau\in\CT[\sigma]$ where $\sigma$ is planted or is the derivative of a planted tree.
\end{remark}

%\begin{remark}
%The set of all $M^\e\in L(\CT,\CT)$ forms a group.
%This group is furthermore Abelian, which may be surprising due to trees $\sigma$ of the form $\<r_z_large>$.
%Indeed, suppose $M_1$ has only non-zero component
%$C^\e [\,\<I(PsiXi1)Xi1less>\,]$, which means $M_1[\tau] \in \tau + \CT[\<IXi^2>]$.
%Then it may appear that applying another map $M_2$ with only non-zero component $\bar C^\e$ would yield an element in $\CT[\bone]$,
%while this would not be the case if we first apply $M_2$ then $M_1$.
%The reason this does not happen in our setting is that $\bar C^\e$ \textit{only} contracts cherries with shifts, while the tree $\<IXi^2>$ above does not have shifts.
%\end{remark}

\begin{remark}\label{rem:superificial_renorm}
The renormalisations \eqref{eq:tilde_C_renorm} will not contribute to the renormalised equation due to Lie brackets (see~\eqref{e:quad-exp-3}).
We later also define their values 
so that  $\sum_{\be \in \CE_\times} \tilde C_{\be} =0$
(as one would guess by parity),
which is another reason that they do not contribute to the renormalised equation.
Likewise,  $\<PsiPsibarXibar>$ and $\<r_z_large>$  in group \hyperlink{(3-3)}{(3-3)}
are renormalised by  $C^\e [ \,\<PsiXi1>\, ]$, but 
this will not have any effect on the renormalised equation, thanks to the Lie brackets;
the same happens for $\X^{(i)} \<PsiXi1>$ (alternatively, due to $\X$).
The tree  $\<R2-1new>$  also will not contribute, again due to the Lie bracket,
and $C^\e [\<cherry232>] $ will cancel out, see \eqref{e:tildeR2-exp}.
Finally, 
some trees in group \hyperlink{(1-3)}{(1-3)} 
%will satisfy an exact parity symmetry thanks to the derivative, so no renormalisation is needed.
are renormalised by \eqref{eq:square_cherry_consts}, but they will eventually 
renormalise the equation
by a term of the form $\e \partial A$
 which vanishes in the limit (see \eqref{eq:bar_delta_def} and the proof of Proposition~\ref{prop:sol-abs}).
\end{remark}

\begin{remark}\label{rem:no_renorm}
The above renormalisation
is sufficient although 
some of the other trees % in $\mfT$ in \eqref{e:trees-deg-1-rem}-\eqref{e:trees-deg-0-rem},
appear to have divergences.
Here we give a quick ``preview'' for the arguments in the forthcoming sections.
%Recall from Remark~\ref{rem:loss-odd} that discretisations may not preserve exact parity symmetry.
\begin{enumerate}
%\item
%The tree 
% $\<R2-1new> = (\CD_j^+ \CS_{\southeast}\tilde\Psi_j) ( \CD_j^+ \CS_{\southwest} \tilde\Psi_j ) $
% does not need renormalisation,
%since the two factors are evaluated at {\it distinct} points.\footnote{This is reminiscent to 
%a cancellation mechanism in proofs of KPZ equation universality, e.g. 
%\cite[Section~4.2, Lemma~A.1]{BG97}.}
%The same applies to $\<cherry232>$. % in group \hyperlink{(2-3)}{(2-3)}.
\item
Trees  of the form $\<I'XiI'[IXiI'Xi]less>$ %in group \hyperlink{(1-2)}{(1-2)} 
do not need renormalisation
either by independence of $\xi_i$ and $\xi_j$,
or by {\it exact} parity,
 except for the ones in \eqref{e:div-R2} with only {\it approximate} parity.
This is subtle: see Remarks~\ref{rem:loss-odd} and~\ref{rem:C_non_vanish} and proof of Lemma~\ref{lem:mom-13}.
%for instance, one might expect a divergence from the tree
%$(\CD_j^+ \CS_{\southeast}\bar\Psi_j)
% (\CD_j^+ \CS_{\southwest}
%  \mcb{I}  (\Psi_j \CD_j \Psi_j ))$
%which would ultimately contribute to a mass renormalisation of the form $cA_j$ in the equation for $A_i$ ($j\neq i$). 
%It turns out that such superficial  divergences will be cancelled by exact parity, while 
%the divergences in \eqref{e:div-R2} only satisfy approximate parity and will lead to mass renormalisation of the form $cA_i$.
\item
Trees in group \hyperlink{(2-2)}{(2-2)} and several trees such as $\<I'XiI(R2-1)new>$ in group \hyperlink{(1-2)}{(1-2)}
do not have divergent substructures
thanks to the introduction of $\tilde\Xi$.
\item We choose not to renormalise trees in $\mfT^\ren=\{\Psi_1^2,\Psi_2^2\}$
since they
only appear after the renormalisation of $\<r_z_large>$ in the derivation of the renormalised equation.
In particular, they do not appear in the solution of
the abstract fixed point problem % associated to~\eqref{e:Aeps} 
(see also Remark~\ref{rem:model_bounds_sector}).
This choice simplifies slightly the renormalisation group, e.g. rendering it Abelian.
\end{enumerate}
\end{remark}
\begin{definition}\label{def:renorm_model}
Consider a discrete model $(\Pi^\e,\Gamma^\e)$ and a map $M^\e\in L(\CT,\CT)$ as above.
Let $(\hat\Pi,\hat\Gamma)=\{\hat\Pi^\e_x,\hat\Gamma^\e_{xy}\}_{x,y\in \R^3}$
be the collection of linear maps $\hat\Pi^\e_x\colon \CT\to \CX_\e$ and
operators $\hat\Gamma^\e_{xy} \in\CG$
defined by $\hat\Pi^\e_x \tau =\Pi^\e_x M^\e \tau $
and $\hat\Gamma^\e_{xy}\tau = (M^\e)^{-1} \Gamma^\e_{xy} M^\e\tau$
for every $\tau\in \CT$.
\end{definition}
%
%\begin{remark}
%It is non-trivial that $(\hat\Pi^\e,\hat\Gamma^\e)$ is a model, and we show this in Proposition~\ref{prop:renormalised_model}.
%The reason we can take such a simple definition for $(\hat\Pi^\e, \hat\Gamma^\e)$ is 
%that our regularity structure is rather small and satisfies the co-interaction property 
%of~\cite{BHZ19} without the need for so-called extended decorations.
%(See~\cite[Proof of Theorem 3.36]{ESI_lectures} for how these definitions follow from co-interaction.)
%This is specific to 2D and such a simple definition would not suffice in 3D.
%
%
%We could have followed a more general construction and defined $(\hat\Pi^\e,\hat\Gamma^\e)$ via a renormalised admissible (uncentred) map $\PPi^\e M^\e$, which always yields a model.
%However, with this definition,
%it would not be a priori clear that we have the identity $\hat\Pi^\e_x=\Pi^\e_x M^\e$,
%which we use in Section~\ref{subsec:renorm_eq} to derive the renormalised equation
%and in Section~\ref{subsec:moments_discrete_models} to get uniform estimates on our models (since
%the general results of~\cite{BCCH17, CH16} are not available to us)
%and the proof of which would amount to that of Proposition~\ref{prop:renormalised_model}.
%We therefore choose to define $\hat\Pi^\e_x = \Pi^\e_x M^\e$ right away and avoid introducing $\PPi^\e$.
%\end{remark}
%
It is non-trivial that $(\hat\Pi^\e,\hat\Gamma^\e)$ is a model, and we show this next.
Recall the notion of compatibility of a model (Definition~\ref{def:compat}).
\begin{proposition}\label{prop:renormalised_model}
If $(\Pi^\e,\Gamma^\e)$ is a model, then so is $(\hat\Pi^\e,\hat\Gamma^\e)$.
Furthermore, if $(\Pi^\e,\Gamma^\e)$ is compatible, then so is $(\hat\Pi^\e,\hat\Gamma^\e)$.
\end{proposition}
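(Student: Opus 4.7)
The plan is to verify in turn the algebraic identities, membership $\hat\Gamma^\e_{xy} \in \CG$, the analytical bounds, and the compatibility properties. The algebraic identities $\hat\Pi^\e_z \hat\Gamma^\e_{zy} = \hat\Pi^\e_y$ and $\hat\Gamma^\e_{zy}\hat\Gamma^\e_{yx} = \hat\Gamma^\e_{zx}$ are immediate from the definitions $\hat\Pi^\e = \Pi^\e \circ M^\e$ and $\hat\Gamma^\e = (M^\e)^{-1}\Gamma^\e M^\e$ combined with~\eqref{eq:model_algeba}.

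The central point is the membership $\hat\Gamma^\e_{xy} \in \CG$. The approach is to identify $M^\e$ with a Connes--Kreimer-type renormalization: concretely, one writes $M^\e = (g^\e \otimes \id)\Delta^-$ for a character $g^\e$ on a sub-Hopf algebra of $\CT$ generated by the divergent subtrees extracted in~\eqref{e:M-on-YM},~\eqref{eq:tilde_C_renorm},~\eqref{e:M-XiIXi},~\eqref{e:div-R2}, and~\eqref{e:M-four}, with $\Delta^-$ the corresponding extraction coaction. The non-standard renormalizations in~\eqref{eq:square_cherry_consts} require slightly more care since they replace a subtree by $C \cdot \CS_\be(\cdot)$ rather than $C \cdot \bone^{(i)}$, but these can be accommodated by enlarging the target sub-Hopf algebra to incorporate the shift generators. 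One then verifies the standard compatibility between $\Delta^-$ and the coaction $\Delta^+$ generating $\CG$, which gives $\hat\Gamma^\e \in \CG$. Since the list of divergent subtrees is finite and completely explicit, this verification is done by direct inspection rather than appeal to the general machinery of~\cite{BHZ19}.

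For the analytical bounds, the decisive feature is that $M^\e$ is \emph{triangular}: each contraction in~\eqref{eq:M_tau_def} replaces a subtree of strictly negative degree by a polynomial (possibly of the form $\CS_\be(\cdot)$) of nonnegative degree, so $M^\e \tau - \tau$ lies in $\bigoplus_{\alpha > |\tau|} \CT_\alpha$. The estimate $|(\hat\Pi^\e_z \tau)(\phi^\lambda_z)| \lesssim \lambda^{|\tau|}$ then follows by applying~\eqref{eq:Pi} component-wise to $M^\e\tau$, with each additional factor $\lambda^{\alpha - |\tau|}$ only improving the bound. The small-scale bound from~\eqref{eq:Pi} and the bounds on $\hat\Gamma^\e$ from~\eqref{eq:Gamma} follow analogously once one expands $(M^\e)^{-1}$ as a Neumann series, which terminates in finitely many steps by triangularity.

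Finally, compatibility is verified separately for each property. For differentiations and shifts, it suffices to show $\CD M^\e = M^\e \CD$ for $\CD \in \Diff$ and $\CS_\be M^\e = M^\e \CS_\be$ on $V_\be$; both identities hold because each contraction in the definition of $M^\e$ acts on internal subtrees and is thus blind to root-level operators, while the $\be$-indexed family of renormalization constants is engineered precisely to enforce the $\CS_\be$-equivariance. Admissibility and compatibility with multiplication by $\e$ then follow from Remark~\ref{rem:renorm_planted} together with the observation that every symbol in $\CT[\<IXi>]$ is planted, so $M^\e$ restricts to the identity exactly where these two conditions impose pointwise constraints on $\Pi^\e$. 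The main obstacle in this programme is the Hopf-algebraic compatibility check in the second paragraph, particularly for the mixed renormalizations~\eqref{eq:square_cherry_consts}; but the explicit, finite list of divergent subtrees reduces this to a bounded case-check rather than a structural difficulty.
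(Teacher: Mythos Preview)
Your approach to showing $\hat\Gamma^\e_{xy}\in\CG$ differs from the paper's: rather than the Hopf-algebraic co-interaction you sketch, the paper proves a direct combinatorial lemma (Lemma~\ref{lem:Gamma_M}) showing that $M\Gamma = \Gamma M$ on $\CT[\sigma]$ for all but three exceptional tree shapes $\X^{(i)}\<PsiXi1>$, $\<PsiPsibarXibar>$, $\<r_z_large>$, and that on these exceptions $M^{-1}\Gamma M\tau = \tau + \hat\tau$ with $\hat\tau\in\CT[\<PsiXi1>]$, which is still of the form of a structure-group element. Your route is plausible in spirit, but carrying it out here would require building the $\Delta^-$/$\Delta^+$ co-interaction for this non-standard structure (shifts, multiple derivative types, the $\CS_\be(\cdot)$-targeted contractions of~\eqref{eq:square_cherry_consts}), and you would in any case hit the same exceptional trees where $M$ and $\Gamma$ do \emph{not} commute; the paper's case-check is both shorter and more transparent about why things still work there.

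There is a genuine gap in your treatment of compatibility with multiplication by $\e$. You check only the condition on $\CT[\<IXi>]$, but Definition~\ref{def:compat} also requires $\hat\Pi_x\tau_\e = \e^{1-\kappa}\hat\Pi_x\tau$ for $\tau$ ranging over $\mfT_\e$, which includes $\<IXiI'Xi_notriangle>$. On that tree $M^\e$ is \emph{not} the identity: by~\eqref{eq:tilde_C_renorm} one has $M\tau = \tau + C\bone^{(i)}$ and $M\tau_\e = \tau_\e + \e^{1-\kappa}C\bone^{(i)}$, and the compatibility holds precisely because of the matching prefactor $\e^{1-\kappa}$ in the second contraction there. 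Your appeal to Remark~\ref{rem:renorm_planted} does not cover this, since $\<IXiI'Xi_notriangle>$ is not planted. Separately, your justifications for compatibility with shifts and differentiations (``blind to root-level operators'', ``$\be$-indexed equivariance'') are off the mark: the correct and simpler reason, used in the paper, is exactly Remark~\ref{rem:renorm_planted} --- the domains $V_{\CD}$ and $V_\be$ consist of planted trees (and polynomials), on which $M^\e$ is the identity, as are their images under $\CD$ and $\CS_\be$.
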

For the proof, we require the following (essentially combinatorial) lemma.
\begin{lemma}\label{lem:Gamma_M}
Consider a tree $\sigma\in\mfT$ and let $\tau\in\CT[\sigma]$, $\Gamma\in \CG$, and $M\in L(\CT,\CT)$ a map as above.
\begin{enumerate}[label=(\roman*)]
\item\label{pt:Gamma_M_commute} 
Suppose $\sigma$ is not of the form $\X^{(i)} \<PsiXi1>,\,\<PsiPsibarXibar>$ or $\<r_z_large>\,$.
Then
%\begin{equ}\label{eq:Gamma_M_commute}
$M\Gamma\tau = \Gamma M\tau$.
%\end{equ}

\item\label{pt:Gamma_M_commute_almost} Suppose $\sigma$ is of the form $\X^{(i)} \<PsiXi1>,\,\<PsiPsibarXibar>$ or $\<r_z_large>\,$.
Then
\begin{equ}\label{eq:Gamma_M_commute_almost}
M^{-1}\Gamma M\tau = \tau + \hat\tau\;,\qquad 
\hat\tau\in\CT[ \<PsiXi1>]\;.
\end{equ}
\end{enumerate}
\end{lemma}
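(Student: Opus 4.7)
My plan is a direct case analysis on the tree $\sigma$, leveraging the simple structure of both $\Gamma$ and $M$ on our regularity structure. The key preliminary observation is that $\Gamma\tau$ can differ from $\tau$ only when $\sigma$ contains either a polynomial factor $\X^{(i)}$ or an unintegrated planted subtree $\CI_i(\bar\sigma)$ of positive degree; by Lemma~\ref{lem:Gamma_simple}, the corresponding Taylor corrections lie in the $\bone$-sector (plus the $\X$-sector in trivial cases). Combined with Remark~\ref{rem:renorm_planted} and the fact that $M$ acts as the identity on abstract polynomials, this reduces the problem to tracking how $\Gamma$-produced $\bone$-corrections interact with the specific contractions that define $M$.

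For part~(i), I would scan the finite list of trees in $\mfT$ from Sections~\ref{subsubsec:step1}--\ref{subsubsec:Step_3} and verify the following dichotomy. Either $\Gamma\tau = \tau$ (which holds generically, e.g.\ whenever every positive-degree subtree $\CI_i(\cdot)$ appears only under a derivative, since $\CD\bone = 0$), or the difference $\Gamma\tau - \tau$ lies in a sector on which $M$ acts trivially (for instance, for $\X^{(i)}\bar\Xi_i$ the correction lives in $\CT[\bar\Xi_i]$, which has no matching $M$-contraction). In either situation, a short direct computation yields $M\Gamma\tau = \Gamma M\tau$; the Abelian nature of our renormalisation and the fact that the relevant $M$-contractions replace their targets by scalar multiples of $\bone$ (on which $\Gamma$ acts simply) ensure that no further adjustments are needed.

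For part~(ii), the three exceptional trees are precisely those for which $\Gamma\tau - \tau$ lands in $\CT[\<PsiXi1>]$, a sector that is non-trivially renormalised by the contraction $\bar\Xi_i\Psi_i\mapsto C^\e[\<PsiXi1>]\bone^{(i)}$ from~\eqref{e:M-XiIXi}. For $\X^{(i)}\<PsiXi1>$, the Taylor correction $\Gamma\X^{(i)}-\X^{(i)}=(z-y)\bone^{(i)}$ multiplies the remaining $\Psi_i\bar\Xi_i$; for $\<PsiPsibarXibar>$ and $\<r_z_large>$, the same mechanism applies via the Taylor expansions of $\bar\Psi_i=\CI_i\bar\Xi_i$ (degree $1-2\kappa$) and of $\CI_i(\Psi_i\bar\Xi_i)$ (degree $1-3\kappa$), both of which only reach order $\bone$ since $|\X|=1$ exceeds these degrees. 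Writing $\tau$ as a tensor $a\otimes b\otimes c$ with $b\otimes c$ representing the $\CT[\<PsiXi1>]$ factor, a direct computation shows $\Gamma M\tau=M\Gamma\tau$, whence $M^{-1}\Gamma M\tau - \tau = \Gamma\tau - \tau \in \CT[\<PsiXi1>]$.

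The main obstacle is the bookkeeping for $\<r_z_large>$, where the outermost $\bar\Xi_i\Psi_i$ contraction, the full-tree contraction $C^\e[\<r_z_large>]$, and the Taylor correction on the inner $\CI_i(\Psi_i\bar\Xi_i)$ all enter simultaneously. One must verify that the $\bone$-contributions arising from these different paths cancel exactly (which ultimately relies on the coefficient $C^\e[\<r_z_large>]$ being a scalar commuting with every $\Gamma\in\CG$), so that no residual leaks into $\CT[\CI_i(\Psi_i\bar\Xi_i)]$ or into the polynomial sector, leaving only the claimed element of $\CT[\<PsiXi1>]$.
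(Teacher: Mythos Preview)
Your overall plan is on the right track and close to the paper's, but there is a genuine gap in part~(i). Your dichotomy only addresses one direction of the commutation: you argue that $M$ acts trivially on $\Gamma\tau-\tau$, but to conclude $M\Gamma\tau=\Gamma M\tau$ you must \emph{also} show that $\Gamma$ acts trivially on $M\tau-\tau$. Your justification for this (``the relevant $M$-contractions replace their targets by scalar multiples of $\bone$'') is incorrect: contractions replace a \emph{sub-expression} by $\bone$, so the result lands in sectors such as $\CT[\Psi_i]$, $\CT[\<IXi^2>]$, or $\CT[\<IPsiXi1>]$, not in $\CT[\bone]$ (see Example~\ref{ex:M_tau}). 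The paper fills this gap by a noise-counting argument showing that $M$ preserves the subspace $\CT^\fix=\bigoplus_{\sigma\in\mfT^\fix}\CT[\sigma]$ spanned by the non-polynomial trees on which $\CG$ acts trivially; this is the non-obvious ``claim'' in the first half of the paper's proof. Once you add this, your case analysis on $\mfT^\nfix$ goes through essentially as you describe.

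For part~(ii), your stronger assertion that $M\Gamma\tau=\Gamma M\tau$ holds even for the three exceptional trees is in fact correct, and immediately gives~\eqref{eq:Gamma_M_commute_almost} since $\Gamma\tau-\tau\in\CT[\<PsiXi1>]$. This is equivalent to what the paper does, namely computing $M^{-1}\Gamma M\tau$ directly and observing a cancellation of two $\CT[\bone]$ contributions. Your description of the mechanism is slightly off, however: the cancellation has nothing to do with $C^\e[\<r_z_large>]$. What happens for $\sigma\sim\<r_z_large>$ is that $M\tau$ has a component $\bar\tau\in\CT[\<IPsiXi1>]$ coming from the outer $\bar\Xi_i\Psi_i\mapsto C^\e[\<PsiXi1>]\bone$ contraction; $\Gamma$ then produces a $\CT[\bone]$ term from the Taylor expansion of $\bar\tau$. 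On the other side, $\Gamma\tau-\tau=\hat\tau\in\CT[\<PsiXi1>]$, and applying $M^{-1}$ to $\hat\tau$ produces a $\CT[\bone]$ term via the same constant $C^\e[\<PsiXi1>]$. These two $\CT[\bone]$ terms carry the same scalar factor (both equal the product of the relevant character value and the pairing with $C^\e[\<PsiXi1>]$) with opposite signs, so they cancel.
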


\begin{proof}
\ref{pt:Gamma_M_commute}
Clearly $\Gamma\bar\CT=\bar\CT$ and $M$ acts trivially on $\bar\CT$, hence~\ref{pt:Gamma_M_commute} holds for all $\tau\in\bar\CT$.
It therefore suffices to consider $\sigma$ non-polynomial.
Suppose $\sigma\in\mfT\setminus\bar\mfT$ is such that $\CG$ acts non-trivially on $\CT[\sigma]$.
Then either $|\sigma|>0$, i.e. $\sigma$ is of the form
\begin{equ}[eq:pos_trees]
\<I(cherry)>\;,\quad
\<IDPsi>\;, \quad
 \<IXibar>\;,\quad
\<I(R2-1new)>\;, \quad
\<IPsiXi1>
\end{equ}
(we recall here \eqref{eq:pos_trees-YM} and \eqref{eq:pos_trees_rem} ), or $\sigma$ is of the form $\sigma=\chi\bar \Xi$, i.e.
\begin{equ}[eq:trees_Xi_bar]
\<cherryY>\;,\quad
\<cherryYless>\;,\quad
\X^{(i)} \bar\Xi_i\;,\quad
\X^{(i)} \<PsiXi1>\;,\quad
\<I(R2-1)Xi1new>\;,\quad
\<I(PsiXi1)Xi1>\;,\quad
\<I(PsiXi1)Xi1less>\;,\quad
\<PsiPsibarXibar>\;\quad
\<r_z_large>\;,
\end{equ}
or $\sigma$ is of the form $\sigma = \CI[\bar\sigma]\CD\CI[\Xi]$ with $|\bar\sigma|\in(-2,-1)$, i.e.
\begin{equ}\label{eq:IDI_trees}
\<I[IXiI'Xi]I'Xi_notriangle>\;,\quad
\<I[I'Xi]I'Xi_notriangle>\;,\quad
\<I'XiI(R2-1)new>\;,\quad
\<I'XiIXibar>\;,\quad
\<supercherry2>\;.
\end{equ}
Let $\mfT^{\nfix}\subset\mfT$ denote the set of trees of the above form
and $\mfT^{\fix} = \mfT\setminus\{ \bar\mfT \cup \mfT^{\nfix}\}$.

We claim that, if $\sigma\in \mfT^\fix$, then $M\tau \in \CT^\fix \eqdef \bigoplus_{\sigma\in\mfT^\fix}\CT[\sigma]$.
Indeed, every tree in $\mfT^{\nfix}$ except $\<IDPsi>\;,\;
 \<IXibar>\;,\;\X^{(i)} \bar\Xi_i$
 contains at least two noises
 (while we usually treat $\<R2-1new>$ as a single noise, for the purpose of this argument, we treat it as having two noises).
If $b\in\mfT^{\nfix}$ has at least two noises and $M\tau$ has a (non-zero) component in $\CT[b]$,
then $\sigma$ must contain at least four noises, which implies $\sigma\sim\<r_z_large>$ and thus $\sigma\in\mfT^{\nfix}$.
Turning to the trees $b\in\mfT^\nfix$ with a single noise,
clearly $M\tau$ does not have a (non-zero) component in $\CT[\X^{(i)} \bar\Xi_i]$
(since only $\sigma=\X^{(i)} \bar\Xi_i\in\mfT^{\nfix}$ would allow such a component). 
Similarly $M\tau$ cannot have a component in $\CT[\<IDPsi>]$ because it would require contracting a tree of degree $<-1$ at the root
(this would only have been possible with $\sigma \sim \<r_z_avoid2>$ which does not appear in $\mfT$).
Finally, $M\tau$ can have a component in $\CT[\<IXibar>]$ only if $\sigma \sim \<IXibar>$ or $\sigma\sim\<PsiPsibarXibar>$, but these are in $\mfT^{\nfix}$.
This proves the claim and shows~\ref{pt:Gamma_M_commute} for $\tau\in\CT^\fix$.

We now consider the case $\sigma\in\mfT^{\nfix}$.
Suppose $|\sigma|>0$, i.e. $\sigma$ is of the form in~\eqref{eq:pos_trees}.
Then clearly $\Gamma\tau=\sum_i\tau_i$ with $M\tau_i=\tau_i$ for all $\Gamma\in\CG$
(in particular $M\tau=\tau$), which proves~\ref{pt:Gamma_M_commute}.

Suppose now $\sigma$ is of the form in~\eqref{eq:trees_Xi_bar}.
Except for $\sigma\sim\X^{(i)} \<PsiXi1>\,,\, \<PsiPsibarXibar>\,,\,
\<r_z_large>$, we have
$M\tau \in \tau + \CT[\bone] + \CT[\<IXi>]$
and $\Gamma\tau \in \tau + \CT[\bar\Xi_i]$,
which shows~\ref{pt:Gamma_M_commute}
since $\Gamma$ acts trivially on $\CT[\bone]\oplus\CT[\<IXi>]$ and $M$ acts trivially on $\CT[\bar\Xi_i]$.

Finally, suppose $\sigma$ is of the form~\eqref{eq:IDI_trees}.
Then $M\tau \in \tau + \CT[\bone] + \CT[\<IXi>]$ and $\Gamma \tau \in \tau + \CT[\<I'Xi_notriangle>]$,
and since $M$ acts trivially on $\CT[\<I'Xi_notriangle>]$ and $\Gamma$ acts trivially on $\CT[\bone]\oplus\CT[\<IXi>]$,
we obtain the desired identity, %~\eqref{eq:Gamma_M_commute}, 
which concludes the proof.

\ref{pt:Gamma_M_commute_almost}
Suppose $\sigma \sim \<r_z_large>$.
Then $M\tau \in \tau + \bar\tau + \CT[\<IXi^2>]  + \CT[\bone]$
where $\bar\tau\in \CT[\<IPsiXi1>]$
(note that $\CT[\<IXi>]$ does not appear since we exclude contractions of three noises).
The only components of $M \tau$ on which $\Gamma$ does not act trivially are $\tau\in\CT[\<r_z_large>]$
and $\bar\tau\in\CT[\<IPsiXi1>]$, for which we have $\Gamma\tau = \tau + \hat\tau$ with $\hat\tau \in \CT[\<PsiXi1>]$ and $\Gamma\bar\tau \in \bar\tau + \CT[\bone]$.
On the other hand, $M^{-1}\hat\tau \in \hat\tau + \CT[\bone]$ where the component in $\CT[\bone]$ carries the opposite sign to that of
the component in $\CT[\bone]$ of 
$\Gamma\bar\tau$, which shows $M^{-1}\Gamma M\tau =\tau + \hat\tau$ and thus~\eqref{eq:Gamma_M_commute_almost}.
The proof for $\sigma$ of the form $\X^{(i)} \<PsiXi1>$ and $\<PsiPsibarXibar>$
is almost the same once we note that, in both cases, $\Gamma\tau = \tau + \hat\tau$ with $\hat\tau \in \CT[\<PsiXi1>]$
and, in the first case,
\begin{equ}
M\tau \in \tau + \CT[\X^{(i)}]\;,\quad
 \Gamma\X^{(i)} = \X^{(i)} + \CT[\bone]\;.
\end{equ}
and in the second case (in somewhat informal notation),
$M\tau \in \tau + \CT[\<IXi>] + \CT[\<IXibar>]$,
$\Gamma \<IXi> = \<IXi>$, and
$\Gamma \,\<IXibar> = \<IXibar> + \CT[\bone]$.
\end{proof}
\begin{proof}[of Proposition~\ref{prop:renormalised_model}]
Dropping reference to $\eps$,
we first verify that $(\hat\Pi,\hat\Gamma)$ is a model.
Since $M$ preserves $\CT^{(i)}$ for each $i\in\{1,2\}$,
it follows that $\hat\Pi$ is of the desired form~\eqref{e:model-split}.
The algebraic conditions~\eqref{eq:model_algeba} on $(\hat\Pi,\hat\Gamma)$ are obviously satisfied as are the analytic conditions for $\hat\Pi$ since $M\tau = \tau + \sum_i\tau_i$ for $\tau\in\CT_\beta$ with $\tau_i\in \CT_{\alpha}$ for $\alpha>\beta$.
The analytic conditions on $\Gamma$ are satisfied due to Lemma~\ref{lem:Gamma_M},
and thus $(\hat\Pi,\hat\Gamma)$ is a model.

The fact that $(\hat\Pi,\hat\Gamma)$ is admissible, compatible with shifts, and compatible with derivatives is clear from the respective properties of $(\Pi,\Gamma)$
because $M\tau=\tau$ for any $\tau\in\CT[\CI \sigma]$ where $\sigma$ is a planted tree or the derivative of a planted tree (Remark~\ref{rem:renorm_planted}).
The same applies to compatibility with multiplication by $\eps$
because $M\tau=\tau$ and $M \tau_\eps = \tau_\eps$ for every $\tau\in\bigoplus_{\sigma\in \{\<I'Xi_notriangle>,
\<R2-1new>, \<IXi>\}}\CT[\sigma]$
(recall the map $\tau\mapsto \tau_\e$ from~\eqref{eq:tau_to_bar_tau}),
while for $\tau \in \CT[\<IXiI'Xi_notriangle>]$,
$M\tau = \tau + C\bone^{(i)}$
and $M\tau_\e = \tau_\e + \e^{1-\kappa}C\bone^{(i)}$
for some $C\in\mfg^{\otimes 2}$
due to \eqref{eq:tilde_C_renorm}.
\end{proof}

\section{Solution theory of the discrete dynamic \texorpdfstring{$A^\eps$}{A eps}}
\label{sec:Aeps}

In this section we discuss the solution theory of \eqref{e:Aeps} for $d=2$.
Throughout the section, we suppose that Assumption~\ref{assump:R} holds.

\subsection{Remainders of degree \texorpdfstring{$0-$}{0-} and mass renormalisation}
\label{subsec:Mass renormalisation_1}

We first analyse the remainder terms of degree (slightly below) $0$.
These terms are more regular than the terms \eqref{e:deg-1errors} (which have degree below $-1$)
and they do not require using regularity structures.
Nonetheless, they impact our formulation of the fixed point problem in Section~\ref{subsec:FPP}.

Fix throughout this subsection $r\geq 1,T\in(0,1]$, and $\Psi_i,v_i \colon [-1,2] \times\obonds_i \to \mfg$ for $i\in\{1,2\}$ and write $A_i=\Psi_i+v_i$ (all functions are implicitly indexed by $\eps$).
We will later take $\Psi_i = K^{i;\e}*_{(i)}\xi_i^\e$ and $A_i$ as the solution to our fixed point problem.
We suppose that, for some $\eta\in (-\frac14,-2\kappa)$, for all $\e>0$,
\begin{equ}[eq:v_bound]
\|v\|_{\CC^{1+\eta,T}_{-1,\eps}} + \|v\|_{\CC^{-\eta,T}_{2\eta,\eps}} + \|v\|_{\CC^{0,T}_{\eta,\eps}} 
+ \|v\|_{\CC^{\eta,T}_{0,\eps}}< r\;.
\end{equ}
Our assumption on $\Psi$ is as follows (see Lemma~\ref{lem:Psi_Wick} for the case $\Psi_i = K^{i;\e}*_{(i)}\xi_i^\e$).

\begin{assumption}\label{as:models_abstract}
For all $\eps>0$,
\begin{equ}[e:assump-Psi-1]
 \sup_{t\in[-1,2]} \|\Psi_i(t)\|_{\CC^{-\kappa}_\e} <r\;.
\end{equ}
Furthermore, let $\partial\in \{\partial_j,\bar\partial_j, \partial_j^\pm\}$ for some $j\in \{1,2\}$,
$i_1,i_2,i_3\in \{1,2\}$, and $h_k\in  \R^2$
 for $k\in\{2,3\}$ such that $|h_k|\leq 4$ and $\obonds_{i_1}+\e h_k =\obonds_{i_k}$.
Then there exist $c_{23},c_{13} \in \mfg\otimes\mfg$ (depending on $\d,i_k,h_k,\e$) such that $c_{23}=O(1),c_{13}=O(1)$ uniformly in $\eps$
and
\begin{equs}[e:assump-Psi-2]
\sup_{t\in[-1,2]}  
\{\| W_{c,h} (\Psi_{i_1}, \Psi_{i_2}, \partial\Psi_{i_3})(t)\|_{\CC^{-\kappa}_\eps}
&+
\| W^{(13)}_{c,h} (\Psi_{i_1}, \partial\Psi_{i_3})(t)\|_{\CC^{-\kappa}_\eps}
\\
&+
\| W^{(23)}_{c,h} (\Psi_{i_2}, \partial\Psi_{i_3})(t)\|_{\CC^{-\kappa}_\eps}\} < r\e^\theta\;,
\end{equs}
for some fixed $\theta\in(0,\kappa]$ and where, for functions $f_{k} \in \mfg^{\obonds_{i_k}}$, $k=1,2,3$,
we define
\begin{equs}[eq:W_ch]
W_{c,h} (f_1,f_2,f_3) &\eqdef \e f_1\otimes f_2(\cdot+\e h_2)\otimes f_3(\cdot+\e h_3) - f_1 \otimes c_{23}
\\
&\qquad - \sum_{\ell=1}^nc^{(\ell)}_1\otimes f_2(\cdot+\e h_2) \otimes c^{(\ell)}_3 \in (\mfg^{\otimes 3})^{\obonds_{i_1}}\;,
\end{equs}
where we write $c_{13} = \sum_{\ell=1}^n c^{(\ell)}_1\otimes c^{(\ell)}_3$ for $c^{(\ell)}_1, c^{(\ell)}_3\in\mfg$,
and
\begin{equs}
W^{(13)}_{c,h} (f_1,f_3)
&\eqdef \e f_1\otimes f_3(\cdot+\e h_3) - c_{13} \in (\mfg\otimes \mfg)^{\obonds_{i_1}}\;,
\\
W^{(23)}_{c,h} (f_2,f_3)
&\eqdef \e f_2(\cdot+\e h_2)\otimes f_3(\cdot+\e h_3) - c_{23} \in (\mfg\otimes \mfg)^{\obonds_{i_1}}\;.
\end{equs}
\end{assumption}
We will show that the ``remainder'' terms arising from $R_3$, $\tilde R_3$, $\Delta_3$, $\hat R^\nabla_{\eps}$ in~\eqref{e:Aeps} fall into the scope of the following lemma.

\begin{lemma}\label{lem:eps-A2-dA}
Let $\partial, i_1,i_2,i_3, h_2,h_3$ be as in Assumption~\ref{as:models_abstract}.
Then, uniformly over $\Psi,v$ satisfying~\eqref{eq:v_bound} and
Assumption~\ref{as:models_abstract},
\begin{equs}
\sup_{t\in (0,T]} t^{\frac12}
\|W_{c,h}(A_{i_1},A_{i_2},\d A_{i_3})(t)\|_{\CC^{-2\kappa}_\eps}
\lesssim \e^\theta r^3\;.
\end{equs}
\end{lemma}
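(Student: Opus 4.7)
The plan is to decompose $A_i = \Psi_i + v_i$ and expand the trilinear expression inside $W_{c,h}$ into eight terms, then group the terms together with the renormalisation subtractions in order to extract the $\eps^\theta$ factor from the hypotheses. Writing $W = W_{c,h}(A_{i_1},A_{i_2},\partial A_{i_3})$, note that
\[
W = \eps A_{i_1}\otimes A_{i_2}(\cdot+\eps h_2)\otimes \partial A_{i_3}(\cdot+\eps h_3) - A_{i_1}\otimes c_{23} - \sum_{\ell}c_1^{(\ell)}\otimes A_{i_2}(\cdot+\eps h_2)\otimes c_3^{(\ell)}.
\]
Since $c_{13},c_{23}$ are only needed to absorb the divergences coming from the pure-noise part, I split $W$ into: (i) the pure-$\Psi$ piece $W_{c,h}(\Psi_{i_1},\Psi_{i_2},\partial\Psi_{i_3})$, (ii) three single-$v$ pieces, and (iii) four pieces with at least two $v$-factors.

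For (i), Assumption~\ref{as:models_abstract} immediately gives $\sup_{t\in[-1,2]} \|W_{c,h}(\Psi_{i_1},\Psi_{i_2},\partial\Psi_{i_3})(t)\|_{\CC^{-\kappa}_\eps}\lesssim r\eps^\theta$. For (ii), the algebra is:
\[
\eps v_{i_1}\otimes \Psi_{i_2}\otimes \partial\Psi_{i_3} - v_{i_1}\otimes c_{23} = v_{i_1}\cdot W^{(23)}_{c,h}(\Psi_{i_2},\partial\Psi_{i_3}),
\]
together with the analogous identity
\[
\eps \Psi_{i_1}\otimes v_{i_2}\otimes \partial\Psi_{i_3} - \sum_{\ell}c_1^{(\ell)}\otimes v_{i_2}\otimes c_3^{(\ell)}
= W^{(13)}_{c,h}(\Psi_{i_1},\partial\Psi_{i_3})\bullet v_{i_2},
\]
with $\bullet$ denoting the obvious tensor rearrangement inserting $v_{i_2}$ in the middle slot. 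For each of these I apply Young multiplication (Lemma~\ref{lem:discrete_Young}): the $W^{(\cdot)}_{c,h}$ factor lies in $\CC^{-\kappa}_\eps$ with size $r\eps^\theta$, while $v$ lies in $\CC^{-\eta}_\eps$ uniformly in time on $(0,T]$ after paying the blow-up weight $t^{\eta}$, and $-\eta+(-\kappa)>0$ for $\kappa$ small. The remaining single-$v$ term $\eps\Psi_{i_1}\otimes\Psi_{i_2}\otimes\partial v_{i_3}$ needs no subtraction; here I use Lemma~\ref{lem:eps-improve-reg}\ref{pt:eps_d_f_vanish} to write $\eps\partial v_{i_3}\in\CC^{-2\kappa}_\eps$ with a gain of $\eps^{1+\eta+2\kappa}\gtrsim \eps^\theta$ (absorbing $t^{-1/2}$ from $\|v\|_{\CC^{1+\eta,T}_{-1,\eps}}$), and then multiply by $\Psi_{i_1}\otimes\Psi_{i_2}\in\CC^{-\kappa}_\eps$ (using Lemma~\ref{lem:discrete_Young} together with the pointwise bound $\|\Psi\otimes\Psi(\cdot+\eps h_2)\|_{\CC^{-\kappa}_\eps}\lesssim r^2$, which itself is a consequence of Assumption~\ref{as:models_abstract} applied with $\partial=\id$; if this precise statement is not available, one can equivalently pair $\Psi_{i_1}$ directly with the Wick product $W^{(23)}$).

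For (iii), the terms contain two or three $v$-factors and no renormalisation is required; here I apply Lemma~\ref{lem:eps-improve-reg} to convert either $\eps$ or $\eps\partial$ into an $\eps^\theta$-gain in an appropriate discrete H\"older space, and close via Young multiplication against the remaining $\Psi$ or $v$ factor. Summing all contributions and multiplying by $t^{1/2}$ yields the claimed bound, using $\|v\|_{\CC^{1+\eta,T}_{-1,\eps}}<r$ to compensate the $t^{-1/2}$ arising from $\partial v$. The main obstacle is purely bookkeeping: the eight expanded terms involve products of distributions of marginal regularity $-\kappa$ together with $v$ whose relevant norms blow up at various polynomial rates as $t\downarrow 0$, and one must check in each case that the chosen pairing of norms satisfies $\alpha+\beta>0$ in Lemma~\ref{lem:discrete_Young} while simultaneously producing the weight $t^{-1/2}$ exactly (no worse) so that multiplication by $t^{1/2}$ yields a uniform bound.
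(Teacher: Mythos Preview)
Your overall strategy---expanding $A=\Psi+v$, grouping the pure-$\Psi$ piece with the full subtraction $W_{c,h}$, pairing the two single-$v$ terms carrying $\partial\Psi$ with $W^{(13)}$ and $W^{(23)}$, and treating the rest as ``easy'' products---matches the paper's proof exactly. The gap is in your handling of the term $\eps\,\Psi_{i_1}\otimes\Psi_{i_2}\otimes\partial v_{i_3}$ (and, more vaguely, of the mixed term $\eps\,\Psi v\,\partial v$).

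You place $\eps\partial v_{i_3}$ in $\CC^{-2\kappa}_\eps$ and then try to multiply by $\Psi_{i_1}\otimes\Psi_{i_2}$, claiming the latter lies in $\CC^{-\kappa}_\eps$. This fails twice. First, Assumption~\ref{as:models_abstract} does \emph{not} supply a bound on $\Psi\otimes\Psi$ in $\CC^{-\kappa}_\eps$: the bilinear hypotheses there always involve a factor $\partial\Psi$ with $\partial\in\{\partial_j,\bar\partial_j,\partial_j^\pm\}$, never the identity. Second, even if such a bound were available, Lemma~\ref{lem:discrete_Young} cannot multiply two objects both of negative regularity; one factor must lie in $\CC^\beta_\eps$ with $\beta>0$. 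Your fallback suggestion (``pair $\Psi_{i_1}$ with $W^{(23)}$'') does not help either, since $W^{(23)}$ involves $\partial\Psi_{i_3}$ rather than $\partial v_{i_3}$.

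The paper's fix is to avoid Young entirely for this term and work in $L^\infty_\eps$: by Lemma~\ref{lem:eps-improve-reg}\ref{pt:eps_f_vanish} applied to~\eqref{e:assump-Psi-1} one has $\eps^\kappa\|\Psi_i(t)\|_{L^\infty_\eps}\lesssim r$, so $\|\Psi_{i_1}\otimes\Psi_{i_2}\|_{L^\infty_\eps}\lesssim \eps^{-2\kappa}r^2$, while Lemma~\ref{lem:eps-improve-reg}\ref{pt:eps_d_f_vanish} with $\bar\alpha=0$ gives $\|\partial v(t)\|_{L^\infty_\eps}\leq \eps^{\eta}\|v(t)\|_{\CC^{1+\eta}_\eps}\leq \eps^\eta r t^{-1/2}$. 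Multiplying in $L^\infty_\eps$ yields $t^{1/2}\|\eps\,\Psi\Psi\,\partial v\|_{L^\infty_\eps}\lesssim \eps^{1-2\kappa+\eta}r^3\leq \eps^\theta r^3$ since $\eta>-\tfrac14$ and $\theta\leq\kappa$. The same $L^\infty$ trick on $\Psi$ is what makes the $\eps\Psi v\,\partial v$ term go through; your sketch in (iii) does not indicate this and would run into the same obstruction if you tried to keep $\Psi$ in $\CC^{-\kappa}_\eps$.
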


\begin{proof}
In this proof we will write
$A_{i_1}  A_{i_2}  \partial A_{i_3}$ or 
simply $A^2 \partial A$ as shorthands for the tensor product.
Expanding $A=\Psi+v$, we have 6 types of terms in $W_{c,h}(A_{i_1},A_{i_2},\d A_{i_3})$ to consider.

Recall $\eta\in (-\frac14,-2\kappa)$ and $\theta\in (0,\kappa]$,
and recall Young's theorem in our discrete setting in Lemma~\ref{lem:discrete_Young}.
Omitting the $t$ variables in the notation and with all the norms understood as spatial norms, and applying \eqref{eq:v_bound} for the ``$v$ factors'' and 
\eqref{e:assump-Psi-1}-\eqref{e:assump-Psi-2}  for the ``$\Psi$ factors'' appearing below,
one has
\begin{equs}
%%%%% 1 %%%%%
{} & \| \e \Psi_{i_1} \Psi_{i_2} \partial \Psi_{i_3} 
- \Psi_{i_1}c_{23}^\e
- c_1^{\e,(\ell)} \Psi_{i_2} c_3^{\e,(\ell)}\|_{\CC^{-\kappa}_\e}
\stackrel{\eqref{e:assump-Psi-2}}{<} r\e^\theta \;,
\\
%%%%% 2 %%%%%
{} & \|\e \Psi^2   \partial v\|_{L_\e^\infty}
\le 
\e^{1-2\kappa+\eta}
\| \eps^{\kappa}\Psi \|^2_{L^\infty_{\e}}
\|\e^{-\eta} \partial v\|_{L_\e^\infty}
\stackrel{\textnormal{\ref{pt:eps_f_vanish}\ref{pt:eps_d_f_vanish}}}\lesssim
\e^{1-2\kappa+\eta}
\| \Psi \|^2_{\CC_\e^{-\kappa}}
\|v\|_{\CC_\e^{1+\eta}}
\\
&\qquad\qquad\qquad\qquad\qquad\qquad\qquad\qquad\qquad
\lesssim
 \e^{1-2\kappa+\eta}r^3 t^{-\frac12} \;,
\\
%%%%% 3 %%%%%
{} & \|\e \Psi \partial \Psi v - cv\|_{\CC_\e^{-\kappa}} 
\le
\|\eps \Psi \partial \Psi - c\|_{\CC_\e^{-\kappa}}
\|v\|_{\CC_\e^{1+\eta}}
\lesssim 
r \e^\theta\cdot r  t^{-\frac12} \;,
\end{equs}
where $\e \Psi \partial \Psi v - cv$ corresponds to two types of terms
with $c$ standing for $c_{23}^\e$ or $c_{13}^\e$,
and where
 \ref{pt:eps_f_vanish}\ref{pt:eps_d_f_vanish}\ref{pt:eps_d_f} refer 
to Lemma~\ref{lem:eps-improve-reg}\ref{pt:eps_f_vanish}\ref{pt:eps_d_f_vanish}\ref{pt:eps_d_f}.
Moreover,
since
\begin{equ}[eq:eDPsi-eDv]
\|\e \partial\Psi\|_{\CC^{-2\kappa}_\e}
\stackrel{\textnormal{\ref{pt:eps_d_f}}}{\lesssim} 
\e^{\kappa}
\|\Psi\|_{\CC_\e^{-\kappa}}
\lesssim
\e^{\kappa} r,
\quad
\| \e \partial v \|_{L^\infty_\e}
\stackrel{\textnormal{\ref{pt:eps_d_f_vanish}}}\lesssim
\e^{-\eta} 
\| v \|_{\CC^{-\eta}_\e}
\le
\e^{-\eta}
r t^{\eta} ,
\end{equ}
one has, similarly as above,
\begin{equs}
%%%%% 4 %%%%%
{} &
\| \e \Psi v \partial v \|_{L_\e^\infty} 
\le 
\e^{-\kappa}
\|\e^\kappa \Psi\|_{L^\infty_\e}
\|v \|_{L^\infty_\e}
\| \e \partial v \|_{L^\infty_\e}
%\stackrel{\ref{pt:eps_f_vanish}\ref{pt:eps_d_f_vanish}}\lesssim
%\e^{-\kappa-\eta}
%\| \Psi\|_{\CC_\e^{-\kappa}}
%\|v \|_{L^\infty_\e}
%\|v \|_{\CC_\e^{-\eta}}
%\\
%&\qquad\qquad\qquad\qquad\qquad\qquad\qquad\qquad\qquad
\lesssim
 r^3\e^{-\eta-\kappa}t^{3\eta/2}\;, %\leq r^3\e^{\theta} t^{-\frac12}\;,
\\
%%%%% 5 %%%%%
{} & 
\| \e v^2 \partial\Psi \|_{\CC_\e^{-2\kappa}}
\le 
\|\e \partial\Psi\|_{\CC_\e^{-2\kappa}}
\| v^2\|_{\CC_\e^{-\eta}}
%\stackrel{\ref{pt:eps_d_f}}{\lesssim} 
%\e^{\kappa}
%\|\Psi\|_{\CC_\e^{-\kappa}}
%\| v^2\|_{\CC_\e^{-\eta}}
\lesssim
\e^{\kappa} r  \cdot r^2 t^{2\eta}\;,
%\\
%&\qquad\qquad\qquad\qquad\qquad\qquad\qquad\qquad\qquad
%\lesssim 
%\e^{\kappa}  r^3 t^{-\frac12}\;,
\\
%%%%% 6 %%%%%
{} &
\| \e v^2 \partial v\|_{L_\e^\infty}
\le
\|v^2\|_{L_\e^\infty}
\|\e \partial v\|_{L_\e^\infty} 
%\stackrel{\ref{pt:eps_d_f_vanish}}\lesssim
%\e^{-\eta}
%\|v^2\|_{L_\e^\infty}
%\|v\|_{\CC^{-\eta}_\e} 
\lesssim 
\e^{-\eta}
r^3 t^{2\eta} \;.
\end{equs}
These bounds are all uniformly in $t\in (0,T]$,
and are all bounded by $r^3\e^{\theta} t^{-\frac12}$
by our assumptions on $\eta$ and $\theta$.
\end{proof}
For the rest of the subsection, we suppose~\eqref{eq:v_bound} and Assumption~\ref{as:models_abstract} hold.
For $c\in L(\mfg^2,\mfg^2)$, we write $c_i\in L(\mfg^2,\mfg)$ for the $i$-th component of $c$ and $c_i B = c_i^{(1)}B_1 + c_i^{(2)} B_2$ with $c^{(j)}_i\in L(\mfg,\mfg)$.
We extend every $c\in L(\mfg^2,\mfg^2)$ to a map $c\in L(\mfq,\mfq)$ by $cA(e)=c_i^{(i)}A_i(e)+c_i^{(j)}A_j(e^{(a)})$
for $e\in\obonds_i$ and $j\neq i$,
where $A_j(e^{(a)})$ is defined by~\eqref{e:def-Aea}.

\begin{lemma}\label{lem:I3}
For $I_3$ from~\eqref{e:Aeps},
there exists  $c^\e\in L(\mfg^2,\mfg^2)$ such that $c^\e=O(1)$ and
$
\|I_3(A) - c^\e A\|_{\CC^{-2\kappa,T}_{-1,\e}} \lesssim \e^\theta r^3
$.
\end{lemma}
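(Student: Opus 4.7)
\medskip

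\noindent\textbf{Plan of proof.} The key structural observation is that $I_3\in\mcI_3$ is a fixed (independent of $\e$) $\R$-linear combination of Lie-bracket monomials in the variables $\A_1,\ldots,\A_9$, each containing at least one factor of the form $\A_p+\A_q$ taken from a generator of the ideal $\mcI$. After fully expanding all Lie brackets and evaluating at $A=\e^{-1}\A$, each generator factor becomes a sum of the type $A_p+A_q = -\e\,\partial_\star^+ A_k$ (some discrete first difference in $\R^2$), while the two remaining factors are ordinary $A_\ell$'s evaluated at bonds within $O(\e)$ of $e$. Thus we obtain a finite decomposition
\[
I_3(A)(e) \;=\; \sum_{k=1}^{K} \lambda_k\,\e\, F^{(k)}_1(y_{1,k})\,F^{(k)}_2(y_{2,k})\,\bigl(\partial_{\!\star_k} F^{(k)}_3\bigr)(y_{3,k})\,,
\]
with $\lambda_k\in\R$, $F^{(k)}_a\in\{A_1,A_2\}$, $\partial_{\star_k}\in\{\partial_j,\bar\partial_j,\partial_j^\pm\}$, and $|y_{a,k}-e|\leq 4\e$. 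The indices $(i_{a,k})$, shifts $h_{a,k}$, and positions of the derivative within the triple vary from term to term; the total number of monomials $K$ is bounded independently of $\e$.

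\medskip

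\noindent For each $k$, apply the Wick-type decomposition provided by Assumption~\ref{as:models_abstract} (after a harmless relabelling placing the derivative factor in the third position), writing
\[
\e\, F^{(k)}_1 F^{(k)}_2 \partial_\star F^{(k)}_3 \;=\; W_{c^{(k)},h^{(k)}}\bigl(F^{(k)}_1,F^{(k)}_2,\partial_\star F^{(k)}_3\bigr)\;+\; F^{(k)}_1(y_{1,k})\,c_{23}^{(k)} \;+\; \textstyle\sum_\ell c_{1}^{(k,\ell)} F^{(k)}_2(y_{2,k}+\e h_{2,k}) c_{3}^{(k,\ell)}\,.
\]
Lemma~\ref{lem:eps-A2-dA} bounds each $W$-term by $\e^\theta r^3$ in $\CC^{-2\kappa,T}_{-1,\e}$, and summing the $K$ contributions (with $K$ finite) gives the desired $\lesssim\e^\theta r^3$ bound for the Wick part.

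\medskip

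\noindent It remains to identify the linear-in-$A$ remainder as $c^\e A$ up to an $\e^\theta$ error. The surviving terms are of the schematic form $F_1(y)\,c_{23}$ and $c_1\,F_2(y+\e h)\,c_3$ with the various factor positions $y$ within $O(\e)$ of $e$. For each such factor we write $A(y)=A(e)+(A(y)-A(e))$; by interpolation $\|\Psi(\cdot+\e h)-\Psi(\cdot)\|_{\CC^{-2\kappa}_\e}\lesssim \e^\kappa\|\Psi\|_{\CC^{-\kappa}_\e}$, and from \eqref{eq:v_bound} one has $\|v(\cdot+\e h)-v(\cdot)\|_{\CC^{-2\kappa}_\e}\lesssim \e\|v\|_{\CC^{1+\eta}_\e}\lesssim \e t^{-1/2}r$, so the total shift error is $\lesssim\e^\theta r$ in $\CC^{-2\kappa,T}_{-1,\e}$. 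Factors $A_i$ with $i$ the direction of $e$ are collected into $c_i^{(i)}A_i(e)$; factors $A_j$ with $j\neq i$ naturally organise into $c_i^{(j)}A_j(e^{(a)})$ via the averaging structure of the four bonds around $e$. This defines $c^\e\in L(\mfg^2,\mfg^2)$, and since each constituent $c_{23}^{(k)},c_1^{(k,\ell)},c_3^{(k,\ell)}$ is $O(1)$ by Assumption~\ref{as:models_abstract} and the sum has $O(1)$ terms, $|c^\e|=O(1)$ uniformly in $\e$.

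\medskip

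\noindent\textbf{Main obstacle.} The expansion in Step~1 produces monomials with the distinguished derivative factor occurring in any of the three positions (depending on where the generator sits in the nested Lie bracket), while Assumption~\ref{as:models_abstract} is phrased with the derivative on the third slot. The delicate point is therefore a bookkeeping one: verify that the relabelling permitted by the symmetric structure of $W_{c,h}$ (and of $W^{(13)}_{c,h},W^{(23)}_{c,h}$) really does cover every position of $\partial_{\!\star_k}$ encountered, and that the resulting contractions $F_1\,c_{23}$ and $c_1\,F_2\,c_3$ — which a priori live in the universal enveloping algebra — combine, after summation over all monomials of a given Lie bracket, into an honest $\mfg$-valued expression to which the $\mfg^2\to\mfg^2$ structure of $c^\e$ can be assigned.
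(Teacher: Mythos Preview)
Your approach is essentially identical to the paper's: recognise that $I_3\in\mcI_3$ forces each monomial to be of the form $\e\,A\,A\,\partial A$ (with factors evaluated at bonds within $O(\e)$ of $e$), apply Lemma~\ref{lem:eps-A2-dA} term by term to extract a linear correction $cA(\cdot+\e h)$, and then reduce the evaluation points to $e$ or $e^{(a)}$ by absorbing the $O(\e\,\partial A)$ discrepancy into the error via the bound~\eqref{eq:edA_to_zero}. The paper's proof does precisely this in three sentences.

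Your ``main obstacle'' is a non-issue. For the first concern, the relabelling is genuinely harmless: since each monomial of $I_3$ is a fixed linear map $L_k\colon\mfg^{\otimes 3}\to\mfg$ applied to a pure tensor, you may freely permute the tensor slots (absorbing the permutation into $L_k$) so that the derivative sits in the third position, and then translate the base point so that one of the undifferentiated factors lands at $e$; both operations preserve the $\CC^{-2\kappa}_\e$ norm. For the second concern, there is no universal-enveloping-algebra ambiguity: the linear corrections $F_1\otimes c_{23}$ and $c_1^{(\ell)}\otimes F_2\otimes c_3^{(\ell)}$ live in $\mfg^{\otimes 3}$, and applying the \emph{same} map $L_k$ to them yields $\mfg$-valued linear functionals of $F_1$ (resp.\ $F_2$), which is exactly the form $c^\e A$ requires. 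Two minor slips: not every generator $\A_p+\A_q$ is a single discrete derivative (e.g.\ $\A_8+\A_3$ involves two non-adjacent bonds), but it is always a finite sum of $\pm\e\,\partial^\pm A$ terms, which suffices; and your shift bound for $v$ should read $\e^{1+\eta}\|v\|_{\CC^{1+\eta}_\e}$ rather than $\e\|v\|_{\CC^{1+\eta}_\e}$, which is of course still $o(\e^\theta)$.
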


\begin{proof}
Since $I_3$ is in $\mcI_3$ (in the sense of 
Definition \ref{def:ideal}), $I_3$ is of the form $\e A^2\d A$.
By Lemma~\ref{lem:eps-A2-dA}, for every monomial in $I_3$,
there exists $c\in L(\mfg^2,\mfg^2)$, $c=O(1)$, and $h\in \R^2$, $|h|<4$,
such that by subtracting $cA(\cdot +\e h)$, one has the claimed bound for this monomial.
The difference between $A(e +\e h)$ and $A(e)$
can always be written into terms of the form $\e \partial A$, and
%by~\eqref{eq:e_d_Psi_bound} and~\eqref{eq:e_p_v_bound},
by~\eqref{eq:eDPsi-eDv},
% Lemma~\ref{lem:eps-improve-reg}\ref{pt:eps_f_vanish}-\ref{pt:eps_d_f_vanish} and the assumptions~\eqref{eq:v_bound}-\eqref{e:assump-Psi-1} on $A= \Psi+v$,
\begin{equ}[eq:edA_to_zero]
\|\e\d A\|_{\CC^{-2\kappa,T}_{2\eta,\e}}  \lesssim \|\e\d \Psi\|_{\CC^{-2\kappa,T}_{0,\e}}
+ \|\e\d v\|_{\CC^{0,T}_{2\eta,\e}}
\lesssim r\e^\kappa\;,
\end{equ}
which means that all renormalisation terms can be evaluated either at $e\in\obonds_i$ or $e^{(a)}$ (vs. other neighbouring points).
\end{proof}
\begin{lemma}\label{lem:grad-R-hat}
For $ \hat R^\nabla_{\eps}$ from
\eqref{e:from-pp-to-e},
there exists $c^\e\in L(\mfg^2,\mfg^2)$, such that $c^\e=O(1)$ and
$
\|\e^{-1} \hat R^{\nabla}_{\eps} (\e A)- c^\e A \|_{\CC^{-2\kappa,T}_{-1,\e}}
\lesssim
\eps^\theta r^4
$
whenever $\e A(t) \in V$ for all $t\in[0,T]$.
\end{lemma}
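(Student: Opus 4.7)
The plan is to apply Lemma~\ref{lem:gradient_R} with $d=2$ and substitute $\A=\e A$, then multiply by $\e^{-1}$. Identifying $\mfg^*\simeq\mfg$, this gives
\[
\e^{-1}\hat R^\nabla_\e(\e A)(e) = \sum_{p\succ e}\Big\{ E^{(1)}_\e(\Sigma^p) + \e^{-2}E^{(2)}_\e\big[(\Sigma^p)^{\otimes 3}\big]\Big\} + \e\, O(A^2) + \e\, O(A^4),
\]
where $\Sigma^p\eqdef\sum_{a=1}^4\e A_a$ is the sum of $\e A$ going around $p$. First I would compute $\Sigma^p$ explicitly for $p=(x,\e_i,\pm\e_j)$: going around the plaquette yields $\Sigma^p=\e^2(\partial_i^+ A_j-\partial_j^+ A_i)(x)$ (up to sign and a shift for the $-\e_j$ orientation), a discrete-curl of size $\e^2\partial A$. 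Thus the expansion collapses, modulo error terms, to a linear-in-$A$ piece $\e\,E^{(1)}_\e(\partial_i^+ A_j-\partial_j^+ A_i)$ and a cubic piece $\e^3 E^{(2)}_\e[(\partial_i^+ A_j-\partial_j^+ A_i)^{\otimes 3}]$.

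For the linear piece I would use $\|\e\d A\|_{\CC^{-2\kappa,T}_{2\eta,\e}}\lesssim r\e^\kappa$ from~\eqref{eq:edA_to_zero}, boundedness of $E^{(1)}_\e$ (Assumption~\ref{assump:R}), and $t^{1/2}\leq t^{-\eta}$ (valid since $-\eta<1/2$), to obtain a bound of order $r\e^\kappa$ in $\CC^{-2\kappa,T}_{-1,\e}$, with no contribution to $c^\e$. The $\e\,O(A^2)$ and $\e\,O(A^4)$ remainders I would handle by decomposing $A=\Psi+v$ into monomials and arguing in the same spirit as the proof of Lemma~\ref{lem:eps-A2-dA}: each $\Psi$ factor is controlled through the pointwise bound $\|\Psi(t)\|_{L^\infty_\e}\lesssim r\e^{-\kappa}$ from~\eqref{eq:Psi_infty_bound}, each $v$ factor through~\eqref{eq:v_bound} and Lemma~\ref{lem:eps-improve-reg}, and mixed products through the discrete Young theorem (Lemma~\ref{lem:discrete_Young}). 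The $\e$ prefactor always produces a positive power $\e^\theta$ dominating any $\e^{-\kappa}$ loss, so these terms again contribute nothing to $c^\e$.

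For the cubic piece $\e^3 E^{(2)}_\e[(\partial A)^{\otimes 3}]$, I would expand $A=\Psi+v$ into eight sub-terms. Those containing at least one $\partial v$ factor I would bound using $\|\e\d v\|_{L^\infty_\e}\lesssim \e^{1+\eta}rt^{-1/2}$ from~\eqref{eq:e_p_v_bound} combined with $\|\e\d\Psi\|_{\CC^{-2\kappa}_\e}\lesssim r\e^\kappa$ from~\eqref{eq:e_d_Psi_bound}, yielding $\e^\theta r^n$ bounds via Young's theorem. The delicate purely-noise term $\e^3 E^{(2)}_\e[(\partial\Psi)^{\otimes 3}]$ would be treated by a Wick-style decomposition: the divergent linear-in-$\partial\Psi$ piece carries a coefficient of size $\e^2\E[(\partial\Psi)^{\otimes 2}]=O(1)$ (thanks to the log-divergence being killed by the $\e^2$ factor), which after a discrete integration-by-parts against the axis-parallel structure of the norm can be rewritten in the form $c^\e A$ up to an $\e^\theta$-error; the Wick-cube residue is then bounded by Gaussian-chaos moment estimates with the $\e^3$ prefactor offsetting the three-fold singularity.

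The main technical obstacle is this purely-noise cubic sub-term: Assumption~\ref{as:models_abstract} directly supplies Wick controls for $\e\,\Psi\,\Psi\,\partial\Psi$ and $\e\,\Psi\,\partial\Psi$, but not for $\e^3(\partial\Psi)^{\otimes 3}$. I expect to bridge this gap by exploiting the antisymmetric curl structure of $\Sigma^p$ — which forces only differences $\partial_i^+A_j-\partial_j^+A_i$ to appear — together with the cyclic/trace identities satisfied by $E^{(2)}_\e$ (as for the Wilson action explicitly computed in Section~\ref{subsubsec:example_actions}), thereby rewriting the dangerous triple product in a form covered by the $W_{c,h}$ bounds of Assumption~\ref{as:models_abstract}. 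Once that reduction is achieved, assembling the bounds yields a $c^\e\in L(\mfg^2,\mfg^2)$ of size $O(1)$ and a remainder of size $\e^\theta r^4$ in $\CC^{-2\kappa,T}_{-1,\e}$, completing the proof.
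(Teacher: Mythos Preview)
Your treatment of the linear $E^{(1)}_\e$ piece and of the $\e\,O(A^2)$, $\e\,O(A^4)$ remainders is essentially fine, though for the latter the paper simply uses the crude pointwise bound $\|A(t)\|_{L^\infty_\e}\lesssim r\e^{\eta}$ from Remark~\ref{rem:eA_small}, which immediately gives $\e\,O(A^k)=O(\e^{1+k\eta}r^k)$ in $L^\infty$ with no need to expand into monomials.

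The real gap is in the cubic $E^{(2)}_\e$ term. You substitute $\Sigma^p=\e^2(\partial_i^+A_j-\partial_j^+A_i)$ into \emph{all three} tensor factors and are then forced to control $\e^3(\partial\Psi)^{\otimes 3}$, which Assumption~\ref{as:models_abstract} does not cover; your proposed rescue via ``antisymmetric curl structure'' and special identities for $E^{(2)}_\e$ is vague and cannot work in general, since $E^{(2)}_\e$ is an arbitrary bounded linear map under Assumption~\ref{assump:R}. The paper's trick is purely algebraic and much simpler: write
\[
(\Sigma^p)^{\otimes 3}=\sum_{m,n=1}^4 \Sigma^p\otimes \A_m\otimes \A_n\,,
\]
i.e.\ expand only the last two factors back into individual edge variables $\A_m=\e A_m$. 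Then each summand is $\e^{-3}\cdot\e^2\cdot\e\cdot\e\,E^{(2)}_\e(\partial A\otimes A_m\otimes A_n)=\e\,E^{(2)}_\e(\partial A\otimes A_m\otimes A_n)$, which is exactly of the form $\e\,A^2\partial A$ that Lemma~\ref{lem:eps-A2-dA} was built to handle. This is where the contribution to $c^\e$ comes from, and~\eqref{eq:edA_to_zero} then reduces the evaluation points to $e$ and $e^{(a)}$. No Wick decomposition or structural hypothesis on $E^{(2)}_\e$ is needed.
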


\begin{remark}\label{rem:eA_small}
The final condition $\e A(t) \in V$ is necessary to make $\nabla \hat R^p_\e(\e A)$ that appears in \eqref{e:from-pp-to-e} well-defined (recall $\hat R_\e$ is assumed differentiable only in $\mathring V$).
This condition is not so stringent:
Assumptions~\eqref{eq:v_bound} and~\eqref{e:assump-Psi-1} and Lemma~\ref{lem:eps-improve-reg}\ref{pt:eps_f_vanish}
imply $\|A(t)\|_{L^\infty_\e} \lesssim \e^\eta \|A(t)\|_{\CC^\eta_\e} \leq 2r\e^\eta$
for all $t\in[0,T]$.
In particular, since $\eta>-1$,
for $\e$ sufficiently small, $\e A$ takes values to $V\subset\mfq$.
\end{remark}

%\begin{remark}\label{rem:sanity}
%For the proof of Theorem~\ref{thm:discrete_dynamics},
%it would suffice in the final statement of Lemma~\ref{lem:grad-R-hat} that $c^{(j)}_i=0$ only for $i\neq j$,
%just like in Lemma~\ref{lem:Rp3},
%while having $c_i^{(i)}$ remain a general element of $L(\mfg,\mfg)$ that can in principle depend on the choice of remainder $R_\e$ in~\eqref{e:S=xR} that appears in the 
%action $S_N$.
%However, $\hat R^{\nabla}_{\eps}$ is the only term in the discrete equation~\eqref{e:Aeps} that
%depends on $R_\e$, and when we ultimately take $A$ as the solution to~\eqref{e:Aeps},
%having a renormalisation $-c_i^{(i)}A_i$ that is dependent on $R_\e$
%would give rise to limiting SPDEs with mass terms that depend on the particular choice of lattice gauge theory,
%which we prove is not possible in Section~\ref{sec:gauge-covar}.
%The current statement of Lemma~\ref{lem:grad-R-hat} (with $c_i^{(i)}=0$)
%verifies directly that the choice of $R_\eps$ does not impact the limiting dynamic, thus serving as a sanity check.
%\end{remark}

\begin{proof}
Consider $\A=\e A$ and $\e^{-1} \nabla \hat R^p_{\eps}(\A)$ where $\nabla \hat R^p_\e(\A)$ admits the expansion \eqref{eq:R_grad_expansion}.
Recall
that $E_{\eps}^{(1)}$ and $E_{\eps}^{(2)}$ there are linear.
As discussed in Remark~\ref{rem:power_of_eps},
the terms  $O(\A^2)$ and $\e^{-2}O(\A^4)$ in \eqref{eq:R_grad_expansion} 
contribute  $\eps O(A^2)$ and $\eps O(A^4)$  respectively to $\e^{-1} \nabla \hat R^p_{\eps}(\e A)$ (recall our notation in Remark~\ref{rem:big-O}).
Since $\|A(t)\|_{L_\e^\infty} \lesssim r\e^{\eta}$ by Remark~\ref{rem:eA_small}, these terms are of order $\e^{2-2\kappa}r^4$ in $\CC^{0,T}_{0,\e} \hookrightarrow \CC^{-2\kappa,T}_{-1,\e}$.
Also upon identifying $\mfg^*$ with $\mfg$,
the term $E^{(1)}_{\eps}(\sum_{a=1}^4 \A_a)$ contributes  to $\e^{-1} \nabla \hat R^p_{\eps}(\e A)$ a term
$
\e E^{(1)}_{\eps}(\d_i A_j - \d_j A_i)
$,
which is of order $r\e^\kappa$ in $\CC^{-2\kappa,T}_{2\eta,\e}$  by~\eqref{eq:edA_to_zero}.

The only term in~\eqref{eq:R_grad_expansion} that remains to be considered is
$
 \e^{-3} E^{(2)}_{\eps}( \A_1+\A_2+\A_3+\A_4)^{\otimes 3}
% +
%  \e^{-3} E^{(2)}_{\eps}\Big( \A_1+\A_5+\A_6+\A_7\Big)^{\otimes 3}
$
(we have $\e^{-3}$ instead of $\e^{-2}$ as we are considering $\e^{-1} \nabla \hat R^p_{\eps}(\e A)$ instead of $\nabla \hat R^p_{\eps}(\e A)$).
%which will yield a mass renormalisation. 
%We rewrite them in the form of \eqref{e:epsE2}.
%
Denoting $F (p)= \A_1+\A_2+\A_3+\A_4= \e^2 (\partial_i^+ A_j (e^\northwest) - \partial_j^+ A_i (e))$,
%and $F (\bar p)= \A_1+\A_5+\A_6+\A_7$.
%Also write $(\bar 1,\bar 2,\bar 3, \bar 4)= (1,5,6,7)$.
we rewrite this term as
$
\sum_{m,n=1}^4 \e^{-3} E^{(2)}_{\eps} ( F(p) \otimes \A_m \otimes \A_n)
%+
% \e^{-3} E^{(2)}_{\eps} (F(\bar p) \otimes \A_{\bar m} \otimes \A_{\bar n}) \;.
$.
%Upon rescaling $\A=\e A$, we have
%$
%F(p) = \e^2 (\partial_i^+ A_j (e^\northwest) - \partial_j^+ A_i (e))
%$.
The conclusion follows from Lemma~\ref{lem:eps-A2-dA}
together with~\eqref{eq:edA_to_zero} which implies that only evaluations at $e$ and $e^{(a)}$ need to be considered.
\end{proof}

\begin{remark}\label{rem:sanity_check}
With a more careful calculation and natural symmetry assumptions on the coefficients $c_{23},c_{13}$ (e.g. invariance under flipping coordinates and reflections) it is possible to show that, in Lemma \ref{lem:I3},
$c^\e$ takes the block-diagonal form $c^\e = (c_1,c_2)$ with $c^{(1)}_1=c^{(2)}_2$ and $c^{(j)}_i=0$ for $i\neq j$,
and that in Lemma~\ref{lem:grad-R-hat},
$c^\e=0$.
Since
$\hat R^{\nabla}_{\eps}$ is the only term in the discrete equation~\eqref{e:Aeps} that
depends on $R_\e$,
this remark about $c^\e$ in Lemma~\ref{lem:grad-R-hat}
implies that
the choice of $R_\eps$ (i.e. the choice of lattice gauge theory) does not impact the limiting dynamic when we 
ultimately take $A$ as the solution to~\eqref{e:Aeps}.
We do not use these remarks going forward, however, as we give an alternative, more general argument in Section \ref{sec:gauge-covar}.
\end{remark}

\subsection{Renormalisation constants}
\label{sec:renorm_constants}

We now specify the values of the constants in~\eqref{eq:all_C}. % that we use for the rest of the section.
%We first specify $\bar{C}^\e, \hat{C}^{\e}_{1,\be},\cdots, \hat{C}^{\e}_5\in\mfg^{\otimes 2}$.
Write $\Psi_i=K^{i;\e} *\xi_i^\e$. For $e\in \obonds_i$,  $j\neq i$, $\be,\be'\in \CE_\times$, $\bw\in\{\Northwest,\Southwest\}$,
define (with $\otimes$ implicit)\footnote{These are 
the discrete analogues of $\bar C^{\eps},
\hat{C}^{\eps}$ in
\cite[Eq.~(6.12)]{CCHS_2D}.}
\begin{equs}
\hat{C}^{\e}_{1,\be} &\eqdef \E [ (K^{j;\e} * \bar\partial_j \Psi_i) (e+\be) \partial_j\Psi_i(e)]\;,
\quad
\hat{C}^{\e}_4 \eqdef \E [ \partial_i\Psi_i(e) (K^{i;\e} * \partial_i \Psi_i) (e) ]\;,
\\
\hat{C}^{\e}_{2,\be} &\eqdef \E [ \bar\partial_i\Psi_j(e) (K^{j;\e} * \partial_i \Psi_j) (e+\be) ]\;,
\quad
\hat{C}^{\e}_5 \eqdef \E [\Psi_i(e) ( \partial_i K^{i;\e} * \partial_i \Psi_i) (e) ]\;,
\\
%\hat{C}^{\e}_{3,\be} &\eqdef \E [ \Psi_j(e+\be) (\bar\partial_i K^{j;\e} * \partial_i \Psi_j) (e) ]\;,
\hat{C}^{\e}_{3,\be,\bw} &\eqdef \E [ \Psi_j(e+\be) (\partial_i^+ K^{j;\e} * \partial_i \Psi_j) (e^\bw) ]\;,
\quad
\bar{C}^\e_{\be\be'}  \eqdef % \frac{1}{16} \sum_{\be,\be'\in \CE_\times}
 \E [ \Psi_j (e+\be) \Psi_j (e+\be') ]\;.
 \end{equs}
Their values do not depend on the choice of $e\in \obonds_i$ or $i\in \{1,2\}$ or time variable.

\begin{remark}\label{rem:graphs-constants}
For a reader familiar with graphic notation, it is helpful to represent
$ \hat{C}^{\e}_{1,\be}$,
$\hat{C}^{\e}_{2,\be}$,
$\frac12 \sum_{\bw\in \{\southwest,\northwest\}}  \hat{C}^{\e}_{3,\be,\bw}$,
$\hat{C}^{\e}_4$,
 $\hat{C}^{\e}_5$,
$\bar{C}^\e_{\be\be'}$
as  
%\[
%\begin{tikzpicture}  [baseline=10,scale=0.7]		
%\node[notgreen] at (0,0) {};
%\node[notorange] at (-1,1) {};
%\draw[very thick] (0,0) -- (1,1);
%%\draw (-.2,.2) -- (-1,1);
%%\draw[thick] (-1+.2,1.2) -- (0,2);
%%\node[dot] at (1,1) {};
%\node[notgreen] at (0,2) {};
%%\draw[bend right =20, dotted]  (1,1) to (0,2) ;
%\end{tikzpicture}
%\]
\[
\begin{tikzpicture}  [baseline=10,scale=0.6]		
\draw[thick]   (-.2, 0) -- (.2,0) ;
\draw[thick]   (-1,  1-.2) -- (-1,  1+.2) ;
\node at (1,1)  [var] (a)  {};  \draw[thick]   (1-.2, 1) -- (1.2,1) ;
\node at (0,2) [var] (b) {}; \draw[thick]   (-.2, 2) -- (.2,2) ;

\draw[very thick] (0,0) -- (a);
\draw (0,0) -- (-1,1);
\draw[very thick] (-1,1) -- (b);

\draw[bend right =20, dotted]  (a) to (b) ;
\node at (-0.8,1.7) {\scriptsize $\bar\partial_j$};
\node at (0.8,0.4) {\scriptsize $\partial_j$};
\end{tikzpicture}
\qquad
\begin{tikzpicture}  [baseline=10,scale=0.6]		
\draw[thick]   (-.2, 0) -- (.2,0) ;
\draw[thick]   (-1,  1-.2) -- (-1,  1+.2) ;
\node at (1,1)  [var] (a)  {};  \draw[thick]   (1, 1-.2) -- (1,1.2) ;
\node at (0,2) [var] (b) {}; \draw[thick]   (0, 2-.2) -- (0,2.2) ;

\draw[very thick] (0,0) -- (a);
\draw (0,0) -- (-1,1);
\draw[very thick] (-1,1) -- (b);

\draw[bend right =20, dotted]  (a) to (b) ;
\node at (-0.8,1.7) {\scriptsize $\partial_i$};
\node at (0.8,0.4) {\scriptsize $\bar\partial_i$};
\end{tikzpicture}
\qquad
\begin{tikzpicture}  [baseline=10,scale=0.6]		
\draw[thick]   (-.2, 0) -- (.2,0) ;
\draw[thick]   (-1,  1-.2) -- (-1,  1+.2) ;
\node at (1,1)  [var] (a)  {};  \draw[thick]   (1, 1-.2) -- (1,1.2) ;
\node at (0,2) [var] (b) {};   \draw[thick]   (0, 2-.2) -- (0,2.2) ;

\draw (0,0) -- (a);
\draw[very thick] (0,0) -- (-1,1);
\draw[very thick] (-1,1) -- (b);

\draw[bend right =20, dotted]  (a) to (b) ;
\node at (-0.8,1.7) {\scriptsize $\partial_i$};
\node at (-0.8,0.4) {\scriptsize $\bar\partial_i$};
\end{tikzpicture}
\qquad
\begin{tikzpicture}  [baseline=10,scale=0.6]		
\draw[thick]   (-.2, 0) -- (.2,0) ;
\draw[thick]   (-1-.2,  1) -- (-1+.2,  1) ;
\node at (1,1)  [var] (a)  {};  \draw[thick]   (1-.2, 1) -- (1.2,1) ;
\node at (0,2) [var] (b) {}; \draw[thick]   (-.2, 2) -- (.2,2) ;

\draw[very thick] (0,0) -- (a);
\draw (0,0) -- (-1,1);
\draw[very thick] (-1,1) -- (b);

\draw[bend right =20, dotted]  (a) to (b) ;
\node at (-0.8,1.7) {\scriptsize $\partial_i$};
\node at (0.8,0.4) {\scriptsize $\partial_i$};
\end{tikzpicture}
\qquad
\begin{tikzpicture}  [baseline=10,scale=0.6]		
\draw[thick]   (-.2, 0) -- (.2,0) ;
\draw[thick]   (-1-.2,  1) -- (-1+.2,  1) ;
\node at (1,1)  [var] (a)  {};  \draw[thick]   (1-.2, 1) -- (1.2,1) ;
\node at (0,2) [var] (b) {};   \draw[thick]   (-.2, 2) -- (.2,2) ;

\draw (0,0) -- (a);
\draw[very thick] (0,0) -- (-1,1);
\draw[very thick] (-1,1) -- (b);

\draw[bend right =20, dotted]  (a) to (b) ;
\node at (-0.8,1.7) {\scriptsize $\partial_i$};
\node at (-0.8,0.4) {\scriptsize $\partial_i$};
\end{tikzpicture}
\qquad
\begin{tikzpicture}  [baseline=10,scale=0.7]		
\draw[thick]   (-.2, 0) -- (.2,0) ;
%\draw[thick]   (-1-.2,  1) -- (-1+.2,  1) ;
\node at (.7,1)  [var] (a)  {};  \draw[thick]   (.7,1-.2) -- (.7,1.2) ;
\node at (-.7,1) [var] (b) {};   \draw[thick]   (-.7,1-.2) -- (-.7,1.2) ;

\draw (0,0) -- (a);
\draw (0,0) -- (b);

\draw[bend right =30, dotted]  (a) to (b) ;
\end{tikzpicture}
\]
Here we specified the ``orientation'' of each bond ($i$:horizontal; $j$:vertical).
Shifting by some $\be\in \CE_\times$ is implicitly applied when a line connects two bonds of different orientations
(unless the line carries a $\bar\partial$).
We do not precisely explain this graphic notation here
but we will explain in Section~\ref{subsec:moments_discrete_models}
where graphic notation is systematically used (without drawing orientations of bonds there).
\end{remark}
We further define, for $k\in\{1,2\}$,
\begin{equ}[e:def-C16]
\bar C^\e \eqdef \frac{1}{16}\sum_{\be,\be'\in \CE_\times}\bar C^\e_{\be\be'} \;,
\quad
\hat C^\e_{k} \eqdef \frac{1}{4}\sum_{\be\in \CE_\times}\hat C^\e_{k,\be} \;,
\quad
\hat C^\e_{3} \eqdef \frac{1}{8}\sum_{\be\in \CE_\times}\sum_{\bw\in\{\Northwest,\Southwest\}}\hat C^\e_{3,\be,\bw} \;.
\end{equ}
These are defined in such a way that,
 for the expressions in \eqref{e:M-on-YM} with $\CS$,
if we replace $\CS_{\be},\CS_{\be'},\CS_\bw$
 by the ``averages'' $\frac14\sum_{\be\in\CE_\times}\CS_{\be}$ and $\frac12\sum_{\bw\in\{\Northwest,\Southwest\}}\CS_{\bw}$,
then
 the map $M^\e$ contracts them into $\bar C^\e \bone^{(i)}$, $\hat C^\e_{k} \bone^{(i)}$ $(k=1,2,3)$.
We then define
\begin{equ}[e:def-CSYM]
C_{\sym}^{\eps} 
\eqdef 
%  4\hat{C}^{\bar\eps} 
2 \hat{C}^{\e}_1
+2 \hat{C}^{\e}_2
-2 \hat{C}^{\e}_3
- \hat{C}^{\e}_4
+\hat{C}^{\e}_5
 - \bar C^{\eps} \;.
\end{equ}
%
%For the `superficial' renormalisation, we define,
For $e\in \obonds_i$,  $j\neq i$, $\be\in \CE_\times$, we set 
%\begin{equ}[e:def-C67etc]
$ \tilde C_{\be} 
\eqdef
\E[ \Psi_j (e+\be) \bar\partial_i \Psi_j(e)]$.
%\end{equ}
 %
Moreover, recalling $K^\e (0)=P^\e(0) =\e^{-2}$ (since the heat kernel on $\Z^2$ equals $1_{x=0}$ at $t=0$), we define
\begin{equs}[e:C-XiIXi]
C^\e [ \,\<PsiXi1>\, ] 
&\eqdef 
\e^{1-\kappa} \E [\xi^\e_i (e)\Psi_i (e)]
= \e^{1-\kappa} K^\e (0)\Cas = \e^{-1-\kappa}\Cas\;,
\\
C^\e [ \,\<I(PsiXi1)Xi1less>\, ] 
&\eqdef 
\e^{2-2\kappa} \E [\xi^\e_i (e)\Psi_i (e)]
= \e^{2-2\kappa} K^\e (0)\Cas = \e^{-2\kappa}\Cas\;,
\end{equs}
where $\Cas\in \mfg\otimes\mfg$ is the quadratic Casimir, i.e. $\Cas=\sum_l e_l \otimes e_l$ for an orthonormal basis $(e_l)_l$ of $\mfg$.
For trees of the form
$\<I'XiI'[IXiI'Xi]less>$,
we also define, for $e\in \obonds_i$, $j\neq i$, and $\bs\in\{\Southeast,\Southwest\}$, the following $6$ renormalisation constants 
(recall that $\CD_j^{\pm}$ includes two cases $\CD_j^{+},\CD_j^{-}$) as appeared in \eqref{e:div-R2}:
\minilab{e:def-C-triangle}
\begin{equs}
{} &  
 C_{\bs,\pm}^\e[ \, \<I'XiI'[IXiI'Xi]less>\,]  
 \eqdef
  \e^{1-\kappa}
\E [\partial_j^{\pm} \Psi_i (e) (\partial_j^+ K^{j;\e} * \bar\partial_j \Psi_i )(e^{\bs}) ] \;,		\label{e:def-C-triangle1}
\\
& 
 C_{\southwest}^\e[ \, \<I'XiI'[IXiI'Xi]less>\,]  
\eqdef
  \e^{1-\kappa}
\E[\partial_j^+ \Psi_j (e^{\southeast})  (\partial_j^+ K^{j;\e} * \partial_i \Psi_j) (e^{\southwest})  ]\;,	\label{e:def-C-triangle2}
\\
 & 
 C_{\southeast}^\e[ \, \<I'XiI'[IXiI'Xi]less>\,]  
 \eqdef
  \e^{1-\kappa}
 \E [\partial_j^+ \Psi_j (e^{\southwest}) ( \partial_j^+ K^{j;\e}* \partial_i \Psi_j )(e^{\southeast}) ]\;.	  \label{e:def-C-triangle3}
\end{equs}
We will write
\begin{equs}[e:def-CREM]
C_{\rem}^\e \eqdef 
-\frac{\e^{2\kappa}}{6}  C^\e [ \,\<I(PsiXi1)Xi1less>\, ] 
- \frac{\e^\kappa}{4} 
(
&C_{\southeast,+}^\e [\, \<I'XiI'[IXiI'Xi]less>\,]
+
C_{\southeast,-}^\e [\, \<I'XiI'[IXiI'Xi]less>\,] 
+
C_{\southwest,+}^\e [\, \<I'XiI'[IXiI'Xi]less>\,]
\\
&+
C_{\southwest,-}^\e [\, \<I'XiI'[IXiI'Xi]less>\,]
% +C_2^\e [\, \<I'XiI'[IXiI'Xi]less>\,]
-2C_\southwest^\e [\, \<I'XiI'[IXiI'Xi]less>\,]+2C_\southeast^\e [\, \<I'XiI'[IXiI'Xi]less>\,] )\;.
\end{equs}
Here, the first term is equal to $-\frac16 \Cas$ by \eqref{e:C-XiIXi}.

\begin{remark}\label{rem:C_non_vanish}
Expressions in \eqref{e:def-C-triangle}  only have ``approximate'' parity property.
For instance,   \eqref{e:def-C-triangle2} is equal to $\Cas$ times 
\begin{equ}[e:def-C-triang]
%C^\e [\, \<I'XiI'[IXiI'Xi]less>\,]
%=
\e^{1-\kappa} 
\int_{(\R\times \obonds_j)^2}
\partial_j^+ K^\e(z^\southeast -y) 
\partial_j^+ K^\e(z^\southwest -w)
\partial_i K^\e(w-y)
\mrd w \mrd y
\end{equ}
where $z\in \R\times \obonds_i$, which does {\it not} vanish.
(It would vanish if $z^\southeast,z^\southwest$ were replaced by $z\in \obonds_j$ as can be seen by flipping the sign of the $i$th coordinates).
\end{remark}
Furthermore, writing $(e_\ell)_{\ell}$ for an orthonormal basis of $\mfg$, 
we define 
\begin{equs}[e:def-C-four]{}
&c^\e_1 [\<r_z_large>] \eqdef \e^{-2\kappa} \int_{\R\times (\e \Z)^2} K^\e(w)^2\mrd w  \;,
\quad
c^\e_2 [\<r_z_large>]  \eqdef \e^{2-2\kappa} \int_{\R\times (\e \Z)^2} K^\e(w)^3\mrd w \;,
\\
&C^\e [\<r_z_large>]
\eqdef  \sum_{j,k} \Big\{c^\e_1 [\<r_z_large>]\,e_j\otimes e_k\otimes e_j\otimes e_k
+
c^\e_2 [\<r_z_large>] \, e_j\otimes e_k\otimes e_k\otimes e_j \Big\}\;,
\end{equs}
where $C^\e [\<r_z_large>]\in \mfg^{\otimes 4}$.
Finally, for $\be\in\CE_\times$ and $\bw\in\{\Northwest,\Southwest\}$, we define
\begin{equs}[eq:C_square_cherries]
C^\e_\be [\<I'XiIXibar>] &\eqdef \e^{1-\kappa} \E [(\bar\d_i \Psi_j)(e) \Psi_j(e+\be)] \;,
\\
C^\e_{\be,\bw}[\<IXiI'Xibar>] &\eqdef 
\e^{1-\kappa} \E [\Psi_j(e+\be) \d_i^+\Psi_j(e^\bw) ]\;,
\\
C^\e [\<R2-1new>] &\eqdef 
\e^{1-2\kappa} \E [\d_j^+ \Psi_j(e^\southwest) \d_j^+\Psi_j(e^\southeast) ]\;,
\\
C^\e [\<cherry232>]  &\eqdef 
\e^{2-2\kappa} \E [\d_j^+ \Psi_j(e^\southwest) \d_j^+\Psi_j(e^\southeast) ]\;.
\end{equs} 
Some of these definitions of renormalisation constants may seem somewhat arbitrary at first, but their purposes will become clear in Section~\ref{subsec:moments_discrete_models}; for instance, $c^\e_1 [\<r_z_large>] $, $c^\e_2 [\<r_z_large>] $ are introduced to cancel the divergent terms in \eqref{e:c1c2-cancel}.

\subsubsection{Behaviour of renormalisation constants}
\label{subsubsec:behaviour}

For $k=(k_0,k_1,k_2)\in\N^3$, 
 we write  $D^k_\eps\eqdef \partial_t^{k_0} (\partial_1^+)^{k_1} (\partial_2^+)^{k_2}$ where $\partial_i^+$
 are the discrete derivatives as in Section~\ref{sec:Notation}. 
For a family of functions $F^\e$ on $\R \times \T_\e^2$ supported in a ball around the origin, we say that it is of order $\zeta\in\R$ if for some $m\in\N$ the quantity (\cite[(5.16)]{Shen18}, \cite[Def.~5.7]{CM18})
\begin{equ}[e:DSingKer]
\VERT F^\e \VERT_{\zeta;m}^{(\eps)}
\eqdef \max_{|k|_\s\leq m}\sup_{z\in \R \times \T_\e^2} \frac{\left| D^k_\e F^\e(z)\right|}{\|z\|_\e^{\zeta-|k|_\s}}
\qquad
(\mbox{where } \|z\|_\e \eqdef \|z\|_\s \vee \e)
\end{equ}
is bounded uniformly in $\eps$. 
In our case, the truncated heat kernel $K^\e$  is of order $-2$.

Let $\Psi_i=K^{i;\e} *_{(i)}\xi_i^\e$ and recall that $\Psi_1,\Psi_2$ are independent. We will often encounter constants of the  form 
``$\e \E [\Psi\partial\Psi]$''
as discussed in the next lemma, where $O(1)$ is a bounded element of $\mfg^{\otimes 2}$ uniformly in $\e$.

\begin{lemma}\label{lem:ePsiDPsi}
Let $j,i\in \{1,2\}$, and $a>1$.
Then, 
\begin{equs}[eq:PsiDPsi]
\e \, \E [\Psi_{i}(e_1)\partial^\pm_j \Psi_{i}(e_2)] =  O(1)\;,
\qquad
e_1, e_2\in \obonds_{i} \;,
\\
\e \, \E [\Psi_{i}(e_1)\partial_j \Psi_{i}(e_2)] = 1_{e_1\neq e_2} O(1) \;,
\qquad
e_1, e_2 \in \obonds_{i}   \;,
\\
\e \, \E [\Psi_{i}(e_1)\bar\partial_j \Psi_{i}(e_2)] = O(1)\;,
\qquad
e_1\in \obonds_{i},e_2\in \obonds_{3-i}  \;,
\end{equs}
uniformly in $\eps\in (0,1)$ and $|e_1 - e_2| \le  a \e$ (where we identify the midpoint of $e_i $ as a point in  $\T^2$).
%Moreover, the second quantity is $0$ if $e_1=e_2$.
%Also, if one of the $\Psi_i$ is replaced by $\Psi_{i'}$ for $i'\neq i$ then the expectation is zero.
Also, the last expectation vanishes if $\Psi_i(e_1)$ is replaced by $\sum_{\be \in \CE_\times} \Psi_i(e_2+\be)$.
Here all the functions are evaluated at the same time variable which is omitted.
\end{lemma}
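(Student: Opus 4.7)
The starting point is to identify the (equal-time, translation-invariant) covariance
\[
\mathcal{K}^\e(z) \eqdef \E\bigl[\Psi_i(t,x)\Psi_i(t,x+z)\bigr] \;\in\; \mfg\otimes\mfg\;,
\]
which by stationarity of $\xi_i^\e$ is independent of $t$ and $x$ and is given by $\bigl(\int_\R K^{i;\e}(s,\cdot)\ast_{(i)} K^{i;\e}(s,\cdot)\,\mrd s\bigr)(z)\,\Cas$. Since $K^{i;\e}$ is of order $-2$ in the sense of \eqref{e:DSingKer}, the standard discrete convolution estimate for singular kernels (cf.\ \cite[Lem.~6.3]{Shen18}, \cite[Sec.~5]{CM18}) yields that $\mathcal{K}^\e$ is of order $0$ (logarithmically singular) with the crucial derivative bound
\[
|D^k_\e\mathcal{K}^\e(z)| \;\lesssim\; \|z\|_\e^{-|k|_\s}\qquad (|k|_\s\geq 1),
\]
uniformly in $\e\in(0,1]$. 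Since $\|z\|_\e\geq\e$, each first-order discrete derivative of $\mathcal{K}^\e$ is bounded by $C\e^{-1}$.

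The three bounds in \eqref{eq:PsiDPsi} then follow by writing each quantity as a discrete difference of $\mathcal{K}^\e$. For $\partial_j^\pm$, setting $z=e_2-e_1$ one has
\[
\e\,\E[\Psi_i(e_1)\partial_j^+\Psi_i(e_2)] \;=\; \mathcal{K}^\e(z+\e_j)-\mathcal{K}^\e(z),
\]
and a discrete fundamental theorem of calculus plus the derivative bound gives $\lesssim\e\cdot\e^{-1}=O(1)$ uniformly for $|z|\leq a\e$; the case of $\partial_j^-$ is identical. For the symmetric derivative $\partial_j$,
\[
\e\,\E[\Psi_i(e_1)\partial_j\Psi_i(e_2)] \;=\; \tfrac12\bigl[\mathcal{K}^\e(z+\e_j)-\mathcal{K}^\e(z-\e_j)\bigr],
\]
which, when $z=0$ (i.e.\ $e_1=e_2$), vanishes identically by the spatial reflection symmetry $\mathcal{K}^\e(z_1,z_2)=\mathcal{K}^\e(-z_1,z_2)=\mathcal{K}^\e(z_1,-z_2)$ inherited from the radial spatial symmetry of $K^{i;\e}$; for $z\ne 0$ the same $O(1)$ bound as before applies. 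For $\bar\partial_j$, we expand
\[
\e\,\E[\Psi_i(e_1)\bar\partial_j\Psi_i(e_2)] \;=\; \tfrac12\sum_{\be\in\CE_\times}\epsilon_\be\,\mathcal{K}^\e(z+\be),
\]
where $\epsilon_\be\in\{\pm 1\}$ are the signs in the definition of $\bar\partial_j$. Since $\sum_\be\epsilon_\be=0$, the right-hand side is itself a discrete first-order difference of $\mathcal{K}^\e$ on a scale $\e$, hence again $O(1)$.

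For the final claim, substituting $\Psi_i(e_1)\mapsto\sum_{\be\in\CE_\times}\Psi_i(e_2+\be)$ yields
\[
\frac{1}{2}\sum_{\be,\be'\in\CE_\times}\epsilon_{\be'}\,\mathcal{K}^\e(\be-\be').
\]
Parametrising $\be=\tfrac12(t_1\e_i+t_2\e_j)$ and $\be'=\tfrac12(s_1\e_i+s_2\e_j)$ with $t_k,s_k\in\{\pm 1\}$, the two reflection symmetries of $\mathcal{K}^\e$ imply that $\mathcal{K}^\e(\be-\be')$ depends only on $(|t_1-s_1|,|t_2-s_2|)$. Since $\{|{\pm 1}-s_k|:t_k\in\{\pm 1\}\}=\{0,2\}$ regardless of $s_k$, the inner sum $f(\be')\eqdef\sum_\be\mathcal{K}^\e(\be-\be')$ is a constant independent of $\be'$, and pairing a constant with the zero-sum signs $\epsilon_{\be'}$ gives $0$.

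The only non-elementary input is the derivative bound on $\mathcal{K}^\e$, but this is by now a standard output of the discrete singular-kernel machinery already invoked elsewhere in the paper; everything else reduces to discrete differencing and a short parity argument, so no serious obstacle is expected.
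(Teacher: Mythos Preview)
Your proof is correct and takes a slightly different route from the paper. The paper writes each expectation directly as a space-time integral of a product of (differentiated) kernels, bounds the integrand pointwise by $\e\|z\|_\e^{-5}$, and integrates; for the vanishing claims it checks that the space-time integrand is odd in one coordinate. You instead package the time integration into the equal-time covariance $\mathcal{K}^\e$, reduce each expectation to a discrete difference of $\mathcal{K}^\e$, and invoke the first-derivative bound $|\partial\mathcal{K}^\e(z)|\lesssim(|z|\vee\e)^{-1}$; your parity arguments then sit at the level of the even function $\mathcal{K}^\e$. The underlying analysis is the same --- your derivative bound on $\mathcal{K}^\e$ is exactly what Lemma~\ref{lem:OpDSingKer}\eqref{item:convolution} gives for $\hat K^\e*\partial_j^+K^\e$ (order $-2+(-3)+4=-1$) evaluated on the $t=0$ slice --- so the two proofs differ mainly in organisation. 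Your combinatorial argument for the final vanishing (showing $\sum_\be\mathcal{K}^\e(\be-\be')$ is independent of $\be'$ by coordinate-wise reflection symmetry, then pairing with the zero-sum signs $\epsilon_{\be'}$) is a clean alternative to the paper's parity check on the integrand. One minor remark: the convolution bound in Lemma~\ref{lem:OpDSingKer} requires $\bar\zeta<0$, so it does not literally yield that $\mathcal{K}^\e$ itself is of order $0$; but you only ever use the bound on its derivatives, where $\bar\zeta=-1$, so this is harmless.
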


\begin{proof}
%Writing $\Psi=\Psi_i$ and $\int=\int_{\R\times\T^2_\e}$ for simplicity,
For $e_1, e_2\in \obonds_{i}$, $\e \E [\Psi_i (e_1) \partial^+_j \Psi_i (e_2)] $ is equal to an integral over
$\R\times (\e \Z)^2$ where the integrand is bounded by  $\e  (\|z\|_\s \vee \e)^{-5}$  for $\|z\|_\s\lesssim 1$
and vanishes for $\|z\|_\s \gg 1$.
%\[
%\e \E [\Psi_i (e_1) \partial^+_j \Psi_i (e_2)] 
%= \Cas\int_{\R\times (\e \Z)^2}  \e K^\e ( (0,e_1-e_2)+z) \, \partial_j^+ K^\e (z)  \mrd z   \;.
%\]
%Since $K$ is of order $-2$ in the sense of \eqref{e:DSingKer}       
%it is easy to see (for instance by Lemma~\ref{lem:shift-K} below) that 
%the integrand is bounded by  $\e  (\|z\|_\s \vee \e)^{-5}$ for $\|z\|_\s\lesssim 1$, so the claim follows.
The same bound holds for  $\partial_j^-, \partial_j, \bar\partial_j$.

For the symmetrised derivative $\partial_j$ and $e\in \obonds_i$,
$ \E [\Psi_i (e) \partial_j \Psi_i(e)]$ is equal to
$ \Cas\int_{\R\times (\e \Z)^2} K^\e(z)  \partial_j K^\e(z) \mrd z   =0$
  since the integrand is odd by  Remark~\ref{rem:loss-odd}.
%summing by parts (or Remark~\ref{rem:loss-odd})
%\[
%\e \E [\Psi \partial_j \Psi] 
%= \e \int K(z)  \partial_j K(z) \mrd z
%= - \e\int \partial_j K(z)  K(z) \mrd z=0 \;.
%\]

For the claim about the term with $\bar\partial_j$ being $0$,
for $e\in \obonds_{3-i} $,
\begin{equs}
\sum_{\be\in\CE_\times} \E [\Psi_{i}(e+\be)\bar\partial_j \Psi_{i}(e)] 
&=
\Cas\int_{\R\times \obonds_i} \sum_{\be\in\CE_\times}  K^{i;\e} (e+\be-\bar e) 
\bar\partial_j  K^{i;\e}(e-\bar e) \mrd \bar e 
\end{equs}
which vanishes
since the integrand is odd in the $j$th coordinate of $e-\bar e$.
\end{proof}

\begin{remark}
\label{rem:trans_invar}
By translation-invariance, the terms on the left-hand side of~\eqref{eq:PsiDPsi} depend only on $e_1-e_2$.
If $K^\e$ is replaced by the full heat kernel $P^\e$, one may find their exact values,
for instance by elementary properties of $P^\e$, one has $\e \E [\Psi(e) \partial^+_j \Psi(e)] = -\frac18\Cas$. We will not need these values though.
\end{remark}

%\begin{remark}
%If $K^\e$ is replaced by the full heat kernel $P^\e$, and $e_1=e_2$, the value of $\e \, \E [\Psi_{i}(e_1)\partial^+_j \Psi_{i}(e_2)]$
%can be calculated explicitly (and is non-zero). Indeed summation by parts yields $ \int P^\e(z)  \partial_j^+ P^\e(z) \mrd z
%=\int \partial_j^- P^\e(z)  P^\e(z) \mrd z$.
%Therefore $\e \E [\Psi \partial^+_j \Psi]$ is $\Cas$ times
%\begin{equs}{}
%&\frac12 \int P^\e(z)  \Big(P(z+\e_j) +P^\e(z-\e_j) - 2 P^\e(z)\Big) \mrd z
%%=\frac{\eps^2}{2d} \int P^\e(t,x) 
%%\Delta P^\e(t,x) \mrd t\mrd x
%\\
%&=
%\frac{\eps^2}{4d} \int \partial_t(P^\e(t,x)^2) \mrd t \mrd x
%=
%-\frac{\eps^2}{4d} \int  P^\e(0,x)^2 \mrd x
%= 
%-\frac{1}{4d\e^{d-2}}
%=
%-\frac{1}{8}\;.
%\end{equs}
%We will not use this explicit value, but 
%we only point out that the parity symmetry may be lost on lattice if the derivative has a ``biased'' direction.
%\end{remark}
%
As a consequence of Lemma~\ref{lem:ePsiDPsi}, 
for the constants in \eqref{eq:C_square_cherries} one has
\begin{equ}[eq:C_be_vanish]
\sum_{\be\in \CE_\times} C^\e_\be [\<I'XiIXibar>] = 0\;.
\end{equ}
Furthermore, we have $C^\e_{\be,\northwest}[\<IXiI'Xibar>]+C^\e_{\be,\southwest}[\<IXiI'Xibar>] = 2\e^{1-\kappa} \E [\Psi_j(e+\be) \bar\d_i\Psi_j(e) ]$,
and thus Lemma~\ref{lem:ePsiDPsi} again implies
\begin{equ}[eq:C_bs_be_vanish]
\sum_{\be\in \CE_\times}\sum_{\bw\in\{\Northwest,\Southwest\}} C^\e_{\be,\bw}[\<IXiI'Xibar>] = 0\;.
\end{equ}
Lemma~\ref{lem:ePsiDPsi} also implies $
\sum_{\be \in \CE_\times} \tilde C_{\be} =0$,
as claimed in Remark~\ref{rem:superificial_renorm}, but we will not use this.

The next lemma will be used in multiple places. For any kernel $K$ we will write $K^{*2} \eqdef K*\hat K$
 where $\hat K$ the space-time reflection: $\hat K(z)=K(-z)$.
\begin{lemma} \label{lem:identity}
Let $P^\e$ be the heat kernel on $\R\times (\e\Z)^d$,
namely $\partial_t P^\e = \Delta P^\e$ with $P^\e_0(x)= \e^{-d}\mathbf{1}_{x=0}$ and $P^\e_t(x)=0$ for $t<0$. Then 
for every $j\in [d]$, $s,s'\in\R$ and $y,y'\in (\e\Z)^d$, 
\begin{equ} \label{e:d-dim-id}
2\sum_{x\in (\e\Z)^d} \e^d
\sum_{j=1}^d
  \int_{\R}
	 \partial_j^+ P^\e_{t+s} (x+y) \partial_j^+ P^\e_{t+s'} (x+y') \mrd t
= P^\e_{|s-s'|}(y-y') \;. %+p_{s'-s}(y-y') \;.
\end{equ}
\end{lemma}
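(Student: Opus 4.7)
The plan is to prove this by combining discrete summation by parts with the semigroup property and the discrete heat equation. First I would reduce to the case $s=0$, $y=0$: setting $\tau = s'-s$ and $h = y'-y$ (and assuming WLOG $s' \geq s$, so $\tau \geq 0$) and performing the change of variable $u = t+s$, the LHS becomes
\[
2d \int_0^\infty \e^d \sum_{\xi\in(\e\Z)^d} \partial_j^+ P^\e_u(\xi)\,\partial_j^+ P^\e_{u+\tau}(\xi+h)\, du,
\]
using that $P^\e_u \equiv 0$ for $u<0$.

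Next, I would invoke discrete summation by parts,
\[
\e^d \sum_{\xi} (\partial_j^+ f)(\xi)\,g(\xi) = -\e^d\sum_{\xi} f(\xi)\,(\partial_j^- g)(\xi),
\]
together with the identity $\partial_j^-\partial_j^+ = \partial_j^2$ from~\eqref{e:laplacian}, to transfer one of the $\partial_j^+$ onto the other factor. Using then the symmetry $P^\e_t(-x) = P^\e_t(x)$ and the Chapman--Kolmogorov relation $P^\e_{u_1}\ast P^\e_{u_2} = P^\e_{u_1+u_2}$ (which holds for the semidiscrete heat kernel, e.g.\ by Fourier transform), the spatial sum collapses to $-\partial_j^2 P^\e_{2u+\tau}(h)$.

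After the substitution $v = 2u + \tau$ (so $du = dv/2$ and $v$ runs over $[\tau,\infty)$), the remaining time integral is of the form $\int_\tau^\infty \partial_j^2 P^\e_v(h)\,dv$. The final step is to apply the discrete heat equation: from $\partial_v P^\e_v = \Delta P^\e_v = \frac{1}{d}\sum_{j'}\partial_{j'}^2 P^\e_v$ one obtains $\sum_{j'}\partial_{j'}^2 P^\e_v = d\,\partial_v P^\e_v$, and the integral telescopes to $P^\e_\tau(h) - \lim_{v\to\infty}P^\e_v(h) = P^\e_\tau(h)$, the limit vanishing by the standard decay $\|P^\e_v\|_{L^\infty((\e\Z)^d)} \lesssim v^{-d/2}$ of the discrete heat kernel. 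The only substantive analytic input is thus this decay at infinity; the main remaining difficulty is a bookkeeping check that the prefactor $2d$ is produced correctly through the manipulations, which is sensitive to the $\frac{1}{d}$ normalization of $\Delta$ and to how the sum over directions $j'$ relates to the single distinguished direction $j$ appearing on the LHS.
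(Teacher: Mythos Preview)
Your real-space approach --- summation by parts, Chapman--Kolmogorov, then the heat equation --- is a natural alternative to the paper's Fourier computation, and the reduction of the left-hand side to $-d\int_\tau^\infty \partial_j^2 P^\e_v(h)\,dv$ is correct. The step you single out as a ``bookkeeping check'' is, however, where the argument genuinely breaks. To integrate $\partial_j^2 P^\e_v(h)$ in $v$ you would need $\partial_j^2 P^\e_v = \partial_v P^\e_v$, but the heat equation only gives $\partial_v P^\e_v = \Delta P^\e_v$, which involves all spatial directions. There is no way to recover the missing directions from a single $j$: the left-hand side of \eqref{e:d-dim-id} has space-time Fourier symbol proportional to $1-\cos k_j$ and so genuinely depends on $j$, while the right-hand side does not. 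The identity as written therefore fails for $d\ge 2$.

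This is not a defect of your method but of the statement. The paper's Fourier proof has exactly the same problem: the second equality in \eqref{e:idenLHS}, asserting $(2d)(2-2\cos k_j) = 4\sum_{n}(1-\cos k_n)$, is false unless $d=1$. (The value $\lambda_k = 2\sum_n(\cos k_n-1)$ used there also implicitly takes $\Delta=\sum_j\partial_j^2$ rather than the $\frac1d$-normalised operator of \eqref{e:laplacian}.) A corrected statement replaces ``for every $j$'' by a sum over $j$ with prefactor $2$ (equivalently, an average over $j$ with prefactor $2d$); under that change your argument finishes cleanly, since then $-\int_\tau^\infty\sum_j\partial_j^2 P^\e_v(h)\,dv = -\int_\tau^\infty\partial_v P^\e_v(h)\,dv = P^\e_\tau(h)$.
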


\begin{proof}
By scaling it suffices to prove the case $\e=1$ and we write $P=P^\e$.
Let $\mathscr F_x,\mathscr F_t ,\mathscr F $ denote the Fourier transform operators
in space, time, and space-time respectively.
Since  $\Delta e^{ik\cdot x} = \lambda_k e^{ik\cdot x}$
with
$\lambda_k \eqdef 2 \sum_{n=1}^d (\cos k_n-1)$, 
by definition of $P$,
we have
$(\mathscr F P)\, (\omega,k)    = (-\lambda_k+ i\omega)^{-1}$.
The left-hand side of \eqref{e:d-dim-id} equals
$2\sum_j (\nabla_j^+ P )^{*2}_{s-s'}(y-y') $
whose Fourier transform
at $(\omega,k)$ is equal to
\begin{equ}[e:idenLHS]
2\sum_{j=1}^d\Big| \frac{e^{ik_j}-1}{-\lambda_k+ i\omega}   \Big|^2
= 
 \frac{4 \sum_{j=1}^d (1-\cos k_j)}{\lambda_k^2+ \omega^2} 	\;.
\end{equ}
For the right-hand side of \eqref{e:d-dim-id}, one has
$
(\mathscr F_x P_{|t|})\, (k)  = e^{\lambda_k |t|}
	%=e^{-\frac{|t|}{d} \sum_{n=1}^d (1-\cos k_n)}
$.
Recall the fact that for any $a>0$,
$\mathscr F_t e^{-a|t|} =\frac{2a}{a^2+\omega^2}$.
So
$\mathscr F_t (\mathscr F_x P_{|\cdot|}(\cdot))\, (\omega,k)  = \frac{-2\lambda_k}{\lambda_k^2+\omega^2} =\eqref{e:idenLHS} $.
\end{proof}

\begin{lemma}\label{lem:C_sym-finite}
$C_{\sym}^{\eps}=O(1)$ %converges as $\e\to 0$. 	
uniformly in $\e\in(0,1)$, where $C_{\sym}^{\eps}$ is as in \eqref{e:def-CSYM}.
\end{lemma}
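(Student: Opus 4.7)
The plan is to express each of the six constants as an explicit space-time integral of products of kernels, perform Wick contractions, and then exploit the identity from Lemma~\ref{lem:identity} (i.e.\ $4\,(\d_j^+P^\e)^{*2}=P^\e$ in dimension $d=2$) to reveal the cancellations that make $C_\sym^\e$ bounded uniformly in $\e$.

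First I would write each constant as an integral over $(\R\times(\e\Z)^2)^k$ (with $k=2$ or $3$ factors of $K^\e$), using $\Psi_i=K^{i;\e}*\xi_i^\e$ and Wick contractions, remembering that $\Psi_1$ and $\Psi_2$ are independent and that every contraction produces a factor of the Casimir $\Cas$. Thus each $\hat C^\e_k$ takes the form $\Cas$ times a convolution integral of (derivatives of) $K^\e$. I would then decompose $K^{j;\e}=P^{j;\e}-R^{j;\e}$, where $R^{j;\e}$ is smooth and bounded uniformly in $\e$. By Young's inequality and the order $-2$ bound \eqref{e:DSingKer} on $K^\e$ together with Lemma~\ref{lem:eps-improve-reg}\ref{pt:eps_d_f_vanish}, every term involving at least one $R^{j;\e}$ is uniformly $O(1)$, reducing the problem to the case where all kernels are $P^{j;\e}$.

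Next I would average over the shifts $\be,\be',\bw$ according to \eqref{e:def-C16}, which renders the resulting integrals translation-invariant in the base-point variable $e$ and allows integration by parts in the spatial discrete derivatives using that $\sum_{\be\in\CE_\times}\CS_\be$ converts $\bar\d_j$ into $\d_j^+$ (cf.\ \eqref{eq:PsiDPsi}) up to boundary terms that are $O(1)$. After these manipulations, each of $\hat C_1^\e,\hat C_2^\e,\hat C_3^\e$ contains a subexpression of the form $\int \d_j^+P^{j;\e}(z)\,\d_j^+P^{j;\e}(z-w)\,\mrd z$, namely a $\d_j^+P^{*2}$, and similarly $\hat C_4^\e,\hat C_5^\e$ contain a $\d_i^+P^{i;\e*2}$. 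Applying Lemma~\ref{lem:identity} with $d=2$ (so that the prefactor is $4$) replaces each such convolution by a single factor of $\tfrac14 P^\e(|s-s'|, y-y')$, effectively ``closing'' one of the two vertical/horizontal loops in the graphs of Remark~\ref{rem:graphs-constants}.

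The heart of the argument, and the main obstacle, is the bookkeeping of this resulting picture: after the identity is applied, each of the five $\hat C_k^\e$ reduces to an integral of $\Cas\cdot P^{j;\e}(z)\cdot P^{j;\e}(z)$ over $(\R\times(\e\Z)^2)$, i.e.\ to the same expression (up to explicit combinatorial factors and averaged shifts) as $\bar C^\e$. I would track the coefficients carefully to verify that the linear combination $2\hat C_1^\e+2\hat C_2^\e-2\hat C_3^\e-\hat C_4^\e+\hat C_5^\e-\bar C^\e$ has a vanishing coefficient in front of the surviving divergent $\int (P^{j;\e})^{*2}$ term, leaving only a finite sum of $O(1)$ remainders. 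This final accounting is the step requiring the most care, since the signs and multiplicities in \eqref{e:def-CSYM} are precisely chosen to make the divergences cancel: the factors $2$ in front of $\hat C_1^\e,\hat C_2^\e,\hat C_3^\e$ correspond to the two plaquettes $p\succ e$ in each direction, while the relative signs encode the orientation of $\d_j^+$ versus $\bar\d_j$. The bounded remainder is estimated using \eqref{e:DSingKer} and the smoothness of $R^\e$, giving $C_\sym^\e=O(1)$ uniformly in $\e\in(0,1)$.
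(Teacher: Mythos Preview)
Your approach is essentially the paper's: reduce each constant to an integral of products of the full heat kernel $P^\e$, apply the identity of Lemma~\ref{lem:identity} to collapse $(\d_j^+P^\e)^{*2}$ into $\tfrac14 P^\e$, and verify that the divergent parts cancel. Two clarifications are worth making.

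First, the shifts are handled more cleanly than via ``converting $\bar\d_j$ into $\d_j^+$''. The paper simply invokes Lemma~\ref{lem:ePsiDPsi}: the difference between $\Psi_j(e+\be)$ and $\Psi_j(e)$, and the difference between the various discrete derivatives, contributes an error of the form $\e\,\E[\Psi\,\d\Psi]=O(1)$. After this, $\bar C^\e$ reduces to $\bar C^\e_{\approx}:=\Cas\int (P^\e)^2$, and each $\hat C_k^\e$ reduces to a single canonical integral $\hat C^\e_{\approx}:=\Cas\int P^\e(z)(\d_j^+P^\e)^{*2}(z)\,\mrd z$ modulo $O(1)$. Your reference to Lemma~\ref{lem:eps-improve-reg}\ref{pt:eps_d_f_vanish} for the remainder terms is not the right tool; the kernel norm \eqref{e:DSingKer} and Lemma~\ref{lem:ePsiDPsi} are what is used.

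Second, your grouping $(\hat C_1,\hat C_2,\hat C_3)$ versus $(\hat C_4,\hat C_5)$ misses the structural dichotomy. The correct split is $(\hat C_1,\hat C_2,\hat C_4)$ versus $(\hat C_3,\hat C_5)$: the first three have both derivatives on the two $\Psi$-legs and yield $+\hat C^\e_{\approx}$ directly, while $\hat C_3,\hat C_5$ have one derivative on the outer kernel and require a summation by parts, producing $-\hat C^\e_{\approx}$. Substituting into \eqref{e:def-CSYM} gives $(2+2+2-1-1)\hat C^\e_{\approx}-\bar C^\e_{\approx}=4\hat C^\e_{\approx}-\bar C^\e_{\approx}$, and Lemma~\ref{lem:identity} yields $\hat C^\e_{\approx}=\tfrac14\bar C^\e_{\approx}$, so the divergent part vanishes.
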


\begin{proof}
The continuous version can be found in~\cite[Lem.~6.9]{CCHS_2D}, so we give a brief proof.
First, 
we can write $\bar{C}^\e$ as 
$\E [ \Psi_j (e) \Psi_j (e) ]$ plus an error of the form 
``$\e \E [\Psi\partial\Psi]$''
which falls into the scope of Lemma~\ref{lem:ePsiDPsi}.
We then write this as 
$\bar{C}^\e_{\approx} \eqdef \Cas\int_{\R\times (\e\Z)^2} K^\e(z) P^\e(z) \mrd z$
plus a converging error. 
Note that replacing $K^\e$ by $P^\e$ in these integrals  yields
errors which are integrals of (discrete) smooth functions
so these errors converge.
%Also note that $P^\e(t,x) \lesssim t^{-d/2}=t^{-1}$ for $|x|\lesssim 1$ so $(P^\e)^2$ is integrable for each $\e>0$.
%
%\ilyaText{but is it then obvious what to do for $|x|\gg 1$?
%Even for the heat kernel $P$ on $\R\times\R^2$,
%we have $P(x,t)\sim t^{-1}$ for $|x|\lesssim\sqrt t$.
%But for fixed $t$, $\{x:|x|\lesssim\sqrt t\}$ has volume $\sim t$, so
%$\int_{|x|\lesssim\sqrt t} P(x,t)^2\mrd x \sim t^{-1}$.
%And then we have to integrate in $t$,
%which diverges for large $t$ like a $\log$. (??)}
%
%\haoText{I think you're right that there's some infrared problem.
%For instance $P(z)^2 \sim \frac{1}{t^2} e^{-x^2/t}$, and 
%$\int \frac{1}{t} e^{-x^2/t} dx$ is basically $1$, we're left with $1/t$ which is not integrable.
%
%We can simply change to $\bar{C}^\e_{\approx} \eqdef \Cas\int_{\R\times (\e\Z)^2} K^\e(z) P^\e(z) \mrd z$ and 
%\[
%\hat C^\e_{\approx} \eqdef \Cas\int_{\R\times (\e\Z)^2} K^\e (z) (\partial^+_j P^\e)^{*2}(z)\mrd z
%\]
% and everything should be okay then?}

Likewise, we can write $\hat C^\e_1$, $\hat C^\e_2$, $\hat C^\e_4$ as 
$\hat C^\e_{\approx} \eqdef \Cas\int_{\R\times (\e\Z)^2} K^\e (z) (\partial^+_j P^\e)^{*2}(z)\mrd z $
 plus a converging error
and an error of the form 
``$\e \E [\Psi\partial\Psi]$''
which again falls into scope of Lemma~\ref{lem:ePsiDPsi}.
For $\hat C^\e_3$, $\hat C^\e_5$,
by summation by parts, we can write them as $-\hat C^\e_{\approx}$, up to converging errors.
(These are easier to see by observing the graphs in Remark~\ref{rem:graphs-constants}.)
By Lemma~\ref{lem:identity}, since $\hat C^\e_{\approx}$ does not depend on $j$,
one has $\hat C^\e_{\approx} = \frac14 \bar{C}^\e_{\approx}$.
Therefore,
%$C_{\sym}^{\eps} $ defined in \eqref{e:def-CSYM} equals
%\begin{equ}
%2 \hat{C}^{\e}_1
%& +2 \hat{C}^{\e}_2
%-2 \hat{C}^{\e}_3
%- \hat{C}^{\e}_4
%+\hat{C}^{\e}_5
% - \bar C^{\eps} 
%\\ 
%&=
\eqref{e:def-CSYM} equals
$
\frac14 \big(
2+2+2-1-1
\big)  \bar{C}^\e_{\approx} -  \bar{C}^\e_{\approx} + c_\e =c_\e
$
%\end{equ}
for some $c_\e=O(1)$. %which converge as $\e\to 0$.
\end{proof}

\begin{lemma}\label{lem:thick-trig-bahave}
$\e^\kappa C_{\bs,\pm}^\e [\, \<I'XiI'[IXiI'Xi]less>\,] = O(1)$
and
$\e^\kappa C_\bs^\e [\, \<I'XiI'[IXiI'Xi]less>\,] = O(1)$ 
%converges as $\e\to 0$,
 for every $\bs\in \{\Southeast,\Southwest\}$.
 In particular $C_{\rem}^\e $ defined in 
 \eqref{e:def-CREM}  is $O(1)$.
% converges  as $\e\to 0$.
\end{lemma}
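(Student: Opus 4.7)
The plan is to unfold the Gaussian pairings in each constant and reduce everything to a power-counting bound for convolutions of singular discrete kernels. Writing $\Psi_i = K^{i;\e} *_{(i)} \xi_i^\e$ and using the independence of $\xi_1,\xi_2$ together with Wick's theorem, each of the constants $C^\e_{\bs,\pm}[\<I'XiI'[IXiI'Xi]less>]$ and $C^\e_\bs[\<I'XiI'[IXiI'Xi]less>]$ becomes, up to a Casimir factor $\Cas\in\mfg^{\otimes 2}$, an integral of the form
\[
\e^{1-\kappa}\int \partial_j^{\alpha_1} K^\e(z_1-w_1)\,\partial_j^{\alpha_2} K^\e(z_2-w_2)\,\partial_j^{\alpha_3} K^\e(w_1-w_2)\,\mrd w_1\mrd w_2\;,
\]
where $\alpha_i\in\{+,-,\mathrm{sym},\bar{\,}\}$ is one of the discrete derivatives from Section~\ref{subsubsec:discrete_der}, and $z_1,z_2$ are two base points differing by $O(\e)$.

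The main estimate I would invoke is that the truncated heat kernel $K^\e$ is of order $-2$ in the sense of~\eqref{e:DSingKer}, hence each discrete derivative $\partial_j^\bullet K^\e$ is of order $-3$. Using the standard convolution bound for singular kernels in scaling dimension $|\s|=4$ (cf.~\cite{Shen18,CM18} and the analogous continuum estimates used in the proof of Lemma~\ref{lem:C_sym-finite}), one has the pointwise bound
\[
\Big|\int \partial_j^{\alpha_2} K^\e(\cdot-w_2)\,\partial_j^{\alpha_3} K^\e(w_1-w_2)\,\mrd w_2\Big|(z)
\lesssim \|z-w_1\|_\e^{-2}\;,
\]
uniformly in $\e\in(0,1]$. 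Substituting back and integrating against $\partial_j^{\alpha_1}K^\e$ (of order $-3$, with $|z_1-z_2|=O(\e)$) gives, by a dilation argument in rescaled variables $w\mapsto \e^{-1}w$, a factor $\e^{-9+8}=\e^{-1}$ from the three derivatives and two Jacobians; the remaining dimensionless integral is bounded uniformly in $\e$ thanks to the compact support and near-origin behaviour of $K^1$. Combined with the prefactor $\e^{1-\kappa}$, each constant is therefore $O(\e^{-\kappa})$, and multiplication by $\e^\kappa$ yields the claimed $O(1)$ bound.

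The final assertion on $C^\e_{\rem}$ then follows directly from~\eqref{e:def-CREM}: the first term $\e^{2\kappa}C^\e[\<I(PsiXi1)Xi1less>]=\Cas$ by the explicit identity in~\eqref{e:C-XiIXi}, while the remaining six terms are each of the form $\e^\kappa$ times one of the constants just controlled, hence $O(1)$.

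The main technical point is the verification that the dimensionless integral in rescaled variables is indeed $O(1)$ uniformly in $\e$; I expect to handle this by writing the inner convolution using Lemma~\ref{lem:identity} or a direct rescaling argument, checking separately the near-diagonal region (where integrability follows from the exponents $-3-3+4=-2$ adding up and $-3-2+4=-1$ at the outer layer, both integrable in $|\s|=4$) and the large-scale region (where the truncation of $K^\e$ and the Gaussian time-decay ensure absolute convergence independent of $\e$). No cancellation between terms is needed for this lemma, only absolute bounds.
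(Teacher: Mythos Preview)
Your proposal is correct and takes essentially the same approach as the paper. The paper's proof is a one-liner invoking the kernel machinery of Lemma~\ref{lem:OpDSingKer}: convolve two copies of $\partial K^\e$ (each of order $-3$) to get a kernel of order $-2$, multiply by the remaining $\partial K^\e$ to get order $-5$, and then use $\int \e\,(\|z\|_\s\vee\e)^{-5}\,\mrd z \lesssim 1$; your expansion of the Gaussian pairings and the outer-layer convolution bound $-3-2+4=-1$ evaluated at $|z_1-z_2|=O(\e)$ is exactly this computation, just phrased via dimensional counting rather than the kernel norms $\VERT\cdot\VERT_\zeta^{(\e)}$.
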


\begin{proof}
This  follows from the fact that $\partial K$ has degree $-3$, 
Lemma~\ref{lem:OpDSingKer} below, and
$\int \e  (\|z\|_\s \vee \e)^{-5} \mrd z \lesssim 1$.
%It alternatively follows from Lemmas~\ref{lem:ePsiDPsi} and~\ref{lem:identity}.
%We give an alternative proof which is of independent interest.
%In fact the constant is essentially the same as in Lemma~\ref{lem:ePsiDPsi} in disguise.
%Take \eqref{e:def-C-triang} for example.
%By translation invariance, we instead fix $w$ and integrate $z,y$.
%Applying Lemma~\ref{lem:identity} to the convolution in $z$,
%we see that $\e^\kappa C_\southwest^\e [\, \<I'XiI'[IXiI'Xi]less>\,]$ is equal to, up to a bounded part
%caused by replacing $K^\e$ by $P^\e$ in \eqref{e:def-C-triang},
%\begin{equ}
%\Cas\,\e
%\int_{\R\times \obonds_j}
%K^\e (w-y+\e_i)
%\partial_i K^\e(w-y)
%\mrd w \mrd y  
%\end{equ}
%which falls into scope of Lemma~\ref{lem:ePsiDPsi}. The same arguments apply to the other constants in \eqref{e:def-C-triangle}.
\end{proof}
Finally, we note that the definitions~\eqref{e:def-C-four} and~\eqref{eq:C_square_cherries} imply the bounds
\begin{equs}[eq:C_bounds_large_cherries]
\e^{3\kappa}c^\e_k [\<r_z_large>] =o(1)\;,\quad \e^{\kappa}C^\e_\be [\<I'XiIXibar>]=O(1)\;,\quad \e^{\kappa}C^\e_{\be,\bw}[\<IXiI'Xibar>]  =O(1)
\\
\e^{1+2\kappa} C^\e [\<R2-1new>] =O(1) \;,
\qquad
\e^{2\kappa} C^\e [\<cherry232>] = O(1)
\end{equs}
for all $k\in\{1,2\}$, $\be\in\CE_\times$, and $\bw\in\{\Northwest,\Southwest\}$.

The next lemma shows that %the processes $\Psi_i=K^{i;\e} *_{(i)}\xi_i^\e$ 
$\Psi_i$ satisfy Assumption~\ref{as:models_abstract} with high probability.

\begin{lemma}\label{lem:Psi_Wick}
For every $p\geq 1$, uniformly in $\eps$,
\begin{equ}[eq:Psi_p_bound]
\E \sup_{t\in[-1,2]} \|\Psi_i(t)\|_{\CC^{-\kappa}_\e}^p  \lesssim 1\;.
\end{equ}
Furthermore, let $\partial, i_1,i_2,i_3, h_2, h_3$ be as in Assumption~\ref{as:models_abstract}.
Then there exist  $c_{23},c_{13}\in \mfg\otimes\mfg$ (depending on $\partial, i_k,
h_k,\e$) such that\footnote{In some cases $c_{13}$ or $c_{23}$ may vanish, but we do not need this.}
$c_{23}=O(1),c_{13}=O(1)$ uniformly in $\eps$ and
$\E [\textnormal{left-hand side of } \eqref{e:assump-Psi-2}] \lesssim \e^{\kappa/2}$.  
%\begin{equs}
%\E \sup_{t\in[1,2]}  
%\{\| W_{c,h} (\Psi_{i_1}, \Psi_{i_2}, \partial\Psi_{i_3})(t)\|_{\CC^{-\kappa}_\eps}
%&+
%\| W^{(13)}_{c,h} (\Psi_{i_1}, \partial\Psi_{i_3})(t)\|_{\CC^{-\kappa}_\eps}
%\\
%&+
%\| W^{(23)}_{c,h} (\Psi_{i_2}, \partial\Psi_{i_3})(t)\|_{\CC^{-\kappa}_\eps}\}
%\lesssim \e^{\kappa/2}\;.
%\end{equs}
\end{lemma}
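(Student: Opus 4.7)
The plan is to exploit the Gaussian structure of $\Psi_i = K^{i;\e} *_{(i)} \xi_i^\e$: compute second moments directly from covariance kernels, upgrade to all $L^p$ moments by Nelson's hypercontractivity, and obtain the supremum over $t$ together with the $\CC^{-\kappa}_\e$ norm by a Kolmogorov/Besov-type chaining argument in $(t, x, \lambda, \varphi)$.

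For Part 1, a direct calculation gives $\E|\scal{\Psi_i(t), \varphi^\lambda_x}_\e|^2 \lesssim \lambda^{-2\kappa}$ uniformly in $\lambda \in [\e, 1]$, in $(t, x)$, and in $\varphi \in \CB^r_{\T^2}$: the equal-time covariance of $\Psi_i$ equals $(K^{i;\e} * \hat K^{i;\e})$ evaluated at the spatial difference, which is log-singular at the lattice scale $\e$ in 2D and yields an $O(1)$ integral against $\varphi^\lambda_x \otimes \varphi^\lambda_x$; the precise $\lambda^{-2\kappa}$ growth follows from interpolating this $L^2$ bound with the trivial pointwise bound on $\phi^\lambda_x$. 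Nelson's inequality upgrades to all even $L^p$ moments, and the supremum over spatial parameters $(x, \lambda, \varphi)$ is then handled by a standard dyadic/wavelet argument. For the sup over $t \in [-1, 2]$, $\Psi_i$ is Hölder in time with values in a slightly weaker spatial Hölder norm (uniformly in $\e$, again a Gaussian computation), so Kolmogorov's criterion completes the proof of \eqref{eq:Psi_p_bound}.

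For Part 2, define the counterterms by the Wick contractions
\begin{equ}
c_{23} \eqdef \e\, \E[\Psi_{i_2}(z + \e h_2) \otimes \partial \Psi_{i_3}(z + \e h_3)]\,,\qquad c_{13} \eqdef \e\, \E[\Psi_{i_1}(z) \otimes \partial \Psi_{i_3}(z + \e h_3)]\,,
\end{equ}
which by translation invariance do not depend on $z$ and are $O(1)$ uniformly in $\e$ by Lemma~\ref{lem:ePsiDPsi}. With these choices, $W^{(13)}_{c,h}$ and $W^{(23)}_{c,h}$ are centered second-chaos random variables whose $\CC^{-\kappa}_\e$ moments are controlled by a direct computation involving integrals of products of $K^{i;\e}$ and $\partial K^{i;\e}$ and the order $-2$ property of $K^{i;\e}$, giving the required $O(\e^{2\theta})\lambda^{-2\kappa}$ bound. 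For the three-factor $W_{c,h}$, Wick's theorem decomposes it at each point $z$ as a pure third-chaos term plus the first-chaos residue
\begin{equ}
\e\, \E[\Psi_{i_1}(z) \otimes \Psi_{i_2}(z+\e h_2)] \otimes \partial \Psi_{i_3}(z + \e h_3)
\end{equ}
arising from the $(1,2)$ contraction that is not subtracted in the definition of $W_{c,h}$.

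The main work and the main obstacle is then to obtain $\E|\scal{W_{c,h}, \varphi^\lambda_x}|^2 \lesssim \e^{2\theta} \lambda^{-2\kappa}$ for some $\theta \in (0, \kappa]$. The pure third-chaos part is a standard BPHZ-style estimate: the sum over Feynman pair-contractions between two copies reduces to integrals of products of $K^{i;\e}$ and $\partial K^{i;\e}$, and the prefactor $\e^2$ produces the required gain after absorbing the logarithmic divergences characteristic of 2D convolution kernels (i.e., the integrals are borderline divergent but controlled by $\e^{2-}$ instead of $\e^2$). The first-chaos piece requires separate analysis and is the main subtle point: the coefficient $\e\,\E[\Psi_{i_1}(z)\Psi_{i_2}(z+\e h_2)]$ is only $O(\e \log \e^{-1})$ (and vanishes when $i_1 \neq i_2$ by independence of the noises), while $\partial\Psi_{i_3}$ lies in a strictly weaker space than $\CC^{-\kappa}_\e$, so one must use the $L^2$ bound $\E|\scal{\partial \Psi_{i_3}, \varphi^\lambda_x}|^2 \lesssim \lambda^{-2}\log(\lambda/\e)$; combined, the pairing is bounded by $\e^2(\log \e^{-1})^2 \lambda^{-2}\log(\lambda/\e)$, which at the critical scale $\lambda = \e$ gives $\e^{2\kappa}$ modulo logarithmic corrections that are absorbed by taking $\theta < \kappa$. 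Promoting to $L^p$ by hypercontractivity and invoking the chaining over $(t, x, \lambda, \varphi)$ (using the time-Hölder regularity of $W_{c,h}$ inherited from that of $\Psi_i$ and $\partial\Psi_i$) yields $\E \sup_t \|W_{c,h}(t)\|_{\CC^{-\kappa}_\e} \lesssim \e^{\kappa/2}$.
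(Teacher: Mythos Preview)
Your proposal is correct and follows essentially the same Wick-chaos decomposition strategy as the paper: define $c_{13},c_{23}$ by the $(1,3)$ and $(2,3)$ contractions so that $W^{(13)}_{c,h},W^{(23)}_{c,h}$ and the corresponding components of $W_{c,h}$ land in pure second/third chaos, bound second moments of these chaoses against $\phi^\lambda$ by $\e^{2-}\lambda^{-2\kappa}$, then upgrade via hypercontractivity and a Kolmogorov argument.

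The one place where your route and the paper's differ is the residual $(1,2)$-contraction term $\e\,\E[\Psi_{i_1}\otimes\Psi_{i_2}]\otimes\partial\Psi_{i_3}$. You bound it by a direct second-moment computation, tracking the log-divergence of the $\Psi$-covariance and of $\partial\Psi$ separately; this works but is heavier than needed. The paper instead observes that $\E[\Psi^2]\lesssim\e^{-\kappa/4}$ and then applies Part~1 with exponent $\kappa/4$ together with Lemma~\ref{lem:eps-improve-reg}\ref{pt:eps_f_vanish} to the random factor $\partial\Psi$: one loses regularity from $\CC^{-1-\kappa/4}_\e$ to $\CC^{-\kappa}_\e$ and gains a compensating $\e^{-1+3\kappa/4}$, so the product picks up exactly $\e\cdot\e^{-\kappa/4}\cdot\e^{-1+3\kappa/4}=\e^{\kappa/2}$. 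This avoids explicit log-tracking and reuses the bound you already established in Part~1.
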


%\begin{remark}
%In Lemma~\ref{lem:Psi_Wick},
%one can show that, when $\partial\in \{\partial_j,\bar\partial_j\}$, one has $c_{23}=0$ if $h_2=h_3$  and $c_{13}=0$ if $h_3=0$, but this will not be used later.
%\end{remark}

\begin{proof}
The first statement is standard (see, e.g. Proposition~\ref{prop:SHE} for a stronger bound).
For the second statement,
as in the proof of Lemma~\ref{lem:eps-A2-dA}, we write $\Psi_{i_1} \Psi_{i_2}\partial \Psi_{i_3}$ or simply
$\Psi^2 \partial \Psi$ as shorthand for the tensor product.
Recall that in the definition of the discrete spaces $\CC^\alpha_\eps$ with $\alpha<0$,
the scaling parameter $\lambda$ is restricted to $\lambda\in [\e,1]$.

\noindent
1. Consider first
$
W_{c,h} (\Psi_{i_1}, \Psi_{i_2}, \partial\Psi_{i_3}) = \e \Psi_{i_1} \Psi_{i_2} \partial \Psi_{i_3} - \Psi_{i_1}c_{23} -c_1^{(\ell)} \Psi_{i_2} c_3^{(\ell)}
$,
which we decompose into third and first chaos.

\begin{enumerate}[label=1(\alph*)]
\item\label{pt:Psi2dPsi_Wick}
The second moment of the 3rd chaos $\e \Wick{\Psi^2 \partial \Psi}$ integrated against a spatial rescaled test function
$\phi^\lambda$ is easily bounded by $\e^2 \lambda^{-2} \le \e^{\kappa/2} \lambda^{-\kappa/2}$.
Combined with a suitable Kolmogorov criterion (in both time and space), it follows that $\E\sup_{t\in [1,2]} \|\e \Wick{\Psi^2 \partial \Psi}(t)\|_{\CC_\e^{-\kappa}} \lesssim \eps^{\kappa/2}$.
\item
By Lemma~\ref{lem:eps-improve-reg}\ref{pt:eps_d_f} and~\eqref{eq:Psi_p_bound} (applied to $\kappa/4$), and using $\E\Psi^2 \lesssim \eps^{-\kappa/4}$,
\begin{equ}
\E \sup_{t\in[-1,2]}\|\e \E[\Psi^2] \partial \Psi(t)\|_{\CC_\e^{-\kappa}} \lesssim\e^{\kappa/2}\;.
\end{equ}
\item\label{pt:EPsidPsi}
The other two terms in the first chaos are
zero provided we choose $c_{13} = \e \E[\Psi_{i_1}  \partial \Psi_{i_3}]$
and $c_{23} = \e \E[\Psi_{i_2}  \partial \Psi_{i_3}]$.
% $
%$ \e \E[\Psi_{i_1}  \partial \Psi_{i_3}] \Psi_{i_2} - c_1^{(\ell)} \Psi_{i_2} c_3^{(\ell)}$ and
% $
% \e \E[\Psi_{i_2}  \partial \Psi_{i_3}] \Psi_{i_1} - \Psi_{i_1}c_{23}
% $, which
By Lemma~\ref{lem:ePsiDPsi}, $c_{23},c_{13} = O(1)$,
and by Remark~\ref{rem:trans_invar}, $c_{23}, c_{13}$ only depend on $\partial, i_k, h_k,\e$.
\end{enumerate}

\noindent
2. Consider now $W^{(13)}_{c,h} (\Psi_{i_1}, \partial\Psi_{i_3}) = \e\Psi_{i_1}\d \Psi_{i_3} - c_{13}$
with
$\Psi\partial \Psi =\Wick{\Psi\partial \Psi}+\E[\Psi\partial \Psi]$. Then
$\E[\Psi  \partial \Psi] - c_{13} = 0$ for the same $c_{13}$ as in~\ref{pt:EPsidPsi},
while
by a similar argument as in~\ref{pt:Psi2dPsi_Wick}, $
\E\sup_{t\in[-1,2]}\|\e \Wick{\Psi  \partial \Psi}(t)\|_{\CC_\e^{-\kappa}}\lesssim \e^{\kappa/2}$.

\noindent
3. The term $W^{(23)}_{c,h} (\Psi_{i_2}, \partial\Psi_{i_3})$ is handled in an identical way.
\end{proof}

\subsection{Fixed point problem}
\label{subsec:FPP}

Fix throughout this subsection (discrete) compatible models $Z^\e=(\Pi^\e,\Gamma^\e)$ (see Definition~\ref{def:compat}) indexed by $\e=2^{-N}$
on the regularity structure $\CT$ from Section~\ref{sec:tr-reg} with a
reconstruction operator $\CR$ as in Section~\ref{sec:reconstruction}. 
Denote by $(e_\ell)_{\ell}$ an orthonormal basis of $\mfg$.
We write $\bXi_i,\bar\bXi_i\colon \R\times\T^2 \to\CT\otimes \mfg$ for the modelled distributions 
which take the constant values
$
\id = \sum_\ell e_\ell^*\otimes e_\ell
$ as elements of 
$
\CT[\Xi_i]\otimes \mfg\simeq \mfg^*\otimes \mfg
$
and
$\CT[\bar\Xi_i]\otimes \mfg\simeq \mfg^*\otimes \mfg
$ 
respectively.
We also write $\bPsi_i =\mcb{I}_i \bXi_i$, $\bar\bPsi_i =\mcb{I}_i \bar\bXi_i$,
also understood as constant modelled distributions equal to $\id$ in the appropriate spaces.
%Recall $P^{i;\e} = K^{i;\e}+R^{i;\e}$ from Section~\ref{sec:Admissible models}
%and 
%$\CK^{i;\eps}$ from \eqref{eq:conv}.
%Write $\CP^{i;\eps}  = \CK^{i;\eps}+R^{i;\eps}*_{(i)} \CR$.

For $A\in\mfq_i$ we define $\delta_i A\in\mfq_i$ as follows.
For $e\in \obonds_i$, 
\begin{equs}[eq:delta_A_def]{}
&(\delta_i A) (e) 
\eqdef 
 \frac{1}{8} \!\! \sum_{\be,\be'\in \CE_\times} \!\!   
 (\ad_{\hat{C}^\e_{1,\be}}+\ad_{\hat{C}^\e_{2,\be}})
  (A(e+\be+\be')-A(e))
\\
&\quad
-\frac{1}{16} \sum_{\be,\be'\in \CE_\times}\sum_{\bw\in\{\Northwest,\Southwest\}} \ad_{\hat{C}^\e_{3,\be,\bw}}(A(e^\bw + \be')-A(e))
%\\
%&\quad
%-
%\frac{\e^\kappa}{16}\sum_{\be\in\CE_\times} (
%\ad_{C_{\southeast,+}^\e [\, \<I'XiI'[IXiI'Xi]less>\,]}
%+
%\ad_{C_{\southeast,-}^\e [\, \<I'XiI'[IXiI'Xi]less>\,]}
%+2\ad_{C_\southeast^\e [\, \<I'XiI'[IXiI'Xi]less>\,]} 
%)
%(A(e^{\southeast}+\be) - A(e))
%\\
%&\quad
%-
%\frac{\e^\kappa}{16}\sum_{\be\in\CE_\times} (
%\ad_{C_{\southwest,+}^\e [\, \<I'XiI'[IXiI'Xi]less>\,]}
%+
%\ad_{C_{\southwest,-}^\e [\, \<I'XiI'[IXiI'Xi]less>\,]}
%-2\ad_{C_\southwest^\e [\, \<I'XiI'[IXiI'Xi]less>\,]} 
%)
%(A(e^{\southwest}+\be) - A(e))\;,
\\
&\quad
-
\frac{\e^\kappa}{16}\sum_{\be\in\CE_\times}\sum_{\bs\in\{\southeast,\southwest\}} (
\ad_{C_{\bs,+}^\e [\, \<I'XiI'[IXiI'Xi]less>\,]}
+
\ad_{C_{\bs,-}^\e [\, \<I'XiI'[IXiI'Xi]less>\,]}
\pm 2 \ad_{C_\bs^\e [\, \<I'XiI'[IXiI'Xi]less>\,]} 
)
(A(e^{\bs}+\be) - A(e))
\end{equs}
where $\pm$ is $+$ for $\bs=\Southeast$ and $-$ for $\bs=\Southwest$,
and we extend $\ad$ to the universal enveloping algebra of $\mfg$, i.e. for $X=X_1\otimes\cdots\otimes X_k \in\mfg^{\otimes k}$ we define $\ad_{X}\in L(\mfg,\mfg)$ by $\ad_X = [X_1,[\ldots,[X_k,\cdot]]]$.
For $A\in\mfq_j$ and $i\neq j$, we also define $\bar \delta_i A \in\mfq_i$ by
\begin{equ}[eq:bar_delta_def]
(\bar \delta_i A) (e) 
\eqdef 
\frac{\e^{\kappa}}{4} \sum_{\be\in\CE_\times}
\Big\{
2\ad_{C^\e_{\be}[\<I'XiIXibar>]}A(e+\be)
-
\sum_{\bw\in\{\Northwest,\Southwest\}}\ad_{C^\e_{\be,\bw}[\<IXiI'Xibar>]}A(e^\bw)
\Big\}\;.
\end{equ}
Recall the analytic function $F\colon \mathring V\to L(\mfg,\mfg)$ defined in \eqref{eq:def_F},
which we arbitrarily extend in this section to a smooth compactly supported function $F\colon \mfg\to L(\mfg,\mfg)$.
Define the smooth functions $h^\e_\ell \colon \mfg \to \mfg$ for $\ell\in\{1,2,3\}$ by 
\begin{equs}[e:def-h123A]
h_1^\e (A) &\eqdef 
%\e^{2\kappa}C^\e [\<I(PsiXi1)Xi1less>]
\sum_{j} 
(F'(\e A)(e_j) - \frac{1}{12}  \ad_{e_j})
[ A,e_j] \;,
\\
h_2^\e (A) &\eqdef 
%\frac{\e^{1+2\kappa}}{2}C^\e [\<I(PsiXi1)Xi1less>]
\frac{\e}{2}
\sum_{j}  F'(\e A)([ A,e_j])[  A ,e_j] \;,
\\
h_3^\e (A) &\eqdef 
 \frac{\e^{1+2\kappa}}{2} (c^\e_1[\<r_z_large>]-c^\e_2[\<r_z_large>])
\sum_{j,k}  F'(\e A) ([e_j,e_k]) [e_j,e_k]\;.
\end{equs}
Here the notation $F'(X)(Y) \in L(\mfg,\mfg)$ means the derivative in direction $Y\in \mfg$.

We formulate our fixed point problem 
on (recall $\cD^{\gamma,\eta}_{\<IXi>,\eps}$
from Section~\ref{sec:Differentiation})
\begin{equ}\label{eq:cD^sol}
\cD^\sol \eqdef \{ \CA^\e \in \cD^{\gamma,\eta}_{\<IXi>,\e}\otimes \mfg  \; : \; \CA^\e = \bone_+\bPsi + \CV^\e \;, \; \CV^\e \in  \cD_{0,\e}^{\gamma,\eta}\otimes \mfg \}\;,
\end{equ}
where $\bone_+$ is the positive time indicator function $\bone_+=1_{t>0}$
and $\gamma\in (1,2)$ and $\eta>-\frac12$ (the values of which we specify later), as follows.
Consider an initial condition $a^\eps=(a^\eps_1,a^\eps_2)\in\Omega_N$ (recalling $\eps=2^{-N}$).
For $j\neq i$, consider\footnote{$\bar{\mcb{D}}_i \bPsi_j +\hat{\mcb{D}}_i \CV^\e_j$ 
%in~\eqref{e:fix-pt-Ae} 
formally combine 
into $\bar{\mcb{D}}_i \CA^\e_j$ corresponding to $\bar\d_iA_j$ in~\eqref{e:Aeps} (see Remark~\ref{rem:barCD_term1}).}
\begin{equs}[e:fix-pt-Ae]
\CA^\e_i &= 
\CP^{i;\eps} \bone_+\Big(
[ \cS_{a} \CA_j^\e ,
2\mcb{D}_j \CA^\e_i   - \bar{\mcb{D}}_i \bPsi_j -
 \hat{\mcb{D}}_i \CV^\e_j] 
+ [\CA^\e_i, \mcb{D}_i \CA^\e_i ]
\\
& \;\;
+[\cS_{a} \CA_j^\e ,[\cS_{a} \CA_j^\e ,\CA_i^\e]]
+ c^{\eps}_{\star,i} \CA^\e
%+ \bdelta_i^\e
+ \bXi_i
+ \e^\kappa F(\cE\CA_i^\e)[\CA^\e_i, \bar\bXi_i]
\\
&\;\;
+  \CQ^{\e}_i
+ \boldR_i
+ \boldH_i
\Big)
- \CL_1 P^{i;\e} *_{(i)} (\delta_i \CR \CV^\e_i + \bar\delta_i \CR \CA^\e_j)
+ \tilde{\boldR}_i
+ G^{i;\eps} a_i^\e\;,
\end{equs}
where we used the shorthand $\hat{\mcb{D}}_i \eqdef \frac12\mcb{D}_i^+(\cS_{\northwest}+\cS_{\southwest})$,
$\cE$ is the linear map on $\cD^{\gamma,\eta}_{\<IXi>,\e}$  defined in Section~\ref{subsubsec:multiplication_by_eps},
$G^{i;\eps}a^\eps_i$ is the harmonic extension of $a^\eps_i$ lifted to $ \cD^{\gamma,\eta}_{0,\e}$, and
\begin{equs}[e:def-tildeCR2]
 \CQ^{\e}_i
  &= 
  -\frac{\eps^\kappa}{2}  [(\mcb{D}_j^+ +\mcb{D}_j^-) \tilde\bPsi_i, \bar{\mcb{D}}_j \tilde\bPsi_j]
 \\
 &\qquad
-\frac{\eps^\kappa}{2}  [(\mcb{D}_j^+ +\mcb{D}_j^-) \bar\bPsi_i, \hat{\mcb{D}}_j \CV_j^\e]
-\frac{\eps^\kappa}{2}  [(\mcb{D}_j^+ +\mcb{D}_j^-) \CV_i^\e, \bar{\mcb{D}}_j \bar\bPsi_j]
\\
&    
 \qquad + \frac{\e^\kappa}{4} [\mcb{D}_j^+\cS_{\southeast} \tilde\bPsi_j, \mcb{D}_j^+\cS_{\southwest} \tilde\bPsi_j]
\\
&\qquad
+ \frac{\e^\kappa}{4} [\mcb{D}_j^+\cS_{\southeast} \bar\bPsi_j, \mcb{D}_j^+\cS_{\southwest} \CV^\e_j]
+ \frac{\e^\kappa}{4} [ \mcb{D}_j^+ \cS_{\southeast} \CV^\e_j  ,  \mcb{D}_j^+ \cS_{\southwest} \bar\bPsi_j] \;,
\end{equs}
where we used another shorthand $\hat{\mcb{D}}_j \eqdef \frac12\mcb{D}_j^+(\cS_{\southeast}+\cS_{\southwest})$.

\begin{remark}\label{rem:quad-tildeXi}
Recall that we built our regularity structure by including 
$\Xi_{\mfl_i^\pm}$ and $\Xi_{\mfl_i^=}$
%\[
%\Xi_{\mfl_i^\pm} = \big(\CD_j^\pm \mcb{I}\tilde\Xi_i \big)  \big(\bar{\CD}_j \mcb{I}\tilde\Xi_j\big) \;,
%\qquad
%\Xi_{\mfl_i^=}=\big(\CD_j^+\CS_{\southeast} \mcb{I}\tilde\Xi_j \big) \big( \CD_j^+ \CS_{\southwest} \mcb{I}\tilde\Xi_j \big)
%\]
 %in $\mfT$
 in \eqref{eq:base_case_2}, rather than starting with $\tilde\Xi$.
The terms in \eqref{e:def-tildeCR2} which are ``quadratic in $\tilde\bPsi$''
should be interpreted as, for instance,
$
 [\mcb{D}_j^+  \tilde\bPsi_i, \bar{\mcb{D}}_j \tilde\bPsi_j]
=
\sum_{m,n} e_m^*\otimes e_n^* \otimes [e_m,e_n]
$
as an element of 
$
 \CT[\big(\CD_j^+ \mcb{I}\tilde\Xi_i \big)  \big(\bar{\CD}_j \mcb{I}\tilde\Xi_j\big)] \otimes \mfg 
 \simeq \mfg^*\otimes \mfg^* \otimes \mfg
$.
\end{remark}
\begin{remark}\label{rem:P_bone_+_xi}
We interpret 
$\CP^\e \bone_+\bXi$  in~\eqref{e:fix-pt-Ae} as  $\bone_+\bPsi -\mathcal L_\gamma K^\eps *(1_{t\le 0} \xi^\eps)  + \mathcal L_\gamma R^\eps *(\bone_+\xi^\eps)$ where $\mathcal L_\gamma$ is the lift to the polynomial regularity structure up to order $\gamma \in (1,2)$.
In this way our fixed point problem will not involve $\Xi$ (i.e. we effectively excluded the additive noise $\Xi$ from our regularity structure $\CT$).

We take this interpretation for two reasons.
First, $\bone_+\bXi \in \cD^{\infty,\infty}_{-2-\kappa}$ is formally outside the domain of $\CP^\e$ from~\eqref{eq:conv_K} (although this problem can be circumvented using a discrete analogue of \cite[Lemma~A.4]{CCHS_2D}).
Second, having $\Xi$ in our regularity structure $\CT$ makes the lowest degree below $-2$,
%but we choose to truncate the degree from above at $\gamma\in (1,2)$
which causes several technical subtleties in the application of~\cite[Sec.~4]{EH19}.\footnote{It was pointed out to us by Rhys Steele that, if the lowest 
degree in the regularity structure is $-2$ or lower, then the assumption \cite[Assump.~4.3.2]{EH19} is not verified with the choice of small-scale norm and heat 
kernel decomposition of~\cite{EH21,HM18} that we take in~\eqref{eq:seminorm} and Section~\ref{sec:Admissible models}.\label{foot:Rhys}}
\end{remark}
Moreover, recalling the remainder terms in Proposition~\ref{prop:rescaled-equ} and \eqref{e:def-h123A},
\begin{equ}[e:def-boldR]
\boldR_i 
= \CL_0 
\big( \e^{-3} r_h ( \e \CR\CA^\e)_i
 +  \e^{-3}  r_l(\e \CR\CA^\e)_i  \big)\;,
%R(e,\CR \CA^\e)
\end{equ}
%where $\xi^\e = \CR\bXi$,
\begin{equ}[e:def-boldH]
\boldH_i 
= \CL_0
\big(
h^\e_1(\CR\CA^\e_i)+h^\e_2(\CR\CA^\e_i)+h^\e_3(\CR\CA^\e_i)
\big)\;,
\end{equ}
and recalling $I_3$ in Proposition~\ref{prop:rescaled-equ},
again for $j\neq i$,
\begin{equs}[e:def-tilde-boldR]
\widetilde\boldR_i
= \CL_1  P^{i;\e}   &  *_{(i)} 
\Big\{
I_3(\CR\CA^\e)_i
 -\frac{\e^{-1}}2  \hat{R}^\nabla_\e (\e \CR\CA^\e)_i %(\nabla \hat R^p_{\eps}(\e \CR\CA^\e) + \nabla \hat R^{\bar p}_{\eps}(\e \CR\CA^\e) )
- \tilde{c}_{\star,i}^\e \CR\CA^\e
\\
& -\frac{\eps^2}{2}  [\partial_j^2 \CR\CV_i^\e, \bar\partial_j \CR\CV_j^\e]
  + \frac{\e}{4} [ \partial_j^+ \CR\CV^\e_j (\cdot_\southeast) ,  \partial_j^+ \CR\CV^\e_j (\cdot_\southwest)]
 \Big\}\;.
\end{equs}
Here $\CL_\gamma$ is the lift to the polynomial regularity structure of order $\gamma$ (after  bilinear interpolation into functions in continuum). 

Finally,
$c^\e_\star\in L(\mfg^2,\mfg^2) $ is, for now,  arbitrary,
$\tilde c^\e_\star\in L(\mfg^2,\mfg^2)$ will be chosen in Definition~\ref{def:tildec},
and we denote as usual $c^\e_\star X= (c^\e_{\star,1} X, c^\e_{\star,2} X)$ and 
$c^\e_{\star,i}X=\sum_{j=1}^2c^{\e,(j)}_{\star,i}X_j$ for $c^\e_{\star,i}\in L(\mfg^2,\mfg)$ and $c^{\e,(j)}_{\star,i}\in L(\mfg,\mfg)$ and $X\in\mfg^2$.
The meaning of $c_iA$ is the same as above Lemma \ref{lem:I3}, i.e.
$
c^{\eps}_{\star,i} \CA^\e
=c^{\eps,(i)}_{\star,i} \CA^\e_i  + c^{\eps,(j)}_{\star,i} \cS_a \CA^\e_j
$
and similarly for $\tilde{c}_{\star,i}^\e \CR\CA^\e$.
%
%\begin{remark}
%The terms $- \bar{\mcb{D}}_i \bPsi_j - \hat{\mcb{D}}_i \CV^\e_j$ 
%in~\eqref{e:fix-pt-Ae} formally combine 
%to give $- \bar{\mcb{D}}_i \CA^\e_j$ (see Remark~\ref{rem:barCD_term1}) which corresponds to $\bar\d_iA_j$ in~\eqref{e:Aeps}.
%\end{remark}

\begin{remark}
Recalling~\eqref{eq:R_2_other_form} and the discussion thereafter,~\eqref{e:def-tildeCR2} and the last line of \eqref{e:def-tilde-boldR} correspond to $\tilde R_2$ 
in Proposition~\ref{prop:rescaled-equ}.
The second line of~\eqref{e:def-tildeCR2} arises from rewriting $\bar\d_j A_j(e)$ as
$\frac12 \{(\d^+_j A_j)(e^{\southeast})+(\d^+_j A_j)(e^{\southwest})\}$
(see Remark~\ref{rem:barCD_term2}).
%We will show that the term $\CQ_i^\e$ vanishes in the limit as a modelled distribution,
%the arguments of $\CL_0$ in~\eqref{e:def-boldR} and~\eqref{e:def-boldH} for $\boldR_i$ and $\boldH_i$ respectively vanish in $L_\e^\infty$,
%and for $\widetilde\boldR_i$ the terms on the right-hand side of \eqref{e:def-tilde-boldR} 
%before convolution with heat kernel $P^\e$ vanish in $\CC_\e^\alpha$ with $\alpha<0$
%provided that $ \tilde{c}^\e_\star $ is suitably chosen.
\end{remark}

%\begin{remark}
%The terms $c^\eps_\star \CA^\e_i$, $ \CL_1 P^{i;\e} *_{(i)} (\delta_i \CR \CV^\e_i + \bar\delta_i \CR \CA^\e_j)$,  and $\boldH_i$ all play a similar role,
%which is to ensure the reconstruction of $\CA^\e$ for the renormalised model yields the original discrete Langevin dynamic without any renormalisation (Lemma~\ref{lem:renormalised-equ}).
%It turns out that $c^\eps_\star = O(1)$.
%\end{remark}

%\begin{remark}\label{rem:HQ18_compare}
%It is helpful to compare with 
%the study of universality for KPZ
%in the weakly asymmetric regime in \cite{KPZJeremy},
%where one has `remainder' terms of the form $\e^k (\partial h)^{2k+2}$ with $\d h$ having negative regularity
%and one needs to introduce an abstract `multiplication by $\eps^k$' map for the fixed point argument to close.
%We introduce $\cE$ for a related but different reason, namely to make $\cE\CA$ function-like so that $F(\cE\CA_i^\e)$ makes sense.
%We in particular do not need $\cE$ to handle the `remainder' terms~\eqref{e:def-tildeCR2}
%since, after decomposing $\CA=\bPsi+\CV$, and introducing $\bar\bPsi$ and $\tilde\bPsi$ with enhanced degrees,
%the lowest degree of
%$(\partial \CV)^2$ is $0-$, so we can simply view $\e$ as a coefficient in~\eqref{e:def-tilde-boldR}
%and the fixed point argument will still close.
%\end{remark}

\begin{remark}
If $(\Pi^\e,\Gamma^\e)$ is the canonical model from Section~\ref{sec:canonical}
and $\CA^\eps$ solves the above 
fixed point problem \eqref{e:fix-pt-Ae} with $c^\e_\star=\tilde{c}_\star=0$ and with $\boldH_i$, $\delta_iA$, and $\bar\delta_i A$ redefined to be $0$,
then $\CR\CA^\eps$
 solves the  discrete SPDE \eqref{e:Aeps} up to the exit time of $\CR\CA^\eps$ from the $V$ (see Section~\ref{sec:Notation}).
This follows from the fact that $(\Pi^\e,\Gamma^\e)$ is compatible 
(Proposition~\ref{prop:canonical_model}),
in particular $\CR\CP^{i;\e} = P^{i;\e}*_{(i)}\CR$ % where $P^\e$ is the random walk transition probability on $\obonds_i$. 
%Also, $(\CR^\e \CA_i^\e)(e) = A_i(e)$. 
and~\eqref{eq:rec} holds for $\CA$ (Remark~\ref{rem:admissible_Holder}),
together with Lemmas~\ref{lem:DRcommute},~\ref{lem:RecShft}, and~\ref{lem:Rec_Mult_eps}
and the fact that
$\Pi^\e$ commutes with products.

Similarly, the terms in \eqref{e:fix-pt-Ae} are designed so that (a) they have good analytic properties, and (b)
the reconstruction of $\CA^\e$ for the \textit{renormalised model} yields the original discrete Langevin dynamic \eqref{e:Aeps} without any renormalisation for a suitable $c^\eps_\star $ (Lemma~\ref{lem:renormalised-equ}).
It turns out that $c^\eps_\star = O(1)$.
\end{remark}
We will need the fact that the
fixed point problem \eqref{e:fix-pt-Ae} is indeed locally well-posed
and close to another fixed point with no error terms.
To this end, we consider on $\cD^\sol$ the fixed point problem, with $\CB=\bone_+\bPsi + \CW^\e$,
\begin{equs}[e:fix-pt-Be]
\CB^\e_i &= 
\CP^{i;\eps} \bone_+\Big(
\Big[
\cS_{a} \CB_j^\e ,
2\mcb{D}_j \CB^\e_i   - \bar{\mcb{D}}_i \bPsi_j - \hat{\mcb{D}}_i \CW^\e_j
\Big] 
+ [\CB^\e_i, \mcb{D}_i \CB^\e_i ]
\\
& \qquad 
+[\cS_{a} \CB_j^\e ,[\cS_{a} \CB_j^\e ,\CB_i^\e]]
+ c^{\eps}_{\star,i} \CB^\e
+ \bXi_i
\Big)
%-\CL_1 P^{i;\e} *_{(i)} \delta \CR \CB^\e_i
+ G^{i;\eps} a_i^\e\;,
\end{equs}
where all the terms have the same meaning.

\begin{assumption}\label{as:models}
There exists $r>0$ such that $\$Z^\eps\$_{\gamma;\K}^{(\eps)}<r$ and $|c^\e_\star|
< r$ for all $\eps>0$,
where $c^\e_\star\in L(\mfg^2,\mfg^2)$ is in~\eqref{e:fix-pt-Ae}.
Furthermore, for $i\in\{1,2\}$, denote
\begin{equ}[eq:Psi_def]
\Psi_i \colon[0,1] \to L^\infty_\e\;,\quad \Psi_i(t) \eqdef \Pi^\e_{(t,\cdot)}\CI_i\Xi_i(t,\cdot)\;.
\end{equ}
Then $\Psi$ satisfies Assumption~\ref{as:models_abstract} for some $\theta>0$.
\end{assumption}

\begin{definition}\label{def:tildec}
Let $\tilde{c}^\e_\star$ be the sum  
of the maps $c^\e$
in Lemmas
\ref{lem:I3} and
\ref{lem:grad-R-hat}.
%From these lemmas, $\tilde{c}^\e_\star$ converges as $\e\to 0$.
\end{definition}

\begin{proposition}  \label{prop:sol-abs}
Let $\gamma\in (1+10\kappa,\frac32)$, $\eta\in (-\frac14,-2\kappa)$, and $r>0$.
Suppose Assumption~\ref{as:models} holds and that $\|a^\e\|_{\CC^\eta_\e} < r$.
Then there exists $\e_0>0$, depending on $r$, such that for all $\e\in (0,\e_0)$,
there exists a unique solution
\begin{equ}
\CA^\e = \CP^\e \bone_+ \bXi + G^\e a^\e + \tilde \CA^\e
\end{equ}
to the fixed point problem \eqref{e:fix-pt-Ae} in 
$\cD^\sol$ on $(0,T]$.
The existence time $T$ can be chosen uniformly over $\eps \in (0,\e_0)$,  
over initial conditions satisfying $\|a^\e\|_{\CC^\eta_\e} < r$,
and over compatible models $Z^\e=(\Pi^\e,\Gamma^\e)$ 
%with \eqref{e:assump-model} uniformly bounded.
satisfying Assumption~\ref{as:models}.

Moreover, there exists a unique solution
\begin{equ}
\CB^\e = \CP^\e \bone_+ \bXi + G^\e a^\e + \tilde \CB^\e
\end{equ}
to~\eqref{e:fix-pt-Be} in $\cD^\sol$ on $(0,T]$ (this holds for all $\e\in(0,1]$ and only $|c^\e_\star|<r$, $\$Z^\eps\$_{\gamma;\K}^{(\eps)}<r$, and~\eqref{e:assump-Psi-1} from Assumption~\ref{as:models} are necessary for this).
Furthermore $\tilde\CB^\e$ is in  $\cD^{\gamma,2\eta+1}_{0,\e}$ and is
locally Lipschitz continuous in the initial condition $a^\e\in\CC^\eta_\e$ and the model under the metric
\begin{equ}[eq:metric_model_extra]
\$Z^\e;\bar Z^\e\$_{\gamma;[-1,2]\times\T^2}^{(\e)} + \sup_{t\in [-1,2]} \|\Psi(t)-\bar\Psi(t)\|_{\CC^{-\kappa}_\e}\;,
\end{equ}
where $\Psi,\bar\Psi$ are defined as in~\eqref{eq:Psi_def} from $Z^\e$ and $\bar Z^\e=(\bar\Pi^\e,\bar\Gamma^\e)$.

Finally, for $\e\in(0,\e_0)$, the remainders $\tilde \CA^\e, \tilde \CB^\e$ are close in $\cD^{\gamma,3\eta+1}_{0,\e}$ in that
\begin{equ}\label{eq:CA_CB_close}
\lim_{\e\to 0}\$\tilde \CA^\e;\tilde\CB^\e\$_{\gamma,3\eta+1;T}^{(\e)} \lesssim \e^\theta\;,
\end{equ}
uniformly over bounded sets of data as above.
\end{proposition}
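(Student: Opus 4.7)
The proof will follow the standard Picard iteration strategy in regularity structures, tailored to the discrete setting of Erhard--Hairer. The plan is to first address the simpler problem \eqref{e:fix-pt-Be} and then treat \eqref{e:fix-pt-Ae} as a perturbation. For \eqref{e:fix-pt-Be}, define the map $\Phi^\e$ sending $\tilde\CW^\e\in \cD^{\gamma,2\eta+1}_{0,\e}\otimes\mfg$ with $\CW^\e = G^\e a^\e + \tilde\CW^\e$ and $\CB^\e = \bone_+\bPsi + \CW^\e$ to the right-hand side of \eqref{e:fix-pt-Be} minus $\CP^\e\bone_+\bXi + G^\e a^\e$. Using the Schauder estimate \eqref{eq:conv_K}, the differentiation estimate (Lemma~\ref{lem:DRcommute}), the shifting estimate (Lemma~\ref{lem:sf-is-Lip}), and the standard product estimates in $\cD^{\gamma,\eta}_{\alpha,\e}$, I would show that each nonlinear term maps the natural ball in $\cD^{\gamma,\eta}_{0,\e}\otimes\mfg$ into $\cD^{\gamma,2\eta+1}_{0,\e}\otimes\mfg$, with a factor $T^\kappa$ gained from Schauder integration on $[0,T]$. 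Since $\eta\in(-\tfrac14,-2\kappa)$, $\gamma\in(1+10\kappa,\tfrac32)$, and the worst nonlinearity $[\cS_a\CB_j^\e,[\cS_a\CB_j^\e,\CB_i^\e]]$ lies in a sector of regularity above $-1-3\kappa$, Schauder integration closes the fixed point. A ball-contraction argument on $[0,T]$ for $T$ small yields the unique $\tilde\CB^\e$, and the local Lipschitz property in $(a^\e,Z^\e)$ under the metric \eqref{eq:metric_model_extra}.

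For \eqref{e:fix-pt-Ae}, the additional terms fall into three categories. First, the multiplicative noise $\e^\kappa F(\cE\CA^\e)[\CA^\e_i,\bar\bXi_i]$: because $F$ is smooth with compact support and $\cE$ is controlled by Lemma~\ref{lem:B_bound_by_A}, this contributes an $O(\e^\kappa)$ term in the appropriate sector once $\CA^\e\in \cD^{\gamma,\eta}_{\<IXi>,\e}$ is established. Second, the quadratic remainders $\CQ^\e_i$ all carry explicit $\e^\kappa$ prefactors, and the degree counting from Section~\ref{subsubsec:step2} together with Lemma~\ref{lem:DRcommute} and \ref{lem:sf-is-Lip} shows they lie in sectors where Schauder integration closes. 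Third, the classical remainders $\boldR_i,\boldH_i,\widetilde\boldR_i$ and the convolutions $\CL_1 P^{i;\e}*_{(i)}(\delta_i\CR\CV^\e_i+\bar\delta_i\CR\CA^\e_j)$ are lifted via $\CL_0,\CL_1$ to the polynomial sector; here Lemmas~\ref{lem:eps-A2-dA}, \ref{lem:I3}, and \ref{lem:grad-R-hat} (with $\tilde c^\e_\star$ from Definition~\ref{def:tildec} subtracted) yield bounds of order $\e^\theta r^3$ or $\e^\theta r^4$ in $\CC^{-2\kappa,T}_{-1,\e}$. The condition $\e\CR\CA^\e(t)\in V$ needed for Lemma~\ref{lem:grad-R-hat} is ensured by Remark~\ref{rem:eA_small} together with the Schauder bound on $\tilde\CA^\e$.

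Combining the three contributions, the Picard map for \eqref{e:fix-pt-Ae} differs from that for \eqref{e:fix-pt-Be} by a map whose norm on bounded balls is $O(\e^{\theta\wedge\kappa})$. For $\e<\e_0$ sufficiently small this leaves the contraction argument intact on the same time $T$, giving existence, uniqueness and membership of $\tilde\CA^\e$ in $\cD^{\gamma,2\eta+1}_{0,\e}$. For the closeness bound \eqref{eq:CA_CB_close}, subtract the two fixed point equations, write $D^\e = \tilde\CA^\e-\tilde\CB^\e$, and evaluate in $\cD^{\gamma,3\eta+1}_{0,\e}$ (the slightly weaker blow-up exponent absorbing products of $\CC^{\eta}$ data with the remainders). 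The contraction property of the common nonlinear terms yields $\$D^\e\$^{(\e)}_{\gamma,3\eta+1;T}\le \tfrac12 \$D^\e\$^{(\e)}_{\gamma,3\eta+1;T}+C\e^\theta$, whence the claim.

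The main obstacle I anticipate is the bookkeeping for $-\CL_1 P^{i;\e}*_{(i)}(\delta_i\CR\CV^\e_i+\bar\delta_i\CR\CA^\e_j)$. The kernel $P^{i;\e}$ is the full discrete heat kernel rather than the truncated $K^{i;\e}$ used in admissibility, and the coefficients $\hat C^\e_{k,\be}$ in \eqref{eq:delta_A_def} are of order $O(1)$ rather than $O(\e^\theta)$, so smallness must come from the increments $A(e+\be+\be')-A(e)$. I would exploit the H\"older regularity of $\CR\CV^\e$ (which lies in $\CC^{0,T}_{\eta,\e}$ by the reconstruction bound \eqref{e:CR-small-scale}) to bound each increment by $\e^{(2\gamma-2)\wedge 1}$ times a H\"older norm, combined with the $\e^\kappa$ prefactor on the cubic coefficients and the cancellations $\sum_\be C_\be[\<I'XiIXibar>]=0$ and \eqref{eq:C_bs_be_vanish} which eliminate the leading $O(1)$ piece of $\bar\delta_i$. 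A parallel argument for $\delta_i$ uses parity of $\hat C^\e_{k,\be}$ in $\be$ to gain the additional H\"older factor needed to close.
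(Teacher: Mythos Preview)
Your overall strategy is correct and matches the paper's proof: first close the contraction for \eqref{e:fix-pt-Be} by standard Schauder/product/differentiation/shift estimates, then treat the extra terms in \eqref{e:fix-pt-Ae} as $O(\e^{\theta\wedge\kappa})$ perturbations. The handling of $\e^\kappa F(\cE\CA^\e)[\CA^\e,\bar\bXi]$, of $\CQ^\e_i$, and of $\widetilde\boldR_i$ via Lemmas~\ref{lem:I3}--\ref{lem:grad-R-hat} is exactly what the paper does.

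The gap is in your treatment of the $\delta_i$ term. There is \emph{no} parity of $\hat C^\e_{k,\be}$ in $\be$; indeed $\tfrac14\sum_\be \hat C^\e_{k,\be}=\hat C^\e_k$ is the logarithmically divergent constant entering $C^\e_{\sym}$, so the mechanism you propose does not exist. Moreover your size estimate is off: the individual $\hat C^\e_{k,\be}$, $\hat C^\e_{3,\be,\bw}$, and the constants $C^\e_{\bs,(\pm)}[\,\<I'XiI'[IXiI'Xi]less>\,]$ are $O(\e^{-\kappa})$, not $O(1)$ (cf.\ the proof of Lemma~\ref{lem:C_sym-finite} and Lemma~\ref{lem:thick-trig-bahave}). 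The correct observation, which you partly have, is that $\delta_i$ acts on $\CR\CV^\e_i$ rather than on $\CR\CA^\e_i$: this is the whole point of the split in \eqref{e:fix-pt-Ae}. Since $\CV^\e\in\cD^{\gamma,\eta}_{0,\e}$ reconstructs to a function with genuine \emph{positive} spatial H\"older regularity---one has $\CR\CV^\e\in\CC^{\beta+\eta,T}_{-\beta,\e}$ for any $\beta\in[0,1]$ (not merely the $\CC^{0,T}_{\eta,\e}$ you cite, which carries no spatial regularity)---the increments $(\CR\CV^\e)(e+\be+\be')-(\CR\CV^\e)(e)$ are directly $O(\e^{\alpha})$ for $\alpha$ close to $1$. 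Combined with the $O(\e^{-\kappa})$ coefficients this yields $\|\delta_i\CR\CV^\e_i(t)\|_{L^\infty_\e}\lesssim \e^\kappa t^{-\kappa+\eta/2}$ without any cancellation among the $\hat C^\e_{k,\be}$. By contrast, $\bar\delta_i$ acts on $\CR\CA^\e_j$, which contains the rough piece $\Psi_j\in\CC^{-\kappa}_\e$; here the cancellations \eqref{eq:C_be_vanish} and \eqref{eq:C_bs_be_vanish} are essential to rewrite $\bar\delta_i\CR\CA^\e_j$ as terms of the form $\e\partial A$, and then one uses $\|\e\partial\Psi\|_{\CC^{-2\kappa}_\e}\lesssim\e^\kappa$ from Lemma~\ref{lem:eps-improve-reg}. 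So the two terms $\delta_i$ and $\bar\delta_i$ are handled by \emph{different} mechanisms, not parallel ones.
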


\begin{proof}
We first show well-posedness of $\CB^\e$ in $\cD^\sol$ and that $\tilde\CB^\e$ is in $\cD^{\gamma,2\eta+1}_{0,\e}$ (we can assume $\e\in(0,1]$ for this).
Recalling Remark~\ref{rem:P_bone_+_xi} and our definition of the metric~\eqref{eq:metric_model_extra}, it follows that $\CP^\e\bone_+\bXi \in \cD^{\gamma,\eta}_{-\kappa,\e}$ is Lipschitz continuous in the model.
%(see \cite[Appendix~A.3]{CCHS_2D} or \cite[Prop.~9.8]{Hairer14} and also Remark~\ref{rem:P_bone_+_xi}).
Since furthermore $ G^\e a^\e \in \cD^{\gamma,\eta}(\bar\CT)$,  it suffices to show that the map
\begin{equs}
\cD^\sol\ni \CA &\mapsto \CP^{i;\eps} \bone_+\Big(
\Big[
\cS_{a} \CA_j^\e ,
2\mcb{D}_j \CA^\e_i   - \bar{\mcb{D}}_i \bPsi_j - \hat{\mcb{D}}_i \CV^\e_j
\Big] 
\\
&\qquad
+ [\CA^\e_i, \mcb{D}_i \CA^\e_i ]
+[\cS_{a} \CA_j^\e ,[\cS_{a} \CA_j^\e ,\CA_i^\e]]
+ c^\eps_{\star,i} \CA^\e \Big)
%+ \bdelta_i^\e
%- \CL_1 P^{i;\e} *_{(i)} \delta \CR \CA^\e_i
\in \cD^{\gamma,2\eta+1}_{0,\e}
\end{equs}
maps every ball in $\cD^\sol$ to a ball of radius $T^{\bar\kappa}$ in $\cD^{\gamma,2\eta+1}_{0,\e}$
and furthermore has Lipschitz constant $T^{\bar\kappa}$ for some $\bar\kappa>0$ and
all $T>0$ sufficient small when all modelled distributions are restricted to $(0,T]$.
To this end, for $\CA\in\cD^\sol$,
the terms in the parentheses $(\ldots)$ are clearly bounded in $\cD^{\gamma-1-\kappa,2\eta-1}_{-1-2\kappa,\e}$ due to Lemmas~\ref{lem:DRcommute} and~\ref{lem:sf-is-Lip} and~\cite[Sec.~5.1]{EH19}.
The well-posedness of $\CB^\e$ in $\cD^\sol$ and the fact that $\tilde \CB^\e$ is in $\cD^{\gamma,2\eta+1}_{0,\e}$ and is locally Lipschitz in the model and initial condition
follow by standard short-time convolution estimates and their multi-level versions~\cite[Lem.~6.2]{EH19} (see also~\eqref{eq:conv_K}).

To show well-posedness of $\CA^\e$ in $\cD^\sol$ and~\eqref{eq:CA_CB_close},
it remains to consider % the remaining terms in~\eqref{e:fix-pt-Ae}, i.e.
\[
\CP^{i;\e} \bone_+ \Big(
% \frac{\e^\kappa}{2} [\CA^\e_i,\bar\bXi_i]
\e^\kappa F(\cE\CA_i^\e)[\CA^\e_i, \bar\bXi_i]
+  \CQ^{\e}_i
+ \boldR_i
+\boldH_i
\Big)
- \CL_1 P^{i;\e} *_{(i)} (\delta_i \CR \CV^\e_i + \bar\delta_i \CR \CA^\e_j)
+ \widetilde{\boldR}_i \;,
\]
and show that these are well-defined and of order $\e^\theta$ in $\cD^{\gamma,3\eta+1}_{0,\e}$ 
uniformly over bounded sets of data and provided $\e<\e_0$ is sufficiently small.
Since  
$\bar\bXi\in \cD^{\infty,\infty}_{-1-2\kappa,\e}$
and
$\CA^\e \in \cD^{\gamma,\eta}_{-\kappa,\e}$,
one has $[\CA^\e_i, \bar\bXi_i] \in \cD^{\gamma-1-2\kappa,\eta-1-2\kappa;i}_{-1-3\kappa,\e}$.
By Lemma~\ref{lem:B_bound_by_A}, $F(\cE\CA_i^\e)\in \cD^{\gamma,\eta;i}_{0,\e}$, and
\[
F(\cE\CA_i^\e)[\CA^\e_i, \bar\bXi_i] \in 
\cD^{\gamma-1-3\kappa, \, (\eta-1-3\kappa)\wedge (2\eta-1-2\kappa); \,i}_{-1-3\kappa,\e} \;.
\]
Therefore under our assumptions on $\eta,\kappa$ one has
\begin{equ}[e:eps-kappa-PAXi]
 \VERT  \e^\kappa \CP^{i;\e} \bone_+ 
% [\CA^\e_i,\bar\bXi_i]   
\big (F(\cE\CA_i^\e)[\CA^\e_i, \bar\bXi_i] \big)
  \VERT_{ \gamma,3\eta+1}^{(\e)}
\lesssim \eps^\kappa \;.
\end{equ}
For the term with $\CQ_i^{\e}$
as defined in \eqref{e:def-tildeCR2}, since
$\CV^\e  \in \cD_{0,\e}^{\gamma,\eta}$, 
$\mcb{D}\CV^\e  \in \cD_{-3\kappa,\e}^{\gamma-1,\eta-1}$ and 
\[
\mcb{D} \bPsi \in \cD_{-1-\kappa,\e}^{\infty,\infty} \;,
\qquad
\mcb{D}  \bar\bPsi \in \cD_{-2\kappa,\e}^{\infty,\infty} \;,
\qquad
[\mcb{D}  \tilde\bPsi, \tilde{\mcb{D}} \tilde\bPsi] \in \cD_{-1-3\kappa,\e}^{\infty,\infty} 
\]
for various differentiation operators $\mcb{D},\tilde{\mcb{D}}$ and shifts (see Remark~\ref{rem:quad-tildeXi} for the meaning of the last term)
the terms of the form $\e^\kappa (\mcb{D}  \tilde\bPsi)(\mcb{D} \tilde\bPsi)$ (with shifts)
are clearly bounded in $\cD_{-1-3\kappa}^{\infty,\infty}$ by a multiple of $\eps^\kappa$,
and, by Lemmas~\ref{lem:DRcommute} and~\ref{lem:sf-is-Lip}, the terms of the form $\e^\kappa (\mcb{D}  \bar\bPsi)(\mcb{D}\CV)$ (with shifts)
are bounded in $ \cD_{-5\kappa,\e}^{\gamma-1-2\kappa,\eta-1-2\kappa}$ by a multiple of $\eps^\kappa$.
So we obtain $\$  \CP^\e  \bone_+ \CQ_i^{\e}  \$_{ \gamma,2\eta+1}^{(\e)} \lesssim \eps^\kappa$ uniformly in $\eps$.

One can also show that $\$\CP^\e \bone_+  \boldR_i\$_{\gamma,2\eta+1}^{(\e)} \lesssim \eps^{1-4\kappa}$.
Indeed,  recalling the small scales bound $\|\CR\CA^\e\|_{\eta;\K_\e;z;\e}\lesssim\$\CA^\e\$_{\gamma,\eta;\K_\e;\e}$ by Lemma~\ref{lem:reconstruct},
the (linear) map
\begin{equ}[eq:A_to_RA]
\cD^\sol\ni\CA^\e \mapsto \CR \CA^\e(t) \in \CC^\eta_\e
\end{equ}
is continuous, uniformly over $t\in(0,T]$, $\e>0$, over bounded balls in $\cD^{\sol}$, initial conditions in $\CC^\eta_\e$, and compatible models
%\eqref{e:assump-model} uniformly bounded.
satisfying Assumption~\ref{as:models} for a fixed $r>0$.
(We implicitly used the
additional bound $\|\Psi_i(t)\|_{L^\infty_\e}\lesssim \e^{-\kappa}$ from~\eqref{e:assump-Psi-1} to obtain~\eqref{eq:A_to_RA}.)
Lemma~\ref{lem:eps-improve-reg}\ref{pt:eps_f_vanish} thus implies $\|\CR\CA^\e(t)\|_{L^\infty_\e} \lesssim \e^{\eta}$.
Hence, for $\e<\e_0$ sufficiently small, which ensures $\e\CR\CA^\e \in V$
and thus $r_h ( \e \CR\CA^\e)$ and $r_l(\e \CR\CA^\e)$ are well-defined,
we obtain by analyticity of $r_h$, $r_l$ and Remark~\ref{rem:power_of_eps},
that the map
\begin{equ}
\cD^\sol\ni\CA^\e \mapsto \e^{-4+4\kappa} \big( r_h ( \e \CR\CA^\e) +  r_l(\e \CR\CA^\e)\big)(t)\in L^\infty_\e
\end{equ}
is locally Lipschitz, uniformly over the same data as above,
and so $\e^{-3} r_h ( \e \CR\CA^\e)
 +  \e^{-3}  r_l(\e \CR\CA^\e) $ can be
 lifted to polynomial regularity structure of order $0$ with $\cD^{0,0}$ norm bounded by $\eps^{1-4\kappa}$ uniformly in $\eps < \e_0$.
The bound $\$\CP^\e \bone_+  \boldR_i\$_{\gamma,2\eta+1}^{(\e)} \lesssim \eps^{1-4\kappa}$ follows from short-time convolution estimates
and the definition of $\boldR_i$ in~\eqref{e:def-boldR}.
%Note that we have not yet used the bounds~\eqref{e:assump-model_2}.

For $\boldH_i$, we obtain $\$\CP^\e \bone_+  \boldH_i\$_{\gamma,2\eta+1}^{(\e)} \lesssim \eps^{1-2\kappa}$ in the same way as for $\boldR_i$ upon using the bound $|c_j^\e[\<r_z_large>]|\lesssim\e^{-3\kappa}$
from~\eqref{eq:C_bounds_large_cherries}
and remarking that $h^\e_1 (A)=O(\e A^2)$, $h^\e_2 (A)=O(\e A^2)$, and $h^\e_3 (A)=O(\e^{1-\kappa})$.

Turning to the term $\widetilde{\boldR}_i$, recall 
$\|\CR\CV^\e\|_{\eta;\K_\e;z;\e}\lesssim\$\CV^\e\$_{\gamma,\eta;\K_\e;\e}$ by Lemma~\ref{lem:reconstruct}.
One has, for any $\beta\in[0,1]$,
$\CR\CV^\e \in \CC^{\beta+\eta}_{-\beta,\e}$ (with blow-up due to initial condition).
Applying Lemma~\ref{lem:eps-improve-reg}\ref{pt:eps_d_f_vanish} with $\alpha=\beta+\eta$, $\beta\in[-\eta,1]$, therein,
\begin{equ}[eq:bounds_on_V]
\|\e^{1-\alpha}\partial\CR\CV^\e(t)\|_{L^\infty_\e}\lesssim t^{-\beta/2}\;,\qquad
\|\e^{2-\alpha}\partial^2\CR\CV^\e(t)\|_{L^\infty_\e}\lesssim t^{-\beta/2}\;.
\end{equ}
Taking $\beta = \frac12-\eta+\kappa$, so $1-\alpha=\frac12-\kappa$, we obtain a bound of order $\e^{2\kappa}$
in $\CC^{0,T}_{-2\beta,\e}=\CC^{0,T}_{-1+2\eta-2\kappa,\e}$ on the final two terms in the parenthesis $\{\ldots\}$ in~\eqref{e:def-tilde-boldR},
which gives a bound of order $\e^{2\kappa}$ in $\cD^{\gamma,3\eta+1}_{0,\e}$ after convolution with the heat kernel.

The rest of the terms in $\{\dots\}$ in \eqref{e:def-tilde-boldR} are of order $\e^\theta$
in $\CC^{-2\kappa,T}_{-1,\e}$ for $\e<\e_0$ by our choice of $\tilde c^\e_\star$ in Definition~\ref{def:tildec} and by Lemmas~\ref{lem:I3} and
\ref{lem:grad-R-hat}
together with Remark~\ref{rem:eA_small}
and the fact that $\CR\CV^\e$ satisfies~\eqref{eq:v_bound} (for a possibly larger $r$).
After convolution with the heat kernel, these terms, and thus $\widetilde{\boldR}_i$, are of order $\e^\theta$ in $\cD^{\gamma,1}_{0,\e}$.

Recalling $\delta_i$ from~\eqref{eq:delta_A_def}, by using the first bound in~\eqref{eq:bounds_on_V} with $\alpha=2\kappa$, $\beta = 2\kappa-\eta$ (and its obvious extension to derivatives of the form $\e^{-1}\{f(\cdot+\be+\be')-f(\cdot)\}$)
and $|\hat C^\e_{j,\be}|
+|\hat{C}^\e_{3,\be,\bw}|
+|C_{\bs,\pm}^\e [\, \<I'XiI'[IXiI'Xi]less>\,]|
+|C_{\bs}^\e [\, \<I'XiI'[IXiI'Xi]less>\,]| \lesssim \e^{-\kappa}$ (Lemma~\ref{lem:thick-trig-bahave}), we obtain
\begin{equ}[eq:delta_term_vanish]
\|(\delta_i \CR\CV^\e_i)(t)\|_{L^\infty_{\e}} \lesssim \e^{\kappa}t^{-\kappa+\eta/2}\;.
\end{equ}
After convolution with the heat kernel, this term is of order $\e^\kappa$ in $\cD^{\gamma,1}_{0,\e}$.

Finally, we recall $\bar\delta_i$ from~\eqref{eq:bar_delta_def}, and $|C^\e_{\be,\bw}[\<IXiI'Xibar>]|,|C^\e_{\be}[\<I'XiIXibar>]|\lesssim \eps^{-\kappa}$ by~\eqref{eq:C_bounds_large_cherries}.
Note the elementary identity
$\sum_{i=1}^n\lambda_iy_i = \sum_{i=1}^{n-1} (\sum_{j=1}^i\lambda_j)(y_{i}-y_{i+1})$
whenever 
$\sum_{i=1}^n\lambda_i=0$.
%\begin{equ}[e:element-identity]
%\sum_{i=1}^n\lambda_iy_i = \sum_{i=1}^{n-1} (\sum_{j=1}^i\lambda_j)(y_{i}-y_{i+1}) \quad \mbox{whenever}
%\quad \sum_{i=1}^n\lambda_i=0
%\end{equ}
Using this identity with $n=4$,
and recalling~\eqref{eq:C_be_vanish},
the first term  in~\eqref{eq:bar_delta_def}
$\e^{\kappa} \sum_{\be\in\CE_\times} \ad_{C^\e_{\be}[\<I'XiIXibar>]}A(e+\be)$
can be written as linear combinations of terms of the form $\e\d A$.
Using the above identity with $n=2$ 
 and recalling~\eqref{eq:C_bs_be_vanish},
the second term  in~\eqref{eq:bar_delta_def} 
$\e^{\kappa} 
\sum_{\bw\in\{\Northwest,\Southwest\}} \big( \sum_{\be\in\CE_\times} \ad_{C^\e_{\be,\bw} [\<IXiI'Xibar>]}\big)A(e^\bw)$
can be also written in the form $\e\d A$.
We have $\|\CR\CA^\e(t)\|_{L^\infty_\e}\lesssim t^{\eta/2}\e^{-\kappa}$ by~\eqref{eq:A_to_RA},
which implies by Lemma~\ref{lem:eps-improve-reg}\ref{pt:eps_d_f}
$
\|\bar\delta_i\CR\CA_j^\e\|_{\CC^{-2\kappa}_\e}\lesssim \|\e\d \CR\CA_j^\e(t)\|_{\CC^{-2\kappa}_\e}\lesssim t^{\eta/2}\e^\kappa$.
After convolution with the heat kernel, this term is of order $\e^\theta$ in $\cD^{\gamma,1}_{0,\e}$.
\end{proof}

\subsection{Renormalised equation}
\label{subsec:renorm_eq}

Let $(\Pi^\e,\Gamma^\e)$ be the canonical model from Section~\ref{sec:canonical} and 
let $(\hat\Pi^\e,\hat\Gamma^\e)$ be as in Definition~\ref{def:renorm_model}.
We call $(\hat\Pi^\e,\hat\Gamma^\e)$ the \textit{renormalised model}.
Let $\tilde c^\e_\star$ be as in Definition~\ref{def:tildec} for $\Psi_i = K^{i;\e}*_{(i)}\xi^\e_i$ (which exists due to Lemma~\ref{lem:Psi_Wick}).
By Propositions~\ref{prop:canonical_model} and~\ref{prop:renormalised_model}, $(\Pi^\e,\Gamma^\e)$ and $(\hat\Pi^\e,\hat\Gamma^\e)$ are both compatible models.
Let $\hat\CR$ be the reconstruction operator for $(\hat\Pi^\e,\hat\Gamma^\e)$.
Recall $V\subset \mfg^{\obonds}$ from Section~\ref{sec:Notation}.

\begin{lemma}\label{lem:renormalised-equ}
There exists $c^\eps_\star\in L(\mfg^2,\mfg^2)$
such that $c^\eps_\star=O(1)$ and, if $\CA^\eps$ solves the 
fixed point problem \eqref{e:fix-pt-Ae},
then $\hat\CR\CA^\eps$ solves the  discrete SPDE \eqref{e:Aeps}
up to the exit time of $\hat\CR\CA^\eps$ from the set $V$.
\end{lemma}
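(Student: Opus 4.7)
The strategy is to apply $\hat\CR$ to the fixed point equation \eqref{e:fix-pt-Ae}, use the multiplicativity of $\hat\CR$ with respect to products in the renormalised model, and then identify all contributions from the contractions in $M^\e$ with the explicit renormalisation $c^\e_\star A$ in \eqref{e:Aeps}. Since $(\hat\Pi^\e,\hat\Gamma^\e)$ is compatible (Proposition~\ref{prop:renormalised_model}), Lemmas~\ref{lem:DRcommute},~\ref{lem:RecShft}, and~\ref{lem:Rec_Mult_eps} apply, so reconstruction commutes with $\mcb{D},\bar{\mcb{D}},\mcb{D}^\pm$, with shifts $\cS_\be$ (giving evaluation at $e+\be$ and hence $A_j(e^{(a)})$ via $\cS_a$), and with the multiplication-by-$\eps$ map $\cE$ (so $\hat\CR\cE\CA^\e = \e\hat\CR\CA^\e$). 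Writing $A_i = \hat\CR\CA^\e_i$, the Schauder part $\CP^{i;\e}\bone_+(\ldots)$ reconstructs to the heat kernel convolution, and up to the $O(1)$ counterterm $c^\e_\star A$ coming from the renormalisation, it suffices to verify that the contractions in $M^\e$ reproduce exactly the remainder/correction terms appearing in \eqref{e:Aeps}.

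The core computation is to reconstruct, tree by tree, the ``negative'' trees whose products appear in \eqref{e:fix-pt-Ae}. For the quadratic YM terms $[\cS_a\CA_j^\e, 2\mcb{D}_j\CA_i^\e - \bar{\mcb{D}}_i\bPsi_j - \hat{\mcb{D}}_i\CV_j^\e]$, $[\CA_i^\e,\mcb{D}_i\CA_i^\e]$ and the cubic $[\cS_a\CA_j^\e,[\cS_a\CA_j^\e,\CA_i^\e]]$, the Wick-type contractions \eqref{e:M-on-YM} produce, after averaging over $\be\in\CE_\times$ (and $\bw\in\{\Northwest,\Southwest\}$ where needed), precisely the combination $C^\e_\sym = 2\hat C^\e_1 + 2\hat C^\e_2 - 2\hat C^\e_3 - \hat C^\e_4 + \hat C^\e_5 - \bar C^\e$ of \eqref{e:def-CSYM} acting by $\ad$-brackets on the ``local'' value $A(e)$, plus the \emph{nonlocal} remainders $\ad_{\hat C^\e_{k,\be}}(A(e+\cdots) - A(e))$ which are exactly the first two lines of $\delta_i A$ in \eqref{eq:delta_A_def}; these nonlocal pieces are cancelled by subtracting the $-\CL_1 P^{i;\e}*(\delta_i\CR\CV_i^\e)$ term in \eqref{e:fix-pt-Ae}. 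The contractions \eqref{eq:tilde_C_renorm} involving $\tilde C_\be$ vanish upon summation against Lie brackets (Remark~\ref{rem:superificial_renorm}). For the $\square$-cherry trees \eqref{eq:C_square_cherries}, the contractions yield evaluations against $A_j(e+\be)$ and $A_j(e^\bw)$ that are precisely absorbed by $-\CL_1 P^{i;\e}*(\bar\delta_i\CR\CA_j^\e)$.

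For the multiplicative noise $\e^\kappa F(\cE\CA_i^\e)[\CA_i^\e,\bar\bXi_i]$, expansion of $F$ around $0$ using $F(\e A) = \frac12 + \frac1{12}\ad_{\e A} + O(\e^2 A^2)$ combined with the contractions \eqref{e:M-XiIXi} and \eqref{e:M-four} generates: the leading Stratonovich correction which matches the definition of $C^\e[\<PsiXi1>]$ and yields the first-order part of $h_1^\e$ via \eqref{e:def-h123A} (note the $\frac1{12}\ad$ subtraction in $h_1^\e$ is precisely what the $\frac12 + \frac1{12}\ad$ expansion of $F$ provides); the $\e$-order correction $h_2^\e$ from the $F'$-linear contribution; and the four-noise $\<r_z_large>$ contraction gives $h_3^\e$ with coefficients $c^\e_1[\<r_z_large>]-c^\e_2[\<r_z_large>]$ (the difference arising from the two tensor structures in \eqref{e:def-C-four}). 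The contractions involving $\X^{(i)}\<PsiXi1>$ and $\<PsiPsibarXibar>$ do not contribute to the renormalised equation thanks to Lie brackets and the $\X$ factor (Remark~\ref{rem:superificial_renorm}).

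For the ``$\tilde R_2$-type'' remainder $\CQ^\e_i$ of \eqref{e:def-tildeCR2}, the trees of the form $\<I'XiI'[IXiI'Xi]less>$ produce, under the contractions \eqref{e:div-R2}, exactly the third line of $\delta_i A$ in \eqref{eq:delta_A_def} (nonlocal, cancelled by $-\CL_1P^{i;\e}*\delta_i\CR\CV_i^\e$) together with the local $O(1)$ contribution $C^\e_\rem$ from \eqref{e:def-CREM}; the latter is absorbed into $c^\e_\star$. The remaining pieces of $\CQ^\e_i$ involving $\CV^\e$ factors, together with $\widetilde\boldR_i$, $\boldR_i$, and the non-lifted quadratic terms in $\CV^\e$, produce (via the classical reconstruction identity $\hat\CR(f\cdot g) = (\hat\CR f)(\hat\CR g)$ for positive-degree sectors) exactly $\tilde R_2$, $I_3$, and $-\frac12\e^{-1}\hat R^\nabla_\e(\e A)$ as in Proposition~\ref{prop:rescaled-equ}, but with the local counterterm $\tilde c^\e_\star A$ subtracted (which enters $c^\e_\star$ with the opposite sign via Definition~\ref{def:tildec}). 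Finally setting
\begin{equ}
c^\e_\star A \eqdef C^\e_\sym \cdot A + C^\e_\rem \cdot A + (\text{remaining local contractions}) + \tilde c^\e_\star A,
\end{equ}
one verifies $c^\e_\star = O(1)$ using Lemmas~\ref{lem:C_sym-finite} and \ref{lem:thick-trig-bahave}, and Definition~\ref{def:tildec}, which identifies $\hat\CR\CA^\e$ as a solution to \eqref{e:Aeps} up to the exit time from $V$.

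The main obstacle is bookkeeping: carefully tracking which contraction in $M^\e$ corresponds to which graphical tree occurring as a subtree of the right-hand side of \eqref{e:fix-pt-Ae} (in particular keeping track of the averaging over $\CE_\times$ and $\{\Northwest,\Southwest\}$ built into $\cS_a$ and $\hat{\mcb{D}}$ so that only the symmetric combinations \eqref{e:def-C16} survive), and verifying the fine cancellations in \eqref{e:div-R2} and \eqref{e:M-four} between the tensor structures in \eqref{e:def-C-four} and the operators $F$ and $\ad$. Once this accounting is done, the identification reduces to matching coefficients term by term with Proposition~\ref{prop:rescaled-equ}.
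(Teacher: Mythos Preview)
Your overall strategy matches the paper's: expand $\CA^\e$ to the required degree, compute the action of $M^\e$ on each nonlinear term, and identify the resulting local counterterms with $c^\e_\star A$ and the nonlocal ones with $\delta_i,\bar\delta_i,\boldH_i$. The bookkeeping you outline (which contractions feed into $C^\e_\sym$, $C^\e_\rem$, the nonlocal pieces of $\delta_i$, etc.) is broadly correct.

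There is, however, a conceptual imprecision that would cause trouble if carried out literally. You write that one should ``use the multiplicativity of $\hat\CR$ with respect to products in the renormalised model'' and later invoke ``$\hat\CR(f\cdot g)=(\hat\CR f)(\hat\CR g)$ for positive-degree sectors''. The renormalised reconstruction $\hat\CR$ is \emph{not} multiplicative; if it were, no counterterms would appear. The correct mechanism, which the paper uses explicitly, is the identity $\hat\CR f(x)=(\hat\Pi^\e_x f(x))(x)=(\Pi^\e_x M^\e f(x))(x)=\CR(M^\e f)(x)$, where $\CR$ is the reconstruction for the \emph{canonical} model (which \emph{is} multiplicative, Remark~\ref{rem:canonical_R_mult}). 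One then expands $M^\e f = f - (\text{contractions})$ and uses multiplicativity of $\CR$ on the uncontracted part. A second ingredient you do not state is that $\hat\CR\CA^\e=\CR\CA^\e$ (and likewise for $\CV^\e,\bXi,\bar\bXi$), which is what allows the answer to be written in terms of $\hat\CR\CA^\e$ alone; this holds because $M^\e$ acts trivially on the symbols appearing in the expansion of $\CA^\e$ at degree $\le 0$ that survive pointwise evaluation.

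Two smaller points. First, your final formula for $c^\e_\star$ has the wrong sign: the renormalisation produces an \emph{extra} term $(\ad_{C^\e_\sym}+\ad_{C^\e_\rem})A^\e_i$ on the right-hand side, so one must take $c^\e_\star=-\ad_{C^\e_\sym}-\ad_{C^\e_\rem}+\tilde c^\e_\star$ (see \eqref{e:specify-c_star}); there are no ``remaining local contractions'' beyond these. Second, in the multiplicative-noise analysis you miss a subtle cancellation: the term $-\frac12(F(\e A_i)-\frac12)\ad_{\Cas}(\bPsi_i+a_i\bone^{(i)})$ from \eqref{e:F-multi-noise-1-result} vanishes upon reconstruction only because $\e A_i=\e\hat\CR\CA^\e_i$ and $\ad_X^k X=0$ for $k\ge 1$ in \eqref{eq:def_F-rep}; without noting this, one would erroneously pick up an extra $A$-dependent counterterm.
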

An important outcome of the following proof is that
in the equation for $A^\e$, the renormalisation map only produces 
1)  vanishing terms corresponding to \eqref{eq:delta_A_def}-\eqref{e:def-h123A}
and 2) linear terms in $A^\e$.
Both of these are eventually cancelled thanks to our formulation of the fixed point problem in Section~\ref{subsec:FPP}.

%An important outcome of the following proof is that
%in the equation for $A^\e (e)$ where $e\in\obonds_i$ the renormalisation map only produces 
%1) linear terms  evaluated at the same bond $e$  or the neighboring bonds of $e$ in $\obonds_i$, but {\it not} $\obonds_j$
%(with only one  exception which leads to a ``$\bar\delta$ term'' \eqref{eq:bar_delta_def} but this vanishes as we saw below \eqref{e:element-identity}); 2) vanishing terms corresponding to \eqref{e:def-h123A}. 
%These terms produced by renormalisation are eventually all cancelled thanks to our formulation of the fixed point problem in Section~\ref{subsec:FPP}, so that our discrete Langevin dynamic  \eqref{e:Aeps} remains untouched.

\begin{proof}
We do not have the general results as in \cite{BCCH17} or \cite[Prop.~5.68]{CCHS_2D}   so we proceed by hand. Throughout the proof we assume $j\neq i$.
In view of the fixed point problem \eqref{e:fix-pt-Ae},
the solution $\CA^\e$ has the following expansion up to degree $1$: 
\begin{equs}[e:CA-expand]
\CA^\e_i 
&=
 \mcb{I} \bXi_i + a_i \bone^{(i)}
+
 \mcb{I} \big[\cS_{a}\bPsi_j +\cS_{a} (a_j \bone^{(j)}), 
2\CD_j \bPsi_i \big]
\\
& \quad
+\mcb{I}  \big[\cS_{a}\bPsi_j +\cS_{a} (a_j \bone^{(j)}), 
 \bar{\CD}_i \bPsi_j \big] 
 +\mcb{I} \big[ \bPsi_i+ a_i \bone^{(i)} , \CD_i \bPsi_i \big]
\\
&\quad
+
\frac{\e^\kappa}{2}  \mcb{I} [  \bPsi_i +a_i \bone^{(i)},   \bar\bXi_i ]
- \frac{\eps^\kappa}{2} \mcb{I}  [(\CD_j^+ +\CD_j^-) \tilde\bPsi_i, \bar{\CD}_j \tilde\bPsi_j]
\\
&\quad
 + \frac{\e^\kappa}{4}\mcb{I} [\CD_j^+\cS_{\southeast} \tilde\bPsi_j, \CD_j^+ \cS_{\southwest}\tilde\bPsi_j]
+ 
\langle \nabla a_i, {\bf X}^{(i)}\rangle\;.
\end{equs}
Here, we write $\bPsi_i =\mcb{I} \bXi_i$, $\bar\bPsi_i =\mcb{I} \bar\bXi_i$, $\tilde\bPsi_i =\mcb{I} \tilde\bXi_i$
as in Section~\ref{subsec:FPP}
(with the terms involving $\tilde\bPsi_i$ understood as in Remark~\ref{rem:quad-tildeXi}),
and $\nabla a_i = (\nabla_1 a_i,\nabla_2 a_i)$
is a vector-valued coefficient 
which is not necessarily the derivative of $a_i$.
Note that % the terms such as $\CD_j \bPsi_i$ are the same as $\mcb{D}_j \bPsi_i$ since $\bPsi$ is constant modelled distribution. On the other hand 
$\cS$ in \eqref{e:CA-expand} is a nonlocal operator which we treat carefully below (see e.g. \eqref{e:calc-double-shifts}). 

Remark that the nonlocal parts of differentiation operators in  \eqref{e:fix-pt-Ae}
do not contribute to \eqref{e:CA-expand} at degree $\le 1$: for instance, for the terms of the form 
$\CA^\e \mcb{D}\CA^\e = \CA^\e \CD \CA^\e+ \CA^\e (H \CA^\e \bone)$,
 the last term  upon hitting by $\mcb{I}$ has degree $>1$.

By the definition \eqref{e:shift-Dga} of $\cS$,
and the definition of  $\CS_{\be}$ on abstract polynomials,
\begin{equ}[e:Sa1]
\cS_{a} (a_j \bone^{(j)}) (e) = 
\frac14\sum_{\be\in \CE_\times} a_j (e+\be) \bone^{(i)} \;,
\qquad
e\in \obonds_i\;.
\end{equ}
We  remark that all the terms in \eqref{e:CA-expand} take values in $\CT^{(i)}\otimes \mfg$ and the Lie brackets act on the $\mfg$ component,
where $\CT^{(i)}$ is the $i$th component in our splitting
\eqref{e:T1T2}.

We now go through all the terms in~\eqref{e:fix-pt-Ae} that require renormalisation, expand them up to degree $0$,
and apply the map $M^\e$ to them in order to compute the 
right-hand side of our equation.
The cubic term is expanded as
\begin{equs}[e:cubic-exp]
{} &
[\bPsi_j^{(a)}, [\bPsi_j^{(a)}, \bPsi_i]]
+ [a_j^{(a)}, [\bPsi_j^{(a)}, \bPsi_i]]
+ [\bPsi_j^{(a)}, [a_j^{(a)}, \bPsi_i]]
+ [\bPsi_j^{(a)}, [\bPsi_j^{(a)}, a_i]]
\\
&
+ [a_j^{(a)}, [a_j^{(a)}, \bPsi_i]]
+ [a_j^{(a)}, [\bPsi_j^{(a)}, a_i]]
+ [\bPsi_j^{(a)}, [a_j^{(a)}, a_i]]
+ [a_j^{(a)}, [a_j^{(a)}, a_i]]
\end{equs}
where $a_j^{(a)}$ is a shorthand for \eqref{e:Sa1},
and $a_i=a_i\bone^{(i)}$, $\bPsi_j^{(a)} =\cS_{a} \bPsi_j$ as shorthands.
Upon applying the renormalisation map $M^\e$, only the 1st and 4th terms are affected, and the expression transforms as
\begin{equ}[eq:cubic_ren_eq]
\eqref{e:cubic-exp} \mapsto 
 \eqref{e:cubic-exp}
 %- C[\<IXi^2>] \,\Cas 
 -\ad_{\bar{C}^\e}
 \,(\bPsi_i+a_i \bone^{(i)})\;,
\end{equ}
where we recall the definition of $\bar C^\e$~\eqref{e:def-C16} and of $\ad_{X\otimes Y}=[X,[Y,\cdot]]$ for $X,Y\in\mfg$ following~\eqref{eq:delta_A_def}
and the minus sign in~\eqref{eq:M_tau_def}.
Here, $\bPsi_i+a_i \bone^{(i)}$ is just $\CA^\e_i$ truncated at degree $0$ and one has $\hat\CR \CA^\e_i =\hat\CR(\bPsi_i+a_i \bone^{(i)})$.

For $[\CA^\e_i, \mcb{D}_i \CA^\e_i ]$, we have $\mcb{D}_i \CA^\e_i = \CD_i \CA^\e_i + b$, where $b\in \CT[\bone^{(i)}]\otimes \mfg$.
The product of $\CA^\e_i$ and $b$ is not renormalised, so it suffices to consider $[\CA^\e_i, \CD_i \CA^\e_i ]$,
which is expanded as,  up to degree $0$,
\begin{equs}[e:quad-exp-1]
{}&
[\bPsi_i + a_i  , \CD_i \bPsi_i + \nabla_i a_i \bone^{(i)}]
+ [\langle \nabla a_i,{\bf X}^{(i)} \rangle, \CD_i \bPsi_i]
\\
&+\Big[
\mcb{I} [a_j^{(a)} +\bPsi_j^{(a)} ,  2 \CD_j \bPsi_i ]  
+ \mcb{I} [a_j^{(a)} +\bPsi_j^{(a)} , - \bar{\CD}_i \bPsi_j  ] 
\\
&\qquad\qquad\qquad\qquad\qquad\qquad
+ \mcb{I} [a_i  +\bPsi_i ,\CD_i \bPsi_i  ] \; , \;\; \CD_i \bPsi_i 
\Big]
\\
&+ \Big[ \bPsi_i +a_i \; ,  \;\; 
\CD_i \mcb{I} [a_j^{(a)} +\bPsi_j^{(a)} ,  2 \CD_j \bPsi_i ]  
\\
&\qquad\qquad
+\CD_i  \mcb{I} [a_j^{(a)} +\bPsi_j^{(a)} , - \bar{\CD}_i \bPsi_j  ] 
+\CD_i \mcb{I} [a_i + \bPsi_i , \CD_i \bPsi_i] 
\Big] 
+ \mcH^{(1)}\;.
%\\
%&+\frac{\e^\kappa}{2}
% [\bPsi_i+a_i , \CD_i\mcb{I} [\bPsi_i+a_i  , \bar\bXi_i]]
%+\frac{\e^\kappa}{2}
%[\mcb{I} [\bPsi_i+a_i , \bar\bXi_i], \CD_i \bPsi_i] 
%\\
%&-\frac{\e^\kappa}{2}
% \Big[\bPsi_i+a_i  , \CD_i\mcb{I}  [(\CD_j^+ +\CD_j^-) \tilde\bPsi_i, \bar{\CD}_j \tilde\bPsi_j]
% -\frac12 \CD_i\mcb{I}  [\CD_j^+ \cS_{\southeast} \tilde\bPsi_j, \CD_j^+\cS_{\southwest} \tilde\bPsi_j]\Big]
% \\
%&+\frac{\e^\kappa}{2}
%\Big[\mcb{I}  [(\CD_j^+ +\CD_j^-) \tilde\bPsi_i, \bar{\CD}_j \tilde\bPsi_j]
% -\frac12  \mcb{I}  [\CD_j^+ \cS_{\southeast} \tilde\bPsi_j, \CD_j^+ \cS_{\southwest} \tilde\bPsi_j], \CD_i \bPsi_i\Big] \;.
\end{equs}
The final term $\mcH^{(1)}$ involves trees
$\<IXiI'Xibar>$, $\<supercherry1>$, $\<I'XiIXibar>$, $\<supercherry2>$
in the group \hyperlink{(1-3)}{(1-3)}
and trees $\<I'(R2-1)new> $,
$\<IXiI'(R2-1)new>$,
$ \<I'XiI(R2-1)new>$ in the group \hyperlink{(1-2)}{(1-2)}, all of which are not renormalised (for trees in \hyperlink{(1-3)}{(1-3)} this is because they are not of the form in~\eqref{eq:square_cherry_consts}).
The other terms only involve $\mfT^{\YM}_-$. 
Upon applying the renormalisation map, only $[\mcb{I} [a_i  +\bPsi_i ,\CD_i \bPsi_i  ] , \;\CD_i \bPsi_i ]$ and $[\bPsi_i,\CD_i \mcb{I} [a_i + \bPsi_i , \CD_i \bPsi_i]]$ are affected, and the expression transforms as
\begin{equ}[eq:quad1_ren_eq]
\eqref{e:quad-exp-1} \; 
\mapsto\;
 \eqref{e:quad-exp-1} 
%-  \left( C[ (\mcb{I} \CD_i \Psi_i ) \CD_i \Psi_i ]
%- C[ \Psi_i  \CD_i \mcb{I} (\CD_i \Psi_i)]\right)\, \Cas
-(\ad_{\hat{C}^\e_4}-\ad_{\hat{C}^\e_5})
\, (\bPsi_i+a_i \bone^{(i)})\;.
\end{equ}

For the term
$[ \cS_{a} \CA_j^\e , 2\mcb{D}_j \CA^\e_i  ] $, as before, it suffices to consider $[ \cS_{a} \CA_j^\e , 2\CD_j \CA^\e_i  ] $, which expands as,  up to degree $0$,
\begin{equs}[e:quad-exp-2]
{}&
[\bPsi_j^{(a)} + a_j^{(a)} , 2\CD_j \bPsi_i + 2\nabla_j a_i \bone^{(i)}]
+ [\cS_a \langle \nabla a_j,{\bf X}^{(j)} \rangle, 2\CD_j \bPsi_i]
\\
&+\Big[
\cS_a \mcb{I} [a_i^{(a)} +\bPsi_i^{(a)} ,  2 \CD_i \bPsi_j ]  
+ \cS_a \mcb{I} [a_i^{(a)} +\bPsi_i^{(a)} , - \bar{\CD}_j \bPsi_i ] 
\\
&\qquad\qquad\qquad \qquad\qquad\qquad
+  \cS_a \mcb{I} [a_j  +\bPsi_j ,\CD_j \bPsi_j  ] \;, \;\; 2 \CD_j \bPsi_i 
\Big]
\\
&+ \Big[ \bPsi_j^{(a)} +a_j^{(a)} \;,\;\; 
2 \CD_j \mcb{I} [a_j^{(a)} +\bPsi_j^{(a)} ,  2 \CD_j \bPsi_i ]  
\\
&\qquad\qquad
+2\CD_j  \mcb{I} [a_j^{(a)} +\bPsi_j^{(a)} , - \bar{\CD}_i \bPsi_j  ] 
+2\CD_j \mcb{I} [a_i + \bPsi_i , \CD_i \bPsi_i] 
\Big] + \mcH^{(2)}\;.
%\\
%&+\e^\kappa
% [\bPsi_j^{(a)}+a_j^{(a)}  , \CD_j\mcb{I} [\bPsi_i+a_i , \bar\bXi_i]]
%+\e^\kappa
%[\cS_a\mcb{I} [\bPsi_j+a_j , \bar\bXi_j], \CD_j \bPsi_i] 
%\\
%&-\e^\kappa
% \Big[\bPsi_j^{(a)}+a_j^{(a)} , \CD_j\mcb{I}  [(\CD_j^+ +\CD_j^-) \tilde\bPsi_i, \bar{\CD}_j \tilde\bPsi_j]
% -\frac12 \CD_j\mcb{I}  [\CD_j^+\cS_{\southeast} \tilde\bPsi_j, \CD_j^+\cS_{\southwest} \tilde\bPsi_j]\Big]
% \\
%&+\frac{\e^\kappa}{2}
%\Big[\cS_a \mcb{I}  [(\CD_i^+ +\CD_i^-) \tilde\bPsi_j, \bar{\CD}_i \tilde\bPsi_i]
% -\frac12 \cS_a \mcb{I}  [\CD_i^+\cS_{\southeast} \tilde\bPsi_i, \CD_i^+ \cS_{\southwest} \tilde\bPsi_i], \CD_j \bPsi_i\Big] \;.
\end{equs}
The final term $\mcH^{(2)}$ again denotes a term involving %the same trees 
trees of the same form
as $\mcH^{(1)}$ in \eqref{e:quad-exp-1} -- again not renormalised (the trees in \hyperlink{(1-3)}{(1-3)} are again not of the form in \eqref{eq:square_cherry_consts}).
One can check that $M^\e$ affects only 
$[\cS_a \mcb{I} [a_i^{(a)} +\bPsi_i^{(a)} , - \bar{\CD}_j \bPsi_i ], 2\CD_j\bPsi_i]$,
which at $e\in \obonds_i$ is equal to
(recalling \eqref{e:Sa1}, the definition of $\cS_a$, and the fact that $\bPsi_i$ is a constant modelled distribution)
%taking the constant value $\Psi_i\eqdef \id\in \CT[\CI\bXi_i]\otimes \mfg \simeq \mfg^*\otimes \mfg$, note that
\begin{equs}[e:calc-double-shifts]
%{} &\Big[ \cS_a \mcb{I} [a_i^{(a)} +\bPsi_i^{(a)} , - \bar{\CD}_j \bPsi_i ] 
%, 2 \CD_j \bPsi_i \Big] (e)
%\\
{}&\frac{1}{16} \sum_{\be,\be'\in \CE_\times} \!\! 
\Big(a_i(e+\be+\be') \Big[ \CS_{\be} \mcb{I} [\CS_{\be'} \bone^{(i)}, - \bar{\CD}_j \Psi_i ] 
, 2 \CD_j \Psi_i \Big]
\\
&\qquad \qquad\qquad\qquad\qquad
 +\Big[ \CS_{\be} \mcb{I} [\CS_{\be'} \Psi_i, - \bar{\CD}_j \Psi_i ] 
, 2 \CD_j \Psi_i \Big]\Big)\;.
\end{equs}
Upon applying the renormalisation map  (in particular see Example~\ref{ex:M_tau}), 
\begin{equs}[eq:C_1_ren_eq]
\eqref{e:quad-exp-2} 
\mapsto
 \eqref{e:quad-exp-2} 
&+2 \ad_{\hat{C}^\e_1}
  \,(\bPsi_i+a_i \bone^{(i)})
  \\
 &+  \frac{1}{16}\!\! \sum_{\be,\be'\in \CE_\times} \!\!  2 \ad_{\hat{C}^\e_{1,\be}} (a_i(\cdot+\be+\be')-a_i)\bone^{(i)}
 \;.
\end{equs}
Note that $a_i(e+\be+\be')-a_i(e)=(\hat\CR \CV_i)(e+\be+\be')-(\hat\CR \CV_i)(e)$.

For the term
$[ \cS_{a} \CA_j^\e ,  - \bar{\mcb{D}}_i\bPsi_j -\hat{\mcb{D}}_i\CV^\e_j] $,
where we recall that $\hat{\mcb{D}}_i=\frac12
\mcb{D}_i^+ (\cS_{\northwest}+\cS_{\southwest})$,
we again  only need to consider $[ \cS_{a} \CA_j^\e ,  - \bar{\CD}_i \bPsi_j -\frac12
\CD_i^+ (\cS_{\northwest}+\cS_{\southwest}) \CV^\e_j] $, which expands as, up to degree $0$ and
with $b$ denoting a generic polynomial term that does not enter any terms that are renormalised,
\begin{equs}[e:quad-exp-3]
{}&
[\bPsi_j^{(a)} + a_j^{(a)} , -\bar{\CD}_i \bPsi_j +b]
+ [b, \bar \CD_i \bPsi_j]
\\
&+\Big[
\cS_a \mcb{I} [a_i^{(a)} +\bPsi_i^{(a)} ,  2 \CD_i \bPsi_j ]  
+ \cS_a \mcb{I} [a_i^{(a)} +\bPsi_i^{(a)} , - \bar{\CD}_j \bPsi_i ] 
\\
&\qquad\qquad\qquad \qquad
+  \cS_a \mcb{I} [a_j+\bPsi_j ,\CD_j \bPsi_j  ] \;, \;\;  -\bar{\CD}_i \bPsi_j
\Big]
\\
&+ \Big[ \bPsi_j^{(a)} +a_j^{(a)} \;,\;\; 
-\frac12
\CD_i^+ (\cS_{\northwest}+\cS_{\southwest}) \Big\{
 \mcb{I} [a_i^{(a)} +\bPsi_i^{(a)} ,  2 \CD_i \bPsi_j ]  
\\
&\qquad\qquad\qquad
+  \mcb{I} [a_i^{(a)} +\bPsi_i^{(a)} , - \bar{\CD}_j \bPsi_i ] 
+ \mcb{I} [a_j + \bPsi_j , \CD_j \bPsi_j] 
\Big\}
\Big] 
\\
&-\frac{\e^\kappa}{2}
 [\bPsi_j^{(a)}+a_j^{(a)}  , \frac12
\CD_i^+ (\cS_{\northwest}+\cS_{\southwest})
\mcb{I} [\bPsi_j+a_j  , \bar\bXi_j]]
\\
&\qquad -\frac{\e^\kappa}{2}
[\cS_a\mcb{I} [\bPsi_j+a_j , \bar\bXi_j], \bar{\CD}_i \bPsi_j] + \mcH^{(3)}\;,
%\\
%&+\frac{\e^\kappa}{2}
% \Big[\bPsi_j^{(a)}+a_j^{(a)} , \hat{\CD}_i\mcb{I}  [(\CD_i^+ +\CD_i^-) \tilde\bPsi_j, \bar{\CD}_i \tilde\bPsi_i]
% + \hat{\CD}_i\mcb{I}  [\CD_i^+ \cS_{\southeast} \tilde\bPsi_i, \CD_i^+ \cS_{\southwest} \tilde\bPsi_i]\Big]
% \\
%&-\frac{\e^\kappa}{4}
%\Big[\cS_a \mcb{I}  [(\CD_i^+ +\CD_i^-) \tilde\bPsi_j, \bar{\CD}_i \tilde\bPsi_i]
% + \cS_a \mcb{I}  [\CD_i^+ \cS_{\southeast} \tilde\bPsi_i, \CD_i^+ \cS_{\southwest} \tilde\bPsi_i], \bar{\CD}_i \bPsi_j\Big] \;,
\end{equs}
where $\mcH^{(3)}$ denotes a term involving trees of the form $\<I'(R2-1)new> $,
$\<IXiI'(R2-1)new>$,
$ \<I'XiI(R2-1)new>$ in the group \hyperlink{(1-2)}{(1-2)}, which are not renormalised.
%we used the shorthand $\hat\CD_i \eqdef \frac12 \CD_i^+(\cS_{\northwest}+\cS_{\southwest})$ in terms that are not renormalised.
The other terms in the final two lines involve trees $\<IXiI'Xibar>$, $\<supercherry1>$, $\<I'XiIXibar>$, $\<supercherry2>$ from the group \hyperlink{(1-3)}{(1-3)},
which {\it are} now renormalised according to~\eqref{eq:square_cherry_consts}.
The only terms from the first five lines that the renormalisation map affects are $[\bPsi_j^{(a)} , -\bar{\CD}_i \bPsi_j ]$,
$[\cS_a \mcb{I} [a_i^{(a)} +\bPsi_i^{(a)} ,  2 \CD_i \bPsi_j ]  , -\bar{\CD}_i \bPsi_j]$,
and
$[\bPsi_j^{(a)},
 -\frac12
\CD_i^+ (\cS_{\northwest}+\cS_{\southwest}) \mcb{I} [a_i^{(a)} +\bPsi_i^{(a)} ,  2 \CD_i \bPsi_j ]  ]$.
Recalling~\eqref{eq:tilde_C_renorm}, the first term does not contribute anything to the renormalised equation
thanks to the Lie bracket.
Therefore,
upon applying the renormalisation map, \eqref{e:quad-exp-3} is mapped to itself plus
\begin{equs}[eq:non_local_A_j]{}
&\frac{1}{16}\sum_{\be,\be'\in \CE_\times}
\!\!
\Big\{
2\ad_{\hat{C}^\e_{2,\be}}
 \,(\bPsi_i+a_i(\cdot+\be+\be'))
-
\!\!\!\!
\sum_{\bw\in\{\Northwest,\Southwest\}}
\!\!\!\!
\ad_{\hat{C}^\e_{3,\be,\bw}}(\bPsi_i+a_i(\cdot^\bw+\be'))
\Big\}
\\
&+ \frac{\e^{\kappa}}{4} \sum_{\be\in\CE_\times}
\!\!
\Big\{
2\ad_{C^\e_{\be}[\<I'XiIXibar>]}(\CS_{\be}\Psi_j + a_j(\cdot+\be))
-
\!\!\!\!
\sum_{\bw\in\{\Northwest,\Southwest\}}
\!\!\!\!
\ad_{C^\e_{\be,\bw}[\<IXiI'Xibar>]}(\CS_{\bw}\Psi_j + a_j(\cdot^\bw))
\Big\}\;.
\end{equs}
By a similar calculation as in \eqref{e:calc-double-shifts}-\eqref{eq:C_1_ren_eq},
the first line in~\eqref{eq:non_local_A_j} can be written as a local term plus a difference as
\begin{equs}{}
&2\ad_{\hat{C}^\e_2}
 \,(\bPsi_i+a_i )
-
2\ad_{\hat{C}^\e_3}(\bPsi_i+a_i)
\label{eq:C_2_C_3_ren_eq}
\\
&+  \frac{1}{16} \!\! \sum_{\be,\be'\in \CE_\times} \!\! 
\Big\{
 2\ad_{\hat{C}^\e_{2,\be}} (a_i(\cdot+\be+\be')-a_i)
-
\!\!\!
\sum_{\bw\in\{\Northwest,\Southwest\}}
\!\!\!
\ad_{\hat C^\e_{3,\be,\bw}}(a_i(\cdot^\bw+\be')-a_i)\Big\}\;.
\end{equs}
Again, all the terms above take values in $\CT^{(i)}\otimes \mfg$.
For instance, for a term $\mcb{I} [a_j + \bPsi_j , \CD_j \bPsi_j] $  in \eqref{e:quad-exp-3}
it take values in $\CT^{(j)}\otimes \mfg$,
but then $\CD_i^+(\cS_{\northwest}+\cS_{\southwest})$ maps it into $\CT^{(i)}\otimes \mfg$
according to Section~\ref{sec:tr-reg}.

By \eqref{e:CA-expand} and the definition of $\cE$ in \eqref{eq:B_expand}, one has, up to degree $1$,
\begin{equs}
\cE \CA^\e_i
 = (\e a_i + \e \Pi \bPsi_i ) \bone^{(i)}
+\e^\kappa\bar\bPsi_i
+\frac{\e^{1+\kappa}}{2}  \mcb{I} [  \bPsi_i +a_i \bone^{(i)},   \bar\bXi_i ]
+\e\langle \nabla a_i, {\bf X}^{(i)}\rangle\;.
\end{equs}
Below we write $\e A_i = \e a_i + \e \Pi \bPsi_i = \e \hat\CR \CA_i^\e$ 
(the second equality follows from Lemma~\ref{lem:Rec_Mult_eps}).
So, up to degree $1$,
\begin{equs}
F(\cE \CA^\e_i)
 &= F(\e A_i) \bone^{(i)} 
 \\
 &\quad + F'(\e A_i)
\Big(\e^\kappa\bar\bPsi_i
+\frac{\e^{1+\kappa}}{2}  \mcb{I} [  \bPsi_i +a_i \bone^{(i)},   \bar\bXi_i ]
+\e\langle \nabla a_i, {\bf X}^{(i)}\rangle \Big)\;.
\end{equs}
Here, recalling $F$ from \eqref{eq:def_F}, 
$F'(\e A_i)$   takes values in $L(\mfg\otimes \mfg,\mfg)$ and
the right-hand side of the above identity takes values in $\CT^{(i)}\otimes L(\mfg,\mfg)$.
Moreover the term $ [\CA^\e_i,\bar\bXi_i]$
is expanded as,  up to degree $0$,
\begin{equs}[e:multi-noise-exp]
{}&   \Big[
 \bPsi_i + a_i
+
 \mcb{I} \big[\bPsi_j^{(a)} + a_j^{(a)}, 
2\CD_j \bPsi_i +\bar{\CD}_i \bPsi_j \big]
+
\mcb{I} \big[ \bPsi_i +a_i  , \CD_i \bPsi_i \big]
\;, \;\;
\bar\bXi_i \Big] 
\\
&+
 \Big[
\frac{\e^\kappa}{2}  \mcb{I} [  \bPsi_i +a_i ,   \bar\bXi_i ] \;, \;\; \bar\bXi_i \Big] 
\\
&+\Big[ - \frac{\eps^\kappa}{2} \mcb{I}  [(\CD_j^+ +\CD_j^-) \tilde\bPsi_i, \bar{\CD}_j \tilde\bPsi_j]
 + \frac{\e^\kappa}{4}\mcb{I} [\CD_j^+ \cS_{\southeast} \tilde\bPsi_j, \CD_j^+ \cS_{\southwest} \tilde\bPsi_j]\;,
 \;\;
\bar\bXi_i \Big]\;.
\end{equs}
The first line involves the trees $\<Xi1>\,$,
$  \<PsiXi1>$, 
 $\<cherryY>$, and
$\<cherryYless>$ from group \hyperlink{(1-3)}{(1-3)}.
They are not renormalised,
except for $\<PsiXi1>\,$, but this does not contribute to the renormalised equation due to the Lie bracket
$[  \bPsi_i , \bar\bXi_i ] $.
The second line
involves 
$\<I(PsiXi1)Xi1>$ and $\<I(PsiXi1)Xi1less>$ from group \hyperlink{(3-3)}{(3-3)} which require renormalisation by  \eqref{e:M-XiIXi}.
The last line involves trees of the form $\<I(R2-1)Xi1new>$ from group \hyperlink{(2-3)}{(2-3)} which do not require renormalisation.

Therefore,  up to degree $0$,
\begin{equs}\label{e:F-multi-noise-exp}
\e^\kappa & F(\cE \CA^\e_i)[\CA^\e_i,\bar\bXi_i]
= 
\e^\kappa  F(\e A_i)  \eqref{e:multi-noise-exp}
\\
& + \e^\kappa 
 F'(\e A_i)
\Big(
\e^\kappa\bar\bPsi_i
+\frac{\e^{1+\kappa}}{2}  \mcb{I} [  \bPsi_i 
+a_i ,   \bar\bXi_i ]
+\e\langle \nabla a_i, {\bf X}^{(i)}\rangle \Big)
[ \bPsi_i + a_i,
\bar\bXi_i ] \;.
\end{equs}
Here 
the right-hand side takes values in $\CT^{(i)}\otimes \mfg$.
The trees involved in the first term on the right-hand side are explained below \eqref{e:multi-noise-exp}.
The second line
involves the trees  $\<I(PsiXi1)Xi1>$,
$\<I(PsiXi1)Xi1less>$, $\<r_z_large>$, $\<PsiPsibarXibar>$ from group \hyperlink{(3-3)}{(3-3)}
 and 
 $ \X^{(i)} \bar\Xi_i$,
$\X^{(i)} \<PsiXi1>$  from group \hyperlink{(1-3)}{(1-3)}.\footnote{The second line
should be understood as $\e^\kappa$ times the derivative
of $F$ at $\e A_i$ in the direction given by the expression in the parenthesis, which gives us an element of $\CT\otimes L(\mfg,\mfg)$, and then we apply this to $[ \bPsi_i+a_i,\bar\bXi_i ] \in \CT\otimes \mfg$
which yields an element of $\CT\otimes \mfg$.}

To find the action of renormalisation map on \eqref{e:F-multi-noise-exp},
we note that 
the contractions of  $\<PsiXi1>$ specified  in \eqref{e:M-XiIXi}
in the trees $\<r_z_large>$, $\<PsiPsibarXibar>$ and
$\X^{(i)} \<PsiXi1>$
do not have any contribution to the renormalised equation thanks to the last Lie bracket in 
\eqref{e:F-multi-noise-exp}.
Only the contractions of  $\<I(PsiXi1)Xi1less>$  in \eqref{e:M-XiIXi} and 
of  $\<r_z_large>$
in \eqref{e:M-four} need to be considered. 

Recall from \eqref{eq:def_F} and \eqref{eq:Phi_inverse} that
\begin{equ}\label{eq:def_F-rep}
F(X) Z %= \frac{\Phi(X)^{-1}-1}{\ad_X}
= \sum_{k=0}^\infty c_k  \ad_X^k Z
 = \frac12 Z + \frac{1}{12}\ad_X Z %+ r_F(X)\;, \qquad  r_F(X) = O(X^2)
 +O(X^2)Z
\end{equ}
for some coefficients $c_k\in\R$. So,
\begin{equ}\label{eq:F-prime}
F'(X)(Y)Z 
=\sum_{k=1}^\infty c_k \sum_{m=0}^{k-1} \ad_X^m \, \ad_Y \, \ad_X^{k-m-1} Z 
= \frac{1}{12}\ad_Y Z + O(XY)Z\;. %+ r'_F(X)(Y)\;.
\end{equ}
Consider the following term from the first line on the right-hand side of \eqref{e:F-multi-noise-exp}:
\begin{equ}[e:F-multi-noise-1]
\frac{\e^{2\kappa}}{2}  F(\e A_i)  
[\mcb{I} [  \bPsi_i +a_i \bone^{(i)},   \bar\bXi_i ],  \bar\bXi_i ] \;.
\end{equ}
Upon applying by $M^\e$,  by  \eqref{e:M-XiIXi} and the exact value of $C^\e[ \<I(PsiXi1)Xi1less> ]= \e^{-2\kappa}\Cas$ given in \eqref{e:C-XiIXi},\footnote{Heuristically, 
the renormalisation $-\frac{1}{4} \ad_{\Cas}(\bPsi_i+a_i \bone^{(i)})$ for \eqref{e:F-multi-noise-1} could be understood 
as obtained by ``substituting $A$ in $\frac\eps2[A,\xi]$ in 
\eqref{e:Z-terms} by $\frac\eps2 \mcb{I}[A,\xi]$''.}
\begin{equs}[e:F-multi-noise-1-result]
\eqref{e:F-multi-noise-1} & \mapsto
 \eqref{e:F-multi-noise-1} 
- \frac{1}{2} F(\e A_i)  
\ad_{\Cas}(\bPsi_i+a_i \bone^{(i)}) 
\\
&=
 \eqref{e:F-multi-noise-1} 
- \frac{1}{4} 
\ad_{\Cas}(\bPsi_i+a_i \bone^{(i)})
%- \frac{1}{2} (\frac{1}{12}\ad_{\e A_i} + r_F(\e A_i))
- \frac{1}{2} (F(\e A_i)-\frac12)\,
\ad_{\Cas}(\bPsi_i+a_i \bone^{(i)}) \;.
\end{equs}
It is important to note that 
the last term 
will not contribute to the renormalised equation.
Indeed, this follows from  $\e A_i = \e\hat\CR \CA^\e_i = \e a_i + \e \Pi \bPsi_i $,
%and, recalling our shorthand  $\e A_i = \e a_i + \e \Pi \bPsi_i $,
%one has 
%$\e A_i = \e \hat\CR \CA^\e_i$
and the fact that, in \eqref{eq:def_F-rep}, the summand for every $k\ge 1$ vanishes if $X=Z$.

Next, 
writing again $(e_l)_l$ for an orthonormal basis of $\mfg$,
consider the following term from the second line of \eqref{e:F-multi-noise-exp},
corresponding to the trees $\<PsiPsibarXibar>$ and $\<I(PsiXi1)Xi1less>\,$:
\begin{equs}\label{e:F-multi-noise-2}
\e^{2\kappa} &
 F'(\e A_i)
(\bar\bPsi_i)
[ \bPsi_i + a_i \bone^{(i)},
\bar\bXi_i ] 
\\
&=
\e^{2\kappa} 
\sum_{j,n} 
e_j^*\otimes e_n^* \otimes
 F'(\e A_i)
(e_j)
[\bPsi_i + a_i\bone^{(i)},
e_n]  
\in ( \CT[\<PsiPsibarXibar>] \oplus \CT[\<I(PsiXi1)Xi1less>] )  \otimes\mfg\;.
\end{equs}
Here and below, for $f\colon \R\times\T^2\to\CT\otimes \mfg$ and $\tilde\CT\subset \CT$, we write $f\in \tilde\CT  \otimes\mfg$ to mean $f$ takes values in $\tilde\CT  \otimes\mfg$.
We remark that the $\CT[\<PsiPsibarXibar>]   \otimes\mfg$ component can be alternatively written as 
$\e^{2\kappa} 
\sum_{j,m,n} 
e_j^*\otimes e_m^* \otimes e_n^*\otimes
 F'(\e A_i) (e_j)
[e_m,e_n]  $.
Contracting $\<I(PsiXi1)Xi1less>$ we have
\begin{equs}
\e^{2\kappa} 
&\sum_{j,n} 
\langle 
e_j^* \otimes e_n^* , C^\e[ \<I(PsiXi1)Xi1less> ]\,
\rangle
 F'(\e A_i)
(e_j)
[ \bPsi_i + a_i \bone^{(i)},
e_n] 
\\ 
&=
\sum_{j}    
 F'(\e A_i)
(e_j)
[ \bPsi_i + a_i \bone^{(i)},
e_j]  \;\;\in ( \CT[\<IXi>] \oplus \CT[\bone^{(i)}] )  \otimes\mfg\;,
\end{equs}
where we used again the value of $C^\e[ \<I(PsiXi1)Xi1less> ]$ in \eqref{e:C-XiIXi}.
Therefore,
upon applying $M^\e$,\footnote{Again heuristically, the renormalisation $\frac{1}{12} \ad_{\Cas}(\bPsi_i+a_i \bone^{(i)})$  for \eqref{e:F-multi-noise-2} can be seen as contracting 
the first $A$ and $\xi$ in $\frac{\eps^2}{12}[A,[A,\xi]]$ in \eqref{e:Z-terms}.}
\begin{equs}[e:F-multi-noise-2-result]
\eqref{e:F-multi-noise-2}
&\mapsto
\eqref{e:F-multi-noise-2}
- \sum_{j} F'(\e A_i)(e_j)[\bPsi_i + a_i \bone^{(i)},e_j]
\\
&=
\eqref{e:F-multi-noise-2}
+ \frac{1}{12} 
\ad_{\Cas}(\bPsi_i+a_i \bone^{(i)})
\\
&\qquad\qquad
- \sum_{j} %r'_F(\e A_i) (e_j)
(F'(\e A_i)(e_j) - \frac{1}{12}  \ad_{e_j})
[\bPsi_i + a_i \bone^{(i)},e_j]\;.
\end{equs}
Next, consider the following terms from the second line of \eqref{e:F-multi-noise-exp},
\begin{equs}\label{e:F-multi-noise-3}
{}& \frac{\e^{1+2\kappa}}{2}  F'(\e A_i ) ( \mcb{I} [  \bPsi_i + a_i \bone^{(i)} ,  \bar\bXi_i ] ) 
[ \bPsi_i + a_i \bone^{(i)},
\bar\bXi_i ]
\\
%&= \frac{\e^{1+2\kappa}}{2}  \sum_{j,n}  e_j^*\otimes  e_n^* F'(\e A_i)([\bPsi_i+a_i \bone^{(i)},e_j])[\bPsi_i+a_i \bone^{(i)},e_n]
%\\
&\qquad\qquad \in (\CT[\<r_z_large>]\oplus \CT[\<PsiPsibarXibar>] \oplus \CT[\<I(PsiXi1)Xi1>] \oplus \CT[\<I(PsiXi1)Xi1less>] )\otimes\mfg\;.
\end{equs}
Contracting $\<I(PsiXi1)Xi1less>$ and $\<r_z_large>$ with the basis of $\mfg$ as above  using the definitions of  $C^\e[\<I(PsiXi1)Xi1less>]$  in \eqref{e:C-XiIXi}, and $C^\e[\<r_z_large>]$
in \eqref{e:def-C-four},
one has that, upon applying $M^\e$, 
\begin{equs}[e:F-multi-noise-3-result]
\eqref{e:F-multi-noise-3}
\mapsto
\eqref{e:F-multi-noise-3}
& -  \frac{\e}{2} \sum_{j}  F'(\e A_i)([\bPsi_i+a_i \bone^{(i)},e_j])[\bPsi_i+a_i \bone^{(i)},e_j] 
\\
& - \frac{\e^{1+2\kappa}}{2} ( c_1^\e[\<r_z_large>]-c_2^\e[\<r_z_large>]) \sum_{j,k}  F'(\e A_i) ([e_j,e_k]) [e_j,e_k]\;.
\end{equs}
Here the second term on the right-hand side belongs to
$( \CT[ \<IXi^2>]\oplus \CT[ \<IXi>] \oplus \CT[ \bone^{(i)}] )\otimes\mfg$,
and the third term belongs to $\CT[ \bone^{(i)}] \otimes\mfg$.

Combining~\eqref{e:F-multi-noise-1-result},~\eqref{e:F-multi-noise-2-result}, and~\eqref{e:F-multi-noise-3-result},
we find that upon applying $M^\e$,
\begin{equ}[eq:F_ren_eq]
\eqref{e:F-multi-noise-exp} \mapsto \eqref{e:F-multi-noise-exp} 
- \frac{1}{6} 
 \ad_{\Cas}(\bPsi_i+a_i \bone^{(i)})
+ (\cdots) \;,
\end{equ}
where $- \frac{1}{6} =- \frac{1}{4}+ \frac{1}{12}$ and
$ (\cdots)$ denotes the sum of
the last term in \eqref{e:F-multi-noise-1-result} (which will not contribute 
to the renormalised equation upon reconstruction by the remark following~\eqref{e:F-multi-noise-1-result}),
the last term in \eqref{e:F-multi-noise-2-result}, and the last two terms  in \eqref{e:F-multi-noise-3-result}.
Since $\e A_i = \e \hat\CR \CA^\e_i$, the contribution of $(\cdots)$
to the renormalised equation (upon reconstruction) is precisely
$
-h_1^\e(\hat\CR \CA^\e_i)
-h_2^\e(\hat\CR \CA^\e_i)
-h_3^\e(\hat\CR \CA^\e_i)
$
for $h^\e_\ell$ defined by~\eqref{e:def-h123A},
which is cancelled by $\boldH_i$ in~\eqref{e:fix-pt-Ae}.

Finally, for $ \CQ^{\e}_i$ defined by~\eqref{e:def-tildeCR2},
we have $\mcb{D} \CV = \CD\CV + b$ for the various differentiations, where $b\in \CT[\bone]\otimes \mfg$.
Since the product of $\mcb{D}\bar\bPsi$ and $b$ is not renormalised,
it suffices to expand $ \CQ^{\e}_i$ with all $\mcb{D}$ replaced by $\CD$,
which gives, up to degree $0$, 
\begin{equs}[e:tildeR2-exp]
  {}& 
%  \mcH^{(4)}
  - \frac{\eps^\kappa}{2}  [(\CD_j^+ +\CD_j^-) \tilde\bPsi_i, \bar{\CD}_j \tilde\bPsi_j]
   + \frac{\e^\kappa}{4} [\CD_j^+\cS_{\southeast} \tilde\bPsi_j, \CD_j^+\cS_{\southwest} \tilde\bPsi_j]
 \\
 &
 -\frac{\eps^\kappa}{4}  
 \Big[(\CD_j^+ +\CD_j^-) \bar\bPsi_i, 
{\CD}_j^+ (\cS_{\southeast}+\cS_{\southwest})\CI_j
\CH_i\{\<IXiI'Xi_notriangle>\}
 \Big]
 \\
&-\frac{\eps^\kappa}{2}  
 \Big[
 (\CD_j^+ +\CD_j^-) \CI_i
\CH_j\{\<IXiI'Xi_notriangle>\}, 
 \bar{\CD}_j \bar\bPsi_j
 \Big]
\\ 
&
+ \frac{\e^\kappa}{4} 
\Big[
\CD_j^+ \cS_{\southeast}\bar\bPsi_j, \CD_j^+\cS_{\southwest}\CI_j 
\CH_i\{\<IXiI'Xi_notriangle>\}
 \Big]
+ \frac{\e^\kappa}{4} 
\Big[
  \CD_j^+\cS_{\southeast}\CI_j 
\CH_i\{\<IXiI'Xi_notriangle>\}
  ,  \CD_j^+ \cS_{\southwest}\bar\bPsi_j
\Big]
 \\
 &
 -\frac{\eps^\kappa}{2}  
 \Big[(\CD_j^+ +\CD_j^-) \bar\bPsi_i, 
\frac{\eps^\kappa}{4}  
{\CD}_j^+(\cS_{\southeast}+\cS_{\southwest})
\mcb{I}_j [\bPsi_j+a_j,\bar\bXi_j]
 \Big]
 \\
 &
  -\frac{\eps^\kappa}{2}  
 \Big[
 \frac{\eps^\kappa}{2}  
 (\CD_j^+ +\CD_j^-) 
\mcb{I} [\bPsi_i+a_i,\bar\bXi_i],
 \bar{\CD}_j \bar\bPsi_j
 \Big]
 \\ 
&
+ \frac{\e^\kappa}{4} 
\Big[
\CD_j^+ \cS_{\southeast}\bar\bPsi_j,
\frac{\eps^\kappa}{2}  
 \CD_j^+ \cS_{\southwest}
\mcb{I} [\bPsi_j+a_j,\bar\bXi_j]
 \Big]
 \\
 &
+ \frac{\e^\kappa}{4} 
\Big[
\frac{\eps^\kappa}{2}  
\CD_j^+  \cS_{\southeast}
\mcb{I} [\bPsi_j+a_j,\bar\bXi_j]
  ,  \CD_j^+ \cS_{\southwest} \bar\bPsi_j
\Big]
+ \CH_i^{(4)}\{\<I'Xi1I'(R2-1)new>\}
\end{equs}
where 
\begin{equ}[e:def-CHi]
\CH_i \{\<IXiI'Xi_notriangle>\}\eqdef
  \big[\bPsi_i^{(a)} + a_i^{(a)}, 
2\CD_i \bPsi_j +\bar{\CD}_j \bPsi_i \big]
+
\big[ \bPsi_j +a_j \bone^{(j)} , \CD_j \bPsi_j \big]\;,
\quad j\eqdef 3-i
\end{equ}
and $\CH_j \{\<IXiI'Xi_notriangle>\}$
is defined in the same way with $i,j$ exchanged.
The  term
$\CH_i^{(4)}\{\<I'Xi1I'(R2-1)new>\}$
involves trees of the form $ \<I'Xi1I'(R2-1)new>$
in group \hyperlink{(2-2)}{(2-2)},
which is not renormalised.
%$\mcH^{(4)}$
The first line involves trees of the form $\<R2-1new>$ in \eqref{e:trees-deg-1-rem}; 
and the last four lines (excluding $\CH_i^{(4)}\{\<I'Xi1I'(R2-1)new>\}$)
involve $\<cherry231>$ and
$\<cherry232>$ in group \hyperlink{(2-3)}{(2-3)},
in which case the first and the second terms are not renormalised since $i\neq j$.
The remaining terms are all of the form
$\<I'XiI'[IXiI'Xi]>$ and $\<I'XiI'[IXiI'Xi]less>$ in group \hyperlink{(1-2)}{(1-2)}.

By definition of $M^\e$ in Section~\ref{subsec:renorm_group},
especially  \eqref{e:div-R2} for trees of the type $\<I'XiI'[IXiI'Xi]less>$
and \eqref{eq:square_cherry_consts} for $\<cherry232>$,
% only $6$ terms in \eqref{e:tildeR2-exp} will be renormalised:
the  terms in \eqref{e:tildeR2-exp} which will be renormalised are:
\begin{itemize}
\item
%they are precisely 
the $6$ terms containing $\CH_i \{\<IXiI'Xi_notriangle>\}$
(but not $\CH_j \{\<IXiI'Xi_notriangle>\}$);
\item
the last two lines (excluding $\CH_i^{(4)}\{\<I'Xi1I'(R2-1)new>\}$).
\end{itemize}
Upon applying $M^\e$ \eqref{e:tildeR2-exp} is mapped to itself plus
\begin{equs}{}
&- \frac{\e^\kappa}{4} 
(
\ad_{C_{\southeast,+}^\e [\, \<I'XiI'[IXiI'Xi]less>\,]}
+
\ad_{C_{\southeast,-}^\e [\, \<I'XiI'[IXiI'Xi]less>\,]}
+2\ad_{C_\southeast^\e [\, \<I'XiI'[IXiI'Xi]less>\,]} 
)
(\bPsi_i + a_i^{(a)}(\cdot^{\southeast}))
\\
&- \frac{\e^\kappa}{4}
(\ad_{C_{\southwest,+}^\e [\, \<I'XiI'[IXiI'Xi]less>\,]}
+\ad_{C_{\southwest,-}^\e [\, \<I'XiI'[IXiI'Xi]less>\,]}
-2\ad_{C_\southwest^\e [\, \<I'XiI'[IXiI'Xi]less>\,]}
)
(\bPsi_i + a_i^{(a)}(\cdot^{\southwest}))
\\
& + \frac{\e^{2\kappa}}{8}
\ad_{C^\e [\<cherry232>] } (\bPsi_j + a_j)
-\frac{\e^{2\kappa}}{8}
\ad_{C^\e [\<cherry232>] } (\bPsi_j + a_j)
\;.
\end{equs}
Note that the last two terms cancel. The first line above
can be written as a local term plus a difference as
\begin{equs}[e:C_tree_ren_eq]{}
&- \frac{\e^\kappa}{4} 
(
\ad_{C_{\southeast,+}^\e [\, \<I'XiI'[IXiI'Xi]less>\,]}
+
\ad_{C_{\southeast,-}^\e [\, \<I'XiI'[IXiI'Xi]less>\,]}
+2\ad_{C_\southeast^\e [\, \<I'XiI'[IXiI'Xi]less>\,]} 
)
(\bPsi_i + a_i)
\\
&-
\frac14\sum_{\be\in\CE_\times} \frac{\e^\kappa}{4} (
\ad_{C_{\southeast,+}^\e [\, \<I'XiI'[IXiI'Xi]less>\,]}
+
\ad_{C_{\southeast,-}^\e [\, \<I'XiI'[IXiI'Xi]less>\,]}
+2\ad_{C_\southeast^\e [\, \<I'XiI'[IXiI'Xi]less>\,]} 
)
(a_i(\cdot^{\southeast}+\be) - a_i)\;,
\end{equs}
and likewise for the second line with $\Southeast$ and $2\ad_{C_\southeast^\e [\, \<I'XiI'[IXiI'Xi]less>\,]}$ replaced by $\Southwest$ and $-2\ad_{C_\southwest^\e [\, \<I'XiI'[IXiI'Xi]less>\,]}$.

We now observe that $\delta_i\hat\CR\CV_i^\e$ in~\eqref{e:fix-pt-Ae}, by its definition~\eqref{eq:delta_A_def}, upon reconstruction cancels the 
`non-local' renormalisations above involving $a_i$, i.e. the second lines of~\eqref{eq:C_1_ren_eq},~\eqref{eq:C_2_C_3_ren_eq}, and~\eqref{e:C_tree_ren_eq} (and its analogue for $\Southwest$).
Likewise, $\bar\delta_i\hat\CR\CA_j^\e$ in~\eqref{e:fix-pt-Ae}, by its definition~\eqref{eq:bar_delta_def},
cancels the second line of~\eqref{eq:non_local_A_j}.
Furthermore, recalling
$C^\e_{\sym}$ defined in \eqref{e:def-CSYM},
the reconstruction of the sum of the renormalisation terms in~\eqref{eq:cubic_ren_eq},~\eqref{eq:quad1_ren_eq},
and the first lines of~\eqref{eq:C_1_ren_eq} and~\eqref{eq:C_2_C_3_ren_eq}
is $\ad_{C^\e_{\sym}} \hat\CR\CA_i^\e$.
Finally, recalling $C^\e_{\rem}$ defined in~\eqref{e:def-CREM},
the reconstruction of the sum of the renormalisation terms in the first lines of~\eqref{eq:F_ren_eq} and~\eqref{e:C_tree_ren_eq} (and its analogue for $\Southwest$) is
$\ad_{C^\e_{\rem}} \hat\CR\CA_i^\e$.

To finally find the equation that $\hat\CR\CA^\e$ solves,
note that $(\hat\Pi^\e,\hat\Gamma^\e)$ is admissible, and thus $\hat\CR \CP^{i;\e} = P^{i;\e} *_{(i)} \hat\CR$.
It therefore suffices to find $\hat\CR f$ as a function of $\hat\CR\CA^\e$, where $f$ is any one of the terms in the parentheses $(\cdots)$ in~\eqref{e:fix-pt-Ae}.
To this end, for all such $f$ and all $x\in\R\times\obonds_i$,
\begin{equ}
\hat \CR f(x) = (\hat \Pi_x f(x))(x) = (\Pi_x M^\e f(x))(x) = \CR (M^\e f)(x)\;,
\end{equ}
where $\CR$ is the reconstruction operator associated to the canonical model $(\Pi^\e,\Gamma^\e)$
(this identity holds as written if $\hat \CR f$ is continuous in time
and needs to be understood in the distributional sense otherwise, i.e. 
for $f=\bXi_i$ or $f=\e^\kappa F(\cE\CA_i^\e)[\CA^\e_i, \bar\bXi_i]$).
Moreover $\CR$ commutes with derivatives (Lemma~\ref{lem:DRcommute}), shiftings (Lemma~\ref{lem:RecShft}), products (Remark~\ref{rem:canonical_R_mult}), and with composition of smooth functions,
and we have
\begin{equ}
\CR\CA_i^\e(x)=(\Pi_x \CA_i^\e(x))(x) = (\hat \Pi_x \CA_i^\e(x))(x) = \hat\CR \CA_i^\e(x)\;,
\end{equ}
and likewise for $\CV_i^\e, \bXi_i, \bar \bXi_i$ and for $\mcb{D}\cS\CA^\e$ for any derivative and shift operators $\mcb{D},\cS$.
We conclude from the above computations that $\hat\CR\CA^\eps$ solves the discrete SPDE~\eqref{e:Aeps} plus the following terms on the right-hand side:
\begin{equ}[e:Ae-renormalised-mass]
(\ad_{C^\e_{\sym}}+\ad_{C^\e_{\rem}})A_i^\e +  (c^\eps_{\star,i} -  \tilde{c}^\e_{\star,i} ) A^\e\;.
\end{equ}
Note that $C^\e_{\sym}=O(1)$ and $C^\e_{\rem}=O(1)$ by Lemmas~\ref{lem:C_sym-finite} and~\ref{lem:thick-trig-bahave} respectively,
while $\tilde c^\e_\star = O(1)$ by
Definition~\ref{def:tildec} and since each $c^\e$ in Lemmas~\ref{lem:I3} and \ref{lem:grad-R-hat} is $O(1)$ due to Lemma~\ref{lem:Psi_Wick}.
The proof is therefore done with the choice
\begin{equ}[e:specify-c_star]
c^\e_\star \eqdef - \ad_{C^\e_{\sym}} - \ad_{C^\e_{\rem}} + \tilde{c}_\star^\e = O(1)\;.
\end{equ}
\end{proof}

\subsection{Uniform bounds on renormalised discrete models}
\label{subsec:moments_discrete_models}

In this section we derive uniform moment bounds on the renormalised models
$\hat Z^\eps = (\hat\Pi^\e,\hat\Gamma^\e)$ from Section~\ref{subsec:renorm_eq}.
\begin{proposition}\label{prop:Ze_moments}
For all $p\geq 1$, $\gamma>0$, and compact $\K\subset \R\times\T^2$, one has $\sup_{\e>0} \E |\$\hat Z^\e\$^{(\e)}_{\gamma;\K}|^p < \infty$.
\end{proposition}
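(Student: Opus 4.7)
The plan is to reduce the bound to a tree-by-tree moment estimate via a discrete Kolmogorov-type criterion and then carry out the bound for each negative tree using graph bounds in the Gaussian chaos together with the specific choice of renormalisation constants fixed in Section~\ref{sec:renorm_constants}. More precisely, since $\hat\Pi^\e_z\tau$ and $\hat\Gamma^\e_{zy}\tau$ depend polynomially on the Gaussian noise $\xi^\e$, it suffices by the hypercontractivity of the Gaussian chaoses and the discrete Kolmogorov-type criterion of \cite[Thm.~10.7]{Hairer14} adapted to the discrete setting of~\cite{EH19,EH21} (together with the discrete Wick calculus of~\cite{HM18,Shen18}) to establish, for every $\tau\in\mfT$ with $|\tau|<0$, the bounds
\begin{equs}
\E\big|\bigl(\iota_\eps\hat\Pi^{\eps}_z \tau\bigr)(\phi^\lambda_z)\big|^2 &\lesssim \lambda^{2|\tau|}\;,\qquad \lambda\in[\e,1]\;,\\
\E\big\|\hat\Pi^\e_z\tau\big\|^2_{|\tau|;\K_\eps,z;\eps} &\lesssim 1\;,
\end{equs}
uniformly in $z\in\K$, $\phi\in\CB^r_0$, and $\eps$, together with the corresponding bound for differences between basepoints (which, by standard arguments following \cite[Sec.~10.4]{Hairer14}, translate into estimates on $\hat\Gamma^\e$). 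The positive-degree trees and trees in $\mfT^\ren$ require no work since they do not appear under $M^\e$ on the negative sector in a divergent way (Remark~\ref{rem:no_renorm}).

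I would proceed tree-by-tree, splitting $\mfT_-=\mfT^\YM_-\sqcup\mfT^\rem_-$. For $\mfT^\YM_-$, the argument parallels \cite[Sec.~6.2]{CCHS_2D}: one expands $\hat\Pi^\e_z\tau$ into Wiener chaoses using Wick's theorem and, for each resulting labelled Feynman graph, one checks that the renormalisation constants $\bar C^\e_{\be\be'}$, $\hat C^\e_{k,\be}$ etc. defined in Section~\ref{sec:renorm_constants} precisely kill the subdivergences; the kernel integrals are then bounded by Hairer's classical graph-estimate \cite[Thm.~A.3]{Hairer14} in the discrete formulation of~\cite[Sec.~5]{Shen18} using $|K^\e|_{-2;m}^{(\e)}\lesssim 1$. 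Lemma~\ref{lem:identity} provides the key identity that makes the cancellations explicit, and Lemma~\ref{lem:C_sym-finite} gives the uniform bound on the combined constant $C^\e_{\sym}$. For trees in $\mfT^\rem_-$ the key observation is that each such tree carries explicit factors of $\eps^{\kappa}$ (or higher powers of $\eps^\kappa$) coming from the noise labels $\bar\mfl_i$, $\mfl^{=,\pm}_i$ via \eqref{e:Pi-noise}: concretely, the noise $\bar\Xi_i$ contributes $\eps^{1-\kappa}\xi^\e$ whose $k$th Wick power has second moment gaining a factor $\e^{k(2-2\kappa)}\cdot\e^{-2k}=\e^{-2k\kappa}$, which, combined with the degree gain from $\bar\Xi_i$ being of degree $-1-2\kappa$ (instead of $-2-\kappa$), yields the required bound after accounting for the kernel integrations. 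The trees of group \hyperlink{(3-3)}{(3-3)} that involve the 4-noise tree $\<r_z_large>$ are handled by the explicit contraction \eqref{e:M-four} with the constant $C^\e[\<r_z_large>]\in \mfg^{\otimes 4}$, whose bounds \eqref{eq:C_bounds_large_cherries} ensure finiteness after the cancellation.

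Two specific issues require care. First, the trees in \eqref{e:div-R2} and Remark~\ref{rem:C_non_vanish} have only approximate parity: the naive vanishing argument fails at scale $\eps$ and produces the constants $C_{\bs,\pm}^\e[\<I'XiI'[IXiI'Xi]less>]$ and $C_\bs^\e[\<I'XiI'[IXiI'Xi]less>]$ which are only $O(\e^{-\kappa})$; these are the main non-trivial renormalisations we must include to get a finite limit, and the bound Lemma~\ref{lem:thick-trig-bahave} tells us precisely the size we have to pay. Second, the small-scale bounds $\|\hat\Pi^\e_z\tau\|_{|\tau|;\K_\eps,z;\eps}$ (i.e.\ for $\lambda<\eps$) must be handled differently from the large-scale bounds: here we are only measuring temporal oscillations on intervals of length below $\e^2$, and by Assumption~\ref{as:models_abstract}/Lemma~\ref{lem:Psi_Wick} the Gaussian temporal Hölder regularity of $\xi^\e$ suffices once one has uniform bounds on $\Pi^\e_z\tau$ at fixed time. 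Compatibility of the renormalised model with differentiations, shifts, and multiplication by $\e$ (Proposition~\ref{prop:renormalised_model}) then extends the bounds to the operators $\CD$, $\CS_\be$, $(\cdot)_\e$ without further work.

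The main obstacle is bookkeeping: our regularity structure contains many more trees than \cite{CCHS_2D}, in particular the families in \eqref{e:trees-deg-1-rem}-\eqref{e:trees-deg-0-rem}, and one must verify tree by tree that either (i) the tree is renormalised exactly enough to cancel its leading divergence (producing a convergent residual integral), or (ii) the tree carries sufficient explicit powers of $\e$ from the remainder noises to allow a crude power-counting bound, or (iii) the tree has only approximate parity and the $O(\e^{-\kappa})$ renormalisation constants of Lemma~\ref{lem:thick-trig-bahave} balance against the corresponding $\e^\kappa$ factor in the definition \eqref{e:def-CREM}. Once this bookkeeping is done, the moment bounds follow by standard Gaussian graph-estimate technology adapted to the semi-discrete setting, as developed in~\cite{HM18,EH19,EH21,Shen18}.
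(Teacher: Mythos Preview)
Your proposal is correct and follows essentially the same approach as the paper: reduce via hypercontractivity and a discrete Kolmogorov criterion (\cite[Thm.~6.1]{HM18}) to second-moment bounds \eqref{e:mom-bounds} for each negative tree, then verify these tree-by-tree via Wiener chaos decomposition and singular kernel bounds (Lemma~\ref{lem:OpDSingKer}), exploiting the renormalisation constants of Section~\ref{sec:renorm_constants}, parity arguments, and explicit $\e$-factors for the remainder noises. One minor imprecision: your treatment of the small-scale bound via ``temporal H\"older regularity at fixed time'' is not quite how the paper proceeds---instead the paper establishes a unified covariance bound \eqref{e:CovarianceBounds} (with parameters $(\zeta,\alpha,n_\delta,n_\e)$ accounting for $\delta$-singularities and $\e$-powers) that simultaneously yields both the large-scale estimate \eqref{e:mom-bounds1} for $\lambda\in(\e,1]$ and the small-scale estimate \eqref{e:mom-bounds2} for $\lambda\in(0,\e]$; this is precisely why the seminorms \eqref{eq:seminorm} from \cite{EH21} were chosen over the simpler $L^\infty$-based ones (Remark~\ref{rem:small-scale-norm}).
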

\begin{proof}
The proof follows from Lemmas~\ref{lem:mom-YM},~\ref{lem:mom-xi},~\ref{lem:mom-12},~\ref{lem:mom-13} and~\ref{lem:mom-222333}
and a Kolmogorov-type criterion (see~\cite[Thm.~6.1]{HM18} or~\cite[Thm.~10.7]{Hairer14}).
\end{proof}
The main ingredient in the above proof is the bounds
\minilab{e:mom-bounds}
\begin{equs}
\E[(\hat\Pi_{z_*}^\e\tau)(\phi_{z_*}^{\lambda})^2]
&\lesssim 
\lambda^{2(|\tau|+\kappa)},			\label{e:mom-bounds1}
\\
\E\Big[\Big(\int (\hat\Pi_{z_*}^\e\tau)(s,x_*)(\CS_{2,t_*}^{\lambda}\phi)(s)\mrd s\Big)^2\Big]
&\lesssim \lambda^{2(|\tau|+\kappa)}\;,		\label{e:mom-bounds2}
\end{equs}
for each  $\tau\in \CT^{(i)}_-\cap \{\CT^\YM\oplus\CT^\rem\}$
(recall Remark~\ref{rem:model_bounds_sector}), uniformly in $z_*=(t_*,x_*)\in \R\times \obonds_i$, 
and $\e > 0$. Here the second moment bounds suffice since our noise is Gaussian.
The first bound is also uniform in $\lambda\in (\e,1]$ 
and all space-time test functions $\phi$ as in~\eqref{eq:Pi},
and the second bound is uniform in $\lambda\in (0,\e]$ and all temporal  functions $\phi$ as in~\eqref{eq:seminorm}.
The rest of this subsection is devoted to the proof of these bounds.

We will need to verify \eqref{e:mom-bounds} for each of our trees in Section~\ref{sec:tr-reg}
{\it one by one} for several reasons.
First, in our discrete setting, we do not have the general ``black-box'' results for probabilistic bounds as in \cite{CH16,HS23_BPHZ,LOTT21} (see also~\cite{BB23,BN22_discrete,BN22,KPZJeremy}).
Also we do not  use the general discrete bounds in the appendix of \cite{EH21},
since in our setting, verifying the conditions therein would  not be easier.
Another, more important reason
is that 
since we introduced the noises such as $\bar\xi=\e^{1-\kappa} \xi$, the models
will yield functions possibly containing factors of $\e$ which are crucial for us to obtain  \eqref{e:mom-bounds}.
Moreover,
we need to carefully examine parity, i.e. whether 
certain functions in the integrands are exactly or only approximately odd,
due to discrete derivatives (see Remark~\ref{rem:loss-odd}) and the shifting operators. 
%and this will further relate with the verification that the equation for $A_i$
%does not involve any mass renormalisation term of the form $cA_j$ for $j\neq i$
%(so in a sense the general argument as in \cite[Section~4]{CCHS_3D} does not directly apply to the discrete setting here).
Fortunately, in 2D  the number of  trees is  relatively small
so it is manageable to obtain the bounds by hand.

Recall  the kernel norm $\VERT \cdot \VERT_{\zeta;m}^{(\eps)}$
and the notation $\|z\|_\e$ from \eqref{e:DSingKer}. For the rest of this subsection we drop the $\e$-dependence in our kernel notation, e.g. $K=K^\e$.
%The following lemma states that slightly shifting a kernel will essentially preserve its norm.
\begin{lemma}\label{lem:shift-K}
Let $\zeta<0$. %For $\be\in \CE_+\cup \CE_\times$, $\tilde K(z) = K(z+\be)$,
For two kernels $\tilde K$ and $K$ such that $\tilde K(z) = K(z+a)$ where $a=a_1\e_1+a_2\e_2$ with $|a_1|+|a_2|<10$,
one has
$\VERT \tilde K \VERT_{\zeta;m}^{(\eps)}
\asymp
\VERT K \VERT_{\zeta;m}^{(\eps)}$.
\end{lemma}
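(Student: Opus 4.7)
The plan is straightforward: the discrete derivatives $D^k_\eps$ commute with translations, and a shift by $a = a_1\eps_1 + a_2\eps_2$ with $|a_1|+|a_2|<10$ is a spatial shift of Euclidean norm $\lesssim \eps$, which changes the reference distance $\|z\|_\eps = \|z\|_\s \vee \eps$ only up to a multiplicative constant. Combining these two facts yields the claim after a change of variable in the supremum.

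More precisely, I would first verify that $D^k_\eps \tilde K(z) = (D^k_\eps K)(z+a)$ for every $z \in \R\times \T^2_\eps$ and every multi-index $k=(k_0,k_1,k_2)\in\N^3$: since $a$ has no time component, $\partial_t$ commutes trivially with the shift, while for spatial differences
\begin{equ}
\partial_j^+ \tilde K(z) = \eps^{-1}\bigl(K(z+\eps_j+a)-K(z+a)\bigr) = (\partial_j^+ K)(z+a)\;,
\end{equ}
and iterating gives the claim for arbitrary $k$.

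Next I would establish the key metric comparison: there exists $C>0$, depending only on the bound $10$ on $|a_1|+|a_2|$, such that $C^{-1}\|z\|_\eps \le \|z+a\|_\eps \le C\|z\|_\eps$ uniformly in $\eps\in(0,1]$ and $z\in\R\times\T^2_\eps$. This splits into two cases according to whether $\|z\|_\s \ge 20\eps$ or $\|z\|_\s < 20\eps$. In the first case $\|z\|_\eps = \|z\|_\s$ and $\|z+a\|_\s \ge \|z\|_\s - \|a\|_\s \ge \|z\|_\s/2$, so $\|z+a\|_\eps \asymp \|z\|_\s = \|z\|_\eps$. In the second case both $\|z\|_\eps$ and $\|z+a\|_\eps$ are bounded above and below by multiples of $\eps$ (the upper bound on $\|z+a\|_\s$ being $(20+10)\eps$), so again $\|z+a\|_\eps \asymp \|z\|_\eps$. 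Since $\zeta-|k|_\s$ is a bounded real exponent as $|k|_\s\le m$, raising to this power preserves the comparison, giving
\begin{equ}
\|z\|_\eps^{\zeta-|k|_\s} \asymp \|z+a\|_\eps^{\zeta-|k|_\s}\;,
\end{equ}
with constants depending only on $\zeta$, $m$ and on $10$ (not on $\eps$).

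Finally, combining these two ingredients,
\begin{equ}
\frac{|D^k_\eps \tilde K(z)|}{\|z\|_\eps^{\zeta-|k|_\s}} = \frac{|(D^k_\eps K)(z+a)|}{\|z\|_\eps^{\zeta-|k|_\s}} \asymp \frac{|(D^k_\eps K)(z+a)|}{\|z+a\|_\eps^{\zeta-|k|_\s}}\;,
\end{equ}
and the map $z\mapsto z+a$ is a bijection of $\R\times\T^2_\eps$ (since $a$ is a lattice vector), so taking the supremum over $z$ and the maximum over $|k|_\s\le m$ on both sides gives $\VERT \tilde K\VERT_{\zeta;m}^{(\eps)} \asymp \VERT K\VERT_{\zeta;m}^{(\eps)}$. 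There is no real obstacle here; the only point requiring a little care is the metric comparison $\|z+a\|_\eps \asymp \|z\|_\eps$ when $\|z\|_\s$ is of the same order as $\eps$, which is handled by the elementary case split above.
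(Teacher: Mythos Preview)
Your proof is correct and takes essentially the same approach as the paper, which simply states that the result is immediate from the definition of $\VERT\cdot\VERT_{\zeta;m}^{(\eps)}$ and the comparison $\|z+a\|_\eps \asymp \|z\|_\eps$. You have spelled out carefully the two ingredients (commutation of $D^k_\eps$ with spatial shifts, and the metric comparison via case analysis) that the paper leaves implicit.
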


\begin{proof}
This is immediate from the definition \eqref{e:DSingKer} and $\|z+a \|_\e \asymp \|z\|_\e$.
\end{proof}
We write $K_1\bigcdot_\e K_2$  for any  product of the form $K_1(z)K_2(z+a)$,
and $K_1*_\e K_2$ for any  convolution  of the form $K_1(\cdot ) * K_2(\cdot+a)$,
%where $\be$ is a generic element of $ \CE_+\cup \CE_\times$.
where $a$ is as in Lemma~\ref{lem:shift-K}.
Here $*$ is the (semi-discrete) convolution over space-time.
We will also write $K^{*_\e 2} \eqdef K*_\e\hat K$,
where $\hat K$ the reflection of $K$ as in Section~\ref{subsubsec:behaviour}.
Recall  $\s=(2,1,1)$ and $|\s|=4$.

\begin{lemma}\label{lem:OpDSingKer}
Let $K_1$ and $K_2$ be functions of order $\zeta_1$ and $\zeta_2$ respectively. 
Let $m\in\N$, $\bar{\zeta}\eqdef \zeta_1+\zeta_2+|\s|$.
Then one has, uniformly in $\eps$ for $\e$ sufficiently small,
\begin{enumerate}
\item
(Multiplication bound):
$\VERT K_1 \bigcdot_\e K_2 \VERT^{(\eps)}_{\zeta_1+\zeta_2;m}
\lesssim 
\VERT K_1 \VERT^{(\eps)}_{\zeta_1;m}
\VERT K_2 \VERT^{(\eps)}_{\zeta_2;m}$.
\item 			\label{item:convolution}
(Convolution bound): $\VERT K_1 \ast_\eps K_2 \VERT^{(\eps)}_{\bar{\zeta};m}
\lesssim 
\VERT K_1\VERT^{(\eps)}_{\zeta_1;m}
\VERT K_2 \VERT^{(\eps)}_{\zeta_2;m}$ provided that $\zeta_1\wedge \zeta_2>-|\s|$ and $\bar{\zeta}<0$.
%\end{equation}
%\item 			\label{item:positive}
%If $\bar{\zeta}\in (0,2) \setminus\N$, then
%$\VERT \bar{K}^\eps\VERT^{(\eps)}_{\bar{\zeta};m}\lesssim \VERT K_1^\eps\VERT^{(\eps)}_{\zeta_1;\bar{m}}\VERT K_2^\eps\VERT^{(\eps)}_{\zeta_2;\bar{m}}$ where
%$\bar{m}=m\vee(\lfloor\bar{\zeta}\rfloor +2)$ and
%\[
%\bar{K}^\eps(z)
%\eqdef K_1^\eps\ast_\eps K_2^\eps(z)
%	-\sum_{|k|_\s<\bar{\zeta}} 
%	\frac{z^k}{k!} D_\eps^k (K_1^\eps\ast_\eps K_2^\eps) (0)\;.
%%\quad (t,x)_{k,\eps} \eqdef t^{k_0} \prod_{i\neq 0}\prod_{0\le j<k_i} (x_i - \eps j)\;.
%\]
%%\begin{equation}\label{b:ConvDSingKer2}. 
\item				\label{item:renormalized}
(Renormalised kernel):
If
$\zeta_1 \in \bigl(-|\s|-1, -|\s|\bigr]$ and $\zeta_2 \in \bigl(-2 |\s|-\zeta_1, 0\bigr]$, 
\begin{equ} [e:RenorConv]
\VERT{\bigl(\mathscr{R} K_1\bigr) \ast_\eps K_2}\VERT_{\bar{\zeta}; m} \lesssim \VERT{K_1}\VERT_{\zeta_1; m} \VERT{K_2}\VERT_{\zeta_2; m + 2}
\end{equ}
where 
$\left(\mathscr{R} K\right)(\varphi) \eqdef \int_{\R \times (\e\Z)^2} K(z) \left( \varphi(z) - \varphi(0) \right) dz$
 for any $\varphi \in \CC^\infty_0(\R^{3})$.
\end{enumerate}
\end{lemma}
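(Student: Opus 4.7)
The plan is to follow the standard multiscale (Littlewood--Paley-type) strategy for singular kernels, adapted to the semidiscrete setting. The key observation that makes everything work uniformly in $\e$ is that for any offset $a$ with $|a|\lesssim \e$ one has $\|z+a\|_\e\asymp \|z\|_\e$ (Lemma~\ref{lem:shift-K}), so the small shifts encoded in $\bigcdot_\e$ and $*_\e$ are harmless and I may as well work with standard products and convolutions. Throughout I would fix a dyadic decomposition $K_i=\sum_{n=0}^{N} K_i^{(n)}$, where $K_i^{(n)}$ is supported on $\{z:\|z\|_\s\asymp 2^{-n}\}$ for $n<N$, $K_i^{(N)}$ collects the remaining ``sub-$\e$'' part with $2^{-N}\asymp \e$, and each $K_i^{(n)}$ satisfies the natural pointwise bound $|D^k_\e K_i^{(n)}(z)|\lesssim 2^{n(|k|_\s-\zeta_i)}\VERT K_i\VERT_{\zeta_i;m}^{(\e)}$.

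For the multiplication bound (1), I would apply the discrete Leibniz rule
\[
D^k_\e (K_1\bigcdot_\e K_2)=\sum_{j\leq k}\binom{k}{j}(D^j_\e K_1)\bigcdot_\e (D^{k-j}_\e K_2)
\]
and invoke Lemma~\ref{lem:shift-K} to absorb the shifts. Each summand contributes $\|z\|_\e^{\zeta_1-|j|_\s}\cdot\|z\|_\e^{\zeta_2-|k-j|_\s}=\|z\|_\e^{(\zeta_1+\zeta_2)-|k|_\s}$, which is exactly the claim.

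For the convolution bound (2), I would write $K_1\ast_\e K_2=\sum_{n_1,n_2\leq N}K_1^{(n_1)}\ast_\e K_2^{(n_2)}$. For $z$ with $\|z\|_\s=2^{-m}$, the pairs $(n_1,n_2)$ split into three regimes according to whether $n_1\wedge n_2$ is less than, comparable to, or larger than $m$. In each regime the standard estimate uses $\|K_i^{(n)}\|_{L^1}\lesssim 2^{-n(|\s|+\zeta_i)}$ together with a pointwise bound on the other factor. Summation over scales converges thanks to $\zeta_1\wedge\zeta_2>-|\s|$ (small-scale integrability) and $\bar\zeta<0$ (large-scale decay), yielding $\|z\|_\e^{\bar\zeta-|k|_\s}$ after differentiating up to order $m$. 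For $m\geq N$ one uses $\|z\|_\e=\e$ and the same sum truncates naturally at the $\e$-scale with the same final estimate.

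For the renormalised kernel (3), the difference quotient $\varphi(z)-\varphi(0)$ provides one order of cancellation at the origin which compensates the borderline non-integrability $\zeta_1\leq -|\s|$ of $K_1$. Writing
\[
\bigl((\mathscr{R} K_1)\ast_\e K_2\bigr)(z)=\int_{\R\times(\e\Z)^2}K_1(y)\bigl[K_2(z-y+a)-K_2(z+a)\bigr]\mrd y\;,
\]
I would split the $y$-integral into $\{\|y\|_\s\leq \|z\|_\e/2\}$ and its complement. On the near region I Taylor expand $K_2$ to first order in $y$, producing a factor $\|y\|_\s$ that upgrades $K_1$ to effective order $\zeta_1+1>-|\s|$, and then combine with $|DK_2|\lesssim \|z\|_\e^{\zeta_2-1}$; the Taylor remainder eats one derivative of $K_2$, which explains the need for $\VERT K_2\VERT_{\zeta_2;m+2}$ (a second one is used when we afterwards apply $D^k_\e$ with $|k|_\s\leq m$). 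On the far region the two kernels are bounded separately as in part~(2). Both regions contribute $\|z\|_\e^{\bar\zeta}$ and summing gives the announced estimate.

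The main obstacle is the boundary scale $n=N$ corresponding to $\e$-sized supports: the usual sharp scaling estimates degenerate there, but they are replaced by the flat bound $|D^k_\e K_i^{(N)}(z)|\lesssim \e^{\zeta_i-|k|_\s}$ which, combined with $\|z\|_\e\geq \e$, reproduces the same numerology. Since the dyadic decomposition has only $O(\log(1/\e))$ relevant scales and each bound has strictly summable exponents (by the $\zeta_i$-hypotheses in~(2)--(3)), the discreteness of the underlying space never enters the final estimates except through the floor $\|z\|_\e\geq \e$, and all bounds hold uniformly in $\e$.
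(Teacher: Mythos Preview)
Your proposal is correct and follows the standard multiscale approach that the cited lemmas in \cite{HM18} carry out; the paper's own proof is simply a one-line reference to \cite[Lemmas~7.3, 7.5]{HM18} together with Lemma~\ref{lem:shift-K} to absorb the $\e$-shifts. One small imprecision: the discrete Leibniz rule for $\partial^+_i$ does not take the exact binomial form you wrote but involves additional $\e$-shifts of the arguments; however, as you already note, these are precisely absorbed by the $\bigcdot_\e$ notation via Lemma~\ref{lem:shift-K}, so the conclusion is unaffected.
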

\begin{proof}
This follows from Lemma~\ref{lem:shift-K} and \cite[Lemmas~7.3, 7.5]{HM18}.
\end{proof}
%
%Our first step in the proof of the bounds \eqref{e:mom-bounds} is the same as that in~\cite{Hairer14,HM18}. Namely, 
As in~\cite{Hairer14,HM18},
for each $\tau$ we have a Wiener chaos decomposition for
$\hat\Pi^{\eps}_{z_*} \tau (\phi) = \sum_k I_k^\e ((\CW^{(\eps; k)}\tau)(\phi))$
where $I_k^\e$ is the $k$th order Wiener integral with respect to $\xi^\e$,
and $\CW^{(\eps; k)}\tau$ are kernels such that
$(\CW^{(\eps; k)}\tau)(z) \in H_\e^{\otimes k}$   
with $H_\e$ being the Hilbert space for $\xi^\e$.
Similarly, for a temporal test function $\phi$,
\[
\int_{\R} (\hat\Pi_{z_*}^\e\tau)(s,x_*)\phi(s)\mrd s = 
\sum_k I_k^\e 
\int_{\R} 
(\CW^{(\eps; k)}\tau)(s,x_*) \phi(s)\mrd s\;.
\]
Then we consider their ``symmetric pairings''
\begin{equs}[e:CovDef]
\bigl(\CK^{(\eps; k)}\tau\bigr)(z_1, z_2) \eqdef \langle \bigl(\CW^{(\eps; k)}\tau\bigr)(z_1), \bigl(\CW^{(\eps; k)}\tau\bigr)(z_2) \rangle_{H_\eps^{\otimes k}}\;.
\end{equs}
Below we will always assume $z_1=(t_1,x_1)$ and $z_2=(t_2,x_2)$.
We will also assume $z_*=(t_*,x_*)=(0,0)$ since our model is stationary, but  we often still write 
$z_*=(t_*,x_*)$ as an intuitive placeholder.
%In \cite[Prop.~10.11]{Hairer14}  the uniform bounds on models amounts to finding the bounds
Consider the bound
 \begin{equ}[e:CovarianceBounds]
|\big(\CK^{(\eps; k)}\tau\big)\big(z_1, z_2\big)| 
\lesssim 
\sum \e^{n_\e}
\big( \|z_1\|_\s + \|z_2\|_\s\big)^\zeta 
\|z_1- z_2\|_\e^\alpha   %{2|\tau| +\kappa - \zeta}
\,\delta_\e (z_1-z_2)^{n_\delta}\;,
\end{equ}
where the sum is over finitely many values of $(\zeta,\alpha,n_\delta,n_\e)$ 
such that 
$\alpha\in (-|\s|,0]$, $\zeta\ge 0$, $n_\delta\in \{0,1\}$, $n_\e \ge 0$, 
and 
\[
\alpha+n_\e \in [\, |\s|n_\delta-|\s|, n_\delta d \,) \;, \qquad
\alpha+\zeta-|\s|n_\delta + n_\e = 2|\tau|+\kappa\;.
\]
Here, $\delta_\e(z_1-z_2)  \eqdef \delta(t_1-t_2) \e^{-d} \bone(x_1-x_2)$  where $\bone$ is the indicator of the origin.
We claim that \eqref{e:CovarianceBounds} suffices
to prove both bounds in \eqref{e:mom-bounds}.
Indeed, \eqref{e:CovarianceBounds} $\Rightarrow$ \eqref{e:mom-bounds1} in the regime $\lambda > \e$ 
follows as in \cite[Prop.~10.11]{Hairer14}.
(Note that when $n_\delta=1$, $\e^{n_\e} \|z_1- z_2\|_\e^\alpha \delta_\e (z_1-z_2)$ is integrable uniformly in $\e$ thanks to $\alpha+n_\e \ge 0$.)
Regarding  \eqref{e:mom-bounds2} in the regime  $\lambda\le \e$,
the 2nd moment of the $k$th chaos  $I_k^\e \int_{\R} (\CW^{(\eps; k)}\tau)(t,x_*)(\CS_{2,t_*}^{\lambda}\phi)(t)\mrd t$ is bounded by
\begin{equs}
%\E & \Big[\Big( I_k^\e \int_{\R} (\CW^{(\eps; k)}\tau)(t,x_*)(\CS_{2,t_*}^{\lambda}\phi)(t)\mrd t\Big)^2\Big]
%\\
{}&
\int_{\R^2}  \langle (\CW^{(\eps; k)}\tau)(t_1,x_*), (\CW^{(\eps; k)}\tau)(t_2,x_*)\rangle
(\CS_{2,t_*}^{\lambda}\phi)(t_1)(\CS_{2,t_*}^{\lambda}\phi)(t_2)
\mrd t_1\mrd t_2
\\
&\lesssim
 \sum%_{\zeta \geq 0}
 \int_{\R^2} \e^{n_\e}  \big( \|(t_1,x_*)\|_\s + \|(t_2,x_*)\|_\s\big)^\zeta 
(|t_1- t_2|^{\frac12}\vee \e)^{\alpha}
\\
&\qquad\qquad\qquad 
\cdot  (\e^{-d} \delta (t_1-t_2))^{n_\delta}   (\CS_{2,t_*}^{\lambda}\phi)(t_1)(\CS_{2,t_*}^{\lambda}\phi)(t_2)
\mrd t_1\mrd t_2
%\label{e:IkWk-2nd}
%\\
%&\lesssim
% \sum %_{\zeta \geq 0}
%  \lambda^\zeta
% \int_{\R^2}
%(|t_1- t_2|^{\frac12}\vee \e)^{2|\tau|+\kappa - \zeta}
%(\CS_{2,t_*}^{\lambda}\phi)(t_1)(\CS_{2,t_*}^{\lambda}\phi)(t_2)
%\mrd t_1\mrd t_2
%\\
%&\lesssim 
% \sum %_{\zeta \geq 0} 
% \lambda^\zeta
% \lambda^{2|\tau| +\kappa - \zeta}
%\lesssim
% \lambda^{2 |\tau|+\kappa}\;,	
\\
&\lesssim 
\begin{cases}
 \int_{\R^2}  \lambda^\zeta (|t_1- t_2|^{\frac12}\vee \e)^{\alpha+n_\e}   (\CS_{2,t_*}^{\lambda}\phi)(t_1)(\CS_{2,t_*}^{\lambda}\phi)(t_2)\mrd t_1\mrd t_2 \lesssim \lambda^{\zeta+\alpha+n_\e}
 \\
  \lambda^\zeta \e^{n_\e} \int_\R \e^\alpha \e^{-d} (\CS_{2,t_*}^{\lambda}\phi)(t)^2 \mrd t 
  \lesssim \lambda^{\zeta-2} \e^{\alpha-d+n_\e} \lesssim \lambda^{\zeta+\alpha-|\s|+n_\e}
\end{cases}
\end{equs}
where  the first bound is for $n_\delta =0$ where we used $\alpha+n_\e <0$ in the last step;
the second bound is for $n_\delta =1$ where we used $\alpha+n_\e <d$ in the last step.
In both cases we have the bound $\lambda^{2|\tau|+\kappa}$ as required, which proves the claim.
%Below 
%we will write  (the contribution of a certain chaos to) the left-hand sides of 
% \eqref{e:mom-bounds1} and  \eqref{e:mom-bounds2}
%as  ``L.S.'' and ``S.S.''
% respectively,
%which stand for ``large scale'' and ``small scale'' bounds.
%We always assume $\lambda > \e$ when bounding L.S. and $\lambda\le \e$  when bounding S.S.

In the next lemmas, we keep in mind that $i\neq j$, and that $\kappa>0$ is an arbitrarily small parameter.
We will introduce graphic notation:
$\tikz[baseline=-3]\draw (0,0)--(.8,0);$ stands for the truncated heat kernel $K$,\footnote{understood
as $K(z-w)$ where $z$ is a point lower than $w$ in the graph; same for other kernels.}
$\tikz[baseline=-3]\draw[very thick] (0,0)--(.8,0);$ stands for a generic derivative of $K$,
$\tikz\node at (0,0) [var] {}; $ stands for $\xi^\e$,
$\tikz\node at (0,0) [var1] {}; $ stands for $\e^{1-\kappa}\xi^\e$.
Joining these lines correspond to multiplications or convolutions  possibly with shifts (i.e. 
they can be 
$K_1\bigcdot_\e K_2$  
and $K_1*_\e K_2$ described above), but we do not explicitly show these shifts in the graphs.
Moreover,
$\tikz[baseline=-3]\draw[kernel1] (0,0) --(.8,0);$ stands for the ``positively renormalised'' truncated heat kernel 
$K(z-w)-K(-w)$.% where $z$ is a point lower than $w$ in the graph.

%The following fact will be useful. 
%Let $F(w)$ be a function of $w \in\R\times \obonds_i$.
%Let $J(z,w)$ be a function with arguments 
%$z\in  \R\times \obonds$ and
%$w\in \R\times \obonds_j$, which only depends on the distance $\|z-w\|_\s$ (recall that we identify the bonds $z,w$ with their midpoints).
%Assume that $j\neq i$. We can then write, for any $z\in \R\times \obonds$,
%\[
%\frac14 \sum_{\be\in \CE_\times}(J * _{(j)} F(\cdot + \be))(z) 
%=\frac14 \sum_{\be\in \CE_\times} \int_{\R\times \obonds_j} J(z,w) F(w + \be) \mrd w \;.
%\]
%If we define $J^\times$ by 
%$J^\times (z,w) \eqdef \frac14 \sum_{\be\in \CE_\times} J(z,w-\be)$ where 
%$w\in \R\times \obonds_i$, then
%\begin{equ}[e:shift-in-conv]
%\frac14 \sum_{\be\in \CE_\times}(J * _{(j)} F(\cdot + \be))(z) 
%=
%(J^\times * _{(i)} F)(z) \;.
%\end{equ}

\begin{remark}
In Lemmas~\ref{lem:mom-YM}--\ref{lem:mom-222333} below
we will frequently apply the multiplication and convolution bounds in Lemma~\ref{lem:OpDSingKer}.
For instance to obtain \eqref{e:mom-cherry},
the convolution of the two kernels of degree $-3$ (resp. $-2$) yields a new kernel of degree $-2$ (resp. $-\kappa$),
%and the convolution of the two kernels of degree $-2$ yields a new kernel of degree $-\kappa$,
and the multiplication of the two new kernels is of degree $-2-\kappa$.
This procedure is elementary and standard, so we will directly give the resulting bound
without repeatedly referring to Lemma~\ref{lem:OpDSingKer}.
Also, we will slightly abuse notation
and write $\Psi,\bar\Psi,\tilde\Psi$ for the trees $\mcb{I}\Xi$,  $\mcb{I}\bar\Xi$,  $\mcb{I}\tilde\Xi$, 
as well as the Gaussian processes they realise;
the meaning is always clear from the context.
\end{remark}

\begin{lemma}
	\label{lem:mom-YM}
For every $\tau\in \CT^{\YM}_-$,  the bounds \eqref{e:mom-bounds} hold.
\end{lemma}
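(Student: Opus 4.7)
The plan is to reduce \eqref{e:mom-bounds} to the covariance bound \eqref{e:CovarianceBounds} (as the paragraph following the lemma already explains) and then to establish \eqref{e:CovarianceBounds} case by case for each $\tau\in\CT^{\YM}_-$. For each tree I would decompose $\hat\Pi^\e_{z_*}\tau$ into Wiener chaoses, express the covariance kernel $\CK^{(\e;k)}\tau$ from \eqref{e:CovDef} as a product of convolutions of (shifted, possibly differentiated) truncated heat kernels $K^{i;\e}$, and then apply the multiplication, convolution, and renormalised-kernel bounds of Lemma~\ref{lem:OpDSingKer}, together with the shift-invariance Lemma~\ref{lem:shift-K} to absorb the operators $\CS_\be$. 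Three things have to be tracked throughout: (i) the order of the resulting kernel, which must match $2|\tau|+\kappa$; (ii) any $\e$-corrections coming from the discrete nature of the derivatives; and (iii) whether a sub-divergence needs to be removed by positive renormalisation, i.e., whether item~\ref{item:renormalized} of Lemma~\ref{lem:OpDSingKer} is needed.

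First I would dispatch the trees living in a single chaos: $\Xi_i$, $\mcb{I}_i\Xi_i\sim\<IXi>$, $\CD\mcb{I}_i\Xi_i\sim\<I'Xi_notriangle>$, $\X^{(i)}\CD\mcb{I}_i\Xi_i$ and $\CD\mcb{I}_i\CD\mcb{I}_i\Xi_i\sim\<I'[I'Xi]_notriangle>$. These all live in the first chaos, with covariance kernels (up to shifts) equal to $\delta_\e$, $K^{*_\e 2}$, $\partial K^{*_\e 2}$, and similar expressions; a direct application of Lemma~\ref{lem:OpDSingKer} gives \eqref{e:CovarianceBounds} with $n_\e=0$ and $\alpha+\zeta$ matching $2|\tau|+\kappa$ with room to spare. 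Next I would handle the two-noise trees that require renormalisation, namely $\<IXi^2>$ and $\<IXiI'Xi_notriangle>$ (with insertions of $\CS_\be$ when the two legs are indexed by $j\ne i$). The zeroth-chaos piece of $\hat\Pi^\e_{z_*}\tau$ is precisely the divergent quantity subtracted by $\bar C^\e_{\be\be'}$ in the first case and by $\hat C^\e_{1,\be},\hat C^\e_{2,\be},\hat C^\e_{3,\be,\bw},\hat C^\e_4,\hat C^\e_5$ in the second (cf.~\eqref{e:M-on-YM}), so that after renormalisation only the second chaos contributes. For the latter the covariance kernel takes the form $K^{*_\e 2}\bigcdot_\e(\partial^a K)^{*_\e 2}$ for $a\in\{0,1\}$, and Lemma~\ref{lem:OpDSingKer} again delivers the stated bound.

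For the three-noise trees -- $\<IXi^3_notriangle>$, $\<IXiI'[IXiI'Xi]_notriangle>$, $\<I[IXiI'Xi]I'Xi_notriangle>$, $\<I'[IXiI'Xi]_notriangle>$, $\<I[I'Xi]I'Xi_notriangle>$, $\<IXiI'[I'Xi]_notriangle>$ -- the third chaos is easy: it is a convolution product of six kernels, handled by iterating Lemma~\ref{lem:OpDSingKer}. The first chaos is more delicate: whenever it contains a genuine sub-divergence (precisely the cases in \eqref{e:M-on-YM} with two leaves contracted inside the tree), renormalisation effectively replaces one of the inner kernels by its positively-renormalised version $\mathscr{R}K$ from Lemma~\ref{lem:OpDSingKer}\ref{item:renormalized}, and \eqref{e:RenorConv} supplies the missing integrability gain. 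In the remaining cases, where no sub-divergence is present -- for instance $\<IXi^3_notriangle>$ with three distinct noise indices -- the plain convolution bound suffices.

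The main obstacle is expected to be the trees whose inner structure involves $\CD^+_i$ or $\CS_\be$ and whose parity under reflection is only approximate (cf.~Remarks~\ref{rem:loss-odd} and~\ref{rem:C_non_vanish}). In these cases the naive convolution integrand is not odd, so the would-be cancellation must be estimated quantitatively; the discrepancy comes out as $\e$ times a more singular kernel, which is precisely the role of the $\e^{n_\e}$ factor in \eqref{e:CovarianceBounds}. The technical heart of the argument will be verifying, for each of roughly a dozen trees in $\mfT^\YM_-$, that the exponents line up, i.e., that $\alpha+\zeta-|\s|n_\delta+n_\e=2|\tau|+\kappa$ with $n_\e$ small enough to be absorbed into the $\kappa$-safety margin. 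The arithmetic parallels \cite[Sec.~6]{CCHS_2D}, with the additional need, in the discrete setting, to replace $\partial$ by the appropriate $\partial^\pm_j,\partial_j,\bar\partial_j$ before invoking Lemma~\ref{lem:shift-K}.
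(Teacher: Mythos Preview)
Your overall strategy is right, but you have misidentified which trees are renormalised by which constants, and this causes the argument to go wrong in two places.

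First, the cherry $\<IXiI'Xi_notriangle>$ is \emph{not} renormalised by the $\hat C^\e_k$ constants. Those constants contract subtrees of the form $\<I[I'Xi]I'Xi_notriangle>$ and $\<IXiI'[I'Xi]_notriangle>$ (see the sentence after \eqref{e:M-on-YM}); they act on the first chaos of the three-noise trees, not on the zeroth chaos of the cherry. For $\<IXiI'Xi_notriangle>$ the paper instead enumerates three sub-types: $\CS_\be\Psi_j\CD_j\Psi_i$ has vanishing zeroth chaos by independence of $\xi_i,\xi_j$; $\Psi_i\CD_i\Psi_i$ has vanishing zeroth chaos by \emph{exact} parity of the symmetrised derivative $\partial_i$ (Lemma~\ref{lem:ePsiDPsi}); and only $\CS_\be\Psi_j\bar\CD_i\Psi_j$ needs renormalisation, by $\tilde C_\be$ via \eqref{eq:tilde_C_renorm}. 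Your proposal misses this case analysis entirely.

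Second, your ``main obstacle'' paragraph about approximate parity and $\e^{n_\e}$ corrections is misplaced. For $\CT^\YM_-$ the bounds in \eqref{e:CovarianceBounds} are all obtained with $n_\e=0$; the $\e^{n_\e}$ factors become relevant only for $\CT^\rem_-$ (Lemmas~\ref{lem:mom-xi}--\ref{lem:mom-222333}). What \emph{is} delicate for the three-noise trees like $\<IXiI'[IXiI'Xi]_notriangle>$ and $\<I[IXiI'Xi]I'Xi_notriangle>$ is a nine-fold case split: for each tree one must decide whether the potentially divergent first-chaos contraction (i) vanishes by independence, (ii) has $J$ with $\mathrm{Int}[J]=0$ by exact parity so $J=\mathscr{R}J$, (iii) has $\mathrm{Int}[J]$ merely bounded (via an $\e$-gain as in the $\sigma=1$ term for tree (5)), or (iv) is genuinely renormalised by one of the $\hat C^\e_k$. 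Only in case (iv) does the BPHZ subtraction come into play. The paper also needs the positively-recentred kernel $K(z-w)-K(-w)$ for the $\<I[IXiI'Xi]I'Xi_notriangle>$ family (because the planted subtree has positive degree), which requires the telescoping bound \eqref{e:Diff-K} and leads to the $(\|z_1\|_\s+\|z_2\|_\s)^\zeta$ factor---this is not captured by a straight application of Lemma~\ref{lem:OpDSingKer}. Finally, ``three distinct noise indices'' cannot occur since $d=2$.
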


\begin{proof}
Consider $\tau=\Xi$ and  recall $\xi^\e= \e^{-1}\dt \BM$ as in Section~\ref{sec:canonical}.
%recall that $\xi^\e = \e^{-1}\dt \BM$ where $\BM$ is a collection of i.i.d. Brownian motions.
One has $\big(\CK^{(\eps; 1)}\tau\big)\big(z_1, z_2\big) = \delta_\e(z_1-z_2)$.
%\begin{equ}[e:def-delta-eps]
%\big(\CK^{(\eps; 1)}\tau\big)\big(z_1, z_2\big) = \delta_\e(z_1-z_2)
% \eqdef \delta(t_1-t_2) \e^{-2} \bone(x_1-x_2)
%\end{equ}
%where $\bone$ equals $1$ at the origin and $0$ otherwise.
%This is not exactly the case as in \eqref{e:CovarianceBounds} due to $\delta(t_1-t_2) $.
%However one still has the desired bounds
%\begin{equs}[e:xiS22]
%\mbox{L.S.} &= \int (\phi^\lambda_{z_*}(z))^2 \mrd z \lesssim \lambda^{-4}\;,
%\\
%%\E \Big[\Big(\int_\R \xi^\e (t,x) \CS^\lambda_2 \phi(t) \mrd t \Big)^2 \Big]
%\mbox{S.S.} 
%%&\le \int_\R \e^{-2}  \delta(t_1-t_2)  \CS^\lambda_{2,t_*} \phi(t_1)\CS^\lambda_{2,t_*} \phi(t_2) \mrd t_1\mrd t_2
%%\\
%&= \int_{\R^2} \e^{-2} \CS^\lambda_{2,t_*} \phi(t)^2 \mrd t
%\lesssim \e^{-2} \lambda^{-2} \lesssim \lambda^{-4}\;, \quad (\mbox{by }\lambda\le \e)\;.
%\end{equs}
%%where in the last step we used $\lambda\le \e$.
So \eqref{e:CovarianceBounds} applies with $(\zeta,\alpha,n_\delta,n_\e)=(0,0,1,0)$.

%Consider $\tau=\<XXi>$. One has
%\[
%\big|\big(\CK^{(\eps; 1)}\tau\big)\big(z_1, z_2\big)\big|
% \lesssim \|z_1\|_\s \|z_2\|_\s   \delta_\e(z_1-z_2)\;.
%\]
%Similarly as \eqref{e:xiS22} we have $\mbox{L.S.} \lesssim \lambda^{-2} $,
% and
%$\mbox{S.S.} \lesssim \e^{-2} \le \lambda^{-2}$,
%where we bound $\|z_1\|_\s \|z_2\|_\s \le \lambda^2$ as in
%\eqref{e:WkSt-general}.
%
%The same argument shows that
%for $\tau=X^2 \Xi$ 
%we have $\mbox{L.S.} \lesssim \lambda^{-\kappa} $,
% and
%$\mbox{S.S.} \lesssim \e^{-\kappa} \le \lambda^{-\kappa}$.

Consider $\tau=\<IXiI'Xi_notriangle>$
which includes
$\CS_{\be} \Psi_j \CD_j \Psi_i $,
$\CS_{\be} \Psi_j \bar\CD_i \Psi_j$, and
$\Psi_i \CD_i \Psi_i $ for  $\be\in \CE_\times$.  %\CS \in \{\CS_a,\CS_{\be},\be\in \CE_\times\}
The $0$th chaos of $(\hat\Pi_z^\e\tau)(\phi_z^{\lambda})$ for each of them is $0$. Indeed,
this holds for $\CS_{\be} \Psi_j \CD_j \Psi_i$ since $i\neq j$,
for $\Psi_i \CD_i \Psi_i $ by Lemma~\ref{lem:ePsiDPsi} thanks to 
the symmetrised derivative $\partial_i$,
and for $\CS_{\be} \Psi_j \bar\CD_i \Psi_j$ by the choice of $\tilde C_{\be}$. %~\eqref{e:def-C67etc}.
For the 2nd chaos, 
\footnote{Below the dotted lines represent 
%$\delta_\e$ as in \eqref{e:CovarianceBounds}.
the parings, i.e. replacing the pair of noises by their covariances.} \vspace{-5pt}
\begin{equ}[e:mom-cherry]
\begin{tikzpicture}  [scale=0.4,baseline=5]		
\node at (.6,1)  [var] (a)  {};  
\node at (-.6,1) [var] (b) {}; 
\draw[very thick] (0,0) -- (a);
\draw (0,0) -- (b);
\node at (2.3+.6,1)  [var] (c)  {};  
\node at (2.3-.6,1) [var] (d) {}; 
\draw[very thick] (2.3,0) -- (d);
\draw (2.3,0) -- (c);

\draw[bend left =40, delta]  (a) to (d) ;
\draw[bend left =40, delta]  (b) to (c) ;
\end{tikzpicture}
\qquad
\big(\CK^{(\eps; 2)}\tau\big)\big(z_1, z_2\big) \lesssim \|z_1-z_2\|_{\e}^{-2-\kappa}
\end{equ}
%one has
%$\mbox{L.S.} \lesssim \lambda^{-2} $ and
%$\mbox{S.S.} \lesssim \e^{-2} \le \lambda^{-2}$.
% where for S.S. we brutally bound $ \|z\|_{\e}^{-2}$ by $\e^{-2}$.
as required by \eqref{e:CovarianceBounds}.
For $\tau=\<I'Xi_notriangle>$, one has essentially the same bound \eqref{e:mom-cherry}.

Consider $\tau = \<IXi^3_notriangle>$,
namely %$(\CS_a \Psi_j)(\CS_a \Psi_j)\Psi_i$ and 
$(\CS_{\be} \Psi_j)(\CS_{\be'} \Psi_j)\Psi_i$.
The 1st chaos vanishes by our choice of renormalisation
%$\bar C^\e$ and 
$\bar C^\e_{\be\be'}$.
For the 3rd chaos, \vspace{-5pt}
\begin{equ}[e:cubic-pairing]
\begin{tikzpicture}  [scale=0.45,baseline=5]		
\node at (.7, .8)  [var] (a)  {};  
\node at (-.7,.8) [var] (b) {}; 
\node at (0,1) [var] (x) {}; 
\draw (0,0) -- (a);
\draw (0,0) -- (b);
\draw (0,0) -- (x);
\node at (2.3+.7,.8)  [var] (c)  {};  
\node at (2.3-.7,.8) [var] (d) {}; 
\node at (2.3,1)  [var] (y)  {};  
\draw (2.3,0) -- (d);
\draw (2.3,0) -- (c);
\draw (2.3,0) -- (y);

\draw[bend left =50, delta]  (a) to (d) ;
\draw[bend left =60, delta]  (b) to (c) ;
\draw[bend left =40, delta]  (x) to (y) ;
\end{tikzpicture}
\qquad
\big(\CK^{(\eps; 3)}\tau\big)\big(z_1, z_2\big) \lesssim \|z_1-z_2\|_{\e}^{-\kappa}
\end{equ}
as required by \eqref{e:CovarianceBounds}.
The bounds for $\<IXi^2>$ and  $\<IXi>$ are similar and simpler.
%Consider $\tau=\<IXi^2>$.
%The 0th chaos  vanishes by the choice of renormalisation.
%For the 2nd chaos, 
%\begin{equ}[e:mom-V]
%\begin{tikzpicture}  [scale=0.6,baseline=10]		
%\node at (.6,1)  [var] (a)  {};  
%\node at (-.6,1) [var] (b) {}; 
%\draw (0,0) -- (a);
%\draw (0,0) -- (b);
%\node at (2.6,1)  [var] (c)  {};  
%\node at (2-.6,1) [var] (d) {}; 
%\draw (2,0) -- (d);
%\draw (2,0) -- (c);
%
%\draw[bend left =40, dotted]  (a) to (d) ;
%\draw[bend left =40, dotted]  (b) to (c) ;
%\end{tikzpicture}
%\qquad
%\big(\CK^{(\eps; 2)}\tau\big)\big(z_1, z_2\big) \lesssim \|z_1-z_2\|_{\e}^{-\kappa}
%\end{equ}
%as required by \eqref{e:CovarianceBounds}.
%The case $\<IXi>$ is even simpler.

Consider $\<IXiI'[IXiI'Xi]_notriangle>$, which consists of the following trees
\begin{gather*}
{} {\bf(1)}\;\; \Psi_i \CD_i \mcb{I} (\CS_{\be} \Psi_j \CD_j \Psi_i)\;,
\quad
{\bf(2)}\;\; \Psi_i \CD_i \mcb{I} (\CS_{\be} \Psi_j \bar\CD_i \Psi_j)\;, 
\quad
{\bf(3)}\;\; \Psi_i \CD_i \mcb{I} ( \Psi_i \CD_i \Psi_i)\;,
\\
{\bf(4)}\;  \CS_{\be} \Psi_j \CD_j \mcb{I} (\CS_{\be'} \Psi_j \CD_j \Psi_i),
\;
{\bf(5)}\; \CS_{\be} \Psi_j \CD_j \mcb{I} (\CS_{\be'} \Psi_j \bar\CD_i \Psi_j),
\;
{\bf(6)}\; \CS_{\be} \Psi_j \CD_j \mcb{I} (\Psi_i\CD_i \Psi_i),
\\
 {\bf(7)}\; \CS_{\be} \Psi_j \CD_i^+ \CS_{\bw} \mcb{I} (\CS_{\be'} \Psi_i \CD_i \Psi_j),
\;
{\bf(8)}\; \CS_{\be} \Psi_j \CD_i^+\CS_{\bw} \mcb{I} (\CS_{\be'} \Psi_i \bar\CD_j \Psi_i),
\;
 {\bf(9)}\; \CS_{\be} \Psi_j \CD_i^+ \CS_{\bw} \mcb{I} (\Psi_j \CD_j \Psi_j).
\end{gather*}
Here $\be,\be'\in \CE_\times$ and $\bw\in\{\Northwest,\Southwest\}$.
For the 3rd chaos, as well as one term of the 1st chaos of the form \tikz[scale=0.25,baseline=-5]{\node at (.6, 1)  [var] (a)  {};  
\node at (-.6,1) [var] (b) {}; 
\node at (-1.2,0) [var] (x) {}; 
\draw[very thick] (0,0) -- (a);
\draw (0,0) -- (b);
\draw[very thick] (0,0) -- (-0.6,-1);
\draw (x) -- (-0.6,-1); \draw[bend right =50, delta,thick]  (b) to (x);},
%obtained by contracting 
%the $\Psi$ outside $\mcb{I}$ and the $\Psi$ (without derivative) inside $\mcb{I}$,
%their second moments are represented by the following graphs;
proceeding as above
we again get a bound by $\|z_1-z_2\|_{\e}^{-\kappa}$. 
%\[
%\begin{tikzpicture}  [scale=0.35,baseline=10]		
%\node at (.6, 1)  [var] (a)  {};  
%\node at (-.6,1) [var] (b) {}; 
%\node at (1.2,0) [var] (x) {}; 
%\draw[very thick] (0,0) -- (a);
%\draw (0,0) -- (b);
%\draw[very thick] (0,0) -- (0.6,-1);
%\draw (x) -- (0.6,-1);
%
%\node at (4.6,1)  [var] (c)  {};  
%\node at (4-.6,1) [var] (d) {}; 
%\node at (4-1.2,0) [var] (y) {}; 
%\draw[very thick] (4,0) -- (d);
%\draw (4,0) -- (c);
%\draw[very thick] (4,0) -- (4-.6,-1);
%\draw (y) -- (4-0.6,-1);
%
%\draw[bend left =40, dotted]  (b) to (c) ;
%\draw[bend left =40, dotted]  (x) to (y) ;
%\draw[bend left =40, dotted]  (a) to (d) ;
%\end{tikzpicture}
%\qquad\qquad
%\begin{tikzpicture}  [scale=0.35,baseline=10]		
%\node at (.6, 1)  [var] (a)  {};  
%\node at (-.6,1) [var] (b) {}; 
%\node at (-1.2,0) [var] (x) {}; 
%\draw[very thick] (0,0) -- (a);
%\draw (0,0) -- (b);
%\draw[very thick] (0,0) -- (-0.6,-1);
%\draw (x) -- (-0.6,-1);
%
%\node at (3.6,1)  [var] (c)  {};  
%\node at (3-.6,1) [var] (d) {}; 
%\node at (3+1.2,0) [var] (y) {}; 
%\draw[very thick] (3,0) -- (d);
%\draw (3,0) -- (c);
%\draw[very thick] (3,0) -- (3.6,-1);
%\draw (y) -- (3+0.6,-1);
%
%\draw[bend right =50, dotted]  (b) to (x) ;
%\draw[bend left =50, dotted]  (c) to (y) ;
%\draw[bend left =40, dotted]  (a) to (d) ;
%\end{tikzpicture}
%\]
For each of these trees there is another term in the 1st chaos component of $\hat\Pi_{z_*}^\e\tau (z)$, denoted by $I_1^\e(\bar\CW^{(\eps; 1)}\tau)(z)$,
% obtained by contracting 
%the $\Psi$ outside $\mcb{I}$ with the derivative of $\Psi$ inside $\mcb{I}$,
of the form
\tikz[scale=0.25,baseline=-5]{
\node at (.6, 1)  [var] (a)  {};  
\node at (-.6,1) [var] (b) {}; 
\node at (-1.2,0) [var] (x) {}; 
\draw (0,0) -- (a);
\draw[very thick] (0,0) -- (b);
\draw[very thick] (0,0) -- (-0.6,-1);
\draw (x) -- (-0.6,-1); \draw[bend right =50, delta,thick]  (b) to (x);},
 which we focus on now.\footnote{The term in the 1st chaos
obtained by contracting the two noises on the top is zero, since the kernel $\d K$
annihilates constants. We will often use this fact below, together with the fact that the positively renormalised kernel $K(z-w)-K(-w)$ also kills constants, without explicitly mentioning.}

For (2)(4)(6)(8): $I_1^\e(\bar\CW^{(\eps; 1)}\tau)=0$ by  independence
of $\Psi_i$ and $\Psi_j$.

For (1)(5)(9): 
$I_1^\e(\bar\CW^{(\eps; 1)}\tau)(z)$ for $z\in \R\times \obonds_i$ is given by 
%\begin{equ}[e:J-convolve-Psi]
$
(J *_{(j)} \Psi_j) (z) = \int_{\R\times \obonds_j} J(z-w) \Psi_j (w) \mrd w
$
where for $z\in \R\times \obonds_i$ and $w\in \R\times \obonds_j$
\begin{equ}[e:def-J1]
J(z-w) \eqdef 
\begin{cases}
  \E [\Psi_i(z) \partial_i K (z+\be-w) \partial_j \Psi_i(w-\be)] \;,\\
  \E [\Psi_j(z+\be) \partial_j K (z+\be'-w) \bar\partial_i \Psi_j(w-\be')]\;,\\
  \E [\Psi_j(z+\be) \partial_i^+ K (z^{\bw}-w) \partial_j \Psi_j(w)] 
\end{cases}
\end{equ}
respectively for the three trees.
Now one has
\begin{equ}[e:RJ-convolve-Psi]
%I_1^\e (\bar\CW^{(\eps; 1)}\tau)(z)  = 
(J *_{(j)} \Psi_j) (z) - \mathrm{Int}[J] \Psi_j (z)
= (\mathscr{R} J *_{(j)} \Psi_j) (z)\;.
\end{equ}
Here we write  $\mathrm{Int}[J]$ for $\int_{\R\times \obonds_j} J(z-w) \mrd w$
which is independent of $z\in \R\times \obonds_i$.
Since 
$J$ has degree $-4$,
by the bound for renormalised kernels in Lemma~\ref{lem:OpDSingKer} one  can
bound the second moment of $\mathscr{R} J *_{(j)} \Psi_j$ by  \vspace{-5pt}
\begin{equ}[e:mom-2424]
\begin{tikzpicture}  [scale=0.32,baseline=0]		
\node at (.6, 1)  [var] (a)  {};  
\draw (0,0) -- (a);
\draw[kernels] (0,0) -- (0.6,-1);

\node at (3-.6, 1)  [var] (b)  {};  
\draw (3,0) -- (b);
\draw[kernels] (3,0) -- (3-.6,-1);

\draw[bend right =50, delta]  (b) to (a) ;
\end{tikzpicture}
\qquad
\qquad
\big(\CK^{(\eps; 1)}\tau\big)\big(z_1, z_2\big) \lesssim \|z_1-z_2\|_{\e}^{-\kappa}
\end{equ}
satisfying  \eqref{e:CovarianceBounds}.
Here $\tikz[baseline=-3]\draw[kernels] (0,0) --(.8,0);$ stands for $\mathscr{R}  J$.
Now it suffices to show that $\mathrm{Int}[J]$ is finite (recalling $i\neq j$).
Indeed, in the first case of \eqref{e:def-J1},
$\mathrm{Int}[J]$ can be written as
$
\int_{\R\times \obonds_i} \E [\Psi_i(z) \partial_i K (z-w) \partial_j \Psi_i(w)] \mrd w
=0$
by parity, namely,
$\partial_i K(w)$ is odd and  $\E[\Psi_i(z) \partial_j \Psi_i(z-w)]$ is even (in $w$)
in the $i$th spatial coordinate, noting that $\partial_i$ is a symmetrised derivative.
In the second case, we write $\mathrm{Int}[J]$ as
\begin{equ}
\frac12\sum\nolimits_{\sigma\in \{0,1\}}\int_{\R\times \obonds_i} 
\E [(\Psi_j(z+\be)+ (-1)^{\sigma}\Psi_j(z-\be)) \partial_j K (z-w) \bar\partial_i \Psi_j(w)]
%+\E [(\Psi_j(z+\be)-\Psi_j(z-\be)) \partial_j K (z-w) \bar\partial_i \Psi_j(w)]
 \mrd w\;.
\end{equ}
The $\sigma=0$ term vanishes by parity; the $\sigma=1$ term
 $\lesssim\int \e \|w\|_\e^{-5} \mrd w$
which is finite uniformly in $\e$.
The third case of \eqref{e:def-J1} follows in the same way.

For (3)(7): 
by \eqref{e:M-on-YM},  %\eqref{e:def-C-main},
$\hat\Pi_{z_*}^\e\tau (z)$ for $z\in \R\times \obonds_i$ is equal to
\begin{equs}
{} &\Psi_i (z) \,(\partial_i K*_{(i)}( \Psi_i \partial_i \Psi_i))(z)
- \E [\Psi_i(z) (\partial_i K*_{(i)} \partial_i \Psi_i)(z)]
\,\Psi_i (z)\;,
\\
& \Psi_j (z+\be)\, (\partial_i^+ K*_{(j)}( \Psi_i(\cdot+\be') \partial_i \Psi_j))(z^\bw)
\\
&\qquad\qquad\qquad \qquad
 - \E [ \Psi_j(z+\be) (\partial_i^+ K *_{(j)} \partial_i \Psi_j) (z^\bw) ]
\, \Psi_i (z)
\end{equs}
respectively, recalling $\hat{C}^{\e}_5$, $\hat C^\e_{3,\be,\bw}$ in Section \ref{sec:renorm_constants}.
So with
\[
J(z-w) \eqdef 
\begin{cases}
  \E [\Psi_i(z) \partial_i K (z-w) \partial_i \Psi_i(w)] \;,\\
  \E [\Psi_j(z+\be) \partial_i^+ K (z^\bw+\be'-w) \partial_i \Psi_j(w-\be')]
\end{cases}
\]
where $z,w\in \R\times \obonds_i$,
 one has 
 \begin{equ}[e:37RJ]
 I_1^\e (\bar\CW^{(\eps; 1)}\tau)(z) 
= (J *_{(i)} \Psi_i) (z) - \mathrm{Int}[J] \Psi_i (z)
 = (\mathscr{R} J *_{(i)} \Psi_i) (z)
 \end{equ}
  and one again has the bound \eqref{e:mom-2424}.

 Consider $\<I[IXiI'Xi]I'Xi_notriangle>$, which consists of the trees
\begin{gather*}
{}{\bf(1)}\;\;\mcb{I} (\CS_{\be}\Psi_j\CD_j \Psi_i) \CD_i \Psi_i\;,
\quad
{\bf(2)}\;\;\mcb{I} (\CS_{\be}\Psi_j\bar\CD_i \Psi_j) \CD_i \Psi_i\;,
\quad
{\bf(3)}\;\; \mcb{I} (\Psi_i\CD_i \Psi_i) \CD_i \Psi_i\;,
\\
{\bf(4)} \;\CS_{\be} \mcb{I} (\CS_{\be'}\Psi_i\CD_i \Psi_j) \CD_j \Psi_i,
\;\;
{\bf(5)} \; \CS_{\be} \mcb{I} (\CS_{\be'}\Psi_i \bar\CD_j \Psi_i) \CD_j \Psi_i,
\;\;
{\bf(6)} \;\CS_{\be} \mcb{I} (\Psi_j\CD_j \Psi_j) \CD_j \Psi_i,
\\
{\bf(7)} \;\CS_{\be} \mcb{I} (\CS_{\be'}\Psi_i\CD_i \Psi_j) \bar\CD_i \Psi_j,
\;\;
{\bf(8)}\;\CS_{\be} \mcb{I} (\CS_{\be'}\Psi_i \bar\CD_j \Psi_i) \bar\CD_i \Psi_j,
\;\;
{\bf(9)}\;\CS_{\be} \mcb{I} (\Psi_j\CD_j \Psi_j) \bar\CD_i \Psi_j.
\end{gather*}
For the 3rd chaos, we have \vspace{-5pt}
\begin{equ}[e:K3-01]
\begin{tikzpicture}  [scale=0.33,baseline=0]		
\node at (.6, 1)  [var] (a)  {};  
\node at (-.6,1) [var] (b) {}; 
\node at (1.2,0) [var] (x) {}; 
\draw[very thick] (0,0) -- (a);
\draw (0,0) -- (b);
\draw[kernel1] (0,0) -- (0.6,-1);
\draw[very thick] (x) -- (0.6,-1);

\node at (4.6,1)  [var] (c)  {};  
\node at (4-.6,1) [var] (d) {}; 
\node at (4-1.2,0) [var] (y) {}; 
\draw[very thick] (4,0) -- (d);
\draw (4,0) -- (c);
\draw[kernel1] (4,0) -- (4-.6,-1);
\draw[very thick] (y) -- (4-0.6,-1);

\draw[bend left =40, delta]  (b) to (c) ;
\draw[bend left =40, delta]  (x) to (y) ;
\draw[bend left =40, delta]  (a) to (d) ;
\end{tikzpicture}
\quad
\big(\CK^{(\eps; 3)}\tau\big)\big(z_1, z_2\big) \lesssim 
\|z_1-z_2\|_{\e}^{-2} (\|z_1\|_{\e} +\|z_2\|_{\e})^{2-2\kappa}
\end{equ}
%so that $\mbox{L.S.}\lesssim \lambda^{-\kappa}$
%and  $\mbox{S.S.}\lesssim \e^{-\kappa} + \e^{-1-\kappa}\lambda \lesssim \lambda^{-\kappa}$.
as required by \eqref{e:CovarianceBounds} with $\zeta=2-\kappa$ therein.
Here, the factor $\|z_1-z_2\|_{\e}^{-2} $
arises from pairing the two noises in the bottom. For the  kernel 
$\tikz[baseline=-3]\draw[kernel1] (0,0) --(.8,0);$ we bound
\begin{equ}[e:Diff-K]
|K(z-w)-K(-w)| \lesssim \|z\|_\e^\alpha (\|z-w\|_\e^{-2-\alpha} + \|w\|_\e^{-2-\alpha})\;,
\quad
\forall \alpha\in [0,1]
\end{equ}
which can be proved in the same way as 
\cite[Lem.~10.18]{Hairer14}, see also \cite[Lem.~7.4]{HM18}.
The above bound \eqref{e:K3-01} is then obtained by applying 
\eqref{e:Diff-K} to both $\tikz[baseline=-3]\draw[kernel1] (0,0) --(.8,0);$ %of the  ``positively renormalised'' kernels 
with $\alpha=1-\kappa$:
one has $ \|z_1\|_\e^{1-\kappa}  \|z_2\|_\e^{1-\kappa}\le 
(\|z_1\|_{\e} +\|z_2\|_{\e})^{2-2\kappa}$,
and the integration over the other variables is bounded due to our choice of $\alpha$.

Consider the term in the 1st chaos not requiring renormalisation. Graphically:\vspace{-5pt}
\begin{equ}[e:K3-02]
\begin{tikzpicture}  [scale=0.33,baseline=0]		
\node at (.6, 1)  [var] (a)  {};  
\node at (-.6,1) [var] (b) {}; 
\node at (-1.2,0) [var] (x) {}; 
\draw[very thick] (0,0) -- (a);
\draw (0,0) -- (b);
\draw[kernel1] (0,0) -- (-0.6,-1);
\draw[very thick] (x) -- (-0.6,-1);

\node at (3.6,1)  [var] (c)  {};  
\node at (3-.6,1) [var] (d) {}; 
\node at (3+1.2,0) [var] (y) {}; 
\draw[very thick] (3,0) -- (d);
\draw (3,0) -- (c);
\draw[kernel1] (3,0) -- (3.6,-1);
\draw[very thick] (y) -- (3+0.6,-1);

\draw[bend right =50, delta]  (b) to (x) ;
\draw[bend left =50, delta]  (c) to (y) ;
\draw[bend left =40, delta]  (a) to (d) ;
\end{tikzpicture}
\end{equ}
We bound the two terms in the ``positively renormalised'' kernel separately
(i.e. \eqref{e:Diff-K} with $\alpha=0$), which boils down to estimating the following two terms
\begin{equs}
\int 
 \|z_1-w_1\|_\e^{-3} \,& \|w_1-w_2\|_\e^{-2} 
 \|z_2-w_2\|_\e^{-3} \mrd w_1 \mrd w_2 \;,
\\
\int 
\|w_1\|_\e^{-2}\,\|z_1-w_1\|_\e^{-1}\, & \|w_1-w_2\|_\e^{-2} 
\|w_2\|_\e^{-2}\,\|z_2-w_2\|_\e^{-1} \mrd w_1 \mrd w_2\;.
\end{equs}
The first term is easily bounded by $\|z_1-z_2\|_\e^{-\kappa}$ using Lemma~\ref{lem:OpDSingKer}.
For the second term, by Young's inequality 
$\|w_1\|_\e^{-2}\|z_1-w_1\|_\e^{-1} \lesssim \|w_1\|_\e^{-3}+\|z_1-w_1\|_\e^{-3}$.
For the integral with the term $\|w_1\|_\e^{-3}$, Lemma~\ref{lem:OpDSingKer} leads to 
a bound by $\|z_2\|_\e^{-\kappa}$.
The integral with the term $\|z_1-w_1\|_\e^{-3}$ is bounded by
$
\int 
 \|z_1-w_2\|_\e^{-1} 
\|w_2\|_\e^{-2}\,\|z_2-w_2\|_\e^{-1}  \mrd w_2
$.
By Young's inequality as above with the subscript $1$ replaced by $2$, it is bounded by
$\|z_1-z_2\|_\e^{-\kappa}+ \|z_2\|_\e^{-\kappa}$.
Summarising, we obtain a bound
\begin{equ}[e:kappa3]
\|z_1-z_2\|_\e^{-\kappa}+\|z_1\|_\e^{-\kappa}+\|z_2\|_\e^{-\kappa}\;.
\end{equ}
The first term  has the form of  \eqref{e:CovarianceBounds}.
The  other two terms integrated against test functions are bounded by $\lambda^{-\kappa}$,
where in the regime $\lambda<\e$ one uses $\|z_i\|_\e^{-\kappa} \leq \e^{-\kappa}$.
%Regarding the other two terms, 
%one has  
%\begin{equs}[e:kappa3LSSS]
%\mbox{L.S.}
%&\lesssim 
%\int \phi^\lambda_{z_*}(z_1)\phi^\lambda_{z_*}(z_2) \|z_2\|_\e^{-\kappa} \mrd z_1\mrd z_2 \lesssim \lambda^{-\kappa}\;,
%\\
%\mbox{S.S.}
%&\lesssim 
%\int_{\R^2}  \|z_1\|_\e^{-\kappa}
%(\CS_{2,t_*}^{\lambda}\phi)(t_1)(\CS_{2,t_*}^{\lambda}\phi)(t_2)
%\mrd t_1\mrd t_2
%\lesssim
%\e^{-\kappa}
%\leq
%\lambda^{-\kappa}
%\end{equs}
%where in the second bound we used $  \|z_2\|_\e^{-\kappa} \leq \e^{-\kappa}$.

We now focus on
the other term in the 1st chaos which has a potential divergence, again denoted by $I_1^\e\bar\CW^{(\eps; 1)}\tau$.

For (2)(4)(6)(8): $I_1^\e(\bar\CW^{(\eps; 1)}\tau)=0$ by  independence
of $\Psi_i$ and $\Psi_j$.

For (1)(9): we proceed similarly as for the terms (1)(5)(9) of $\<IXiI'[IXiI'Xi]_notriangle>$.  
$I_1^\e (\bar\CW^{(\eps; 1)}\tau)(z)$ for $z\in \R\times \obonds_i$  is given by $(J *_{(j)} \Psi_j) (z)$ %as in  \eqref{e:J-convolve-Psi}
where for $z\in \R\times \obonds_i$ and $w\in \R\times \obonds_j$, 
\begin{equ}[e:def-J2]
J(z-w) \eqdef 
\begin{cases}
  \E [\partial_i\Psi_i(z)  K (z+\be-w) \partial_j \Psi_i(w-\be)] \;,\\
  \E [\bar\partial_i \Psi_j(z)  K (z+\be-w) \partial_j \Psi_j(w)] 
\end{cases}
\end{equ}
respectively, plus a remainder term due to the %``positively renormalised'' 
$\tikz[baseline=-3]\draw[kernel1] (0,0) --(.8,0);$ kernels. 
 We again have \eqref{e:RJ-convolve-Psi} 
and the bound \eqref{e:mom-2424} for $\mathscr{R} J *_{(j)} \Psi_j$.
One can then show that 
$\mathrm{Int}[J]$ is bounded uniformly in $\e$ by similar arguments as for the terms (1)(5)(9) of $\<IXiI'[IXiI'Xi]_notriangle>$.  
Now we only need to consider the remainder term,
which boils down to estimating 
\begin{equ}[e:w-z-w-w-w]
\int 
\|w_1\|_\e^{-2}\,\|z_1-w_1\|_\e^{-2}\,  \|w_1-w_2\|_\e^{-\kappa} \,
\|w_2\|_\e^{-2}\,\|z_2-w_2\|_\e^{-2} \mrd w_1 \mrd w_2\;.
\end{equ}
%Since $0=z_*,w_1,w_2$ form a triangle, $\|w_1\|_\e^{-2} \|w_1-w_2\|_\e^{-\kappa} \|w_2\|_\e^{-2}$ is bounded by
%\[
%\|w_1\|_\e^{-2-\frac{\kappa}2}\|w_2\|_\e^{-2-\frac{\kappa}2}
%+\|w_1\|_\e^{-2} \|w_1-w_2\|_\e^{-2-\kappa} 
%+\|w_2\|_\e^{-2} \|w_1-w_2\|_\e^{-2-\kappa}\;.
%\]
By Young's inequality 
$\|w_1\|_\e^{-2} \|w_1-w_2\|_\e^{-\kappa}
 \lesssim \|w_1\|_\e^{-2-\kappa}+\|w_1-w_2\|_\e^{-2-\kappa}$.
If the first term is chosen, the integral is bounded by $\|z_1\|_\e^{-\kappa}+\|z_2\|_\e^{-\kappa}$.
If the other term is chosen, we again apply Lemma~\ref{lem:OpDSingKer}  and Young's inequality to obtain
\eqref{e:kappa3}. % and thus \eqref{e:kappa3LSSS}.
 
For (3)(5)(7): we proceed similarly as for the terms (3)(7) of $\<IXiI'[IXiI'Xi]_notriangle>$.
By \eqref{e:M-on-YM} and the choices of $\hat C^\e_{4}$, $\hat C^\e_{1,\be}$,
$\hat C^\e_{2,\be}$  in Section \ref{sec:renorm_constants},
% \eqref{e:def-C-main}, 
 $\hat\Pi_{z_*}^\e\tau (z)$ for $z\in \R\times \obonds_i$ is equal to
\begin{equs}
\partial_i \Psi_i (z) (K*_{(i)}( \Psi_i \partial_i \Psi_i))(z)
&- \E [\partial_i\Psi_i(z) (K*_{(i)} \partial_i \Psi_i)(z)]
\Psi_i (z)\;,
\\
\partial_j \Psi_i (z) ( K*_{(j)}( \Psi_i(\cdot+\be') \bar\partial_j \Psi_i))(z+\be)
&- \E [\partial_j \Psi_i (z) ( K *_{(j)} \bar\partial_j \Psi_i) (z+\be) ]
\Psi_i (z)\;,
\\
\bar\partial_i \Psi_j (z) ( K*_{(j)}( \Psi_i(\cdot+\be') \partial_i \Psi_j))(z+\be)
&- \E [\bar\partial_i \Psi_j(z) ( K *_{(j)} \partial_i \Psi_j) (z+\be) ]
\Psi_i (z)
\end{equs}
respectively, plus remainder terms due to the ``positively renormalised'' kernels
which are bounded in exactly the same way as above.
So with
\[
J(z-w) \eqdef 
\begin{cases}
  \E [\partial_i  \Psi_i(z) K (z-w) \partial_i \Psi_i(w)] \;,\\
   \E [\partial_j \Psi_i (z) K (z+\be+\be'-w) \bar\partial_j \Psi_i (w-\be') ]\;,\\
  \E [\bar\partial_i \Psi_j(z)  K (z+\be+\be'-w) \partial_i \Psi_j(w-\be')]
\end{cases}
\]
where $z,w\in \R\times \obonds_i$,
the above three expressions can all be written as in \eqref{e:37RJ},
and one again has the bound \eqref{e:mom-2424}.

%Consider $\<I'[IXiI'Xi]_notriangle>$. It only has 2nd chaos and
%\begin{equ}
%\begin{tikzpicture}  [scale=0.4,baseline=0]		
%\node at (.6, 1)  [var] (a)  {};  
%\node at (-.6,1) [var] (b) {}; 
%\draw[very thick] (0,0) -- (a);
%\draw (0,0) -- (b);
%\draw[very thick] (0,0) -- (0.6,-1);
%
%\node at (3.6,1)  [var] (c)  {};  
%\node at (3-.6,1) [var] (d) {}; 
%\draw[very thick] (3,0) -- (d);
%\draw (3,0) -- (c);
%\draw[very thick] (3,0) -- (3-.6,-1);
%
%\draw[bend left =40, dotted]  (b) to (c) ;
%\draw[bend left =40, dotted]  (a) to (d) ;
%\end{tikzpicture}
%\qquad
%\big(\CK^{(\eps; 2)}\tau\big)\big(z_1, z_2\big) \lesssim \|z_1-z_2\|_{\e}^{-\kappa}
%\end{equ}
%%one has
%%$\mbox{L.S.} \lesssim \lambda^{-2} $ and
%%$\mbox{S.S.} \lesssim \e^{-2} \le \lambda^{-2}$.
%% where for S.S. we brutally bound $ \|z\|_{\e}^{-2}$ by $\e^{-2}$.
%as required by \eqref{e:CovarianceBounds}.
%The case $\<I'[I'Xi]_notriangle>$ satisfies the same bound.
The bounds for $\<I'[IXiI'Xi]_notriangle>$ and  $\<I'[I'Xi]_notriangle>$
follow from the bounds for $\<IXiI'Xi_notriangle>$ and $\<I'Xi_notriangle>$.
%
%Consider $ \<IXiI'[I'Xi]_notriangle>$. This is controlled in the same way as $\<IXiI'[IXiI'Xi]_notriangle>$.
%In particular, the two trees that need renormalisation are
%$ \Psi_i \CD_i \mcb{I} ( \bone^{(i)} \CD_i \Psi_i)$ 
%and 
%$\CS_a \Psi_j \bar\CD_i \mcb{I} (\CS_a \bone^{(i)} \CD_i \Psi_j)$,
%and for each of them
%the $0$th chaos exactly vanishes  by the choice of renormalisation.
%This is also the case for the other trees of this form
% either by independence
%of $\Psi_i$ and $\Psi_j$, or by exact spatial parity due to 
%symmetrised derivatives.
%
%The case $\<I[I'Xi]I'Xi_notriangle>$ is also treated in the same way as $\<I[IXiI'Xi]I'Xi_notriangle>$.
%
The trees  $\<IXiI'[I'Xi]_notriangle>$ and $\<I[I'Xi]I'Xi_notriangle>$
are treated in the same way as 
$\<IXiI'[IXiI'Xi]_notriangle>$ and
$\<I[IXiI'Xi]I'Xi_notriangle>$, and we omit the details.
\end{proof}

\begin{lemma}
	\label{lem:mom-xi}
%For every $\tau\in \CT^{\xi}_-$,  
For every $\tau$ in \eqref{e:trees-deg-1-rem},
the bounds \eqref{e:mom-bounds} hold.
\end{lemma}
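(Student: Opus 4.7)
The plan is to verify the covariance bound \eqref{e:CovarianceBounds} separately for each of the three trees $\bar\Xi_i$, $\<PsiXi1>$, and $\<R2-1new>$ appearing in \eqref{e:trees-deg-1-rem}. In each case the (renormalised) model has a finite Wiener chaos expansion that I would bound chaos by chaos, the key point being that the factor $\e^{1-\kappa}$ coming from $\bar\Xi=\e^{1-\kappa}\xi$ (or from the product of two half noises $\tilde\Xi=\e^{\frac12-\frac\kappa2}\xi$) provides enough room to absorb the extra regularity loss relative to the YM trees. For $\bar\Xi_i$ itself the model lives only in the first chaos with $\CK^{(\e;1)}\bar\Xi_i(z_1,z_2)=\e^{2-2\kappa}\delta_\e(z_1-z_2)$, which fits \eqref{e:CovarianceBounds} after absorbing a factor of $\e^\kappa$ so that the exponent data realise the equality $\alpha+\zeta-|\s|n_\delta+n_\e=2|\tau|+\kappa=-2-3\kappa$ with $n_\delta=1$.

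For $\<PsiXi1>$, the renormalisation constant $C^\e[\,\<PsiXi1>\,]=\e^{-1-\kappa}\Cas$ is designed precisely to kill the Stratonovich (zeroth chaos) contribution, so that $\hat\Pi_z(\Psi_i\bar\Xi_i)=\e^{1-\kappa}\Wick{\Psi_i\xi_i^\e}$ lies purely in the second chaos. The two Wick pairings produce
\[
\CK^{(\e;2)}\<PsiXi1>(z_1,z_2) \;\lesssim\; \e^{2-2\kappa}K(z_1-z_2)^2 \;+\; \e^{2-2\kappa}K^{*2}(z_1-z_2)\delta_\e(z_1-z_2),
\]
where the first summand is bounded by $\e^{2-2\kappa}\|z_1-z_2\|_\e^{-4}$ and, using $\|z_1-z_2\|_\e\ge\e$ to trade a small-distance singularity for powers of $\e$, can be recast as $\e^{n_\e}\|z_1-z_2\|_\e^\alpha$ with $\alpha\in(-|\s|,0]$ matching the exponent equality in \eqref{e:CovarianceBounds}, while the second is bounded using the logarithmic control $K^{*2}(0)\lesssim|\log\e|\le\e^{-\kappa'}$ and fits the $n_\delta=1$ case.

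The main obstacle is $\<R2-1new>$. A naive application of $(\partial_j^+K)^{*2}(z)\lesssim\|z\|_\e^{-2}$ would give the zeroth chaos $\e^{1-\kappa}(\partial_j^+K)^{*2}(\e_i)$ at the problematic size $\e^{-1-\kappa}$, which would violate the required bound already at scale $\lambda=O(1)$. The rescue is the discrete identity of Lemma~\ref{lem:identity}: specialising to $s=s'=0$ and spatial difference $\e_i\ne 0$ yields $(\partial_j^+P^\e)^{*2}(0,\e_i)=0$ exactly because $P^\e_0(\e_i)=0$, and replacing the full heat kernel $P^\e$ by the truncated kernel $K$ contributes only a uniformly bounded smooth correction; hence the zeroth chaos is in fact of size $\e^{1-\kappa}$ and its square is trivially absorbed into $\lambda^{2(|\tau|+\kappa)}$. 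This is the ``exact parity'' mechanism alluded to in Remark~\ref{rem:no_renorm}(1). For the second chaos, the two Wick pairings applied to the product at the shifted points $z_1^\southeast,z_1^\southwest,z_2^\southeast,z_2^\southwest$ give $\CK^{(\e;2)}\<R2-1new>(z_1,z_2)\lesssim \e^{2-2\kappa}\|z_1-z_2\|_\e^{-4}$ via $(\partial_j^+K)^{*2}(z)\lesssim\|z\|_\e^{-2}$ together with Lemma~\ref{lem:shift-K} to absorb the $\CE_\times$-shifts, which is then handled exactly as in the first term of the $\<PsiXi1>$ estimate. The reduction from \eqref{e:CovarianceBounds} to \eqref{e:mom-bounds1}--\eqref{e:mom-bounds2} is the standard computation recorded after \eqref{e:CovarianceBounds}.
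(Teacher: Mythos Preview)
Your argument is correct and follows the same route as the paper. One small inaccuracy worth noting: for $\<PsiXi1>$ the cross Wick pairing $\Psi_i(z_1)\leftrightarrow\bar\Xi_i(z_2)$, $\bar\Xi_i(z_1)\leftrightarrow\Psi_i(z_2)$ contributes $\e^{2-2\kappa}K(z_1-z_2)K(z_2-z_1)$ rather than $\e^{2-2\kappa}K(z_1-z_2)^2$, and since $K$ is non-anticipative this product is supported on $\{t_1=t_2\}$ and hence vanishes almost everywhere. The second-chaos covariance is therefore exactly the single term $\e^{2-2\kappa}K^{*2}(z_1-z_2)\delta_\e(z_1-z_2)$ that the paper records; your displayed bound remains a valid upper bound, so nothing breaks, but the first summand and its attendant $\e$-absorption are unnecessary.
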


\begin{proof}
For $\tau=\<Xi1>=\bar\Xi$ such that $\hat\Pi^\e\tau = \e^{1-\kappa} \xi^\e$,
similarly as $\Xi$, \eqref{e:CovarianceBounds} applies with $(\zeta,\alpha,n_\delta,n_\e)=(0,0,1,2-2\kappa)$.
%the same calculation as in \eqref{e:xiS22} yields the desired bounds:
%\begin{equ}
%\mbox{L.S.} \lesssim (\e^{1-\kappa})^2 \lambda^{-4} \le \lambda^{-2-2\kappa}\;,
%\qquad
%\mbox{S.S.} \lesssim (\e^{1-\kappa})^2 \e^{-2} \lambda^{-2} \lesssim \lambda^{-2-2\kappa}\;.
%\end{equ}

Let $\tau= \<PsiXi1>$.
The $0$th chaos vanishes by our choice of renormalisation. For the 2nd chaos,
$\bigl(  \CK^{(\eps; 2)}\tau \bigr)(z_1,z_2)  = \e^{2-2\kappa} \delta_\e(z_1-z_2)  K^{*2}(z_1-z_2)$,
so \eqref{e:CovarianceBounds} applies with $(\zeta,\alpha,n_\delta,n_\e)=(0,-\kappa,1,2-2\kappa)$.
%
%for which
%\begin{equs}[e:PsiXi1-mom]
%%\E [(\hat\Pi^\e \tau) (\phi^\lambda)^2]
%\mbox{L.S.}
%& \le  (\e^{1-\kappa})^2  \int \delta_\e(z_1-z_2)  \phi_{z_*}^\lambda(z_1)   \phi_{z_*}^\lambda(z_2)
%  K^{*2}(0) \mrd z_1 \mrd z_2
%\\
%& \lesssim \e^{2-2\kappa} \int \phi_{z_*}^\lambda(z)^2 \e^{-\kappa}\mrd z
% \lesssim  \e^{2-3\kappa} \lambda^{-4} 
% \le \lambda^{-2-3\kappa}\;.
%\end{equs}
%%For $\lambda<\e$, 
%Here we recall $\delta_\e$ in \eqref{e:def-delta-eps}. By similar calculation,
%\begin{equs}
%%\E \Big[\Big(\int_\R \hat\Pi^\e \tau (t,x) \CS^\lambda_2 \phi(t) \mrd t \Big)^2 \Big]
%\mbox{S.S.}
%\le  \e^{2-2\kappa}\int_\R \e^{-2} \CS^\lambda_{2,t_*} \phi(t)^2 \e^{-\kappa}\mrd t
%\lesssim \e^{-3\kappa} \lambda^{-2} 
%\le \lambda^{-2-3\kappa}\;.
%\end{equs}
%Here $ \e^{-2}$ inside the integral arises from the factor $\e^{-1}$ in $\e^{-1}dW$.
%In general, the bounds at small scales and large scales  
%can be obtained in essentially the same way:
%as in the above calculation one simply needs to replace a factor $\lambda^{-2}$
%in $ \e^{2-3\kappa} \lambda^{-4} $ for the large scale bound
%by a factor $\e^{-2}$ which is simply the right small scale bound.

Let $\tau = \<R2-1new> = \big(\CD_j^+ \CS_{\southeast} \mcb{I}_j \tilde\Xi_j \big) 
\big(\CD_j^+ \CS_{\southwest} \mcb{I}_j \tilde\Xi_j \big)$. For the 2nd chaos,
\begin{equs}
\bigl(  \CK^{(\eps; 2)}\tau \bigr)(z_1,z_2) 
& = \e^{2-2\kappa} (\partial_j^+ K)^{*_\e 2} (z_1-z_2) (\partial_j^+ K)^{*_\e 2} (z_1-z_2)
\\
& \lesssim \e^{2-2\kappa} \|z_1-z_2\|_\e^{-4-\kappa}
 \lesssim  \|z_1-z_2\|_\e^{-2-3\kappa} 
\end{equs}
so we  have \eqref{e:CovarianceBounds}.
%and we get the bound $\lambda^{-2-3\kappa}$ on both large and small scales.
The $0$th chaos is cancelled by our choice of 
$C^\e [\<R2-1new>]$
in \eqref{eq:C_square_cherries}.
%For the $0$th chaos, one has, for $z = (t,x)$ with $x \in \obonds_i$,
%\[
%\bigl(\CW^{(\eps; 0)}\tau \bigr)(z) = 
%\int_{\R} \e^2 \sum\nolimits_{y\in \obonds_j}
%\partial_j^+ K_{t-s}(x^\southeast - y) \,
%\partial_j^+ K_{t-s}(x^\southwest - y)\mrd s \;.
%\]
%By naive dimension counting, this appears divergent. However, it is actually bounded.
%This is because replacing $K$ above by $P$ only causes  an error uniformly bounded in $\e$.
%Furthermore,
%since $x^\southeast \neq x^\southwest$ and $P_0$
%vanishes away from origin, by Lemma~\ref{lem:identity}, 
%the above quantity with 
%$K$ replaced by $P$ vanishes.
\end{proof}

\begin{lemma}
	\label{lem:mom-12}
For every $\tau$ in group \hyperlink{(1-3)}{(1-3)},  the bounds \eqref{e:mom-bounds} hold.
\end{lemma}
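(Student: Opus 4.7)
The plan is to follow closely the pattern set up in Lemmas \ref{lem:mom-YM} and \ref{lem:mom-xi}: decompose $\hat\Pi^\e_{z_*}\tau$ into its Wiener chaos components and reduce both estimates in \eqref{e:mom-bounds} to a covariance bound of the form \eqref{e:CovarianceBounds}, which is handled uniformly via the multiplication, convolution and renormalised-kernel bounds of Lemma~\ref{lem:OpDSingKer}. The key new feature in group \hyperlink{(1-3)}{(1-3)} is the appearance of the noise $\bar\Xi_i$ with $\hat\Pi^\e_{z_*}\bar\Xi_i = \e^{1-\kappa}\xi^\e$: every Wick pairing involving $\bar\Xi_i$ produces an extra factor $\e^{2-2\kappa}$ in the pointwise covariance, and it is precisely this factor that compensates for the fact that the trees in this group would otherwise be marginally divergent.

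First I would dispose of the abstract-polynomial trees $\X^{(i)}\bar\Xi_i$ and $\X^{(i)}\,\<PsiXi1>$. Because admissible models act on polynomials by \eqref{eq:poly_model}, one has $\hat\Pi^\e_{z_*}(\X^{(i)}\sigma)(z) = (z-z_*)^{(i)}\,\hat\Pi^\e_{z_*}\sigma(z)$, so integration against $\phi^\lambda_{z_*}$ or $\CS^\lambda_{2,t_*}\phi$ picks up an additional power of $\lambda$ relative to the test of $\sigma$ itself. Since $\bar\Xi_i$ and $\Psi_i\bar\Xi_i \sim \<PsiXi1>$ have already been estimated in Lemma~\ref{lem:mom-xi}, the required bounds follow immediately.

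Next I treat the un-renormalised trees of the group, namely $\<cherryY>$, $\<cherryYless>$, $\<I'Xibar>$, $\<supercherry1less>$, $\<supercherry1>$, and $\<supercherry2>$. For each of these I would enumerate the Wiener chaos components (of orders $1$ and $3$, or $1$ alone in the case of $\<I'Xibar>$), draw the associated pairing graphs, and apply Lemma~\ref{lem:OpDSingKer} in exactly the same fashion as for $\<IXiI'Xi_notriangle>$, $\<IXiI'[IXiI'Xi]_notriangle>$ and $\<I[IXiI'Xi]I'Xi_notriangle>$ in the proof of Lemma~\ref{lem:mom-YM}. In every diagram at least one contraction involves $\bar\Xi_i$, which brings in a factor $\e^{2-2\kappa}$; together with the counting of kernel degrees (using that $\mathcal{I}$ has degree $+2$ and each $\CD$, $\bar\CD$, $\CD^\pm$ reduces degree by $1$), this produces a bound of the form $\|z_1-z_2\|_\e^{-\kappa}+(\|z_1\|_\e^{-\kappa}+\|z_2\|_\e^{-\kappa})$ on $\CK^{(\e;k)}\tau$, conforming to \eqref{e:CovarianceBounds}.

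The main obstacle, and the only place where a genuinely new argument is required, is the pair of renormalised trees $\<IXiI'Xibar>$ and $\<I'XiIXibar>$, which are renormalised through \eqref{eq:square_cherry_consts} with constants $C_\be[\<I'XiIXibar>]$ and $C_{\be,\bw}[\<IXiI'Xibar>]$ satisfying $|\e^{\kappa}C| = O(1)$ by \eqref{eq:C_bounds_large_cherries}. Mimicking the treatment of the 1st-chaos component around \eqref{e:def-J1}--\eqref{e:RJ-convolve-Psi}, I would write the (dangerous) 1st-chaos contribution as $(J\ast_{(j)}\Psi_j)(z) - \mathrm{Int}[J]\,\Psi_j(z) = (\mathscr{R}J \ast_{(j)} \Psi_j)(z)$, where now $J$ carries an extra $\e^{1-\kappa}$ from $\bar\Xi$. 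Because $\Psi_i,\Psi_j$ are \emph{not} exactly independent in the relevant index configuration and the symmetrised/$\bar{}$-derivatives are only approximately odd (cf.\ Remarks~\ref{rem:loss-odd}, \ref{rem:C_non_vanish}), $\mathrm{Int}[J]$ is not zero, but the uniform bound $|\mathrm{Int}[J]| \lesssim \e^{1-\kappa}\int\|w\|_\e^{-3}\mrd w \lesssim \e^{-\kappa}$ matches precisely the size of $\e^{\kappa}$ absorbed into the renormalisation constants, so the subtraction in \eqref{eq:square_cherry_consts} eliminates exactly this divergence. The remaining $\mathscr{R}J \ast_{(j)}\Psi_j$ is controlled by Lemma~\ref{lem:OpDSingKer}\ref{item:renormalized}, giving a covariance bound of the form $\|z_1-z_2\|_\e^{-\kappa}$ as in \eqref{e:mom-2424}. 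The higher chaos components (of order $3$) are estimated, after possibly applying \eqref{e:Diff-K} to the positively renormalised kernels as in \eqref{e:K3-01}--\eqref{e:K3-02}, by the usual dimension-count argument, which is now easily satisfied thanks to the improvement from $\bar\Xi$.
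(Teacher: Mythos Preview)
Your overall strategy is right, but there is a genuine mislabeling of which trees carry the renormalisation, and this creates an actual gap.

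The trees $\<IXiI'Xibar>$ and $\<I'XiIXibar>$ have \emph{two} noises (one $\Xi$ and one $\bar\Xi$), so their chaos decomposition has orders $0$ and $2$ only. There is no $1$st-chaos component, and the $(J*\Psi_j)-\mathrm{Int}[J]\Psi_j=\mathscr{R}J*\Psi_j$ mechanism you describe cannot be applied to them. What the renormalisation \eqref{eq:tilde_C_renorm}, \eqref{eq:square_cherry_consts} does for these two-noise trees is simply subtract their $0$th chaos (an expectation of the form $\e^{1-\kappa}\E[\Psi_j\,\partial\Psi_j]$), after which the $2$nd chaos is bounded exactly as for $\<IXiI'Xi_notriangle>$ with the extra $\e^{1-\kappa}$ absorbed into a kernel.

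Conversely, you list $\<supercherry1>$ and $\<supercherry2>$ among the ``un-renormalised'' trees, and assert that in every $1$st-chaos pairing at least one contraction involves $\bar\Xi$ and hence brings a factor $\e^{2-2\kappa}$. This fails for the variants $\CS_{\be}\Psi_j\,\CD_i^+\CS_{\bw}\CI_j(\Psi_j\bar\Xi_j)$ and $\CS_\be\CI_j(\Psi_j\bar\Xi_j)\,\bar\CD_i\Psi_j$: contracting the outer $\Psi_j$ with $\bar\Xi_j$ (not with the inner $\Psi_j$) leaves a single $\e^{1-\kappa}$ and a kernel of order $-5$, i.e.\ an effective order $-4-\kappa$ convolved with $\Psi_j$. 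This is exactly a $(J*\Psi_j)$ with $\mathrm{Int}[J]=C_{\be,\bw}[\<IXiI'Xibar>]$ (resp.\ $C_\be[\<I'XiIXibar>]$), which is $O(\e^{-\kappa})$ and does \emph{not} vanish by parity (the derivative here is $\partial_i^+$, cf.\ Remark~\ref{rem:loss-odd}). These supercherry variants are precisely the ones renormalised by \eqref{eq:square_cherry_consts}, and the $\mathscr{R}J*\Psi_j$ argument you wrote down belongs here, not to $\<IXiI'Xibar>$, $\<I'XiIXibar>$. The other supercherry variants avoid renormalisation either by independence of $\xi_i,\xi_j$ or by exact oddness of the symmetrised derivative $\partial_i$, and you need to distinguish these cases.
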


\begin{proof}
Let 
$\tau= \<cherryY>$, consisting of three trees,
%We will see that they do not contribute to any mass renormalisation.
with $\big(\CK^{(\eps; 3)}\tau\big)\big(z_1, z_2\big)$ all bounded as \vspace{-5pt}
\begin{equ}[e:mom-Y333]
\begin{tikzpicture}  [scale=0.35,baseline=0]		
\node at (.6, 1)  [var] (a)  {};  
\node at (-.6,1) [var] (b) {}; 
\node at (.6,-1) [var1] (x) {}; 
\draw[very thick] (0,0) -- (a);
\draw (0,0) -- (b);
\draw[kernel1] (0,0) -- (x);

\node at (3.6,1)  [var] (c)  {};  
\node at (3-.6,1) [var] (d) {}; 
\node at (3-.6,-1) [var1] (y) {}; 
\draw[very thick] (3,0) -- (d);
\draw (3,0) -- (c);
\draw[kernel1] (3,0) -- (y);

\draw[bend left =40, delta]  (b) to (c) ;
\draw[delta]  (x) to (y) ;
\draw[bend left =40, delta]  (a) to (d) ;
\end{tikzpicture}
\qquad
\|z_1-z_2\|_{\e}^{-\kappa} (\|z_1\|_{\e}^2 +\|z_2\|_{\e}^2)
\delta_\e(z_1-z_2)  (\e^{1-\kappa})^2 
\end{equ}
so \eqref{e:CovarianceBounds} applies with $(\zeta,\alpha,n_\delta,n_\e)=(2,-\kappa,1,2-2\kappa)$.
%Therefore, since $\|z_1\|_{\e}^2 +\|z_2\|_{\e}^2\lesssim \lambda^2$
%on the supports of the rescaled test functions,
%\begin{equs}[e:mom-cheY3]
%\mbox{L.S.}
%&\lesssim
%\e^{-\kappa} \lambda^2 (\e^{1-\kappa})^2  
% \int \phi_{z_*}^\lambda(z)^2 \mrd z
% \lesssim
%\e^{-\kappa} \lambda^2 \e^{2-2\kappa}  \lambda^{-4}
%\le \lambda^{-3\kappa}\;,
%\\
%\mbox{S.S.}
%&\lesssim
%\e^{-\kappa} \lambda^2 (\e^{1-\kappa})^2  
%\int_\R \e^{-2} \CS^\lambda_{2,t_*} \phi(t)^2 \mrd t 
% \lesssim
% \e^{-\kappa} \lambda^2 \e^{2-2\kappa}\e^{-2}  \lambda^{-2}
%\le \lambda^{-3\kappa}\;.
%\end{equs}
For the 1st chaos, 
one term is treated as follows. Note that the contraction of two noises within each tree
yields a product of kernels with total degree $-4$, but we can use the powers of $\e$ to improve it, e.g.
$\e^{1-\kappa} \|z\|_\e^{-1} \le \|z\|_\e^{-\kappa}$. Then $\big(\CK^{(\eps; 1)}\tau\big)\big(z_1, z_2\big)$ is bounded by
\begin{equ}[e:mom-Y11]
\begin{tikzpicture}  [scale=0.35,baseline=0]		
\node at (.6, 1)  [var] (a)  {};  
\node at (-.6,1) [var] (b) {}; 
\node at (0,-1.2) [var1] (x) {}; 
\draw[very thick] (0,0) -- (a);
\draw (0,0) -- (b);
\draw[kernel1] (0,0) -- (x);

\node at (3.6,1)  [var] (c)  {};  
\node at (3-.6,1) [var] (d) {}; 
\node at (3,-1.2) [var1] (y) {}; 
\draw[very thick] (3,0) -- (d);
\draw (3,0) -- (c);
\draw[kernel1] (3,0) -- (y);

\draw[bend right =40, delta]  (b) to (x) ;
\draw[bend left =40, delta]  (c) to (y) ;
\draw[delta]  (a) to (d) ;
\end{tikzpicture}
\qquad
\|z_1-z_2\|_{\e}^{-2-\kappa} 
(\|z_1\|_{\e}^2 +\|z_2\|_{\e}^2+\|z_1-z_2\|_{\e}^2)
\end{equ}
so we get  \eqref{e:CovarianceBounds}.
There is then another term in the 1st chaos, with 2nd moment
\[
\begin{tikzpicture}  [scale=0.35,baseline=0]		
\node at (.6, 1)  [var] (a)  {};  
\node at (-.6,1) [var] (b) {}; 
\node at (0,-1.2) [var1] (x) {}; 
\draw (0,0) -- (a);
\draw[very thick] (0,0) -- (b);
\draw[kernel1] (0,0) -- (x);

\node at (3.6,1)  [var] (c)  {};  
\node at (3-.6,1) [var] (d) {}; 
\node at (3,-1.2) [var1] (y) {}; 
\draw (3,0) -- (d);
\draw[very thick] (3,0) -- (c);
\draw[kernel1] (3,0) -- (y);

\draw[bend right =40, delta]  (b) to (x) ;
\draw[bend left =40, delta]  (c) to (y) ;
\draw[delta]  (a) to (d) ;
\end{tikzpicture}
%\quad
%\big(\CK^{(\eps; 1)}\tau\big)\big(z_1, z_2\big) \lesssim 
%\|z_1-z_2\|_{\e}^{-2-\kappa} 
%(\|z_1\|_{\e}^2 +\|z_2\|_{\e}^2+\|z_1-z_2\|_{\e}^2)
\]
Define $\bar K\eqdef \e^{1-\kappa} (K \partial K)$ (where $\partial$ could be replaced by  $\bar\partial$ depending on the particular tree).
One has
$|\bar K(z)| \lesssim \|z\|_\e^{-4-\kappa}$
and
 since the derivative is symmetrised, 
 $\bar K=\mathscr{R} \bar K$ and  $\bigl(\CK^{(\eps; 1)}\tau \bigr)(z_1,z_2)$ is given by  (recall $z_*=0$)
\begin{equ}
 \int 
\big(\mathscr{R} \bar K (z_1-w_1)
-\bar K (-w_1)\big)
K^{*2} (w_1- w_2) 
\big(\mathscr{R} \bar K (z_2-w_2)
-\bar K (-w_2)\big)
\mrd w_1\mrd w_2\;.
\end{equ}
We can derive the same bound \eqref{e:mom-Y11} for this case,
so one again has \eqref{e:CovarianceBounds}.
The case $\<cherryYless>$ follows in the same way.

Let 
$\tau = \<IXiI'Xibar>$ (which includes trees added in Step 3 in Section \ref{subsubsec:Step_3}).
The $0$th chaos vanishes 
thanks to our choice of renormalisation \eqref{eq:tilde_C_renorm}, \eqref{eq:square_cherry_consts}, \eqref{eq:C_square_cherries}.
%\[
%\bigl(\CK^{(\eps; 2)}\tau \bigr)(z,\bar z) = \e^{2-2\kappa} (K^\e)^{*2} (\partial K^\e)^{*2} (z-\bar z) \lesssim \e^{2-2\kappa} \|z-\bar z\|_\e^{-2-\kappa}\;.
%\]
%One gets a bound $\e^{2-2\kappa} \lambda^{-2-\kappa} \lesssim \lambda^{-3\kappa}$ for $\e<\lambda$.
The 2nd chaos is treated similarly as  $\<IXiI'Xi_notriangle>$,
and by absorbing powers of $\e$ into kernels $\e^{1-\kappa} \|z\|_\e^{-1} \le \|z\|_\e^{-\kappa}$.
The cases $\tau=\<I'Xibar>$  and $\tau=\<I'XiIXibar>$ follow similarly. 
%This is similar as the last one, except that 
%\[
%\bigl(\CK^{(\eps; 2)}\tau \bigr)(z,\bar z) 
% \lesssim
%  \e^{2-2\kappa} \|z-\bar z\|_\e^{-2-\kappa}
% +  \e^{2-2\kappa} \|z-\bar z\|_\e^{-3-\kappa} (\|z\|_\e + \|\bar z\|_\e)\;.
%\]
%So we get the same bound.   The zeroth chaos also vanishes.

Consider $\tau\in \{ \<supercherry1>, \<supercherry2>\}$.  This includes the following trees
\begin{equs}[e:supercherry1-list]
 \Psi_i \CD_i\mcb{I} (\Psi_i \bar\Xi_i)\;,
&\qquad
\mcb{I} ( \Psi_i \bar\Xi_i) \CD_i \Psi_i\;,
\\
\CS_{\be}\Psi_j  \CD_j\mcb{I} (\Psi_i  \bar\Xi_i)\;,
&\qquad
\CS_{\be}\mcb{I} (\Psi_j \bar\Xi_j) \CD_j \Psi_i\;,
\\
 \CS_{\be} \Psi_j
\CD_i^+ \CS_{\bw}
\mcb{I} (\Psi_j \bar\Xi_j)\;,
&\qquad
\CS_{\be}\mcb{I} (\Psi_j \bar\Xi_j) \bar{\CD}_i \Psi_j\;,
\end{equs}
where $\be\in\CE_\times$ and $\bw\in\{\Northwest,\Southwest\}$.
%Remark that these trees showed up in \eqref{e:quad-exp-1}, \eqref{e:quad-exp-2}, \eqref{e:quad-exp-3}.
The 3rd chaos parts of the trees in the left column are all bounded as follows:
\begin{equ}[e:superch-3rd]
\begin{tikzpicture}  [scale=0.35,baseline=0]		
\node at (.6, 1)  [var] (a)  {};  
\node at (0,0) [var1] (b) {}; 
\node at (1.2,0) [var] (x) {}; 
\draw (b) -- (a);
\draw[very thick] (b) -- (0.6,-1);
\draw (x) -- (0.6,-1);

\node at (4,0)  [var1] (c)  {};  
\node at (4-.6,1) [var] (d) {}; 
\node at (4-1.2,0) [var] (y) {}; 
\draw (c) -- (d);
\draw[very thick] (c) -- (4-.6,-1);
\draw (y) -- (4-0.6,-1);

\draw[bend left =30, delta]  (b) to (c) ;
\draw[bend left =10, delta]  (x) to (y) ;
\draw[bend left =10, delta]  (a) to (d) ;
\end{tikzpicture}
\qquad
\big(\CK^{(\eps; 3)}\tau\big)\big(z_1, z_2\big) \lesssim 
  \e^{2-3\kappa} 
\|z_1- z_2\|_\e^{-2} \lesssim \|z_1- z_2\|_\e^{-3\kappa}
\end{equ}
where we bounded  
$|K^{*_\e 2}(0)| \lesssim \e^{-\kappa}$ at the top
(it is evaluated at $0$ due to the contraction of the two $\bar\xi$'s). 
For the 3rd chaos  of the trees in the right column:
\[
\begin{tikzpicture}  [scale=0.35,baseline=0]		
\node at (.6, 1)  [var] (a)  {};  
\node at (0,0) [var1] (b) {}; 
\node at (1.2,0) [var] (x) {}; 
\draw (b) -- (a);
\draw[kernel1] (b) -- (0.6,-1);
\draw[very thick] (x) -- (0.6,-1);

\node at (4,0)  [var1] (c)  {};  
\node at (4-.6,1) [var] (d) {}; 
\node at (4-1.2,0) [var] (y) {}; 
\draw (c) -- (d);
\draw[kernel1] (c) -- (4-.6,-1);
\draw[very thick] (y) -- (4-0.6,-1);

\draw[bend left =30, delta]  (b) to (c) ;
\draw[bend left =10, delta]  (x) to (y) ;
\draw[bend left =10, delta]  (a) to (d) ;
\end{tikzpicture}
\quad
\big(\CK^{(\eps; 3)}\tau\big)\big(z_1, z_2\big) 
\lesssim 
\|z_1-z_2\|_\e^{-1-3\kappa} 
(\|z_1-z_2\|_\e
+\|z_1\|_\s+\|z_2\|_\s)
\]
These two bounds are again in the scope of \eqref{e:CovarianceBounds}.
The 1st chaos of each of these trees consists of two terms,
one of which is bounded as follows
\begin{equ}[e:mom-supercheche]
\begin{tikzpicture}  [scale=0.35,baseline=0]		
\node at (.6, 1)  [var] (a)  {};  
\node at (1.2,0) [var1] (b) {}; 
\node at (0,0) [var] (x) {}; 
\draw (b) -- (a);
\draw[very thick] (b) -- (0.6,-1);
\draw (x) -- (0.6,-1);

\node at (4-.6,1) [var] (d) {}; 
\node at (4-1.2,0)  [var1] (c)  {};  
\node at (4,0) [var] (y) {}; 
\draw (c) -- (d);
\draw[very thick] (c) -- (4-.6,-1);
\draw (y) -- (4-0.6,-1);

\draw[bend right =40, delta]  (a) to (x) ;
\draw[delta]  (b) to (c) ;
\draw[bend left =40, delta]  (d) to (y) ;
\end{tikzpicture}
\qquad
\big(\CK^{(\eps; 1)}\tau\big)\big(z_1, z_2\big) 
\lesssim 
\e^{2-2\kappa} \|z_1-z_2\|_\e^{-2-2\kappa}
 \lesssim \|z_1- z_2\|_\e^{-4\kappa}
\end{equ}
\begin{equ}
\begin{tikzpicture}  [scale=0.4,baseline=0]		
\node at (.6, 1)  [var] (a)  {};  
\node at (1.2,0) [var1] (b) {}; 
\node at (0,0) [var] (x) {}; 
\draw (b) -- (a);
\draw[kernel1] (b) -- (0.6,-1);
\draw[very thick] (x) -- (0.6,-1);

\node at (3.7-.6,1) [var] (d) {}; 
\node at (3.7-1.2,0)  [var1] (c)  {};  
\node at (3.7,0) [var] (y) {}; 
\draw (c) -- (d);
\draw[kernel1] (c) -- (3.7-.6,-1);
\draw[very thick] (y) -- (3.7-0.6,-1);

\draw[bend right =40, delta]  (a) to (x) ;
\draw[delta]  (b) to (c) ;
\draw[bend left =40, delta]  (d) to (y) ;
\end{tikzpicture}
\qquad
\big(\CK^{(\eps; 1)}\tau\big)\big(z_1, z_2\big) 
 \lesssim \|z_1- z_2\|_\e^{-4\kappa}
+ \|z_1\|_\e^{-3\kappa}+ \|z_2\|_\e^{-3\kappa}
\end{equ}
so we are again in the scope of \eqref{e:CovarianceBounds} and \eqref{e:kappa3}.
The other term in the 1st chaos is 
%obtained by contracting the two noises in the bottom:
\tikz[scale=0.29,baseline=-5]
{\node at (.6, 1)  [var] (a)  {};  
\node at (1.2,0) [var1] (b) {}; 
\node at (0,0) [var] (x) {}; 
\draw (b) -- (a);
\draw[very thick] (b) -- (0.6,-1);
\draw (x) -- (0.6,-1);
\draw[very thick, delta]  (b) to (x) ;
}
 or 
\tikz[scale=0.29,baseline=-5]
{	
\node at (.6, 1)  [var] (a)  {};  
\node at (1.2,0) [var1] (b) {}; 
\node at (0,0) [var] (x) {}; 
\draw (b) -- (a);
\draw[kernel1] (b) -- (0.6,-1);
\draw[very thick] (x) -- (0.6,-1);
\draw[very thick, delta]  (b) to (x) ;
} .
This term vanishes for trees in the second row of \eqref{e:supercherry1-list}, by independence of $\xi_i,\xi_j$.
For trees in the first row of \eqref{e:supercherry1-list}, 
this term is equal to $J*_{(i)}\Psi_i$ (up to a remainder due to the  kernel $\tikz[baseline=-3]\draw[kernel1] (0,0) --(.8,0);$ in the case of $\mcb{I} ( \Psi_i \bar\Xi_i) \CD_i \Psi_i$) with
$J(z-u)$  being
$\e^{1-\kappa}  \E [\Psi_i(z)\xi_i(u) \partial_i K(z-u)]$
and
$\e^{1-\kappa} \E [\partial_i \Psi_i(z)\xi_i(u)  K(z-u)]$
for $z,u\in \R\times \obonds_i$. Since each $J$  here is odd in the $i$th coordinate, its integral vanishes and thus $J=\mathscr{R} J $.
Since $J$ is of order $-4-\kappa$,
we therefore have the bound 
\eqref{e:mom-2424}. The ``remainder'' is bounded exactly as in \eqref{e:w-z-w-w-w} for $\<I[IXiI'Xi]I'Xi_notriangle>$.

Consider now  the trees in last row of  \eqref{e:supercherry1-list},
which are renormalised as in  \eqref{eq:square_cherry_consts} and \eqref{eq:C_square_cherries}.
For $ \CS_{\be} \Psi_j\CD_i^+ \CS_{\bw}\mcb{I} (\Psi_j \bar\Xi_j)$,
the term of interest in the 1st chaos is
\begin{equs}
\e^{1-\kappa} &\int \E[\Psi_j (z+\be) \partial_i^+ K(z^\bw-u)  \xi_j(u)] \Psi_j(u) \mrd u
\\
&-
\e^{1-\kappa} \E [\Psi_j(z+\be) \d_i^+\Psi_j(z^\bw) ] \Psi_j(z^\bw) = (\mathscr{R} J *_{(j)} \Psi_j)(z^\bw)
\end{equs}
where for $\bar z,u\in  \R\times \obonds_j$, writing $\hat z$ for the element of $\R\times \obonds_i$ such that $\hat z^{\bw}=\bar z$,
\[
J(\bar z-u) = \e^{1-\kappa} \E[\Psi_j (\hat z+\be) \partial_i^+ K(\bar z-u)  \xi_j(u)]\;.
\]
We then again obtain  the bound 
\eqref{e:mom-2424}.  The other tree in the last row of  \eqref{e:supercherry1-list} follows in the same way,
with ``positively renormalised'' kernels treated as above.

For $\tau=\<supercherry1less>$ one only needs to bound the 2nd chaos and it is similar as \eqref{e:superch-3rd}.
%$\bigl(\CK^{(\eps; 1)}\tau\bigr)(z_1,z_2)$ equals
%\[
% \e^{2-2\kappa} 
%\int
%(\partial K^\e)(K^\e)^{*2} (z_1-y_1) \,
%(\partial K^\e)(K^\e)^{*2} (z_2- y_2) \,
%\delta_\e(y_1-y_2) 
%\mrd y_1 \mrd y_2
%\]
%One then gets a large scale bound $ \e^{2-2\kappa}  \lambda^{-2-2\kappa} \lesssim \lambda^{-4\kappa}$.

Finally, for $\tau=\X^{(i)} \<PsiXi1>$, the 0th chaos vanishes by parity.
For the 2nd chaos the bound follows as in $\<PsiXi1>$ above except that $\zeta=1$.
%Regarding the 2nd chaos, writing $z_j=(t_j,x_j)$ for $j\in \{1,2\}$ one has
%\begin{equ}
%\mbox{L.S.}
%\le  (\e^{1-\kappa})^2  \int \delta_\e(z_1-z_2) \, \| x_1^{(i)}\|_\e \| x_2^{(i)}\|_\e\, \phi_{z_*}^\lambda(z_1)   \phi_{z_*}^\lambda(z_2)
%  K^{*2}(0) \mrd z_1 \mrd z_2
%\end{equ}
%which is bounded by $\lambda^{-3\kappa}$
%similarly as \eqref{e:PsiXi1-mom} for $ \<PsiXi1>$, noting that $\|x_j^{(i)}\|_\e \le \lambda$ on the support of $\phi_{z_*}^\lambda$.
%The small scale bound is trivial since 
%in \eqref{e:IkWk-2nd}
%$\CW^{(\eps; 2)}\tau$ is evaluated at $(t,x_*)$ which is zero. 
The tree $\X^{(i)} \bar\Xi_i$ is even simpler.
\end{proof}

\begin{lemma}
	\label{lem:mom-13}
For every $\tau$ in group \hyperlink{(1-2)}{(1-2)},  the bounds \eqref{e:mom-bounds} hold.
\end{lemma}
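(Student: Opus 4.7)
The plan is, as in Lemmas~\ref{lem:mom-YM} and~\ref{lem:mom-12}, to perform a chaos decomposition of $\hat\Pi^\e_{z_*}\tau$ for each of the five trees in group \hyperlink{(1-2)}{(1-2)} and to verify the covariance bound \eqref{e:CovarianceBounds}. Since each tree has degree $-\kappa'$ for some small $\kappa' > 0$, it suffices in each case to get a covariance bound of size $\|z_1-z_2\|_\e^{-\kappa'}$ (plus admissible $\zeta$- and $n_\e$-corrections).

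I would treat the three trees $\<I'(R2-1)new>$, $\<IXiI'(R2-1)new>$, $\<I'XiI(R2-1)new>$ first, since for these no renormalisation is needed: the two $\tilde\Xi$ factors inside $\<R2-1new>$ are attached to derivatives applied at distinct points $e^{\southeast}$ and $e^{\southwest}$, so the potential diagonal contribution vanishes as in Lemma~\ref{lem:mom-xi}. Decomposing into $4$th, $2$nd, and $0$th chaos, each pairing diagram is a product of kernels that we bound using Lemma~\ref{lem:OpDSingKer}, together with the factor $\e^{2-2\kappa}$ produced by contracting the two $\tilde\Xi$'s in second moments (coming from $\Pi^\e\tilde\Xi=\e^{\frac12-\frac\kappa2}\xi^\e$ in~\eqref{e:Pi-noise}). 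This extra $\e$-power exactly absorbs the apparent loss of regularity at the $\<R2-1new>$ sub-tree, reducing the estimates to a case strictly analogous to the treatment of $\<I[IXiI'Xi]I'Xi_notriangle>$ in Lemma~\ref{lem:mom-YM}.

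For the two remaining trees $\<I'XiI'[IXiI'Xi]less>$ and $\<I'XiI'[IXiI'Xi]>$ I would follow closely the analysis of $\<I[IXiI'Xi]I'Xi_notriangle>$: the $4$th chaos is estimated directly; the $2$nd chaos using a ``positively renormalised'' kernel $\mathscr{R}K$ at the root to absorb the singularity at the origin as in \eqref{e:Diff-K}--\eqref{e:K3-02}; the $1$st chaos (where present) is bounded using Young's inequality to yield the bound \eqref{e:kappa3}. Most variants require no renormalisation, either by independence of $\xi_i,\xi_j$ or by exact parity of the integrand under reflection of the axis of the symmetrised derivative (Remark~\ref{rem:loss-odd}), so the $0$th chaos is automatically zero.

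The main obstacle is the subset of variants renormalised in \eqref{e:div-R2}, which as noted in Remark~\ref{rem:C_non_vanish} have only approximate parity: the $\e^{1-\kappa}$ explicit in the definitions \eqref{e:def-C-triangle1}--\eqref{e:def-C-triangle3} does \emph{not} immediately vanish. For these, after subtracting $C^\e_{\bs,\pm}[\<I'XiI'[IXiI'Xi]less>]\bone$ and $C^\e_{\bs}[\<I'XiI'[IXiI'Xi]less>]\bone$ as dictated by $M^\e$, the $0$th chaos vanishes by definition of the constants, and the $2$nd chaos can be expressed as the original pairing diagram with one edge replaced by a ``positively renormalised'' kernel $\mathscr{R}K$. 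The key quantitative input is the bound $\e^\kappa C^\e_{\bs,\pm}, \e^\kappa C^\e_\bs = O(1)$ from Lemma~\ref{lem:thick-trig-bahave}, which, combined with the factor $\e^{1-\kappa}$ built into \eqref{e:def-C-triangle} and the kernel estimate of Lemma~\ref{lem:OpDSingKer}\eqref{item:renormalized}, delivers a covariance bound of the form \eqref{e:kappa3}, namely $\|z_1-z_2\|_\e^{-\kappa'} + \|z_1\|_\e^{-\kappa'} + \|z_2\|_\e^{-\kappa'}$. This fits in the shape of \eqref{e:CovarianceBounds} (the last two summands fall under the regime covered by the $\zeta$-exponent) and thus yields the desired bounds \eqref{e:mom-bounds1}--\eqref{e:mom-bounds2}.
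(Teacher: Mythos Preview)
Your overall strategy—chaos decomposition, absorbing $\e$-powers into kernels via Lemma~\ref{lem:OpDSingKer}, and invoking renormalised kernels—matches the paper's. There are, however, two genuine gaps.

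First, your chaos counts are off: none of these trees has four noises. The trees $\<I'XiI'[IXiI'Xi]>$, $\<IXiI'(R2-1)new>$, $\<I'XiI(R2-1)new>$ carry three noises (so 3rd and 1st chaos), while $\<I'XiI'[IXiI'Xi]less>$ and $\<I'(R2-1)new>$ carry two (2nd and 0th). In particular, the divergent contribution for $\<I'XiI'[IXiI'Xi]>$ lives in the \emph{first} chaos—a contraction of two of the three noises leaving one $\Psi$—not the zeroth.

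Second, and more substantively, the renormalisation mechanism is not what you describe. For the variants in \eqref{e:div-R2} the map $M^\e$ subtracts $C^\e[\,\<I'XiI'[IXiI'Xi]less>\,]\,\Psi_i(z)$ (not $C^\e\bone$), and this recasts the divergent 1st-chaos term as $(\mathscr{R}J*_{(i)}\Psi_i)(z)$ for a \emph{composite} kernel $J$ of degree $-4-\kappa$, built from $\e^{1-\kappa}$ and two copies of $\partial K$ contracted together (see \eqref{e:product-expectation} in the paper's proof). One then bounds this via Lemma~\ref{lem:OpDSingKer}\eqref{item:renormalized} to obtain \eqref{e:mom-2424}. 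Lemma~\ref{lem:thick-trig-bahave} is irrelevant here: it bounds the size of the constants, but those constants are subtracted exactly—what matters for the moment estimate is that the subtraction produces $\mathscr{R}J$, not how large the constants are. You also need the explicit case analysis (independence of $\xi_i,\xi_j$ versus exact parity giving $\int J=0$ so $J=\mathscr{R}J$) for the non-renormalised variants of $\<I'XiI'[IXiI'Xi]>$; the paper carries this out tree by tree to isolate precisely which three of the twelve variants actually need the constants in \eqref{e:div-R2}.
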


\begin{proof}
Let $\tau=\<I'XiI'[IXiI'Xi]>$. Recall (below \eqref{e:trees-deg-0-rem}) that trees of this form arise from the term $\tilde R_2$ in \eqref{e:tildeR2}, \eqref{eq:R_2_other_form},
with one of the factors $A$ there replaced by an integration of $A\partial A$. (See also \eqref{e:tildeR2-exp}.)
% where they appear on the level of modelled distributions when we expand the term $ \CQ^{\e}_i$ defined in \eqref{e:def-tildeCR2}.
These trees have different properties in terms of parity, noise indices and renormalisation;
to enumerate these trees more easily and conceptually,  we will count 
the two trees  corresponding to $\CD^+ + \CD^-$ as ``one tree'', 
the two trees corresponding to $\sum_{\bw} \CD^+ \cS_{\bw}$ as ``one tree'', 
the two trees which only differ by simultaneously swapping all indices $i\to 3-i$ as ``one tree'',
and the four trees corresponding to $\cS_{a}$ defined in \eqref{e:shift-a} (see e.g. \eqref{e:def-CHi}) as ``one tree''.
Counting this way,
there are $2\times 2\times 3 = 12$ trees of this form (two terms in $\tilde R_2$, each of them is quadratic in $A$, and the equation for $A$ has $3$ terms of the form $A\partial A$).
The 3rd chaos can be all bounded as follows: \vspace{-5pt}
\[
\begin{tikzpicture}  [scale=0.33,baseline=0]		
\node at (.6, 1)  [var] (a)  {};  
\node at (-.6,1) [var] (b) {}; 
\node at (1.2,0) [var1] (x) {}; 
\draw[very thick] (0,0) -- (a);
\draw (0,0) -- (b);
\draw[very thick] (0,0) -- (0.6,-1);
\draw[very thick] (x) -- (0.6,-1);

\node at (4.6,1)  [var] (c)  {};  
\node at (4-.6,1) [var] (d) {}; 
\node at (4-1.2,0) [var1] (y) {}; 
\draw[very thick] (4,0) -- (d);
\draw (4,0) -- (c);
\draw[very thick] (4,0) -- (4-.6,-1);
\draw[very thick] (y) -- (4-0.6,-1);

\draw[bend left =40, delta]  (b) to (c) ;
\draw[bend left =40, delta]  (x) to (y) ;
\draw[bend left =40, delta]  (a) to (d) ;
\end{tikzpicture}
\qquad
\big(\CK^{(\eps; 3)}\tau\big)\big(z_1, z_2\big) \lesssim 
\e^{2-2\kappa}\|z_1-z_2\|_{\e}^{-2-\kappa}
\lesssim
\|z_1-z_2\|_{\e}^{-3\kappa}\;.
\]
The same bound holds for the term 
\tikz[scale=0.27,baseline=-5]
{
\node at (.6, 1)  [var] (a)  {};  
\node at (-.6,1) [var] (b) {}; 
\node at (-1.2,0) [var1] (x) {}; 
\draw[very thick] (0,0) -- (a);
\draw (0,0) -- (b);
\draw[very thick] (0,0) -- (-0.6,-1);
\draw[very thick] (x) -- (-0.6,-1);
\draw[bend right =40, delta,thick]  (b) to (x) ;
} in the 1st chaos
using $\e^{1-\kappa} \|z\|_\e^{-4} \le  \|z\|_\e^{-3-\kappa}$.
%For the 1st chaos, the following term is easy to bound: using $\e^{1-\kappa} \|z\|_\e^{-4} \le  \|z\|_\e^{-3-\kappa}$  \vspace{-5pt}
%\[
%\begin{tikzpicture}  [scale=0.33,baseline=0]		
%\node at (.6, 1)  [var] (a)  {};  
%\node at (-.6,1) [var] (b) {}; 
%\node at (-1.2,0) [var1] (x) {}; 
%\draw[very thick] (0,0) -- (a);
%\draw (0,0) -- (b);
%\draw[very thick] (0,0) -- (-0.6,-1);
%\draw[very thick] (x) -- (-0.6,-1);
%
%\node at (3.6,1)  [var] (c)  {};  
%\node at (3-.6,1) [var] (d) {}; 
%\node at (3+1.2,0) [var1] (y) {}; 
%\draw[very thick] (3,0) -- (d);
%\draw (3,0) -- (c);
%\draw[very thick] (3,0) -- (3+.6,-1);
%\draw[very thick] (y) -- (3+0.6,-1);
%
%\draw[bend right =40, delta]  (b) to (x) ;
%\draw[bend left =40, delta]  (a) to (d) ;
%\draw[bend left =40, delta]  (c) to (y) ;
%\end{tikzpicture}
%\qquad
%\big(\CK^{(\eps; 1)}\tau\big)\big(z_1, z_2\big) \lesssim 
%\|z_1-z_2\|_{\e}^{-3\kappa}\;.
%\]
Below we only need to focus on 
the term 
\tikz[scale=0.27,baseline=-5]
{
\node at (.6, 1)  [var] (a)  {};  
\node at (-.6,1) [var] (b) {}; 
\node at (-1.2,0) [var1] (x) {}; 
\draw (0,0) -- (a);
\draw[very thick] (0,0) -- (b);
\draw[very thick] (0,0) -- (-0.6,-1);
\draw[very thick] (x) -- (-0.6,-1);
\draw[bend right =40, delta,thick]  (b) to (x) ;
}
 in the 1st chaos.
% obtained by contracting the two noises in the subtree $\<I'XiI'[IXiI'Xi]less>$,
%which form a potential divergent subgraph  with 3 derivatives. 
By enumerating the $12$ trees, one finds that for $6$ of them 
the two noises being contracted %in $\<I'XiI'[IXiI'Xi]less>$ 
have distinct spatial indices,
% (the expansion of $\CA_i^\e$~\eqref{e:CA-expand} is helpful for this),
for instance
$\big(\CD_j^+ \CS_{\southeast} \mcb{I} (\CS_{\be}\Psi_i \CD_j \Psi_i)\big)
\big( \CD_j^+ \CS_{\southwest} \bar\Psi_j\big)$.
By independence
of $\xi_i,\xi_j$, 
 we only need to consider the other $6$ trees.
% 
%one might be able to use parity, but on lattice we must check if we have {\it exact} parity,
%so we look into each tree more explicitly.
First, consider 
$\big(\CD_j^+ \CS_{\southeast} \mcb{I} (\Psi_j \CD_j \Psi_j)\big)
\big( \CD_j^+ \CS_{\southwest} \bar\Psi_j\big)$.
The term of interest  in the 1st chaos has a potential divergence, where $z\in \R\times \obonds_i$:  %the form $\bar K * \Psi_j$ where 
\begin{equ}[e:mom-R2-DjK3]
\e^{1-\kappa} \!\!
\int_{(\R\times \obonds_j)^2} \!\!
\partial_j^+ K(z^\southwest - y)
\partial_j^+ K(z^\southeast - w)
\partial_j K( w-y) \mrd w \mrd y\;.
\end{equ}
By a change of variables $w_j \mapsto 2z_j-w_j$ and $y_j \mapsto 2z_j-y_j$,
and recalling that $K$ is even in each spatial direction and the definitions of $z^\southwest ,z^\southeast,\partial_j^+,\partial_j$,
we see that each of the three factors in the integrand flips the sign, so \eqref{e:mom-R2-DjK3} is zero. (Essentially
$\partial_j^+ K(z^\southwest) = \frac{1}{\e} (K (z^\northwest) - K (z^\southwest))$ is still a ``symmetrised'' derivative in the $j$th direction.)
%\hao{Probably don't need this sentence now. On the other hand, 
%if this is $\e^{-\kappa}$ instead of $0$,
%having $c\CA_j^\e$ is fine due to your new argument;
%it's just that $\e^{-\kappa}$ isn't allowed here to bound the model...
%In principle
%  we could also treat it in the same way as the 3 trees listed in the displayed equation below, namely we renormalize it (and we have analog of Lemma \ref{lem:thick-trig-bahave}), but that doesn't necessarily shorten things}
Now with the kernel
 $\bar K=\mathscr{R} \bar K $ of degree $-4-\kappa$
 denoted by $\tikz[baseline=-3]\draw[kernels] (0,0) --(.8,0);$ which is defined to be the above integral without integrating $w$,
 one  has the same bound as \eqref{e:mom-2424},
again satisfying  \eqref{e:CovarianceBounds}.
%
%
%\[
%\bigl(\CK^{(\eps; 1)}\tau \bigr)(z,\bar z) = \e^{2-2\kappa}\int \bar K^\e (z-w) (K^\e)^{*2} (w-\bar w) \bar K^\e (\bar z-\bar w)\mrd w\mrd \bar w
%\]
%and we get a bounded by $\lambda^{-2\kappa}$.
%
Exactly the same argument applies to 
$\big( \CD_j^+ \CS_{\southeast} \bar\Psi_j\big)
\big(\CD_j^+ \CS_{\southwest} \mcb{I} (\Psi_j \CD_j \Psi_j)\big)$.
We then turn to the case
$ (\CD_j^{\pm} \mcb{I} (\CS_{\be} \Psi_j \bar\CD_i \Psi_j) )(\bar\CD_j \bar\Psi_j)$.
The potential divergence (which is independent of $\be$) is an integral of a function which is odd in the $i$th direction, so we again have a bound \eqref{e:mom-2424}.
It then remains to 
consider 
\begin{equs}
(\CD_j^{\pm} \bar\Psi_i)  \CD_j^+ &\CS_{\bs}  \mcb{I} (\CS_{\be} \Psi_i \bar\CD_j \Psi_i))\;,
\\
\big( \CD_j^+ \CS_{\southeast} \bar\Psi_j\big)
\big(\CD_j^+ \CS_{\southwest} \mcb{I} (\CS_{\be}\Psi_i \CD_i \Psi_j)\big)
\;,
& \qquad
\big(\CD_j^+ \CS_{\southeast} \mcb{I} (\CS_{\be}\Psi_i \CD_i \Psi_j)\big)
\big( \CD_j^+ \CS_{\southwest} \bar\Psi_j\big)\;,
\end{equs}
where $\be\in\CE_\times$ and $\bs\in\{\Southeast,\Southwest\}$.
These are precisely the trees renormalised as in 
\eqref{e:div-R2} and
\eqref{e:def-C-triangle} by 
$C_{\bs,\pm}^\e[ \, \<I'XiI'[IXiI'Xi]less>\,]$,
%$C_{\southeast,\pm}^\e[ \, \<I'XiI'[IXiI'Xi]less>\,]$,
$C_{\southwest}^\e[ \, \<I'XiI'[IXiI'Xi]less>\,]$,
$C_{\southeast}^\e[ \, \<I'XiI'[IXiI'Xi]less>\,]$,
for which we can obtain bounds of the form \eqref{e:mom-2424}.
For instance if $\tau$ is the last tree,
we have (see \eqref{e:def-C-triangle3})
\begin{equs}[e:product-expectation]
\hat\Pi_{z_*}^\e\tau (z)
&=\e^{1-\kappa} 
\partial_j^+\Psi_j(z^\southwest)
 (\partial_j^+ K *( \Psi_i(\cdot+\be) \partial_i \Psi_j))(z^\southeast)
 \\
&\qquad -  \E [ \e^{1-\kappa} \partial_j^+\Psi_j(z^\southwest)
 (\partial_j^+ K * \partial_i \Psi_j)(z^\southeast)]
\Psi_i (z)\;,
\end{equs}
which is equal to $(\mathscr{R} J *_{(i)} \Psi_i)(z)$ with $J$ of degree $-4-\kappa$ as above.
The other trees follow similarly.
(These constants may not in general vanish, see Remark \ref{rem:C_non_vanish}.)\footnote{In these calculations, the 3 trees which do require renormalisation due to lack of exact parity 
are renormalised by $c\Psi_i$ which will contribute a term $c A_i$ to the renormalised equation;
the 3 trees which would potentially renormalise the equation by a term $c A_j$ for $j\neq i$ all enjoy exact parity so 
they are not renormalised. This is consistent with the fact that we show in Section \ref{sec:gauge-covar} (by other means) that we do not have a renormalisation term $c A_j$ in the $A_i$ equation.}
%Instead, we can replace the kernels $K^\e$ by the full heat kernel $P^\e$;
%the error of this replacement vanishes.
%Moreover, we can replace the $w$ integral in \eqref{e:mom-R2-DjK2DiK} by an integral over $z$.
%With these replacements, and 
%applying Lemma~\ref{lem:identity}, \eqref{e:mom-R2-DjK2DiK} is then turned into
%\begin{equ}
%\e^{1-\kappa} \!\!
%\int_{\R\times \obonds_j} \!\!
%P^\e(w - y - \e_i)
%\partial_i P^\e( w-y) \mrd y\;.
%\end{equ}
%By Lemma~\ref{lem:ePsiDPsi} this is $\e^{-\kappa} O(1)$.
%

The case  $\<I'XiI'[IXiI'Xi]less>$ is analogous to the above arguments for $\<I'XiI'[IXiI'Xi]>$.

The bound for $\<I'(R2-1)new>$ follows from that for $\<R2-1new>$.
%It only has the 2nd chaos  which is easily bounded:
%\begin{equ}[e:momR2-Ydiamond]
%\begin{tikzpicture}  [scale=0.4,baseline=0]		
%\node at (.6, 1)  [var2] (a)  {};  
%\node at (-.6,1) [var2] (b) {}; 
%\draw[very thick] (0,0) -- (a);
%\draw[very thick] (0,0) -- (b);
%\draw[very thick] (0,0) -- (0,-1);
%
%\node at (3.6,1)  [var2] (c)  {};  
%\node at (3-.6,1) [var2] (d) {}; 
%\draw[very thick] (3,0) -- (d);
%\draw[very thick] (3,0) -- (c);
%\draw[very thick] (3,0) -- (3,-1);
%
%\draw[bend left =40, dotted]  (b) to (c) ;
%\draw[bend left =40, dotted]  (a) to (d) ;
%\end{tikzpicture}
%\qquad
%\qquad
%\big(\CK^{(\eps; 2)}\tau\big)\big(z_1, z_2\big) \lesssim \|z_1-z_2\|_{\e}^{-\kappa}
%\end{equ}

Consider $\tau=\<IXiI'(R2-1)new>$ and  $\tau=\<I'XiI(R2-1)new>$. 
For the 3rd and 1st chaoses we can obtain 
%a bound of the form \eqref{e:momR2-Ydiamond} 
the desired bounds
by observing the following graphs
and absorbing powers of $\e$ into kernels as before: \vspace{-5pt}
\[
\begin{tikzpicture}  [scale=0.33,baseline=10]		
\node at (.6, 1)  [var2] (a)  {};  
\node at (-.6,1) [var2] (b) {}; 
\node at (1.2,0) [var] (x) {}; 
\draw[very thick] (0,0) -- (a);
\draw[very thick] (0,0) -- (b);
\draw[very thick] (0,0) -- (0.6,-1);
\draw (x) -- (0.6,-1);

\node at (4.6,1)  [var2] (c)  {};  
\node at (4-.6,1) [var2] (d) {}; 
\node at (4-1.2,0) [var] (y) {}; 
\draw[very thick] (4,0) -- (d);
\draw[very thick] (4,0) -- (c);
\draw[very thick] (4,0) -- (4-.6,-1);
\draw (y) -- (4-0.6,-1);

\draw[bend left =40, dotted]  (b) to (c) ;
\draw[bend left =40, dotted]  (x) to (y) ;
\draw[bend left =40, dotted]  (a) to (d) ;
\end{tikzpicture}
\qquad
\begin{tikzpicture}  [scale=0.33,baseline=10]		
\node at (.6, 1)  [var2] (a)  {};  
\node at (-.6,1) [var2] (b) {}; 
\node at (-1.2,0) [var] (x) {}; 
\draw[very thick] (0,0) -- (a);
\draw[very thick] (0,0) -- (b);
\draw[very thick] (0,0) -- (-0.6,-1);
\draw (x) -- (-0.6,-1);

\node at (3.6,1)  [var2] (c)  {};  
\node at (3-.6,1) [var2] (d) {}; 
\node at (3+1.2,0) [var] (y) {}; 
\draw[very thick] (3,0) -- (d);
\draw[very thick] (3,0) -- (c);
\draw[very thick] (3,0) -- (3.6,-1);
\draw (y) -- (3+0.6,-1);

\draw[bend right =50, delta]  (b) to (x) ;
\draw[bend left =50, delta]  (c) to (y) ;
\draw[bend left =40, delta]  (a) to (d) ;
\end{tikzpicture}
\qquad
\qquad
\begin{tikzpicture}  [scale=0.33,baseline=10]		
\node at (.6, 1)  [var2] (a)  {};  
\node at (-.6,1) [var2] (b) {}; 
\node at (1.2,0) [var] (x) {}; 
\draw[very thick] (0,0) -- (a);
\draw[very thick] (0,0) -- (b);
\draw[kernel1] (0,0) -- (0.6,-1);
\draw[very thick] (x) -- (0.6,-1);

\node at (4.6,1)  [var2] (c)  {};  
\node at (4-.6,1) [var2] (d) {}; 
\node at (4-1.2,0) [var] (y) {}; 
\draw[very thick] (4,0) -- (d);
\draw[very thick] (4,0) -- (c);
\draw[kernel1] (4,0) -- (4-.6,-1);
\draw[very thick] (y) -- (4-0.6,-1);

\draw[bend left =40, delta]  (b) to (c) ;
\draw[bend left =40, delta]  (x) to (y) ;
\draw[bend left =40, delta]  (a) to (d) ;
\end{tikzpicture}
\qquad
\begin{tikzpicture}  [scale=0.33,baseline=10]		
\node at (.6, 1)  [var2] (a)  {};  
\node at (-.6,1) [var2] (b) {}; 
\node at (-1.2,0) [var] (x) {}; 
\draw[very thick] (0,0) -- (a);
\draw[very thick] (0,0) -- (b);
\draw[kernel1] (0,0) -- (-0.6,-1);
\draw[very thick] (x) -- (-0.6,-1);

\node at (3.6,1)  [var2] (c)  {};  
\node at (3-.6,1) [var2] (d) {}; 
\node at (3+1.2,0) [var] (y) {}; 
\draw[very thick] (3,0) -- (d);
\draw[very thick] (3,0) -- (c);
\draw[kernel1] (3,0) -- (3.6,-1);
\draw[very thick] (y) -- (3+0.6,-1);

\draw[bend right =50, delta]  (b) to (x) ;
\draw[bend left =50, delta]  (c) to (y) ;
\draw[bend left =40, delta]  (a) to (d) ;
\end{tikzpicture}
\]
Here the 3rd graph has  ``positively renormalised'' kernels and is bounded in the same way as 
\eqref{e:K3-01} using \eqref{e:Diff-K}. 
The last graph is also treated as in \eqref{e:K3-02}.
%also satisfies the same bound \eqref{e:kappa3LSSS} as in the case \eqref{e:K3-02}.
\end{proof}

\begin{lemma}
	\label{lem:mom-222333}
For every $\tau$ in groups \hyperlink{(3-3)}{(3-3)}, \hyperlink{(2-3)}{(2-3)} and \hyperlink{(2-2)}{(2-2)},  the bounds \eqref{e:mom-bounds} hold.
\end{lemma}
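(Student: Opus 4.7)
The plan is to treat each tree in groups \hyperlink{(2-2)}{(2-2)}, \hyperlink{(2-3)}{(2-3)}, and \hyperlink{(3-3)}{(3-3)} individually, following exactly the pattern established in Lemmas~\ref{lem:mom-YM}--\ref{lem:mom-13}. For each $\tau$, I decompose $\hat\Pi_{z_*}^\e\tau$ into Wiener chaoses and verify the bound \eqref{e:CovarianceBounds} for each $\CK^{(\e;k)}\tau(z_1,z_2)$ by drawing the corresponding pairing graph, applying the multiplication and convolution bounds in Lemma~\ref{lem:OpDSingKer}, and systematically absorbing the prefactors $\e^{1-\kappa}$ coming from each $\bar\Xi$ and $\e^{(1-\kappa)/2}$ coming from each $\tilde\Xi$ into the singular kernels via the elementary estimate $\e^a \|z\|_\e^{-b}\leq \|z\|_\e^{a-b}$ (valid for $a\geq 0$). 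Since all these trees have degree close to $0$, the gain in degree from these $\e$-factors is what makes all the estimates work.

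For group \hyperlink{(2-2)}{(2-2)}, the tree $\<I'Xi1I'(R2-1)new>$ is of the form $(\CD_j^{\pm}\bar\Psi_i)(\CD_j^+\CS_{\southeast}\tilde\Psi_j)(\CD_j^+\CS_{\southwest}\tilde\Psi_j)$. Its 2nd chaos is handled by the same graph as for $\<I'XiI'[IXiI'Xi]>$ in Lemma~\ref{lem:mom-13}, and its 0th chaos vanishes: the contraction of the two $\tilde\Xi_j$ noises against each other produces the same parity-odd integrand as in \eqref{e:mom-R2-DjK3}, while the contraction of one $\tilde\Xi_j$ against $\bar\Xi_i$ is zero by independence of $\xi_i$ and $\xi_j$. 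For group \hyperlink{(2-3)}{(2-3)}, trees $\<I(R2-1)Xi1new>$, $\<cherry231>$, and $\<cherry232>$ are treated analogously: higher chaoses are bounded by pairing-graph estimates like \eqref{e:mom-cherry} and \eqref{e:mom-supercheche} after absorbing the $\e^{2-2\kappa}$ and $\e^{1-\kappa}$ factors, while 0th/1st chaos either vanish by independence/parity or are renormalised via the $\CI(\Psi\bar\Xi)$ and $\CS_{\bs}\CI\tilde\Xi\tilde\Xi$ subdivergences that have already been analysed.

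For group \hyperlink{(3-3)}{(3-3)}, the renormalisation becomes the central issue. For $\<I(PsiXi1)Xi1less>$, the $0$th chaos is exactly $\e^{2-2\kappa}K(0)\Cas = C^\e[\<I(PsiXi1)Xi1less>]$ with our choice in \eqref{e:C-XiIXi}, so after renormalisation it vanishes; the 2nd chaos kernel is of the form $\e^{2-2\kappa}K^{*2}(z_1-z_2)\delta_\e(z_1-z_2)$ which is controlled by the $n_\delta=1$ case of \eqref{e:CovarianceBounds}, analogously to $\<PsiXi1>$ in Lemma~\ref{lem:mom-xi}. The trees $\<I(PsiXi1)Xi1>$ and $\<PsiPsibarXibar>$ are handled similarly: the inner $\Psi\bar\Xi$ subdivergence is subtracted via $C^\e[\<PsiXi1>]$ from \eqref{e:M-XiIXi}, and what remains is bounded by graphs analogous to \eqref{e:superch-3rd}.

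The main obstacle will be $\<r_z_large> = \mcb{I}(\Psi_i\bar\Xi_i)(\Psi_i\bar\Xi_i)$, for which the renormalisation $C^\e[\<r_z_large>]$ in \eqref{e:def-C-four} has two distinct tensorial pieces $c_1^\e$ and $c_2^\e$ corresponding to two inequivalent Wick contractions of the four noises appearing in $(\hat\Pi_{z_*}^\e\tau)^2$. Concretely, when computing the $0$th chaos one must pair each $\bar\Xi$ on one side with a $\bar\Xi$ on the other, giving two $\delta_\e$-factors but distinct $\mfg$-algebraic tensor structures, namely the $e_j\otimes e_k\otimes e_j\otimes e_k$ pattern (producing the integral $\int K^2$) and the $e_j\otimes e_k\otimes e_k\otimes e_j$ pattern (producing $\int K^3$). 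The careful step is to verify that the precise tensor form of $C^\e[\<r_z_large>]$ chosen in \eqref{e:def-C-four} cancels both contributions simultaneously, after which the remaining 2nd chaos terms (with one internal and one external contraction) are bounded by a graph analogous to \eqref{e:mom-Y333}, with the $\e^{2-2\kappa}$ prefactor providing the needed margin.
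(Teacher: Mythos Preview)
Your overall strategy is the same as the paper's, but there is a genuine gap in your treatment of $\<r_z_large>$, and a chaos miscounting for $\<I'Xi1I'(R2-1)new>$.

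For $\<r_z_large> = \Psi_i\bar\Xi_i\,\CI_i(\Psi_i\bar\Xi_i)$, you correctly identify that the $0$th chaos is removed by $C^\e[\<r_z_large>]$, but your claim that the $2$nd chaos is controlled purely by ``the $\e^{2-2\kappa}$ prefactor providing the needed margin'' is wrong. The $2$nd chaos has \emph{five} terms, corresponding to the five ways of internally contracting two of the four noises. One of these---contracting the root $\Psi_i$ with the root $\bar\Xi_i$---produces the coefficient $\E[\Psi_i(z)\bar\Xi_i(z)] = \e^{1-\kappa}K^\e(0)\,\Cas = \e^{-1-\kappa}\Cas$, multiplying the positively recentred $\CI_i(\Psi_i\bar\Xi_i)$. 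This prefactor is not compensated by any remaining power of $\e$, and the term is genuinely divergent; it is cancelled precisely by the $\bar\Xi_i\Psi_i \mapsto C^\e[\<PsiXi1>]\bone^{(i)}$ contraction in $M^\e$ (see \eqref{e:M-XiIXi}), which produces the $\CT[\<IPsiXi1>]$ component of $M^\e\tau$ noted in the proof of Lemma~\ref{lem:Gamma_M}. The other four $2$nd-chaos contractions \emph{are} handled by $\e$-absorption and the bound \eqref{e:Diff-K} with $\alpha=1$, but without singling out the fifth one your argument does not close.

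For $\<I'Xi1I'(R2-1)new>$ you write ``$2$nd chaos'' and ``$0$th chaos'', but this tree carries one $\bar\Xi$ and two $\tilde\Xi$'s, hence three noises and only $3$rd and $1$st chaos. Your ``$0$th chaos vanishes by the parity of \eqref{e:mom-R2-DjK3}'' is also the wrong mechanism: the internal contraction of the two $\tilde\Xi_j$'s in the inner $\<R2-1new>$ gives a \emph{constant} (bounded, not zero, via Lemma~\ref{lem:identity} and $x^\southeast \neq x^\southwest$ as in the proof of Lemma~\ref{lem:mom-xi}), which is then killed by the outer derivative $\CD\CI$. The remaining $1$st-chaos contractions vanish by independence of $\xi_i,\xi_j$ or are bounded by $\e$-absorption. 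Similarly, for $\<cherry231>$ the key point you elide is that the trees \eqref{e:cherry231-list2} again rely on the $x^\southeast \neq x^\southwest$ argument, not on parity.
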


\begin{proof}
In the proof 
we will often use the positive powers of $\e$ to improve the singularities of the kernels as before without 
explicitly mentioning.
Let 
$\tau=\<I(PsiXi1)Xi1>$.
The 3rd chaos is bounded as follows, using  \eqref{e:Diff-K} with 
$\alpha=1$: \footnote{Here \eqref{e:Diff-K} with 
$\alpha=0$ i.e. bounding the two terms in  $\tikz[baseline=-3]\draw[kernel1] (0,0) --(.8,0);$ separately
would lead to a bound $\big(\CK^{(\eps; 3)}\tau\big)\big(z_1, z_2\big)  \lesssim \e^{4-6\kappa}  \delta_\e(z_1-z_2)$, but this is not sufficient for the 
desired (small scale) bound.}
\begin{equ}[e:I(PsiXi1)Xi1-3rd]
\begin{tikzpicture}  [scale=0.33,baseline=0]		
\node at (.6, 1.2)  [var] (a)  {};  
\node at (1.2,0) [var1] (b) {}; 
\node at (0.6,-1.2) [var1] (x) {}; 
\draw (b) -- (a);
\draw[kernel1] (b) -- (x);

\node at (4-.6,1.2) [var] (d) {}; 
\node at (4-1.2,0)  [var1] (c)  {};  
\node at (4-.6,-1.2) [var1] (y) {}; 
\draw (c) -- (d);
\draw[kernel1] (c) -- (y);

\draw[delta]  (a) to (d) ;
\draw[delta]  (b) to (c) ;
\draw[delta]  (x) to (y) ;
\end{tikzpicture}
\qquad
\big(\CK^{(\eps; 3)}\tau\big)\big(z_1, z_2\big) 
 \lesssim  %\|z_1- z_2\|_\e^{-2-\kappa}
 (\|z_1\|_\e^{2} +\|z_2\|_\e^{2})
 \delta_\e(z_1-z_2)  \e^{2-6\kappa}.
\end{equ}
%From this we can deduce a bound $\lambda^{-6\kappa}$ as in \eqref{e:mom-cheY3}.
Now \eqref{e:CovarianceBounds} applies with $(\zeta,\alpha,n_\delta,n_\e)=(2,0,1,2-6\kappa)$.
%(in particular for the small scale we bound $\|z_1- z_2\|_\e^{-2-\kappa}\le \e^{-2-\kappa}$).

For one term in the 1st chaos the second moment is bounded as follows
\begin{equ}[e:I(PsiXi1)Xi1-1st]
\begin{tikzpicture}  [scale=0.33,baseline=0]		
\node at (.6, 1.2)  [var] (a)  {};  
\node at (1.2,0) [var1] (b) {}; 
\node at (0.6,-1.2) [var1] (x) {}; 
\draw (b) -- (a);
\draw[kernel1] (b) -- (x);

\node at (4-.6,1.2) [var] (d) {}; 
\node at (4-1.2,0)  [var1] (c)  {};  
\node at (4-.6,-1.2) [var1] (y) {}; 
\draw (c) -- (d);
\draw[kernel1] (c) -- (y);

\draw[bend right =40, delta]  (a) to (x) ;
\draw[bend left =40, delta]  (b) to (c) ;
\draw[bend left =40, delta]  (d) to (y) ;
\end{tikzpicture}
\qquad
\big(\CK^{(\eps; 1)}\tau\big)\big(z_1, z_2\big) 
 \lesssim 
% \|z_1- z_2\|_\e^{-2-4\kappa}
% (\|z_1\|_\e^{2} +\|z_2\|_\e^{2}+\|z_1-z_2\|_\e^{2})
 \|z_1- z_2\|_\e^{-4\kappa} +  \|z_1\|_\e^{-4\kappa}+ \| z_2\|_\e^{-4\kappa}
\end{equ}
%from which we obtain the desired bounds by \eqref{e:CovarianceBounds} and \eqref{e:kappa3LSSS}.
essentially the same as \eqref{e:kappa3}.
For the other term %which is graphically denoted as
%\begin{equ}[e:I(PsiXi1)Xi1-1st1]
%\begin{tikzpicture}  [scale=0.33,baseline=0]		
\tikz[scale=0.27,baseline=-7]
{
\node at (-.5, 0)  [var] (a)  {};  
\node at (1,0) [var1] (b) {}; 
\node at (0,-1.2) [var1] (x) {}; 
\draw (b) -- (a);
\draw[kernel1] (b) -- (x);
\draw[bend left = 60, delta, thick]  (b) to (x) ;
} ,
 recalling \eqref{e:M-XiIXi} and \eqref{e:C-XiIXi},,
upon cancellation by  $C^\e[\,\<I(PsiXi1)Xi1less>\,]$ we are left with
$\e^{2-2\kappa}\int K(-z)K(z-w)\xi(w)\mrd w$,
and the covariance is bounded by $\|z_1\|^{-3\kappa}\|z_2\|^{-3\kappa}$ which can be treated easily.
%bounded as in \eqref{e:kappa3LSSS}.
The bound for 
$\<I(PsiXi1)Xi1less>$ is similar and even simpler.

Let $\tau=\<r_z_large>$. 
%For the 4th chaos, $\big(\CK^{(\eps; 4)}\tau\big)\big(z_1, z_2\big)$ is equal to
%\begin{equ}
% \e^{4-4\kappa} (K^\e)^{*2} (0)^2 
%\int K^\e(z_1-w)K^\e(z_2-w) \delta_\e (z_1-z_2)\mrd w\mrd z_1\mrd z_2
%\end{equ}
%plus (up to possible minus sign) three other terms given by the same expression with $z_1$ or $z_2$ (or both) in the function $K^\e$
%replaced by $z_*$.
%Here $\delta_\e$ is as in \eqref{e:def-delta-eps}. Since $|K^\e * K^\e |\lesssim \e^{-\kappa}$,
The 4th chaos can be bounded in the same way as the 3rd chaos of 
$\<I(PsiXi1)Xi1>$ above. The 2nd chaos has $5$ terms:
\[
\begin{tikzpicture}  [scale=0.33,baseline=0]		
\node at (0, 1.2)  [var] (a)  {};  
\node at (1,0) [var1] (b) {}; 
\node at (0,-1.2) [var1] (x) {}; 
\node at (-1,0) [var] (y) {}; 
\draw (b) -- (a);
\draw[kernel1] (b) -- (x);
\draw (x) -- (y);
\draw[delta]  (a) to (y) ;
\end{tikzpicture}
\qquad
\begin{tikzpicture}  [scale=0.33,baseline=0]		
\node at (0, 1.2)  [var] (a)  {};  
\node at (1,0) [var1] (b) {}; 
\node at (0,-1.2) [var1] (x) {}; 
\node at (-1,0) [var] (y) {}; 
\draw (b) -- (a);
\draw[kernel1] (b) -- (x);
\draw (x) -- (y);
\draw[delta]  (a) to (x) ;
\end{tikzpicture}
\qquad
\begin{tikzpicture}  [scale=0.33,baseline=0]		
\node at (0, 1.2)  [var] (a)  {};  
\node at (1,0) [var1] (b) {}; 
\node at (0,-1.2) [var1] (x) {}; 
\node at (-1,0) [var] (y) {}; 
\draw (b) -- (a);
\draw[kernel1] (b) -- (x);
\draw (x) -- (y);
\draw[delta]  (b) to (y) ;
\end{tikzpicture}
\qquad
\begin{tikzpicture}  [scale=0.33,baseline=0]		
\node at (0, 1.2)  [var] (a)  {};  
\node at (1,0) [var1] (b) {}; 
\node at (0,-1.2) [var1] (x) {}; 
\node at (-1,0) [var] (y) {}; 
\draw (b) -- (a);
\draw[kernel1] (b) -- (x);
\draw (x) -- (y);
\draw[bend left = 60, delta]  (b) to (x) ;
\end{tikzpicture}
\qquad
\begin{tikzpicture}  [scale=0.33,baseline=0]		
\node at (0, 1.2)  [var] (a)  {};  
\node at (1,0) [var1] (b) {}; 
\node at (0,-1.2) [var1] (x) {}; 
\node at (-1,0) [var] (y) {}; 
\draw (b) -- (a);
\draw[kernel1] (b) -- (x);
\draw (x) -- (y);
\draw[bend right = 60, delta]  (y) to (x) ;
\end{tikzpicture}
\]
For the first term,  the bound follows in the same way as \eqref{e:I(PsiXi1)Xi1-3rd}.
For the second term the bound follows in the same way as \eqref{e:I(PsiXi1)Xi1-1st}.
The fourth term follows in the same way as %\eqref{e:I(PsiXi1)Xi1-1st1}.
\tikz[scale=0.27,baseline=-7]
{
\node at (-.5, 0)  [var] (a)  {};  
\node at (1,0) [var1] (b) {}; 
\node at (0,-1.2) [var1] (x) {}; 
\draw (b) -- (a);
\draw[kernel1] (b) -- (x);
\draw[bend left = 60, delta, thick]  (b) to (x) ;
} above.
These are all because $|\e^\kappa K(-\cdot)*K|\lesssim 1$.
For the third term,
by  \eqref{e:Diff-K} with 
$\alpha=1$, one can deduce a bound on 
$\big(\CK^{(\eps; 2)}\tau\big)$ by the right-hand side of \eqref{e:I(PsiXi1)Xi1-3rd}.
The fifth term is cancelled by our choice of  $C^\e [ \,\<PsiXi1>\, ]$
in \eqref{e:M-XiIXi} and \eqref{e:C-XiIXi}.
Turning to the 0th chaos, there are two terms
\begin{equ}[e:c1c2-cancel]
\begin{tikzpicture}  [scale=0.33,baseline=0]		
\node at (0, 1.2)  [var] (a)  {};  
\node at (1,0) [var1] (b) {}; 
\node at (0,-1.2) [var1] (x) {}; 
\node at (-1,0) [var] (y) {}; 
\draw (b) -- (a);
\draw[kernel1] (b) -- (x);
\draw (x) -- (y);
\draw[delta]  (a) to (y) ;
\draw[bend left = 60, delta]  (b) to (x) ;
\end{tikzpicture}
\qquad\qquad
\begin{tikzpicture}  [scale=0.33,baseline=0]		
\node at (0, 1.2)  [var] (a)  {};  
\node at (0,0) [var1] (b) {}; 
\node at (0,-1.2) [var1] (x) {}; 
\node at (-1.2,0) [var] (y) {}; 
\draw (b) -- (a);
\draw[kernel1] (b) -- (x);
\draw (x) -- (y);
\draw[delta]  (b) to (y) ;
\draw[delta,bend left = 60]  (a) to (x) ;
\end{tikzpicture}
\end{equ}
Our choice of renormalisation \eqref{e:M-four} and \eqref{e:def-C-four}
precisely cancel these terms, up to remainders from the positively renormalised kernels,
which clearly satisfy the desired bounds.
The bounds for $\<PsiPsibarXibar>$ are similar and simpler.

For $\tau=\<I(R2-1)Xi1new>$, the desired bounds follow similarly as \eqref{e:mom-Y333} and \eqref{e:mom-Y11}.

Consider $\<cherry231>$ which includes the following trees (which appear in \eqref{e:tildeR2-exp}):
\minilab{e:cherry231-list}
\begin{equs}
(\CD_j^{\pm} \mcb{I} (\Psi_i \bar\Xi_i) )(\bar\CD_j \bar\Psi_j),
&\quad
(\CD_j^{\pm} \bar\Psi_i)( \CD_j^+ \CS_{\bs}  \mcb{I} (\Psi_j \bar\Xi_j)),		\label{e:cherry231-list1}
\\
(\CD_j^+ \CS_{\southeast} \mcb{I}  (\Psi_j \bar\Xi_j)) (\CD_j^+  \CS_{\southwest}\bar\Psi_j),
&\quad
(\CD_j^+  \CS_{\southeast} \bar\Psi_j) (\CD_j^+ \CS_{\southwest} \mcb{I}  (\Psi_j \bar\Xi_j)),	\label{e:cherry231-list2}
\end{equs}
where $\bs\in \{\Southwest,\Southeast\}$.
Recall that they are not renormalised.
Their 3rd chaoses can be all bounded as the {\it 1st} graph in \eqref{e:grouped-graph1} by $\big(\CK^{(\eps; 3)}\tau\big)\big(z_1, z_2\big) 
 \lesssim \|z_1- z_2\|_\e^{-4\kappa}$. \vspace{-5pt}
\begin{equ}[e:grouped-graph1]
\begin{tikzpicture}  [scale=0.4,baseline=0]		
\node at (.6, 1)  [var] (a)  {};  
\node at (0,0) [var1] (b) {}; 
\node at (1.2,0) [var1] (x) {}; 
\draw (b) -- (a);
\draw[very thick] (b) -- (0.6,-1);
\draw[very thick] (x) -- (0.6,-1);

\node at (4,0)  [var1] (c)  {};  
\node at (4-.6,1) [var] (d) {}; 
\node at (4-1.2,0) [var1] (y) {}; 
\draw (c) -- (d);
\draw[very thick] (c) -- (4-.6,-1);
\draw[very thick] (y) -- (4-0.6,-1);

\draw[bend left =40, delta]  (b) to (c) ;
\draw[bend left =40, delta]  (x) to (y) ;
\draw[bend left =40, delta]  (a) to (d) ;
\end{tikzpicture}
\qquad
\begin{tikzpicture}  [scale=0.33,baseline=0]		
\node at (.6, 1)  [var2] (a)  {};  
\node at (-.6,1) [var2] (b) {}; 
\node at (1.2,0) [var1] (x) {}; 
\draw[very thick] (0,0) -- (a);
\draw[very thick] (0,0) -- (b);
\draw[very thick] (0,0) -- (0.6,-1);
\draw[very thick] (x) -- (0.6,-1);

\node at (4.6,1)  [var2] (c)  {};  
\node at (4-.6,1) [var2] (d) {}; 
\node at (4-1.2,0) [var1] (y) {}; 
\draw[very thick] (4,0) -- (d);
\draw[very thick] (4,0) -- (c);
\draw[very thick] (4,0) -- (4-.6,-1);
\draw[very thick] (y) -- (4-0.6,-1);

\draw[bend left =40, delta]  (b) to (c) ;
\draw[bend left =40, delta]  (x) to (y) ;
\draw[bend left =40, delta]  (a) to (d) ;
\end{tikzpicture}
\qquad
\begin{tikzpicture}  [scale=0.33,baseline=0]		
\node at (.6, 1)  [var2] (a)  {};  
\node at (-.6,1) [var2] (b) {}; 
\node at (-1.2,0) [var1] (x) {}; 
\draw[very thick] (0,0) -- (a);
\draw[very thick] (0,0) -- (b);
\draw[very thick] (0,0) -- (-0.6,-1);
\draw[very thick] (x) -- (-0.6,-1);

\node at (3.6,1)  [var2] (c)  {};  
\node at (3-.6,1) [var2] (d) {}; 
\node at (3+1.2,0) [var1] (y) {}; 
\draw[very thick] (3,0) -- (d);
\draw[very thick] (3,0) -- (c);
\draw[very thick] (3,0) -- (3.6,-1);
\draw[very thick] (y) -- (3+0.6,-1);

\draw[bend right =50, delta]  (b) to (x) ;
\draw[bend left =50, delta]  (c) to (y) ;
\draw[bend left =40, delta]  (a) to (d) ;
\end{tikzpicture}
\end{equ}
The 1st chaos of each of these trees
has two terms,
one of which,
given by contracting $\Psi$ and $\bar\Psi$,
can be essentially bounded as in \eqref{e:mom-supercheche}.
We now turn to the other term in the 1st chaos,
obtained by contracting $\bar\Psi$  and $\bar\Xi$.
This term vanishes for \eqref{e:cherry231-list1}
by independence of $\xi_i$ and $\xi_j$.
For \eqref{e:cherry231-list2},
%the same argument as for 
%$\<R2-1new>$ in the proof of Lemma~\ref{lem:mom-xi},
%which uses 
%Lemma~\ref{lem:identity}
%and the fact that $x^{\southeast}\neq x^{\southwest}$,
%applies here,
%so  we again have a bound of the form \eqref{e:mom-2424}.
the desired bound follows by our choice of $C^\e [\<cherry232>]$ in \eqref{eq:C_square_cherries}.
The case $\tau=\<cherry232>$ is similar (and simpler).

Finally, for $\<I'Xi1I'(R2-1)new>$
we also have
the desired bounds
by observing the {\it 2nd and 3rd} graphs in \eqref{e:grouped-graph1}
and absorbing powers of $\e$ into kernels as before.
\end{proof}

\section{Stochastic heat equation}
\label{sec:SHE}

In this section we establish uniform in $\e$ estimates on the discrete stochastic heat equation (SHE) as a function of time with values in $\Omega_{N;\alpha}$ (Proposition \ref{prop:SHE}).
Our results can be seen as the discrete versions of~\cite[Sec.~4]{CCHS_2D},
but several statements and proofs below differ
%to those of~\cite{CCHS_2D} 
and give sharper results (e.g. Lemmas~\ref{lem:SHE_gr}-\ref{lem:SHE_rho}).

%We fix $\e=2^{-N}$ throughout this section
%and let $\T^2_\e$ be a generic square lattice in $\T^2$ with spacing $\eps$.
Recall $\e=2^{-N}$ and  $\T^2_\e$ as in  Section~\ref{sec:funcSpaces}.
Equip $\T^2_\e$ with the natural measure $\mrd x = \e^{2} m(\mrd x)$ approximating the Lebesgue measure on $\T^2$, where $m$ is the counting measure on $\T^2_\e$.
Recall the discrete Laplace operator $\Delta\equiv\Delta_\e$ from~\eqref{e:laplacian} and let $\{e^{t\Delta}\}_{t\geq 0}$ denote the associated heat semigroup.
By translation invariance, we can view $e^{t\Delta}$ as a function
$e^{t\Delta}\colon \T^2_\e \to [0,\infty)$ which acts on $L^2(\T^2_\e)$ by convolution $e^{t\Delta}f(x) = \int_{\T^2_\e} e^{t\Delta}(x-y)f(y) \mrd y$.

Let $\xi$ be the white noise over the Hilbert space $L^2(\R\times \T^2_\e)$.
Consider the solution to the SHE
$\Psi\equiv \Psi_N \colon \R_+\times \T^2_\e\to \R$
\begin{equ}\label{eq:disc_SHE}
\d_t \Psi = \Delta \Psi + \xi\;,
\end{equ}
with some initial condition $\Psi(0,\cdot)\colon\T^2_\e \to \R$.

If $\Psi(0)=0$, then $\Psi(t,x) = \int_0^t\mrd s\int_{\T^2_\e} \mrd y e^{(t-s)\Delta}(x-y)\xi(s,y)$, and therefore, by It\^o isometry and the semi-group property of $e^{t\Delta}$,
for all $x\in\T^2_\e, t\geq 0$
\begin{equs}[eq:C_t_x]
C_t(x) &\eqdef
\E[\Psi(t,x)\Psi(t,0)] = \int_0^t \mrd s\int_{\T^2_\e} \mrd ye^{(t-s)\Delta}(x-y)e^{(t-s)\Delta}(y)
\\
&= \int_0^t\mrd s e^{2(t-s)\Delta}(x) 
= \frac12\int_0^{2t} e^{u\Delta}(x)\mrd u\;.
\end{equs}

\begin{lemma}
Uniformly over $x\in\T^2_\e$ and $t\in [0,1]$
\begin{equ}\label{eq:hk_upper}
e^{t\Delta}(x) \lesssim \eps^{-2} \wedge t^{-1} \wedge |x|^{-2}\;,
\end{equ}
and uniformly over $y\in\T^2_\e$ with $|y|\geq |x|$
\begin{equ}\label{eq:hk_diff}
|e^{t\Delta}(x) - e^{t\Delta}(y)| \lesssim
(\e^{-3}\wedge t^{-3/2} \wedge |x|^{-3})|x-y|\;.
\end{equ}
\end{lemma}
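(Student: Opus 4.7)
The plan is to derive both bounds from standard discrete heat kernel estimates on the lattice torus, combining a few basic ingredients for the three separate regimes.

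First I would establish the pointwise bound~\eqref{eq:hk_upper}. The $\e^{-2}$ factor is $L^\infty$-contraction of the semigroup: $e^{t\Delta}(x)\leq e^{t\Delta}(0)\leq \e^{-2}$ from $e^{0\Delta}(x)=\e^{-2}\mathbf{1}_{x=0}$. The $t^{-1}$ factor is the classical on-diagonal 2D estimate, obtainable via a Nash inequality on $\T^2_\e$, via Fourier series on the dual torus (the eigenvalues $\lambda_k$ of $\Delta$ satisfy $-\lambda_k\asymp |k|^2\wedge\e^{-2}$ uniformly in $\e$), or via the local central limit theorem for the continuous-time random walk generated by $\Delta$. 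The $|x|^{-2}$ factor comes from the Gaussian upper tail
\[
e^{t\Delta}(x)\lesssim (t\vee\e^2)^{-1}\exp\bigl(-c|x|^2/(t\vee\e^2)\bigr),
\]
a classical lattice heat kernel bound (Davies method, or direct Fourier), by trading the exponential for polynomial decay.

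For the difference bound~\eqref{eq:hk_diff}, I would combine the pointwise estimate with a discrete nearest-neighbour gradient bound of matching Gaussian type,
\[
|e^{t\Delta}(x+\e_j)-e^{t\Delta}(x)|\lesssim\e\,(t\vee\e^2)^{-3/2}\exp\bigl(-c|x|^2/(t\vee\e^2)\bigr),
\]
obtained from the semigroup identity $e^{t\Delta}=e^{(t/2)\Delta}\ast e^{(t/2)\Delta}$ together with $\|\partial_j e^{s\Delta}\|_{L^1}\lesssim s^{-1/2}$ (a direct consequence of the Gaussian bound) and $\|e^{s\Delta}\|_\infty\lesssim s^{-1}$. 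Summing these nearest-neighbour differences along a lattice path from $x$ to $y$ yields $|e^{t\Delta}(x)-e^{t\Delta}(y)|\lesssim |x-y|\,M$ where $M$ is the supremum of the gradient bound along the path. Since $|y|\geq|x|$, one can choose the path to stay at distance $\gtrsim|x|$ from the origin, producing the factor $|x|^{-3}$; the factor $t^{-3/2}$ is simply the on-diagonal case of the gradient bound; and $\e^{-3}|x-y|$ is immediate from the pointwise $\e^{-2}$ bound together with $|x-y|\geq\e$.

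The main technical point is verifying that the constants in the Gaussian and gradient estimates are uniform in $\e\in(0,1]$. I would handle this by splitting into the diffusive regime $t\geq\e^2$, where the discrete heat kernel is well-approximated by the continuum Gaussian and the classical $\R^2$ bounds carry over with $\e$-independent constants, and the sub-lattice regime $t<\e^2$, where the trivial $\e^{-2}$ bound already controls everything (including differences, via $|x-y|\geq\e$). Once this uniformity is in hand, all three pairs of bounds follow by elementary optimization, so the lemma reduces to a routine verification of well-known discrete heat kernel estimates.
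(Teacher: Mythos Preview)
Your proposal is correct and follows the standard route (Gaussian upper bound plus gradient estimate, chained along a lattice path); the paper does not give an argument at all but simply cites \cite[Lem.~B.1]{SSSX21}. One small point worth making explicit in your path argument: when $|x-y|\geq |x|/2$ you can bypass the path entirely and use the crude pointwise bound $|e^{t\Delta}(x)|+|e^{t\Delta}(y)|\lesssim |x|^{-2}\lesssim |x|^{-3}|x-y|$, while for $|x-y|<|x|/2$ the straight lattice path automatically stays at distance $\geq |x|/2$ from the origin, so no delicate path selection on the torus is needed.
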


\begin{proof}
This follow from~\cite[Lem.~B.1]{SSSX21}.
\end{proof}
Note that $\e^{-2}$ in~\eqref{eq:hk_upper} is only useful when $x=0$ and $t<\e^2$.
%Furthermore, by the trivial bound $|e^{t\Delta}(x) - e^{t\Delta}(y)| \leq |e^{t\Delta}(x)| + |e^{t\Delta}(y)|$,
%if $t^{1/2}\leq |x-y|$ we can replace $t^{-3/2}|x-y|$ by $t^{-1}$.
The identity~\eqref{eq:C_t_x} and bound~\eqref{eq:hk_upper} imply that, for all $x\in\T^2_\e,t\geq 0$,
\begin{equ}
%\label{eq:C_0_bound}
%|C_t(0)|& \lesssim \int_0^{t} \e^{-2}\wedge s^{-1}\mrd s =
%\begin{cases}
%t\e^{-2} & \text{ if } t < \e^2\\
%1+\log(t\e^{-2}) & \text{ if } t \geq \e^2\;,
%\end{cases}
%\\
\label{eq:C_x_bound}
|C_t(x)| \lesssim \int_0^{t} s^{-1}\wedge |x|^{-2} \mrd s =
\begin{cases}
t|x|^{-2} & \text{ if } t < |x|^2\;,\\
1+\log(t|x|^{-2}) & \text{ if } t \geq |x|^2\;,
\end{cases}
\end{equ}
and if $x=0$, one can replace $|x|$ on the right-hand side by $\eps$.
Consider further now $|y|\geq|x|$ with $|x-y|^2 \leq t$
for which \eqref{eq:hk_upper}-\eqref{eq:hk_upper} imply
\begin{equs}
|C_t(x)-C_t(y)|
&\lesssim \int_0^{|x-y|^2} s^{-1} \wedge |x|^{-2} \mrd s
+ \int_{|x-y|^2}^{t} (s^{-3/2}\wedge |x|^{-3})|x-y| \mrd s\;.
\end{equs}
For the case $|x| < |x-y|$, we obtain
\begin{equs}[eq:C_x_C_y_1]
|C_t(x)-C_t(y)|
&\lesssim 
1+\log(|x-y||x|^{-1}) + |x-y|(|x-y|^{-1}-t^{-1/2})
\\
&\lesssim 1+\log(|x-y||x|^{-1})\;.
\end{equs}
For the case $ |x-y|\leq |x| < t^{1/2}$, $|C_t(x)-C_t(y)|$ is bounded by a multiple of
\begin{equ}{}
|x-y|^2|x|^{-2} + |x-y||x|^{-3}(|x|^2-|x-y|^2) + |x-y|(|x|^{-1}-t^{-1/2})\;.
\end{equ}
Finally, for the case $ |x-y|\leq t^{1/2}\leq |x|$, we obtain
\begin{equ}
|C_t(x)-C_t(y)| \lesssim |x-y|^2|x|^{-2} + |x-y||x|^{-3}(t-|x-y|^2)\;.
\end{equ}
Note that in the final two cases we obtain the bound
\begin{equ}\label{eq:crude_diff}
|C_t(x)-C_t(y)|\lesssim |x-y||x|^{-1}
\end{equ}
(which is sharp for the range $|x-y|\ll |x|\ll t^{1/2}$).
Finally, we remark that
\begin{equs}[eq:C_0_C_y]
|C_t(0)-C_t(y)|&\lesssim
\int_0^{\e^2\wedge t}\e^{-2}\mrd s 
+ \int_{\e^2\wedge t}^{|y|^2\wedge t} s^{-1}\mrd s
+ \int_{|y|^2\wedge t}^t s^{-3/2}|y|\mrd s
\\
&= \begin{cases}
t\e^{-2} & \text{ if } t<\e^2\;,\\
1+\log ((t\wedge |y|^2)\e^{-2}) & \text{ if } \e^2 \leq t \leq |y|^2\;,\\
1+ \log (|y|^2\e^{-2}) + (1-|y|t^{-1/2}) & \text{ if } |y|^2 < t\;.
\end{cases}
\end{equs}

\begin{lemma}\label{lem:SHE_gr}
Suppose $\ell\subset \T^2_\e$ is a line containing $k$ points in $\T^2_\e$, i.e. $h\eqdef |\ell|=k \eps$.
Let $\delta_\ell\colon L^2(\T^2_\e)\to\R$ be the linear map $\delta_\ell(f) = \e\sum_{x\in\ell} f(x)$, which we also identify with a function in $L^2(\T^2_\e)$ by duality $\scal{\delta_\ell,f}=\delta_\ell(f)$.
Suppose $\Psi(0)=0$.
Then
\begin{equs}
\E \scal{\delta_\ell,\Psi(t)}^2
\lesssim h\big(\big[h\log(th^{-2} + 1)\big] \wedge t^{1/2}\big)
=
\begin{cases}
h^2\log(th^{-2}+1) & \text{ if } h<\sqrt t\\
ht^{1/2}
& \text{ if } h \geq \sqrt t\;.
\end{cases}
\end{equs}
\end{lemma}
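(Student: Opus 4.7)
My plan is to use It\^o isometry to rewrite the variance as an integral in time of $\|e^{s\Delta}\delta_\ell\|_{L^2(\T^2_\e)}^2$, then exploit the tensor factorisation of the discrete heat kernel to reduce everything to one-dimensional estimates combined with Young's inequality. This is cleaner than estimating the double sum $\sum_{x,y\in \ell} C_t(x-y)$ directly using \eqref{eq:C_x_bound}--\eqref{eq:C_0_C_y}.

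\textbf{Step 1: It\^o isometry and factorisation.} From the mild formulation of \eqref{eq:disc_SHE} together with self-adjointness of $e^{s\Delta}$,
\[
\E\scal{\delta_\ell,\Psi(t)}^2 = \int_0^t \|e^{s\Delta}\delta_\ell\|_{L^2(\T^2_\e)}^2\, \mrd s.
\]
Assume, without loss of generality, that $\ell$ is parallel to $e_1$. Since the 2D discrete Laplacian is a sum of commuting 1D Laplacians in each coordinate, $e^{s\Delta}$ factorises as a product of 1D heat kernels $p^\e_s$ on $\e\Z/\Z$. Consequently,
\[
\|e^{s\Delta}\delta_\ell\|_{L^2(\T^2_\e)}^2 = \|p^\e_s * \chi\|_{L^2(\e\Z/\Z)}^2\cdot \|p^\e_s\|_{L^2(\e\Z/\Z)}^2,
\]
where $\chi$ is the 1D indicator of $\{0,\e,\ldots,(k-1)\e\}$, so that $\|\chi\|_1 = h$ and $\|\chi\|_2^2 = h$.

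\textbf{Step 2: One-dimensional estimates.} Standard bounds give $\|p^\e_s\|_\infty, \|p^\e_s\|_2^2 \lesssim \e^{-1} \wedge s^{-1/2}$ with $\|p^\e_s\|_1 = 1$. Two applications of Young's convolution inequality yield
\[
\|p^\e_s * \chi\|_2^2 \leq \|p^\e_s\|_1^2 \|\chi\|_2^2 = h, \qquad \|p^\e_s * \chi\|_2^2 \leq \|p^\e_s * \chi\|_1 \|p^\e_s * \chi\|_\infty \leq h^2(\e^{-1}\wedge s^{-1/2}).
\]
Combining with the bound on $\|p^\e_s\|_2^2$ and taking the minimum, we obtain three regimes: $\|e^{s\Delta}\delta_\ell\|_2^2$ is bounded by $h\e^{-1}$ for $s\leq \e^2$, by $hs^{-1/2}$ for $\e^2 \leq s \leq h^2$, and by $h^2 s^{-1}$ for $s \geq h^2$.

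\textbf{Step 3: Integration and conclusion.} Integrating the three-regime bound in $s\in[0,t]$, splitting at the crossover scales $\e^2$ and $h^2$, one finds $\int_0^t \|e^{s\Delta}\delta_\ell\|_2^2 \mrd s \lesssim h\sqrt t$ in all cases (using $t\e^{-1} \leq \sqrt t$ when $t<\e^2$), and additionally $\lesssim h^2\log(th^{-2}+1)$ when $t>h^2$ (from the $\int h^2/s\,\mrd s$ tail producing the logarithm). Taking the minimum of these two bounds gives exactly the claimed estimate. The only mildly delicate piece is the small-$s$ contribution from $s<\e^2$, where the kernel degenerates to a discrete delta; the bound $h\e \lesssim h\sqrt t$ rescues this region when $\e\leq \sqrt t$, while for $t<\e^2$ the bound $h t\e^{-1}\leq h\sqrt t$ already suffices. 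Beyond this, the proof is a routine crossover analysis of three power-law regimes.
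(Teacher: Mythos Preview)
Your proof is correct and takes a genuinely different route from the paper. The paper bounds the double sum $\E\scal{\delta_\ell,\Psi(t)}^2 = \e^2\sum_{x,y\in\ell}C_t(x-y)$ directly using the pointwise heat-kernel covariance bound~\eqref{eq:C_x_bound}, which yields the term $h\e\sum_{j=0}^{k-1}C_t(j\e e_1)$ that is then split into the $j=0$ contribution and an integral approximation for $j\ge 1$. Your approach instead applies It\^o isometry, exploits the tensor factorisation of the discrete heat semigroup to reduce to a product of one-dimensional $L^2$ norms, and then uses Young's convolution inequality to obtain the three-regime bound on $\|e^{s\Delta}\delta_\ell\|_2^2$.

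What each buys: the paper's method is more hands-on and reuses the two-dimensional covariance estimates~\eqref{eq:C_x_bound}--\eqref{eq:C_0_C_y} that are in any case needed for the companion Lemma~\ref{lem:SHE_rho} on parallel lines, so there is no extra infrastructure cost. Your factorisation is cleaner and more conceptual for this particular lemma; it avoids the separate treatment of the $j=0$ term and the integral comparison, and makes the crossover at $s=h^2$ transparent. Two minor remarks: the one-dimensional bounds $\|p^\e_s\|_\infty,\|p^\e_s\|_2^2 \lesssim \e^{-1}\wedge s^{-1/2}$ you invoke are standard but not stated in the paper (only the two-dimensional analogue~\eqref{eq:hk_upper} is), so you are implicitly importing them; and because of the normalisation $\Delta = \tfrac{1}{2}(\partial_1^2+\partial_2^2)$ in~\eqref{e:laplacian}, your $p^\e_s$ is really the kernel of $e^{(s/2)\partial^2}$, which is harmless for $\lesssim$ bounds.
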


\begin{proof}
Using~\eqref{eq:C_x_bound} and its sharpening for $x=0$,
\begin{equs}
\E  & \scal{\delta_\ell,\Psi(t)}^2
= \scal{\delta_\ell, C_t * \delta_\ell}
=
\e^{2}\sum_{x,y\in\ell} C_t(x-y)
\lesssim
h\e\sum_{j=0}^{k-1}(C_t(j\e e_1))\label{eq:d_ell_Psi_bound}
\\
&\lesssim h\e\big[t\e^{-2}\wedge (1+|\log(t\e^{-2})|)\big]
 +h\int_0^{h}(tx^{-2} )\wedge (1+|\log(tx^{-2})|) \mrd x
\\
&\lesssim h\big[t\e^{-1}\wedge (\e+\e|\log(t\e^{-2})|)\big]
+
\begin{cases}
h^2\log(th^{-2}+1) & \text{ if } h<\sqrt t\\
ht^{1/2}
& \text{ if } h \geq \sqrt t\;.
\end{cases}
\end{equs}
%The conclusion follows by remarking that, 
We then note that
in the final line, the first term is controlled by the second.
\end{proof}

\begin{lemma}\label{lem:SHE_rho}
Suppose $\ell,\bar\ell\subset \T^2_\e$ are parallel lines each containing $k$ points in $\T^2_\e$ and distance $r$ apart, i.e. $h\eqdef |\ell|=k \eps$ and $\inf_{x\in\ell,y\in\bar\ell}|x-y|=r$.
Suppose $\Psi(0)=0$ and $r<h$.
Then
$
\E \scal{\delta_\ell-\delta_{\bar\ell},\Psi(t)}^2
\lesssim 
(ht^{1/2}) \wedge (hr(1+\log(h/r)))
%\begin{cases}
%ht^{1/2} & \text{ if } t<r^2\\
%hr(1+\log(h/r)) & \text{ if } r^2 \leq t\;.
%\end{cases} 
%=
%h(t^{1/2} \wedge r)\;.
$.
\end{lemma}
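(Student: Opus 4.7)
The plan is to use It\^o isometry and translation invariance to reduce the variance to a one-dimensional sum involving only the covariance kernel $C_t$, then split this sum according to scale. First I would write $\E\scal{\delta_\ell - \delta_{\bar\ell}, \Psi(t)}^2 = \scal{\delta_\ell - \delta_{\bar\ell}, C_t * (\delta_\ell - \delta_{\bar\ell})}$, with $C_t$ as in \eqref{eq:C_t_x}, and choose coordinates so that $\ell = \{(j\e,0)\}_{j=0}^{k-1}$ and $\bar\ell = \{(j\e,r)\}_{j=0}^{k-1}$. Expanding the bilinear form and using $C_t(x_j - x_{j'}) = C_t(\bar x_j - \bar x_{j'})$ by translation invariance of $C_t$, and grouping by $m \eqdef j - j'$, collapses the sum into
\begin{equ}
2\e^2 \sum_{|m|\leq k-1}(k-|m|) f(m\e)\;, \qquad f(x) \eqdef C_t(xe_1) - C_t(xe_1 - re_2)\;.
\end{equ}
Bounding $k - |m|\leq k$ reduces everything to estimating $h\e \sum_{m=0}^{k-1}|f(m\e)|$ (noting that $f(-x) = f(x)$ by evenness of the discrete heat kernel under reflection in $e_2$). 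The bound by $ht^{1/2}$ follows at once from Lemma \ref{lem:SHE_gr} applied to $\ell$ and $\bar\ell$ separately via the triangle inequality, using that the $h^2\log(th^{-2}+1)$ regime of that lemma is itself always dominated by $ht^{1/2}$ (substitute $u = t/h^2 \geq 1$ and invoke $\log u \leq \sqrt u$).

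For the sharper bound by $hr(1+\log(h/r))$, I may assume $r \leq t^{1/2}$, since otherwise the already-proved bound $ht^{1/2}$ is smaller than $hr$. It then suffices to show $\e \sum_{m=0}^{k-1}|f(m\e)| \lesssim r(1+\log(h/r))$, which I would do by splitting the sum at $m\e \sim r$. For $1 \leq m < r/\e$, the points $x = m\e e_1$ and $y = m\e e_1 - re_2$ satisfy $|x| < |x-y| = r$ and $|x-y|^2 = r^2 \leq t$, so \eqref{eq:C_x_C_y_1} gives $|f(m\e)|\lesssim 1 + \log(r/(m\e))$, whose Riemann sum is controlled by $\int_0^r(1+\log(r/x))\mrd x \lesssim r$. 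The boundary case $m=0$, assuming $r \geq \e$ (else $\ell=\bar\ell$), uses \eqref{eq:C_0_C_y} in the regime $\e^2 \leq t$, $r \leq t^{1/2}$ to give $|f(0)|\lesssim 1+\log(r/\e)$, so $\e|f(0)|\lesssim \e(1+\log(r/\e))\leq r$ by $\log u \leq u - 1$. For $m\e \geq r$, one is in the regime $|x-y|\leq |x|$ where \eqref{eq:crude_diff} gives $|f(m\e)|\lesssim r/(m\e)$, so $\e \sum_{r/\e \leq m \leq k-1} r/(m\e) \lesssim r\log(h/r)$. Summing the three contributions yields the claim.

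The main obstacle is really just careful bookkeeping: one must track which of the three regimes of $(|x|,|x-y|,t^{1/2})$ applies to each $m$, and invoke the correct case of the refined kernel estimates \eqref{eq:C_x_C_y_1}, \eqref{eq:crude_diff}, \eqref{eq:C_0_C_y}. There is no new analytic input beyond these, and the logarithmic factor $1+\log(h/r)$ in the final bound is sharp and traces directly to the $\int_r^h \mrd x/x$ integral coming from the second regime, while the pure $hr$ term comes from the $\int_0^r(1+\log(r/x))\mrd x$ integral near the diagonal.
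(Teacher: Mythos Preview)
Your proof is correct and follows essentially the same approach as the paper: both reduce to the one-dimensional sum $h\e\sum_{j=0}^{k-1}|C_t(j\e e_1)-C_t(j\e e_1\pm re_2)|$, obtain the $ht^{1/2}$ bound from Lemma~\ref{lem:SHE_gr} via the triangle inequality, and for the sharper bound (under $r^2\leq t$) split at $j\e\sim r$ and apply \eqref{eq:C_x_C_y_1}, \eqref{eq:crude_diff}, \eqref{eq:C_0_C_y} to the three ranges exactly as you do. Your derivation of the one-dimensional reduction via grouping by $m=j-j'$ is slightly more explicit than the paper's reference to \eqref{eq:d_ell_Psi_bound}, but the content is identical.
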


\begin{remark}\label{rem:correct_Thm_413}
We point out a typo in~\cite[Lem.~4.8,~4.12]{CCHS_2D} related to the above estimate.
First, $\kappa\in(0,1)$ therein should be $\kappa\in(0,\frac12)$,
and second, the second bound in~\cite[Lem.~4.8]{CCHS_2D} is incorrect.
Thus, in the proof of~\cite[Lem.~4.12]{CCHS_2D},
one can at best apply~\cite[Lem.~4.11]{CCHS_2D} with $M \sim C_\xi^{p/2} t^{p\kappa/2}$ and $\alpha=1-2\kappa$ with $\kappa\in (0,\frac12)$ 
(instead of $\alpha=1-\kappa$ as therein implicitly).
Since $\kappa = (1-\alpha)/2$,
this means that $t^{p(1-\alpha)/2}$ in the right-hand side of the bound in~\cite[Lem.~4.12]{CCHS_2D} should read $t^{p(1-\alpha)/4}$ for $\alpha\in (0,1)$.
We note, however, that this has no effect on the proof of~\cite[Thm.~4.13]{CCHS_2D} -- in fact,
the final term $|t-s|^{p(1-\beta)/2}$ now becomes $|t-s|^{p(1-\beta)/4}$ which is more natural
since it has the same homogeneity as the terms $|t-s|^{p(\alpha-\bar\alpha)/4}$ coming from the heat flow.
\end{remark}

%\begin{remark}\label{rem:log_factor}
%We expect the factor $(1+\log(h/r))$ can be dropped in Lemma~\ref{lem:SHE_rho} -- it arises from the term~\eqref{eq:log_factor} below that in turn follows from~\eqref{eq:crude_diff}. This final bound can likely be improved by sharpening~\eqref{eq:hk_diff}.
%\end{remark}

\begin{proof}
As in~\eqref{eq:d_ell_Psi_bound},
$
\E \scal{\delta_\ell-\delta_{\bar\ell},\Psi(t)}^2
%&= 2 \scal{\delta_\ell-\delta_{\bar\ell}, C_t * \delta_\ell}
%=
%2\e^{2}\sum_{x,y\in\ell}\sum_{\bar y\in \bar \ell} C_t(x-y)-C_t(x-\bar y)
%\\
\lesssim
h\e\sum_{j=0}^{k-1}(C_t(j\e e_1) - C_t(j\e e_1 + re_2))$.
In all cases, in particular when $r^2>t$, by Lemma~\ref{lem:SHE_gr}
\begin{equ}
\E \scal{\delta_\ell-\delta_{\bar\ell},\Psi(t)}^2 \lesssim \E \scal{\delta_\ell,\Psi(t)}^2
\lesssim h\big(\big[h\log(th^{-2} + 1)\big] \wedge t^{1/2}\big) \leq ht^{1/2}\;.
\end{equ}
If $t\geq r^2$, then we gain something by using sharper bounds for $|C_t(x)-C_t(y)|$ above.
Considering the sum over $j\geq 1$, for $j^2\e^2 < r^2$ we have by~\eqref{eq:C_x_C_y_1}
\begin{equ}
|C_t(j\e e_1) - C_t(j\e e_1 + re_2)| \lesssim 1+\log(r|j\e|^{-1})\;,
\end{equ}
which contributes a multiple of
$
\int_0^{r} \{1+\log(rx^{-1})\} \mrd x = 2r
$.
For $j^2\e^2 \geq r^2$, by~\eqref{eq:crude_diff}, we have
$
|C_t(j\e e_1) - C_t(j\e e_1 + re_2)| \lesssim 
r|j\e|^{-1}
$,
which contributes a multiple of
%\begin{equ}[eq:log_factor]
$
\int_r^{h} rx^{-1} \mrd x \asymp r \log (h/r)
$.
%\end{equ}
In conclusion, the sum over $j\geq 1$ is bounded by a multiple of $hr(1+\log (h/r))$.
It remains to add the contribution from $j=0$ for $t\geq r^2$
for which, by~\eqref{eq:C_0_C_y},
\begin{equ}
h\e |C_t(0)-C_t(re_2)| \lesssim h\e [1+ \log (r^2\e^{-2}) + (1-rt^{-1/2})]
\lesssim hr\;.
\end{equ}
The conclusion follows since $hr(1+\log(h/r)) < ht^{1/2}$ only for $r^2<t$.
\end{proof}

\begin{lemma}\label{lem:hk_flow}
Uniformly in $t\in (0,1)$, $0<\bar\alpha\leq \alpha\in[0,1]$, and $\e>0$, one has $
|A-e^{t\Delta}A|_{N;\bar\alpha}\lesssim t^{(\alpha-\bar\alpha)/4}|A|_{N;\alpha}$,
where we treat $e^{t\Delta}$ a linear operator $e^{t\Delta}\colon \mfq_N\to\mfq_N$ by viewing $\mfq_N$ as two copies of $\mfg^{\T^2_\e}$.
\end{lemma}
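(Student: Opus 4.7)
The plan is to reduce everything to a convolution estimate on line integrals. Identifying $\mfq_N \simeq \mfg^{\obonds_1} \oplus \mfg^{\obonds_2}$ and letting $p_t^\e(\cdot) := e^{t\Delta}(\cdot)$ denote the discrete heat kernel on $\T^2_\e$ (which integrates to $1$ and satisfies $\int p_t^\e(y)|y|^\beta\mrd y \lesssim t^{\beta/2}$ for $\beta \geq 0$, uniformly in $\e$), the identity
\begin{equ}\label{eq:prop_heat_identity}
e^{t\Delta}A(\ell) - A(\ell) = \int p_t^\e(y)\bigl[A(\ell - y) - A(\ell)\bigr]\,\mrd y
\end{equ}
holds for every $\ell \in \olines_N$, where $\ell - y$ denotes the translate of $\ell$ by $-y$ (which lies in $\olines_N$ because $y \in \e\Z^2$).

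The key analytic ingredient is the translation bound: if $\ell$ has length $h$ and direction $e_i$ and we write $y = y_\| e_i + y_\perp e_j$ with $j \neq i$, then the telescope $A(\ell) - A(\ell-y) = [A(\ell) - A(\ell - y_\perp)] + [A(\ell - y_\perp) - A(\ell - y)]$ combined with the definitions of the two seminorms gives
\begin{equ}\label{eq:prop_translate}
|A(\ell) - A(\ell - y)| \lesssim \min(|y_\||, h)^\alpha |A|_{N;\gr\alpha} + \bigl(h \min(|y_\perp|, h)\bigr)^{\alpha/2} |A|_{N;\alpha;\rho}.
\end{equ}
Indeed, the second bracket involves two parallel lines of length $h$ at distance $|y_\perp|$, so is bounded by $\rho(\ell,\ell-y_\perp)^\alpha |A|_{N;\alpha;\rho} = (h|y_\perp|)^{\alpha/2}|A|_{N;\alpha;\rho}$ (and by $h^\alpha|A|_{N;\gr\alpha}$ when $|y_\perp| > h$), while the first bracket is the integral of $A$ over the symmetric difference of two collinear segments and so is bounded by $\min(|y_\||, h)^\alpha|A|_{N;\gr\alpha}$. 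Inserting \eqref{eq:prop_translate} into \eqref{eq:prop_heat_identity} and using $\int p_t^\e(y)\min(|y|,h)^\beta\mrd y \lesssim \min(t^{1/2}, h)^\beta$ yields
\[
|e^{t\Delta}A(\ell) - A(\ell)| \lesssim \min(t^{1/2}, h)^\alpha |A|_{N;\gr\alpha} + h^{\alpha/2}\min(t^{1/2}, h)^{\alpha/2}|A|_{N;\alpha;\rho}.
\]
Dividing by $h^{\bar\alpha}$ and splitting into $h \leq t^{1/2}$ and $h > t^{1/2}$ (using $h, t \leq 1$) then bounds the $|\cdot|_{N;\gr\bar\alpha}$ part of $|A - e^{t\Delta}A|_{N;\bar\alpha}$ by $t^{(\alpha-\bar\alpha)/4}|A|_{N;\alpha}$.

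For the $|\cdot|_{N;\bar\alpha;\rho}$ part, consider parallel $\ell, \bar\ell$ of length $h$ at distance $r$, so $\rho := \rho(\ell,\bar\ell) = \sqrt{hr}$. Applying \eqref{eq:prop_heat_identity} to both lines yields
\[
(A - e^{t\Delta}A)(\ell) - (A - e^{t\Delta}A)(\bar\ell) = \int p_t^\e(y) G(y) \mrd y,\quad G(y) = [A(\ell - y) - A(\bar\ell - y)] - [A(\ell) - A(\bar\ell)].
\]
There are two useful bounds for $G$: the direct one $|G(y)| \leq 2\rho^\alpha |A|_{N;\alpha;\rho}$ (valid since $\ell-y, \bar\ell-y$ are parallel at distance $r$), and the one obtained by applying \eqref{eq:prop_translate} separately to $A(\ell) - A(\ell-y)$ and $A(\bar\ell) - A(\bar\ell-y)$. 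The final step is to compare $\rho$ with $t^{1/4}$ (the natural heat scale for a quantity with dimension $\sqrt{\textnormal{length}}$): if $\rho \leq t^{1/4}$ the direct bound integrates to $\rho^\alpha |A|_{N;\alpha;\rho}$, whence $\rho^{\alpha-\bar\alpha} \leq t^{(\alpha-\bar\alpha)/4}$; if $\rho > t^{1/4}$ one uses the per-line bound and carries out a short case analysis using $\rho^2 = hr > t^{1/2}$, $h \leq 1$ (which forces $r > t^{1/2}$), and the two subcases $h \lessgtr t^{1/2}$.

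The main obstacle is precisely this $\rho > t^{1/4}$ case: the factor $\rho^{-\bar\alpha}$ mixes $h$ and $r$ in an unfavourable way, and one has no direct control relating $h$ and $r$ beyond $hr > t^{1/2}$. The key observation which unlocks the estimate is that $h \leq 1$ (because lines in $\olines_N$ have length at most $\tfrac12$), which forces $r > t^{1/2}$ and thus provides a factor $r^{-\bar\alpha/2} \lesssim t^{-\bar\alpha/4}$ precisely large enough to absorb the excess $\rho^{-\bar\alpha}$ and deliver the target exponent $t^{(\alpha-\bar\alpha)/4}$.
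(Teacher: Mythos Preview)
Your argument is correct and self-contained, but takes a different route from the paper. The paper proceeds in two steps: first it proves the pointwise claim $|f - e^{t\Delta}f|_{L^\infty} \lesssim t^{\alpha/2}|f|_{\CC^\alpha_\e}$ via the random walk representation $e^{t\Delta}f(x) = \E f(x + \e X_{N(t\e^{-2})})$, then it invokes (a discrete analogue of) \cite[Prop.~4.1]{CCHS_2D} to convert this into the line-integral norms. You instead work directly at the line-integral level via the convolution identity on $A(\ell)$ and a translation bound, in effect reproving the content of that black-box proposition by hand. Your route is more elementary and makes explicit exactly which structural features of $|\cdot|_{N;\gr\alpha}$ and $|\cdot|_{N;\alpha;\rho}$ enter; the paper's is shorter but opaque unless one consults the cited result. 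Two cosmetic remarks: in your telescope the labels ``first'' and ``second'' bracket are swapped (it is $[A(\ell) - A(\ell - y_\perp e_j)]$ that involves parallel lines at distance $|y_\perp|$, and $[A(\ell - y_\perp e_j) - A(\ell - y)]$ that involves collinear segments shifted by $y_\| e_i$); and the subcase $h \leq t^{1/2}$ in your final $\rho > t^{1/4}$ analysis is in fact vacuous, since $hr > t^{1/2}$, $h \leq t^{1/2}$ and $r \leq \tfrac12$ are incompatible.
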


\begin{proof}
We first claim that $|f-e^{t\Delta}f|_{L^\infty} \lesssim t^{\alpha/2}|f|_{\CC^\alpha_\e}$.
Indeed,
\begin{equ}
|(e^{t\Delta}f)(x) - f(x)| = |\E[f(x+\e X_{N(t\e^{-2})}) - f(x)]| \leq |f|_{\CC^\alpha_\e}\E[|\e X_{N(t\e^{-2})}|^\alpha]\;,
\end{equ}
where $X_0,X_1\ldots,$ is a simple symmetric random walk on $\Z^2$ and $\{N(t)\}_{t\geq 0}$ is an independent Poisson process.
The final term is bounded by a multiple of $|f|_{\CC^\alpha_\e} t^{\alpha/2}$ since
$\E[|X_n|^\alpha]\lesssim n^{\alpha/2}$ and $\E N(t)^{\alpha/2} \lesssim t^{\alpha/2}$ uniformly in $n,t\geq 0,\alpha\in[0,1]$, which proves the claim.
The conclusion follows by an analogue of \cite[Prop.~4.1]{CCHS_2D} with $\Omega^1_\alpha$ replaced by $\Omega_{N,\alpha}$.
\end{proof}
\begin{proposition}\label{prop:SHE}
For $i\in\{1,2\}$,
consider a white noise $\xi_i$
on $\R\times\obonds_i$ 
and let $\Psi^{(i)}$ be the solutions to the SHE~\eqref{eq:disc_SHE}
with $\xi=\xi_i$ and $\T^2_\e=\obonds_i$ (as usual, we identify $e\in\obonds_i$ with its midpoint).
For $t\geq 0$, denote $\Psi(t)=(\Psi^{(1)}(t),\Psi^{(2)}(t))\in \Omega_N$.

Then for $0<\bar\alpha<\alpha\leq 1$, $\kappa\in (0,\frac{\alpha-\bar\alpha}{4})$, $p \geq 1$, and $T>0$, we have
\begin{equ}
\E\Big[\sup_{0\leq s<t\leq T} |t-s|^{-p\kappa}|\e\Psi(t)-\e\Psi(s)|^p_{N;\bar\alpha}\Big] \lesssim |\e\Psi(0)|_{\alpha} + 1\;,
\end{equ}
where the proportionality constant depends only on $\bar\alpha,\alpha,\kappa,p,T$.
\end{proposition}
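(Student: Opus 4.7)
The plan is to combine the pointwise covariance bounds of Lemmas~\ref{lem:SHE_gr} and~\ref{lem:SHE_rho} with a Gaussian Kolmogorov-type criterion adapted to the seminorms $|\cdot|_{N;\gr{\bar\alpha}}$ and $|\cdot|_{N;\bar\alpha;\rho}$. First, decompose $\Psi(t)=e^{t\Delta}\Psi(0)+\hat\Psi(t)$, where $\hat\Psi$ solves~\eqref{eq:disc_SHE} with $\hat\Psi(0)=0$. For the deterministic piece, Lemma~\ref{lem:hk_flow} together with the semigroup property gives
\begin{equ}
|e^{t\Delta}\e\Psi(0)-e^{s\Delta}\e\Psi(0)|_{N;\bar\alpha}
=|e^{s\Delta}(I-e^{(t-s)\Delta})\e\Psi(0)|_{N;\bar\alpha}
\lesssim|t-s|^{(\alpha-\bar\alpha)/4}|\e\Psi(0)|_\alpha\;,
\end{equ}
using that $e^{s\Delta}$ is a contraction on every $\Omega_{N;\gamma}$, which handles the contribution of $\Psi(0)$.

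It remains to bound the stochastic part $\hat\Psi$. Since every line-integral of $\hat\Psi$ is Gaussian, Gaussian hypercontractivity reduces everything to second-moment bounds, and we may restrict to $p=2$ at the start. To use Kolmogorov over $(s,t,\ell,\bar\ell)$ together with a chaining over the countable family of lattice lines $\olines_N$, I would first produce, for every fixed pair $(s,t)$ with $s<t$, the estimates
\begin{equs}
\E[(\hat\Psi(t,\ell)-\hat\Psi(s,\ell))^2] &\lesssim  |t-s|^{(\alpha-\kappa)/2}|\ell|^{2\bar\alpha}\;,\\
\E[((\hat\Psi(t,\ell)-\hat\Psi(s,\ell))-(\hat\Psi(t,\bar\ell)-\hat\Psi(s,\bar\ell)))^2]
&\lesssim |t-s|^{(\alpha-\kappa)/2}\rho(\ell,\bar\ell)^{2\bar\alpha}\;,
\end{equs}
uniformly over parallel $\ell,\bar\ell\in\olines_N$, with the power of $\e$ in $\ell=\{x\}$ absorbed as in Lemma~\ref{lem:SHE_gr}. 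To get these I would use the Markov-and-time-reversal split
\begin{equ}
\hat\Psi(t)-\hat\Psi(s)\;=\;(e^{(t-s)\Delta}-I)\hat\Psi(s)\;+\;\hat\Psi^\sharp(t-s)\;,
\end{equ}
where $\hat\Psi^\sharp$ is an independent SHE with zero initial data run for time $t-s$. For $\hat\Psi^\sharp$ the bounds are exactly Lemmas~\ref{lem:SHE_gr} and~\ref{lem:SHE_rho} with $t$ replaced by $t-s$, giving at worst $h(t-s)^{1/2}$ and $hr(1+\log(h/r))$, both of which interpolate to $|t-s|^{(\alpha-\kappa)/2}h^{2\bar\alpha}$ and $|t-s|^{(\alpha-\kappa)/2}\rho^{2\bar\alpha}$ after using $r\leq h$ and trading the extra logs for an $\e^\kappa$ factor when $\bar\alpha<\alpha$. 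For the first term $(e^{(t-s)\Delta}-I)\hat\Psi(s)$, I would apply Lemma~\ref{lem:hk_flow} \emph{pathwise} in $\Omega_{N;\alpha}$ and close the loop using the stationary-time bound at $s=t$, where Lemmas~\ref{lem:SHE_gr} and~\ref{lem:SHE_rho} directly give $\E|\hat\Psi(s)|_{N;\alpha}^2\lesssim 1$ uniformly in $\e$ and $s\in[0,T]$; this is the standard Kolmogorov argument applied over the (countable) dyadic family of lines in $\olines_N$, with $\kappa>0$ room to cover the logarithms and pass from weak to strong bounds.

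Combining the two estimates above and raising to the $p$-th power by hypercontractivity yields, for fixed $s<t$,
\begin{equ}
\E|\e(\hat\Psi(t)-\hat\Psi(s))|_{N;\bar\alpha}^p \lesssim |t-s|^{p(\alpha-\bar\alpha)/4-p\kappa/2}\;,
\end{equ}
with the loss from Kolmogorov over lines absorbed in any small $\kappa$-fraction; note the correction of homogeneity mentioned in Remark~\ref{rem:correct_Thm_413}. A standard two-parameter Kolmogorov continuity argument then upgrades this to the supremum estimate of the proposition, adding back the deterministic contribution from the initial condition. The main difficulty I anticipate is the bookkeeping at the scale $h\sim\sqrt{t-s}\sim\e$: Lemma~\ref{lem:SHE_rho} only gives $hr(1+\log(h/r))$, and one has to choose the interpolation exponent carefully (this is the point of requiring $\kappa<(\alpha-\bar\alpha)/4$ rather than $\kappa<(\alpha-\bar\alpha)/2$) so that the logarithmic factors are beaten by $|t-s|^{\kappa/2}$ uniformly in $\e$, and so that the bound~\eqref{eq:hk_upper} with the extra $\e^{-2}$ at the origin does not spoil the small-time estimate.
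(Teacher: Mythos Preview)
Your proposal is correct and follows essentially the same approach as the paper: split off the initial condition via Lemma~\ref{lem:hk_flow}, use the Markov decomposition $\hat\Psi(t)-\hat\Psi(s)=(e^{(t-s)\Delta}-I)\hat\Psi(s)+\hat\Psi^\sharp(t-s)$, control the first piece pathwise by Lemma~\ref{lem:hk_flow} together with a fixed-time bound $\E|\e\hat\Psi(s)|^p_{N;\alpha'}\lesssim 1$ (obtained from Lemmas~\ref{lem:SHE_gr}--\ref{lem:SHE_rho} and a Kolmogorov criterion over lines), and control the second piece by the same lemmas at time $t-s$, then run Kolmogorov in time. The paper organises this slightly differently, first proving the fixed-time estimate $\E|\e\Psi(t)|^p_{N;\bar\alpha}\lesssim t^{p(1-\alpha)/4}$ and then invoking the argument of \cite[Thm.~4.13]{CCHS_2D}, but the ingredients and logic are the same; your remark about the corrected homogeneity from Remark~\ref{rem:correct_Thm_413} is precisely the subtlety the paper flags.
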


\begin{proof}
Suppose first that $\Psi(0)=0$ and $\alpha\in (\frac12,1)$. Then, by equivalence of Gaussian moments, Lemmas~\ref{lem:SHE_gr} and~\ref{lem:SHE_rho} imply that, for all parallel $\ell,\bar\ell\in\olines_N$ and $p\geq 1$, one has
$
\E|\e\Psi(t)(\ell)|^p \lesssim |\ell|^{p\alpha}t^{p(1-\alpha)/2}
$,
and for any $\beta>0$
\begin{equ}
\E|\e\Psi(t)(\ell)-\e\Psi(t)(\bar\ell)|^p \lesssim \rho(\ell,\bar\ell)^{p(\alpha-\beta)}|\ell|^{p\beta + p(1-\alpha)/2} t^{p(1-\alpha)/4}\;.
\end{equ}
%where the proportionality constants depend only on $\alpha,p,\beta$.
It thus follows from a discrete version of the Kolmogorov criterion of \cite[Lem.~4.11]{CCHS_2D} (with the same proof) that
\begin{equ}\label{eq:SHE_t_control}
\E|\e\Psi(t)|^p_{N;\bar\alpha} \lesssim t^{p(1-\alpha)/4}\;.
%\;, \qquad \forall \bar\alpha \in (0,\alpha-\tfrac{16}{p})\;.
\end{equ}
The proof now follows in the same way as the proof of~\cite[Thm.~4.13]{CCHS_2D} with Lemma \ref{lem:hk_flow} and~\eqref{eq:SHE_t_control} substituting the role of~\cite[Cor.~4.2 \& Lem.~4.12]{CCHS_2D} respectively
(see also Remark~\ref{rem:correct_Thm_413}).
\end{proof}

\section{Convergence of discrete dynamic to a continuum limit}
\label{sec:diagonal_argument}

We fix throughout this section a mollifier $\moll$ (not necessarily non-anticipative).
We prove in this section that the discrete dynamic $A^\eps$ converges along a subsequence of $\e\downarrow0$ to $\SYM(C)$ for \textit{some} operator $C\in L(\mfg^2,\mfg^2)$.
We make the following assumption throughout this section.
\begin{assumption}\label{as:subsequence}
We fix a sequence $\e=2^{-N}\downarrow0$ such that $c^\e_\star$ from Lemma~\ref{lem:renormalised-equ} converges along this sequence.
We continue to write this sequence as $\e$.
\end{assumption}

Recall from \eqref{eq:L_G} that $L_G(\mfg^2,\mfg^2)$ denotes
the space of linear maps which commute with the (diagonal) adjoint action by $G$.
\begin{theorem}\label{thm:some_C}
There exists a subsequence of $\e\downarrow0$ and $C\in L_G(\mfg^2,\mfg^2)$
such that the statement of Theorem~\ref{thm:discrete_dynamics} holds along this subsequence with $\bar C$ replaced by $C$.
Furthermore, $\sup_{N\geq 1} \E |T_N|^{-p} < \infty$ for all $p >0$. 
\end{theorem}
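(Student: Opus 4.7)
The plan is to extract convergent subsequences via the uniform moment bounds on the renormalised models and then transport the abstract fixed-point convergence to the target space $\Omega_{N;\alpha}$. First I would extract a convergent subsequence of renormalised models $\hat Z^\e$. By Proposition~\ref{prop:Ze_moments} and a Kolmogorov-type criterion for discrete models, $\{\hat Z^\e\}$ is tight in the topology generated by the metric~\eqref{eq:metric_model_extra}. Combined with Assumption~\ref{as:subsequence} on $c^\e_\star$ and Skorokhod's representation, I may pass to a further subsequence along which $\hat Z^\e \to \hat Z$ and $c^\e_\star \to c_\star$ almost surely, for some continuum admissible model $\hat Z$ and some $c_\star \in L(\mfg^2,\mfg^2)$. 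The hypothesis $|a^{(N)} - \pi_N a|_{N;\alpha} \to 0$ together with Lemma~\ref{lem:A_pi_N_A} likewise shows that the initial data converge to $a$ in the appropriate discrete-continuum sense.

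Second I would pass to the limit in the fixed point problem using Proposition~\ref{prop:sol-abs}. The auxiliary solution $\CB^\e$ to~\eqref{e:fix-pt-Be} is locally Lipschitz in the model (under \eqref{eq:metric_model_extra}) and in the initial condition, hence converges on a uniform time interval $[0,T]$ to a continuum fixed point $\CB$ driven by $\hat Z$ with mass $c_\star$. By~\eqref{eq:CA_CB_close}, the full solution $\CA^\e$ to~\eqref{e:fix-pt-Ae} also converges to $\CA = \CB$. The continuum reconstruction $\CR\CA$ solves the renormalised continuum SYM equation in the sense of regularity structures with a specific mass constant, and by the parametrisation of $\SYM(\cdot,a)$ by this constant there is a unique $C \in L(\mfg^2,\mfg^2)$ such that $\CR\CA = \SYM(C,a)$.

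Third I would transfer this convergence to $\CC^{T_N}_{\eta - \alpha/2}(\Omega_{N;\alpha})$. By Lemma~\ref{lem:renormalised-equ}, $A^{(N)} = \e\, \hat\CR\CA^\e$ up to the exit time from $V$. Decomposing $\hat\CR\CA^\e = \Psi^\e + G^\e a^\e + \tilde A^\e$, Proposition~\ref{prop:SHE} provides uniform $\Omega_{N;\alpha}$ bounds on the discrete SHE component, $G^\e a^\e$ is a smooth harmonic extension, and $\tilde A^\e$ has improved regularity from the fixed-point iteration; combined with the convergence in $\cD^\sol$, Lemma~\ref{lem:A_pi_N_A}, and the embeddings~\eqref{eq:embeddings}, this yields $A^{(N)} \to \pi_N \SYM(C,a)$ in the required Banach-valued C-space. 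The stopping times $T_N$ are as in Remark~\ref{rem:T*} with $K_N$ compatible with the local existence time, and the moment bound $\sup_N \E T_N^{-p} < \infty$ follows from the polynomial lower bound on the local existence time in Proposition~\ref{prop:sol-abs} in terms of the model and initial data norms, combined with the uniform moments from Proposition~\ref{prop:Ze_moments}.

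The main obstacle is the identification of $C$ in the second step. Our discrete noise regularisation $\xi^\e = \e^{-1}\dt\BM$ differs from the continuum mollifier regularisation that defines $\SYM(C,a)$, so matching the limit $c_\star$ of the discrete counterterms to the continuum BPHZ mass leaves $C$ only implicitly characterised and potentially depending on the chosen subsequence; the sharper statement $C = \bar C$ is deferred to the gauge-covariance uniqueness argument of Section~\ref{sec:gauge-covar} via Theorem~\ref{thm:C_unique}.
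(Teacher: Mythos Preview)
Your approach has a genuine gap in the first step. You claim that Proposition~\ref{prop:Ze_moments} plus a Kolmogorov criterion yields tightness of $\{\hat Z^\e\}$ in the topology of~\eqref{eq:metric_model_extra}, after which Skorokhod gives $\hat Z^\e \to \hat Z$ for some continuum model. But Proposition~\ref{prop:Ze_moments} only gives uniform moment bounds on the \emph{discrete} model norms $\$\hat Z^\e\$^{(\e)}_{\gamma;\K}$; it does not show that $\hat Z^\e$ is close to any continuum object. Each $\hat Z^\e$ lives on a different lattice, and the comparison metric $\$Z;Z^\e\$^{(\e)}_{\gamma;\K;\ge\e}$ between a continuum and a discrete model is not controlled by boundedness alone. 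Proving convergence of the discrete BPHZ models to the continuum BPHZ model is essentially as hard as the theorem itself, and the paper does not attempt it.

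The paper instead uses a diagonal argument with a second regularisation parameter $\bar\e\ge\e$: one mollifies the discrete noise to obtain $\xi^{\e,\bar\e}$ and a corresponding smooth discrete dynamic $A^{\e,\bar\e}_C$ (Section~\ref{subsec:regular_discrete_process}). The difference $A^{(N)}-\pi_N\SYM(C,a)$ is then split into four pieces~\eqref{eq:diagonal_bound}: the continuum $\bar\e\to0$ limit from~\cite{CCHS_2D} (Lemma~\ref{lem:1-2_terms}); a classical numerical-analysis limit $\e\to0$ for fixed $\bar\e$ (same lemma); the comparison $A^{\e,\bar\e}_C\leftrightarrow B^\e$ uniformly in $\e<\bar\e$ as $\bar\e\to0$ (Lemma~\ref{lem:3_term}, using Lemma~\ref{lem:ZeeZe}); and $B^\e\leftrightarrow A^{(N)}$ via Section~\ref{sec:Aeps} (Lemma~\ref{lem:4th_term}). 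The constant $C$ is defined explicitly by~\eqref{e:set-final-C}, namely $C=c_\star^0+\lim_{\bar\e\to0}\ad_{C^{0,\bar\e}_{\sym}}$, where the limits of the doubly-indexed renormalisation constants are extracted along nested subsequences. This construction never requires convergence of $\hat Z^\e$ to a continuum model; it compares $\hat Z^\e$ only to the nearby \emph{discrete} model $\hat Z^{\e,\bar\e}$ (Lemma~\ref{lem:ZeeZe}), which is tractable because the extra mollification makes the difference small.
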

In Section~\ref{sec:gauge-covar} we will show that $C=\bar C$ regardless of the subsequence $\eps$ from Assumption~\ref{as:subsequence}, thus proving Theorem~\ref{thm:discrete_dynamics}.
We prove Theorem~\ref{thm:some_C} by a diagonal argument.
This argument has been used in several recent works,
e.g. \cite[Sec.~6]{MourratWeber}, \cite[Sec.~6]{HS15},  \cite[Sec.~7]{HM18}, \cite[Sec.~6]{CM18}, \cite[Sec.~4]{EH21}, \cite[Sec.~4]{ChoukGairingPerkowski} (see also \cite[Sec.~5.1]{MR4029148}
for an alternative approach based on 
convergence of discrete multiple stochastic integrals \cite{MR3584558}).
%\footnote{Another interesting approach is
%\cite{MR3584558} which provides a general criterion for the {\it convergence} of discrete multiple stochastic integrals to multiple Wiener-It\^o integrals, which was adapted later to the setting of Wick products in \cite[Sec.~5.1]{MR4029148}.}
In our case we would like to preserve the exact gauge symmetry along the discrete approximations,
and thus are {\it not} allowed to renormalise the discrete dynamics;
this is why we first show 
Theorem~\ref{thm:some_C} and then identify $C$ in Section~\ref{sec:gauge-covar}.

\subsection{Regularised discrete process \texorpdfstring{$A^{\e,\bar\e}$}{A eps, bar eps}}
\label{subsec:regular_discrete_process}

In the following, we will write $A^\e$ for the solution to \eqref{e:Aeps}.

Let $\bar\eps \ge \eps$ and, for $x\in \T^2$, define
$
\chi^{\eps,\bar\eps} (t,x) \eqdef \eps^{-2} \int \chi^{\bar\eps} (t,y) % 1_{|y-x| \le \eps/2}
1_{B(x,\e)}(y)
 \mrd y
$, 
where $\chi^{\bar\eps} (t,y) = \bar\eps^{-4} \chi (\bar\eps^{-2} t, \bar\eps^{-1} y )$
and $1_{B(x,\e)}$ is  as in~\eqref{e:couple-noise}.
The above definition is particularly useful when  $x\in \obonds_i$
(for instance in situation such as \eqref{e:def-xi-ebe} we think of it as restricting to $\obonds_i$)
 but also makes sense if 
 $x\in \T^2$.
 
 We then define the ``smoothened'' discrete noise	
\begin{equ}[e:def-xi-ebe]
\xi^{\eps,\bar\eps}_i = \chi^{\eps,\bar\eps} *_{(i)} \xi^\eps_i\;,\qquad i\in \{1,2\}
\end{equ}
where $*_{(i)}$ is convolution on $\R\times \obonds_i$, 
%In other words, for $(t,x)\in \R\times \obonds_i$,
%\[
%\xi^{\eps,\bar\eps}_i (t,x) 
%=\eps^2 \int \mrd s \sum_{y\in  \obonds_i}  \chi^{\eps,\bar\eps} (t-s, x-y) \xi^\eps_i (s,y)\;,
%\]
i.e. $\eps^2 \int \sum_{y\in  \obonds_i}  \chi^{\eps,\bar\eps} (\cdot -s, \cdot -y) \xi^\eps_i (s,y) \mrd s$.
%where the time integral is in the distributional sense.

%\begin{remark}
%Another way would be to average $\xi^\e$ over the entire $\obonds$,
%namely
%for $(t,x)\in \R\times \obonds$,
%\[
%\xi^{\eps,\bar\eps} (t,x) 
%=\eps^2 \int ds \sum_{y\in  \obonds}  \chi^{\eps,\bar\eps} (t-s, x-y) \xi^\eps (s,y)\;.
%\]
%However in this way we would lose independence between $\xi^{\eps,\bar\eps}_1$ and $\xi^{\eps,\bar\eps}_2$
%which would be slightly inconvenient for models and renormalisation.
%\end{remark}

For $C\in L(\mfg^2,\mfg^2)$, let $A^{\eps,\bar\eps}$  be the solution to the equation for $e\in \obonds_i$, 
$j\neq i$,
\begin{equs}\label{e:Aeps-eps-bar}
{}&\partial_t  A_i^{\eps,\bar\eps}  (e)
= \Delta A_i^{\eps,\bar\eps} (e)
+ \big[A_j^{\eps,\bar\eps} (e^{(a)}), 
(2\partial_j A_i^{\eps,\bar\eps})(e)  
- (\bar\partial_i A_j^{\eps,\bar\eps}) (e) \big] 
\\
&+ [A_i^{\eps,\bar\eps} (e), (\partial_i A_i^{\eps,\bar\eps})(e) ]
+ \big[A_j^{\eps,\bar\eps}(e^{(a)}) ,[A_j^{\eps,\bar\eps}(e^{(a)}) ,A_i^{\eps,\bar\eps}(e) ]\big] 
+ C_iA^{\eps,\bar\eps}  (e)
+ \xi_i^{\eps,\bar\eps} (e).
\end{equs}

\subsection{Proof of Theorem~\ref{thm:some_C}}
\label{subsec:some_C_proof}

Throughout this subsection, let us fix initial conditions $a\in\Omega^1_\alpha$, $a^{\e,\bar\e}\in\Omega_N$, and $a^\e \equiv a^{(N)}\in\Omega_N$ such that, for some $\eta\in (\frac\alpha4-\frac12,\alpha-1)$,
%\begin{equ}[e:a_initial]
%(i) \; \sup_\e |a^{\e}|_{N;\alpha}<\infty\;,
%\quad
%(ii) \;\lim_{\bar\e \to 0}|a^\e- a^{\e,\bar\e}|_{N;\alpha} = 0\;,
%\quad
%(iii)\; \lim_{\bar\e\to0}\|a^{\e,\bar\e}; a\|_{\CC^\eta_\e} = 0
%\end{equ}
\begin{equs}
\sup_\e |a^{\e}|_{N;\alpha}&<\infty\;,
\label{eq:sup_a_N}
\\
\lim_{\bar\e \to 0}\sup_{\e\in(0,\bar\e)}|a^\e- a^{\e,\bar\e}|_{N;\alpha} &= 0\;,\label{eq:ae_aee}\
\\
\lim_{\bar\e\to0}\sup_{\e\in(0,\bar\e)}\|a^{\e,\bar\e}; a\|_{\CC^\eta_\e} &= 0\;,\label{eq:aee_a}
\end{equs}
%where~\eqref{eq:ae_aee} and~\eqref{eq:aee_a} 
%hold uniformly in $\e\in (0,\bar\e)$.
(We later take $a^{\e,\bar\e}=a^\e$ in the proof of Theorem~\ref{thm:some_C}, but do not need this for now.)
\begin{remark}\label{rem:a_vs_ea}
Recall our convention from Section~\ref{subsubsec:compare_line_ints} that $a^\e\in\Omega_N$ is identified with $\{a^\e_i\}_{i\in[2]} \in \CC^\eta_\e(\obonds_1)\oplus \CC^\eta_\e(\obonds_2)$ where $a^\e_i(b)=\e^{-1}a^\e(b)$ for $b\in\obonds_i$.
In particular, $\|a^{\e,\bar\e};a\|_{\CC^\eta_\e}$ refers to
$\max_{i\in[2]}\|a^{\e,\bar\e}_i;a_i\|_{\CC^\eta_\e}$
in the notation of Section~\ref{sec:funcSpaces}.
\end{remark}
We let $A_C=\SYM(C,a)$ and let $A^{0,\bar \e}_C$ be the solution to~\eqref{eq:SYM_moll} with mollifier $\moll^{\bar\e}$,
map $C\in L(\mfg^2,\mfg^2)$, and initial condition $a$.
We furthermore let $A^{\e,\bar\e}_C$ be the solution to~\eqref{e:Aeps-eps-bar} with map $C$ and initial condition $\{a^{\e,\bar\e}_i\}_{i\in[2]}$.

The idea behind the proof of Theorem~\ref{thm:some_C} is to bound $|A^{(N)}-\pi_N A_C|_{N;\alpha}$ by
\begin{equ}[eq:diagonal_bound]
|A_C - A^{0,\bar \e}_C|_{\alpha} + |\pi_N A^{0,\bar \e}_C - A^{\e,\bar\e}_C|_{N;\alpha} + |A^{\e,\bar\e}_C - B^\e|_{N;\alpha} + |B^\e - A^{(N)}|_{N;\alpha}\;,
\end{equ}
where $B^\e = \hat\CR\CB^\e$ for $\CB^\e$ solving~\eqref{e:fix-pt-Be} with initial condition $a^\e$.
Here and below, $\hat\CR$ is the reconstruction operator associated to the renormalised (random) discrete model $(\hat\Pi^\e,\hat\Gamma^\e)$
as in Section~\ref{subsec:renorm_eq}.

For any $C$, the first term in~\eqref{eq:diagonal_bound} vanishes as $\bar\eps \downarrow0$ by~\cite[Thm.~2.4]{CCHS_2D}
while the second term, for any \textit{fixed} $\bar\eps>0$, vanishes as $\e\downarrow0$ by classical numerical analysis since $A^{0,\bar \e}_C(t)$ is smooth for $t>0$ (Lemma~\ref{lem:1-2_terms}).
The fourth term vanishes as $\eps\downarrow0$ by the results of Section~\ref{sec:Aeps} (Lemma~\ref{lem:4th_term}).
We show in this subsection that, for suitable $C$, the third term vanishes as $\bar\eps\downarrow0$ uniformly over $\eps<\bar\eps$  (Lemma \ref{lem:3_term}).
%We thus prove Theorem~\ref{thm:some_C} by taking $\bar\e>0$ small such that the first and third terms are small for all $\eps<\bar\eps$, and then choose $\eps$ small (depending on $\bar\eps$) such that the second and fourth terms are small.

We suppose for the remainder of the section that Assumption~\ref{assump:R} holds.
We summarise the bounds on the first and second terms in~\eqref{eq:diagonal_bound} in the following lemma.

\begin{lemma}\label{lem:1-2_terms}
Let $C\in L(\mfg^2,\mfg^2)$.
There exist $\{K_{\bar\e}\}_{\bar\e\in(0,1)}$ such that 
$\lim_{\bar\e\downarrow0}K_{\bar\e}=\infty$ and such that,
for $\tau(\bar\e) \eqdef K_{\bar\e}\wedge \inf\{t>0\,:\, |A_C(t)|_\alpha\vee |A^{0,\bar \e}_C(t)|_\alpha >K_{\bar\e}\}$,
one has
\begin{equ}[eq:AC_A0eC]
\lim_{\bar\e\to0}\|A_C - A^{0,\bar \e}_C\|_{\CC([0,\tau(\bar\e)],\Omega^1_\alpha)} =0\quad \text{ in probability,}
\end{equ}
and, for every fixed $\bar\e>0$,
\begin{equ}[eq:AeeC_A0eC]
\lim_{\e\to 0} 
\|  A^{\e,\bar\e}_C - \pi_N A^{0,\bar\e}_C\|_{\CC^{\tau(\bar\e)}_{\eta-\alpha/2}(\CC^{\alpha/2}_\e)}
=0\quad \text{ in probability.}
\end{equ}
\end{lemma}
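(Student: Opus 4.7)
My plan is to treat the two statements separately, since they rely on very different ingredients.

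For \eqref{eq:AC_A0eC}, this is essentially a restatement of the construction of $\SYM(C,a)$ in \cite[Thm.~2.9]{CCHS_2D}: by definition, $A_C$ is the locally uniform $\bar\e\downarrow 0$ limit in $(\Omega^1_\alpha)^{\sol}$ of the mollified solutions $A^{0,\bar\e}_C$. I would choose $K_{\bar\e}$ diverging sufficiently slowly that, on $[0,\tau(\bar\e)]$, both $A_C$ and $A^{0,\bar\e}_C$ remain controlled in $\Omega^1_\alpha$ by $K_{\bar\e}$; the local Lipschitz dependence of the BPHZ solution map on the renormalised model (as set up in Section~\ref{subsec:FPP} with $\e=0$) then yields convergence in $\CC([0,\tau(\bar\e)],\Omega^1_\alpha)$.

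For \eqref{eq:AeeC_A0eC}, the key observation is that with $\bar\e>0$ held fixed, both $\chi^{\bar\e}*\xi$ and its discretisation $\xi^{\e,\bar\e}$ are smooth in space-time, with all spatial and temporal derivatives bounded in probability uniformly in $\e\le\bar\e$. Consequently $A^{0,\bar\e}_C$ satisfies a classical SPDE with smooth forcing, and \eqref{e:Aeps-eps-bar} is its lattice approximation. I would proceed in three steps: (i) establish that $\pi_N(\chi^{\bar\e}*\xi)_i - \xi^{\e,\bar\e}_i \to 0$ uniformly on $[0,T]\times\obonds_i$ in probability, via a Kolmogorov-type argument exploiting Gaussianity of $\xi$ together with the explicit form of $\chi^{\e,\bar\e}$; (ii) quantify the consistency errors between discrete and continuum objects (discrete Laplacian $\Delta_\e$ vs. $\Delta$, discrete derivatives $\partial_j,\bar\partial_i,\partial_j^\pm$ vs. $\partial_j$, and the nonlocal average $A_j(e^{(a)})$ vs. $A_j$), each of which vanishes as $\e\to 0$ when applied to functions with bounded $\CC^2$ norm; (iii) perform a Gronwall estimate on $A^{\e,\bar\e}_C - \pi_N A^{0,\bar\e}_C$ written in Duhamel form, combining steps (i)--(ii) with the initial-condition convergence \eqref{eq:aee_a}, working in the weighted space $\CC^{\tau(\bar\e)}_{\eta-\alpha/2}(\CC^{\alpha/2}_\e)$ and using the embedding $\CC^{\alpha/2}_\e\hookrightarrow\Omega_{N;\alpha}$ of Section~\ref{subsubsec:compare_line_ints}.

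The main technical obstacle will be executing step (iii) while accommodating the rough initial datum $a\in\Omega^1_\alpha$. The weight exponent $\eta-\alpha/2$ is chosen precisely to match the $t^{\alpha/4-1/2}$ blow-up of $\|e^{t\Delta_\e}a^{\e,\bar\e}\|_{\CC^{\alpha/2}_\e}$, and one must verify that the quadratic, cubic and nonlocal terms remain uniformly controlled on this blow-up profile (each gains positive powers of $t$ from the heat flow on the continuum side, and one needs the lattice analogue via Lemma~\ref{lem:hk_flow}). Once the weighted-norm setup is in place, since $\bar\e$ is fixed and the effective coefficients of \eqref{e:Aeps-eps-bar} become smooth with derivatives bounded in probability, the Gronwall argument closes by essentially classical lattice semigroup estimates.
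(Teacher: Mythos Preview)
Your proposal is correct and follows essentially the same route as the paper. For \eqref{eq:AC_A0eC} the paper invokes \cite[Thm.~2.4]{CCHS_2D} (rather than Thm.~2.9, which concerns gauge covariance), but the content is the same: $A_C$ is by construction the local-in-time limit of $A^{0,\bar\e}_C$. For \eqref{eq:AeeC_A0eC} the paper likewise first records the a~priori smoothness bound $\sup_{t\in(0,\tau(\bar\e)]} t^{(k-\eta)/2}\|A^{0,\bar\e}_C(t)\|_{\CC^k}<\infty$, then shows $\xi^{\e,\bar\e}_i \to \xi^{0,\bar\e}_i$ uniformly on compacts, and finally appeals to ``standard results in numerical analysis'' together with \eqref{eq:aee_a}; your steps (i)--(iii) are a fleshed-out version of exactly this, and your identification of the weighted-norm Gronwall as the main work is accurate.
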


\begin{proof}
The existence of $K_{\bar\e}$ as in the statement for which~\eqref{eq:AC_A0eC} holds readily follows from~\cite[Thm.~2.4]{CCHS_2D}.
To prove~\eqref{eq:AeeC_A0eC}, since $\xi^{0,\bar\e}\eqdef\moll^{\bar\e}*\xi$ is smooth,
we have
\begin{equ}[eq:A0eC_apriori]
\sup_{t\in (0,\tau(\bar\e)]}
t^{\frac12(k-\eta)}\|A^{0,\bar\e}_C(t)\|_{\CC^{k}}<\infty
\end{equ}
for any $k\geq 0$.
Recall that
$A^{\e,\bar\e}_{i,C}$ solves~\eqref{e:Aeps-eps-bar} with forcing term
$\xi^{\eps,\bar\eps}_i = \chi^{\eps,\bar\eps} *_{(i)} \xi^\e_i$
as in 
\eqref{e:def-xi-ebe}.
With the coupling of $\xi_i^\e$ and $\xi_i$ in \eqref{e:couple-noise}
we can
 show that 
$
\E [ (\xi_i^\e - \xi_i) (\phi^\lambda)^2] \lesssim \e^{2\kappa} \lambda^{-4-3\kappa}
$
for space-time rescaled test function $\phi^\lambda$ where $\xi_i^\e$  is viewed as linear combinations of Dirac mass distributions.
One then has the convergence of $\xi_i^\e$ to $\xi_i$ in $\CC_{\e}^{-2-\kappa}$ in probability.
One can then prove that, for any $R>0$, in probability
\begin{equ}
%\lim_{\e\to 0} \|  \xi_i^{\e,\bar\e} - \xi_i^{0,\bar\e} \|_{L^\infty([-R,R],L^\infty_\e)} 
%\eqdef
\lim_{\e\to 0} \sup_{(t,x)\in [-R,R]\times \obonds_i} |\xi_i^{\e,\bar\e}(t,x) - \xi_i^{0,\bar\e} (t,x)| =0\;.
\end{equ}
This follows from the bound
$\|\chi^{\bar\e}-\chi^{\eps,\bar\eps}\|_{\CC^k_\e} \lesssim \e$ uniformly in $\e$
for any $k\ge 0$ and moment bounds on 
$\xi^\e$ in $\CC^{-2-\kappa}_{\e}$ and 
$\xi^{\e,\bar\e}_i$ in $\CC([-R,R]\times\obonds_i)$
uniformly in $\e$, see for instance a similar argument in \cite[Section~5]{EH21}.
Then~\eqref{eq:AeeC_A0eC} follows from convergence of initial conditions~\eqref{eq:aee_a},
the a priori bound~\eqref{eq:A0eC_apriori}, and standard results in numerical analysis (e.g.  \cite{MR2895081}).
 \end{proof}
The next lemma controls the fourth term in~\eqref{eq:diagonal_bound}.
Recall that $\CB^\e$ solves~\eqref{e:fix-pt-Be} 
with respect to the renormalised model 
$\hat Z^{\e}=(\hat\Pi^{\e},\hat\Gamma^{\e})$
and with $c^\e_{\star}$ taken as in Lemma~\ref{lem:renormalised-equ}.
Recall that the solution is given by Proposition~\ref{prop:sol-abs} with
\[
\CB^\e_i = \CP^{i;\e}\bone_+ \bXi_i + G^{i;\e} a^\e_i + \tilde \CB^\e_i \;, \qquad 
\tilde \CB^\e_i \in  \cD_{0,\e}^{\gamma,2\eta+1}\;,
\]
where the initial condition is $a^\e_i$ (recall Remark~\ref{rem:a_vs_ea} on $a^\e_i$ vs. $a^\e$).

\begin{lemma}\label{lem:4th_term}
Let $K>0$ and $\tau(\e,K)\eqdef K\wedge\inf\{t>0\,:\,|B^\e(t)|_{N;\alpha}>K\}$.
Then
$
\lim_{\e\to 0} \|B^\e - A^{(N)}\|_{\CC([0,\tau(\e,K)],\Omega_{N;\alpha})} = 0
$
in probability.
\end{lemma}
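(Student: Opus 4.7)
The approach is to use that $\hat\CR\CA^\e$ pathwise coincides with $A^{(N)}$ on the interval where both are defined and $\e\hat\CR\CA^\e$ has not exited $V$, then transfer the modelled-distribution closeness bound \eqref{eq:CA_CB_close} to a closeness estimate for $B^\e = \hat\CR\CB^\e$ and $A^{(N)}$ in the norm $|\cdot|_{N;\alpha}$. Specifically, I will proceed in four stages.

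\medskip

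\textbf{Stage 1 (Probabilistic setup).} For any $\delta>0$, Proposition~\ref{prop:Ze_moments} together with Lemma~\ref{lem:Psi_Wick} and Chebyshev's inequality provides a parameter $r=r(\delta,K)$ and an event $\Omega_\e^r$, with $\P(\Omega_\e^r)>1-\delta$ uniformly in $\e$, on which the renormalised model $\hat Z^\e$ satisfies $\$\hat Z^\e\$_{\gamma;[-1,2]\times\T^2}^{(\e)}<r$ and on which the process $\Psi=\Pi^\e\CI\Xi$ satisfies Assumption~\ref{as:models_abstract} for some fixed $\theta>0$. Since Lemma~\ref{lem:renormalised-equ} gives $c^\e_\star=O(1)$, Assumption~\ref{as:models} holds on $\Omega_\e^r$ with parameter comparable to $r$, enlarged if needed to dominate $|a^\e|_{N;\alpha}\lesssim K$ (the latter following, on $[0,\tau(\e,K)]$, from closeness to $B^\e$ established inductively below).

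\medskip

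\textbf{Stage 2 (Local identification).} On $\Omega_\e^r$ and for $\e<\e_0(r)$, Proposition~\ref{prop:sol-abs} produces both $\CA^\e$ and $\CB^\e$ as solutions in $\cD^\sol$ on a common interval $[0,T]$ whose length $T$ depends only on $r$, together with the bound $\$\tilde\CA^\e;\tilde\CB^\e\$_{\gamma,3\eta+1;T}^{(\e)}\lesssim \e^\theta$. By Lemma~\ref{lem:renormalised-equ}, $\hat\CR\CA^\e$ solves the discrete SPDE \eqref{e:Aeps} pathwise up to the first exit of $\e\hat\CR\CA^\e$ from $V$. Since \eqref{e:Aeps} is a finite-dimensional SDE with locally Lipschitz drift and uniquely solvable before blow-up, this forces $\hat\CR\CA^\e(t)=A^{(N)}(t)$ for $t\in[0,T]$ whenever the exit time has not yet occurred. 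That this exit time indeed does not interfere follows from the embedding $\|\cdot\|_{\CC^{\alpha-1}_\e}\lesssim |\cdot|_{N;\alpha}$ of \eqref{eq:embeddings} together with Lemma~\ref{lem:eps-improve-reg}\ref{pt:eps_f_vanish}: on the event where $|A^{(N)}(t)|_{N;\alpha}$ is bounded by $K+o(1)$, one has $\|\e A^{(N)}(t)\|_{L^\infty_\e}\lesssim \e^{\alpha}K\to 0$, placing $\e A^{(N)}$ inside $V$ for all $\e$ small.

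\medskip

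\textbf{Stage 3 (From modelled distributions to $\Omega_{N;\alpha}$).} Since $\CA^\e-\CB^\e=\tilde\CA^\e-\tilde\CB^\e$, reconstructing with the common renormalised model and using Lemma~\ref{lem:reconstruct} applied component-wise in the expansion \eqref{eq:A_expansion} yields, for every $t\in(0,T]$ and $i\in\{1,2\}$,
\begin{equs}
\|\hat\CR\CA^\e_i(t)-\hat\CR\CB^\e_i(t)\|_{\CC^{\alpha/2}_\e}\lesssim \e^\theta\,t^{(3\eta+1-\alpha/2)/2}\;,
\end{equs}
where the exponent of $t$ comes from the $\eta$-index of $\tilde\CA^\e-\tilde\CB^\e$ and the analytic estimates on discrete integration/reconstruction from Sections~\ref{sec:Models}--\ref{sec:reconstruction}. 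Combining with the embedding $|\cdot|_{N;\alpha}\lesssim \|\cdot\|_{\CC^{\alpha/2}_\e}$ from \eqref{eq:embeddings} gives $|A^{(N)}(t)-B^\e(t)|_{N;\alpha}\lesssim \e^\theta$ uniformly for $t$ in any interval $[t_\ast,T]$ with $t_\ast>0$, and a standard $t\downarrow0$ argument using the continuity of the initial value $a^\e$ extends this to all of $[0,T]$.

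\medskip

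\textbf{Stage 4 (Iteration to $\tau(\e,K)$ and main obstacle).} The argument is then iterated by restarting the fixed point problem at times $t_k=kT'$ for some $T'<T$ depending only on $K$ (and on $r$), using $A^{(N)}(t_k)$ and $B^\e(t_k)$ as new initial conditions. Before time $\tau(\e,K)$ one has $|B^\e(t_k)|_{N;\alpha}\le K$ by definition, and by the previous iteration $|A^{(N)}(t_k)|_{N;\alpha}\le K+o(1)$, so a uniform existence time applies. The number of iterations needed is $O(K/T')$, and the cumulative closeness is of order $(K/T')\e^\theta\to 0$. The main obstacle is precisely this iteration: one must propagate the $\cD^{\gamma,3\eta+1}_{0,\e}$-closeness across restarts, which requires that at each restart time the renormalised model on the shifted interval $[t_k-1,t_k+2]$ still satisfies the uniform bounds with the same parameter $r$ (handled by stationarity of the noise and Proposition~\ref{prop:Ze_moments} applied on a finite collection of translates), and that the regularisation from the heat flow upgrades $B^\e(t_k)\in\Omega_{N;\alpha}$ to an admissible initial condition for the fixed point problem, uniformly in $\e$. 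Once this is carried out, the desired convergence in $\CC([0,\tau(\e,K)],\Omega_{N;\alpha})$ in probability follows by sending first $\e\to 0$ (on $\Omega_\e^r$) and then $\delta\to 0$.
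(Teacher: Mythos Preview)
Your approach is essentially the same as the paper's: identify $A^{(N)}=\hat\CR\CA^\e$ via Lemma~\ref{lem:renormalised-equ}, invoke the closeness bound~\eqref{eq:CA_CB_close} from Proposition~\ref{prop:sol-abs}, pass to $\Omega_{N;\alpha}$ through the embedding $\CC^{\alpha/2}_\e\hookrightarrow\Omega_{N;\alpha}$, and iterate. The paper's proof is terser but follows the same route; your Stage~4 makes the restart argument explicit, which the paper leaves implicit (and handles using that $e^{t\Delta_\e}$ is a contraction on $\Omega_{N;\alpha}$).

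There is, however, one genuine gap in Stage~3. You write the reconstruction bound with time weight $t^{(3\eta+1-\alpha/2)/2}$ but never verify that this exponent is nonnegative, and then fall back on ``a standard $t\downarrow0$ argument using the continuity of the initial value''. That fallback does not give \emph{uniform} convergence on $[0,T]$: continuity of each fixed-$\e$ process at $t=0$ says nothing about uniformity in $\e$ near $t=0$, and a bound of the form $\e^\theta t^{-\delta}$ together with $B^\e(0)=A^{(N)}(0)$ does not force $\sup_{t\in[0,T]}|B^\e(t)-A^{(N)}(t)|_{N;\alpha}\to0$. The paper handles this cleanly by applying Proposition~\ref{prop:sol-abs} with the specific choice $\eta=\alpha-1$, so that $3\eta+1=3\alpha-2>\alpha/2$ precisely by the standing assumption $\alpha>\tfrac45$. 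With that choice the time exponent is positive, the bound is uniform on $[0,T]$, and no $t\downarrow0$ argument is needed. You should make this choice explicit and drop the continuity fallback.

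A minor point: in Stage~1 you bound $|a^\e|_{N;\alpha}$ by $K$ via a circular reference to the later closeness to $B^\e$. This is unnecessary; the standing assumption~\eqref{eq:sup_a_N} already gives $\sup_\e|a^\e|_{N;\alpha}<\infty$, which is all Proposition~\ref{prop:sol-abs} needs.
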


\begin{proof}
Recall the uniform in $\eps>0$ bounds on the models $(\hat\Pi^\e,\hat\Gamma^\e)$ in Proposition~\ref{prop:Ze_moments},
the bounds on the discrete SHE $\Psi^\e$ from Proposition~\ref{prop:SHE}, and the fact that $e^{t\Delta_\e}$ is a contraction on $\Omega_{N;\alpha}$.
Recall further that $A^{(N)}$, by Lemma~\ref{lem:renormalised-equ}, is the reconstruction $\hat\CR\CA^\e$ of $\CA^\e$ solving~\eqref{e:fix-pt-Ae} with our choice of $c^\e_\star$.
Finally, note that we are in the scope of Proposition~\ref{prop:sol-abs} (see in particular Lemma~\ref{lem:Psi_Wick} that verifies Assumption~\ref{as:models} for $\Psi^\e$).
The conclusion follows from the uniform bound on initial conditions~\eqref{eq:sup_a_N},
from Proposition~\ref{prop:sol-abs} (in particular~\eqref{eq:CA_CB_close}) upon taking $\eta=\alpha-1$ therein
so that $3\eta+1 > \alpha/2$ by the assumption $\alpha>\frac45$,
and from the uniform embeddings $\CC^{\alpha/2}_\e \hookrightarrow \Omega_{N;\alpha}\hookrightarrow \CC^{\alpha-1}_\e$
by~\eqref{eq:embeddings}.
\end{proof}
It remains to control the third term in~\eqref{eq:diagonal_bound}, which requires a special choice of $C\in L(\mfg^2,\mfg^2)$.
For the rest of this subsection, $\gamma,\eta$ are chosen as in  Proposition~\ref{prop:sol-abs} such that furthermore
$\eta\in (\frac\alpha4-\frac12,\alpha-1)$; such $\eta$ exists since $\alpha>\frac23$.
(The condition $\eta<\alpha-1$ is needed to apply Lemma~\ref{lem:A_pi_N_A} below
while $\frac\alpha4-\frac12<\eta$ is used in the proof of Lemma~\ref{lem:3_term}.)

Remark that~\eqref{e:fix-pt-Be} has no ``error'' terms (which is the point of introducing it) and thus $\CB^\e$ take values in the sector $\CT^\YM\subset \CT$.
Since~\eqref{e:fix-pt-Be} is the only fixed point problem that we analyse in what follows, we make the following convention.

\begin{notation}\label{not:model_restr}
For the rest of this section, all models are assumed to be defined on $\CT^\YM$ only.
In particular, all model-dependent quantities, e.g. $\$\hat Z^\e\$^{(\e)}_{\gamma;\K}$, $\$\hat Z^\e;\hat Z^{\e,\bar\e}\$^{(\e)}_{\gamma;\K}$,
are defined in terms of $\CT^\YM$.
\end{notation}
We now define a random discrete model $\hat Z^{\e,\bar\e} = (\hat\Pi^{\e,\bar\e},\hat\Gamma^{\e,\bar\e})$ on $\CT^\YM$ in the same way
as $(\hat\Pi^{\e},\hat\Gamma^{\e})$, but replacing each instance of $\xi^\e$ by $\xi^{\e,\bar\e}$, 
and each ``delta function'' on $\R\times \obonds_i$
%$\begin{tikzpicture} \node at (0,0)  [var] (a)  {};  \draw[thick]   (-.2, 0) -- (.2,0) ; \end{tikzpicture}$
%and 
%$\begin{tikzpicture}[baseline=-3] \node at (0,0)  [var] (a)  {};  \draw[thick]   (0, -.2) -- (0,.2) ; \end{tikzpicture}$
by the kernels
 $\langle \chi^{\e,\bar\e} (z-\cdot ), \chi^{\e,\bar\e} (-\cdot)\rangle_{L^2(\R\times \obonds_i)}$
with $i=1,2$ respectively.

More precisely,
each $\xi^\e$ in \eqref{e:Pi-noise}
is replaced by $\xi^{\e,\bar\e}$;
the renormalisation map $M^{\e,\bar\e}$ is defined as for $M^{\e}$ in Section~\ref{subsec:renorm_group}
with
\begin{equ}[e:C-e-bare]
\bar{C}^{\e,\bar\e}_{\be\be'}\;,
\;\; \hat{C}_{1,\be}^{\e,\bar\e}\;,
\;\; \hat{C}_{2,\be}^{\e,\bar\e}\;,
\;\; \hat{C}_{3,\be,\bw}^{\e,\bar\e}\;,
\;\; \hat{C}_{4}^{\e,\bar\e}\;,
\;\;
\hat{C}_{5}^{\e,\bar\e}\;,\;\;
\tilde C_\be \in \mfg^{\otimes 2}
\end{equ}
defined in the same way 
as in 
Section~\ref{sec:renorm_constants}
except that $\xi^\e$ is now replaced by $\xi^{\e,\bar\e}$ and
$\Psi$ is now defined as $\Psi=K^\e * \xi^{\e,\bar\e}$.
Following Notation~\ref{not:model_restr},
we ignore all other renormalisation constants in~\eqref{eq:all_C}
because they only affect the action of $M^{\e}$ on $\CT^\rem$, which has trivial intersection with $\CT^\YM$.
We then define
$C_{\sym}^{\e,\bar\e}$
as in \eqref{e:def-C16}-\eqref{e:def-CSYM}
but using the corresponding constants \eqref{e:C-e-bare}.

\begin{lemma}
$C_{\sym}^{\e,\bar\e} = O(1)$ uniformly in $\e\le\bar\e\in(0,1)$.
\end{lemma}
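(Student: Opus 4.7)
The plan is to mirror the proof of Lemma~\ref{lem:C_sym-finite} with the extra mollifier $\chi^{\e,\bar\e}$ carried through at each step, verifying that the uniform-in-$\e$ bounds there extend to uniform-in-$(\e,\bar\e)$ bounds for $\e \leq \bar\e \in (0,1)$. Writing $\tilde P := P^\e * \chi^{\e,\bar\e}$ for the mollified full heat kernel, I would introduce the approximated quantities
\[
\bar C^{\e,\bar\e}_{\approx} := \Cas \int_{\R \times (\e\Z)^2} \tilde P(z)^2 \, \mrd z,
\qquad
\hat C^{\e,\bar\e}_{\approx} := \Cas \int_{\R \times (\e\Z)^2} \tilde P(z)\, (\partial_j^+ \tilde P)^{*2}(z) \, \mrd z,
\]
and aim to establish, uniformly in $\e \leq \bar\e$, the three relations (a) $\bar C^{\e,\bar\e}_{\be\be'} = \bar C^{\e,\bar\e}_{\approx} + O(1)$; (b) $\hat C^{\e,\bar\e}_{k} = \hat C^{\e,\bar\e}_{\approx} + O(1)$ for $k \in \{1,2,4\}$ and $\hat C^{\e,\bar\e}_{k} = -\hat C^{\e,\bar\e}_{\approx} + O(1)$ for $k \in \{3,5\}$ (the latter via summation by parts exactly as in Lemma~\ref{lem:C_sym-finite}); and (c) $4\hat C^{\e,\bar\e}_{\approx} = \bar C^{\e,\bar\e}_{\approx} + O(1)$. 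Inserted into \eqref{e:def-CSYM} these yield $C^{\e,\bar\e}_{\sym} = (2+2+2-1-1)\hat C^{\e,\bar\e}_{\approx} - \bar C^{\e,\bar\e}_{\approx} + O(1) = O(1)$, as required.

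Steps (a) and (b) follow the argument of Lemma~\ref{lem:C_sym-finite}. Two sources of error arise: replacing the truncated kernel $K^\e$ by the full heat kernel $P^\e$ produces a smooth bounded kernel error which, after convolution with $\chi^{\e,\bar\e}$ of uniformly bounded $L^1$-norm, stays smooth and contributes $O(1)$; and shifts among lattice-neighbouring plaquettes give ``$\e\,\E[\Psi^{\e,\bar\e}\partial\Psi^{\e,\bar\e}]$''-type corrections. For the second, the covariance of $\Psi^{\e,\bar\e}$ is $H *_{(i)} \hat H$ with $H := K^\e * \chi^{\e,\bar\e}$. Since mollification can only smooth $K^\e$ further at scales below $\bar\e$, one has $|H(z)| \lesssim (\|z\|_\s \vee \bar\e)^{-2}$ and $|\partial H(z)| \lesssim (\|z\|_\s \vee \bar\e)^{-3}$, from which $\e \, |\E[\Psi^{\e,\bar\e}(e_1) \partial \Psi^{\e,\bar\e}(e_2)]| \lesssim \e/\bar\e \leq 1$ at neighbouring sites $|e_1-e_2| \lesssim \e$. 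This is the mollified analogue of Lemma~\ref{lem:ePsiDPsi}, and all remaining bounds in Lemma~\ref{lem:C_sym-finite} transfer without further change.

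Step (c) is the Fourier step, where the algebraic identity $\sum_{j=1,2} 2(1-\cos k_j) = -\lambda_k$ used in the proof of Lemma~\ref{lem:identity} gives, with $g := \mathscr F \chi^{\e,\bar\e}$,
\[
\sum_{j=1,2} 2\,(\partial_j^+ \tilde P)^{*2}(z) = \bigl[P^\e(|t|,x) * \chi^{\e,\bar\e} * \hat\chi^{\e,\bar\e}\bigr](z).
\]
Pairing with $\tilde P$ and using Parseval yields $\sum_j 2\hat C^{\e,\bar\e,(j)}_{\approx} = \bar C^{\e,\bar\e}_{\approx} + E^{\e,\bar\e}$, and the task reduces to $|E^{\e,\bar\e}| = O(1)$ uniformly. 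In Fourier, $E^{\e,\bar\e}$ is an explicit integral whose integrand depends on $g - 1$, which vanishes at the origin since $\int \chi^{\e,\bar\e} = 1$; the leading (constant in $g-1$) contribution vanishes thanks to $\int_{\R} (\lambda_k^2 - \omega^2)(\lambda_k^2+\omega^2)^{-2}\,\mrd\omega = 0$, while higher-order contributions are $O(1)$ uniformly thanks to the smoothness and rapid decay of $g$ inherited from $\chi \in \CC^\infty$. The passage from $\sum_j$ to the single-$j$ claim (c) uses the $k_1 \leftrightarrow k_2$ symmetry of the measure $\mrd k / \lambda_k^2$ available in $d = 2$ (or, for anisotropic $\chi$, an averaging over the orientations $i \in \{1,2\}$ consistent with the isotropic form of $C^{\e,\bar\e}_{\sym}$).

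The main obstacle is the uniform bound on $E^{\e,\bar\e}$ in step (c): in Lemma~\ref{lem:C_sym-finite} one has $g \equiv 1$ and $E^{\e,\bar\e} = 0$ exactly, whereas here the interference between the two copies of $\chi^{\e,\bar\e}$ produces a non-zero correction whose boundedness requires the vanishing-to-leading-order cancellation described above. Once this is established, the rest of the proof is a direct transcription of Lemma~\ref{lem:C_sym-finite}.
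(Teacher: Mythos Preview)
Your overall strategy matches the paper's: mimic Lemma~\ref{lem:C_sym-finite} with the mollifier carried through. Steps (a) and (b) are essentially correct and coincide with the paper's argument. There are, however, two points where your proposal diverges from the paper, one a genuine slip and one a missed simplification.

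First, the outer kernel in $\hat C^{\e,\bar\e}_{k}$ should \emph{not} be mollified. The constants \eqref{e:C-e-bare} are defined by replacing $\xi^\e$ with $\xi^{\e,\bar\e}$, so that $\Psi = K^\e * \chi^{\e,\bar\e} * \xi^\e$: only the two $\Psi$-kernels pick up $\chi^{\e,\bar\e}$, while the abstract integration kernel $K^{j;\e}$ stays unmollified. The paper's approximant is therefore $\hat C^{\e,\bar\e}_{\approx} = \Cas\int P^\e\,(\partial_j^+ P^{\e,\bar\e})^{*2}$, with bare $P^\e$ outside. Your choice with $\tilde P$ everywhere differs from this by an extra $O(1)$ error (since $|P^\e - \tilde P| \lesssim \bar\e(\|z\|_\s+\bar\e)^{-3}$ paired with a degree-$(-2)$ kernel integrates to $O(1)$), but you do not mention this, and more importantly the correct choice makes step~(c) much cleaner.

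Second, with the correct outer kernel the Fourier cancellation you worry about is unnecessary. Applying Lemma~\ref{lem:identity} gives $(\partial_j^+ P^{\e,\bar\e})^{*2} = \tfrac14\, P^\e(|\cdot|,\cdot) * (\chi^{\e,\bar\e})^{*2}$ exactly. Pairing with $P^\e$ and splitting $P^\e(|t|,x) = P^\e(t,x)\mathbf 1_{t\ge 0} + P^\e(-t,-x)\mathbf 1_{t<0}$, the causal part yields precisely $\tfrac14\bar C^{\e,\bar\e}_{\approx}$ (no error at all, since $\langle P^\e, P^\e * \chi * \hat\chi\rangle = \langle P^\e * \chi, P^\e * \chi\rangle$). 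The anti-causal part produces the integrand $P^{\e,\bar\e}(z)\,(P^\e * \hat\chi^{\e,\bar\e})(-z)$, which is supported on $|t| \le \bar\e^2$ (both factors force $t$ near $0$) and bounded by $(\|z\|_\s + \bar\e)^{-4}$, hence integrates to $O(1)$ directly. This real-space support/decay argument replaces your Fourier computation of $E^{\e,\bar\e}$ entirely.
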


\begin{proof}
This follows  analogously as Lemma~\ref{lem:C_sym-finite}.
Indeed, it is easy to see from the proof of Lemma~\ref{lem:ePsiDPsi} that \eqref{eq:PsiDPsi} remains true 
with $\Psi$ therein replaced by $K^\e * \xi^{\e,\bar\e}$, uniformly  in $\e,\bar\e\in(0,1)$.
The claims about 
$\bar{C}^{\e}$, $\hat{C}^{\e}_k$
in the proof of Lemma~\ref{lem:C_sym-finite} are then still valid for $\bar{C}^{\e,\bar\e}$, $\hat{C}^{\e,\bar\e}_k$
with 
$\bar{C}^\e_{\approx}$  and   $\hat C^\e_{\approx}$
replaced by 
\begin{equs}
 \bar{C}^{\e,\bar\e}_{\approx} &\eqdef \Cas\int_{\R\times (\e\Z)^2} 
K^{\e,\bar\e}(z) P^{\e,\bar\e}(z) \mrd z \;,
\\
\hat C^{\e,\bar\e}_{\approx} & \eqdef
 \Cas\int_{\R\times (\e\Z)^2} K^\e (z) (\partial^+_j P^{\e,\bar\e})^{*2}(z)\mrd z 
 \end{equs}
 respectively,
 where $ P^{\e,\bar\e}= P^\e *\chi^{\eps,\bar\eps}$.
 Now by Lemma~\ref{lem:identity}, one has 
 $ (\partial^+_j P^{\e,\bar\e})^{*2}=\frac14 P^{\e}(|\cdot|,\cdot) *(\chi^{\e,\bar\e})^{*2}$.
%where $(\cdots)^{*2}$ is again the convolution of a function with its reflection.
Writing $P^{\e}(|t|,x) = P^{\e}(t,x)1_{t\ge 0} + P^{\e}(-t,-x)1_{t<0}$, we have 
$
\hat C^{\e,\bar\e}_{\approx} = \frac14 \bar{C}^{\e,\bar\e}_{\approx}
 +  \frac14 \Cas\int_{\R\times (\e\Z)^2} P^{\e,\bar\e}(z) (P^{\e}*\chi^{\e,\bar\e}(-\cdot)) (-z)\mrd z
$
and the last term is $O(1)$ since the integrand is bounded by $(\|z\|_\s + \bar\e)^{-4}$ but supported on $\{z=(t,x):|t|\le \bar\e^2\}$.
\end{proof}
\begin{definition}\label{def:CAee}
For $C\in L(\mfg^2,\mfg^2)$, let
\begin{equ}
\CA^{\e,\bar\e}_i = \bone_+ \bPsi_i + \CV^{\e,\bar\e}_i 
=\CP^{i;\e,\bar\e}\bone_+\bXi_i + G^{i;\e}a^{\e,\bar\e}_i+\tilde\CA^{\e,\bar\e}_i  \in\cD^\sol\otimes \mfg
\end{equ}
be the solution to the fixed point problem
\begin{equs}\label{e:fix-pt-Aee}
\CA^{\e,\bar\e}_i &= 
\CP^{i;\eps} \bone_+\Big(
\Big[
\cS_{a} \CA_j^{\e,\bar\e} ,
2\mcb{D}_j \CA^{\e,\bar\e}_i   - \bar{\mcb{D}}_i \bPsi_j -  \hat{\mcb{D}}_i \CV^{\e,\bar\e}_j
\Big]
\\
&\quad+ [\CA^{\e,\bar\e}_i, \mcb{D}_i \CA^{\e,\bar\e}_i ]
+[\cS_{a} \CA_j^{\e,\bar\e} ,[\cS_{a} \CA_j^{\e,\bar\e} ,\CA_i^{\e,\bar\e}]]
\\
&+ C_i\CA^{\e,\bar\e}_i -\ad_{C^{\e,\bar\e}_{\sym}}\CA^{\e,\bar\e}_i
+ \bXi_i
\Big)
 -\CL_1 P^{i;\e} *_{(i)} \tilde \delta \hat\CR^{\e,\bar\e} \CA^{\e,\bar\e}_i
+ G^{i;\eps} a_i^{\e,\bar\e}
\end{equs} 
with underlying model $\hat Z^{\e,\bar\e}=(\hat\Pi^{\e,\bar\e},\hat\Gamma^{\e,\bar\e})$
and where $\hat\CR^{\e,\bar\e}$ is the reconstruction operator for $\hat Z^{\e,\bar\e}$
and, for $A\in \mfq_i$, we define $\tilde \delta_i A\in\mfq_i$ by
the first two lines of \eqref{eq:delta_A_def} with
$\hat{C}^{\e}_{1,2,\be}$, $\hat{C}^{\e,}_{3,\be,\bw}$
replaced by $\hat{C}^{\e,\bar\e}_{1,2,\be}$, $\hat{C}^{\e,\bar\e}_{3,\be,\bw}$.
%\begin{equs}[eq:tilde_delta_A_def]{}
%&(\tilde \delta_i A)(e)
%\eqdef 
% \frac{1}{8} \!\! \sum_{\be,\be'\in \CE_\times} \!\!   
% (\ad_{\hat{C}^{\e,\bar\e}_{1,\be}} +\ad_{\hat{C}^{\e,\bar\e}_{2,\be}})
%  (A(e+\be+\be')-A(e))
%\\
%&\quad
%-\frac{1}{16} \sum_{\be,\be'\in \CE_\times}\sum_{\bw\in\{\Northwest,\Southwest\}} \ad_{\hat{C}^{\e,\bar\e}_{3,\be,\bw}}(A(e^\bw + \be')-A(e))\;.
%\end{equs}
\end{definition}
The only difference between~\eqref{e:fix-pt-Aee} and~\eqref{e:fix-pt-Be}
is that $c^\e_\star$ is replaced by $C_i-\ad_{C^{\e,\bar\e}_{\sym}}$
and the additional term $-\CL_1 P^{i;\e} *_{(i)} \tilde \delta \hat\CR^{\e,\bar\e} \CA^{\e,\bar\e}_i$ in~\eqref{e:fix-pt-Aee}.
The existence and uniqueness of $\CA^{\e,\bar\e}$ follows in the same way as Proposition~\ref{prop:sol-abs}
and we have $\tilde\CA^{\e,\bar\e} \in \cD^{\gamma,2\eta+1}_{0,\e}$.
Define $A^{\e,\bar\e} \eqdef \hat\CR^{\e,\bar\e}\CA^{\e,\bar\e}$.
The proof of the following lemma is similar to (and simpler than) that of Lemma~\ref{lem:renormalised-equ}
(see in particular \eqref{e:Ae-renormalised-mass}).

\begin{lemma}\label{lem:Aee_reconstruct}
For any $C\in L(\mfg^2,\mfg^2)$,
$A^{\e,\bar\e}=A^{\e,\bar\e}_{C}$,
i.e.
$A^{\e,\bar\e}$ solves
\eqref{e:Aeps-eps-bar}.
\end{lemma}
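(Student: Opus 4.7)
The plan is to imitate, in a simpler setting, the computation carried out in the proof of Lemma~\ref{lem:renormalised-equ}, but now applied to the fixed point \eqref{e:fix-pt-Aee} driven by the renormalised model $\hat Z^{\e,\bar\e}$. The essential simplification is that, by Notation~\ref{not:model_restr}, we only need to track the YM sector $\CT^\YM$: the terms involving $\bar\bXi$, $\tilde\bXi$, the nonlinearity $F$, the quartic counterterm $C^\e[\<r_z_large>]$, and the remainders $\boldR_i,\boldH_i,\widetilde\boldR_i,\CQ^\e_i$ all disappear. What remains is the ``pure YM'' part of \eqref{e:fix-pt-Ae}, plus the two extra terms $C_i\CA^{\e,\bar\e}_i-\ad_{C^{\e,\bar\e}_{\sym}}\CA^{\e,\bar\e}_i$ and $-\CL_1 P^{i;\e}\ast_{(i)}\tilde\delta_i \hat\CR^{\e,\bar\e}\CA^{\e,\bar\e}_i$.

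First, I would introduce the canonical model $Z^{\e,\bar\e}=(\Pi^{\e,\bar\e},\Gamma^{\e,\bar\e})$ on $\CT^\YM$ built from $\xi^{\e,\bar\e}$ exactly as in Section~\ref{sec:canonical}, and observe that $\hat Z^{\e,\bar\e}$ is obtained from it via Definition~\ref{def:renorm_model} with renormalisation map $M^{\e,\bar\e}$ defined by the constants in \eqref{e:C-e-bare}. Proposition~\ref{prop:canonical_model} and Proposition~\ref{prop:renormalised_model} (restricted to $\CT^\YM$) show this is a compatible model, so Lemma~\ref{lem:DRcommute}, Lemma~\ref{lem:RecShft}, Remark~\ref{rem:canonical_R_mult}, and admissibility give $\hat\CR^{\e,\bar\e}\CP^{i;\e,\bar\e}=P^{i;\e}\ast_{(i)}\hat\CR^{\e,\bar\e}$ and the usual identity $\hat\CR^{\e,\bar\e}f(x)=\CR^{\e,\bar\e}(M^{\e,\bar\e}f)(x)$ with $\CR^{\e,\bar\e}$ multiplicative, commuting with differentiations and with shifts.

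Next, I would expand $\CA^{\e,\bar\e}_i$ up to degree $1$ as in \eqref{e:CA-expand} (dropping all terms from the ``$F$-nonlinearity'' and from $\CQ^\e_i$), and apply $M^{\e,\bar\e}$ to each of the nonlinear terms of \eqref{e:fix-pt-Aee} that lies in the YM sector. This is exactly the computation \eqref{e:cubic-exp}--\eqref{eq:C_2_C_3_ren_eq} of the proof of Lemma~\ref{lem:renormalised-equ}, now with every constant carrying the extra superscript $\bar\e$. The outcome is that the renormalisation produces precisely two kinds of counterterms:
\begin{itemize}
\item a local term whose reconstruction equals $\ad_{C^{\e,\bar\e}_{\sym}}\hat\CR^{\e,\bar\e}\CA^{\e,\bar\e}_i$, and
\item nonlocal differences whose reconstruction equals $\tilde\delta_i\hat\CR^{\e,\bar\e}\CA^{\e,\bar\e}_i$ (this is exactly how $\tilde\delta_i$ was defined from the first two lines of \eqref{eq:delta_A_def}).
\end{itemize}
The $\tilde C_\be$ counterterms from \eqref{eq:tilde_C_renorm} drop out by the Lie-bracket argument already recorded in Remark~\ref{rem:superificial_renorm}. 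The $-\ad_{C^{\e,\bar\e}_{\sym}}\CA^{\e,\bar\e}_i$ added by hand in \eqref{e:fix-pt-Aee} cancels the first counterterm, and the convolution with $-P^{i;\e}\ast_{(i)}\tilde\delta_i\hat\CR^{\e,\bar\e}\CA^{\e,\bar\e}_i$ cancels the second.

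Finally, the remaining terms in the reconstruction, coming from the genuine nonlinearity and from $C_i\CA^{\e,\bar\e}_i$, are identified with the right-hand side of \eqref{e:Aeps-eps-bar}: Lemma~\ref{lem:DRcommute} turns $\mcb{D}_j,\bar{\mcb{D}}_i,\hat{\mcb{D}}_i$ into $\d_j,\bar\d_i,\frac12\d_i^+(\cS_\northwest+\cS_\southwest)$ (noting that the ``non-local parts'' $H$ are killed by convolution against the heat kernel as discussed after \eqref{e:CA-expand}), Lemma~\ref{lem:RecShft} turns $\cS_a$ into the average $(\cdot)^{(a)}$, and the fact that $\hat\CR^{\e,\bar\e}\bXi_i=\xi^{\e,\bar\e}_i$ supplies the forcing. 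The heat-kernel part of $\CB^\e$-type Schauder estimates, together with the initial condition $G^{i;\e}a^{\e,\bar\e}_i$, give the discrete parabolic equation \eqref{e:Aeps-eps-bar} with mass renormalisation $C$, i.e. $A^{\e,\bar\e}=A^{\e,\bar\e}_C$. I do not expect any real obstacle: every algebraic and analytic ingredient has already been used in Lemma~\ref{lem:renormalised-equ}, and the present situation is strictly simpler since only $\CT^\YM$ is involved. The one point requiring a little care is to verify, mechanically, that in our expansion the only trees in $\mfT^\YM_-$ producing nontrivial action of $M^{\e,\bar\e}$ are precisely those listed in \eqref{e:M-on-YM} and \eqref{eq:tilde_C_renorm}; this follows from Lemma~\ref{lem:Gamma_M}\ref{pt:Gamma_M_commute} and Remark~\ref{rem:renorm_planted}.
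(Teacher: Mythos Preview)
Your proposal is correct and follows exactly the approach the paper indicates: the paper's proof is a single sentence pointing to Lemma~\ref{lem:renormalised-equ} (in particular \eqref{e:Ae-renormalised-mass}) and noting that the present situation is simpler, and you have spelled out precisely why---only the $\CT^\YM$ sector is involved, so the renormalisation produces just the local term $\ad_{C^{\e,\bar\e}_{\sym}}$ and the nonlocal differences encoded by $\tilde\delta_i$, both of which are built into \eqref{e:fix-pt-Aee} by design. One small remark: the nonlocal renormalisation differences in \eqref{eq:C_1_ren_eq}--\eqref{eq:C_2_C_3_ren_eq} involve only the polynomial coefficient $a_i$ (equivalently $\hat\CR^{\e,\bar\e}\CV^{\e,\bar\e}_i$), so when you match them against the $\tilde\delta$ term in \eqref{e:fix-pt-Aee} you should read the latter as acting on $\CV^{\e,\bar\e}_i$ rather than $\CA^{\e,\bar\e}_i$, consistently with how the analogous term is used in \eqref{e:fix-pt-Ae} and in the estimate \eqref{eq:delta_term_vanish}.
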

We define for the rest of this section
\begin{equ}[e:set-final-C]
c^0_\star = \lim_{\e\to0} c^\eps_\star\;,
\quad
C^{0,\bar\e}_{\sym} = \lim_{\e\to 0} 
C^{\e,\bar\e}_{\sym}\;,
\quad
C =
c^0_\star + \lim_{\bar\e\to 0}\ad_{C^{0,\bar\e}_{\sym}}\;,
\end{equ}
where the limits are understood in the following way: for each $\bar\e$, we take a subsequence $\e\downarrow0$ in $(0,\bar\e)$ (depending on $\bar\e$)
such that $C^{0,\bar\e}_{\sym}=\lim_{\e\downarrow0} C^{\e,\bar\e}_{\sym}$ exists,
and since $C^{0,\bar\e}_{\sym}=O(1)$ in $\bar\e$,
we further take a subsequence $\bar\e\downarrow0$ such that $\lim_{\bar\e\downarrow 0}C^{0,\bar\e}_{\sym}$ exists.
(Recall that $c^0_\star=\lim_{\e\downarrow0}c^\eps_\star$ exists by Assumption~\ref{as:subsequence}.)
By choosing another subsequence of 
each $\bar\e$-dependent sequence $\e\downarrow0$, we further suppose that
\begin{equ}[eq:unif_diff_lims]
\sup_\e
\{|C^{\e,\bar\e}_{\sym} - C^{0,\bar\e}_{\sym}|
+|c^\e_\star - c^0_\star|\}
\leq \bar\e^\kappa\;.
\end{equ}

%The final ingredient in the proof of Theorem~\ref{thm:some_C} is the following lemma that controls the third term in~\eqref{eq:diagonal_bound}.

\begin{lemma}\label{lem:3_term}
Let $K>0$ and $\tau(\e,K)$ as in Lemma~\ref{lem:4th_term}.
Then, for any $c>0$,
\begin{equ}
\lim_{\bar\e\downarrow 0} \sup_{\e} \P\big[\|B^\e - A^{\e,\bar\e}_C\|_{\CC([0,\tau(\e,K)],\Omega_{N;\alpha})} > c\big] = 0\;,
\end{equ}
where $\bar\e\downarrow0$ is the subsequence taken in the definition of $C$ following~\eqref{e:set-final-C}
and $\sup_{\e}$ is over the corresponding $\bar\e$-dependent subsequence $\e\downarrow0$ .
\end{lemma}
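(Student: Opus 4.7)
\textbf{Proof plan for Lemma~\ref{lem:3_term}.}
The plan is to compare $B^\e$ and $A^{\e,\bar\e}_C$ by viewing both as reconstructions of solutions to a fixed-point problem of the type in Proposition~\ref{prop:sol-abs}, and then to invoke the local Lipschitz continuity in the extended metric \eqref{eq:metric_model_extra} and in the initial condition. Concretely, $B^\e = \hat\CR \CB^\e$ with $\CB^\e$ solving \eqref{e:fix-pt-Be} with underlying model $\hat Z^\e$, mass constant $c^\e_\star$, and initial condition $a^\e$, while $A^{\e,\bar\e}_C = \hat\CR^{\e,\bar\e}\CA^{\e,\bar\e}$ with $\CA^{\e,\bar\e}$ solving \eqref{e:fix-pt-Aee} with model $\hat Z^{\e,\bar\e}$, mass $C_i - \ad_{C^{\e,\bar\e}_\sym}$, initial condition $a^{\e,\bar\e}$ (which we take equal to $a^\e$ in proving Theorem~\ref{thm:some_C}), and an extra remainder $-\CL_1 P^{i;\e}*_{(i)}\tilde\delta \hat\CR^{\e,\bar\e}\CA^{\e,\bar\e}_i$. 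The task therefore reduces to three things: (a) closeness of the underlying models, (b) closeness of the effective mass terms, and (c) smallness of the extra $\tilde\delta$ remainder.

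For (a), I would show that, in probability and uniformly in $\e\in(0,\bar\e)$,
\begin{equ}
\$\hat Z^\e; \hat Z^{\e,\bar\e}\$^{(\e)}_{\gamma;\K} + \sup_{t\in[-1,2]}\|\Psi^\e(t)-\Psi^{\e,\bar\e}(t)\|_{\CC^{-\kappa}_\e} \xrightarrow[\bar\e\to 0]{} 0.
\end{equ}
The noise difference $\xi^\e-\xi^{\e,\bar\e}$ has second moment bounded by $\bar\e^{2\kappa}$ times a scale factor when tested against rescaled test functions (because $\chi^{\e,\bar\e}$ converges to a delta on $\R\times\obonds_i$ as $\bar\e\downarrow0$, with the same support and size bounds uniformly in $\e\le\bar\e$), and the graphical decomposition carried out in Lemmas~\ref{lem:mom-YM}--\ref{lem:mom-222333} applied to the difference of the two models yields the same covariance structure, now with an extra small factor polynomial in $\bar\e$. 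The corresponding renormalisation constants $\bar C^\e-\bar C^{\e,\bar\e}$, $\hat C^\e_k - \hat C^{\e,\bar\e}_k$ etc.\ converge to zero as $\bar\e\downarrow 0$ uniformly in $\e$ by the same type of kernel estimates as in Lemma~\ref{lem:C_sym-finite}. Moment bounds on the difference of models, together with a Kolmogorov criterion as in Proposition~\ref{prop:Ze_moments}, give the stated convergence.

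For (b), combining the definition \eqref{e:set-final-C} of $C$ with \eqref{eq:unif_diff_lims} gives
\begin{equ}
\bigl|(C - \ad_{C^{\e,\bar\e}_\sym}) - c^\e_\star\bigr| \le \bigl|C - c^0_\star - \ad_{C^{0,\bar\e}_\sym}\bigr| + 2\bar\e^\kappa,
\end{equ}
which tends to zero as $\bar\e\downarrow 0$ uniformly in $\e$ along the chosen subsequence. For (c), the estimates used to prove \eqref{eq:delta_term_vanish} apply verbatim to $\tilde\delta_i$ (which only omits the sub-sum handled via~\eqref{eq:square_cherry_consts}) and show $\|\CL_1 P^{i;\e}*_{(i)}\tilde\delta\hat\CR^{\e,\bar\e}\CV^{\e,\bar\e}_i\|_{\cD^{\gamma,1}_{0,\e}}\lesssim \e^\kappa$ on $[0,\tau(\e,K)]$, hence it is uniformly negligible.

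Putting (a)--(c) together, the local Lipschitz continuity of the solution map of \eqref{e:fix-pt-Be} in the metric~\eqref{eq:metric_model_extra} (plus initial data and the forcing remainder), established in Proposition~\ref{prop:sol-abs}, yields
\begin{equ}
\$\tilde \CB^\e - \tilde \CA^{\e,\bar\e}\$_{\gamma,2\eta+1;T}^{(\e)} \xrightarrow[\bar\e\to0]{} 0 \quad\text{in probability, uniformly in }\e,
\end{equ}
on the stopped interval $[0,\tau(\e,K)]$, where we use that $K$ bounds $|B^\e|_{N;\alpha}$ (and, by the same estimates, $|A^{\e,\bar\e}_C|_{N;\alpha}$ up to a random perturbation which vanishes with $\bar\e$), so we stay in a ball on which the solution map is Lipschitz. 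After reconstruction and the uniform embedding $\CC^{\alpha/2}_\e \hookrightarrow \Omega_{N;\alpha}$ from \eqref{eq:embeddings}, and noting that the SHE and harmonic-extension parts are \emph{identical} in the two problems up to the already-controlled noise discrepancy, this yields the claimed uniform convergence of $B^\e - A^{\e,\bar\e}_C$ to $0$ in $\CC([0,\tau(\e,K)],\Omega_{N;\alpha})$.

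The main technical obstacle I expect is item (a), the \emph{uniform in $\e$} convergence of the renormalised models: one must re-run the chaos-expansion bookkeeping of Section~\ref{subsec:moments_discrete_models} on differences of the two models and verify that each of the delicate cancellations (in particular the parity-based ones and those using Lemma~\ref{lem:identity}) degrade continuously as $\chi^{\e,\bar\e}$ approximates the delta on $\R\times\obonds_i$, with a quantitative gain in $\bar\e$ that does not deteriorate as $\e\downarrow 0$.
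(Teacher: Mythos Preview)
Your overall strategy matches the paper's proof: compare the two fixed-point problems via the Lipschitz dependence of the solution map on the model (in the extended metric \eqref{eq:metric_model_extra}), on the mass constant, and on the extra $\tilde\delta$-remainder, then iterate on the stopped interval using the definition of $\tau(\e,K)$ and the embedding $\CC^{\alpha/2}_\e\hookrightarrow\Omega_{N;\alpha}$. Your points (b) and (c) are correct and essentially identical to the paper's treatment; your point (a) on model convergence is the content of the paper's Lemma~\ref{lem:ZeeZe}.

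There is, however, one genuine gap. In (a) you only assert convergence of $\Psi^\e-\Psi^{\e,\bar\e}$ in $\CC^{-\kappa}_\e$, and later you say ``the SHE and harmonic-extension parts are identical\ldots up to the already-controlled noise discrepancy''. But the final statement is in $\Omega_{N;\alpha}$, and the decomposition $B^\e - A^{\e,\bar\e}_C = (\Psi^\e-\Psi^{\e,\bar\e}) + (\hat\CR\tilde\CB^\e-\hat\CR^{\e,\bar\e}\tilde\CA^{\e,\bar\e}) + (\text{harmonic extension terms})$ requires the SHE difference itself to be small in $\Omega_{N;\alpha}$, not merely in $\CC^{-\kappa}_\e$. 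The $\CC^{-\kappa}_\e$ control only feeds into the Lipschitz estimate for the \emph{remainder} via \eqref{eq:metric_model_extra}; it does not by itself give $\Omega_{N;\alpha}$-control of $\Psi^\e-\Psi^{\e,\bar\e}$, since $\Psi^\e$ is genuinely distributional and the embedding $\CC^{-\kappa}_\e\hookrightarrow\Omega_{N;\alpha}$ fails. The paper handles this with a separate Lemma~\ref{lem:Psie_Psiee}, which uses the dedicated SHE estimates of Section~\ref{sec:SHE} (Proposition~\ref{prop:SHE}) to show $\|\Psi^\e-\Psi^{\e,\bar\e}\|_{\CC([0,R],\Omega_{N;\alpha})}\to 0$ in probability uniformly in $\e\in(0,\bar\e)$. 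You should invoke this explicitly.
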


\begin{proof}
Consider the discrete SHE and regularised discrete SHE
\begin{equ}[eq:Psie_Psiee_def]
\Psi^\e_i=\hat\CR\CP^{i;\e}\bone_+\bXi_i + G^{i;\e} a^\e_i\;,\quad
\Psi^{\e,\bar\e}_i=\hat\CR^{\e,\bar\e}\CP^{i;\e,\bar\e}\bone_+\bXi_i+ G^{i;\e} a^{\e,\bar\e}_i
\end{equ}
with 
initial conditions $a^\e_i$ and $a^{\e,\bar\e}_i$ respectively.
By Lemma~\ref{lem:Psie_Psiee} below,
\begin{equ}
\lim_{\bar\e\to0}\sup_{\e}\P[\|\Psi^\e-\Psi^{\e,\bar\e}\|_{\CC([0,R],\Omega_{N;\alpha})} > c] = 0
\end{equ}
for any $R>0$,
where we also used convergence of initial conditions~\eqref{eq:ae_aee} and that  $e^{t\Delta_\e}$ is a contraction on $\Omega_{N;\alpha}$.
To control the remainders,
by the same argument as the first part of the proof of Proposition~\ref{prop:sol-abs},
\begin{equs}[e:Bee-Be]
\$ \tilde \CA^{\e,\bar\e};  \tilde\CB^\e  \$_{\gamma,2\eta+1;T}^{(\e)} 
&\lesssim
\$ \hat Z^{\e,\bar\e} ; \hat Z^\e  \$_{\gamma;O}^{(\e)} +
\|\Psi^\e-\Psi^{\e,\bar\e}\|_{\CC([-1,2],\CC^{-\kappa}_\e)}
\\
&\quad +
\|a^{\e,\bar\e}-a^\e\|_{\CC^\eta_\e} + O(\bar\e^\kappa)\;,
\end{equs}
where the proportionality constant and $T>0$ are uniform in the size of models, $\Psi^\e,\Psi^{\e,\bar\e}\in \CC([-1,2],\CC^{-\kappa}_\e)$, and initial conditions in $\CC^\eta_\e$.
Here $O=[-1,2]\times \T^2$ and
$\$ \hat Z^{\e,\bar\e} ; \hat Z^\e  \$_{\gamma;O}^{(\e)}$
is the distance between two {\it discrete} models defined in Section~\ref{sec:Models}.
Furthermore $O(\bar\e^\kappa)$ accounts for the difference between $C-\ad_{C^{\e,\bar\e}_{\sym}}$
and $c^\e_\star$, which is $O(\bar\e^\kappa)$ by~\eqref{eq:unif_diff_lims},
and for the extra `$\tilde \delta$ term' which appears in~\eqref{e:fix-pt-Aee}, which is
of order $\e^\kappa$
due to~\eqref{eq:delta_term_vanish}
and the bound $|\hat C^{\e,\bar\e}_{j,\be}|+|\hat C^{\e,\bar\e}_{3,\be,\bw}| \lesssim \e^{-\kappa}$.

By the condition $2\eta+1>\frac\alpha2 (\Leftrightarrow \eta>\frac\alpha4-\frac12)$
and by continuity of the reconstruction operator, it follows that
$\|\hat\CR \tilde\CB^\e - 
\hat\CR^{\e,\bar\e}\tilde\CA^{\e,\bar\e}\|_{\CC([0,T],\CC^{\alpha/2}_\e)}$
is bounded by a multiple of the right-hand side of~\eqref{e:Bee-Be}
uniformly over the same data.

By Lemma~\ref{lem:ZeeZe} below, for any $R>0$, one has 
$\lim_{\bar\e\to 0}\$ \hat Z^{\e,\bar\e} ; \hat Z^\e  \$_{\gamma;[-R,R]\times \T^2}^{(\e)} =0$
in probability uniformly in $\e\in (0,\bar\e)$.
Furthermore, by Proposition~\ref{prop:Ze_moments},
$\$\hat Z^\e\$^{(\e)}_{\gamma;[-R,R]\times\T^2}$ has every moment bounded uniformly in $\e$.

It follows from the embedding $\CC^{\alpha/2}_\e\hookrightarrow\Omega_{N;\alpha}$ by~\eqref{eq:embeddings}
and from the definition of $\tau(\e,K)$,
that we can iterate the bound~\eqref{e:Bee-Be} and the corresponding bound on the reconstructions
in a standard way to see that $B^\e$ and $A^{\e,\bar\e}= \hat\CR^{\e,\bar\e}\CA^{\e,\bar\e}$
are close in $\Omega_{N;\alpha}$ over $[0,\tau(\e,K)]$.
Finally, $A^{\e,\bar\e}=A^{\e,\bar\e}_C$
by Lemma~\ref{lem:Aee_reconstruct}.
\end{proof}
In the above proof, we used the following two lemmas.
\begin{lemma}\label{lem:Psie_Psiee}
Let $\Psi^\e$ and $\Psi^{\e,\bar\e}$ be defined by~\eqref{eq:Psie_Psiee_def} with $a^\e=a^{\e,\bar\e}=0$.
Then, for any $R,c>0$,
%\begin{equ}[eq:Psie_Psiee_conv]
one has
$\lim_{\bar\e\to0}\sup_{\e\in (0,\bar\e)}
\P[\|\Psi^\e-\Psi^{\e,\bar\e}\|_{\CC([0,R],\Omega_{N;\alpha})} > c] = 0$.
%\end{equ}
\end{lemma}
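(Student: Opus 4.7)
The approach mirrors the proof of Proposition~\ref{prop:SHE}, exploiting that the difference $D^{\e,\bar\e} \eqdef \Psi^\e - \Psi^{\e,\bar\e}$ is a centred Gaussian whose covariance inherits a small factor $\bar\e^{2\theta}$ (for some $\theta>0$) from the mollifier cancellation. Since the noise-to-solution map is linear and both $\Psi^\e$ and $\Psi^{\e,\bar\e}$ are driven by the same underlying white noise $\xi^\e$, we have the representation $D^{\e,\bar\e}_i = K^{i;\e} *_{(i)} \eta_i^{\e,\bar\e}$ with $\eta_i^{\e,\bar\e} \eqdef \xi^\e_i - \chi^{\e,\bar\e} *_{(i)} \xi^\e_i$. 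Because $\chi^{\e,\bar\e}$ has total mass $1$ and (up to harmless time localisation) spatial support of diameter $\lesssim \bar\e$, a first-order Taylor expansion yields, for any $\theta\in[0,1]$ and smooth $\phi$, the pointwise bound $|(\delta-\chi^{\e,\bar\e})*\phi|_{L^\infty} \lesssim \bar\e^\theta |\phi|_{\CC^\theta}$. Applied to the discrete heat kernel, this gives a modified kernel $K^{i;\e,\bar\e} \eqdef K^{i;\e} - K^{i;\e}*\chi^{\e,\bar\e}$ with pointwise behaviour essentially that of $K^{i;\e}$ damped by $\bar\e^\theta$ at the cost of an additional $t^{-\theta/2}$ blow-up near $t=0$.

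Second, I would revisit the line-integral calculation underlying Lemma~\ref{lem:SHE_gr}, replacing $C_t(x)$ in~\eqref{eq:C_t_x} by the modified covariance built from $K^{i;\e,\bar\e}$, and similarly for Lemma~\ref{lem:SHE_rho}. The outcome should be second-moment bounds of the schematic form
\begin{equ}
\E|\scal{\delta_\ell, \e D^{\e,\bar\e}(t)}|^2 \lesssim \bar\e^{2\theta}\, h\,\big(h\log(th^{-2}+1)\wedge t^{1/2}\big)\;,
\end{equ}
and an analogous bound involving $\rho(\ell,\bar\ell)$ for parallel lines, uniform in $\e\in(0,\bar\e)$. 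Provided $\theta$ is small enough that the extra $t^{-\theta/2}$ singularity remains integrable in $s$, the homogeneity structure needed for the Gaussian Kolmogorov criterion (as in the discrete analogue of \cite[Lem.~4.11]{CCHS_2D}, used in the proof of Proposition~\ref{prop:SHE}) is preserved, yielding $\E|\e D^{\e,\bar\e}(t)|_{N;\alpha}^p \lesssim \bar\e^{p\kappa}$ for some $\kappa=\kappa(\theta,\alpha)>0$ and every $p\geq 1$, uniformly in $\e\in(0,\bar\e)$ and $t\in[0,R]$.

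Finally, to upgrade the pointwise-in-time estimate to a bound on $\|\e D^{\e,\bar\e}\|_{\CC([0,R],\Omega_{N;\alpha})}$, I would run exactly the time-H\"older argument from the proof of Proposition~\ref{prop:SHE}, combining the semigroup smoothing estimate of Lemma~\ref{lem:hk_flow} with the covariance bound just established (tested against increments $\Psi^\e(t)-\Psi^\e(s)$). Markov's inequality then gives convergence in probability uniformly in $\e$.

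The main obstacle lies in the second step: one must carefully track the interaction between the three length scales $\e \leq \bar\e \leq t^{1/2}$ (and all their reorderings) when estimating the modified covariance against line integrals, since the mollification gain $\bar\e^\theta$ competes with the lattice spacing for $t<\bar\e^2$ and with the heat-kernel spread for $t>\bar\e^2$. A case-by-case analysis analogous to the derivation of~\eqref{eq:C_x_bound}--\eqref{eq:C_0_C_y} should suffice, but keeping the exponents in the scaling bounds coherent is the delicate point.
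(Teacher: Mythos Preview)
Your approach is sound in principle but takes a more laborious route than the paper. You propose to redo the line-integral covariance estimates of Lemmas~\ref{lem:SHE_gr}--\ref{lem:SHE_rho} with the modified kernel $K^{i;\e}-K^{i;\e}*\chi^{\e,\bar\e}$, then feed the resulting $\bar\e^{2\theta}$-weighted second-moment bounds into the Kolmogorov machinery of Proposition~\ref{prop:SHE}. This works, but requires the multi-scale case analysis you flag at the end.

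The paper instead sidesteps all of this by a semigroup trick. It introduces $\Phi^\e = P^{i;\e}*_{(i)}(\bone_{t\geq -1}\xi^\e_i)$, the SHE started at time $-1$, and observes that for $t\geq 0$ one has $\Psi^\e(t)=\Phi^\e(t)-e^{t\Delta_\e}\Phi^\e(0)$ and, crucially, $\Psi^{\e,\bar\e}(t)=\Phi^{\e,\bar\e}(t)-e^{t\Delta_\e}\Phi^{\e,\bar\e}(0)$ where $\Phi^{\e,\bar\e}=\chi^{\e,\bar\e}*\Phi^\e$ is simply a mollification of $\Phi^\e$. Since $e^{t\Delta_\e}$ is a contraction on $\Omega_{N;\alpha}$, this reduces the problem to $\sup_{t\in[0,R]}|\Phi^\e(t)-\Phi^{\e,\bar\e}(t)|_{N;\alpha}\to 0$. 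But $\Phi^\e$ is already H\"older continuous on $[0,R]$ (no blow-up at $t=0$, since it was started at $-1$), so the H\"older-in-time bound from Proposition~\ref{prop:SHE} immediately gives the convergence of mollifications, as in \cite[Cor.~4.4]{CCHS_2D}.

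What the paper's approach buys: it entirely avoids revisiting the covariance computations, turning the lemma into a two-line corollary of the already-established time regularity. What yours buys: it is more self-contained and would yield an explicit rate $\bar\e^\kappa$ in $L^p$ rather than just convergence in probability, at the cost of the tedious scale-tracking you anticipate.
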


\begin{proof}
This can be shown in a way similar to the continuum case~\cite[Cor.~4.14]{CCHS_2D}, but we give a slightly different and shorter proof.
Let $P^{i;\e}\colon [0,\infty)\times\obonds_i \to\R$ be the discrete heat kernel
and let $\Phi^\e_i \eqdef P^{i;\e}*_{(i)}(\bone_{t\geq-1}\xi_i^\e)$ be the SHE with $0$ initial condition started at time $-1$.
Then, since $t\mapsto e^{t\Delta_\e}$ is a semi-group,
$
\Psi^{\e}(t)
= \Phi^\e(t) - e^{t\Delta_\e}\Phi^\e(0)
$
for $t\geq 0$.
Define also $\Phi^{\e,\bar\e} = \moll^{\e,\bar\e}*\Phi^\e$.
Then for $t\geq 0$
\begin{equs}
\Psi^{\e,\bar\e}(t)
&= \int_{-1}^t e^{(t-s)\Delta_\e} (\moll^{\e,\bar\e}*\xi)(s)\mrd s - e^{t\Delta_\e}\int_{-1}^0
e^{-s\Delta_\e} (\moll^{\e,\bar\e}*\xi)(s)\mrd s
\\
&=
\Phi^{\e,\bar\e}(t) - e^{t\Delta_\e}\Phi^{\e,\bar\e}(0)\;,
\end{equs}
where in the first equality we again used that $t\mapsto e^{t\Delta_\e}$
is a semi-group
and in the second equality we used $(\moll^{\e,\bar\e}*\xi)(s) = [\moll^{\e,\bar\e}*(\bone_{t>-1}\xi)](s)$
for $s>-1+\bar\e^2$.
Hence
\begin{equs}
\|\Psi^\e-\Psi^{\e,\bar\e}\|_{\CC([0,R],\Omega_{N;\alpha})}
&\leq 
\sup_{t\in[0,R]}\{|\Phi^\e(t)-\Phi^{\e,\bar\e}(t)|_{N;\alpha}\\
&\qquad\qquad+ |e^{t\Delta_\e}(\Phi^\e(0)-\Phi^{\e,\bar\e}(0))|_{N;\alpha}\}
\\
&\leq \sup_{t\in[0,R]}2|\Phi^\e(t)-\Phi^{\e,\bar\e}(t)|_{N;\alpha}\;,
\end{equs}
where in the second inequality we used that
$e^{t\Delta_\e}$ is a contraction on $\Omega_{N;\alpha}$.
By the H\"older-in-time bound in Proposition~\ref{prop:SHE}
and the same argument as in~\cite[Cor.~4.4]{CCHS_2D},
we obtain
$
\lim_{\bar\e\to0}\sup_{\e\in (0,\bar\e)}
\P[\|\Phi^\e-\Phi^{\e,\bar\e}\|_{\CC([0,R],\Omega_{N;\alpha})} > c] = 0$.
\end{proof}

\begin{lemma}\label{lem:ZeeZe}
There exists $\theta>0$ such that, for all compact $\K\subset \R\times\T^2$,
one has
$ \E \VERT \hat Z^{\e,\bar\e} ; \hat Z^\e  \VERT_{\gamma;\K}^{(\e)}
\lesssim \bar\e^\theta$ uniformly in $\e \in (0,\bar\e)$. 
\end{lemma}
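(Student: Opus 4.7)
My plan is to mirror the proof of Proposition~\ref{prop:Ze_moments} (i.e.\ Lemmas~\ref{lem:mom-YM}--\ref{lem:mom-222333}) while tracking the difference of the two models with a small prefactor $\bar\e^\theta$. By a Kolmogorov-type criterion (as in \cite[Thm.~10.7]{Hairer14} and its discrete analogue in \cite[Thm.~6.1]{HM18}), it suffices to establish, for every $\tau \in \CT^\YM_- \cup \{\Xi\}$ of interest, analogues of \eqref{e:mom-bounds1}--\eqref{e:mom-bounds2} for the \emph{difference}, namely
\[
\E\bigl[\bigl((\hat\Pi^\e_{z_*} - \hat\Pi^{\e,\bar\e}_{z_*})\tau(\phi^\lambda_{z_*})\bigr)^2\bigr] \lesssim \bar\e^{2\theta}\,\lambda^{2(|\tau|+\kappa)},
\]
together with the corresponding small-scale temporal bound and the analogous bounds on $\hat\Gamma^\e - \hat\Gamma^{\e,\bar\e}$, for some $\theta\in(0,\kappa)$.

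The key input is the noise convergence: since $\chi^{\e,\bar\e}$ is an approximation of the identity at scale $\bar\e$, the difference $\xi^\e - \xi^{\e,\bar\e}$ satisfies a pointwise covariance bound
\[
\bigl|\E[(\xi^\e - \xi^{\e,\bar\e})(z_1)(\xi^\e - \xi^{\e,\bar\e})(z_2)]\bigr| \lesssim \bar\e^{2\theta}\,\|z_1-z_2\|_\e^{-|\s|-2\theta}
\]
(obtained by writing the covariance as $\delta_\e - \chi^{\e,\bar\e}*_\e\hat\chi^{\e,\bar\e}$ and exploiting that $\chi^{\e,\bar\e}$ is a mollifier at scale $\bar\e$, see the similar step in the proof of Lemma~\ref{lem:1-2_terms}). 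Combined with the reasoning following \eqref{e:DSingKer} this yields a replacement of $\delta_\e$-type contractions by a mildly singular kernel, thereby absorbing a factor of $\bar\e^{2\theta}$ in each pairing. Composing with Lemma~\ref{lem:OpDSingKer} gives the desired base bound.

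I would then proceed tree-by-tree by induction on the height of $\tau \in \mfT^\YM_-$, expanding $\hat\Pi^\e - \hat\Pi^{\e,\bar\e}$ via a telescoping of the Wiener chaos decomposition: writing $\CW^\e\tau - \CW^{\e,\bar\e}\tau$ as a sum of terms in which exactly one noise factor (or one renormalisation constant) is replaced by the corresponding difference. Each such term inherits a factor of $\bar\e^{2\theta}$ from the noise-difference bound above (or from the constant-difference bound below), while the remaining factors are controlled by the uniform bounds already established in Lemmas~\ref{lem:mom-YM}--\ref{lem:mom-222333} applied to either $\hat\Pi^\e$ or $\hat\Pi^{\e,\bar\e}$. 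The operations of multiplication by kernels, convolution, and application of $\mathscr R$ in Lemma~\ref{lem:OpDSingKer} are all continuous in the kernels involved, so the small factor $\bar\e^{2\theta}$ propagates through the inductive step. The bounds on $\hat\Gamma^\e - \hat\Gamma^{\e,\bar\e}$ follow from the bounds on $\hat\Pi^\e - \hat\Pi^{\e,\bar\e}$ via the usual identity expressing $\Gamma$ in terms of negative-order projections of $\Pi$.

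The main technical obstacle is the handling of the renormalised trees, where one must show that the differences of the corresponding renormalisation constants in \eqref{e:C-e-bare} between the two models are themselves $O(\bar\e^\theta)$. By Notation~\ref{not:model_restr} we only need to consider the constants that act on $\CT^\YM$, all of which are $O(1)$ by Lemma~\ref{lem:C_sym-finite} and an analogous bound for the $\e,\bar\e$ version. For each such constant, e.g.\ $\bar C^\e_{\be\be'} - \bar C^{\e,\bar\e}_{\be\be'}$, one expands the difference of the underlying covariances of $\Psi^\e$ and $\Psi^{\e,\bar\e} = K^\e * \xi^{\e,\bar\e}$; the difference of the two-point functions is controlled by the noise-difference bound above and the convergence proceeds by the same argument as in the proof of Lemma~\ref{lem:C_sym-finite}, where the cancellations via Lemma~\ref{lem:identity} leave a manifestly $O(\bar\e^\theta)$ remainder. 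The corresponding inductive step for the trees $\<I[I'Xi]I'Xi_notriangle>$, $\<IXiI'[I'Xi]_notriangle>$, and $\<IXi^3_notriangle>$ (for which cancellation of the divergent contributions of $\hat\Pi^\e$ and $\hat\Pi^{\e,\bar\e}$ is essential) then proceeds exactly as in the proof of Lemma~\ref{lem:mom-YM} with each renormalised contribution replaced by its difference, which carries the desired prefactor $\bar\e^\theta$. Combining these ingredients yields the claimed uniform bound.
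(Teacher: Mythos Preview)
Your strategy matches the paper's: reduce via a Kolmogorov criterion to moment bounds of the form \eqref{e:mom-bounds} for the difference $\hat\Pi^\e-\hat\Pi^{\e,\bar\e}$, then proceed tree-by-tree as in Lemmas~\ref{lem:mom-YM}--\ref{lem:mom-222333}, telescoping so that exactly one factor carries a prefactor $\bar\e^\theta$. The one imprecision is your stated key input: the pointwise bound on the noise-difference covariance cannot hold as written, since $\E[(\xi^\e-\xi^{\e,\bar\e})(z_1)(\xi^\e-\xi^{\e,\bar\e})(z_2)]$ still contains the $\delta_\e$ contribution (a genuine Dirac in time), and your formula for it is also missing the cross-terms $-\chi^{\e,\bar\e}-\hat\chi^{\e,\bar\e}$. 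The paper instead transfers the difference to the \emph{kernel} side: since every noise in a nontrivial tree of $\CT^\YM$ is integrated against $K^\e$, one bounds $\VERT K^\e-K^\e\ast_\e\chi^{\e,\bar\e}\VERT_{-2-\kappa;m}\lesssim\bar\e^\theta\VERT K^\e\VERT_{-2;m+2}$ via \cite[Lem.~7.5]{HM18}, which replaces one kernel in each telescoped term by a slightly more singular one carrying the factor $\bar\e^\theta$; the tree $\Xi$ with no integration is handled separately (cf.\ \cite[Prop.~4.1]{EH21}). With this adjustment your argument goes through, and your observation that the renormalisation-constant differences are $O(\bar\e^\theta)$ is correct and needed, though the paper leaves it implicit.
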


\begin{proof}
As in the proof of Proposition~\ref{prop:Ze_moments}, it suffices to show 
%that for every $\tau\in \CT^{(i)}_-\cap\CT^\YM$ and $z_*=(t_*,x_*)\in\R\times\obonds_i$,
%\begin{equs}[e:mom-diff-bounds]
%\E[(\hat\Pi_{z_*}^\e  - \hat\Pi_{z_*}^{\e,\bar\e}) (\tau)(\phi_{z_*}^{\lambda})^2]
%\lesssim 
%\bar\e^{2\theta}
%\lambda^{2(|\tau|+\kappa-\theta)}\;,
%\\
%\E\Big[\Big(\int (\hat\Pi_{z_*}^\e   - \hat\Pi_{z_*}^{\e,\bar\e})   (\tau)(s,x)(\CS_{2,t_*}^{\lambda}\phi)(s)\mrd s\Big)^2\Big]
%\lesssim 
%\bar\e^{2\theta}
%\lambda^{2(|\tau|+\kappa-\theta)}\;,
%\end{equs}
%uniformly over $\e > 0$ and $\bar\e\in (\e,1)$. 
%The first bound is  uniform in $\lambda\in (\e,1]$ 
%and all space-time test functions $\phi$ as in~\eqref{eq:Pi},
%and the second bound is uniform in $\lambda\in (0,\e]$ and all temporal  functions $\phi$ as in~\eqref{eq:seminorm}.
\eqref{e:mom-bounds} but with bounds $\bar\e^{2\theta}\lambda^{2(|\tau|+\kappa-\theta)}$ on the right-hand sides,
and with $\hat\Pi_{z_*}^\e$ replaced by $\hat\Pi_{z_*}^\e  - \hat\Pi_{z_*}^{\e,\bar\e}$.
To prove this,
%To prove~\eqref{e:mom-diff-bounds}, 
by \cite[Lemma~7.5]{HM18} one can bound
$\VERT K^\e - K^\e \ast_\eps \psi^{\e,\bar\e} \VERT_{-2 - \kappa; m}$
 by $\bar{\eps}^\theta \VERT K^\e \VERT_{-2; m + 2}$
uniformly in $\e\le \bar\e$.
The proof then follows in exactly the same way as the proof of  \eqref{e:mom-bounds},
except that we can now extract a small positive power of $\bar{\eps}$ for each %element of our regularity structure 
$\tau\in \CT^{(i)}_-\cap\CT^\YM$
with at least one integration map.
The bounds for
$\<Xi>$
follow easily (e.g. by similar argument as \cite[Prop.~4.1]{EH21}).
\end{proof}

\begin{proof}[of Theorem~\ref{thm:some_C}]
Let $c>0$, $K_{\bar\e}$ as in Lemma~\ref{lem:1-2_terms}, and $a^{\e,\bar\e}=a^\e$. Note that~\eqref{eq:aee_a} holds due to $\eta<\alpha-1$ and Lemma~\ref{lem:A_pi_N_A}.
By~\eqref{eq:AC_A0eC}, for $\bar\e$ small,
\begin{equ}[eq:AC_A0eC_c]
\P[\|A_C - A^{0,\bar \e}_C\|_{\CC([0,\tau(\bar\e)],\Omega^1_\alpha)} > c ]<c
\end{equ}
for $\tau(\bar\e)$ as in Lemma~\ref{lem:1-2_terms}.
Furthermore, by Lemma~\ref{lem:3_term},
after possibly decreasing $K_{\bar\e}$ but such that still $\lim_{\bar\e\downarrow0}K_{\bar\e}=\infty$,
we have
\begin{equ}[eq:B_AC_c]
\sup_\e \P[\|B^\e - A^{\e,\bar\e}_C\|_{\CC([0,\tau(\e,K_{\bar\e})],\Omega_{N;\alpha})} > c] < c
\end{equ}
for $\tau(\e,K_{\bar\e})$ and $\sup_\e$ are as in Lemma~\ref{lem:3_term}.
Next, by~\eqref{eq:AeeC_A0eC} and the embedding
$\CC^{\alpha/2}_\e\hookrightarrow\Omega_{N;\alpha}$,
we have for $\e$ sufficiently small
\begin{equ}[eq:AeeC_A0eC_c]
\P\big[
\|  A^{\e,\bar\e}_C - \pi_N A^{0,\bar\e}_C\|_{\CC^{\tau(\bar\e)}_{\eta-\alpha/2}(\Omega_{N;\alpha})}
>c \big] < c\;.
\end{equ}
Finally, by Lemma~\ref{lem:4th_term}, again for $\e$ sufficiently small,
\begin{equ}[eq:B_AN_c]
\P\big[\|B^\e - A^{(N)}\|_{\CC([0,\tau(\e,K)],\Omega_{N;\alpha})}  > c \big] < c\;.
\end{equ}
%It follows that
%\begin{equ}
%\P[\|A^{(N)} - \pi_N A_C\|_{\CC([0,\tau(\bar\e) \wedge \tau(\e,K_{\bar\e})],\Omega_{N;\alpha})} > 4c] < 4c\;.
%\end{equ}
Finally, since~\eqref{eq:AC_A0eC_c},~\eqref{eq:B_AC_c},~\eqref{eq:AeeC_A0eC_c},~\eqref{eq:B_AN_c} control the differences between all four terms,
by choosing $\bar\e \downarrow0$ and $\e=2^{-N}\downarrow0$  as above,
it follows that there exists an increasing sequence $M_N$, with $\lim_{N\to\infty}M_N=\infty$,
such that, for all $c>0$,
\begin{equ}[eq:T_N_conv]
\lim_{N\to\infty}\P[\|A^{(N)} - \pi_N A_C\|_{\CC([0,T_N],\Omega_{N;\alpha})} > c] =0\;,
\end{equ}
where $T_N\eqdef M_N\wedge \inf\{t>0\,:\, |A_C(t)|_\alpha > M_N\}$.
Clearly $T_N$ is increasing in $N$ and $\lim_{N\to\infty} T_N = T^*$, the blow-up time of $A_C$.

It remains to show that $C\in L_G(\mfg^2,\mfg^2)$.
This follows by tracking all the contributions in the definition of $C$.
Indeed, by \eqref{e:set-final-C},
$C = c^0_\star + \lim_{\bar\e\to 0}\ad_{C^{0,\bar\e}_{\sym}}$
where
$c^0_\star = \lim_{\e\to0} c^\eps_\star$
and
$C^{0,\bar\e}_{\sym} = \lim_{\e\to 0}  C^{\e,\bar\e}_{\sym}$.
Recall that 
$C^{\e,\bar\e}_{\sym}$ is a linear combination of the renormalisation operators in 
\eqref{e:C-e-bare}, which are 
defined as in 
Section~\ref{sec:renorm_constants} with $\xi^\e$ replaced by $\xi^{\e,\bar\e}$ and
 $\Psi$ replaced by $K^\e * \xi^{\e,\bar\e}$;
they are all linear multiples of the quadratic Casimir $\Cas \in \mfg^{\otimes 2}$, and $\ad_{\Cas}$ commutes with the adjoint action of $G$, see e.g. \cite[Rem.~6.8]{CCHS_2D}.

It remains to consider $c^0_\star$.
Recall that, by \eqref{e:specify-c_star},
$c^\e_\star = - \ad_{C^\e_{\sym}} - \ad_{C^\e_{\rem}} + \tilde{c}_\star^\e $.
Here, the operators $C^\e_{\sym}$ and  $C_{\rem}^\e$ 
 are defined in  \eqref{e:def-CSYM} and \eqref{e:def-CREM}, all the terms in these definitions being
again multiples of the quadratic Casimir.

It only remains to consider $\tilde{c}_\star^\e $, which by 
Definition~\ref{def:tildec}
is the sum  
of the maps $c^\e$
in Lemmas
\ref{lem:I3} and
\ref{lem:grad-R-hat}. 
These maps are defined in terms of $c_{13},c_{23}$ appearing in Lemma~\ref{lem:Psi_Wick}, which, following its proof, are all of the form of the expectations in Lemma~\ref{lem:ePsiDPsi},
so $c_{13},c_{23}$ are multiples of the quadratic Casimir.
It now follows that each component $c^{\eps,(i)}_j$ of $c^\eps\in L(\mfg^2,\mfg^2)$ in Lemmas \ref{lem:I3} and
\ref{lem:grad-R-hat} are of the form
\[
c^{\eps,(i)}_j(X) = T(e_i\otimes e_i \otimes X)
\]
where $\Cas = e_i\otimes e_i$ for an orthonormal basis $e_i$ of $\mfg$ and implcit summation over $i$, and where $T\in L(\mfg^{\otimes 3})\to\mfg$ satisfies
\[
\Ad_g T(X_1\otimes X_2\otimes X_3) = T(\Ad_g X_1\otimes \Ad_g X_2\otimes \Ad_g X_3)\;.
\]
To see this, recall that $\Cas$ is $\Ad_G$ invariant.
Then the claimed form of $c^\eps$ follows for Lemma \ref{lem:I3} from the fact that $I_3$ is in $\CI_3$ and thus $c^\eps$ is a multiple of $[e_i,[e_i,X]]$,
while for Lemma \ref{lem:grad-R-hat} from the $\Ad_G$ covariance of $E^{(2)}_\e$ in Assumption \ref{assump:R} which implies that $\Ad_g E^{(2)}(X_1\otimes X_2\otimes X_3)^* = E^{(2)}(\Ad_g X_1\otimes \Ad_g X_2\otimes \Ad_g X_3)^*$.
(Remark that the map in 
\eqref{e:def-C-four} is an element of $\mfg^{\otimes 4}$ but 
by the argument below \eqref{eq:F_ren_eq} it does not contribute
to the renormalisation operator $C$.)
\end{proof}

\section{Identification of the limit}
\label{sec:gauge-covar}

In this section, we prove Theorem~\ref{thm:discrete_dynamics}.
We have shown in Theorem~\ref{thm:some_C} that the discrete dynamics converge
(along subsequences) to the continuum dynamic introduced in~\cite{CCHS_2D} with \textit{some} mass renormalisation $CA$ with $C\in L_G(\mfg^2,\mfg^2)$.
It remains to show that, if $\moll$ is non-anticipative,
then $C=\bar C$ for $\bar C\in L_G(\mfg,\mfg)$ from the gauge covariance theorems~\cite[Theorems~2.9,~2.13]{CCHS_2D}.
A potential way to show this is to compute all the finite mass shifts from the error terms,
i.e. find the $O(1)$ constants in the lemmas of Sections~\ref{subsec:Mass renormalisation_1},~\ref{sec:renorm_constants}, and~\ref{sec:diagonal_argument}.
However, this would require numerous computations that seem challenging by hand.

Our strategy instead is to show that $\bar C$ is the \textit{unique} element in $L_G(\mfg^2,\mfg^2)$ that makes the continuum dynamic gauge covariant,
which in addition answers a question left open in~\cite{CCHS_2D}, see Remark~\ref{rem:uniqueness_compare}.
% (in a sense much stronger than the uniqueness of $\bar C$ in~\cite[Thm~2.9]{CCHS_2D}).
In fact, we prove a quantitative version of this statement in Theorem~\ref{thm:A_tilde_A}.
This uniqueness of $\bar C$ allows us to show $\bar C = C$ since the lattice dynamic similarly preserves gauge covariance in the limit.
%We give the proof of Theorem~\ref{thm:discrete_dynamics} in Section~\ref{subsec:proof_C=bar_C}.
We believe this strategy also has a higher chance of generalising to 3D for which a direct computation of the mass shifts seems even more challenging.

With the exception of Section~\ref{subsec:proof_C=bar_C}, where we prove Theorem~\ref{thm:discrete_dynamics},
the results of this section are independent of those of Sections~\ref{sec:lattice_dynamics} - \ref{sec:diagonal_argument}.

\subsection{Quantitative gauge-covariance}

We fix throughout this section a non-anticipative mollifier $\moll$ and let $\bar C \in L_G(\mfg,\mfg)$ be the unique element from~\cite[Thm.~2.9(i)]{CCHS_2D}.
%
%\begin{remark}
%If $\mfg$ is simple, then $\bar C = \lim_{\eps\downarrow0} \lambda \int \moll^\eps (z)(K*K^\eps)(z)\mrd z$ where $\lambda\in\R$ is the Casimir in the adjoint representation.
%Equivalently, one can define $\check C$ like in~\cite[Eq.~(2.13)]{Chevyrev22YM}
%and consider the equation for $A$ with bare mass $\check C$, so that $\bar C = \check C+ \lim_{\eps\downarrow0} C^\eps_{{\tiny\mathrm{SYM}}}$.
%\end{remark}
%
Fix $\alpha\in(\frac12,1)$, a white noise $\xi$, and for $C\in L(\mfg^2,\mfg^2)$ and $a\in\Omega^1_{\alpha}$, recall the process $\{\SYM_t(C,a)\}$ from Section~\ref{subsec:main_results}.
The following is a detailed version of Theorem~\ref{thm:C_unique}.

\begin{theorem}\label{thm:A_tilde_A}
Let $C\in L_G(\mfg^2,\mfg^2)$ with $\bar C \neq C$.
There exists a loop $\ell\in\CC^\infty(S^1,\T^2)$ with the following property.
There exist $K,t^*,\sigma>0$, depending only on $\moll, C, G$,
such that, for all $t\in (0,t_*)$
there exists $\tilde g\in\CC^\infty(\T^2,G)$ with $|\tilde g|_{\CC^3}<K$ such that
$
|\E W_\ell(\SYM_t(C,0)) - \E W_\ell(\SYM_t(C,0^{\tilde g}))| \geq \sigma t^2
$.
\end{theorem}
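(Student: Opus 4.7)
The plan is to derive a short-time Euler-type expansion of $\E W_\ell(\SYM_t(C,a))$, accurate to order $t^{2+\theta}$, and then exploit gauge-covariance of $\SYM(\bar C,\cdot)$ to isolate the leading-order obstruction at scale $t^2$ when $C\neq \bar C$. The key observation is that the gauge-covariance identity~\eqref{eq:best_bound_bar_C} for $\bar C$ says the difference $\E W_\ell(\SYM_t(\bar C,0))-\E W_\ell(\SYM_t(\bar C,0^{\tilde g}))$ is $O(t^M)$ for every $M$, so any persistent discrepancy at order $t^2$ must arise from the difference $D\eqdef C-\bar C$ in the mass renormalisation, and the theorem is reduced to a quantitative nondegeneracy statement for a linear functional of $D$.

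Step 1. I would first establish a Duhamel perturbation identity of the form
\begin{equ}
\SYM_t(C,a) \;=\; \SYM_t(\bar C,a) \;+\; \int_0^t e^{(t-u)\Delta}\,D\cdot \SYM_u(\bar C,a)\,\mrd u \;+\; \mathcal{E}_t(a),
\end{equ}
treating the extra linear term $D\cdot A$ as a perturbation of the $\bar C$-dynamic. The remainder $\mathcal{E}_t(a)$ comes from iterating both the $D$-perturbation and the quadratic/cubic nonlinearity; one expects $\mathcal{E}_t(a)=O(t^{2+\theta})$ in $(\Omega_\alpha^1)^{\sol}$ by the Euler-type estimates for the renormalised singular SPDE referenced in the abstract and closely parallel to the perturbative analysis of Sections~\ref{sec:Aeps}--\ref{sec:diagonal_argument}. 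Step 2. Next I would expand the Wilson loop functional using Young/rough-path theory: since the correction term in Step~1 is smoother than $\SYM_t(\bar C,a)$ itself, the derivative $dW_\ell(A)[B]$ is well-defined as a holonomy-weighted line integral of $B$ along $\ell$, giving
\begin{equ}
\E W_\ell(\SYM_t(C,a)) \;=\; \E W_\ell(\SYM_t(\bar C,a)) \;+\; \E\, dW_\ell(\SYM_t(\bar C,a))[B_t^D(a)] \;+\; O(t^{2+\theta}),
\end{equ}
where $B_t^D(a)\eqdef \int_0^t e^{(t-u)\Delta}D\cdot \SYM_u(\bar C,a)\,\mrd u$. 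Step 3. Combining this with~\eqref{eq:best_bound_bar_C} for $a=0$ and $a=0^{\tilde g}$ collapses the quantity in~\eqref{eq:W_diff_lower} (up to $O(t^{2+\theta})$) to the purely $D$-driven difference
\begin{equ}
\E\, dW_\ell(\SYM_t(\bar C,0))[B_t^D(0)] \;-\; \E\, dW_\ell(\SYM_t(\bar C,0^{\tilde g}))[B_t^D(0^{\tilde g})].
\end{equ}

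Step 4. I would then compute the $t\to 0$ leading order. Taking $\tilde g=\exp(\phi)$ for a fixed smooth $\phi\in\CC^3(\T^2,\mfg)$ (which bounds $|\tilde g|_{\CC^3}$ by a $t$-independent constant $K$), the pure-gauge initial data $0^{\tilde g}=-(d\tilde g)\tilde g^{-1}$ is deterministic, and for small $t$ the short-time asymptotics give $\SYM_t(\bar C,0^{\tilde g})\approx \tilde g\Psi_t\tilde g^{-1}+e^{t\Delta}0^{\tilde g}$ with $\Psi_t$ the 2D SHE. The stochastic contributions to the two expectations cancel because $\Psi_t$ and its gauge conjugate agree in law and $W_\ell$ is gauge-invariant on closed loops, leaving an explicit deterministic leading term
\begin{equ}
t^2\cdot Q(D,\phi,\ell) \;+\; o(t^2),
\end{equ}
where $Q$ is multilinear in its arguments, involving integrals of $D\cdot d\phi$ and their Lie-brackets weighted by heat-kernel kernels along $\ell$. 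Two powers of $t$ emerge because the pure-gauge contribution $0^{\tilde g}$ alone gives a trivial Wilson loop at $t=0$ (closed-loop constraint annihilates $\int_\ell d\phi$), so one must go to second order in the Duhamel expansion to detect $D$.

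The main obstacle, and what I expect to be the crux of the proof, is Step 5: the \emph{nondegeneracy} claim that for any $D\neq 0$ one can choose a smooth loop $\ell$ and a function $\phi$ with $|Q(D,\phi,\ell)|\geq\sigma_0$ for some $\sigma_0>0$ depending only on $D$. This is where ideas from sub-Riemannian geometry enter: the linear span of the functionals $(\phi,D)\mapsto Q(D,\phi,\ell)$ as $\ell$ ranges over small rectangular (or lassoed) loops must be rich enough to separate arbitrary directions in $L(\mfg^2,\mfg^2)$. Concretely, I would first choose $\ell$ to be a small rectangular loop aligned with coordinate axes and $\phi$ supported near $\ell$ and aligned with a generator of $\mfg$ on which $D$ acts non-trivially; the non-commutativity of holonomy then guarantees that $Q(D,\phi,\ell)$ does not identically vanish. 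Passing from non-vanishing to a uniform lower bound $\sigma_0$ requires an explicit normalisation and rescaling in $\phi$, keeping $|\tilde g|_{\CC^3}\leq K$ controlled, which is the delicate part of the construction. In contrast, Steps 1--4 amount to careful bookkeeping with the regularity-structures solution theory of~\cite{CCHS_2D}, the rough-path expansion of Wilson loops, and the gauge-covariance identity~\eqref{eq:best_bound_bar_C}.
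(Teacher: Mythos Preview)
Your strategy—Duhamel-perturb around $\SYM(\bar C,\cdot)$ and invoke~\eqref{eq:best_bound_bar_C}—differs from the paper's and has genuine gaps. The paper does not compare $\SYM(C,a)$ to $\SYM(\bar C,a)$; instead it uses the coupling of~\cite[Thm.~2.9]{CCHS_2D} to show $\SYM(C,0^{\tilde g})$ equals \emph{in law} a process $\tilde A$ solving the $C$-equation with the \emph{same} noise as $A=\SYM(C,0)$ plus the extra drift $c\,\mrd g\,g^{-1}$, $c=C-\bar C$ (Proposition~\ref{prop:A_tilde_A}). One then Euler-expands $A,\tilde A$ pathwise around their common stochastic part (Lemma~\ref{lem:a_tilde_a_diff}) and expands the holonomy in the smooth difference $\tilde A-A$ (Lemma~\ref{lem:path_perturb}). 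This sidesteps your Step~4: in your setup $dW_\ell(\SYM_t(\bar C,0^{\tilde g}))$ and $B_t^D(0^{\tilde g})$ are only law-coupled to their $a=0$ counterparts, and since $D$ is not gauge-covariant the claimed ``stochastic cancellation'' is unjustified. Your Step~2 is also incomplete: the first-order remainder is $|B_t^D|^2=O(t^{2-})$, not $O(t^{2+\theta})$, so the quadratic Wilson-loop term must be retained.

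The decisive gap is Step~5. The actual expansion (Lemma~\ref{lem:restricted_hol}) reads $t\Trace c_1\gamma(1)+\E[\bar L_t](h(0))+\tfrac{t^2}{2}\Trace[(c_1\gamma(1))^2]+O(t^{2+})$, where $\bar L_t$ is linear in $h(0)$ but depends on the noise in a way one cannot compute or bound away from the quadratic term. (Your reason for $t^2$—the closed-loop constraint—is wrong: the order-$t$ term vanishes in the semi-simple case because $\Trace|_\mfg=0$, and otherwise survives.) The paper handles the uncontrolled $\E[\bar L_t](h(0))$ by a scaling argument: via Chow--Rashevskii (Lemma~\ref{lem:curve_selection}) one constructs \emph{two} gauge transformations $u,\tilde u$ with $\tilde h=4h$, both giving smooth periodic $g(0)$; for each small $t$, whichever of $g(0)\in\{u,\tilde u\}$ prevents cancellation between the linear and quadratic terms is chosen. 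This is why $\tilde g$ may depend on $t$, and why sub-Riemannian geometry enters in constructing $g(0)$ for a \emph{fixed} loop $\ell(x)=(x,0)$—not in varying $\ell$ as you suggest.
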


\begin{remark}
The function $\tilde g$ in Theorem~\ref{thm:A_tilde_A} in general depends on $t\in (0,t_*)$. The point of $K$ is to show that the size of $\tilde g$ is bounded uniformly in $t$.
\end{remark}
Before proceeding, we state a corollary of Theorem~\ref{thm:A_tilde_A} which shows a very strong form of uniqueness for the mass renormalisation $\bar C$ for SYM in~\cite[Thm.~2.13]{CCHS_2D}.
For $C\in L(\mfg^2,\mfg^2)$ and $a\in\Omega^1_{\alpha}$, we say that a probability measure $\mu$ on functions $\R_+ \to \hat\Omega^1_{\alpha}$
is \emph{generative with mass $C$ and initial condition $a$}
if it satisfies~\cite[Def.~2.11]{CCHS_2D} with the given $C$ in item 3. therein.

\begin{corollary}\label{cor:unique_C}
For any $\bar C\neq C\in L_G(\mfg^2,\mfg^2)$, 
there exists $\tilde g\in \CC^\infty(\T^2,G)$ such that for any generative probability measures $\mu,\tilde\mu$
with mass $C$ and
initial conditions $0, 0^{\tilde g}$ respectively,
the pushforward measures $\pi_* \mu,\pi_*\tilde \mu$ on $\CC(\R_+,\hat\mfO_{\alpha})$
are not equal.
\end{corollary}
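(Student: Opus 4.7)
The plan is to derive the corollary directly from Theorem~\ref{thm:A_tilde_A} using the gauge-invariance of Wilson loops.

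First, for any smooth closed loop $\ell\in\CC^\infty(S^1,\T^2)$, the Wilson loop $W_\ell\colon\hat\Omega^1_{\alpha}\to\C$ is gauge-invariant: for any $g\in\mfG^{0,\alpha}$ one has $\hol(\omega^g,\ell)=g(\ell(0))^{-1}\hol(\omega,\ell)g(\ell(0))$, and the trace is conjugation-invariant, whence $W_\ell(\omega^g)=W_\ell(\omega)$. Thus $W_\ell$ descends to a bounded Borel function on $\hat\mfO_{\alpha}$. In particular, for any two probability measures $\nu_1,\nu_2$ on $\CC(\R_+,\hat\Omega^1_{\alpha})$ with $\pi_*\nu_1=\pi_*\nu_2$, one has $\E_{\nu_1}W_\ell(A_t)=\E_{\nu_2}W_\ell(A_t)$ for every $t\ge0$.

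Second, I will invoke the fact that for any generative probability measure $\nu$ in the sense of~\cite[Def.~2.11]{CCHS_2D} with mass $C$ and initial condition $a\in\Omega^1_{\alpha}$,
\[
\E_\nu W_\ell(A_t)=\E W_\ell(\SYM_t(C,a))\qquad\text{for every }t\ge 0.
\]
This should be built into the definition of generative measure: such $\nu$ arise as weak limits of mollified SPDE approximations with the prescribed mass $C$, the Wilson loop is continuous on $\Omega^1_{\alpha}$ with the convention $W_\ell(\skull)=0$ at blow-up, and $\SYM(C,a)$ (defined via our fixed non-anticipative mollifier $\moll$) is itself one such generative measure. The gauge-invariant Wilson loop one-time marginal is therefore pinned down by the pair $(C,a)$ and does not depend on the choice of generative representative.

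With these two ingredients in hand, let $\ell,\sigma,t^*,K$ be as in Theorem~\ref{thm:A_tilde_A}, fix any $t_0\in(0,t^*)$, and let $\tilde g\in\CC^\infty(\T^2,G)$ be the associated gauge transformation, so that
\[
|\E W_\ell(\SYM_{t_0}(C,0))-\E W_\ell(\SYM_{t_0}(C,0^{\tilde g}))|\ge\sigma\,t_0^2>0.
\]
Suppose $\mu,\tilde\mu$ are any generative measures with mass $C$ and initial conditions $0$ and $0^{\tilde g}$ respectively. If $\pi_*\mu=\pi_*\tilde\mu$, then by the first paragraph $\E_\mu W_\ell(A_{t_0})=\E_{\tilde\mu}W_\ell(A_{t_0})$, while by the second paragraph the two sides equal $\E W_\ell(\SYM_{t_0}(C,0))$ and $\E W_\ell(\SYM_{t_0}(C,0^{\tilde g}))$ respectively, contradicting the displayed strict inequality. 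Hence $\pi_*\mu\neq\pi_*\tilde\mu$.

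The main obstacle will be verifying the identity in the second paragraph, i.e., that the Wilson loop one-time marginals of any generative measure coincide with those of $\SYM_t(C,a)$. This reduces to unpacking~\cite[Def.~2.11]{CCHS_2D} and checking that the defining approximation procedure produces the same gauge-invariant statistics regardless of the specific mollifier used. I expect this to follow from the gauge-invariance of Wilson loops under the time-dependent gauge transformations relating different mollifications, in the spirit of the coupling arguments of~\cite[Sec.~7]{CCHS_2D} referenced in Remark~\ref{rem:CCHS22_compare}. Once this input is in place the remainder of the argument is a direct reduction to Theorem~\ref{thm:A_tilde_A}.
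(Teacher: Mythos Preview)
Your overall strategy is right, but the identity you claim in the second paragraph,
\[
\E_\nu W_\ell(A_t)=\E W_\ell(\SYM_t(C,a))\quad\text{for every }t\ge 0,
\]
is false in general, and your proposed justification (mollifier independence, coupling of different regularisations) is not the relevant issue. A generative measure in the sense of \cite[Def.~2.11]{CCHS_2D} is \emph{not} the law of $\SYM(C,a)$: its associated adapted process $A$ is obtained by running $\SYM(C,\cdot)$ and \emph{restarting} at a sequence of stopping times from gauge-equivalent initial conditions, precisely so that $A$ may survive past the blow-up time of $\SYM(C,a)$. Hence $A(t)$ and $\SYM_t(C,a)$ agree only until the first restart, and there is no reason for the Wilson loop expectations to coincide exactly thereafter.

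What the paper does instead is observe that the identity holds \emph{approximately}: the first restart can occur only once $\SYM(C,a)$ approaches blow-up, and the blow-up time $\tau$ satisfies $\E\tau^{-p}<\infty$ for all $p\ge1$ (this follows from the local well-posedness estimate~\eqref{eq:tau_inverse_bound}). By Markov's inequality, $\P[\tau<t]=O(t^M)$ for any $M>0$, so
\[
\big|\E_\nu W_\ell(A_t)-\E W_\ell(\SYM_t(C,a))\big|=O(t^M).
\]
Choosing $M>2$, this error is dominated by the lower bound $\sigma t^2$ from Theorem~\ref{thm:A_tilde_A}, and the contradiction goes through exactly as in your third paragraph. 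This is also why the quantitative form of Theorem~\ref{thm:A_tilde_A} (with exponent $t^2$ rather than merely a nonzero difference) is essential: you need to beat the $O(t^M)$ discrepancy between the generative process and $\SYM$.
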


\begin{proof}
Let notation be as in Theorem~\ref{thm:A_tilde_A}.
There exists a random variable $\tau>0$ such that $\E\tau^{-p}<\infty$ for all $p\geq 1$
and
such that $\SYM_t(C,0)\neq \skull$ and $\SYM_t(C,0^{\tilde g}) \neq\skull$
for all $t\in (0,\tau)$ and all $|\tilde g|_{\CC^3} < K$ (see, e.g.~\eqref{eq:tau_inverse_bound} below).
By Markov's inequality,
$
\P[\SYM_t(C,0)=\skull] + \P[\SYM_t(C,0^{\tilde g})=\skull] \lesssim t^{\eta}
$
for any $\eta>0$.
Taking $\eta>2$, Theorem~\ref{thm:A_tilde_A} implies that for all $t$ sufficiently small, there exists $|\tilde g|_{\CC^3} < K$ such that
$
|\E W_\ell(A(t)) - \E W_\ell(\tilde A(t))| \gtrsim t^2
$,
where $A,\tilde A$ are the adapted processes as in~\cite[Def.~2.11]{CCHS_2D} associated to $\mu,\tilde \mu$ respectively.
Since $A\sim \mu$ and $\tilde A\sim\tilde \mu$ by definition, and since $W_\ell$ is a gauge-invariant function on $\hat\Omega^1_{\alpha}$, the conclusion follows.
\end{proof}

\begin{remark}
\label{rem:uniqueness_compare}
%To compare this with the uniqueness statement of~\cite{CCHS_2D},
It was shown in~\cite[Thm.~2.9]{CCHS_2D}
that $\bar C$ is the unique mass renormalisation that renders the dynamic gauge covariant under a particular coupling.
Whether other choices of $\bar C$ lead to gauge covariant dynamics (e.g. via different couplings) was left as an open question that Theorem~\ref{thm:A_tilde_A} and Corollary~\ref{cor:unique_C} answer.
\end{remark}
To prove Theorem~\ref{thm:A_tilde_A}, we reduce it to a simpler and more general statement (Proposition~\ref{prop:A_tilde_A}).
Fix for the rest of the section $C\in L(\mfg^2,\mfg^2)$.
Consider further $a\sim \bar a \in \Omega^1_{\alpha}$ with $\bar a = a^{g(0)}$ for some $g(0)\in \CC^\infty(\T^2,G)$.
Recall the heuristic notation \eqref{eq:SYM_heuristic} and that, by~\cite[Sec.~2.2]{CCHS_2D},
if $C\in L_G(\mfg^2,\mfg^2)$, then the solution $A$ to
\begin{equ}\label{eq:A_C}
\partial_t A = \Delta A + A\partial A + A^3 + \xi+ CA\;,\quad A(0)=a
\end{equ}
is pathwise gauge equivalent to $B$ that solves
\begin{equ}\label{eq:B_C}
\partial_t B = \Delta B + B\partial B + B^3 + \Ad_g\xi + CB + C\mrd g g^{-1}\;,\quad B(0)=\bar a
\end{equ}
(here and below, unless otherwise stated,
solutions to singular SPDEs like~\eqref{eq:A_C}-\eqref{eq:B_C} are understood as the $\eps\downarrow0$
limit of the solutions with $\xi$ replaced by $\xi^\eps= \moll^\eps*\xi$).
More precisely, $A^{g} = B$ up until the blow up of $(A,B,g)$,
where $g$ solves the (classically well-posed)
PDE with $B$ as a `driver'
\begin{equ}\label{eq:PDE_g}
\partial_t g = \Delta g - (\partial_j g) g^{-1} (\partial_j g) + [B_j, (\partial_j g) g^{-1}]g
\end{equ}
and initial condition $g(0)$.
Furthermore, for $\bar C$ as above (which depends on $\moll$) and any $C\in L(\mfg^2,\mfg^2)$, by~\cite[Thm.~2.9(i)]{CCHS_2D},\footnote{\cite[Thm.~2.9(i)]{CCHS_2D} considers only `diagonal' operators $C,\bar C\in L(\mfg,\mfg)$ that commute with $\Ad$, but an inspection of the proof reveals that this is not necessary, see also \cite[Thm.~1.14]{Chevyrev22YM}.}
$B$ is equal almost surely (up until blow-up) to $\bar A$ obtained as the $\eps\downarrow0$ limit of
\begin{equ}
\partial_t \bar A =\Delta \bar A + \bar A\partial \bar A + \bar A^3 +\moll^\eps* (\Ad_{\bar g} \xi) + C\bar A + (C-\bar C)\mrd \bar g \bar g^{-1}
\;,\quad \bar A(0)=\bar a\;,
\end{equ}
where $\bar g$ solves~\eqref{eq:PDE_g} but with `driver'
$\bar A$.
In the $\eps\downarrow0$ limit, clearly $(\bar A,\bar g)$, as a random variable in $(\Omega^1_{\alpha}\times \mfG^{0,\alpha})^\sol$,
is equal in law to,
\begin{equ}
\partial_t \tilde A = \Delta \tilde A + \tilde A\partial \tilde A + \tilde A^3 + \xi + C\tilde A + (C-\bar C)\mrd g g^{-1}
\;,\quad \tilde A(0)=\bar a\;,
\end{equ}
where $g$ solves~\eqref{eq:PDE_g} with `driver' $\tilde A$.
Theorem~\ref{thm:A_tilde_A} now follows from the following more general result.
Recall that we identify $\T^2$ with $[0,1)^2$.
For $c\in L(\mfg^2,\mfg^2)$, we write $c_i\in L(\mfg^2,\mfg)$ for the $i$-th component of $c$ for $i=1,2$.

\begin{proposition}\label{prop:A_tilde_A}
Consider the loop $\ell\in\CC^\infty([0,1],\T^2)$, $\ell(x)=(x,0)$,
and let $c,C\in L(\mfg^2,\mfg^2)$ with $c_1\neq 0$.
Then there exist $K,t_*,\sigma>0$, depending only on $\moll,c,C,G$, such that for all $t\in (0,t_*)$ there exists
$g(0)\in \CC^\infty(\T^2,G)$ with $|g(0)|_{\CC^3} < K$
such that
$
|\E W_\ell(A(t)) - \E W_\ell(\tilde A(t))| \geq \sigma t^2
$,
where $A,\tilde A$ solve
\begin{equs}
\partial_t A &= \Delta A + A\partial A + A^3 + \xi + CA\;, \qquad &A(0)=0\;,\label{eq:A_equ}
\\
\partial_t \tilde A &= \Delta\tilde A + \tilde A\partial \tilde A + \tilde A^3 + \xi + C\tilde A + c\mrd g g^{-1}\;,\qquad &\tilde A(0)=0\;.\label{eq:tilde_A_equ}
\end{equs}
As before, $g$ solves the PDE~\eqref{eq:PDE_g} with driver $\tilde A$ and initial condition $g(0)$
and we treat $(\tilde A, g)$ as a random variable in $(\Omega^1_{\alpha}\times \mfG^{0,\alpha})^\sol$
(in particular $\tilde A(t)=\skull$ if either $\tilde A$ or $g$ blow up before time $t$).
\end{proposition}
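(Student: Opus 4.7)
The plan is to perform a short-time expansion of $\E[W_\ell(\tilde A(t))] - \E[W_\ell(A(t))]$ in $t$ and to pick $g(0)$ making either the $t$- or the $t^2$-coefficient non-zero, which suffices for the required $\sigma t^2$ lower bound. Couple $A$ and $\tilde A$ to the same noise $\xi$, so that $D \eqdef \tilde A - A$ satisfies
\[
\partial_t D = \Delta D + (\mathcal N(\tilde A) - \mathcal N(A)) + c\,\mathrm{d}g\,g^{-1}\;,\qquad D(0) = 0\;,
\]
with $\mathcal N$ the nonlinear part of the right-hand side of~\eqref{eq:tilde_A_equ}. Since $g(s) = g(0) + O_{\CC^2}(s)$ and the heat semigroup is close to identity on short times, Duhamel yields the Euler-type expansion $D(t) = t\,c\,\mathrm{d}g(0) g(0)^{-1} + R(t)$; controlling $R$ in a suitable norm along $\ell$ is precisely one of the ``Euler estimates for singular SPDEs and Young ODEs arising from Wilson loops'' referred to in the introduction.

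Because $\alpha > \tfrac12$, the holonomy admits the Young iterated-line-integral expansion
\[
 \mathrm{hol}(A,\ell) = I + \int_\ell A + \iint_{0<t_1<t_2<1} A(\ell(t_2))\dot\ell(t_2)\, A(\ell(t_1))\dot\ell(t_1)\,\mathrm{d}t_1\mathrm{d}t_2 + \cdots\;,
\]
and the $\mathrm{Ad}$-invariance of the noise covariance (combined with the gauge covariance of~\eqref{eq:A_equ} under constant gauge transformations) forces $\E A(t) \in \mathfrak{z}(\mfg)$, which allows us to discard several cross terms. Substituting $\tilde A = A + V + R$ with $V \eqdef t\,c\,\mathrm{d}g(0) g(0)^{-1}$, taking $\E$, and using cyclicity of $\Tr$ to reduce the path-ordered double integral to $\tfrac12 \Tr(\int_\ell V)^2$, one obtains
\[
 \E[W_\ell(\tilde A(t))] - \E[W_\ell(A(t))] = t\,F_1(g(0)) + t^2\, F_2(g(0)) + O(t^{2+\delta})
\]
for some $\delta > 0$, with $F_1(g(0)) = \Tr W_0$, $F_2(g(0)) = \tfrac12 \Tr(W_0^2)$, and $W_0 \eqdef \int_0^1 c_1(\mathrm{d}g(0) g(0)^{-1})(x,0)\,\mathrm{d}x \in \mfg$.

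It remains to find $g(0) \in \CC^\infty(\T^2,G)$ with $|g(0)|_{\CC^3}$ bounded by a constant depending on $c,G$ such that $W_0 \neq 0$. Writing $g(0)(x,y) = h(x)\tilde h(y)$, the two $\mfg$-components of $\int_\ell \mathrm{d}g(0) g(0)^{-1}|_\ell$ equal $\int_0^1 h'(x) h(x)^{-1}\,\mathrm{d}x$ and $\int_0^1 \mathrm{Ad}_{h(x)}(\tilde h'(0)\tilde h(0)^{-1})\,\mathrm{d}x$ respectively. Varying $\tilde h$ realises the second component as any element of $\mfg$, while non-Abelian choices of $h$ (e.g.\ $h(x) = \exp(\cos(2\pi x)\cdot Z_1)\exp(\sin(2\pi x)\cdot Z_2)$, whose Maurer--Cartan holonomy is $\pi[Z_1,Z_2]$ plus higher-order-in-$|Z_\bullet|$ corrections) give the first component any value in $[\mfg,\mfg]$, and non-contractible loops reach the remaining central directions. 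Using the decomposition $\mfg = \mathfrak{z}(\mfg) \oplus [\mfg,\mfg]$ of any compact Lie algebra (a H\"ormander-type surjectivity), this map hits all of $\mfg^2$, so for any $c_1 \neq 0$ one picks $g(0)$ with $W_0 \neq 0$. If $\Tr W_0 \neq 0$ then $F_1(g(0)) \neq 0$ provides the linear-in-$t$ bound. Otherwise $F_2(g(0)) = -\tfrac12 |W_0|^2_\mfg < 0$ strictly, since $W_0$ is a non-zero skew-Hermitian matrix, yielding the $\sigma t^2$ bound directly.

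The principal obstacle is the Euler estimate controlling $R$. While $V$ contributes the anticipated $t^2$ to $\tfrac12 \Tr\E(\int_\ell D)^2$, correlation terms such as $\Tr\E[\int_\ell A \cdot \int_\ell R]$ could naively swamp the $t^2$ signal since $\int_\ell A$ has $L^2$-norm of order $t^{1/4}$ by the SHE bounds of Section~\ref{sec:SHE}; to win by Cauchy--Schwarz one needs $\E|\int_\ell R|^2 = o(t^{7/2})$. Obtaining this sharper bound requires quantitative short-time moment estimates on the reconstructed model and on the linearised singular SPDE, combined with Young-type estimates for line integrals against $R \in \Omega^1_\alpha$. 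Once these quantitative estimates are in place the remaining selection of $g(0)$ is a matter of elementary Lie algebra.
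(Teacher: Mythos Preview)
Your overall strategy---Euler expansion, holonomy expansion, then choosing $g(0)$ to make a coefficient nonzero---matches the paper's. But there is a genuine gap in the step where you claim
\[
\E[W_\ell(\tilde A(t))] - \E[W_\ell(A(t))] = t\,F_1 + t^2\,F_2 + O(t^{2+\delta})\;.
\]
This expansion is incorrect: between the $t$- and $t^2$-terms there is an additional term, call it $\bar L_t(h(0))$, which is \emph{linear in $h(0)$} but only of order $t^{1+\beta/2}$ with $\beta>0$ small (coming from cross-terms between $\int_\ell A$ and the $O(t^{3/2-})$ part of $R$). Concretely, your remainder $R$ contains a piece of size $t^{3/2-}$ in $\CC^{0+}$ (the paper's $L_t(h(0))$, arising from $\CP\{DH(B)\CP ch(0)\}$ and $\CP\{c\CP\partial(Bh(0))\}$), so the bound $\E|\int_\ell R|^2 = o(t^{7/2})$ you ask for is simply false---it is only $O(t^{3-})$. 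Sharper moment estimates will not fix this.

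The paper resolves this by exploiting the linearity of $\bar L_t$ in $h(0)$. One constructs \emph{two} gauge transformations $u,\tilde u$ with $\tilde h(0)=4h(0)$ (this is where Lemma~\ref{lem:curve_selection}, based on the Chow--Rashevskii theorem, is needed: it produces a curve $\zeta$ such that \emph{both} its development and the development of $4\zeta$ return to $\id$, so both $u$ and $\tilde u$ are periodic). Then, for each $t$, one checks whether $|\E\bar L_t(h)|\in[\tfrac14 t^2,t^2]$: if not, take $g(0)=u$ and the sum $\E\bar L_t(h)+\tfrac{t^2}{2}\Trace[(c_1\gamma(1))^2]$ is already $\gtrsim t^2$; if yes, switch to $g(0)=\tilde u$, whereupon the linear term multiplies by $4$ but the quadratic term by $16$, yielding $|4\E\bar L_t(h)+8t^2|\geq 4t^2$. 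Your ad hoc construction of $g(0)$ does not accommodate this ``scale by $4$'' trick, and without it the $t^{1+\beta/2}$ term can swamp the $t^2$ signal.
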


Remark that $c,C$ in Proposition \ref{prop:A_tilde_A} do need to be elements of $L_G$ (cf. Theorem \ref{thm:A_tilde_A}).
Moreover, $A$ and $\tilde A$ in Proposition \ref{prop:A_tilde_A} have the \emph{same} initial condition $A(0)=\tilde A(0)=0$.
In the proof of Theorem \ref{thm:A_tilde_A} below, $A$ in Proposition \ref{prop:A_tilde_A} will really correspond to $\SYM(C,0)$, i.e. the dynamic from \eqref{eq:A_C} with initial condition $\bar a = 0$,
which we compare to $\SYM(C,a)$ with $a= 0^{\tilde g}$ for suitable $\tilde g$.

\begin{proof}[of Theorem~\ref{thm:A_tilde_A}]
To prove Theorem~\ref{thm:A_tilde_A}, we assume without loss of generality that $C_1\neq \bar C_1$
and apply Proposition~\ref{prop:A_tilde_A} with $c=C-\bar C$.
To this end, let $g(0)$ be as in Proposition \ref{prop:A_tilde_A},
and take $\bar a=0$ and $a=0^{\tilde g}$ in the discussion above Proposition~\ref{prop:A_tilde_A}, where $\tilde g=g(0)^{-1}$. 
Then, with these choices of $\tilde g$ and $c$,
\begin{equs}{}
&|\E W_\ell(\SYM_t(C,0)) - \E W_\ell(\SYM_t(C,0^{\tilde g}))| 
\\
&\quad\geq
|\E W_\ell(\tilde A(t)) - \E W_\ell(\SYM_t(C,0))|-
 |\E W_\ell(\SYM_t(C,0^{\tilde g})) - \E W_\ell(\tilde A(t))|\;,
\end{equs}
where $\tilde A$ solves \eqref{eq:tilde_A_equ}.
The first term on the right-hand side is of order at least $t^2$ by Proposition \ref{prop:A_tilde_A}.
On the other hand, the second term is of order $t^\eta$ for any $\eta>2$, by the discussion above Proposition \ref{prop:A_tilde_A} 
(which shows  
gauge equivalence in law of 
$\SYM_t(C,0^{\tilde g})$ and
$\tilde A(t)$ and which is where we use that $C \in L_G(\mfg^2,\mfg^2)$), by the fact that 
Wilson loops are gauge invariant observables,
and by the argument in
the proof of Corollary~\ref{cor:unique_C} (i.e. the existence times for the above SPDEs have inverse moments of all orders).
\end{proof}
Section~\ref{subsec:proof_of_A_tilde_A} contains the proof of Proposition~\ref{prop:A_tilde_A}, which we now sketch.
The idea is to derive an Euler estimate for $A(t)$ and $\tilde A(t)$ for small times $t$
(these Euler estimates are motivated by those for rough differential equations~\cite{Davie_08,Friz_Victoir_08_Euler,FV10}).
It turns out that (see Proposition~\ref{prop:Euler_A}) 
\begin{equ}
\tilde A(t) = \Psi(t) + tch(0) + L_t(h(0)) + O_{[\mfg,\mfg]}(t^{2-}) + O(t^{5/2-})\;,
\end{equ}
where $h(0) = \mrd g(0) g(0)^{-1}$, $\Psi(t) = O(t^\beta)$ for some small $\beta>0$ and is \textit{independent} of $h(0)$, and $L_t(h(0))$ is a term linear in $h(0)$ for which $|L_t(h(0))|_{\CC^{0+}} \lesssim t^{3/2-}$.
Here $O(t^\eta)$ represent a term $\omega$, which in general depends on $h(0)$, such that $|\omega|_{\gr\alpha} \lesssim t^\eta$ uniformly in $t$,
and $O_{[\mfg,\mfg]}$ denotes a term taking values in the derived Lie algebra $[\mfg,\mfg]$.
The same expansion holds for $A(t)$ but with $h(0)=0$ and crucially with the \textit{same} $\Psi(t)$.
In particular (see Lemma \ref{lem:a_tilde_a_diff}),
\begin{equ}\label{eq:A_tilde_A_diff_prelim}
\tilde A(t) - A(t) = tch(0) + L_t(h(0)) + O_{[\mfg,\mfg]}(t^{2-}) + O(t^{5/2-})\;.
\end{equ}

Coming back to the loop $\ell$, we can expand $\hol(\tilde A(t),\ell)$ in terms of $\hol(A(t),\ell)$ using iterated integrals in the sense of Young
(this is another type of Euler estimate but for ODEs, see Lemma~\ref{lem:path_perturb}).
The errors in this expansion, by~\eqref{eq:A_tilde_A_diff_prelim},
are sufficiently large powers of $t$ and $|A(t)|_{\gr\alpha}$,
which allows us to quantify the difference $\hol(A(t),\ell) - \hol(\tilde A(t),\ell)$ in Lemma~\ref{lem:a_tilde_a_hol}.
In particular, we obtain
\begin{equ}\label{eq:hol_diff_non_simple}
\E \hol(A(t),\ell) - \E \hol(\tilde A(t),\ell) = tc_1 X + O(t^{1+})\;,
\end{equ}
where $X \in\mfg^2$ depends linearly on $h(0)$ and can be chosen so that $c_1 X \neq 0$.

If $\Trace (c_1 X) \neq 0$ (which implies in particular that $\mfg$ is not semi-simple since otherwise $\mfg=[\mfg,\mfg]$ and thus $\Trace \mfg=0$),
then we can make the trace of the right-hand side of~\eqref{eq:hol_diff_non_simple} at least of order $t$, which would prove Proposition~\ref{prop:A_tilde_A}.

If, on the other hand, $\Trace(c_1 X) =0$,
then we need to expand the holonomies to higher orders, use the cyclic property of trace, and use that the largest error in \eqref{eq:A_tilde_A_diff_prelim} takes values in $[\mfg,\mfg]$ so is killed by trace, to obtain
 \begin{equ}\label{eq:hol_diff_simple}
\Trace \hol(A(t),\ell) - \Trace\hol(\tilde A(t),\ell) = \bar L_t(h(0))+t^2 \Trace( ( c_1 X)^2) + O(t^{2+})\;,
\end{equ}
where $\bar L_t(h(0))$ is random but \textit{linear} in $h(0)$.
% (furthermore $|\bar L_t(h(0))| \lesssim t^{1+}$ but this is not important).
%(What is crucial is that the $t^2$ term is quadratic in $h(0)$ while $\bar L_t(h(0))$ is linear.)
We then carefully choose a reference $g(0)$, such that, depending on whether $\E \bar L_t(h(0)) \asymp t^2$ or $\E \bar L_t(h(0)) \gg t^2$ or $\E \bar L_t(h(0)) \ll t^2$,
we can adjust $g(0)$ (with the effect of multiplying $h(0)$ by a large scalar) to make the expectation of the right-hand side at least order $t^2$;
finding $g(0)$ in general requires the Chow--Rashevskii theorem (see Lemma~\ref{lem:curve_selection}).

\subsection{Euler estimate for SYM}\label{subsec:Euler}

Suppose we are in the setting of~\cite[Sec.~6.1]{CCHS_2D} with the regularity structure $\cT$ for the continuum 2D YM
(not to be confused with the regularity structure denoted by the same symbol from Section~\ref{sec:DiscreteRS}).
Consider the nonlinearity given by
\begin{equ}
F_{\mfa_i}(\mbA) = \xi_i + \sum_{j=1}^2 \{[A_j,2\partial_j A_i - \partial_iA_j + [A_j,A_i]] + C_i^{(j)}A_j\}\;,
\end{equ}
where $C^{(j)}_i\in L(\mfg,\mfg)$.
We further denote $H_{\mfa_i} = F_{\mfa_i}-\xi_i$.
One should think of $H$ as a polynomial of $(A,\partial A)$ of the form $H(A)=A+A\partial A + A^3$.

We fix henceforth an admissible model $Z$ on $\cT$
which we will later take to be the BPHZ model of the noise $\xi$.
Note that, when applying the results of this subsection to prove Proposition~\ref{prop:A_tilde_A},
we will work directly with the limiting model rather than considering $\eps>0$
and then taking the limit.

We will use the following notation throughout this section.

\begin{notation}\label{not:plus_minus}
For $a\in\R$, we let $a-$ and $a+$ denote $a-n\kappa$ and $a+N\kappa$ respectively,
where the values of $n$ and $N$ may change from term to term, but will always satisfy $1\leq n \ll N \ll \kappa^{-1}$.
In particular, one has
$(a-)+(b+) =(a+b)+$.
\end{notation}

%\begin{remark}
%Notation~\ref{not:plus_minus} implies $(a-)+(b+) =(a+b)+$.
%\end{remark}
%
Fix for the remainder of this section some non-zero $c\in L(\mfg^2,\mfg^2)$.
The fixed point problem we consider for now is 
\begin{equ}\label{eq:A_general_FPP}
A = B + \CP \bone_+ (H(A) + ch) \;,
\end{equ}
which we pose for $A\in\cD^{1+,0-}_{0-}$
and where $B \in \cD^{1+,0-}_{0-}$, at this stage, is
an arbitrary given modelled distribution.
Here $\CP$ denotes convolution with the heat kernel.
(We should think of $B= P A(0) +\CP^{1_+\xi}\bone_+\Xi$, where $PA(0)$ is the `harmonic extension' of $A(0)$ to positive times and where we emphasise that $\CP \bone_+\Xi$, is a priori ill-defined, so we insert
$1_+\xi$ compatible with $\bone_+\Xi$ playing the role of its reconstruction,
see~\cite[Appendix~A.3]{CCHS_2D}.
We could thus even take $B \in \cD^{\infty,0-}_{0-}$.)

Above, $h\in \cD^{1-,0}_0$ is the lift of $\mrd g g^{-1}$ to the polynomial regularity structure,
where $g$ solves the well-posed PDE \eqref{eq:PDE_g} with $B$ replaced by $A$.
%\begin{equ}
%\partial_t g = \Delta g - (\partial_j g) g^{-1} (\partial_j g) + [A_j, (\partial_j g) g^{-1}]g\;.
%\end{equ}
A straightforward calculation shows that (see~\cite[Lem.~7.2]{CCHS_2D})
\begin{equ}\label{eq:h_equ}
\partial_t h_{i} = 
\Delta h_i - [h_j,\partial_j h_i] + [[A_j, h_j],h_i] + \partial_i [A_j, h_j]\;.
\end{equ}
Observe that
\begin{equ}\label{eq:H_map}
H\colon \cD^{1+, 0-}_{0-} \to \cD^{0+,-1-}_{-1-} \quad \text{is locally Lipschitz.}
\end{equ}
%(even in the strong sense of~\cite[Sec.~7.3]{Hairer14}, though we don't use this).
Recall also that, for $\theta\geq0$, $t\in(0,1]$, and $f\in\cD^{\gamma,\eta}_\alpha$ with $\alpha\wedge\eta>-2$ and $\gamma>0$,
\begin{equ}\label{eq:short_time_conv}
|\CP\bone_+f|_{\cD^{\gamma+2,\bar\eta};t} \lesssim t^{\theta/2}|f|_{\cD^{\gamma,\eta};t}
\end{equ}
where $\bar\eta=(\eta\wedge\alpha)+2-\theta$
and the proportionality constant depends on the Greek letters and polynomially on $\$Z\$_{\gamma;O}$
(the polynomial dependence on $\$Z\$_{\gamma;O}$ follows from inspecting the proof of~\cite[Thm.~7.1]{Hairer14}).
Here $O\eqdef[-1,2]\times \T^2$ and we use the shorthand $|\cdot|_{\cD^{\gamma,\eta};t}\eqdef |\cdot|_{\cD^{\gamma,\eta};(0,t]\times\T^2}$.
The notations $\$Z\$_{\gamma;\K}$ and $|f|_{\cD^{\gamma,\eta};\K}$ are taken from~\cite[Sections~2.3 and~6]{Hairer14}
($|f|_{\cD^{\gamma,\eta};\K}$ is denoted by $\$f\$_{\gamma,\eta;\K}$ therein).

It is standard that there exists $\tau\in(0,1]$, with
\begin{equ}\label{eq:tau_inverse_bound}
\tau^{-1} \lesssim 1+\$Z\$^q_{\gamma;O} + |B|^q_{\cD^{1+,0-}_{0-};O}+|h(0)|_{\CC^{1-}(\T^2)}^q
\end{equ}
for some $q\geq 1$, such that~\eqref{eq:A_general_FPP}+\eqref{eq:h_equ} admits a unique fixed point
$
(A,h)\in \cD^{1+, 0-}_{0-;(0,\tau]\times\T^2}\times \CC^{1-}_{(0,\tau]}
$
such that
\begin{equ}\label{eq:A_h_bound}
|A|_{\cD^{1+,0-}_{0-};\tau} + |h|_{\CC^{1-};\tau} \lesssim 1+ |B|_{\cD^{1+,0-}_{0-};O} + |h(0)|_{\CC^{1-}(\T^2)}\;,
\end{equ}
uniformly in $B$, $h(0)$, and $Z$.
Here $|\cdot|_{\CC^{\alpha};\tau}$ and $\CC^{\alpha}_{(0,\tau]}$ denote the $\CC^\alpha$ norm and space (for the parabolic scaling) restricted to $(0,\tau]\times\T^2$.

\begin{notation}
In the estimates that follow, unless otherwise stated, we fix $t\in(0,\tau]$ and all space-time functions and modelled distributions are understood as restricted to $(0,t]\times\T^2$,
e.g. $\cD^{\gamma,\eta}_\alpha$ and $\CC^\gamma$ will mean $\cD^{\gamma,\eta}_{\alpha;(0,t]\times\T^2}$ and $\CC^\gamma((0,t]\times\T^2)$ respectively.
All implicit proportionality constants will be sufficiently large powers of
\begin{equ}\label{eq:poly_constant}
2+\$Z\$_{\gamma;O}+|B|_{\cD^{1+,0-}_{0-};O}+|h(0)|_{\CC^{2+}(\T^2)}
\end{equ}
and will \textit{not} depend on $t$.
Note that we require more regularity on $h(0)$ in \eqref{eq:poly_constant}
than in \eqref{eq:tau_inverse_bound}-\eqref{eq:A_h_bound}.

Furthermore, for a normed space $(\CB,\|\cdot\|)$, we let
$O_{\CB}(t^{\eta})$ denote a term $X\in\CB$ such that $\|X\|\lesssim t^{\eta}$ uniformly in $t$.
We also write $\cD^{\gamma,\eta}_\alpha[\mfg,\mfg]$
for the space of modelled distributions that reconstruct to $1$-forms with values in the derived Lie algebra $[\mfg,\mfg]$,
and likewise for $\CC^\gamma[\mfg,\mfg]$.
%When we estimate a term of the form $|r|_{\cD[\mfg,\mfg]}$, we implicitly mean $r\in \cD[\mfg,\mfg]$.
\end{notation}
The following is the main result of this subsection.

\begin{proposition}\label{prop:Euler_A}
Define the space-time function $\bar h\colon [0,t]\times\T^2\to\mfg^2$ by $\bar h(s,x) = sh(0)(x)$. Then
\begin{equ}\label{eq:A_Euler}
A = B + D + c \bar h + L_t(h(0)) + O_{\cD^{0+,0-}_0[\mfg,\mfg]}(t^{2-N\kappa})
+
O_{\cD^{1+,0-}_0}(t^{5/2-N\kappa})
\end{equ}
where
$D= O_{\cD^{1+,0-}_0}(t^{1/2-})$ depends only on $B$
and
$L_t(h(0))= O_{\CC^{0+}}(t^{3/2-N\kappa})$
is linear in $h(0)$ 
and takes values in the polynomial sector.
\end{proposition}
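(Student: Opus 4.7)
The plan is to iterate the fixed point equation~\eqref{eq:A_general_FPP} twice while simultaneously expanding $h$ around its initial datum $h(0)$, organizing the resulting terms by their order in $t$ and by their multiplicity in $h(0)$. The Lie-algebraic structure of each remainder is then read off by inspection of the finitely many contributions appearing at each order.

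First I would establish the short-time bound $|h(t) - h(0)|_{\CC^{1-}} \lesssim t^{0+}$. This follows from Duhamel's formula applied to~\eqref{eq:h_equ},
\begin{equ}
h(t) - h(0) = (e^{t\Delta} - \id)h(0) + \int_0^t e^{(t-s)\Delta}\bigl(-[h_j,\partial_j h_i] + [[A_j,h_j],h_i] + \partial_i[A_j,h_j]\bigr)(s)\,\mrd s,
\end{equ}
combined with the a priori bound~\eqref{eq:A_h_bound} and the smoothness $h(0) \in \CC^{2+}$. Writing
\begin{equ}
\CP\bone_+(ch) = c\bar h(0) + \bigl(\CP\bone_+(ch(0)) - c\bar h(0)\bigr) + \CP\bone_+\bigl(c(h - h(0))\bigr),
\end{equ}
the second summand is $O(t^2)$ in any smooth norm, since $\int_0^t(e^{s\Delta} - \id)h(0)\,\mrd s$ gains an extra factor of $t$ from the boundedness of $\Delta h(0)$, while the third summand is $O(t^{3/2-})$ in $\CC^{0+}$ by the Schauder estimate~\eqref{eq:short_time_conv}. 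Both summands are linear in $h(0)$ at leading order and lie in the polynomial sector, since $h(0)$ is smooth and $\CP\bone_+$ preserves the polynomial sector on smooth inputs; together they form the main contribution to $L_t(h(0))$.

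Second, for the nonlinear contribution $\CP\bone_+ H(A)$, I would substitute $A = B + R$ with $R = \CP\bone_+(H(A) + ch) = O_{\cD^{1+,0-}_0}(t^{1/2-})$, which follows from~\eqref{eq:short_time_conv} and~\eqref{eq:A_h_bound}. Since $H$ is polynomial of degree three in $(A,\partial A)$, Taylor expansion gives
\begin{equ}
H(A) = H(B) + H'(B) \cdot R + \tfrac{1}{2} H''(B)(R,R) + \tfrac{1}{6} H'''(R,R,R),
\end{equ}
and I would further split $R = c\bar h(0) + R'$. The term $\CP\bone_+ H(B)$, together with further iterations restricted to $B$, is purely $B$-dependent and forms $D$. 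The cross terms of the form $\CP\bone_+(H'(B) \cdot c\bar h(0))$ are linear in $h(0)$; using~\eqref{eq:short_time_conv} together with the bound on $B$ and the extra factor of $t$ carried by $\bar h(0)$, these are $O(t^{3/2-})$ or better. Those contributions coming from a Lie bracket of $B$ with $c\bar h(0)$ land in $[\mfg,\mfg]$ and are absorbed into the $O(t^{2-})$ remainder; the contribution from the single non-bracket term $CA$ in $H$ is linear in $h(0)$ and absorbed into $L_t(h(0))$. The quadratic and cubic Taylor remainders in $R$ produce expressions that are bilinear-or-higher in $h(0)$ sitting inside Lie brackets, hence $[\mfg,\mfg]$-valued, and by~\eqref{eq:short_time_conv} of size at most $t^{2-}$.

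The main obstacle is verifying that the $O(t^{2-})$ remainder truly takes values in $[\mfg,\mfg]$. Every summand of $H$ is either a Lie bracket or the linear term $CA$, so each quadratic-or-higher contribution in $h(0)$ necessarily sits inside at least one Lie bracket in which two factors each contain $h(0)$, and hence lands in $[\mfg,\mfg]$. The only potential obstruction comes from iterated applications of the linear $CA$-term producing $C^n(c\bar h(0))$-type expressions not a priori in $[\mfg,\mfg]$; such iterations, however, gain an additional positive power of $t$ from each heat-kernel convolution and remain linear in $h(0)$, so they are either absorbed into $L_t(h(0))$ or into the $O(t^{5/2-})$ remainder. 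A finite case analysis of all contributions arising in two iterations of the fixed point equation then yields the stated decomposition.
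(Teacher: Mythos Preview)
Your strategy matches the paper's, but several steps fail as written. Two iterations are not enough: each pass gains only $t^{1/2-}$ in the remainder, and the paper needs three (Lemmas~\ref{lem:Euler_2}--\ref{lem:Euler_4}, yielding $B_1=\CP H(B)$, $B_2=\CP[DH(B)B_1]$, $B_3=\CP\{DH(B)B_2+2B_1\partial B_1\}$) before reaching $O_{[\mfg,\mfg]}(t^{2-})+O(t^{5/2-})$; two iterations stop at $O(t^{3/2-})$ as in Lemma~\ref{lem:Euler_3}. Your bound $|h(t)-h(0)|_{\CC^{1-}}\lesssim t^{0+}$ is also too weak: the time integral in $\CP$ turns an $O(s^{0+})$ input into only $O(t^{1+})$, not the $O(t^{3/2-})$ you claim for $\CP\bone_+(c(h-h(0)))$, and the spare spatial regularity is wasted in the $\CC^{0+}$ target. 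The paper proves $h-h(0)=O_{\CC^{0+}}(t^{1/2-})$ in Lemma~\ref{lem:h_Euler_1} (the limiting term being $\CP\partial(Ah)$, which gains only $t^{1/2}$) and then refines to $h=h(0)+\CP\partial(Bh(0))+O(t^{1-})$ in Lemma~\ref{lem:h_Euler}, retaining an explicit linear-in-$h(0)$ piece.

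More fundamentally, the $[\mfg,\mfg]$ splitting is not driven by multiplicity in $h(0)$. The paper's mechanism is~\eqref{eq:CP_DHB_bound}: the only non-bracket piece of $DH(B)X$ is the zeroth-order map $X\mapsto CX$, so $\CP DH(B)X=O(t|X|)+O_{[\mfg,\mfg]}(t^{1/2}|X|)$, with the non-bracket part gaining an \emph{extra} $t^{1/2}$ because it carries no derivative. Iterating this is what pushes the non-$[\mfg,\mfg]$ remainder down to $O(t^{5/2-})$ while the bracket part sits at $O(t^{2-})$. You observe that $CA$ is the only non-bracket term in $H$ but never use that it is also the only term without a derivative, which is precisely the source of this extra gain. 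Your alternative claim---that quadratic-or-higher-in-$h(0)$ contributions automatically sit in a Lie bracket---fails for those routed through $\CP\bone_+(ch)$: the nonlinear part of $h-h(0)$ (for instance $\CP[h,\partial h]$) is $[\mfg,\mfg]$-valued, but the arbitrary operator $c\in L(\mfg^2,\mfg^2)$ need not preserve $[\mfg,\mfg]$, leaving an $O(t^{2-})$ term that is neither linear in $h(0)$ nor $[\mfg,\mfg]$-valued and so fits none of your buckets.
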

Proposition~\ref{prop:Euler_A}  is proven at the end of this subsection.
We  write~\eqref{eq:h_equ} as
\begin{equ}\label{eq:h_FPP}
h = P h(0) + \CP(h\partial h + A h^2 + \partial(Ah))\;.
\end{equ}
Here, for $f\in\CC(\T^2)$, we denote by $Pf \in\CC([0,t]\times \T^2)$ the harmonic extension of $f$ to positive times, i.e. $Pf(t,x) = (e^{t\Delta} f)(x)$.
\begin{lemma}\label{lem:h_Euler_1}
Interpreting $h(0)$ as a space-time function that is constant in time,
\begin{equ}\label{eq:h_Euler_1}
h = h(0) +  O_{\CC^{0+}[\mfg,\mfg]}(t^{1/2-N\kappa}) + R_t\;.
\end{equ}
where $R_t\eqdef P h(0) - h(0) =O_{\CC^{0+}}(t)$.
Furthermore
\begin{equ}\label{eq:CP_h_Euler_2}
|\CP h|_{\CC^{1+}} \lesssim t^{1-N\kappa}\;.
\end{equ}
\end{lemma}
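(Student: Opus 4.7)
The plan is to exploit two structural features of~\eqref{eq:h_FPP}: the short-time heat flow $Ph(0)$ is close to the constant-in-time extension $h(0)$, and every nonlinear term in~\eqref{eq:h_equ} is a Lie commutator and hence takes values in $[\mfg,\mfg]$. Both bounds then reduce to short-time Schauder estimates (via~\eqref{eq:short_time_conv}) applied to the mild formulation~\eqref{eq:h_FPP}, fed by the a priori $\CC^{1-}$ bound on $h$ and $\cD^{1+,0-}_{0-}$ bound on $A$ from~\eqref{eq:A_h_bound}.

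First I would write
\[
h - h(0) \;=\; \big(Ph(0) - h(0)\big) \;+\; \CP\big(h\partial h + A h^{2} + \partial(A h)\big),
\]
where $h(0)$ on the left is its extension as a space-time function constant in time. By direct inspection of~\eqref{eq:h_equ}, the second summand equals $\CP$ of a sum of Lie brackets, hence takes values in $[\mfg,\mfg]$; applying~\eqref{eq:short_time_conv} with $\gamma$ slightly positive and $\theta=1/2-$, fed by~\eqref{eq:A_h_bound}, each of the three convolutions is bounded in $\CC^{0+}$ by $t^{1/2-N\kappa}$. For the first summand, the classical bound $|Ph(0)-h(0)|_{L^{\infty}} \lesssim t$ (since $h(0)\in \CC^{2+}$, absorbed into the implicit constants) interpolated against the uniform $\CC^{1-}$ bound on $Ph(0)$ gives $|Ph(0)-h(0)|_{\CC^{0+}}\lesssim t^{1/2-}$. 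This yields~\eqref{eq:h_Euler_1}.

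For~\eqref{eq:CP_h_Euler_2} I would apply $\CP$ once more to~\eqref{eq:h_FPP}, giving $\CP h = \CP(Ph(0)) + \CP^{2}\big(h\partial h + A h^{2} + \partial(A h)\big)$. The first term is smooth in space-time and controlled in $\CC^{1+}$ by a multiple of $t$. Each of the three nonlinear pieces inside $\CP^{2}$ gains roughly four scaling dimensions; applying~\eqref{eq:short_time_conv} twice with cumulative $\theta = 1-N\kappa$, the roughest input (of parabolic regularity slightly below $-1$) is absorbed and one arrives at $|\CP h|_{\CC^{1+}}\lesssim t^{1-N\kappa}$.

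The main obstacle is ensuring the $[\mfg,\mfg]$ decoration in~\eqref{eq:h_Euler_1} is carried through consistently. The convolution piece $h - Ph(0)$ is manifestly bracket-valued, but the heat-flow correction $Ph(0)-h(0)$ is not; matching the statement requires either grouping $Ph(0)-h(0)$ with the bracket-valued terms via the identity $Ph(0)-h(0) = \CP(\Delta h(0))$ together with one iteration of~\eqref{eq:h_equ}, or peeling off a linear-in-$h(0)$ correction analogous to the term $L_{t}(h(0))$ that appears in Proposition~\ref{prop:Euler_A}. Both routes are technical but standard once the Schauder bookkeeping is in place, and the remaining estimates are routine applications of~\eqref{eq:short_time_conv} and~\eqref{eq:A_h_bound}.
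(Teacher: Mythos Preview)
Your approach to~\eqref{eq:h_Euler_1} matches the paper's: split $h-h(0)$ into $Ph(0)-h(0)$ and the nonlinear piece $\CP(h\partial h + Ah^2 + \partial(Ah))$, the latter bracket-valued. Two minor remarks: the paper gets $|Ph(0)-h(0)|_{\CC^{0+}}\lesssim t$ directly from $Ph(0)-h(0)=\int_0^{\cdot}e^{r\Delta}\Delta h(0)\,dr$ and $h(0)\in\CC^{2+}$ (your interpolation, done correctly, would also give $t^{1-}$ rather than the $t^{1/2-}$ you quote); and the paper records separately that $|\CP(h\partial h+Ah^2)|_{\CC^{0+}[\mfg,\mfg]}\lesssim t^{1-N\kappa}$ (labelled~\eqref{eq:h_better}), a refinement reused in Lemma~\ref{lem:h_Euler}.

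For~\eqref{eq:CP_h_Euler_2} the paper's route is shorter than yours: it applies $\CP$ directly to~\eqref{eq:h_Euler_1} and uses $|\CP h(0)|_{\CC^{1+}}\lesssim t$ together with $|\CP f|_{\CC^{1+}}\lesssim t^{1/2}|f|_{\CC^{0+}}$. Your $\CP^2$ argument is correct but unnecessarily roundabout.

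You are right that $Ph(0)-h(0)$ is not in general $[\mfg,\mfg]$-valued; the paper's own proof bounds this piece only in $\CC^{0+}$, without the decoration. Your route~1 does not resolve this: $\CP(\Delta h(0))$ is still not bracket-valued, and iterating~\eqref{eq:h_equ} does not eliminate the offending term. Your route~2 is the correct reading: $Ph(0)-h(0)$ is $O_{\CC^{0+}}(t)$ and \emph{linear} in $h(0)$, so in the downstream application (Proposition~\ref{prop:Euler_A}) its contribution, after one more $\CP$, is $O_{\CC^{0+}}(t^2)$ and linear in $h(0)$, hence absorbed into $L_t(h(0))$ rather than the $[\mfg,\mfg]$-decorated error.
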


\begin{proof}
The bound~\eqref{eq:h_Euler_1} follows from
\begin{equ}
|\CP\partial(Ah)|_{\CC^{0+}[\mfg,\mfg]} \lesssim t^{1/2-N\kappa}|\partial(Ah)|_{\CC^{-1-}[\mfg,\mfg]}
\lesssim t^{1/2-N\kappa}
%|A|_{\CC^{0-}} |h|_{\CC^{0+}}
\end{equ}
and
\begin{equ}\label{eq:h_better}
|\CP(h\partial h + Ah^2)|_{\CC^{0+}[\mfg,\mfg]} \lesssim t^{1-N\kappa}\;.
%(|h|_{\CC^{1-N\kappa}}^2+|h|_{\CC^{0+}}^2|A|_{\CC^{0-}})\;.
\end{equ}
Now~\eqref{eq:CP_h_Euler_2} follows from~\eqref{eq:h_Euler_1} and $|\CP h(0)|_{\CC^{1+}}\lesssim t$ and $|\CP f|_{\CC^{1+}} \lesssim t^{1/2}|f|_{\CC^{0+}}$.
\end{proof}
%
%\begin{lemma}\label{lem:Euler_1}
%\begin{equ}\label{eq:Euler_1}
%A = B + O_{\cD^{1+,0-}_0}(t^{1/2-})\;.
%\end{equ}
%\end{lemma}
%%
%\begin{proof}
%By~\eqref{eq:short_time_conv} and~\eqref{eq:CP_h_Euler_2}, we obtain
%\begin{equ}
%|A-B|_{\cD^{1+,0-}_0} = |\CP ( H(A)+ch)|_{\cD^{1+,0-}_0} \lesssim t^{1/2-}\;.
%\end{equ}
%\end{proof}
%
A consequence of~\eqref{eq:H_map},~\eqref{eq:short_time_conv}, and~\eqref{eq:CP_h_Euler_2} is
$
|\CP ( H(A)+ch)|_{\cD^{1+,0-}_0} \lesssim t^{1/2-}
$
and therefore, since $A$ solves the fixed point~\eqref{eq:A_general_FPP},
\begin{equ}\label{eq:Euler_1}
A = B + O_{\cD^{1+,0-}_0}(t^{1/2-})\;.
\end{equ}
In the proofs below, we will use without mention
the usual multiplication and differentiation rules for singular
modelled distributions~\cite[Propositions~6.12,~6.15]{Hairer14}.
\begin{lemma}\label{lem:Euler_2}
Let  $B_1 = \CP H(B)$. Then
\begin{equ}[e:Euler_2]
A = B+ B_1 + O_{\cD^{1+,0-}_0}(t^{1-N\kappa})\;.
\end{equ}
\end{lemma}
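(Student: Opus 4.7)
The plan is to iterate the fixed point equation~\eqref{eq:A_general_FPP} once, using the crude Euler estimate~\eqref{eq:Euler_1}. Writing
\[
A - B - B_1 = \CP\bigl(H(A) - H(B)\bigr) + c \, \CP h \;,
\]
I will bound each of the two terms on the right-hand side separately in $\cD^{1+,0-}_0$, with the target rate $t^{1-N\kappa}$.

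First I would handle the term $c\,\CP h$. From~\eqref{eq:CP_h_Euler_2} I have $|\CP h|_{\CC^{1+}} \lesssim t^{1-N\kappa}$. Since $\CP h$ can be lifted to a purely polynomial modelled distribution (it is a genuine smooth function in positive time, with appropriate behaviour at $t=0$ inherited from $h(0)\in\CC^{2+}$), its $\cD^{1+,0-}_0$ norm is controlled by its $\CC^{1+}$ norm, yielding the desired bound.

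Next I would tackle $\CP(H(A) - H(B))$. Since $H(A) = CA + A\partial A + A^3$ is polynomial in $(A,\partial A)$, the map $H$ is locally Lipschitz from $\cD^{1+,0-}_{0-}$ to $\cD^{0+,-1-}_{-1-}$ by~\eqref{eq:H_map}, with Lipschitz constant controlled by a power of the quantity in~\eqref{eq:poly_constant}. Combined with the crude bound $|A-B|_{\cD^{1+,0-}_{0-}} \lesssim t^{1/2-}$ from~\eqref{eq:Euler_1}, this gives
\[
|H(A) - H(B)|_{\cD^{0+,-1-}_{-1-}} \lesssim t^{1/2-} \;.
\]
Now applying the Schauder estimate~\eqref{eq:short_time_conv} with $\theta = 1-$ (which is admissible since the input reconstructs at index $-1-$, safely above $-2$, and the output target $\bar\eta = 0-$ is achievable with this choice of $\theta$), I obtain an extra $t^{1/2-}$ factor, so $|\CP(H(A)-H(B))|_{\cD^{1+,0-}_0} \lesssim t^{1-}$. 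Summing the two contributions gives~\eqref{e:Euler_2}.

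The only step requiring any care is the Schauder application: one must check that the regularity budget permits simultaneously (i) gaining $\theta = 1-$ worth of time-regularity, (ii) landing at the correct $\cD^{1+,0-}_0$ space with the reconstruction index improved from $0-$ (the regularity of $B$, which carries the stochastic heat equation part) to $0$ (since $\CP H(A)$ is function-valued in positive time). This is where the non-anticipativity / structure of the kernel and the choice of negative-time blow-up exponent $\eta=0-$ interact, but the calculation is routine once the indices are tracked. The main conceptual point is simply that replacing $A$ by $B$ inside $H$ produces an error of size $t^{1/2-}$ (one power of the difference $A-B$), which the heat semigroup then upgrades by an additional half power of $t$, and that the forcing term $c\,\CP h$ is automatically of size $t$ because $h(0)$ is a fixed bounded function and $\CP$ integrates in time.
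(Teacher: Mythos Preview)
Your proof is correct and follows essentially the same approach as the paper. The only cosmetic difference is that you invoke the local Lipschitz property~\eqref{eq:H_map} directly to bound $H(A)-H(B)$, whereas the paper writes out the Taylor remainder $r=H(A)-H(B)$ and bounds it term by term; the resulting estimate $|H(A)-H(B)|_{\cD^{0+,-1-}_{-1-}}\lesssim t^{1/2-}$ and the subsequent Schauder application with $\theta=1-$ are identical.
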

\begin{proof}
We Taylor expand $H(A)$ once to get $H(A)=H(B) + r$ where
\begin{equs}
|r|_{\cD^{0+,-1-}_{-1-}}
&\lesssim
|A-B|_{\cD^{1+,0-}_{0-}} + |Q(A,B)|_{\cD^{1+,0-}_{0-}}|(A-B)|_{\cD^{1+,0-}_{0-}}
\\
&\qquad + |B|_{\cD^{1+,0-}_{0-}}|\partial(A-B)|_{\cD^{0+,-1-}_{-1-}}
+ |\d A|_{\cD^{0+,-1-}_{-1-}}|A-B|_{\cD^{1+,0-}_{0-}}
\\
&\lesssim t^{1/2-}\;,	\label{eq:n=1_bound2}
\end{equs}
where $Q$ is quadratic
and where we used~\eqref{eq:Euler_1} in the second bound.
Substituting this into the fixed point~\eqref{eq:A_general_FPP} gives
$
A = B + \CP(H(B) + r + ch)
$.
Furthermore,
$|\CP r|_{\cD^{1+,0-}_{0}} \lesssim t^{1/2-}|r|_{\cD^{0+,-1-}_{-1-}} \lesssim t^{1-}$
where we used~\eqref{eq:short_time_conv} in the first bound
and~\eqref{eq:n=1_bound2} in the second bound.
We conclude by applying~\eqref{eq:CP_h_Euler_2}.
\end{proof}
To get a remainder term which is sufficiently small in $t$ and takes values in $[\mfg,\mfg]$, we need to expand the nonlinearity $H$ and substitute back into the fixed point~\eqref{eq:A_general_FPP} two more times.
We introduce the notation
\begin{equ}\label{eq:DH_notation}
DH(B)X = X+X\partial B + B\partial X + 3 B^2 X\;,
\end{equ}
where the products have the obvious meaning.
In particular, by~\eqref{eq:short_time_conv}, we have
\begin{equ}\label{eq:CP_DHB_bound}
\CP DH(B)X = O_{\cD^{1+,0-}_0}(t|X|_{\cD^{1+,0-}_0}) + O_{\cD^{1+,0-}_0[\mfg,\mfg]} (t^{1/2}|X|_{\cD^{1+,0-}_0})\;.
\end{equ}

\begin{lemma}\label{lem:Euler_3}
Let  $B_2= \CP  [DH(B)B_1]$.
Then
\begin{equ}
A = B + B_1 + B_2 + \CP  ch+ O_{\cD^{1+,0-}_0}(t^{3/2-N\kappa})\;.
\end{equ}
\end{lemma}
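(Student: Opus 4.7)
The plan is to perform a second-order Taylor expansion of the nonlinearity $H$ around $B$, substitute the first-order estimate from Lemma~\ref{lem:Euler_2} into the fixed-point equation~\eqref{eq:A_general_FPP}, and control each remaining error with the short-time convolution estimate~\eqref{eq:short_time_conv} together with~\eqref{eq:CP_DHB_bound}. Since $H$ is a polynomial of degree at most $3$ in $A$ with one derivative, we can write exactly
\begin{equ}
H(A) = H(B) + DH(B)(A-B) + R(A,B)\;,
\end{equ}
where $DH(B)$ is as in~\eqref{eq:DH_notation} and $R(A,B)$ is a finite linear combination of monomials of the form $(A-B)\partial(A-B)$, $B(A-B)^2$, $(A-B)^3$ (and similar with derivatives falling on either factor).

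First I would use Lemma~\ref{lem:Euler_2} to write $A - B = B_1 + E$ with $E = O_{\cD^{1+,0-}_0}(t^{1-N\kappa})$, and substitute back into~\eqref{eq:A_general_FPP}. Using $\CP \bone_+ H(B) = B_1$ by definition and $B_2 = \CP[DH(B) B_1]$, this gives
\begin{equ}
A = B + B_1 + B_2 + \CP \bone_+ DH(B) E + \CP \bone_+ R(A,B) + \CP \bone_+ ch\;.
\end{equ}
The term $\CP \bone_+ DH(B) E$ is controlled by~\eqref{eq:CP_DHB_bound}: since $|E|_{\cD^{1+,0-}_0} \lesssim t^{1-N\kappa}$, we obtain a bound of order $t \cdot t^{1-N\kappa} = t^{2-N\kappa}$ in $\cD^{1+,0-}_0$, which is absorbed into the claimed $O(t^{3/2-N\kappa})$ error.

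The main step, and the tightest one, is bounding $\CP \bone_+ R(A,B)$. From~\eqref{eq:Euler_1}, $|A-B|_{\cD^{1+,0-}_0} \lesssim t^{1/2-}$; combined with the standard multiplication rules, each monomial contributing to $R(A,B)$ lies in $\cD^{0+,-1-}_{-1-}$ with norm $\lesssim t^{1-}$, using $|B|_{\cD^{1+,0-}_{0-};O} = O(1)$. Applying~\eqref{eq:short_time_conv} with a small $\theta>0$ converts this into a bound of order $t^{1/2-} \cdot t^{1-} = t^{3/2-N\kappa}$ in $\cD^{1+,0-}_0$, which is exactly the claimed remainder. I expect this last estimate to be the main obstacle: one must verify that every term generated by the Taylor remainder lies in a function space where the short-time convolution bound applies and that the accumulated exponents of $t$ and $\kappa$ work out, since there is essentially no room to spare in the regularity count. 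The term $\CP \bone_+ ch$ is kept unexpanded, as required by the statement, and will be handled in the next step (combined with Lemma~\ref{lem:h_Euler_1}) when deriving Proposition~\ref{prop:Euler_A}.
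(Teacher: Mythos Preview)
Your approach is essentially the same as the paper's, and the proof is correct. One minor slip: when you apply~\eqref{eq:CP_DHB_bound} to $E = O_{\cD^{1+,0-}_0}(t^{1-N\kappa})$, the bound you obtain is $O_{\cD^{1+,0-}_0}(t^{3/2-N\kappa})$, not $O(t^{2-N\kappa})$, since the $O(t^{1/2}|X|)$ term in~\eqref{eq:CP_DHB_bound} (the one valued in $[\mfg,\mfg]$) dominates when you forget the Lie-algebra refinement; this is still exactly the claimed error, so nothing is lost.
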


\begin{proof}
By Taylor expansion,
$
H(A) = H(B) + DH(B)(A-B) + r
$,
where, by~\eqref{eq:Euler_1},
$|r|_{\cD^{0+,-1-}_{0-}} \lesssim
|A- B|_{\cD^{1+,0-}_0}^2 + |B|_{\cD^{1+,0-}_{0-}}|A- B|_{\cD^{1+,0-}_0}^2 \lesssim t^{1-}$.
Therefore
\begin{equ}
A = B + \CP (H(B) + DH(B)(A-B) + ch) + \CP r\;,
\end{equ}
where, by~\eqref{eq:short_time_conv},
\begin{equ}\label{eq:n=2_bound}
|\CP r|_{\cD^{1+,0-}_0} \lesssim t^{1/2}|r|_{\cD^{0+,-1-}_{0-}} \lesssim t^{3/2-}\;.
\end{equ}
Then, by Lemma~\ref{lem:Euler_2},
\begin{equ}
A = B + B_1
+ \CP  [DH(B)B_1] + \CP [DH(B)O_{\cD^{1+,0-}_0}(t^{1-N\kappa})]
+ \CP  ch
+ \CP r
%\\
%&= B + B_1 + B_2 + \CP  ch+ \CE
\end{equ}
and the conclusion follows by~\eqref{eq:CP_DHB_bound} and~\eqref{eq:n=2_bound}.
%where now $|\CE|_{\cD^{1+,0-}_0} \lesssim t^{3/2-N\kappa}$
%because, by~\eqref{eq:CP_DHB_bound},
%\begin{equ}
%|\CP DH(B) O_{\cD^{1+,0-}_0}(t^{1-N\kappa})|_{\cD^{1+,0-}_0}\lesssim t^{3/2-N\kappa}\;.
%\end{equ}
\end{proof}
Similar to~\eqref{eq:DH_notation}, we introduce the notation
\begin{equ}
D^2H(B)(X,X) \eqdef D^2H(B)X^2 \eqdef 2X\d X+6BX^2
\end{equ}
where again the products have the obvious meaning.
\begin{lemma}\label{lem:Euler_4}
Let $B_3 = \CP \{DH(B)B_2 + 2B_1\d B_1\}$. Then
\begin{equs}
A &= B + B_1 + B_2 + B_3 + \CP \{DH(B)(\CP ch)\} + \CP  ch
\\
&\quad + O_{\cD^{1+,0-}_0[\mfg,\mfg]}(t^{2-N\kappa}) + O_{\cD^{1+,0-}_0}(t^{5/2-N\kappa})\;.
\end{equs}
\end{lemma}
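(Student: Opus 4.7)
The proof continues the iterative Taylor expansion strategy of Lemmas~\ref{lem:Euler_2} and~\ref{lem:Euler_3}, now expanding $H(A)$ to second order in $A - B$. I would begin by writing
\begin{equ}
H(A) = H(B) + DH(B)(A-B) + \tfrac12 D^2H(B)(A-B)^2 + r_3,
\end{equ}
where $r_3$ is cubic in $A - B$ (with $B$-dependent prefactors) and is built entirely from Lie brackets, so that $r_3$ takes values in $[\mfg,\mfg]$. The bound $|A - B|_{\cD^{1+,0-}_0} \lesssim t^{1/2-}$ from~\eqref{eq:Euler_1} gives $|r_3|_{\cD^{0+,-1-}_{-1-}} \lesssim t^{3/2-}$, whence $|\CP r_3|_{\cD^{1+,0-}_0[\mfg,\mfg]} \lesssim t^{2-}$ by~\eqref{eq:short_time_conv}.

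Next, I would substitute the expansion from Lemma~\ref{lem:Euler_3}, namely
\begin{equ}
A - B = B_1 + B_2 + \CP ch + \tilde r_2, \qquad \tilde r_2 = O_{\cD^{1+,0-}_0}(t^{3/2-N\kappa}),
\end{equ}
into each of the remaining pieces. Linearity of $DH(B)$ together with~\eqref{eq:CP_DHB_bound} applied to $\tilde r_2$ yields
\begin{equ}
\CP DH(B)(A-B) = B_2 + \CP DH(B) B_2 + \CP DH(B)\CP ch + E_1,
\end{equ}
with $E_1 = O_{\cD^{1+,0-}_0}(t^{5/2-N\kappa}) + O_{\cD^{1+,0-}_0[\mfg,\mfg]}(t^{2-N\kappa})$, where we used that $B_2$ already appears in the statement via $B_3$ and that $\CP DH(B) \CP ch$ is kept explicitly in the claim. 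For the quadratic piece, all cross terms obtained by expanding $(A-B)^2$ and which involve at least one factor of $B_2$, $\CP ch$, or $\tilde r_2$ are bounded in $\cD^{0+,-1-}_{-1-}[\mfg,\mfg]$ by a product of at least one factor of size $t^{1/2-}$ and one of size $t^{1-N\kappa}$ (using the estimate on $B_1$ above, \eqref{eq:CP_DHB_bound} for $B_2$, and~\eqref{eq:CP_h_Euler_2} for $\CP ch$), which after convolution produces errors of order $t^{2-N\kappa}$ in $\cD^{1+,0-}_0[\mfg,\mfg]$. Hence only $\tfrac12 D^2H(B) B_1^2$ survives at the top order, with its cubic $3BB_1^2$ component belonging to $[\mfg,\mfg]$ and contributing at size $t^{3/2-}$ after convolution, which is then absorbed into the $[\mfg,\mfg]$-error of size $t^{2-N\kappa}$.

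Assembling $A = B + \CP H(A) + \CP ch$ and using $\CP H(B) = B_1$ yields the claimed identity, provided $\CP DH(B) B_2$ together with $\CP$ of the $B_1 \partial B_1$-part of the quadratic contribution combine to form $B_3$, modulo the advertised error terms. The main obstacle, and indeed the only delicate point, is the careful accounting of which remainders are $[\mfg,\mfg]$-valued, since this refinement is what powers the trace-based cancellation in the proof of Proposition~\ref{prop:A_tilde_A} (cf.\ equation~\eqref{eq:hol_diff_simple} and the passage to $t^2$-order there). This hinges on the structural fact that $D^k H(B)$ for $k \geq 2$ is built entirely from Lie brackets, together with~\eqref{eq:CP_DHB_bound} trading one power of $t^{1/2}$ for the privilege of sitting outside $[\mfg,\mfg]$; this is precisely what permits the non-$[\mfg,\mfg]$ errors coming from $DH(B)\tilde r_2$ and $DH(B)\CP ch$ to sit at the sharper rate $t^{5/2-N\kappa}$ rather than the coarser $t^{2-N\kappa}$ that governs the $[\mfg,\mfg]$ contributions.
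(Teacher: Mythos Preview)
Your strategy mirrors the paper's proof and is essentially correct, but there is one genuine gap that breaks the argument as written. You claim that $\CP[3BB_1^2]$ is of size $t^{3/2-}$ and then state it can be ``absorbed into the $[\mfg,\mfg]$-error of size $t^{2-N\kappa}$''. This absorption fails: under Notation~\ref{not:plus_minus} one has $t^{3/2-}=t^{3/2-n\kappa}$ with $n$ small, while $N\kappa\ll 1$, so $3/2-n\kappa < 2-N\kappa$ and $t^{3/2-}$ is strictly larger than $t^{2-N\kappa}$ for small $t$.

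The repair is to recognise that $B(A-B)^2$ contains no derivative and therefore lives in the function-like space $\cD^{1+,0-}_{0-}$ rather than $\cD^{0+,-1-}_{0-}$. The short-time convolution estimate~\eqref{eq:short_time_conv} then permits a gain of nearly $t$ (take $\theta$ close to $2$), yielding $\CP[B(A-B)^2]=O_{\cD^{1+,0-}_0[\mfg,\mfg]}(t^{2-})$, which \emph{can} be absorbed. The paper makes precisely this observation by recording $(A-B)^2 = O_{\cD^{1+,0-}_{0}[\mfg,\mfg]}(t^{1-})$ before applying~\eqref{eq:short_time_conv}. The same mechanism applied to your cubic remainder $r_3=(A-B)^3$ in fact gives the sharper bound $O(t^{5/2-})$ that the paper records (since $(A-B)^3\in\cD^{1+,0-}_0$), though your coarser $O(t^{2-})$ there already suffices.
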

\begin{proof}
Taylor expansion gives
\begin{equ}
H(A) = H(B) + DH(B)(A-B) + \frac12 D^2H(B)(A-B)^2 + (A-B)^3\;.
\end{equ}
We have $(A-B)^3=O_{\cD^{1+,0-}_{0-}[\mfg,\mfg]}( t^{3/2-})$ by~\eqref{eq:Euler_1}, so~\eqref{eq:short_time_conv} implies
$
\CP  (A-B)^3 = O_{\cD^{1+,0-}_{0-}[\mfg,\mfg]}(t^{5/2-})
$,
which is (better than) the desired order.
Furthermore, by \eqref{eq:Euler_1} and \eqref{e:Euler_2},
\begin{equ}
(A-B)\d(A-B) = B_1\d B_1 +  O_{\cD^{0+,-1-}_{-1-}[\mfg,\mfg]}(t^{3/2-N\kappa})\;,
\end{equ}
and, by \eqref{eq:Euler_1}, $(A-B)^2 = O_{\cD^{1+,0-}_{0}[\mfg,\mfg]}(t^{1-})$, which implies
%\begin{equ}
%D^2H(B)(A-B)^2 = \CP\{B_1\d B_1\} + O_{\cD^{1+,0-}_{0}[\mfg,\mfg]}(t^{2-N\kappa})\;,
%\end{equ}
%and thus
by~\eqref{eq:short_time_conv}
\begin{equ}
\CP D^2H(B)(A-B)^2 =  \CP\{2 B_1\d B_1\} + O_{\cD^{1+,0-}_{0}[\mfg,\mfg]}(t^{2-N\kappa})\;,
\end{equ}
which is the desired order.
Finally, applying Lemma~\ref{lem:Euler_3} and~\eqref{eq:CP_DHB_bound} we obtain
%\begin{equ}
%DH(B)(A-B) = DH(B)(B_1 + B_2 + \CP  ch+ O_{\cD^{1+,0-}_0[\mfg,\mfg]}(t^{3/2-N\kappa}) + O_{\cD^{1+,0-}_0}(t^{2-N\kappa}))\;.
%\end{equ}
%Using~\eqref{eq:CP_DHB_bound}, we obtain 
\begin{equs}
\CP[DH(B)(A-B)]
&= \CP \{DH(B)(B_1+B_2+\CP ch)\}
\\
&\quad+ O_{\cD^{1+,0-}_{0-}[\mfg,\mfg]}(t^{2-N\kappa}) + O_{\cD^{1+,0-}_{0-}}(t^{5/2-N\kappa})\;.
\end{equs}
\end{proof}
The point in Lemma~\ref{lem:Euler_4} is that everything except $h$ and the terms $O(\cdots)$ on the right-hand side 
is a function only of $B$, and that $h$ appears only linearly (except possibly in $O(\cdots)$).
We now approximate $\CP h$ further.
\begin{lemma}\label{lem:h_Euler}
One has
$
h = h(0) + \CP\partial(Bh(0))+ O_{\CC^{0+}[\mfg,\mfg]}(t^{1-N\kappa})
+ R_t$
for $R_t$ as in Lemma~\ref{lem:h_Euler_1}.
\end{lemma}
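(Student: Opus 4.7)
The plan is to iterate the fixed point~\eqref{eq:h_FPP} one step further than Lemma~\ref{lem:h_Euler_1} does, substituting the first-order approximations $h\approx h(0)$ and $A\approx B$ (from~\eqref{eq:h_Euler_1} and~\eqref{eq:Euler_1} respectively) into the most singular term $\CP\partial(Ah)$, and showing that its leading contribution is precisely $\CP\partial(Bh(0))$.

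Concretely, I would start from the rewritten fixed point
\[
h = Ph(0) + \CP(h\partial h) + \CP(Ah^2) + \CP\partial(Ah),
\]
and first observe that since $h(0)\in\CC^{2+}(\T^2)$ by the polynomial control~\eqref{eq:poly_constant}, the identity $Ph(0) - h(0) = \int_0^{\cdot} e^{u\Delta}\Delta h(0)\,\mrd u$ yields $|Ph(0) - h(0)|_{\CC^{0+};t}\lesssim t^{1-N\kappa}$. The bracket terms $\CP(h\partial h)$ and $\CP(Ah^2)$ are already controlled in the course of proving Lemma~\ref{lem:h_Euler_1} (see~\eqref{eq:h_better}) by $O_{\CC^{0+}[\mfg,\mfg]}(t^{1-N\kappa})$, the $[\mfg,\mfg]$-valuedness being automatic since $[h_j,\partial_j h_i]$ and $[[A_j,h_j],h_i]$ are Lie brackets.

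The main computation is then to extract the leading contribution from $\CP\partial(Ah)$, where $Ah$ denotes the Lie bracket $[A_j, h_j]$ from~\eqref{eq:h_equ} and thus automatically lies in $[\mfg,\mfg]$. I would decompose
\[
Ah = Bh(0) + (A-B)h(0) + B(h-h(0)) + (A-B)(h-h(0))
\]
and estimate each correction separately. For $(A-B)h(0)$, combining $|A-B|_{\cD^{1+,0-}_0;t}\lesssim t^{1/2-}$ from~\eqref{eq:Euler_1} with smoothness of $h(0)$ (lifted to the polynomial sector), then applying~\eqref{eq:short_time_conv} to $\CP\partial$ with $\theta$ slightly less than one, yields $O_{\cD^{1+,0-}_0}(t^{1-N\kappa})$. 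For $B(h-h(0))$, lifting $h-h(0)$ to the polynomial sector and using the $\CC^{0+}$ bound $t^{1/2-N\kappa}$ from Lemma~\ref{lem:h_Euler_1} gives a product bound of order $t^{1/2-N\kappa}$ in $\cD^{1+,0-}_{0-}$; applying~\eqref{eq:short_time_conv} again yields $O(t^{1-N\kappa})$ after $\CP\partial$. For the quadratic remainder $(A-B)(h-h(0))$, the bounds multiply to give $O(t^{1-N\kappa})$ on the product, so $\CP\partial$ even delivers $O(t^{3/2-N\kappa})$, stronger than needed. All three correction terms are Lie brackets and therefore take values in $[\mfg,\mfg]$.

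The main obstacle, though a mild one, is the careful bookkeeping of modelled-distribution norms when multiplying the modelled distribution $B$ with a classical function such as $h-h(0)$; this is handled by the standard lift of a continuous function to the polynomial sector together with the multiplication and short-time convolution estimates~\eqref{eq:short_time_conv}. No renormalisation or probabilistic input is required---this is a purely deterministic Euler expansion, essentially a one-step refinement of the argument behind Lemma~\ref{lem:h_Euler_1}.
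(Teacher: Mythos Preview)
Your proposal is correct and follows essentially the same approach as the paper: substitute $h\approx h(0)$ and $A\approx B$ (from Lemma~\ref{lem:h_Euler_1} and~\eqref{eq:Euler_1}) into the dominant term $\CP\partial(Ah)$, with the remaining terms handled via~\eqref{eq:h_better}. Your explicit four-term decomposition of $Ah-Bh(0)$ is slightly more systematic than the paper's two-step substitution ($Ah\to Ah(0)\to Bh(0)$), but the substance is identical.
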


\begin{proof}
Since $h$ solves~\eqref{eq:h_FPP}, we have
\begin{equs}
h &= h(0) + \CP \partial(Ah) + O_{\CC^{0+}[\mfg,\mfg]}(t^{1-N\kappa}) + R_t
\\
&= h(0) + \CP\partial (A h(0)) + O_{\CC^{0+}[\mfg,\mfg]}(t^{1-N\kappa}) + R_t
\end{equs}
where we used~\eqref{eq:h_better} in the first equality
and, in the second equality,~\eqref{eq:h_Euler_1} combined with
$
\CP\partial (A O_{\CC^{0+}}(t^{1/2-N\kappa})) = O_{\CC^{0+}[\mfg,\mfg]}(t^{1-N\kappa})
$.
Finally, we have $A = B + O_{\CC^{0-}}(t^{1/2-})$ by~\eqref{eq:Euler_1},
and
$
\CP \partial (O_{\CC^{0-}}(t^{1/2-}) h(0)) = O_{\CC^{0+}[\mfg,\mfg]}(t^{1-N\kappa})
$,
from which the desired estimate follows.
\end{proof}

\begin{proof}[of Proposition~\ref{prop:Euler_A}]
Combining Lemmas~\ref{lem:Euler_4} and~\ref{lem:h_Euler} and
\begin{equs}
\CP \{DH(B)\CP  [O_{\CC^{0+}[\mfg,\mfg]}(t^{1-N\kappa})+ R_t]\} =  O_{\cD^{1+,0-}_0[\mfg,\mfg]}(t^{2-N\kappa}) + O_{\CC^{0+}}(t^3)
\end{equs}
(which follows from $|\CP f|_{\CC^{1+}}\lesssim t^{1/2}|f|_{\CC^{0+}}$ and~\eqref{eq:CP_DHB_bound})
and
\begin{equ}
\CP O_{\CC^{0+}[\mfg,\mfg]}(t^{1-N\kappa}) =  O_{\cD^{0+,0-}_0[\mfg,\mfg]}(t^{2-N\kappa})\;,
\end{equ}
we obtain
\begin{equs}
A &= B + B_1 + B_2 +B_3
+\CP \{DH(B)\CP c[ h(0)
+ \CP\partial (B h(0))]\}
+\CP c h(0)
\\
&\quad
+ \CP \{c\CP \partial(B h(0))\} + \CP c R_t
+O_{\cD^{0+,0-}_0[\mfg,\mfg]}(t^{2-N\kappa}) 
+O_{\cD^{1+,0-}_0}(t^{5/2-N\kappa}) \;.
\end{equs}
We have also $\CP\partial (B h(0)) = O_{\CC^{0+}}(t^{1/2-N\kappa})$ which implies
\begin{equs}[eq:L_t_2_terms]
\CP \{DH(B)\CP c[ h(0)
+ \CP\partial (B h(0))]\} &= O_{\CC^{0+}}(t^{3/2-N\kappa})\;,
\\
\CP \{c\CP \partial(B h(0))\} &=
O_{\CC^{0+}}(t^{3/2-N\kappa})\;.
\end{equs}
%so these terms can be absorbed into $L_t(h(0))$ in~\eqref{eq:A_Euler}.
Moreover, we can write $\CP h(0)= \bar h + \{\CP h(0) - \bar h\}$ where $\CP h(0) - \bar h= O_{\CC^{0+}}(t^2)$ is linear in $h(0)$.
The terms \eqref{eq:L_t_2_terms}, $\CP c R_t$, and $\CP ch(0) - c\bar h$ thus combine to give the desired $L_t(h(0))$ in~\eqref{eq:A_Euler}.
The conclusion follows since $B_1=O_{\cD^{1+,0-}_0}(t^{1/2-})$ by~\eqref{eq:Euler_1} and Lemma~\ref{lem:Euler_2},
and $B_{2,3}$  are of even smaller order.
\end{proof}

\subsection{Expansion of Wilson loops}
\label{subsec:hol_expansion}

We keep the same setting as the previous subsection.
In particular, we have a model $Z$ on the regularity structure $\cT$ and a given modelled distribution $B\in\cD^{1+,0-}_{0-}$.
We now consider $\tilde A$ and $A$ as the modelled distribution from the previous subsection
with different choices for the initial condition of $h$.
More specifically, we take $h(0)=0$ (and thus $h\equiv 0$) for $A$,
and generic smooth $h(0)$ for now for $\tilde A$.
We emphasise that $A,\tilde A$ solve~\eqref{eq:A_general_FPP} with different $h$ but with the \textit{same} model $Z$ and modelled distribution $B$.
We furthermore take the existence time $\tau$ common to $A,\tilde A$ and consider $t\in(0,\tau]$
and assume that the model $Z$ is such that the reconstructions of all modelled distributions can be evaluated at time-slices.

We denote by $a,\tilde a, b \in \CD'(\T^2)$ the reconstructions of $A,\tilde A, B$ respectively evaluated at time $t$.
The main result of this subsection is Lemma~\ref{lem:a_tilde_a_hol} which estimates the difference between $\hol(\tilde a,\ell)$ and $\hol(a,\ell)$.

%All function norms, e.g. $|\cdot|_{\CC^{\gamma}}$, are taken over $\T^2$.
As before, unless otherwise stated, all implicit proportionality constants below are sufficiently large powers of~\eqref{eq:poly_constant} and do not depend on $t$.
We have the following estimate on $a-b$ and $\tilde a- a$, which is an immediate consequence of Proposition~\ref{prop:Euler_A}.

\begin{lemma}\label{lem:a_tilde_a_diff}
One has $a = b + O_{\CC^{0+}(\T^2)}(t^{1/2-})$
and
\begin{equ}\label{eq:a_tilde_a_diff}
\tilde a = a + t c h(0) + L_t(h(0)) + O_{\CC^{0+}(\T^2,[\mfg,\mfg])}(t^{2-N\kappa})
+O_{\CC^{0+}(\T^2)}(t^{5/2-N\kappa}) \;,
\end{equ}
where $L_t(h(0))= O_{\CC^{0+}(\T^2)}(t^{3/2-N\kappa})$ is linear in $h(0)$.
\end{lemma}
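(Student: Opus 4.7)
The plan is to read off Lemma~\ref{lem:a_tilde_a_diff} directly from Proposition~\ref{prop:Euler_A} by applying the reconstruction operator at the time slice $s=t$. First, I would set up the comparison: both $A$ and $\tilde A$ solve the fixed-point problem \eqref{eq:A_general_FPP} with the \emph{same} model $Z$ and the \emph{same} given $B\in\cD^{1+,0-}_{0-}$, differing only through the initial data of $h$ in \eqref{eq:h_FPP}. For $A$ we have $h\equiv 0$ since $h(0)=0$, so the terms $c\bar h(0)$ and $L_t(h(0))$ in \eqref{eq:A_Euler} vanish and the proposition simplifies to
\begin{equ}
A = B + D + O_{\cD^{0+,0-}_0[\mfg,\mfg]}(t^{2-N\kappa}) + O_{\cD^{1+,0-}_0}(t^{5/2-N\kappa}).
\end{equ}
For $\tilde A$ with generic smooth $h(0)$, Proposition~\ref{prop:Euler_A} applies verbatim and, crucially, produces the \emph{same} $D$ (which depends only on $B$).

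Second, I would subtract the two expansions before reconstructing. Since $D$ cancels, one obtains
\begin{equ}
\tilde A - A = c\bar h(0) + L_t(h(0)) + O_{\cD^{0+,0-}_0[\mfg,\mfg]}(t^{2-N\kappa}) + O_{\cD^{1+,0-}_0}(t^{5/2-N\kappa})
\end{equ}
in $\cD^{\gamma,\eta}_{0}$ on $(0,t]\times\T^2$. For the first statement of the lemma, I would separately invoke \eqref{eq:Euler_1} to write $A=B+O_{\cD^{1+,0-}_0}(t^{1/2-})$.

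Third, I would translate these modelled-distribution bounds into $\CC^{0+}(\T^2)$ bounds on the time-$t$ slice. The terms $c\bar h(0)$ and $L_t(h(0))$ live in the polynomial (function-like) sector, so reconstruction is just evaluation: $c\bar h(0)$ reconstructs to $(s,x)\mapsto sch(0)(x)$, which at $s=t$ equals $tch(0)$; and $L_t(h(0))$ reconstructs to a function in $\CC^{0+}(\T^2)$ of size $t^{3/2-N\kappa}$ that is linear in $h(0)$, inheriting these properties from Proposition~\ref{prop:Euler_A}. For the remainder terms one uses that reconstruction of a modelled distribution in $\cD^{\gamma,\eta}_0$ of size $M$ gives, at any fixed time-slice $s=t>0$, a function whose $\CC^{0+}$ norm is bounded by $M$ times a factor $t^{(\eta\wedge 0)/2}$; since $\eta=0-$ this factor is absorbed into the $N\kappa$ slack in the exponents, preserving the orders $t^{2-N\kappa}$ and $t^{5/2-N\kappa}$ stated in the lemma. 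The $[\mfg,\mfg]$-valuedness of the first remainder is preserved under reconstruction because reconstruction commutes with the projection to $[\mfg,\mfg]$.

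There is essentially no hard step here: the substantive work was already done in Proposition~\ref{prop:Euler_A} and in establishing the short-time Schauder and reconstruction theory of \cite{Hairer14}. The only small point requiring care is the bookkeeping of the time-weight $t^{\eta/2}$ when evaluating a modelled distribution in $\cD^{\gamma,\eta}_0$ at the (non-degenerate) time slice $s=t$, which is a standard computation and is why the stated exponents carry the $N\kappa$ (rather than a cleaner $\kappa$) loss.
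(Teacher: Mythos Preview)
Your proposal is correct and follows essentially the same approach as the paper, which simply states that the lemma ``is an immediate consequence of Proposition~\ref{prop:Euler_A}.'' You have spelled out the steps the paper leaves implicit—applying the proposition to both $A$ and $\tilde A$, using that $D$ depends only on $B$ so it cancels, and reconstructing at the time slice $s=t$—and your bookkeeping of the time-weights and the $[\mfg,\mfg]$-valuedness is accurate.
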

Consider $p\in[1,2)$ and a path $\gamma\in\CC^{\var p}([0,1],\mfg)$,
where $\CC^{\var p}$ is defined below~\eqref{eq:p-var}.
Let $E^\gamma \in \CC^{\var p} ([0,1],G)$
denote the path ordered exponential of $\gamma$, i.e. the solution to the linear ODE
\begin{equ}\label{eq:V_ODE}
\mrd E^\gamma(x) = E^\gamma(x)\mrd \gamma(x)\;,\quad E^\gamma(0)=\id\;,
\end{equ}
which is well-posed as a Young ODE~\cite{Lyons94}.

%For $\gamma$ as above, since the ODE for $E^\gamma$ is linear, we have the expansion in terms of iterated (Young) integrals
%\begin{equs}%\label{eq:hol_expansion}
%E^\gamma(1) &= 1 + \int_0^1 \mrd \gamma(x) + \int_0^1\int_0^x \mrd \gamma(y)\mrd \gamma(x) + \ldots
%\\
%&= 1 + \sum_{k\geq1} \int_0^1\ldots\int_0^{x_{k-1}} \mrd \gamma(x_{k})\ldots \mrd \gamma(x_1)\;,
%\end{equs}
%which converges absolutely and, for all $k\geq 1$,
%\begin{equ}\label{eq:Young_iter_int}
%\Big|\int_0^1\ldots\int_0^{x_{k-1}} \mrd \gamma(x_{k})\ldots \mrd\gamma(x_1)\Big| \leq \frac{C|\gamma|_{\var p}^k}{(k/p)!}\;,
%\end{equ}
%where $C=C(p)>0$ is a constant,
%see e.g.~\cite[Thm.~3.7]{LyonsStFlour07}.

We will be interested in perturbing $\gamma$, for which we have the following result.
The lemma is a routine application of Young's estimate for integrals, but we give a proof for completeness.

\begin{lemma}\label{lem:path_perturb}
For $\zeta\in \CC^{\var p}([0,1],\mfg)$ and $L\geq 4$
we have
\begin{equs}
E^{\gamma+\zeta}(1) = E^\gamma(1) &+ \int_0^1 \mrd \zeta(x) + P^\gamma(\zeta)
+ \int_0^1\int_0^x \mrd \zeta(y)\mrd \zeta(x)
\\
&+ O\{w^L+w^{L+1} + v^{L+1} + v^2 (w + v + w^{L-3})\}
\end{equs}
where $v\eqdef |\zeta|_{\var p}$, $w\eqdef |\gamma|_{\var p}$, and $P^\gamma(\zeta) = O(v(w+w^{L-2}))$
is linear in $\zeta$, and
where the proportionality constants depend only on $p,L$.
\end{lemma}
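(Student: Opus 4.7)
The plan is to expand $E^{\gamma+\zeta}(1)$ in a truncated Chen (Picard) series and then decompose each iterated integral by multilinearity according to how many of its $n$ differentials come from $\zeta$ rather than from $\gamma$. Iterating the integral equation $E^{\eta}(x) = 1 + \int_0^x E^\eta(y)\,d\eta(y)$ with $\eta = \gamma + \zeta$ gives, for any $L \geq 3$,
\begin{equ}
E^{\gamma+\zeta}(1) = \sum_{n=0}^{L-1} I_n(\gamma+\zeta) + R_L\;,\qquad I_n(\eta) \eqdef \int_{0 < x_1 < \cdots < x_n < 1} d\eta(x_1) \cdots d\eta(x_n)\;,
\end{equ}
where $R_L$ is the $L$-fold iterated Young integral with the factor $E^{\gamma+\zeta}(x_1)$ in the innermost slot. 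Iterating Young's estimate~\eqref{eq:Young_int} $L$ times, and using that $E^{\gamma+\zeta}$ takes values in the compact group $G$ (so $|E^{\gamma+\zeta}|_\infty \leq C$ independently of $v,w$) together with the linear Young-ODE bound $|E^{\gamma+\zeta}|_{\var p} \lesssim v+w$, I would obtain the polynomial remainder bound $|R_L| \leq C_{p,L}(v+w)^L(1+v+w)$.

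Next I would decompose, by multilinearity, each $I_n(\gamma+\zeta)$ as a sum of $2^n$ iterated integrals $I_n^\epsilon$ indexed by $\epsilon \in \{\gamma,\zeta\}^n$, with $|I_n^\epsilon| \leq C_{p,n}\, v^{k(\epsilon)} w^{n-k(\epsilon)}$ where $k(\epsilon)$ counts the $\zeta$-entries of $\epsilon$. Regrouping by $k$ produces four blocks. The $k=0$ block $\sum_{n<L} I_n(\gamma)$ is the truncation of the Chen series for $E^\gamma(1)$, whose tail is bounded by $C_{p,L}\, w^L(1+w)$ by the same argument as for $R_L$ (applied with $\zeta=0$). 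The $k=1$ block isolates the unique $n=1$ contribution $\int_0^1 d\zeta(x)$ from a finite sum (over $2\leq n\leq L-1$) of iterated integrals with exactly one $d\zeta$; this latter sum is manifestly linear in $\zeta$ and is \emph{defined} to be $P^\gamma(\zeta)$, which is then bounded by $C_{p,L}\, v(w+w^{L-1})$. The $k=2$ block isolates $\int_0^1\int_0^{x_2} d\zeta(x_1)\,d\zeta(x_2)$ from a finite sum over $3\leq n\leq L-1$ bounded by $C_{p,L}\, v^2(w+w^{L-2})$. The $k\geq 3$ block is bounded by $C_{p,L}\, v^3(1+v+w)^{L-3}$ by using $v^k\leq v^3(v+w)^{k-3}$ and summing the resulting geometric-type series. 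Combining these four blocks with $R_L$, and absorbing every extra polynomial factor in $v+w$ into the overall factor $(1+v+w)$ (with proportionality constants allowed to depend on $p,L$), produces exactly the expansion claimed in the statement.

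The step I expect to require the most care is the remainder bound $|R_L| \leq C_{p,L}(v+w)^L(1+v+w)$. A naive iteration of Young's estimate for the linear Young ODE defining $E^{\gamma+\zeta}$ produces a Gr\"onwall-type exponential factor $e^{C(v+w)^p}$ that would be incompatible with the polynomial error shape demanded here. Compactness of $G$ is the key input that removes the exponential: it supplies the uniform bound $|E^{\gamma+\zeta}|_\infty \leq C$ independently of $v,w$, and hence, via the Young-integration identity $E^{\gamma+\zeta} = 1 + \int E^{\gamma+\zeta}\,d(\gamma+\zeta)$ and the usual subdivision-plus-superadditivity argument for $p$-variation with $p<2$, gives $|E^{\gamma+\zeta}|_{\var p} \lesssim v+w$ with constant independent of $v,w$. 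Once this uniform $p$-variation bound is in hand, each of the $L$ iterations of Young's inequality in the definition of $R_L$ contributes only one factor of $v+w$ with a constant depending on $p$ and $L$. The remaining work is then multilinear bookkeeping of the four $k$-blocks, and the linearity of $P^\gamma(\zeta)$ in $\zeta$ is immediate from its definition as a sum of iterated integrals containing exactly one $d\zeta$.
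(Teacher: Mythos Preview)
Your final absorption step fails. The lemma's error $(v^3 + v^2(w+w^{L-2}) + vw^{L-1} + w^L)(1+v+w)$ has degree at most $4$ in $v$ alone (set $w=0$), whereas your remainder bound $|R_L|\lesssim (v+w)^L(1+v+w)$ contributes $v^{L+1}$ at $w=0$, and your $k\ge 3$ block $v^3(1+v+w)^{L-3}$ contributes $v^L$. Neither fits into the stated error with a constant depending only on $p,L$: the phrase ``absorbing every extra polynomial factor in $v+w$ into the overall factor $(1+v+w)$'' is exactly where the argument breaks. Your proof does go through if one adds the hypothesis $v+w\le 1$ (then $(1+v+w)^{L-3}\le 2^{L-3}$ and the $k\ge 3$ part of $(v+w)^L$ is at most $2^L v^3$), and this restricted version is in fact enough for the paper's downstream application, but it is not the lemma as stated.

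The paper obtains the sharper error by an \emph{asymmetric} expansion. It first expands $E^{\gamma+\zeta}(1)$ only to order $3$, absorbs the single term with three $\mrd\zeta$'s together with its $E^{\gamma+\zeta}$ factor into $O(v^3(1+v+w))$, and then iterates $E^{\gamma+\zeta}=1+\int E^{\gamma+\zeta}\,\mrd(\gamma+\zeta)$ on the remaining $E$-terms with the rule: any $E$-term that has acquired two $\mrd\zeta$'s is stopped and absorbed as $O(v^2 w^{k-2}(1+v+w))$; only $E$-terms with at most one $\mrd\zeta$ are pushed further, all the way to depth $L$, where they yield the $vw^{L-1}$ and $w^L$ contributions. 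This selective iteration guarantees that at most three powers of $v$ ever appear, which is precisely what the stated error demands. The fix for your approach is therefore not to expand symmetrically to depth $L$, but to terminate the Picard iteration early in the $\zeta$-direction while carrying it to depth $L$ in the $\gamma$-direction.
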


\begin{proof}
Denoting $I^\gamma=\sum_{k=1}^{L-1} \int_0^1\ldots\int_0^{x_{k-1}} \mrd \gamma(x_{k})\ldots \mrd \gamma(x_1)$,
by linearity of the ODE~\eqref{eq:V_ODE}, for every $L\geq 1$
\begin{equ}
E^\gamma(1) = \id
+ I^\gamma +
\int_0^1\cdots\int_0^{x_{L-1}} E^{\gamma}(x_{L})\mrd \gamma(x_{L}) \mrd \gamma(x_{L-1})\ldots \mrd \gamma(x_1)\;.
\end{equ}
Since $E^\gamma$ takes values in a compact set, it follows that $|E^\gamma|_{\var p} \leq C_p |\gamma|_{\var p}$.
Recalling Young's estimate for integrals~\eqref{eq:Young_int} with $p=q$,
we obtain
\begin{equ}\label{eq:V_gamma_expansion}
E^\gamma(1) = \id
+ I^\gamma + O(w^L + w^{L+1})\;,
\end{equ}
where the proportionality constant depends only on $L,p$.
Likewise
\begin{equ}
E^{\gamma+\zeta}(1)
=
\id + I^{\gamma+\zeta}
+ O(w^L + v^L+ w^{L+1} + v^{L+1})\;.
\end{equ}
Expanding the iterated integrals in $I$, we further have
\begin{equs}
I^{\gamma+\zeta} &= I^\gamma + \int_0^1\mrd \zeta(x) + P^\gamma(\zeta)
+ \int_0^1\int_0^x \mrd \zeta(y)\mrd \zeta(x) 
\\
&\qquad + O\{v^2 (w + v + w^{L-3} + v^{L-3})\}
\end{equs}
where $P^\gamma(\zeta)$ arises from the integrals with one instance of $\zeta$ and at least one instance of $\gamma$, which is clearly linear in $\zeta$ and
satisfies $P^\gamma(\zeta) = O(v(w+w^{L-2}))$,
and $O\{v^2 (w + v + w^{L-3} + v^{L-3})\}$ arises from the $k$-fold integrals for $3\leq k \leq L-1$ which have at least two instances of $\zeta$.
%\ilya{This is where we use $L\geq 4$, otherwise this term is absent}
The conclusion now follows once we remark that $v^{L} + v^{L-1} = O(v^3 + v^{L+1})$.
\end{proof}

For metric spaces $(E,\rho),(E',\rho')$, a function $f\colon E\to E'$, and $\alpha\in[0,1]$, denote
\begin{equ}\label{eq:Hol_def}
|f|_{\Hol\alpha} \eqdef \sup_{x\neq y\in E} \rho(x,y)^{-\alpha}\rho'(f(x),f(y))\;.
\end{equ}
%Consider 
Suppose that we are given a line $\ell\colon [0,1]\to \T^2$,
$\ell(x)=z+xhe_i$, where $z\in\T^2$, $h\in[-1,1]$,
 and $i\in\{1,2\}$.
For a $1$-form $\omega\in\Omega^1_{\alpha}$
we define $\ell_\omega \colon [0,1] \to \mfg$
by $\ell_\omega(x)=\omega(\ell\restr_{[0,x]})$,
where $\ell\restr_{[0,x]}$ is understood canonically as a line segment in $\T^2$ and $\omega(\ell\restr_{[0,x]})$ is well-defined by additivity of $\omega$.
(If $h=0$ then $\ell$ is constant and thus $\ell_\omega\equiv0$ by additivity.)
A basic consequence of additivity is that
\begin{equ}\label{eq:ell_omega_Hol}
|\ell_\omega|_{\Hol\alpha} \leq 2^{1-\alpha}  |\ell|^\alpha|\omega|_{\gr\alpha}\;.
\end{equ}
(The factor $2^{1-\alpha}$ arises from our restriction to lines of length $\leq 1/2$ in the definition of $|\omega|_{\gr\alpha}$.)
For $\alpha\in(\frac12,1]$, we can thus define 
the holonomy $\hol(\omega,\ell)$ by \begin{equ}\label{eq:hol_def}
\hol(\omega,\ell)=E^{\ell_\omega}(1)\;,
\end{equ}
well-posed as a Young ODE since $|\cdot|_{\var{(1/\alpha)}} \leq |\cdot|_{\Hol\alpha}$.

%Consider
Below we fix the loop $\ell\colon[0,1]\to\T^2$, $\ell(x)=(x,0)$ as in Proposition~\ref{prop:A_tilde_A}.

\begin{lemma}\label{lem:a_tilde_a_hol}
For any $\alpha\in(\frac12,1]$ and $L\geq 4$ 
\begin{equ}\label{eq:hol_diff_1}
\hol(\tilde a,\ell) = \hol(a,\ell) + t c_1 \gamma(1) + O(t|a|_{\gr\alpha} + t^{3/2-N\kappa} + |a|_{\gr\alpha}^{L}+|a|_{\gr\alpha}^{L+1})\;,
\end{equ}
where $\gamma\in \CC^\infty([0,1],\mfg^2)$ is defined by $\gamma_i(x)=\int_0^x h_i(0)(y,0)\mrd y$.
Furthermore
\begin{equs}[eq:hol_diff_2]
\Trace\hol(\tilde a,\ell)
&= \Trace\hol(a,\ell)
+ t\Trace c_1\gamma(1)
+\bar L_t(h(0)) + \frac{t^2}2 \Trace \big[( c_1\gamma(1))^2\big]
\\
&\qquad + O(t^{2-N\kappa}|a|_{\gr\alpha} + |a|_{\gr\alpha}^{L}+|a|_{\gr\alpha}^{L+1}+ t^{5/2-N\kappa})\;,
\end{equs}
where
$\bar L_t(h(0))=O(t^{3/2-N\kappa}+t(|a|_{\gr\alpha}+|a|^{L-2}_{\gr\alpha}))$ %\ilya{added $t^{3/2-N\kappa}$ which I think was missing - see below. Also changed $|a|^{L-1}_{\gr\alpha}$ to $|a|^{L-2}_{\gr\alpha}$ here and in proof}
 is linear in $h(0)$.
\end{lemma}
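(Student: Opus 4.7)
My plan is to parameterise $\hol(\tilde a, \ell) = E^{\gamma+\zeta}(1)$ and $\hol(a, \ell) = E^\gamma(1)$ by setting $\gamma = \ell_a$ and $\zeta = \ell_{\tilde a - a}$, and then to apply the Young--Euler expansion of Lemma~\ref{lem:path_perturb} around $E^\gamma(1)$ with $p = 1/\alpha$. The two variation estimates I need are $w = |\gamma|_{\var{1/\alpha}} \lesssim |a|_{\gr\alpha}$, which is immediate from $|\cdot|_{\var{1/\alpha}} \leq |\cdot|_{\Hol\alpha}$ and \eqref{eq:ell_omega_Hol}, and $v = |\zeta|_{\var{1/\alpha}} \leq |\zeta|_{\var 1} \lesssim |\tilde a - a|_{L^\infty} \lesssim t$, which uses that $\tilde a - a$ is a continuous function of size $O(t)$ by Lemma~\ref{lem:a_tilde_a_diff}, so $\zeta$ is Lipschitz in its argument with constant $O(t)$.

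\smallskip
\noindent\textbf{First claim.} Lemma~\ref{lem:path_perturb} delivers
\[
E^{\gamma+\zeta}(1) = E^\gamma(1) + \zeta(1) + P^\gamma(\zeta) + \int_0^1\!\!\int_0^x d\zeta(y)\, d\zeta(x) + R,
\]
where $R$ collects all error terms of total $(v,w)$-degree at least three. Substituting Lemma~\ref{lem:a_tilde_a_diff} and integrating the first component along $\ell(x) = (x,0)$ gives $\zeta(1) = t\, c_1\gamma(1) + O(t^{3/2-N\kappa})$, since the linear-in-$h(0)$ piece $\ell_{L_t(h(0))}(1)$ is $O(t^{3/2-N\kappa})$ and both the $[\mfg,\mfg]$-valued and the unrestricted higher-order remainders are even smaller. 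With $v \lesssim t$ and $w \lesssim |a|_{\gr\alpha}$, each of $P^\gamma(\zeta) = O(v(w+w^{L-2}))$, the iterated integral $O(v^2)$, and $R$ decomposes into a finite collection of monomials in $t$ and $|a|_{\gr\alpha}$; via routine Young-type manipulations (such as $t|a|_{\gr\alpha}^{L-2}\lesssim t|a|_{\gr\alpha} + |a|_{\gr\alpha}^{L-1}$) every one of them fits into the stated error of \eqref{eq:hol_diff_1}.

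\smallskip
\noindent\textbf{Trace claim.} Taking $\Trace$ of the same expansion unlocks two cancellations. First, since the trace of any commutator vanishes, one has $\Trace|_{[\mfg,\mfg]} = 0$, which kills the $\Trace$ of the $O_{[\mfg,\mfg]}(t^{2-N\kappa})$ contribution to $\zeta(1)$. Second, cyclicity of $\Trace$ collapses the iterated integral to a \emph{commutative} square: since $\zeta$ is Lipschitz, $\int_0^1\!\!\int_0^x d\zeta\, d\zeta = \int_0^1 \zeta(x)\zeta'(x)\, dx$, and $\Trace(\zeta\zeta') = \tfrac12 \Trace\bigl((\zeta^2)'\bigr)$, so
\[
\Trace \int_0^1\!\!\int_0^x d\zeta(y)\, d\zeta(x) = \tfrac12 \Trace\bigl(\zeta(1)^2\bigr),
\]
as $\zeta(0) = 0$. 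Substituting $\zeta(1) = tc_1\gamma(1) + (\text{smaller terms})$ and expanding the square produces the announced leading $\tfrac{t^2}{2}\Trace\bigl((c_1\gamma(1))^2\bigr)$; the cross terms are either of size $O(t^{5/2-N\kappa})$ or linear in $h(0)$, in which case they are gathered into $\bar L_t$ together with the linear-in-$h(0)$ parts of $\Trace\zeta(1)$ and of $\Trace P^\gamma(\zeta)$.

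\smallskip
\noindent\textbf{Expected main obstacle.} The delicate step is the bookkeeping for the trace claim, and in particular verifying the stated size bound $\bar L_t(h(0)) = O(t(|a|_{\gr\alpha} + |a|_{\gr\alpha}^{L-1}))$ on the linear functional $\bar L_t$. While the $P^\gamma$ contribution naturally carries the $|\gamma|_{\var{1/\alpha}} \lesssim |a|_{\gr\alpha}$ factor, the linear-in-$h(0)$ contributions coming from $\Trace\ell_{L_t(h(0))}(1)$ and from the cross terms in $\Trace\zeta(1)^2$ a priori carry only a $t$-dependent bound; extracting the claimed $|a|_{\gr\alpha}$-factor will require revisiting the piece-by-piece decomposition of $L_t(h(0))$ in the proof of Proposition~\ref{prop:Euler_A} and exploiting that its dominant pieces carry a multiplicative $B$-factor which, after line integration and combination with the residual $\gamma$-dependence, produces the required $|a|_{\gr\alpha}$-weighted bound.
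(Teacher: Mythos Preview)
Your approach is the paper's: set $\gamma=\ell_a$, $\zeta=\ell_{\tilde a-a}$, apply Lemma~\ref{lem:path_perturb} with $p=1/\alpha$, and for the trace identity use cyclicity together with $\Trace[\mfg,\mfg]=0$. The organisational point you miss is that the paper first writes $\tilde a-a=\omega+\eta$ with $\omega\eqdef tch(0)+L_t(h(0))$ and $\eta$ the remainder from Lemma~\ref{lem:a_tilde_a_diff}, and then \emph{defines} $\bar L_t(h(0))\eqdef\Trace P^{\ell_a}(\ell_\omega)$. With this choice the bound $\bar L_t=O(t(|a|_{\gr\alpha}+|a|_{\gr\alpha}^{L-1}))$ is immediate from the estimate $P^\gamma(\cdot)=O(v(w+w^{L-1}))$ in Lemma~\ref{lem:path_perturb}, using $v=|\ell_\omega|_{\var{1/\alpha}}\lesssim t$; the piece $\Trace P^{\ell_a}(\ell_\eta)=O(t^{2-N\kappa}(|a|_{\gr\alpha}+|a|_{\gr\alpha}^{L-1}))$ goes straight into the error. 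No dissection of the internal structure of $L_t(h(0))$ is required.

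Your worry that cross terms in $\Trace\zeta(1)^2$ can be linear in $h(0)$ is unfounded: since $\zeta$ vanishes identically when $h(0)=0$ (as then $\tilde a=a$), every monomial in $\zeta(1)^2$ is at least quadratic in $h(0)$, and a direct check shows all of them are $O(t^{5/2-N\kappa})$ apart from the displayed $t^2(c_1\gamma(1))^2$. The one genuine leftover you correctly spot is $\Trace\ell_{L_t(h(0))}(1)$ coming from $\Trace\zeta(1)$; it is linear in $h(0)$, of size $O(t^{3/2-N\kappa})$, and indeed carries no $|a|_{\gr\alpha}$ factor. But as the Remark immediately following the lemma makes explicit, the size bound on $\bar L_t$ is never used downstream---only its linearity in $h(0)$ matters. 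So your ``expected main obstacle'' dissolves: fold this piece into $\bar L_t$ and do not chase the $|a|_{\gr\alpha}$ factor.
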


% \begin{remark}
% The estimate $\bar L_t(h(0))=O(t(|a|_{\gr\alpha}+|a|^{L-1}_{\gr\alpha}))$ is not used later; what is crucial for the proof of Proposition~\ref{prop:A_tilde_A} is that $\bar L_t(h(0))$ is linear in $h(0)$.
% \end{remark}

\begin{proof}
By~\eqref{eq:a_tilde_a_diff}, we have $\tilde a = a + \omega+\eta$ where
\begin{equ}\label{eq:omega_def}
\omega= tc h(0) + L_t(h(0))\;,\quad \eta = O_{\CC^{0+}(\T^2,[\mfg,\mfg])}(t^{2-N\kappa})
+O_{\CC^{0+}(\T^2)}(t^{5/2-N\kappa})\;,
\end{equ}
and where $\omega = O_{\CC^{0+}(\T^2)}(t)$ and $L_t(h(0))=O_{\CC^{0+}(\T^2)}(t^{3/2-N\kappa})$ is linear in $h(0)$.
Since $\omega\in \CC^{0+}(\T^2)\hookrightarrow \Omega_{\gr1}$,
we have $|\ell_\omega|_{\Hol\alpha}\leq |\ell_\omega|_{\Hol1}\leq |\omega|_{\CC^{0+}(\T^2)}$ and similarly for $\eta$.
Likewise
$|\ell_a|_{\Hol{\alpha}} \lesssim |a|_{\gr{\alpha}}$ by~\eqref{eq:ell_omega_Hol}.
Furthermore $\ell_{\tilde a} = \ell_a + \ell_\omega + \ell_\eta$ by linearity of $X\mapsto \ell_X$,
and $E^{\ell_a}(1)=\hol(a,\ell)$.
Therefore,
by Lemma~\ref{lem:path_perturb} with $p=1/\alpha$, $\zeta=\ell_{\omega+\eta}$, and $\gamma=\ell_a$ therein (and thus $v=O(t)$ and $w\lesssim |a|_{\gr\alpha}$),
\begin{equs}[eq:hol_tilde_a_expan]
\hol(\tilde a,\ell) &= \hol(a,\ell) + \int_0^1 \mrd \ell_{\omega+\eta}(x)
+ P^{\ell_a}(\ell_\omega + \ell_\eta)
\\
&\quad + 
\int_0^1\int_0^x \mrd \ell_{\omega+\eta}(y)\mrd\ell_{\omega+\eta}(x)
+ O(|a|_{\gr\alpha}^{L} + |a|_{\gr\alpha}^{L+1} + t^3 + t^2|a|_{\gr\alpha})\;,
\end{equs}
where $P^{\ell_a}(\ell_\omega) = O(t(|a|_{\gr\alpha}+|a|_{\gr\alpha}^{L-2}))$ is linear in $\ell_\omega$ and thus in $h(0)$,
and $P^{\ell_a}(\ell_\eta) = O(t^{2-N\kappa}(|a|_{\gr\alpha}+|a|_{\gr\alpha}^{L-2}))$,
and where we used that $t^2|a|_{\gr\alpha}^{L-3} = O(t^2|a|_{\gr\alpha} + |a|_{\gr\alpha}^{L})$.
Since $h_i(0)(x,0) = \gamma'_i(x)$, we have
\begin{equ}\label{eq:omega_f}
|\ell_\omega - tc_1\gamma|_{\Hol1} = |\omega_1-tc_1h(0)|_{L^\infty(\T^2)}= O(t^{3/2-N\kappa})\;,
\end{equ}
where, in the final bound, we used~\eqref{eq:omega_def} and the estimate on $L_t(h(0))$ in the line below it.
Since the double integral in~\eqref{eq:hol_tilde_a_expan} is of order $t^2$,
this proves~\eqref{eq:hol_diff_1}.

To prove~\eqref{eq:hol_diff_2}, by the cyclic property of trace,
\begin{equs}
\Trace \int_0^1\int_0^x \mrd \ell_{\omega+\eta}(y)\mrd\ell_{\omega+\eta}(x)
%&= \frac12\Trace \int_{[0,1]^2}\mrd\ell_{\omega+\eta}(y)\mrd\ell_{\omega+\eta}(x)
%\\
&= \frac12 \Trace [(\ell_{\omega+\eta}(1))^2]\;.
\end{equs}
%where we used the cyclic property of trace in the first equality.
Furthermore, by~\eqref{eq:omega_f} and the bound on $\eta$ from \eqref{eq:omega_def},
\begin{equ}
\Trace [(\ell_{\omega+\eta}(1))^2] = t^2\Trace [( c_1 \gamma(1))^2] + O(t^{3-N\kappa})\;.
\end{equ}
%\ilya{This doesn't matter for the final statement, but I changed $O(t^{5/2-N\kappa})$ to $O(t^{3-N\kappa})$ since $\omega = O(t)$ and $\eta = O(t^{2-N\kappa})$ - it looks more natural to me now}
Finally, by \eqref{eq:omega_def}, $\Trace\int_0^1 \mrd \ell_\eta(x) = O(t^{5/2-N\kappa})$ since $\Trace[\mfg,\mfg]=0$.
The conclusion follows with $\bar L_t(h(0))= \Trace \{A+P^{\ell_a}(\ell_\omega)\}$ by taking trace on both sides of \eqref{eq:hol_tilde_a_expan},
where $A=\int_0^1 \mrd \ell_{\omega}(x) - t c_1\gamma(1) = O(t^{3/2-N\kappa})$ is linear in $h(0)$.
%\ilya{I added this extra term $A$ into $\bar L_t(h(0))$ - it was missing before which looks like a mistake!}
\end{proof}

\subsection{Proof of Proposition~\ref{prop:A_tilde_A}}
\label{subsec:proof_of_A_tilde_A}

We finally take $B$ as the modelled distribution $\CP^{1_+\xi}\bone_+\Xi$, which
reconstructs to the SHE with  zero initial condition.
We fix $\alpha\in(\frac12,1)$ and $0<\beta<(1-\alpha)/2$
throughout this subsection.
Then $\E|b(t)|^2_{\gr\alpha}\lesssim t^{\beta}$ by~\cite[Lem.~4.12]{CCHS_2D}
(see also Remark~\ref{rem:correct_Thm_413}),
and thus $t^{-\beta/2}|b(t)|_{\gr\alpha}$ has Gaussian tails uniform in $t\in (0,1]$.
Furthermore, $\$Z\$_{\gamma;O}$ and $|B|_{\cD^{1+,0-}_{0-};O}$ have moments of all orders.

We continue using the notation of Section~\ref{subsec:hol_expansion}.
Throughout this section, we consider $M\geq 2$ and, for $t\in(0,1)$, the event
\begin{equ}
Q_t \eqdef \{2(1+\$Z\$_{\gamma;O}+ |B|_{\cD^{1+,0-}_{0-};O} + t^{-\beta/2}|b(t)|_{\gr\alpha}) < M\}\;.
\end{equ}
We let $\tau\geq M^{-q}$ denote the guaranteed existence time from Section~\ref{subsec:Euler} on the event $\{2(1+\$Z\$_{\gamma;O}+ |B|_{\cD^{1+,0-}_{0-};O})<M\}\supset Q_t$ for $q$ sufficiently large.

\begin{lemma}\label{lem:restricted_hol}
In the notation of Lemma~\ref{lem:a_tilde_a_hol},
for all $t\in(0,\tau]$
\begin{equ}\label{eq:E_hol_1}
\E\hol(\tilde a,\ell)\bone_{Q_t} = \E \hol(a,\ell)\bone_{Q_t}
+\P[Q_t]\{t c_1\gamma(1) + O(t^{1+\frac\beta2})\}\;.
\end{equ}
Furthermore, for all $t\in(0,\tau]$
\begin{equs}[eq:E_hol_2]
\E\Trace \hol(\tilde a,\ell)\bone_{Q_t} &= \E\Trace \hol( a,\ell)\bone_{Q_t}
+t\Trace c_1\gamma(1)\P[Q_t]
+\E[L_t](h(0))
\\
&\qquad + \P[Q_t]\Big( \frac{t^2}{2} \Trace[( c_1\gamma(1))^2] + O(t^{2+\frac\beta2-N\kappa})\Big)\;,
\end{equs}
where $\E[L_t](h(0)) = O(t^{1+\beta/2})$ is linear in $h(0)$.
The proportionality constants in all bounds are $M^k$ for some $k >0 $.
\end{lemma}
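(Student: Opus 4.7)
The plan is to derive the two expectation identities by taking the pointwise expansions in Lemma~\ref{lem:a_tilde_a_hol}, multiplying through by $\bone_{Q_t}$, and taking expectation, with the restriction to $Q_t$ playing a double role: it makes the implicit proportionality constants (powers of \eqref{eq:poly_constant}) deterministic and bounded by $M^k$, and it converts the random quantity $|a|_{\gr\alpha}$ into a deterministic $O(t^{\beta/2})$ factor.

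The key observation is that on $Q_t$ one has $|b(t)|_{\gr\alpha}\le Mt^{\beta/2}/2$, and combining with Lemma~\ref{lem:a_tilde_a_diff} (which gives $a=b+O_{\CC^{0+}(\T^2)}(t^{1/2-})$ with constant bounded by a power of \eqref{eq:poly_constant}, hence by $M^k$ on $Q_t$) one obtains a pointwise bound $|a|_{\gr\alpha}\,\bone_{Q_t}\lesssim M^k t^{\beta/2}$, since $\beta/2<1/2$. Consequently $\bone_{Q_t}\,\E |a|_{\gr\alpha}^m\,\bone_{Q_t}\lesssim M^{km}t^{m\beta/2}$ for every $m\ge 1$, which is what we need to control all powers of $|a|_{\gr\alpha}$ that appear in the error terms. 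Note that the expectation of $\hol(a,\ell)\,\bone_{Q_t}$ and $\hol(\tilde a,\ell)\,\bone_{Q_t}$ are well-defined because $\hol$ takes values in the compact group $G$.

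For the first identity, I apply \eqref{eq:hol_diff_1} with $L$ chosen large enough that $\beta(L-1)/2>1+\beta/2$ (e.g.\ $L>1+2/\beta+1$). Taking $\E[\cdot\,\bone_{Q_t}]$ of each term:\ the deterministic piece $tc_1\gamma(1)$ passes through, the error $O(t|a|_{\gr\alpha})$ contributes $O(M^k t^{1+\beta/2})$, the error $O(t^{3/2-N\kappa})$ is absorbed by choosing $\kappa$ small (recalling Notation~\ref{not:plus_minus} and $\beta<1-\alpha<1/2$), and the errors $O(|a|_{\gr\alpha}^{L-1}+|a|_{\gr\alpha}^{L+1})$ are of order $t^{\beta(L-1)/2}$ which is smaller than $t^{1+\beta/2}$ by the choice of $L$. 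For the second identity I apply \eqref{eq:hol_diff_2} and repeat the procedure: the $t^2$ trace term passes through, the linear-in-$h(0)$ piece $\bar L_t(h(0))$ gives $\E[\bar L_t(h(0))\bone_{Q_t}]$ which inherits linearity in $h(0)$ from $h(0)\mapsto \bar L_t(h(0))$ (linearity is preserved by the linear operation of integration against $\bone_{Q_t}\,\mrd\P$), and the estimate $\bar L_t(h(0))=O(t(|a|_{\gr\alpha}+|a|_{\gr\alpha}^{L-1}))$ of Lemma~\ref{lem:a_tilde_a_hol} yields the claimed order $O(t^{1+\beta/2})$ after taking expectation on $Q_t$; the remaining errors $O(t^{2-N\kappa}|a|_{\gr\alpha}+t^{5/2-N\kappa}+|a|^{L\pm1}_{\gr\alpha})$ all become $O(t^{2+\beta/2-N\kappa})$ after choosing $L$ sufficiently large and $\kappa$ sufficiently small.

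The main technical point, rather than obstacle, is ensuring that the implicit constants in Lemma~\ref{lem:a_tilde_a_hol} — which are stated there as powers of the quantity \eqref{eq:poly_constant} with $h(0)$ replaced by a norm of $h(0)$ — remain controlled uniformly by $M^k$ on $Q_t$; this is why $Q_t$ is defined in terms of $\$Z\$_{\gamma;O}$, $|B|_{\cD^{1+,0-}_{0-};O}$, and $t^{-\beta/2}|b(t)|_{\gr\alpha}$ rather than $|a|_{\gr\alpha}$ directly. A subtle verification is that the condition $t\le\tau$, where $\tau\ge M^{-q}$ from the discussion above Lemma~\ref{lem:restricted_hol}, ensures the fixed-point problem for both $A$ and $\tilde A$ is solvable on $[0,t]$ on $Q_t$, so that the reconstructions $a,\tilde a$ and the holonomies $\hol(a,\ell),\hol(\tilde a,\ell)$ are well-defined. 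The dependence on $h(0)$ in the proportionality constants, which enters through \eqref{eq:poly_constant}, does not affect the claim because the polynomial dependence on $|h(0)|_{\CC^{2+}}$ will ultimately be controlled when this lemma is applied in the forthcoming proof of Proposition~\ref{prop:A_tilde_A} by a suitable bound on $|h(0)|_{\CC^3}\le K$.
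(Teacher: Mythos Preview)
Your proof is correct and follows essentially the same approach as the paper: restrict to $Q_t$ so that the implicit constants (powers of \eqref{eq:poly_constant}) become $M^k$, use Lemma~\ref{lem:a_tilde_a_diff} to convert $|a|_{\gr\alpha}$ into $O(t^{\beta/2})$ on $Q_t$, choose $L$ large enough so that the $|a|_{\gr\alpha}^{L\pm 1}$ terms in Lemma~\ref{lem:a_tilde_a_hol} are subdominant, and take expectations. The paper's proof is more terse (it simply takes $L\ge 1+6/\beta$ so that $|a|_{\gr\alpha}^{L-1}=O(t^3)$ covers both identities at once), while you track the required powers of $t$ more explicitly, but the substance is the same.
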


\begin{proof}
We take $L\geq \frac6\beta$
 in Lemma~\ref{lem:a_tilde_a_hol}
so that $|b|_{\gr\alpha}^{L} = O(t^3)$ on the event $Q_t$.
Lemma \ref{lem:a_tilde_a_diff} implies
$
|a|_{\gr{\alpha}}\leq |b|_{\gr\alpha} + O(t^{1/2-})
$, so
we also obtain $|a|_{\gr\alpha}^{L} = O(t^3)$.
The conclusion follows from Lemma~\ref{lem:a_tilde_a_hol}
upon taking expectations.
\end{proof}

\begin{proof}[of Proposition~\ref{prop:A_tilde_A}]
The idea is to choose $t\downarrow0$ and $M\uparrow\infty$ together in a suitable way and use Lemma~\ref{lem:restricted_hol} with a suitable choice of $g(0)$ for each sufficiently small $t$.
% to exhibit a difference between $\E[W_\ell(\tilde a)] $ and $\E[W_\ell(a)]$.
We take for now general $g(0)\in \CC^\infty(\T^2,G)$
for which we will make specific choices in different cases below.
Note that $a,\tilde a$ from Lemma~\ref{lem:restricted_hol} are $A,\tilde A$
from the proposition statement.
We often omit dependence on $t$ in the proof.

Let $k$ be as in Lemma \ref{lem:restricted_hol}.
We let $\eta\geq q$ so that for $t\asymp M^{-\eta}$ we have $t<\tau$.
We can suppose $\frac\beta2-N\kappa>0$ and we further take $\eta$ large such that $(\frac\beta2 - N\kappa)\eta-k\gg 1$,
and thus $O(t^{\frac\beta2-N\kappa})$ in Lemma~\ref{lem:restricted_hol} becomes $O(t^{0+})$, this time \textit{uniform} in $M\geq 2$.
Furthermore, by Markov's inequality, we obtain for $t\asymp M^{-\eta}$
\begin{equ}\label{eq:Markov}
\E[W_\ell(\tilde a)\bone_{Q_t^c}] \lesssim \P[Q_t^c] \lesssim  M^{-3\eta} = O(t^3)\;,
\end{equ}
uniform in $M\geq 2$,
and similarly for $\E[W_\ell(a)\bone_{Q_t^c}]$.
Let us write, as usual, $c_1 A = c^{(1)}_1A_1 + c^{(2)}_1A_2$ with $c^{(i)}_1\in L(\mfg,\mfg)$.

\textit{Case 1: $c^{(1)}_1 \neq 0$.}
We define $u, \tilde u \in \CC^\infty(\T^2,G)$ as follows.
Let $\zeta\in\CC^\infty([0,1],\mfg)$ be as
in Lemma~\ref{lem:curve_selection}
with $J$ therein taken as $J\eqdef Q\circ c^{(1)}_1\in L(\mfg,\R)$ where $Q\in L(\mfg,\R)$ is such that $J\neq 0$ and arbitrary otherwise.
For all $x,y\in [0,1)$, we let $u(0,y)=\id$
and $(\d_1 u)u^{-1}(x,y) = \dot\zeta(x)$.
Remark that these conditions uniquely determine $u$, and the conditions that $\dot\zeta=0$ in a neighbourhood of $0$ and $1$ (Lemma~\ref{lem:curve_selection}\ref{pt:const_deriv})
and $L^\zeta(0)=L^\zeta(1)=\id$
(Lemma~\ref{lem:curve_selection}\ref{pt:targets})
ensure that $u\colon\T^2\to G$ is smooth.
Furthermore $\d_2 u(x,y)= 0$.
We define $\tilde u$ in exactly the same way except $(\d_1 \tilde u) \tilde u^{-1}(x,y)=4\dot\zeta(x)$, and remark that $\tilde u\colon \T^2\to G$ is smooth now due to Lemma~\ref{lem:curve_selection}\ref{pt:4targets}.

Denote $h = (\mrd u) u^{-1}$ and define $\gamma_j(x)=\int_0^x h_j(y,0)\mrd y$ for $j=1,2$ as in Lemma~\ref{lem:a_tilde_a_hol},
and likewise for $\tilde h$ and $\tilde\gamma_j$.
Remark that, by construction,
$h_2=\tilde h_2=0$, thus $\gamma_2 = \tilde\gamma_2=0$,
and further that $\tilde \gamma_1=4\gamma_1=4\zeta$ and $\tilde h_1=4h_1$,
and by the condition in 
Lemma~\ref{lem:curve_selection}\ref{pt:non-zero}
$
c_1\tilde \gamma(1) = 4c_1\gamma(1) =  4c_1^{(1)}\zeta(1) \neq 0
$.

\textit{Case 1A.}
Suppose that $\Trace c_1^{(1)}\zeta(1) \neq 0$.
We then take $g(0)=u$ and observe that, by Lemma~\ref{lem:restricted_hol}, uniformly in $M\geq 2$ for $t\asymp M^{-\eta}$
\begin{equ}
\E[W_\ell(\tilde a)\bone_{Q_t}] = \E[W_\ell(a)\bone_{Q_t}] + \P[Q_t]\{t\Trace c_1^{(1)}\zeta(1) + O(t^{1+})\}\;.
\end{equ}
Combining with~\eqref{eq:Markov}, we obtain uniformly in $M\geq2$ for $t\asymp M^{-\eta}$
\begin{equ}
\E[W_\ell(\tilde a)] = \E[W_\ell(a)] + t\Trace c_1^{(1)}\zeta(1) + O(t^{1+})\;.
\end{equ}
It follows that $|\E[W_\ell(\tilde a)] - \E[W_\ell(a)]| \geq \sigma t$ for some fixed $\sigma >0$ and all $t>0$ sufficiently small.
This completes the proof in \textit{Case 1A}.

\textit{Case 1B.} Suppose now that $\Trace c_1^{(1)}\zeta(1) = 0$.
Taking for now $g(0)= u$,
we obtain by Lemma~\ref{lem:restricted_hol} uniformly in $M\geq 2$ for $t\asymp M^{-\eta}$
\begin{equ}
\E[W_\ell(\tilde a)\bone_{Q_t}] = \E[W_\ell(a)\bone_{Q_t}] + \E{L_t}(h) + \P[Q_t]\Big\{\frac{t^2}{2}\Trace[(c_1^{(1)}\zeta(1))^2] + O(t^{2+}) \Big\}
\end{equ}
where $\E[L_t](h)=O(t^{1+})$ is linear in $h$.
Again combining with~\eqref{eq:Markov}, we obtain 
uniformly in $M\geq2$
\begin{equ}
\E[W_\ell(\tilde a)] = \E[W_\ell(a)] + \E{L_t}(h) + \frac{t^2}{2}\Trace[(c_1^{(1)}\zeta(1))^2] + O(t^{2+})\;.
\end{equ}
Let $r=|\Trace[(c_1^{(1)}\zeta(1))^2]|$ and note that $r>0$ since $c_1^{(1)}\zeta(1)\neq0$.

Now, for every $t$, we consider two cases.
The first case is $|\E{L_t}(h)| \notin [\frac14 t^2r,t^2r]$,
in which case $\E{L_t}(h) + \frac{t^2}{2}\Trace[(c_1^{(1)}\zeta(1))^2]$ is at least of order $t^2$,
so we are done due to our choice of $g(0)$.

The second case is $|\E{L_t}(h)| \in [\frac14 t^2r,t^2r]$,
in which case we take instead $g(0) = \tilde u$.
%Note that now
%$\gamma=\gamma^{(2)}$ for $\gamma$ as in Lemma~\ref{lem:a_tilde_a_hol}.
By linearity of $h\mapsto \E L_t (h)$
and the fact that $\tilde\gamma_1=4\gamma_1=4\zeta$ and $\tilde h=4h$ and $\tilde \gamma_2=0$,
we have
\begin{equ}
|\E{L_t}(\tilde h) + \frac{t^2}{2}\Trace[(c_1\tilde \gamma(1))^2]| \geq (4^2/2-4)t^2 r=4 t^2r\;.
\end{equ}
Therefore, for every $t>0$ sufficiently small, either $g(0)=u$ or $g(0)=\tilde u$ yields $|\E[W_\ell(\tilde a)]-\E[W_\ell(a)]| \geq \sigma t^2$,
which completes the proof in \textit{Case 1B}.

\textit{Case 2: $c^{(1)}_1 = 0$ and $c^{(2)}_1\neq 0$.}
Define $u\in \CC^\infty(\T^2,G)$ as follows.
Let $X\in\mfg$ such that $c^{(2)}_1 X \neq 0$.
Consider smooth $Z\colon [-\frac12,\frac12]\to [0,1]$ such that $Z(0)=Z(y)=0$ for all $|y|>\frac14$
and $\dot Z(0)=1$.
We then take $u$ such that $\d_1 u=0$ and, for $y\in[0,\frac14]$,
$
u(0,y) = e^{Z(y)X}$ and $u(0,1-y) = e^{Z(-y)X}$.
%Note that this is equivalent to
%\begin{equ}
%\d_2 g^{(1)}(0,y) = \dot Z(y) X g^{(1)}(0,y)\;, \quad \d_2 g^{(1)}(0,1-y) = -Z(y) X g^{(1)} (0,y)
%\end{equ}
%(We identify $\T^2\simeq [0,1)^2$ as usual.)
We further set $u(0,y)= 1$ for $y\in[\frac14,\frac34]$.
With these definitions, $u\colon \T^2 \to G$ is smooth
and
$
h_2(x,0)\eqdef (\d_2 u) u^{-1}(x,0) = X
$ for all $x\in [0,1]$.
In particular, $\int_0^1 c^{(2)}_1 h_2(x,0) \mrd x = c^{(2)}_1 X$.
We then define $\tilde u$ exactly as above but with
$
\tilde u(0,y) = e^{4Z(y)X}$ and $\tilde u(0,1-y) = e^{4Z(-y)X}$,
so that $\int_0^1 c^{(2)}_1 \tilde h_2(x,0)\mrd x = 4c^{(2)}_1 X$.
The conclusion now follows exactly as in \textit{Case 1} upon subdividing into the cases $\Trace c^{(2)}_1X\neq 0$ and $\Trace c^{(2)}_1X = 0$.
\end{proof}

\begin{remark}\label{rem:Abelian_vs_simply_conn}
For \textit{Case 1} in the above proof,
one should contrast  two  situations: $G$ is Abelian, and $G$ is simply connected.
If $G$ is Abelian, we are always in \textit{Case 1A} and
the existence of $\zeta$ relies crucially on the non-contractibility of $\ell$ (for $G=\U(1)$ and $\R^2$, which is simply connected, Theorem \ref{thm:A_tilde_A} fails~\cite[Rem.~1.16]{Chevyrev22YM}).
If $G$ is simply connected, so $\mfg$ is semi-simple,
we are always in \textit{Case 1B}
and it is not crucial that $\ell$ is non-contractible since almost the same argument implies the existence of $u,\tilde u$ with the desired properties for any smooth simple loop $\ell$;
for $G$ simply connected, we therefore expect our argument to apply when $\T^2$ is replaced by an arbitrary compact manifold.
(In \textit{Case 2}, the nature of $G$ is not important.)
\end{remark}

\subsection{Proof of Theorem~\ref{thm:discrete_dynamics}}\label{subsec:proof_C=bar_C}

\begin{definition}\label{def:varsigma_N}
For $N\geq1$ and $\alpha\in(\frac12,1]$, define the map $\varsigma_N\colon \Omega_{\gr\alpha}\to\log Q_N \subset \mfq_N$ by $(\varsigma_N a)(b)=\log \hol(a,b)$ for all $b\in\obonds_N$.
\end{definition}

\begin{lemma}\label{lem:piecewise_hol}
Let $\alpha\in(\frac12,1]$ and $a\in \Omega_{\gr\alpha}$.
Then
\begin{equ}
|\varsigma_N a-\pi_N a|_{N;\gr\alpha}  \lesssim 2^{N(1-2\alpha)}|a|_{\gr\alpha}^2\;,\quad
|\varsigma_N a-\pi_N a|_{N;\alpha;\rho}  \lesssim 2^{N(1-3\alpha/2)}|a|_{\gr\alpha}^2\;.
\end{equ}
\end{lemma}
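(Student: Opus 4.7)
The plan is to reduce to a single-bond pointwise estimate and then sum bond-by-bond. Specifically, I will show that for every $b \in \obonds_N$ of length $\e = 2^{-N}$,
\begin{equation}
|(\varsigma_N a)(b) - (\pi_N a)(b)| \lesssim (\e^\alpha|a|_{\gr\alpha})^2,
\end{equation}
after which summation over the bonds composing a line yields both stated inequalities.

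To establish the single-bond estimate, parametrise the straight segment underlying $b$ by $x\in[0,1]$ and set $\gamma(x) \eqdef a(b|_{[0,x]})$; by additivity of $a$ and the definition of $|\cdot|_{\gr\alpha}$ one has $|\gamma|_{\Hol\alpha([0,1])} \lesssim \e^\alpha |a|_{\gr\alpha}$ (compare~\eqref{eq:ell_omega_Hol}). Since $\alpha > 1/2$, the Young expansion of the path-ordered exponential — which is the $\zeta = 0$ specialisation of Lemma~\ref{lem:path_perturb} or equivalently the classical Lyons--Davie Euler estimate — gives
\begin{equation}
\hol(a,b) = E^\gamma(1) = \id + \gamma(1) + I_2 + R
\end{equation}
with $|I_2| \lesssim (\e^\alpha|a|_{\gr\alpha})^2$ and $|R| \lesssim (\e^\alpha|a|_{\gr\alpha})^3$. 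Provided $\e^\alpha|a|_{\gr\alpha}$ lies below a fixed threshold so that $\hol(a,b) \in \mathring{W}$, inverting through the convergent $\log$ series yields the pointwise claim, since $(\pi_N a)(b) = \gamma(1)$ by definition of $\pi_N$.

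For any $\ell \in \olines_N$ of length $h = k\e$, decomposing into its $k$ consecutive bonds and applying the triangle inequality yields
\begin{equation}
|(\varsigma_N a - \pi_N a)(\ell)| \lesssim k(\e^\alpha|a|_{\gr\alpha})^2 = h\,\e^{2\alpha-1}|a|_{\gr\alpha}^2.
\end{equation}
Dividing by $h^\alpha$ and using $h \leq 1/2$ gives the first stated inequality. For the $\rho$-seminorm, apply the same bond-wise bound to each of two parallel lines $\ell \parallel \bar\ell$ of common length $h$ at Hausdorff distance $d \geq \e$ and subtract, obtaining $|(\varsigma_N a - \pi_N a)(\ell) - (\varsigma_N a - \pi_N a)(\bar\ell)| \lesssim h\,\e^{2\alpha-1}|a|_{\gr\alpha}^2$. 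Dividing by $\rho(\ell,\bar\ell)^\alpha = (hd)^{\alpha/2}$, using $h^{1-\alpha/2} \lesssim 1$ together with $d^{-\alpha/2} \leq \e^{-\alpha/2}$ (since $d \geq \e$), produces the bound $\e^{3\alpha/2 - 1}|a|_{\gr\alpha}^2 = 2^{N(1-3\alpha/2)}|a|_{\gr\alpha}^2$.

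The only non-mechanical step — and hence the main obstacle — is handling the regime in which the Young step's smallness hypothesis $\e^\alpha|a|_{\gr\alpha} \lesssim 1$ fails. In that regime the claimed right-hand sides $\e^{2\alpha-1}|a|_{\gr\alpha}^2$ and $\e^{3\alpha/2-1}|a|_{\gr\alpha}^2$ are already of order at least $\e^{-1}$ and $\e^{\alpha/2-1}$ respectively, which dominate the trivial uniform bound $k(\sup_G|\log| + \e^\alpha|a|_{\gr\alpha}) \lesssim h\e^{-1} + h\e^{\alpha-1}|a|_{\gr\alpha}$ obtained from boundedness of $\log\colon G \to \mfg$ and the definition of $|\cdot|_{\gr\alpha}$. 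No Lie-algebraic structure of the iterated integral $I_2$ is required, since only its magnitude enters the estimate.
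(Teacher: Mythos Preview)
Your proof is correct and follows essentially the same approach as the paper: establish the single-bond estimate $|(\varsigma_N a)(b)-(\pi_N a)(b)|\lesssim 2^{-2N\alpha}|a|_{\gr\alpha}^2$ via the Young/Euler expansion of the path-ordered exponential, then sum bond-by-bond and use $d(\ell,\bar\ell)\geq 2^{-N}$ for the $\rho$-seminorm. The only cosmetic difference is that the paper absorbs the large-$|a|_{\gr\alpha}$ regime directly into the single-bond estimate (writing ``since $\log(G)$ is bounded, $|\log E^\gamma-\gamma(1)|\lesssim|\gamma|_{\var p}^2$'' without a case split), whereas you handle it by a separate trivial bound at the line level; both arguments are valid.
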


\begin{proof}
For $p\in [1,2)$ and $\gamma\in \CC^{\var p}([0,1],\mfg)$ with $\gamma(0)=0$,
we obtain from \eqref{eq:V_gamma_expansion} that
$|E^\gamma(1)-\id - \gamma(1)| \lesssim |\gamma|_{\var p}^2+|\gamma|_{\var p}^3$.
Since $e^X = \id + X + O(X^2)$ and $\log(G)$ is bounded,
we have
$|\log E^\gamma - \gamma(1)| \lesssim |\gamma|_{\var p}^2$.

Taking $\gamma=b_a$, so that $\hol(a,b)= E^{\gamma}$, and using
$
|\gamma|_{\var{(1/\alpha)}} \leq |\gamma|_{\Hol\alpha} \leq 2^{-N\alpha}|a|_{\gr\alpha}
$,
we obtain $|(\varsigma_N a)(b)-(\pi_N a)(b)|\lesssim 2^{-2N\alpha}|a|_{\gr\alpha}^2$.
Consequently, %for a line $\ell\in\olines_N$,
\begin{equ}
|(\varsigma_N a - \pi_N a)(\ell)| = \Big|\sum_{b\in \ell} (\varsigma_N a)(b)-(\pi_N a)(b)\Big|\leq 2^{N}|\ell|2^{-2N\alpha}|a|_{\gr\alpha}^2 \;,
%\leq 2^{N(1-2\alpha)}|\ell|^\alpha|a|_{\gr\alpha}^2
\quad
\forall \ell\in\olines_N
\;.
\end{equ}
Likewise, if $\bar\ell\neq\ell$ are parallel, then $|(\varsigma_N a - \pi_N a)(\ell)-(\varsigma_N a - \pi_N a)(\bar \ell)|$ is bounded by a multiple of
\begin{equs}
%\Big|\sum_{b\in \ell}\sum_{\bar b\in\bar\ell} a^{(N)}(b)-a^{(N)}(\bar b)-(\pi_N a)(b)+(\pi_N a)(\bar b)\Big|
2^{N}|\ell|2^{-2N\alpha}|a|_{\gr\alpha}^2
\leq d(\ell,\bar\ell)^{\alpha/2}
2^{N\alpha/2}|\ell|^{\alpha/2}2^{N(1-2\alpha)}|a|_{\gr\alpha}^2\;,
%=\rho(\ell,\bar \ell)^\alpha2^{N(1-3\alpha/2)}|a|_{\gr\alpha}^2\;.
\end{equs}
from which the conclusion follows.
\end{proof}

\begin{proof}[of Theorem~\ref{thm:discrete_dynamics}]
We first show that $C=\bar C$ for $C$ appearing in Theorem~\ref{thm:some_C}
for any sequence $\e=2^{-N}\downarrow0$ as chosen in Assumption~\ref{as:subsequence}.
Take discrete approximations $a^{(N)}\eqdef \varsigma_N a \in \log Q_N$. 
Then, by Lemma~\ref{lem:piecewise_hol}, $a^{(N)}$ and $a$ satisfy the assumptions of Theorem~\ref{thm:some_C} and therefore
there exists $C\in L(\mfg^2,\mfg^2)$ such that, for a random time $T^*>0$ with inverse moments of all orders,
\begin{equ}\label{eq:approx_discrete_A}
\lim_{N\to\infty}\|A^{(N)} - \pi_N A\|_{\CC^{T^*}_{\eta-\alpha/2}(\Omega_{N;\alpha})} =0
\quad \text{in probability,}
\end{equ}
where $A=\SYM(C,a)$ and $A^{(N)} = 2^N\log U^{(N)}$.

Consider henceforth any $K>0$, lattice loop $\ell$,
and $b\sim a$ with $|a|_{\CC^1},|b|_{\CC^1}<K$.
By Theorem~\ref{thm:A_tilde_A}, since $C,\bar C\in L_G(\mfg^2,\mfg^2)$, to prove that $C = \bar C$, it suffices to show
\begin{equ}\label{eq:equal_condition}
|\E W_\ell \SYM_t(C,a) - \E W_\ell \SYM_t(C,b)| = o(t^2)
\end{equ}
for all $t>0$ sufficiently small, where the proportionality constant can depend on $K,\ell$ but not on $a,b$.
(In fact, it suffices to consider $\ell$ from Theorem~\ref{thm:A_tilde_A} and $a=0$ and $b=0^g$ with $|g|_{\CC^3}<K$, but the more general case is no harder.)

The idea is to use the approximation~\eqref{eq:approx_discrete_A} together with the gauge-covariance of the discrete dynamic to show~\eqref{eq:equal_condition}.
Recall the process $\check U$ from~\eqref{eq:discrete_hat_U}.
Recall that, if $\check U_0=U_0$, then $U_t=\check U_t$ for all $t<\varpi$, where $\varpi$ is the exit time~\eqref{eq:varpi_def}.

Define $b^{(N)}=\varsigma_N b$.
Recalling the identity for $(x,y)\in\bonds$
\begin{equ}
g(x)\hol(a,(x,y))g(y)^{-1}=\hol(a^g,(x,y))\;,
\end{equ}
we obtain $b^{(N)}\sim a^{(N)}$, i.e. $\exp b^{(N)} \sim \exp a^{(N)}\in Q_N$ (this is where we use the definition $a^{(N)}=\varsigma_N a$ 
and would not be true if $\varsigma_N$ is replaced by $\pi_N$).
Denoting $\check U^u$ the solution to~\eqref{eq:discrete_hat_U} with $U_0=u$ and
$
\check A^{(N)} \eqdef \log \check U^{\exp a^{(N)}}$
and $\check B^{(N)} \eqdef \log \check U^{\exp b^{(N)}}$,
it follows from Proposition~\ref{prop:gauge_covar}\ref{pt:DeTurck_to_DeTurck}
that $[\check B^{(N)}_t] \eqlaw [\check A^{(N)}_t]$ for all $t\geq 0$.
In particular, for $N$ sufficiently large such that $W_\ell$ is well-defined on $\Omega_{N}$,
\begin{equ}\label{eq:tilde_A_tilde_B_equal}
\E W_\ell(\check A^{(N)}_t) = \E W_\ell(\check B^{(N)}_t)\;.
\end{equ}
On the other hand, for any $M>0$,
\begin{equ}[eq:A_lim_A_N]
|\E W_\ell(A(t)) - \E W_\ell(A^{(N)}(t))|
\lesssim \E \bone_{t<T^*}\big|W_\ell(A(t)) - W_\ell(A^{(N)}(t))\big| 
+ o(t^M)\;.
%\\
%&\eqdef \Theta(t,N) + \P[t> T^*] 
%= \Theta(t,N)+ o(t^{M}) \;,
\end{equ}
where we used $\P[t> T^*] = o(t^M)$ due to $\E [(T^*)^{-M}]<\infty$
and Markov's inequality.
Furthermore,~\eqref{eq:approx_discrete_A} implies that, for every fixed $t>0$,
\begin{equ}\label{eq:Theta_to_zero}
\lim_{N\to\infty}\E \bone_{t<T^*}\big|W_\ell(A(t)) - W_\ell(A^{(N)}(t))\big|  = 0\;.
\end{equ}
The same statement holds for $B$ and $B^{(N)}$.

Finally, since $A^{(N)}_t=\check A^{(N)}_t$ for $t<\varpi$
and since $\varpi$, by the same reason as for $T^*$, has inverse moments of all orders bounded uniformly in $N$, the same argument implies that, for any $M>0$,
\begin{equ}\label{eq:A_tilde_A_bound}
\sup_{N \geq 1}\big|\E W_\ell(\check A^{(N)}(t)) - \E W_\ell(A^{(N)}(t))\big| = o(t^M)\;,
\end{equ}
and the same for $B$ and $B^{(N)}$.
Combining~\eqref{eq:tilde_A_tilde_B_equal},~\eqref{eq:A_lim_A_N},~\eqref{eq:Theta_to_zero}, and~\eqref{eq:A_tilde_A_bound}, we conclude that
$
|\E W_\ell(A(t)) - \E W_\ell(B(t))| = o(t^{M})
$
for any $M>0$, which in particular implies~\eqref{eq:equal_condition}.
Therefore $C=\bar C$ as claimed.

The proof of Theorem~\ref{thm:some_C} (see in particular~\eqref{eq:T_N_conv})
now reveals that, for every $K\geq 1$ and $\delta>0$, there exists $N_0>0$ such that
\begin{equ}
\sup_{N>N_0} \P[\|A^{(N)} - \pi_N A\|_{\CC([0,\tau_{K}],\Omega_{N;\alpha})}  > \delta] < \delta\;,
\end{equ}
where $\tau_{K} \eqdef K\wedge \inf\{t>0\,:\, |A(t)|_\alpha > K\}$.
The proof readily follows.
\end{proof}

\section{Moment estimates on the gauge-fixed YM measure}
\label{sec:moment_estimates}

In this section, we prove the following moment bounds on the gauge-fixed YM measure on $\T^2$
that will be used in the proof of Theorem~\ref{thm:invar_measure}.
%Namely, we show that there is a gauge-fixed representation $A$ of the YM measure such that $|A|_\alpha$ has moments of all orders bounded uniformly in the lattice spacing.

\begin{theorem}\label{thm:YM_gauge_fixed}
Suppose that $G$ is simply connected and that
Assumption~\ref{assum:P_N} on $S_N$ holds.
Define the probability measure $\mu_N$ on $Q_N \simeq G^{\obonds_N}$ by~\eqref{eq:mu_N}.
Let $U$ be a random variable with law $\mu_N$.
Then, on the same probability space, there exists a $\mfG_N$-valued random variable $g$ such that for all $\beta\in[1,\infty)$, $\alpha\in (0,1)$, and $N\geq 1$,
one has 
$
\E|\log U^g|^\beta_\alpha \leq C
$,
where $C$ depends only on $\alpha,\beta,G$ and the parameters in Assumption~\ref{assum:P_N}.  
\end{theorem}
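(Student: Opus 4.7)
The plan is to follow the gauge-fixing strategy of \cite{Chevyrev19YM} and upgrade its a.s.\ finiteness statement to moment bounds uniform in $N$, as foreshadowed in Remark~\ref{rem:Che19_compare}. The guiding idea is that, since $G$ is simply connected, the YM measure on $\T^2$ admits, via Migdal subdivision invariance and L\'evy's construction \cite{Levy06}, an essentially explicit description in terms of plaquette variables that are (up to a single global constraint from compactness of $\T^2$) approximately independent heat-kernel distributed random variables at small time $\sim \eps^2$. Assumption~\ref{assum:P_N}, together with Assumption~\ref{assump:R}, is designed precisely so that the density of $\mu_N$ near identity on each plaquette is comparable to the Villain (heat kernel) density, so that general $S_N$ can be controlled by reweighting against the Villain case.

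First I would fix a spanning-tree-type \emph{axial gauge} $g \in \mfG_N$, e.g.\ the one that sets $U^g(b) = \id$ on every horizontal bond and on every vertical bond of the bottom row $\{y=0\}$. The remaining degrees of freedom of $U^g$ are then the vertical bonds with $y > 0$, and each such bond variable equals a telescoping product (along the column) of plaquette holonomies in the row strip below it, combined with one global (non-contractible) loop variable. A standard measure-theoretic change of variables turns $\mu_N$ into a product measure (up to the single topological constraint) on these plaquette variables, whose densities are, under Assumption~\ref{assum:P_N}, comparable to the Villain heat kernel densities on $G$ at time $\asymp \eps^2$.

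Next I would establish sub-Gaussian concentration for plaquette holonomies $U(\partial p)$: $\E |\log U(\partial p)|^{\beta}_\mfg \lesssim \eps^{\beta}$ for every $\beta \ge 1$ and $N$, uniformly. This is essentially the heat-kernel estimate for small-time Brownian motion on $G$, combined with a bounded Radon-Nikodym factor coming from Assumption~\ref{assum:P_N}. Then, for any lattice line $\ell$ of vertical orientation the 1-form evaluation $(\log U^g)(\ell) = \log \hol(U^g,\ell)$ (the subtlety that $\log$ is defined on all of $G$ via Section~\ref{sec:Notation} is handled as in~\cite{Chevyrev19YM}) equals, in $G$, a product of plaquette holonomies in the rectangle of area $\asymp |\ell|\cdot(\text{height of }\ell)$ enclosed by $\ell$ and the base line $\{y=0\}$. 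Using the discrete BCH formula to control non-commutativity and Young-type estimates, one obtains $\E |\log U^g(\ell)|^\beta \lesssim |\ell|^{\alpha\beta}$ for any $\alpha<1$. For parallel lines $\ell,\bar\ell$ the difference $\log U^g(\ell)-\log U^g(\bar\ell)$ is, again modulo commutator corrections, a product of plaquettes enclosed between them, which has area $\rho(\ell,\bar\ell)^2$; hence $\E |\log U^g(\ell)-\log U^g(\bar\ell)|^\beta \lesssim \rho(\ell,\bar\ell)^{\alpha\beta}$. A discrete Kolmogorov criterion of the type used in the proof of Proposition~\ref{prop:SHE} (or equivalently the Kolmogorov-type argument for the norms $|\cdot|_{N;\gr\alpha}$ and $|\cdot|_{N;\alpha;\rho}$ developed in \cite[Sec.~3]{Chevyrev19YM} and recalled in Section~\ref{sec:norms}) then promotes these pointwise-in-$(\ell,\bar\ell)$ estimates to the uniform bound $\E |\log U^g|_\alpha^\beta \le C$.

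The main obstacle is handling the non-commutativity of $G$: the reconstruction of the line holonomy from plaquette holonomies is a non-linear \emph{product} rather than a sum, so the cleanly additive Gaussian arguments available for $G=\U(1)$ do not apply directly. Overcoming this requires rough-paths/Young-integral estimates on discrete products in $G$, combined with a careful BCH expansion whose error terms need to be controlled uniformly in the number of factors — this is where the sub-Gaussianity at the plaquette scale is essential, since it lets one iterate the commutator corrections and keep them of lower order than the leading Gaussian fluctuations. A secondary difficulty is the single global constraint coming from $\T^2$; this is handled by conditioning on the total product of plaquettes (which is concentrated near $\id$ in $G$ at scale $\sqrt{N^2 \eps^2} = O(1)$) and showing that the conditional law of the remaining plaquettes is still close enough to the unconditional product to preserve the moment bounds. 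Passing from Villain to general $S_N$ under Assumption~\ref{assum:P_N} is then a short absolute-continuity argument whose Radon-Nikodym derivative has all moments under $\mu_N$.
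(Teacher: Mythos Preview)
There is a genuine gap in your proposal: a pure axial (spanning-tree) gauge cannot deliver the bound for $\alpha>\tfrac12$, which is the regime the theorem claims and the paper actually needs (recall $\alpha\in(\tfrac45,1)$ is fixed in Section~\ref{subsec:main_results}). In any axial-type gauge on $\Lambda_N$, a non-tree bond $b$ at distance $O(1)$ from the reference line has $U^g(b)$ equal to the holonomy of a rectangle of area $\asymp \eps$, so that typically $|\log U^g(b)|\asymp \eps^{1/2}$. Since $|b|=\eps$, this forces $|A|_{N;\gr\alpha}\gtrsim \eps^{1/2-\alpha}$, which blows up as $N\to\infty$ whenever $\alpha>\tfrac12$. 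Your rectangle-area formula ``$|\ell|\cdot(\text{height of }\ell)$'' is not the correct one: the relevant rectangle connecting a line $\ell$ (or a bond) to the gauge-fixed base has one side of \emph{macroscopic} length (the distance to the base), not of length $|\ell|$. This is precisely why the paper does \emph{not} stop at the axial gauge of Lemma~\ref{lem:axial} --- that lemma only yields $\max_b|\log U^g(b)|\lesssim 2^{-M\alpha/2}$, and is applied at a \emph{coarse} scale $M$ chosen adaptively. The real work is the multiscale \emph{binary Landau} refinement of Section~8.1.1, which at each dyadic level sets the new bond to an average of the coarser ones plus plaquette corrections (see~\eqref{eq:U^g_a_i_def}), yielding $|\log U^g(b)|\lesssim 2^{-n\alpha}$ at every scale $n$ (Lemma~\ref{lem:bonds_bound}) and hence $|A|_{N;\gr{\bar\alpha}}$ bounded for all $\bar\alpha<\alpha<1$ (Theorem~\ref{thm:Landau}).

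There is a second structural difference. The paper separates the argument into a \emph{deterministic} gauge-fixing estimate (Theorem~\ref{thm:gauge_fix}) bounding $|\log U^g|_{\bar\alpha}$ by the gauge-invariant quantity $Z\asymp (1+\fancynorm{U}_{\alpha;N}^4)\exp\{C(\log(1+[U]_{\alpha;q,N}))^2\}$, and a \emph{probabilistic} estimate (Theorem~\ref{thm:improved_prob}) giving Gaussian tails for $\fancynorm{U}_{\alpha;N}$ and $[U]_{\alpha;q,N}$. The deterministic bound is super-polynomial in $[U]_{\alpha;q,N}$, so mere moment bounds on $[U]$ (as in~\cite{Chevyrev19YM}, via rough path BDG) would \emph{not} give moments of $|\log U^g|_\alpha$; one genuinely needs Gaussian tails. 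This is what Appendix~\ref{app:Gaussian_tails} provides, via a moment-comparison of the random-walk rough path with Brownian motion, and it is where Assumption~\ref{assum:P_N}\ref{pt:Gauss_tail} enters. Your proposal, which tries to bound $\E|A(\ell)|^\beta$ directly and then apply Kolmogorov, bypasses this issue only because the axial gauge would (if it worked) give a polynomial relation --- but as noted above, it does not work for the required range of $\alpha$. The rough-path/Young estimates you mention are indeed used in the paper, but for a different purpose: they control the anti-development of $U$ along rectangles in the proof of Lemma~\ref{lem:q-var_bound}, which is a key step in closing the multiscale induction of Theorem~\ref{thm:Landau}.
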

We give the proof of Theorem~\ref{thm:YM_gauge_fixed} at the end of this section.
This result quantifies and extends the main result of~\cite{Chevyrev19YM}, where only tightness of $|\log U^g|_\alpha$ is shown.
Standard actions listed in Section~\ref{sec:approx-YM} (Manton, Wilson, Villain) satisfy Assumption~\ref{assum:P_N},
see Example~\ref{ex:verify_S_N_assump}.
The result is new even for the Villain and Wilson actions and the proof relies on rough path estimates from Appendix~\ref{app:Gaussian_tails}.

The assumption that $G$ is simply connected is used in only one step in this section, which is Lemma~\ref{lem:axial}.
This assumption, however, is crucial for Theorem~\ref{thm:YM_gauge_fixed} and the result is not true without it.

\subsection{Lattice gauge-fixing and rough Uhlenbeck compactness}

In this subsection, we improve the analysis of the deterministic gauge-fixing procedure from~\cite[Sec.~4]{Chevyrev19YM}.\footnote{While the core idea is taken
from~\cite{Chevyrev19YM}, we prefer to make our presentation essentially self-contained with the aim to (i) facilitate reading, and (ii) point out and correct several errors in~\cite{Chevyrev19YM}.}
See also~\cite[Sec.~6]{ChandraChevyrev22}, where this procedure is carried out
in the much simpler Abelian case and with stronger estimates.

\begin{definition}
An $N$-\textit{rectangle} is a triple $r=(x,m2^{-N}e_1,n2^{-N}e_2) \in\Lambda_N \times \R^2\times\R^2$ where $1\leq m,n\leq 2^N$ and $m\wedge n=1$.
We denote by $|r|=mn2^{-2N}$ the area of $r$.
We let $\rect_N$ denote the set of $N$-rectangles.
\end{definition}
Because we are in dimension $d=2$, for a plaquette $p\in\plaq_N$, the set $p\cap \Lambda_{N-1}$ is a singleton $\{z\}$. We call $z$ the \textit{origin} of $p$. 

Every $r=(x,m2^{-N},n2^{-N})\in\rect_N$ can be seen as a rectangular subset of $\T^2$ with $x$ in the lower-left corner.
We also treat $r$ as a subset of $\plaq_N$ containing $k\eqdef mn$ plaquettes $p_1,\ldots, p_k$ of $\Lambda_N$.
We order these plaquettes in the following way.
If $k=1$, there is nothing to order.
If $k>1$, then the boundary of $r$ contains two lines $\ell,\bar\ell$ consisting of 2 or more bonds.
Precisely one of these lines, say $\ell$, is contained in the `grid'  
\begin{equ}\label{eq:grid_def}
\Grid_{N-1} = \{x+te_i2^{-N+1}\,:\, x\in\Lambda_{N-1}\,,\,t\in[0,1]\,,\,i=1,2\} \subset \T^2\;.
\end{equ}
We call $r$ \textit{lower} or \textit{upper} if $\ell$ is horizontal (so $m>1$ and $n=1$) and is below or above $\bar \ell$ respectively.
Similarly we call $r$ \textit{leftward} or \textit{rightward} if $\ell$ is vertical (so $n>1$ and $m=1$) and is to the left or right of $\bar \ell$ respectively.
See Figure~\ref{fig:rectangles}.

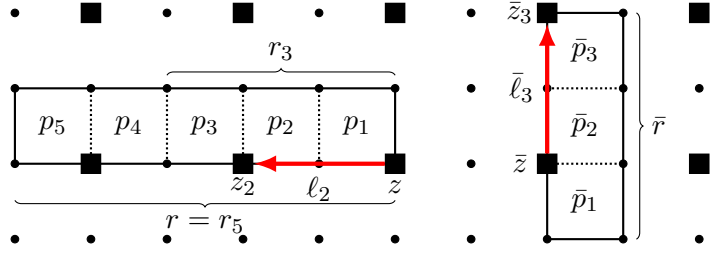
\begin{figure}[t]
\centering
\begin{tikzpicture}[scale = 2.0]

%skeleton of r
\draw[thick] (0.0,0.5)--++(2.5,0)--++(0,0.5)--++(-2.5,0)--++(0,-0.5);

%skeleton of \bar r
\draw[thick] (3.5,0.0)--++(0.5,0)--++(0,1.5)--++(-0.5,0)--++(0,-1.5);

%dots and plaquettes for r
\foreach \x in {0,...,3}{
	\draw[thick,densely dotted] (0.5+\x/2,0.5)--++(0,0.5);
}
\foreach \x in {1,...,5}{
	\node (p\x) at (2.25 + 0.5 - \x/2,3/4) {$p_{\x}$};
}

%dots and plaquettes for \bar r
\foreach \x in {0,...,1}{
	\draw[thick,densely dotted] (3.5,0.5+\x/2)--++(0.5,0.0);
}
\foreach \x in {1,...,3}{
	\node (p\x) at (3.75,1/4 - 0.5 + \x/2) {$\bar p_{\x}$};
}

%tags for r
\draw[decorate, decoration={brace}, yshift=0.5ex]  (1.0,1) -- node[above=0.4ex] {$r_3$} (2.5,1);
\draw[decorate, decoration={brace,mirror}, yshift=-1.4ex]  (0.0,0.5) -- node[below=0.4ex] {$r=r_5$}  (2.5,0.5);

%tags for \bar r
\draw[decorate, decoration={brace,mirror}, xshift= 0.5ex]  (4,0.0) -- node[right=0.4ex] {$\bar r$}  (4.0,1.5);

%lattice
\foreach \x in {0,...,4}{
  \foreach \y in {0,...,1}{
	\node [draw, fill, name=square\x\y] at (\x+0.5,\y+0.5) {};
	\node [draw, dot, name=c\x\y] at (\x,\y) {};
	\node [draw, dot, name=c\x\y] at (\x+0.5,\y) {};
	\node [draw, dot, name=c\x\y] at (\x,\y+0.5) {};
%    \fill[black] (\x,\y) circle (0.02);
  }
}

%line for r
\draw[ultra thick, red, -{Latex[length=3mm]}] (square20) -- (square10)
node[draw=none, black, midway, below=0.5]{$\ell_2$};
%node[draw=none, midway, left=0.5]{$b_5$};

%line for \bar r
\draw[ultra thick, red, -{Latex[length=3mm]}] (square30) -- (square31)
node[draw=none, black, midway, left=0.5]{$\bar \ell_3$};;

%origins for r
\node (barz) at (2.5,0.5-0.15) {$z$};
\node (barz3) at (1.5,0.5-0.15) {$z_2$};

%origins for \bar r
\node (barz) at (3.5-0.18,0.5) {$\bar z$};
\node (barz3) at (3.5-0.18,1.5) {$\bar z_3$};
\end{tikzpicture}
\caption{
Example of lower rectangle $r$ with ordered plaquettes $p_1,\ldots, p_5$ and leftward rectangle $\bar r$ with ordered plaquettes $\bar p_1,\bar p_2,\bar p_3$.
Squares are points in $\Lambda_{N-1}$, dots are points in $\Lambda_N\setminus\Lambda_{N-1}$.
Thick red arrow on the left denotes $\ell_2=\ell_3$,
and thick red arrow on the right denotes $\bar \ell_3$.
We have $\ell_1 = \protect\emptyset$ and $\bar\ell_1=\bar\ell_2 = \protect\emptyset$.
The origin of $r$ (and of $p_1$) is $z$ and the origin of $p_2,p_3$ is $z_2=z_3$.
The origin of $\bar r$ (and of $\bar p_1,\bar p_2$) is $\bar z$ and the origin of $\bar p_3$ is $\bar z_3$.
}
\label{fig:rectangles}
\end{figure}

If $r$ is lower (resp. upper) we order $p_1,\ldots, p_k$ from the right to left (resp. left to right).
If $r$ is leftward (resp. rightward) we order $p_1,\ldots, p_k$ from the bottom to top (resp. top to bottom).
We call $p_1,\ldots, p_k$ the \textit{ordered plaquettes} of $r$.
The origin of $r\in \rect_N$ is defined as the origin of $p_1$.  

Throughout this section, for $U\in Q_N$ and $r=(x,m2^{-N}e_1,n2^{-N}e_2)$, we define the holonomy of $U$ around $r$ by
$U(\d r) = U(b_1)\ldots U(b_j)$, where $b_1,\ldots,b_j\in\bonds_N$ are the bonds contained in the boundary of $r$ (treated in the obvious way as a subset of $\T^2$)
oriented counter-clockwise and starting at the origin of $r$.
In particular, plaquettes are oriented counter-clockwise.

(In the case that $m=2^N$ and $n=1$, the boundary of $r$ consists of horizontal bonds and a single vertical bond, $(x,x+ 2^{-N}e_2)$.
If $z$, the origin of $r$, is on this vertical bond then there are two ways to traverse the boundary counter-clockwise starting from $z$.
In this case we view the boundary of $r$ as a closed simple curve in $\R^2$,
in which case there is only one way to traverse the boundary counter-clockwise starting from $z$, and we choose this way.
A similar consideration applies in the case that $m=1$ and $n=2^N$.)

\begin{definition}
For $U\in Q_N$, $r\in\rect_N$ with ordered plaquettes $p_1,\ldots,p_k$, and $1\leq i\leq k$, 
let $r_i\in\rect_N$ be the unique $N$-rectangle whose ordered plaquettes are $p_1,\ldots, p_i$, see Figure~\ref{fig:rectangles}.
The \textit{anti-development} of $U$ along $r$ is the $\mfg$-valued sequence $X=\{X_i\}_{i=0}^k$ with $X_0=0$ and $X_i-X_{i-1} = \log (U(\d r_{i-1})^{-1}U(\d r_i))$ where $U(\partial r_0)\eqdef \id$.   
\end{definition}
As the name suggests, the development into $G$ of the anti-development $X$ is $U(\d r_i)=e^{X_1-X_0}\ldots e^{X_i-X_{i-1}}$.
To explain our choice of ordering $p_1,\ldots, p_k$,
observe that $U(\d r_i) = x_1\ldots x_i$ where $x_i = U(\ell_i) U(\d p_i) U(\ell_i)^{-1}$,
where $\ell_i=(z,z_i-z)\in\lines_N$ is the (oriented) line connecting $z\in\Lambda_{N-1}$, the origin of $r$, to $z_i\in\Lambda_{N-1}$, the origin of $p_i$ (see again Figure~\ref{fig:rectangles}),
and $U(\ell_i)=\prod_{b\in\ell_i}U(b)$ is the ordered product of oriented bonds $b\in\bonds_N$ as they appear in $\ell_i$.
In particular, $U(\d r_{i-1})^{-1} U(\d r_i) = x_i$, and thus   
\begin{equ}\label{eq:anti_dev_identity}
X_i - X_{i-1} = \Ad_{U(\ell_i)}(\log U(\d p_i))\;.
\end{equ}
Note that $\ell_i$ for all $i=1,\ldots, k$ is contained in the grid $\Grid_{N-1}$, which will be important in the proof of Lemma~\ref{lem:q-var_bound} below.
The identity~\eqref{eq:anti_dev_identity} shows that the increments
$X_{i}-X_{i-1}$ are $\log$ of \textit{lassos}~\cite[Sec.~2.3]{Levy03}
-- see also~\cite{Gross85}
where this concept is introduced in the continuum.

Much like $\{\Omega_N\}_{N\geq 1}$, $\{Q_N\}_{N\geq 1}$ forms a projective system
with projections $\pi_{N,M}\colon Q_N\to Q_M$
for $M\leq N$, where $(\pi_{N,M} U) (x,x\pm e_i2^{-M})$ is given by
$\prod_{k=0}^{2^{N-M}-1}  U(x\pm ke_i 2^{-N}, x\pm 
(k+1)e_i2^{-N})$.
%\begin{multline*}
%(\pi_{N,M} U) (x,x\pm e_i2^{-M}) = U(x,x\pm e_i2^{-N})U(x\pm e_i2^{-N},x\pm 2e_i2^{-N})
%\\
%\cdots U(x\pm (2^{N-M}-1)e_i 2^{-N}, x\pm 2^{N-M}e_i2^{-N})\;.
%\end{multline*}
We keep $\pi_{N,M}$ implicit and treat every $U\in Q_N$ as defining an element of $Q_M$.
In particular, for every $M\leq N$, $U\in Q_N$, and $r\in\rect_M$, the holonomy $U(\d r)$ and the anti-development of $U$ around $r$ are well-defined.

We measure `non-flatness' of $U\in Q_N$ through the gauge-invariant quantities
\begin{equ}{}
[U]_{\alpha;q,N} \eqdef \max_{0 \leq M \leq N} \max_{r\in \rect_M} |r|^{-\alpha/2} |X|_{\var q}\;,
\end{equ}
where $q \in [1,\infty)$ and $X$ is the anti-development of $U$ along $r\in\rect_M$,
and
\begin{equs}
\fancynorm{U}_{\alpha;N} &\eqdef \max_{0 \leq M \leq N} \fancynorm{U}_{\alpha;=M}\;,
\\
\fancynorm{U}_{\alpha;=M} &\eqdef \max_{j,k} |r_{M,j,k}|^{-\alpha/2}|\log U(\d r_{M,j,k})|\;,
\end{equs}
where the final maximum is over all $0\leq j < 2^{M}$ and $1\leq k\leq 2^M$ and where $r_{M,j,k}\in\rect_M$ is the rectangle $r_{M,j,k}=((0,j2^{-M}),k2^{-M}e_1,2^{-M}e_2))$.
Recall that $|X|_{\var q}$ for a sequence $X=\{X_i\}_{i=0}^k$
%(taking values in a normed space)
is defined by~\eqref{eq:p-var}
where we treat $X$ as a piecewise constant function on $[a,b)\eqdef [0,k+1)$ equal to $X_i$ on $[i,i+1)$.
%\begin{equ}
%|X|_{\var q} = \max_s \sum_{j=1}^{|s|}\Big( |X_{s_{j}}-X_{s_{j-1}}|^q \Big)^{1/q} \;,
%\end{equ}
%where the maximum is over all subsequences $s=(s_0,\ldots, s_n)$ of $(0,\ldots, k)$ and $|s|\eqdef n$
%(this is the discrete analogue of~\eqref{eq:p-var}).

One should think of $[U]_{\alpha;q,N}$ and $\fancynorm{U}_{\alpha;N}$ as substituting the YM energy in classical Uhlenbeck compactness~\cite{Uhlenbeck82,Wehrheim04}.
Indeed, the main result of this subsection is the following rough version of Uhlenbeck compactness that quantifies~\cite[Thm.~4.5]{Chevyrev19YM}.
Recall that every $U\in Q_N$ defines an element $\log U \in\Omega_N$, see~\eqref{eq:A_def_1_form} and the discussion that follows.

\begin{theorem}\label{thm:gauge_fix}
Suppose that $G$ is simply connected.
Consider $\alpha\in (\frac12,1)$, $q\in [1,\frac1{1-\alpha})$, and $\bar\alpha\in (0,\alpha)$.
There exists $C>0$, depending only on $\alpha,\bar\alpha,q,G$, with the following property.
For all $N\geq 1$ and $U\in Q_N$, there exists
$g\in \mfG_N$ such that, denoting $A=\log U^g \in\Omega_N$,  
\begin{equ}[eq:A_gr_bound]
|A|_{N;\gr{\bar\alpha}} \leq Z\eqdef C(1+\fancynorm{U}_{\alpha;N}^4) e^{C(\log(1+[U]_{\alpha;q,N}))^2}\;,
\end{equ}
and, if $\alpha\in (\frac23,1)$, then also $|A|_{N;\bar\alpha;\rho} \leq Z$. 
\end{theorem}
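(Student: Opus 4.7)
The plan is to use a multi-scale axial gauge-fixing procedure on the grids $\Grid_M$ for $0 \le M \le N$, combined with rough-path enhancements to convert the $q$-variation control in $[U]_{\alpha;q,N}$ into Hölder-type bounds on $A = \log U^g$.

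First, I would construct $g$ scale-by-scale. At the coarsest scale $M=0$ there is essentially nothing to do. Inductively, assume that a gauge has been fixed at scale $M-1$. To refine to scale $M$, I would use the remaining gauge freedom at vertices of $\Lambda_M \setminus \Lambda_{M-1}$ to set $\log U^g$ to zero along a spanning tree of the new bonds added to $\Grid_M$ (rooted at the points of $\Lambda_{M-1}$). The values of $\log U^g$ on the non-tree bonds at scale $M$ are then determined by the plaquette holonomies $U^g(\partial p)$ for $p \in \plaq_M$, each bounded by $2^{-M\alpha}\fancynorm{U}_{\alpha;=M}$ by definition of $\fancynorm{\cdot}_{\alpha;=M}$. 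Simple-connectedness of $G$ enters here in the style of Lemma~\ref{lem:axial}: each plaquette holonomy can be lifted consistently to an $\mfg$-valued increment with no monodromy obstruction, producing a single-valued $A \in \Omega_N$.

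Next I would estimate $|A|_{N;\gr{\bar\alpha}}$. Given $\ell \in \olines_N$ with $|\ell| = k2^{-N}$, decompose it dyadically into segments belonging to various scales $M$; using the telescoping between scales and that the bond values on the spanning trees vanish, $A(\ell)$ becomes a sum of contributions coming from non-tree bonds. The scale-$M$ contribution is controlled by $|r|^{\alpha/2}\fancynorm{U}_{\alpha;=M}$ for a rectangle $r \in \rect_M$ swept out. Summing the geometric tail in $M$ gives the leading bound $\lesssim |\ell|^\alpha \fancynorm{U}_{\alpha;N}$. To pass from $\alpha$ down to $\bar\alpha < \alpha$ one must absorb the multiplicative BCH-reconstruction errors from nested conjugations by $U^g$; this is exactly where the anti-development $X$ enters. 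On each rectangle $r$, the BCH expansion of $U^g(\partial r)$ is controlled by $|X|_{\var q} \leq |r|^{\alpha/2}[U]_{\alpha;q,N}$, and Young/rough-path estimates (from Appendix~\ref{app:Gaussian_tails}) of the reconstructed $A$ from its antidevelopment give dependence on $[U]_{\alpha;q,N}$ of at worst Gaussian type, i.e.\ $\exp(C(\log(1+[U]_{\alpha;q,N}))^2)$, since $q < (1-\alpha)^{-1}$ ensures the Young regime $\alpha + 1/q > 1$.

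For the $\rho$ semi-norm needed when $\alpha \in (\tfrac23,1)$, I would compare $A(\ell) - A(\bar\ell)$ for parallel $\ell, \bar\ell$. Writing $A(\ell) - A(\bar\ell)$ via a rectangle $r$ enclosed by $\ell, \bar\ell$ and the two transversal bonds of the chosen tree, the difference equals the associated anti-development increment up to a commutator correction bounded by a power of $\fancynorm{U}_{\alpha;N}$. Since $|r|^{1/2} = \rho(\ell,\bar\ell) \cdot |\ell|^{1/2}$, a Young-type estimate against $X$ yields $|A(\ell)-A(\bar\ell)| \lesssim \rho(\ell,\bar\ell)^{\bar\alpha} Z$. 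The threshold $\alpha > \tfrac23$ is what allows $q$ to be chosen both strictly less than $(1-\alpha)^{-1}$ (Young condition) and such that the resulting $\bar\alpha$ exponent on $\rho$ survives the decomposition.

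The main obstacle will be keeping precise track of these estimates simultaneously across all scales, and in particular deriving the specific polynomial factor $(1+\fancynorm{U}_{\alpha;N}^4)$ with the power $4$ and the $\log^2$-exponential factor in $[U]_{\alpha;q,N}$. The former reflects the BCH errors (quadratic commutators of quadratic terms) accumulated over the scale refinement, while the latter is the hallmark of Gaussian tails for rough-path driven objects; showing that no worse factor is incurred, in particular that successive iterations of the gauge refinement do not blow up the constants, is the most delicate book-keeping in the argument and is where the appeal to Appendix~\ref{app:Gaussian_tails} is indispensable.
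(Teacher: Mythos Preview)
Your proposal has several genuine gaps.

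First, the gauge construction you describe---setting $\log U^g$ to zero along spanning trees at each scale---is not what makes the argument work and would likely fail. The paper uses an axial gauge (Lemma~\ref{lem:axial}) only at a single initial scale $M$; the refinement from $M$ to $N$ is done via the \emph{binary Landau gauge}, whose defining formula~\eqref{eq:U^g_a_i_def} is not a tree condition but an explicit averaging of neighbouring bonds and plaquette holonomies. This averaging yields the key identity $A(\ell)=\tfrac12(A(\ell_1)+A(\ell_2))+\Delta_1+\Delta_2$ in~\eqref{eq:ell_average}, which drives the induction in Theorem~\ref{thm:Landau}: the $\bar\alpha$-H\"older bound at scale $n$ is inherited from scale $n-1$ with a multiplicative error $1+C2^{-n(\alpha-\bar\alpha)}[U]_{\alpha;q,M,N}$. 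A tree gauge would not produce this averaging, and the errors would not form a convergent product.

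Second, your appeal to Appendix~\ref{app:Gaussian_tails} is misplaced. That appendix concerns Gaussian tails of \emph{random} rough paths and is used only in Section~\ref{sec:moment_estimates} for probabilistic moment bounds. Theorem~\ref{thm:gauge_fix} is purely deterministic. The factor $e^{C(\log(1+[U]_{\alpha;q,N}))^2}$ arises from the infinite product $\prod_{n\geq 1}(1+C2^{-n(\alpha-\bar\alpha)}[U]_{\alpha;q,M,N})$ at the end of the proof of Theorem~\ref{thm:Landau}, not from any Gaussian estimate. The $q$-variation enters only through the deterministic Young sum estimate in Lemma~\ref{lem:q-var_bound}, used to bound $\Delta_1$.

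Third, simple connectedness is not used to ``lift plaquette holonomies without monodromy''; $\log$ is already fixed globally. It is used in Lemma~\ref{lem:axial} via quantitative homotopy theory to interpolate the horizontal holonomies $u_j$ by short paths in $G$, which is what controls bonds at the initial scale $M$. Finally, the power $4$ on $\fancynorm{U}_{\alpha;N}$ does not come from BCH commutators but from the choice of $M$: the proof picks $M$ minimal with $2^{-M\alpha/2}\fancynorm{U}_{\alpha;N}\lesssim 1$, so $2^{M(1+\bar\alpha/2)}\leq 2^{2M\alpha}\lesssim \fancynorm{U}_{\alpha;N}^4$ feeds into the $\rho$-bound~\eqref{eq:Aell_barell_bound}.
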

\begin{remark}
The reader may wonder why $[U]_{\alpha;q,N}$ involving $|X|_{\var q}$ is a natural quantity to consider.
It appears due to Young's estimate for sums/integrals, see~\eqref{eq:Young_bound}, which is used in~\eqref{eq:Delta2_bound} in the proof of Theorem~\ref{thm:gauge_fix}.
\end{remark}
\begin{remark}
Although we do not use it later,
Theorem~\ref{thm:gauge_fix} respects different scales in the following sense:
if we consider $M\leq N$, then $g$ can be chosen such that $g\restr_{\Lambda_M}$
is the transformation corresponding to $\pi_{N,M}U\in Q_M$.
\end{remark}
We give the proof of Theorem~\ref{thm:gauge_fix} at the end of this subsection.
\begin{lemma}[Axial gauge]\label{lem:axial}
Suppose that $G$ is simply connected.
There exists $C>0$ with the following property.
For all $M\geq 1$, and $U\in Q_M$, there exists $g\in\mfG_M$ such that, for all $\alpha\geq 0$,
\begin{equ}[eq:axial_gauge_bound]
\max_{b\in\bonds_M}|\log (U^g(b))| \leq C\fancynorm{U}_{\alpha;=M}2^{-M\alpha/2} + C2^{-M}\;.
\end{equ}
\end{lemma}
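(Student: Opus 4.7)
The plan is to construct $g$ in two main phases: a standard axial gauge-fixing that trivializes as many bonds as possible, followed by a redistribution that uniformly spreads the resulting concentrated holonomies across the non-contractible loops they sit on, which is possible because $G$ is simply connected (hence connected). I would start by defining $g$ recursively along each column: set $g(x,0)=\id$ on the bottom row, then inductively $g(x,(j+1)2^{-M}) = g(x,j2^{-M}) U(x+je_2 2^{-M},x+(j+1)e_2 2^{-M})$ for $j=0,\ldots,2^M-2$. This yields $U^g(b_v)=\id$ for every vertical bond below the top row, while the top-row vertical bond at column $i$ becomes the vertical holonomy $V_i\in G$. The residual freedom (gauge transformations constant in $y$) can then be used to set all row-$0$ horizontal bonds to $\id$ except the last one, which absorbs the horizontal holonomy $L_0\in G$.

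Second, since $G$ is connected, I would write $V_i=\exp(v_i)$ and $L_0=\exp(l_0)$ with $|v_i|,|l_0|\le \diam(G)$, and apply a further gauge transformation $h$ of the form $h(i2^{-M},j2^{-M}) = \exp(jv_i/2^M)\exp(il_0/2^M)$ (or a symmetric variant) that spreads these concentrated holonomies uniformly along the corresponding non-contractible loops. After this redistribution, every vertical bond becomes $V_i^{1/2^M}$ and every row-$0$ horizontal bond becomes $L_0^{1/2^M}$, each of log-size $O(2^{-M})$, accounting for the $C\,2^{-M}$ term in the claimed bound.

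Third, the horizontal bonds at rows $j\ge 1$ are then determined by the plaquette identities from Steps 1--2. Using the local constraint $U(\partial p(i,j-1)) = U^g(b_h(i,j-1))\,V_{i+1}^{1/2^M}\, U^g(b_h(i,j))^{-1}\,V_i^{-1/2^M}$, one iteratively expresses each such bond in terms of row-$0$ bonds, plaquette holonomies, and powers of the $V_i^{1/2^M}$. To go beyond the naive bound, the plan is to exploit the key identity $U(\partial r_{M,j-1,2^M}) = L_{j-1}L_j^{-1}$ for full-width strips (the two vertical boundary bonds coincide modulo the torus and cancel), combined with the a priori estimate $|\log L_j|\le \diam(G)$: the horizontal loop holonomies $L_j$ are bounded in $G$, while the consecutive differences $L_{j-1}L_j^{-1}$ are controlled by $\fancynorm{U}_{\alpha;=M}\,2^{-M\alpha/2}$. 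Distributing each $L_j$ over the $2^M$ bonds at row $j$ then reproduces the stated per-bond estimate.

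The hard part will be Step 3, because a naive per-plaquette iteration only yields an accumulated bound of order $\fancynorm{U}_{\alpha;=M}\,2^{M(1-\alpha)}$ at row $j\sim 2^M$, far exceeding the target $\fancynorm{U}_{\alpha;=M}\,2^{-M\alpha/2}$. The resolution requires recognising that partial products of $2^M$ plaquette holonomies across a given row repackage into a single full-width strip holonomy, whose log is controlled by the much sharper scaling $2^{-M\alpha/2}$, and balancing this against the $O(2^{-M})$ contribution coming from the compactness of $G$. This two-fold use of simple connectedness (for the Step 2 distribution) and of the full-width strip bound (for the Step 3 synthesis) is where the topology and the curvature estimates must interact carefully; the BCH corrections and the non-commutativity of $G$ add further technical bookkeeping.
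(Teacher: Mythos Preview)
Your proposal correctly identifies the relevant invariants (the row holonomies $L_j$ and the full-width strip bound $|\log(L_{j-1}L_j^{-1})|\lesssim \fancynorm{U}_{\alpha;=M}\,2^{-M\alpha/2}$) and correctly diagnoses the obstruction (naive iteration accumulates to $2^{M(1-\alpha)}$). However, Step~3 has a genuine gap that your outline does not resolve.

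``Distributing $L_j$ over the $2^M$ bonds at row $j$'' amounts to choosing, for each $j$, a path $\gamma_j\colon[0,1]\to G$ with $\gamma_j(0)=\id$ and $\gamma_j(1)=L_j$, and then setting the gauge so that the partial horizontal holonomy at column $k$ of row $j$ equals $\gamma_j(k2^{-M})$. The horizontal bond at $(k,j)$ then becomes $\gamma_j(k2^{-M})^{-1}\gamma_j((k+1)2^{-M})$, which is small if $\gamma_j$ is Lipschitz with constant $O(1)$. But the vertical bond joining rows $j$ and $j+1$ at column $k$ becomes (up to $O(2^{-M})$ factors from the old vertical bonds) $\gamma_j(k2^{-M})^{-1}\gamma_{j+1}(k2^{-M})$, so you also need $\sup_t|\log(\gamma_j(t)^{-1}\gamma_{j+1}(t))|$ to be small. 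Your implicit choice $\gamma_j(t)=\exp(t\log L_j)$ fails this: even when $L_j$ and $L_{j+1}$ are close in $G$, their logarithms may lie on opposite sides of the cut locus (think of two elements of $\SU(2)$ near $-\id$), in which case the geodesics $\exp(t\log L_j)$ and $\exp(t\log L_{j+1})$ diverge for intermediate $t$. Constructing paths that are \emph{simultaneously} Lipschitz and uniformly close to their neighbours is precisely a quantitative homotopy problem, and this is where simple connectedness enters essentially---not in your Step~2, which uses only connectedness.

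The paper handles this by first distributing only the \emph{left-column} vertical holonomy (a much simpler task than your column-by-column Step~2, whose explicit formula is in any case incorrect for the top vertical and last horizontal bonds due to non-commutativity), then invoking a quantitative homotopy lemma (\cite[Lem.~4.14]{Chevyrev19YM}, see also~\cite[Thm.~B]{CDMW18}) to produce curves $\gamma_j$ satisfying both $|\gamma_j|_{\Hol1}\lesssim 2^M\delta+1$ and $\sup_t|\log(\gamma_j(t)^{-1}\gamma_{j+1}(t))|\lesssim\delta+2^{-M}$, where $\delta=\max_j|\log(u_ju_{j+1}^{-1})|$ is controlled by the full-width strip bound. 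This is the missing ingredient in your plan; without it, Step~3 cannot close.
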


\begin{proof}
Throughout the proof, we let $C$ be a constant that depends on $G$ and which may change from line to line.
For $0\leq j\leq 2^{M}-1$, consider the points $y_j=(0,j2^{-M})\in \Lambda_M$ and define the holonomy $V=U(y_0,y_1)\ldots U(y_{2^M-1},y_0)$.
We define a gauge transformation $\bar g\in\mfG_M$ as follows.
We set $\bar g(y_0)=1$ and, for $1\leq j\leq 2^M-1$, we let $\bar g(y_j)$ be the unique group element such that
\begin{equ}\label{eq:g_y_j_def}
U^{\bar g}(y_j,y_{j+1}) \eqdef \bar g(y_j)U(y_j,y_{j+1}) \bar g(y_{j+1})^{-1} = \exp(2^{-M}\log V) \eqdef v
\end{equ} 
(such a choice for $\bar g(y_1),\ldots, \bar g(y_{2^M-1})\in G$ exists and is unique).
For all remaining $x\in\Lambda_M$, we set $\bar g(x)=1$.

For $1\leq k\leq 2^M-1$ and $0\leq j\leq 2^M-1$, define $y_{j,k} = (k2^{-M},j2^{-M}) \in\Lambda_M$ and the holonomies
$
u_j = U^{\bar g}(y_j,y_{j,1})U^{\bar g}(y_{j,1},y_{j,2})\cdots U^{\bar g}(y_{j,2^{M}-1}, y_j)$.
Recall that, by quantitative homotopy theory (see~\cite[Lem.~4.14]{Chevyrev19YM} or~\cite[Thm.~B]{CDMW18}),
there exist curves $\gamma_j \in C([0,1],G)$ such that $\gamma_j(0) = 1$, $\gamma_j(1)=u_j$, and
\begin{equ}\label{eq:gamma_vert_bound}
\max_j \sup_{t\in[0,1]}|\log (\gamma_j(t)^{-1}\gamma_{j+1}(t))| \leq C(\delta +2^{-M})
\end{equ}
and
\begin{equ}\label{eq:gamma_horiz_bound}
\max_{j} \sup_{\substack{s,t\in [0,1]\\ s\neq t}} |t-s|^{-1}|\log (\gamma_j(s)^{-1}\gamma_{j}(t))| \leq C(2^M\delta+1)\;,
\end{equ}
where $\delta = \max_j |\log (u_ju_{j+1}^{-1})|$.
(This is where we use that $G$ is simply connected.)
We now define $g\in\mfG_M$ by $g\equiv \bar g$ on $\{y_0,\ldots, y_{2^M-1}\}$
and by the identities
\begin{equ}\label{eq:g_final_def}
U^g(y_{j,k},y_{j,k+1}) = \gamma_j(k2^{-M})^{-1}\gamma_j((k+1)2^{-M})\;,
\end{equ}
which uniquely determine $g$. See Figure~\ref{fig:paths}.

\begin{figure}[ht]
\centering
\begin{tikzpicture}
%dots in lattice
\foreach \x in {0,...,8}{
  \foreach \y in {0,...,8}{
    \fill[black] (\x/2,\y/2) circle (0.05);
  }
}

%verticle line
\draw[thick] (0,0)--++(0,4);

%horizontal lines
\foreach \y in {0,...,8}{
\draw[thick] (0,\y/2)--++(4,0);
}

%square at 0
\foreach \x in {0,...,1}{
\foreach \y in {0,...,1}{
\node[name=zero\x\y, mark size=2.0pt] at (4*\x,4*\y) {\pgfuseplotmark{square*}};
}}
\node[name=zero] at (0,-1/4) {$y_0$};

%arrows
\draw[-{Latex[length=2mm]}] (0-1/4,0)--++(0,4)
node[draw=none, midway, left=0.2]{$V$};
\draw[very thick, red, -{Latex[length=2mm]}] (0,0) --++ (4,0)
node[draw=none, midway, below=0.0]{{\footnotesize $u_0$}};
\draw[very thick, darkergreen, -{Latex[length=2mm]}] (0,1/2)--++(4,0)
node[draw=none, midway, below=0.0]{{\footnotesize $u_1$}};
\draw[very thick, darkblue, -{Latex[length=2mm]}] (0,1)--++(2,0)
node[draw=none, midway, below=-0.5]{{\footnotesize $\gamma_2(1/2)$}};
\draw[very thick, purple, -{Latex[length=2mm]}] (0,4-1/2)--++(3,0)
node[draw=none, midway, below=-0.5]{{\footnotesize $\gamma_7(3/4)$}};
\end{tikzpicture}
\quad
\begin{tikzpicture}
%G
\draw[smooth cycle, tension=0.4, fill=white] plot coordinates{(1,2.5) (-1.5,0.5) (2,-1.5) (4,1)} node at (-0.8,1.9) {$G$};

%id
\node [draw,dot,name=id,label={180:$\id$}] at (0,0) {};

%points
\node [draw,dot,name=u0,label={0:$u_0$}] at (2.3,1) {};
\node [draw,dot,name=u1,label={45:$u_1$}] at (2,1.4) {};
\node [draw,dot,name=u2,label={90:$u_2$}] at (1.4,1.5) {};
\node [draw,dot,name=u7,label={-45:$u_7$}] at (2.2,0.4) {};

%curves
\path[->,red,thick] (id) edge [bend right] (u0);
\path[->,darkergreen,thick] (id) edge [bend right] (u1);

\begin{scope}[thick,decoration={
    markings,
    mark=at position 0.5 with {\arrow{>}}}
    ] 
	\draw[postaction={decorate},->,darkblue,dotted,bend right] (id) to[bend right=15] node[draw=none, midway, left=0.0]{{\footnotesize $\gamma_2$}} (u2);
	\draw[postaction={decorate},->,purple,dotted,bend right] (id) to[bend right=40] node[draw=none, midway, below=0.0]{{\footnotesize $\gamma_7$}} (u7);
%    \path[postaction={decorate},->,darkblue,dotted] (id) edge [bend right] node[left] {{\footnotesize $\gamma_2$}} (u2);
\end{scope}

\end{tikzpicture}
\caption{Illustration of case $M=3$ in proof of Lemma~\ref{lem:axial}. The holonomy of $U^g$ along blue and purple lines in left picture are $\gamma_2(1/2)$ and $\gamma_7(3/4)$ respectively.}
\label{fig:paths}
\end{figure}
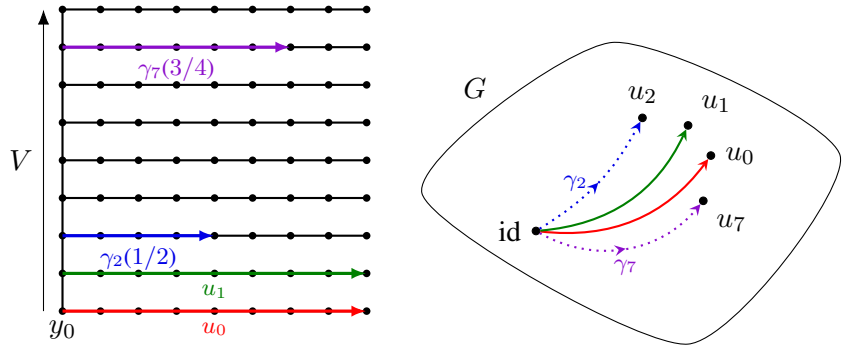

It follows from the definition of $g(y_j)=\bar g(y_j)$, namely from~\eqref{eq:g_y_j_def}, that
\begin{equ}\label{eq:y_j_bound}
|\log U^{g}(y_j,y_{j+1})| = |\log v| \leq C2^{-M}\;.
\end{equ}
%\ilya{We don't know that $\log v = 2^{-M}\log V$ for small $M$, but once $M$ is sufficiently large (depending only on $G$), then we do have this simply because $\log$ is a diffeomorphism around the identity. For $M$ small for which $\log v = 2^{-M}\log V$ might not hold, the bound is still true for some large $C$.}
Furthermore, for all $x,y\in G$, we have
$
|\log(xy)| \leq  C(|\log x| + |\log y|)$
and $|\log (y xy^{-1})| = |\log x|$.
Since $u_ju_{j+1}^{-1} = (u_j v u_{j+1}^{-1} v^{-1})v(u_{j+1}v^{-1}u_{j+1}^{-1})$, where we observe that $u_j v u_{j+1}^{-1} v^{-1}$ is a conjugate of $U^g(\partial r_{M,j,2^{M}})$,  by~\eqref{eq:y_j_bound}
we obtain for all $\alpha\geq 0$ 
%\ilya{This is due to the identity $|\log (y xy^{-1})| = |\log x|$ where $x=v^{-1}$ and $y=u_{j+1}$, so $|\log(u_{j+1}v^{-1}u_{j+1}^{-1})| \lesssim 2^{-M}$
%and then $|\log(vu_{j+1}v^{-1}u_{j+1}^{-1})| \lesssim |\log(v)| + |\log(u_{j+1}v^{-1}u_{j+1}^{-1})| \lesssim 2^{-M}$.
%Should we add more detail in the argument?}
\begin{equ}\label{eq:u_j_bound}
\delta = \max_j |\log (u_ju_{j+1}^{-1})| \leq C2^{-M\alpha/2}\fancynorm{U}_{\alpha;=M} + C2^{-M}\;.
\end{equ}
We now obtain from~\eqref{eq:gamma_vert_bound} and a similar decomposition that each `vertical' bond $
|\log U^g(y_{j,k},y_{j+1,k})|$ is bounded by the right-hand side of \eqref{eq:axial_gauge_bound}.
Finally, directly by the definition~\eqref{eq:g_final_def} and by~\eqref{eq:gamma_horiz_bound} and~\eqref{eq:u_j_bound}, we obtain the same bound for
 `horizontal' bonds
$|\log U^g(y_{j,k},y_{j,k+1})|$.
\end{proof}

\subsubsection{Binary Landau gauge}

The binary Landau gauge $U^g$ for $U\in\mfG_N$ starting from scale $M\leq N$ is defined in~\cite[Sec.~4.1]{Chevyrev19YM} (our $M,N$ are denoted by $N_0,N_1$ therein).
We now recall the definition of this gauge but using a formulation closer to that of~\cite[Sec.~6]{ChandraChevyrev22}.

Fix $1\leq M\leq N$.
We define $g$ inductively beginning with $g\equiv 1$ on $\Lambda_M$.
Suppose now $g$ is defined on $\Lambda_{n-1}$ for some $M<n\leq N$.
To extend the definition of $g$ to $\Lambda_n$, consider first $\Lambda^1_n\subset \Lambda_n$ consisting of all points in $\Lambda_{n-1}$ together with midpoints of bonds in $\obonds_{n-1}$.
For $x\in\Lambda^1_n$ which is the midpoint of $(y,z)\in\obonds_{n-1}$, we set $g(x)\in G$ as the unique element such that
\begin{equ}[e:def-g-midpoint]
U^g(y,x)=U^g(x,z) = \exp\Big(\frac12 \log U^g(y,z)\Big)\;.
\end{equ}
Now consider a point $x\in\Lambda_n\setminus\Lambda_n^1$.
Consider bonds $b_1,\ldots, b_8,a_1,\ldots,a_4\in\bonds_n$, points $y_1,\ldots, y_4\in\Lambda^1_{n}$, and plaquettes $p_1,\ldots,p_4\in\plaq_n$ as in Figure~\ref{fig:bonds_plaq_def} (recall that plaquettes are oriented counter-clockwise with origin in $\Lambda_{n-1}$).
\begin{figure}[t]
\centering
\begin{tikzpicture}[scale = 1.7]
\foreach \x in {0,2}{
  \foreach \y in {0,2}{
    \node [draw, fill, name=c\x\y] at (\x,\y) {};
  }
}

\node [draw, dot, name=d10,label=below:$y_4$] at (1,0) {};
\node [draw, dot, name=d12,label=above:$y_2$] at (1,2) {};
\node [draw, dot, name=d01,label=left:$y_3$] at (0,1) {};
\node [draw, dot, name=d21,label=right:$y_1$] at (2,1) {};

\draw[thick, -{Latex[length=3mm]}] (c00) -- (d10) node[draw=none, midway, below=0.5]{$b_6$};
\draw[thick, -{Latex[length=3mm]}] (d10) -- (c20)
node[draw=none, midway, below=0.5]{$b_7$};
\draw[thick, -{Latex[length=3mm]}] (c20) -- (d21)
node[draw=none, midway, right=0.5]{$b_8$};
\draw[thick, -{Latex[length=3mm]}] (d21) -- (c22)
node[draw=none, midway, right=0.5]{$b_1$};
\draw[thick, -{Latex[length=3mm]}] (c22) -- (d12)
node[draw=none, midway, above=0.5]{$b_2$};
\draw[thick, -{Latex[length=3mm]}] (d12) -- (c02)
node[draw=none, midway, above=0.5]{$b_3$};
\draw[thick, -{Latex[length=3mm]}] (c02) -- (d01)
node[draw=none, midway, left=0.5]{$b_4$};
\draw[thick, -{Latex[length=3mm]}] (d01) -- (c00)
node[draw=none, midway, left=0.5]{$b_5$};

\draw[thick, -{Latex[length=3mm]}] (1,1)--++(1,0)
node[draw=none, midway, above=0.5]{$a_1$};
\draw[thick, -{Latex[length=3mm]}] (1,1)--++(0,1)
node[draw=none, midway, right=0.5]{$a_2$};
\draw[thick, -{Latex[length=3mm]}] (1,1)--++(-1,0)
node[draw=none, midway, above=0.5]{$a_3$};
\draw[thick, -{Latex[length=3mm]}] (1,1)--++(0,-1)
node[draw=none, midway, right=0.5]{$a_4$};

\node (p1) at (1.5,1.5) {$p_1$};
\node (p2) at (0.5,1.5) {$p_2$};
\node (p3) at (0.5,0.5) {$p_3$};
\node (p4) at (1.5,0.5) {$p_4$};

\node [draw,dot,name=x,label={-135:$x$}] at (1,1) {};

\end{tikzpicture}
\caption{
Squares are points of $\Lambda_{n-1}$, small dots are points of $\Lambda_n$.
The points $y_1,\ldots, y_4$ are in $\Lambda^1_n$.
The oriented bonds $b_1,\ldots, b_8$ and $a_1,\ldots, a_4$ are in $\bonds_n$.
}
\label{fig:bonds_plaq_def}
\end{figure}
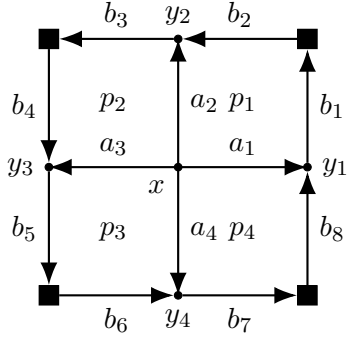
Then there exists a unique choice for $g(x)\in G$ such that, for all $i=1,\ldots, 4$,
\begin{equs}[eq:U^g_a_i_def]
\log U^g(a_i) &= \frac{\log U^g(b_{2i-3}) - \log U^g(b_{2i})}{2}
+ \frac38 (\log U^g(\d p_i) - \log U^g(\d p_{i-1}))
\\
&\qquad\qquad+ \frac18(\log U^g(\d p_{i+1}) - \log U^g(\d p_{i+2})) + \mcE_i
\end{equs}
where $\mcE_1 = 0$. Indexes for $b_j$ are modulo $8$ and those for $p_j$ are modulo $4$.
The condition $\mcE_1=0$ and the fact that $\log$ is injective determine $g(x)$ uniquely;
once $g(x)$ is determined, 
$\log U^g(a_i)$ for $i=2,3,4$ are fixed which then uniquely determine $\mcE_i$ for $i=2,3,4$.
Defining
\begin{equ}\label{eq:delta_def}
\delta = \sum_{i=1}^4 |\log U(\d p_i)| + \sum_{i=1}^8 |\log U^g(b_i)|\;,
\end{equ}
we claim that
\begin{equ}[eq:mcE_def]
\mcE_i = O(\delta^2)\;.
\end{equ}
Below we first prove \eqref{eq:mcE_def}, and then give some motivation for \eqref{eq:U^g_a_i_def}.

To prove \eqref{eq:mcE_def}, we note
\begin{equ}[eq:b_ix_j]
U^g(b_1)\cdots U^g(b_8) = x_1x_2x_3x_4
\qquad \mbox{where}\quad
 x_i = u_i U^g(\d p_i)u_i^{-1}
\end{equ}
 and $u_i$ is a product of elements of the form $U^g(\d p_j)^{-1}$ (with $j<i$) and $U^g(b_k)$ (with $k<2i$),
namely $u_1 = U^g(b_1)$ and
$u_2 = u_1 U^g(\d p_1)^{-1}U^g(b_2)U^g(b_3)$,
$u_3=u_2U^g(\d p_2)^{-1}U^g(b_4)U^g(b_5)$,
$u_4=u_3U^g(\d p_3)^{-1}U^g(b_6)U^g(b_7)$.
To see \eqref{eq:b_ix_j}, remark that the definition of $u_i$ implies directly
\[
x_1 x_2 x_3 x_4
= U^g(b_1)U^g(b_2)U^g(b_3) U^g(b_4)U^g(b_5) U^g(b_6)U^g(b_7) U^{g}(\d p_4) u_4^{-1}
\]
and then that $U^{g}(\d p_4) u_4^{-1} = U^{g}(b_8)$;
the latter identity follows immediately from the more general identity
$u_j = U^g(a_1)^{-1}U^g(a_j)U^g(b_{2j-1})$ that one can prove by induction.

The BCH formula implies that, for all $X_1,\ldots,X_n\in\mfg$,
\begin{equ}[eq:BCH]
\Big|\log (e^{X_1}\cdots e^{X_n}) - \sum_{i=1}^n X_i\Big| \lesssim \sum_{i=1}^n|X_i|^2\;,
\end{equ}
where the proportionality constant depends only on $n,G$.
It follows that
\begin{equ}\label{eq:log_sums}
\sum_{i=1}^4 \log U^g(\d p_i) = \sum_{i=1}^8 \log U^g(b_i) + O(\delta^2)\;.
\end{equ}
%\haoText{Actually, if we take $u_1 = U^g(b_1)$ and
%$u_2 = u_1  U^g(b_2)U^g(b_3)$,
%$u_3=u_2 U^g(b_4)U^g(b_5)$,
%$u_4=u_3 U^g(b_6)U^g(b_7)$, wouldn't everything also work? Why do we take the factors $U^g(\d p_1)^{-1}$, $U^g(\d p_2)^{-1}$, $U^g(\d p_3)^{-1}$ in $u_2,u_3,u_4$?}
%\ilyaText{I don't think this choice of $u_i$ will work, at least I don't see why it would.
%The choice of $u_i$ above works because when we substitute their expressions we get
%\[
%x_1 = u_1 U^g(\d p_1) u_1^{-1}
%\]
%\[
%x_1 x_2 = (U^g(b_1) U^g(\d p_i) u_1^{-1}) (u_1 U^g(\d p_1)^{-1}U^g(b_2)U^g(b_3)) U^{g}(\d p_2) u_2^{-1}
%\]
%\[
%= U^g(b_1)U^g(b_2)U^g(b_3) U^{g}(\d p_2) u_2^{-1}
%\]
%\[
%(x_1 x_2) x_3 = (U^g(b_1)U^g(b_2)U^g(b_3) U^{g}(\d p_2) u_2^{-1})(u_2U^g(\d p_2)^{-1}U^g(b_4)U^g(b_5) U^g(\d p_3) u_3^{-1})
%\]
%\[
%= U^g(b_1)U^g(b_2)U^g(b_3) U^g(b_4)U^g(b_5) U^{g}(\d p_3) u_3^{-1}
%\]
%and
%\[
%x_1 x_2 x_3 x_4
%= U^g(b_1)U^g(b_2)U^g(b_3) U^g(b_4)U^g(b_5) U^g(b_6)U^g(b_7) U^{g}(\d p_4) u_4^{-1}
%\]
%and one can check that $U^{g}(\d p_4) u_4^{-1} = U^{g}(b_8)$.
%}

On the other hand, for $i=1,2,3,4$ (indexes for $a_i$ are modulo $4$)
\begin{equ}\label{eq:bonds_prod}
U^g(a_{i+1}) = U^g(a_{i})U^g(b_{2i-1})U^g(\d p_i)^{-1}U^g(b_{2i})\;.
\end{equ}
%One can readily verify that $\mcE_i = O(\delta^2)$
%by taking $\log$ of both sides of~\eqref{eq:bonds_prod} and
%using the gauge conditions
%$U^g(b_{2i})=U^g(b_{2i+1})$
%and~\eqref{eq:U^g_a_i_def}
%together with~\eqref{eq:log_sums} and the definition $\CE_1=0$
%(see also~\cite[Lem.~4.8]{Chevyrev19YM} and~\cite[Sec.~6.1]{ChandraChevyrev22}).
One can readily verify that $\mcE_i=O(\delta^2)$ from the above identities.
Indeed, using shorthands $B_j = \log U^g(b_j)$, $P_i = \log U^g(\d p_i)$,
and taking $i=1$ in~\eqref{eq:bonds_prod}, we have by~\eqref{eq:U^g_a_i_def} and \eqref{eq:BCH}
\begin{equs}[eq:mcE_verification]{}
&\log U^g(a_2) = \frac{B_{1}-B_4}{2}
+\frac38(P_2-P_1)
+\frac18(P_3-P_4) + \mcE_2
\\
&= \frac{B_{7}-B_2}{2} +\frac38(P_1-P_4)+\frac18(P_2-P_3) + \mcE_1
+B_1 - P_1 + B_2
+O(\delta^2)\;.
\end{equs}

Since $B_2=B_3,B_4=B_5,B_6=B_7,B_8=B_1$ by~\eqref{e:def-g-midpoint}, it follows that
\begin{equ}
\mcE_2=\frac14(B_1+\cdots+B_8) - \frac14(P_1+\cdots+P_4) + \mcE_1 + O(\delta^2)\;.
\end{equ}
Then~\eqref{eq:log_sums} implies that the first two terms on the right-hand side cancel
up to order $\delta^2$, and since $\mcE_1=0=O(\delta^2)$, we obtain $\mcE_2=O(\delta^2)$.
The claims $\mcE_{3}=O(\delta^2)$ and $\mcE_4=O(\delta^2)$ follow analogously by shifting indexes in~\eqref{eq:mcE_verification}.

This choice of $g$ in \eqref{eq:U^g_a_i_def} is motivated by the fact that $
\sum_{i=1}^4 \log (U^g(a_i)) = O(\delta^2)$,
which corresponds to an approximation of the Coulomb/Landau gauge $\sum_{i=1}^2\d_iA_i=0$ in the continuum.
In fact, a naive choice with only $\frac12 (B_{2i-3}-B_{2i})$ on the right-hand side of 
\eqref{eq:U^g_a_i_def} would make this discrete divergence zero, but since one has only one degree of freedom $g(x)$ this choice would lead to
$\mcE_i$ for $i=2,3,4$ of order $O(\delta)$.
The plaquette terms are added in  \eqref{eq:U^g_a_i_def}
to cancel it, resulting in $\mcE_i$  of order $O(\delta^2)$.
This cancellation is possible thanks to  
\eqref{eq:log_sums} (which obviously holds in the abelian case without  $O(\delta^2)$).
See also \cite[Remark~4.6]{Chevyrev19YM}
for a Poisson equation heuristic.

With $U^g$ defined, we now proceed to improve the analysis of this gauge in comparison to~\cite{Chevyrev19YM}.
We denote throughout the section $A=\log U^g \in\Omega_N$.

\begin{remark}
By construction, $\log (U^g(b)) = A(b)$ for all $b\in\bonds_n$.  To see this, note that, by definition,
$A(b) = \sum_{\bar b\in\obonds_N, \bar b\in b} A(\bar b)$ while $U^g(b) = \prod_{\bar b\in\obonds_N, \bar b\in b} U^g(\bar b)$ where $\bar b\in b$ means that $\bar b$ is a sub-bond of $b$ in the obvious sense,
and then observe that $A(\bar b)$ are all equal due to~\eqref{e:def-g-midpoint}, from which we obtain $\log (U^g(b)) = A(b)$.
(The equality $\log (U(b)) = (\log U)(b)$ does not hold for arbitrary $U\in Q_N$.)
\end{remark}
The following corresponds to (and corrects)~\cite[Lem.~4.10]{Chevyrev19YM}.

\begin{lemma}[Bonds bound]\label{lem:bonds_bound}
There exist constants $c_1,c_2>0$, depending only on $G$, with the following property.
Let $\alpha\in (0,1)$ and
suppose that
\begin{equ}[eq:initial_bound]
\plaqnorm{U}_{\alpha;M,N} 2^{-M\alpha} \leq c_1\;,\quad
\text{and} \quad
\max_{b\in\obonds_M} |A(b)| \leq c_2\;,
\end{equ}
where
$
\plaqnorm{U}_{\alpha;M,N} \eqdef \max_{M\leq n \leq N} \max_{p\in\plaq_n} |p|^{-\alpha/2}|\log U(\d p)|$.
Then
\begin{equ}
\max_{b\in \obonds_n} |A(b)| \leq C2^{-n\alpha}(\plaqnorm{U}_{\alpha;M,N}+2^{\alpha M} \max_{b\in\obonds_{M}} |A(b)|)\;,
\end{equ}
for all $M\leq n\leq N$,
where $C$ depends only on $\alpha$ and $G$.
\end{lemma}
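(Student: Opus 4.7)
The plan is to argue by induction on $n$ from $M$ to $N$, working with the rescaled quantity $S_n := 2^{n\alpha} R_n$, where $R_n := \max_{b \in \obonds_n} |A(b)|$, so as to absorb the expected H\"older scaling. The inductive hypothesis is the ansatz
\begin{equation*}
S_n \leq S^* := K\bigl( \plaqnorm{U}_{\alpha;M,N} + 2^{M\alpha} R_M \bigr)
\end{equation*}
for a constant $K = K(\alpha,G)$ to be fixed; the base case $n = M$ is immediate from $S_M = 2^{M\alpha} R_M \leq S^*$ as soon as $K \geq 1$.

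For the inductive step, fix $M < n \leq N$. By construction of the binary Landau gauge, any bond $b \in \obonds_n$ is one of two types: either (i) a subdivision half-bond, labelled $b_i$ in Figure~\ref{fig:bonds_plaq_def}, produced by the midpoint rule \eqref{e:def-g-midpoint}, for which $A(b_i) = \tfrac12 A(\text{parent})$ and hence $|A(b_i)| \leq \tfrac12 R_{n-1}$; or (ii) a midpoint bond $a_i$ determined by the Landau formula \eqref{eq:U^g_a_i_def}. For type (ii), substitute the bounds $|A(b_j)| \leq \tfrac12 R_{n-1}$ for the eight surrounding half-bonds together with the plaquette bound $|\log U^g(\partial p_j)| = |\log U(\partial p_j)| \leq \plaqnorm{U}_{\alpha;M,N}\, 2^{-n\alpha}$ (using conjugation-invariance of $|\log \cdot|$), and use $|\mcE_i| \leq C\delta^2$ with $\delta$ as in \eqref{eq:delta_def} satisfying $\delta \leq C(R_{n-1} + \plaqnorm{U}_{\alpha;M,N}\, 2^{-n\alpha})$. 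Collecting terms and multiplying by $2^{n\alpha}$ yields the key recursion
\begin{equation*}
S_n \leq 2^{\alpha-1}\, S_{n-1} + C' \plaqnorm{U}_{\alpha;M,N} + C \cdot 2^{-n\alpha}\bigl( S_{n-1}^2 + \plaqnorm{U}_{\alpha;M,N}^2 \bigr).
\end{equation*}
Crucially $2^{\alpha-1} < 1$ because $\alpha < 1$, which provides the linear contraction that drives the induction.

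To close the recursion against the quadratic remainder, invoke \eqref{eq:initial_bound}: assuming $S_{n-1} \leq S^*$ and using $\plaqnorm{U}_{\alpha;M,N} \leq c_1 2^{M\alpha}$ and $R_M \leq c_2$, one has $S^* \leq K(c_1 + c_2) 2^{M\alpha}$, so (since $n \geq M$) the quadratic error satisfies
\begin{equation*}
2^{-n\alpha} S_{n-1}^2 \leq K(c_1 + c_2) S^*, \qquad 2^{-n\alpha} \plaqnorm{U}_{\alpha;M,N}^2 \leq c_1 \plaqnorm{U}_{\alpha;M,N}.
\end{equation*}
Fix $K$ slightly larger than $\frac{2C'}{1 - 2^{\alpha-1}}$ and then choose $c_1, c_2$ so small (depending only on $\alpha$, $G$, $K$) that $CK(c_1 + c_2) \leq \tfrac14 (1 - 2^{\alpha-1})$ and $Cc_1 \leq \tfrac14 C'$; an elementary rearrangement then shows that the three terms on the right-hand side of the recursion add up to at most $S^*$, completing the induction.

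The main obstacle is not the linear contraction but the quadratic Landau residue $\mcE_i = O(\delta^2)$, which couples the bond and plaquette magnitudes at scale $n$ nonlinearly. Without an a priori smallness input this term would destabilise the recursion at fine scales; the role of the hypotheses on $c_1$ and $c_2$ is precisely to ensure, once and for all, that the quadratic contribution is dominated by the linear part uniformly in $n$ and $M$.
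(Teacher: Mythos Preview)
Your proof is correct and follows essentially the same strategy as the paper's: both derive the recursion $\delta_{n+1} \leq \tfrac12\delta_n + C_1(\plaqnorm{U}_{\alpha;M,N}2^{-(n+1)\alpha} + \delta_n^2)$ from the Landau gauge formula~\eqref{eq:U^g_a_i_def} and the quadratic error~\eqref{eq:mcE_def}, then use the smallness hypotheses~\eqref{eq:initial_bound} to absorb the nonlinear term and close the induction. The only organisational difference is that the paper runs two separate inductions (first establishing the uniform bound $\delta_n \leq c_2$ to linearise, then solving the resulting linear recursion explicitly), whereas you carry out a single induction on the rescaled quantity $S_n = 2^{n\alpha}\delta_n$ against the target $S^*$; your version is slightly more streamlined but equivalent in substance.

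One small remark: both your argument and the paper's proof end up choosing $c_1,c_2$ depending on $\alpha$ (through $K$ in your case, through $\eta$ in the paper's), even though the lemma statement asserts dependence only on $G$. This is a minor imprecision in the statement itself, not a defect in your proof.
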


\begin{proof}
Fix any $\eta \in (0,\frac14)$ such that $\theta \eqdef (\eta+1/2)2^{\alpha}\in(2^{\alpha-1},1)$.
We first prove by induction that, if $c_1\ll c_2$ are both sufficiently small, then for all $ n\in\{M,\ldots, N\}$
\begin{equ}\label{eq:first_induction}
\delta_n \eqdef \max_{b\in\obonds_n} |A(b)| \leq c_2\;.
\end{equ}
The case $n=M$ holds by assumption~\eqref{eq:initial_bound}.
Suppose~\eqref{eq:first_induction} holds for $n\geq M$.
We can assume $\plaqnorm{U}_{\alpha;M,N}2^{-M\alpha}\leq c_1<1$
and thus obtain from~\eqref{eq:U^g_a_i_def}-\eqref{eq:mcE_def} 
\begin{equ}
\delta_{n+1} \leq \delta_{n}/2 + \plaqnorm{U}_{\alpha;M,N} 2^{-(n+1)\alpha} + C_1 \delta_{n}^2\;,
\end{equ}
where $C_1$ depends only on $G$
and where we used that $|p|=2^{-2(n+1)}$ for $p\in\plaq_{n+1}$.
Fixing now $c_2 < \eta/C_1$, we have
\begin{equ}\label{eq:second_induc}
\delta_{n+1} \leq (\eta+1/2) \delta_{n} + \plaqnorm{U}_{\alpha;M,N} 2^{-(n+1)\alpha}
\leq \frac34 c_2 + c_1\;,
\end{equ}
where we used the induction hypothesis in the
first inequality to bound $C_1\delta_{n}^2 \leq \eta\delta_{n}$ and in the final inequality to bound $\delta_n\leq c_2$.
Taking $c_1 < \frac14 c_2$, we conclude that~\eqref{eq:first_induction} holds with $n$ replaced by $n+1$ with this choice of $c_1,c_2$.
We now obtain from the first bound in~\eqref{eq:second_induc} and another induction that, for all $n\in \{M,\ldots,N\}$,
\begin{equs}
\delta_n
&\leq (\eta+1/2)^{n-M}\delta_M + \plaqnorm{U}_{\alpha;M,N}  2^{-n\alpha} \sum_{k=0}^{n-M-1} \theta^k
\\
&\lesssim 2^{-\alpha (n-M)} \delta_M + \plaqnorm{U}_{\alpha;M,N} 2^{-n\alpha}\;,
\end{equs}
where we used
$(\eta+1/2) < 2^{-\alpha}$ and $\theta<1$ in the second inequality.
\end{proof}
The following lemma is essentially~\cite[Lem.~4.11]{Chevyrev19YM}.\footnote{The convention
in~\cite{Chevyrev19YM} for ordering plaquettes differs from ours, which is a typo therein.}
\begin{lemma}\label{lem:q-var_bound}
For any $\beta\in (\frac12,1]$ and $q\in [1,\frac1{1-\beta})$,
there exists $C>0$ such that for all $r\in\rect_n$ with ordered plaquettes $p_1,\ldots, p_k$
\begin{equ}
\Big| \sum_{i=1}^k \log U^g(\d p_i)\Big|
\leq C(1+(k2^{-n})^\beta|A|_{n-1;\gr\beta})|X|_{\var q}\;,
\end{equ}
where $X$ is the anti-development of $U$ along $r$. 
\end{lemma}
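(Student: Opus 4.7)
The strategy is a three-step reduction of the bound to the discrete Young--Stieltjes inequality, exploiting the anti-development identity \eqref{eq:anti_dev_identity}.

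First, combine \eqref{eq:anti_dev_identity} with the gauge-transformation identity $U^g(\ell_i) = g(z) U(\ell_i) g(z_i)^{-1}$ (equivalently, $g(z_i) U(\ell_i)^{-1} = U^g(\ell_i)^{-1} g(z)$) to rewrite each summand in the factored form
\begin{equ}
\log U^g(\d p_i) = W_i\,(Y_i - Y_{i-1}),\qquad W_i \eqdef \Ad_{U^g(\ell_i)^{-1}},\quad Y_i \eqdef \Ad_{g(z)} X_i\;.
\end{equ}
Since $\Ad_{g(z)}$ is an isometry of $\mfg$, one has $Y_0 = 0$ and $|Y|_{\var q} = |X|_{\var q}$, while each $W_i \in L(\mfg,\mfg)$ is an isometry in operator norm.

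Second, choose the conjugate exponent $p = 1/\beta$. The hypothesis $q < 1/(1-\beta)$ precisely guarantees $1/p + 1/q > 1$ strictly, so the discrete Young--Stieltjes inequality (the sum analogue of \eqref{eq:Young_int}) yields
\begin{equ}
\Big|\sum_{i=1}^k W_i(Y_i - Y_{i-1}) - W_1 Y_k\Big| \leq C_{\beta,q}\, |W|_{\var p;\mathrm{op}}\, |Y|_{\var q}\;.
\end{equ}
Combining this with $|W_1 Y_k| \leq |Y_k| \leq |Y|_{\var q} = |X|_{\var q}$ gives
\begin{equ}
\Big|\sum_{i=1}^k \log U^g(\d p_i)\Big| \leq \big(1 + C_{\beta,q}\, |W|_{\var p;\mathrm{op}}\big)\, |X|_{\var q}\;.
\end{equ}

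Third, bound $|W|_{\var p;\mathrm{op}}$ by $(k2^{-n})^\beta |A|_{n-1;\gr\beta}$. Since consecutive lines $\ell_i,\ell_{i+1}$ both lie in $\Grid_{n-1}$ and their endpoints $z_i,z_{i+1}\in\Lambda_{n-1}$ either coincide or differ by a single scale-$(n-1)$ bond, one has $U^g(\ell_i)^{-1} U^g(\ell_{i+1}) = U^g(\sigma_i)^{\pm 1}$ for a (possibly empty) scale-$(n-1)$ bond $\sigma_i\subset\Grid_{n-1}$, whence
\begin{equ}
|W_{i+1} - W_i|_{\mathrm{op}} \leq C_G\, d_G(\id, U^g(\sigma_i))\;.
\end{equ}
The critical per-step estimate is $d_G(\id, U^g(\sigma_i)) \leq C_\beta\, |A|_{n-1;\gr\beta}\, 2^{-(n-1)\beta}$: when $|A|_{n-1;\gr\beta}\,2^{-(n-1)\beta} \gtrsim 1$ this holds trivially by compactness of $G$, and in the small-norm regime it follows from a BCH/Young argument using $|A(\sigma_i)| \leq |A|_{n-1;\gr\beta}\,2^{-(n-1)\beta}$ and $\beta > \tfrac12$. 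Summing the $p$-th powers over the at most $k-1$ values of $i$ and using $p=1/\beta$ yields
\begin{equ}
|W|^p_{\var p;\mathrm{op}} \leq C_\beta\, k\, |A|^p_{n-1;\gr\beta}\, 2^{-(n-1)\beta p}\;,
\end{equ}
so $|W|_{\var p;\mathrm{op}} \leq C_\beta'\, k^\beta\, 2^{-(n-1)\beta}\, |A|_{n-1;\gr\beta} = C_\beta''\,(k2^{-n})^\beta\, |A|_{n-1;\gr\beta}$, completing the bound.

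\textbf{Main obstacle.} The hard part is the per-step estimate of Step~3 in the small-norm regime, where one must control $d_G(\id, U^g(\sigma_i))$ by $|A(\sigma_i)|$ in spite of the fact that $U^g(\sigma_i) = \prod_{b\subset\sigma_i} e^{A(b)}$ depends on finer-scale bond values not directly controlled by $|A|_{n-1;\gr\beta}$. The key is that, for $\beta > \tfrac12$, the BCH commutator corrections are of higher order and hence dominated by the leading scale-$(n-1)$ Hölder contribution; making this precise requires a Young-type estimate for the group-valued discrete holonomy along $\sigma_i$.
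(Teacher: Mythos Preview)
Your Steps 1 and 2 are correct and coincide with the paper's argument (the paper writes $Y_i$ for your $W_i$ and $X^g_i$ for your $Y_i$, noting $X^g_i = \Ad_{g(z)}X_i$ and hence $|X^g|_{\var q} = |X|_{\var q}$).

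Step 3 contains a genuine gap. You conclude $|W|_{\var p;\mathrm{op}}^p \leq \sum_i |W_{i+1} - W_i|_{\mathrm{op}}^p$ by summing consecutive $p$-th powers, but for $p>1$ the $p$-variation is a supremum over \emph{all} partitions, and coarse partitions can dominate: if $W$ moves along a geodesic in $\Aut(\mfg)$ with constant step $\delta$, the trivial partition $\{0,k\}$ already gives $|W|_{\var p}\geq|W_k - W_0| = k\delta$, not $k^{1/p}\delta$. Your per-step input alone does not preclude this, so the bound $|W|_{\var p}\leq C(k2^{-n})^\beta |A|_{n-1;\gr\beta}$ does not follow. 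The missing ingredient is the H\"older control that $|A|_{n-1;\gr\beta}$ provides over lines of \emph{all} lengths, not just single bonds. The paper observes that $\{W_i\}$ is the development of $\{-A(\ell_i)\}$ into $\Aut(\mfg)$, and Young's ODE estimate (using $\beta>\tfrac12$) together with compactness of the adjoint group yields $|W|_{\var{(1/\beta)}} \lesssim |A(\ell_\cdot)|_{\var{(1/\beta)}}$. Then for any $i<j$ one has $A(\ell_j)-A(\ell_i)=A(\ell_{ij})$ with $\ell_{ij}\in\olines_{n-1}$ of length $\lesssim (j-i)2^{-n}$, hence $|A(\ell_j)-A(\ell_i)|\lesssim |A|_{n-1;\gr\beta}((j-i)2^{-n})^\beta$, and this global bound gives $|A(\ell_\cdot)|_{\var{(1/\beta)}}\lesssim (k2^{-n})^\beta|A|_{n-1;\gr\beta}$ immediately.

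As an aside, your identified ``main obstacle'' is a non-issue here: the midpoint rule \eqref{e:def-g-midpoint} of the binary Landau gauge forces all scale-$N$ sub-bond values within any coarser bond $\sigma$ to be equal, so $U^g(\sigma) = e^{A(\sigma)}$ exactly and no BCH argument is needed at the single-bond level. The real difficulty lies where you did not expect it: passing from per-step to global variation control.
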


\begin{proof}
Due to~\eqref{eq:anti_dev_identity}, we have $
\sum_{i=1}^k \log U^g(\d p_i) = \sum_{i=1}^k Y_i(X^g_i-X^g_{i-1})$
where $Y_i = \Ad_{U^g(\ell_i)^{-1}}$
and $X^g$ is the anti-development of $U^g$ along $r$.
Since $\beta+q^{-1}>1$, Young's estimate for sums~\eqref{eq:Young_int} implies
\begin{equ}\label{eq:Young_bound}
\Big|\sum\nolimits_{i=1}^k Y_i(X^g_i-X^g_{i-1})\Big| \lesssim (|Y_1|+|Y|_{\var{(1/\beta)}})|X^g|_{\var q}\;.
\end{equ}
Note that $|X^g|_{\var q}=|X|_{\var q}$. Furthermore $\{Y_i\}_{i=1}^k$ is the development of $\{-A(\ell_i)\}_{i=1}^k$ into $\Aut(\mfg)$ (via the adjoint representation) with $Y_1 = \id$.
Hence
$
|Y|_{\var{(1/\beta)}} \lesssim |A(\ell_{\act})|_{\var{(1/\beta)}}
\lesssim |k2^{-n}|^\beta|A|_{n-1;\gr\beta}
$,
where the first bound follows from Young's estimate for ODEs and that $Y$ takes values in a compact set,
and the second bound is a discrete analogue of~\eqref{eq:ell_omega_Hol}.
\end{proof}

\begin{theorem}\label{thm:Landau}
Let $\alpha\in(\frac12,1)$, $q \in [1,\frac{1}{1-\alpha})$, and  $\bar\alpha \in (0,\alpha)$.
Define
\begin{equ}{}
[U]_{\alpha;q,M,N} \eqdef \max_{M \leq n \leq N} \max_{r\in \rect_n} |r|^{-\alpha/2}|X|_{\var q}\;,
\end{equ}
where $X$ is the anti-development of $U$ along $r\in\rect_n$.
Suppose that~\eqref{eq:initial_bound} holds.

Then there exists $C \geq 0$, depending only on $\alpha,\bar\alpha,q,G$, such that% for all parallel $\ell,\bar\ell \in \olines_N$
\begin{equ}\label{eq:Aell_bound}
|A|_{N;\gr{\bar\alpha}} \leq P
\; \eqdef C\max\{1,\delta_M 2^{M}, \delta_M^2 2^{2\alpha M}\} e^{C (\log (1+[U]_{\alpha;q,M,N}))^2}
\end{equ}
where $\delta_M \eqdef \max_{b\in\obonds_M}|A(b)|$
%\begin{equ}
%P\eqdef C\max\{1,\delta_M 2^{M}, \delta_M^2 2^{2\alpha M}\} e^{C (\log (1+[U]_{\alpha;q,M,N}))^2}\;,\qquad \delta_M \eqdef \max_{b\in\obonds_M}|A(b)|\;,
%\end{equ}
and, if $\alpha \in [\frac23,1)$, then for all parallel $\ell,\bar\ell \in \olines_N$
\begin{equ}\label{eq:Aell_barell_bound}
|A(\ell) -  A(\bar\ell)| \leq (2^{M(1+\bar\alpha/2)+1}\delta_M + P) |\ell|^{\alpha/2} d(\ell,\bar\ell)^{\bar\alpha/2}\;.
\end{equ}
% and again $C \geq 0$ depends only on $\alpha,\bar\alpha,q,G$.
\end{theorem}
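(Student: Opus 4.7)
The plan is a multiscale induction controlling $M_n := |A|_{n;\gr{\beta}}$ for some fixed $\beta \in (\max(\bar\alpha, \tfrac12, 1-\tfrac1q), \alpha)$, built inductively from scale $M$ up to $N$; the claimed bound with $\bar\alpha$ then follows from the monotonicity $|A|_{N;\gr{\bar\alpha}} \leq |A|_{N;\gr\beta}$, valid since $|\ell| \leq 1/2$. The first ingredient is the bonds estimate: since the anti-development around a single plaquette reduces to $|\log U(\partial p)|$, one has $[U]_{\alpha;q,M,N} \geq \plaqnorm{U}_{\alpha;M,N}$, so hypothesis~\eqref{eq:initial_bound} puts us in the setting of Lemma~\ref{lem:bonds_bound}, giving
\begin{equation*}
\max_{b \in \obonds_n}|A(b)| \leq C 2^{-n\alpha}\big([U]_{\alpha;q,M,N} + 2^{M\alpha}\delta_M\big)\,,\qquad M \leq n \leq N\,.
\end{equation*}

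For the inductive step at scale $n > M$, given $\ell \in \olines_n$ of length $|\ell|= k2^{-n}$, I would construct a companion $\ell' \in \olines_{n-1}$ of the same length obtained by translating $\ell$ by at most $2^{-n}$ perpendicularly so that $\ell' \subset \Grid_{n-1}$. Then $\ell,\ell'$ form the two long sides of a rectangle $r \in \rect_n$ with short sides $s_1, s_2 \in \obonds_n$ and area $|r|=|\ell|\cdot 2^{-n}$. The key geometric identity is the discrete Stokes-type cancellation combined with the BCH expansion around each plaquette,
\begin{equation*}
A(\ell) - A(\ell') = \sum\nolimits_{i=1}^k A(\partial p_i) - A(s_1) + A(s_2) = \sum\nolimits_{i=1}^k \log U^g(\partial p_i) - A(s_1) + A(s_2) + \sum\nolimits_i E_i\,,
\end{equation*}
where $p_1,\ldots,p_k$ are the ordered plaquettes of $r$ and $E_i$ captures BCH corrections in reconstructing $\log U^g(\partial p_i)$ from its four bond logarithms. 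Applying Lemma~\ref{lem:q-var_bound} together with $|X|_{\var q}\leq R\,|r|^{\alpha/2}$ gives
\begin{equation*}
\Big|\sum\nolimits_i \log U^g(\partial p_i)\Big| \lesssim \big(1 + |\ell|^\beta M_{n-1}\big)\, R\,|\ell|^{\alpha/2}\,2^{-n\alpha/2}\,,
\end{equation*}
while the bonds bound yields $|A(s_i)| \lesssim 2^{-n\alpha}(R + 2^{M\alpha}\delta_M)$ and $|E_i| \lesssim |A(\partial p_i)|^2 \lesssim 2^{-2n\alpha}(R + 2^{M\alpha}\delta_M)^2$; summing $E_i$ over $k\leq 2^n$ plaquettes and dividing by $|\ell|^\beta$ stays summable in $n$ precisely because $\alpha > 1/2$. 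Maximising over $\ell$, we obtain the recursion
\begin{equation*}
M_n \leq M_{n-1}\big(1 + CR\,2^{-n\alpha/2}\big) + C\cdot 2^{-n\theta}\big(R + R^2 + 2^{M\alpha}\delta_M + 2^{2M\alpha}\delta_M^2\big)\,,
\end{equation*}
with $R := [U]_{\alpha;q,M,N}$ and $\theta = \theta(\alpha,\beta) > 0$.

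Iterating yields $M_N \leq M_M\prod_{n=M+1}^N(1+CR\,2^{-n\alpha/2}) + \text{(summable additive terms)}$. The crucial step to obtain $\exp(C(\log(1+R))^2)$ rather than the cruder $\exp(CR)$ is to split $\sum_{n=M+1}^N \log(1+CR\,2^{-n\alpha/2})$ at the threshold scale $n_* \asymp \tfrac{2}{\alpha}\log_2(1+R)$: for $n > n_*$, $CR\,2^{-n\alpha/2} \leq 1$ and the tail sum is $O(1)$ by geometric summation; for $M < n \leq n_*$, each summand behaves like $\log(1+R) - \tfrac{n\alpha}{2}\log 2$, producing $O((\log(1+R))^2)$. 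The base case $M_M \leq 2^M\delta_M$ is the trivial bond count, contributing the $\delta_M 2^M$ factor in $P$; the $\delta_M^2 2^{2\alpha M}$ factor in $P$ comes from the summed BCH corrections, and pure powers of $R$ alone are absorbed into the exponential via $R^j \leq e^{C(\log(1+R))^2}$ for $R$ large.

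The second bound~\eqref{eq:Aell_barell_bound} for $\alpha \in [2/3,1)$ is proved by exactly the same rectangle technique, but now $\ell$ and $\bar\ell$ themselves serve as the two long sides of a rectangle of area $|\ell|\cdot d(\ell,\bar\ell)$, so that the right-hand side acquires the factor $|\ell|^{\alpha/2}d(\ell,\bar\ell)^{\bar\alpha/2}$; the stricter constraint $\alpha > 2/3$ is exactly what is required for the relevant scale exponents in the corresponding multiscale decomposition to remain summable. The main technical obstacle throughout is the careful bookkeeping of the BCH corrections $E_i$ and of the side-bond contributions, which must all be absorbed into summable additive terms rather than amplifying the multiplicative structure of the recursion; assumption~\eqref{eq:initial_bound} is essential here, ensuring that every bond remains small from the base scale $M$ onward so that the quadratic BCH estimates stay valid uniformly at every scale of the iteration.
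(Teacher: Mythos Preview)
Your overall strategy (multiscale induction, bonds bound, Lemma~\ref{lem:q-var_bound} for the plaquette sum, and the threshold splitting of the product to extract $e^{C(\log(1+R))^2}$) is sound and matches the paper's in spirit, but there is a genuine gap in the inductive step that you need to address.

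The claim that you can translate $\ell\in\olines_n$ perpendicularly by at most $2^{-n}$ to obtain a companion $\ell'\in\olines_{n-1}$ \emph{of the same length} is false in general. Perpendicular translation lands $\ell'$ on the grid $\Grid_{n-1}$, but its endpoints remain at scale-$n$ positions and its length $k2^{-n}$ need not be a multiple of $2^{-(n-1)}$; so $\ell'$ lies only in $\olines_n\cap\Grid_{n-1}$, not in $\olines_{n-1}$. Your inductive hypothesis controls $M_{n-1}=|A|_{n-1;\gr\beta}$, which is a supremum over $\olines_{n-1}$, so you cannot apply it to $\ell'$ directly. This is precisely the issue the paper isolates as \emph{Case A1}: for $\ell\in\olines_n$ lying in $\Grid_{n-1}$, one exploits the midpoint property~\eqref{e:def-g-midpoint} of the binary Landau gauge to write $A(\ell)=\tfrac12(A(\underline\ell)+A(\bar\ell))$ with $\underline\ell,\bar\ell\in\olines_{n-1}$ the contraction and extension, and then concavity of $x\mapsto x^\beta$ gives $|A(\ell)|\leq P_{n-1}|\ell|^\beta$ with \emph{the same constant}. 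You never invoke this gauge-specific identity. Your Stokes argument could be salvaged by contracting $\ell'$ to $\underline{\ell'}\in\olines_{n-1}$ and absorbing the (at most two) removed bonds via Lemma~\ref{lem:bonds_bound} into a further summable additive term; this works, but it is an additional step you have omitted.

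There is also a structural difference in how the ``off-grid'' case is handled. The paper's \emph{Case A2} does not use a Stokes identity at all: it reads off directly from the gauge definition~\eqref{eq:U^g_a_i_def} that $A(\ell)=\tfrac12(A(\ell_1)+A(\ell_2))+\Delta_1+\Delta_2$, an \emph{average over the two} nearest grid lines, with $\Delta_1$ collecting the plaquette terms and $\Delta_2$ the $\mcE_i$ corrections. Because $P_{n-1}$ is defined to dominate $[U]^2$ and $2^{2\alpha M}\delta_M^2$, both $\Delta_1,\Delta_2$ are bounded by multiples of $P_{n-1}$, yielding a purely multiplicative recursion $P_n=P_{n-1}(1+C[U]2^{-n(\alpha-\bar\alpha)})$ with no additive part. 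Your Stokes route, once the endpoint issue is repaired, leads instead to a recursion with both multiplicative and additive pieces; this is workable and yields the same final bound, but it is a genuinely different organisation of the argument and the constants propagate differently. Your treatment of~\eqref{eq:Aell_barell_bound} is too brief to assess; the paper runs an analogous two-case induction there as well.
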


\begin{proof}
It suffices to consider $\bar\alpha \in (\frac12 \vee (1-q^{-1}),\alpha)$.
To prove~\eqref{eq:Aell_bound}, we proceed by induction on $n \geq M$.
For the base case $n=M$,
we have $|A(\ell)| \leq \delta_M 2^{M}|\ell| \leq P_{M}|\ell| \leq P_{M}|\ell|^{\bar\alpha}$,
where we take
\[
P_M \eqdef  \max\{1,\delta_M 2^{M}, 2^{\alpha M}\delta_M^2,[U]_{\alpha;q,M,N}^2\}
\;.
\]
For the inductive step,
assume that $|A|_{n-1;\gr{\bar\alpha}} \leq P_{n-1}$ for $P_{n-1} \geq P_M$.
Let $\ell \in \olines_n$ and consider two cases.

\textit{Case A1:} $\ell$ (as a subset of $\T^2$) is contained in the grid $\Grid_{n-1}$ defined by~\eqref{eq:grid_def}.
Consider the contraction $\underline{\ell}$ and extension $\bar \ell$ of $\ell$ which are respectively the longest and shortest elements of $\olines_{n-1}$ such that $\underline{\ell}\subset\ell \subset \bar \ell$.
Observe that $|\ell|=\frac12(|\underline{\ell}|+|\bar\ell|)$ and, by definition of the gauge $g$, 
in particular \eqref{e:def-g-midpoint}, 
one has $A(\ell)=\frac12(A(\underline{\ell})+A(\bar\ell))$.
Therefore, by concavity of $x\mapsto x^{\bar\alpha}$
and the inductive hypothesis\footnote{This step corrects an error in the corresponding step in the proof of~\cite[Thm.~4.12]{Chevyrev19YM}.}
$$
|A(\ell)| \leq \frac12 P_{n-1}(|\underline{\ell}|^{\bar\alpha} + |\bar\ell|^{\bar\alpha}) \leq P_{n-1}|\ell|^{\bar\alpha}\;,
$$
which proves the inductive step in the case $\ell \subset \Grid_{n-1}$.
Note that the same constant $P_{n-1}$ appears, which will be used in the next case.

\textit{Case A2:}  $\ell$ is not contained in $\Grid_{n-1}$.
Then by the definition of $g$ in~\eqref{eq:U^g_a_i_def}, we have
\begin{equ}\label{eq:ell_average}
A(\ell) = \frac{A(\ell_1) + A(\ell_2)}{2} + \Delta_1 + \Delta_2\;,
\end{equ}
where $\ell_1,\ell_2 \in\olines_n$ are the closest lines parallel to $\ell$ that are contained in $\Grid_{n-1}$.

To see \eqref{eq:ell_average}, recalling \eqref{eq:A_def_1_form},
$A(\ell)$ is a sum of terms of the form $\log U^g(a_i)$ on the left-hand side 
of \eqref{eq:U^g_a_i_def}.
The term $\frac12(A(\ell_1) + A(\ell_2))$ arises from summing 
the terms $\frac12 (\log U^g(b_{2i-3}) - \log U^g(b_{2i}))$ on the right-hand side 
of \eqref{eq:U^g_a_i_def}.

Moreover, $\Delta_1$ in \eqref{eq:ell_average} accounts for the terms $\log U^g(\d p_j)$ in~\eqref{eq:U^g_a_i_def}
and satisfies for proportionality constants depending on $G,\bar\alpha,q$
\begin{equs}[eq:Delta1_bound]
|\Delta_1|
&\leq \Big| \sum_{p} \log U^g(\partial p) \Big| + 8\plaqnorm{U}_{\alpha;M,N} 2^{-n\alpha}
\\
&\lesssim (1+ |\ell|^{\bar \alpha}P_{n-1}) [U]_{\alpha;q,M,N}|\ell|^{\alpha/2} 2^{-n\alpha/2} + 2^{-n\alpha}[U]_{\alpha;q,M,N}
%\\
%&\lesssim |\ell|^{\bar\alpha}[U]_{\alpha;q,M,N}( 2^{-n(\alpha-\bar\alpha)} + P_{n-1}2^{-n\alpha/2})
\\
&\lesssim P_{n-1}|\ell|^{\bar\alpha}[U]_{\alpha;q,M,N}2^{-n(\alpha-\bar\alpha)}\;.
\end{equs}
The sum is taken over all plaquettes $p\in\plaq_n$ which share a side with $\underline\ell$, the longest line contained in $\ell$ whose closest parallel lines in $\Grid_{n-1}$ are in $\olines_{n-1}$;
the term $8\plaqnorm{U}_{\alpha;M,N} 2^{-n\alpha}$ accounts for potential overhangs of $\ell$ arising in the case $\ell\neq\underline\ell$.
% (e.g. like the plaquettes $p_5$ are $\bar p_1$ in Figure~\ref{fig:rectangles}).
In the second inequality we used Lemma~\ref{lem:q-var_bound} (with $\beta=\bar\alpha\in (\frac12,1)$ therein) and
\begin{equ}[eq:plaqnorm_bracket]
\plaqnorm{U}_{\alpha;M,N}\leq [U]_{\alpha;q,M,N}\;,
\end{equ}
which follows from the fact that $|X|_{\var q}=|\log U(\d p)|$ where $X$ is the anti-development of $U$ along a plaquette $p$
(interpreted as a rectangle).
In the final inequality in~\eqref{eq:Delta1_bound} we used $P_{n-1}\geq 1$, $|\ell|\geq 2^{-n}$, $\bar\alpha>\alpha/2$, and $2^{-n\alpha/2} \leq 2^{-n(\alpha-\bar\alpha)}$.

The term $\Delta_2$ accounts for the terms $\mcE_i$ in~\eqref{eq:U^g_a_i_def} and satisfy $\mcE_i=O(\delta^2)$ with $\delta$ as in~\eqref{eq:delta_def}, hence
for proportionality constants depending only on $G,\alpha$
\begin{equs}[eq:Delta2_bound]
|\Delta_2| &\lesssim |\ell| 2^{n} ([U]_{\alpha;q,M,N}2^{-n\alpha} + 2^{\alpha(M-n)}\delta_M )^2
%\\
%&\lesssim |\ell| 2^{-n(2\alpha-1)}([U]_{\alpha;q,M,N}^2+2^{2\alpha M}\delta_M)
%\\
%&\lesssim |\ell|2^{-n(2\alpha-1)}([U]_{\alpha;q,M,N}+2^{\alpha M}\delta_M)^2
\\
&\lesssim |\ell| 2^{n(1-2\alpha)} P_{n-1} \leq |\ell|2^{-n(\alpha-\bar \alpha)} P_{n-1} \;.
\end{equs}
In the first inequality we used that, by~\eqref{eq:plaqnorm_bracket},
\begin{equ}
|\log U^g(\d p_i)| \leq 2^{-n\alpha}\plaqnorm{U}_{\alpha;M,N} \leq 2^{-n\alpha}[U]_{\alpha;q,M,N}\;,
\end{equ}
together with Lemma~\ref{lem:bonds_bound} and the fact that $\ell$ contains $|\ell|2^n$ bonds in $\obonds_n$.
In the second inequality we used that $P_{n-1} \gtrsim [U]_{\alpha;q,M,N}^2+2^{2\alpha M}\delta_M^2$.
%In the final inequality we used that $2^{n(1-2\alpha)} \leq 2^{-n(\alpha-\bar \alpha)}$.

%In the final inequality we simply used that $|\ell|\leq 1$ and that $2\alpha-1>\alpha-\bar\alpha$.

Using~\eqref{eq:Delta1_bound} and~\eqref{eq:Delta2_bound} together with~\eqref{eq:ell_average} and Case 1, we obtain
\begin{equ}
|A(\ell)|_{n;\gr{\bar\alpha}} \leq P_{n-1} (1+C 2^{-n(\alpha-\bar\alpha)}[U]_{\alpha;q,M,N}) \eqdef P_n\;,
%+2^{-n(2\alpha-1)}([U]_{\alpha;q,M,N}+2^{\alpha M}\delta_M)^2\;.
\end{equ}
where $C$ depends only on $\alpha,\bar\alpha,q,G$.

This completes the inductive step and shows that
\begin{equ}
\sup_n P_n \leq P_M \prod_{n=1}^\infty (1+C 2^{-n(\alpha-\bar\alpha)}[U]_{\alpha;q,M,N}) \leq P_M C e^{C (\log (1+[U]_{\alpha;q,M,N}))^2}\;.
\end{equ}
Absorbing the factor $[U]_{\alpha;q,M,N}^2$ in $P_M$ into the exponential, we obtain~\eqref{eq:Aell_bound}.

We now prove~\eqref{eq:Aell_barell_bound}. Proceeding again by induction,
note that for the base case $n=M$ with distinct and parallel $\ell,\bar\ell \in \olines_M$
\begin{equs}
|A(\ell)-A(\bar \ell)|
\leq 2^{M+1}|\ell|\delta_M
&\leq 2^{M(1+\bar\alpha/2)+1}\delta_M|\ell|^{\alpha/2}d(\ell,\bar\ell)^{\bar\alpha/2}
\\
&\eqdef S_M|\ell|^{\alpha/2}d(\ell,\bar\ell)^{\bar\alpha/2} \;,
\end{equs}
where we used in the first bound the triangle inequality and that $|A(\ell)|\leq 2^M|\ell|\delta_M$, and in the second bound that
$d(\ell,\bar \ell)\geq 2^{-M}$ and $|\ell|\leq|\ell|^{\alpha/2}$. This proves \eqref{eq:Aell_barell_bound} for scale $n=M$.

Suppose now that~\eqref{eq:Aell_barell_bound} holds at scale $n-1\geq M$ with proportionality constant $S_{n-1} \geq S_M$.
Consider distinct and parallel $\ell,\bar\ell \in \olines_n$. We consider again two cases.

\textit{Case B1:} both $\ell$ and $\bar\ell$ are contained in $\Grid_{n-1}$.
Using concavity of $x\mapsto x^{\alpha/2}$, we obtain as in Case A1 of the proof of~\eqref{eq:Aell_bound}
\begin{equ}\label{eq:case_1_ell_bar_ell}
|A(\ell)-A(\bar\ell)| \leq S_{n-1}|\ell|^{\alpha/2}d(\ell,\bar\ell)^{\bar\alpha/2}\;.
\end{equ}

\textit{Case B2:} $\ell$ is not contained in $\Grid_{n-1}$.
% (the remaining case that $\bar\ell$ is not in $\Grid_{n-1}$ follows by symmetry).
Then we know $A(\ell)$ and $A(\bar \ell)$ admit the expressions~\eqref{eq:ell_average} with the same bounds on $\Delta_i$ and $\bar\Delta_i$ (if $\bar\ell$ is in $\Grid_{n-1}$, then $\bar\ell_i=\bar\ell$ and $\bar\Delta_i=0$).
Furthermore $\ell_i\parallel \bar\ell_i$ for $i=1,2$ with either $d(\ell_i,\bar\ell_i)=d(\ell,\bar\ell)$ if $\bar\ell$ is not $\Grid_{n-1}$, or
$
d(\ell_1,\bar\ell) = d(\ell,\bar\ell) - 2^{-N}$ and $d(\ell_2,\bar\ell) \leq d(\ell,\bar\ell) + 2^{-N}$,
if $\bar\ell$ is in $\Grid_{n-1}$.
By~\eqref{eq:case_1_ell_bar_ell} and the concavity of $x\mapsto x^{\bar\alpha/2}$,
\begin{equ}
|A(\ell)-A(\bar\ell)|
\leq S_{n-1} |\ell|^{\alpha/2}d(\ell,\bar\ell)^{\bar\alpha/2} 
+ \sum\nolimits_{i\in \{1,2\}}\{ \Delta_i + \bar \Delta_i\}\;.
\end{equ}
Since $|\ell|\wedge d(\ell,\bar\ell) \geq 2^{-n}$, the second bound in~\eqref{eq:Delta1_bound} implies
\begin{equ}
\Delta_1 + \bar\Delta_1 \lesssim P_{n-1}[U]_{\alpha;q,M,N}|\ell|^{\alpha/2}d(\ell,\bar\ell)^{\bar\alpha/2} 2^{-n(\alpha-\bar\alpha)/2}\;.
\end{equ}
The second bound in~\eqref{eq:Delta2_bound} implies that
$
\Delta_2 + \bar\Delta_2 \lesssim P_{n-1}|\ell| d(\ell,\bar\ell)^{\bar\alpha/2}2^{-n(\alpha-\bar\alpha)/2}$,
where we used $2^{n(1-2\alpha)} \leq d(\ell,\bar\ell)^{\bar\alpha/2}2^{-n(\alpha-\bar\alpha)/2}$ since $\alpha\geq\frac23$ and $d(\ell,\bar\ell) \geq 2^{-n}$.
It follows that
\begin{equ}
|A(\ell)-A(\bar\ell)| \leq (S_{n-1} + C P_{n-1}[U]_{\alpha;q,M,N}2^{-n(\alpha-\bar\alpha)/2}) |\ell|^{\alpha/2}d(\ell,\bar\ell)^{\bar\alpha/2}\;.
\end{equ}
This completes the inductive step and shows that
\begin{equ}
\sup_n S_n \leq S_M + C (\sup_nP_n) [U]_{\alpha;q,M,N} \leq S_M + P\;,
\end{equ}
which completes the proof of~\eqref{eq:Aell_barell_bound}.
\end{proof}

\begin{proof}[of Theorem~\ref{thm:gauge_fix}]
Let $C$ be the constant from Lemma~\ref{lem:axial} and let $c_1,c_2$ be the constants from Lemma~\ref{lem:bonds_bound}.
We can suppose $N$ is sufficiently large such that $C2^{-N} < c_2/2$.
Suppose first that $C2^{-N\alpha/2}\fancynorm{U}_{\alpha;N} + C2^{-N}\geq c_2$ or $[U]_{\alpha;q,N} 2^{-N\alpha} \geq c_1$.
We take $g\equiv 1$. Since $
\max_{b\in\obonds_N} |A(b)| \lesssim 1$, we have
\begin{equ}
|A|_{\gr{\bar\alpha}} \leq |A|_{\gr1} \lesssim 2^{N} \lesssim \fancynorm{U}_{\alpha;N}^{2/\alpha} + [U]_{\alpha;q,N}^{1/\alpha} \lesssim Z \;.
\end{equ}
Furthermore, for distinct and parallel $\ell,\bar\ell\in\olines_N$ we have
$
|A(\ell)-A(\bar\ell)| \leq |A(\ell)| + |A(\bar\ell)| \leq 2|\ell|2^N$,
and thus if $\alpha\in [\frac23,1)$, then
\begin{equ}
|A(\ell)-A(\bar\ell)| \lesssim |\ell| d(\ell,\bar\ell)^{\bar\alpha/2}2^{N(1+\bar\alpha/2)} \Rightarrow |A|_{\bar\alpha;\rho}\lesssim \fancynorm{U}_{\alpha;N}^{4} + [U]_{\alpha;q,N}^2 \lesssim Z\;,
\end{equ}
where in the first bound we used $d(\ell,\bar\ell) \geq 2^{-N}$ and in the second bound we used $\alpha\geq \frac23$ and $\bar\alpha<\alpha$ which implies
\begin{equ}\label{eq:2N_bound}
2^{N(1+\bar\alpha/2)} \leq 2^{2N\alpha} \lesssim \fancynorm{U}_{\alpha;N}^4 + [U]_{\alpha;q,N}^2\;.
\end{equ}

Now suppose $C2^{-N\alpha/2}\fancynorm{U}_{\alpha;N} + C2^{-N}< c_2$ and $[U]_{\alpha;q,N} 2^{-N\alpha} < c_1$.
%Note that we also  $[U]_{\alpha;q,N} \geq \plaqnorm{U}_{\alpha;M,N}$ for all $M\leq N$, we also have $\plaqnorm{U}_{\alpha;0,N} 2^{-N\alpha} < c_1$.
Let $1\leq M \leq N$ be the smallest integer such that
\begin{equ}
C2^{-M\alpha/2}\fancynorm{U}_{\alpha;N} + C2^{-M} < c_2\;,
\qquad \mbox{and} \qquad
[U]_{\alpha;q,N}2^{-M\alpha} < c_1\;.
\end{equ}
Since $[U]_{\alpha;q,N} \geq \plaqnorm{U}_{\alpha;M,N}$ by~\eqref{eq:plaqnorm_bracket}, we thus satisfy the first condition in~\eqref{eq:initial_bound}.
Applying the axial gauge bound (Lemma~\ref{lem:axial}) at scale $M$, we in addition obtain
$
\max_{b\in\obonds_M} |\log U^g(b)| < c_2
$,
so that the second condition in~\eqref{eq:initial_bound} is also satisfied with $A=\log U^g$.
Our choice of $M$ further implies that
\begin{equ}\label{eq:2M_bound}
2^M \lesssim 1+\fancynorm{U}_{\alpha;N}^{2/\alpha}+ [U]_{\alpha;q,N}^{1/\alpha}\;.
\end{equ}
We then apply the binary Landau gauge starting at scale $M$, which, by~\eqref{eq:Aell_bound}, yields~\eqref{eq:A_gr_bound}.  
If furthermore $\alpha\in [\frac23,1)$, then
%, just like in~\eqref{eq:2N_bound},
$\bar\alpha <\alpha$ and~\eqref{eq:2M_bound} imply
$
2^{M(1+\bar\alpha/2)} \leq 2^{2M\alpha}\lesssim 1+\fancynorm{U}_{\alpha;N}^4 + [U]_{\alpha;q,N}^{2}
$,
and thus $|A|_{N;\bar\alpha;\rho} \leq Z$ by~\eqref{eq:Aell_barell_bound}.
\end{proof}

\subsection{Probabilistic bounds}

To use Theorem~\ref{thm:gauge_fix} in the proof of Theorem~\ref{thm:YM_gauge_fixed}, we require bounding the gauge-invariant quantities $\fancynorm{U}_{\alpha;N}$ and $[U]_{\alpha;q,N}$.
This is where we need to make a significant improvement over the method in~\cite{Chevyrev19YM} because the rough path BDG inequality employed therein yields at best moment bounds on $\fancynorm{U}_{\alpha;N}+[U]_{\alpha;q,N}$
which do not translate to moment bounds on $|A|_{N;\alpha}$ due to the appearance of $(\log (1+[U]_{\alpha;q,N}))^2$ inside the exponential in~\eqref{eq:A_gr_bound}.

Our strategy is to substitute the BDG inequality by Gaussian estimates on the rough path norm of random walks. We derive these estimates in Appendix~\ref{app:Gaussian_tails} by a
moment comparison method with the Brownian rough path.
To apply the results of Appendix~\ref{app:Gaussian_tails}, we make the following assumption on the action $S_N$. 

\begin{assumption}\label{assum:P_N}
Consider the probability measure $\P_N(\mrd x) \propto e^{-S_N(x)}\mrd x$ on $G$.
\begin{enumerate}[label=(\roman*)]
\item\label{pt:density_bound} There exist $C_l,C_u > 0$ such that for all $x\in G$ and $N\geq 2$
\begin{equ}\label{eq:density_bound}
C_l^{-1} \leq \frac{\mrd \P^{\star 2^{2N-3}}_N}{\mrd x}(x) \leq C_u\;,
\end{equ}
where $\P_N^{\star k}$ is the $k$-fold convolution of $\P_N$.

\item\label{pt:Gauss_tail} There exists $\bar C>0$ such that for all $N\geq 2$ and $\beta\in[1,\infty)$
\begin{equ}[eq:Gauss_tail]
\Big(
\int_G |x,\id|_G^\beta \P_N(\mrd x)
\Big)^{1/\beta}
\leq \bar C 2^{-N} \sqrt \beta\;.
\end{equ}
\end{enumerate}
\end{assumption}

\begin{remark}
Assumption~\ref{assum:P_N}\ref{pt:density_bound} is identical to~\cite[Eq.~(5.1)]{Chevyrev19YM} and
%means that, after $2^{2N-3}$ steps, the $G$-valued random walk with increments distributed by $\P_N$ has a density with respect to the Haar measure which is uniformly bounded above and below.
%This condition
is used to compare, in law, the holonomies $U(\d r)$ to a $G$-valued random walk (see~\cite[Prop.~1.8]{Levy20} that relates
conditioned random walks to the YM measure on closed surfaces).
If $\T^2$ is replaced by the square $[0,1]^2$ then this condition can be dropped.
\end{remark}

\begin{remark}
Assumption~\ref{assum:P_N}\ref{pt:Gauss_tail} is new
and implies Gaussian tails on $\P_N$ comparable to the normal distribution $N(0,2^{-2N})$.
This substitutes the moment condition~\cite[Eq.~(5.2)]{Chevyrev19YM}
and will be used to prove Gaussian tails for $\fancynorm{U}_{\alpha;N}$ and $[U]_{\alpha;q,N}$, which is (more than) sufficient to prove Theorem~\ref{thm:YM_gauge_fixed}
using Theorem~\ref{thm:gauge_fix}.
%In light of Theorem~\ref{thm:gauge_fix}, however, Theorem~\ref{thm:YM_gauge_fixed} holds
%once $(1+\fancynorm{U}_{\alpha;N}^4)e^{(\log(1+[U]_{\alpha;q,N}))^2}$ has moments of all orders bounded uniformly in $N$,
%so we believe this assumption can be significantly relaxed.
\end{remark}
The following proposition gives a simple sufficient (but not necessary) condition on $S_N$ to satisfy Assumption~\ref{assum:P_N}.
We will see in Example~\ref{ex:verify_S_N_assump} that the Manton, Wilson, and Villain actions from Section~\ref{sec:approx-YM} all satisfy Assumption~\ref{assum:P_N}.

\begin{proposition}\label{prop:criterion_S_N}
Suppose there exist $r,\theta,\Theta>0$ such that, for all $\eps=2^{-N}$, $N \gg 1$
\begin{equ}\label{eq:S_N_upper_bound}
 \forall x\in B_{\e r}\eqdef \{x\in G\,:\, |x,\id|<\e r\}\;,\quad S_N(x) \leq \Theta\e^{-2}|x,\id|^2_G\;,
\end{equ}
and
\begin{equ}\label{eq:S_N_lower_bound}
\forall x\in G\setminus B_{\e r}\;,\quad S_N(x) \geq \theta\e^{-2}|x,\id|^2_G\;.
\end{equ}
Then Assumption~\ref{assum:P_N}\ref{pt:Gauss_tail} holds.
If furthermore~\eqref{eq:S_N_lower_bound} holds for all $x\in G$, then Assumption~\ref{assum:P_N}\ref{pt:density_bound} holds.
\end{proposition}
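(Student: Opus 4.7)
The plan is to handle the two halves separately. For the Gaussian tail of Part 1, I would combine a lower bound on the partition function $Z_N = \int_G e^{-S_N(x)}\mrd x$ with a direct Gaussian integral estimate. The upper bound \eqref{eq:S_N_upper_bound} applied on $B_{\e r/2}$ gives $S_N \leq \Theta r^2/4$ there, so $Z_N \geq e^{-\Theta r^2/4}\mathrm{vol}(B_{\e r/2})\gtrsim \e^d$ with $d = \dim G$. Splitting the moment integral $\int_G |x,\id|_G^\beta e^{-S_N(x)}\mrd x$ at the threshold $|x,\id|_G = \e r$, the inner part is trivially $\leq (\e r)^\beta Z_N$, while the outer part is dominated, using \eqref{eq:S_N_lower_bound}, by the Gaussian integral; via the coarea formula in normal coordinates at $\id$ and the change of variable $u=|x,\id|_G/\e$, this is $\lesssim \e^{\beta+d}\theta^{-(\beta+d)/2}\Gamma((\beta+d)/2)$. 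Dividing by $Z_N$ and taking the $\beta$-th root, Stirling's bound $\Gamma(m)^{1/m} \lesssim m$ converts the $\Gamma$-factor into $\sqrt\beta$, yielding \eqref{eq:Gauss_tail}.

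For Part 2, the strengthened hypothesis gives matching two-sided Gaussian-type bounds on the density $p_N(x) = Z_N^{-1}e^{-S_N(x)}$: in particular $p_N \geq c\e^{-d}\mathbf{1}_{B_{\e r/2}}$ yields a Doeblin-type minorisation $\P_N \geq \alpha\, m_{B_{\e r/2}}$ with $\alpha \asymp r^d$ independent of $N$, where $m_B$ denotes normalised uniform probability on $B$. Writing $\P_N = \alpha\, m_{B_{\e r/2}} + (1-\alpha)\nu_N$ and expanding $\P_N^{\star k_N}$ binomially with $k_N = 2^{2N-3}$, the product $k_N\e^2 = 1/8$ represents an $O(1)$ effective ``heat time''. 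Convolution with any probability measure preserves uniform upper and lower bounds on a density, so the target reduces to showing that $m_{B_{\e r/2}}^{\star j}$ has density in a fixed compact interval of $(0,\infty)$ whenever $j\e^2 \geq c_0$, combined with a Chernoff bound that concentrates $j$ drawn from $\mathrm{Binomial}(k_N, \alpha)$ near $\alpha k_N$, with tail $e^{-c 2^{2N}}$ absorbing the rare small-$j$ events.

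The main obstacle is therefore this local central limit theorem: showing that the $j$-fold convolution of the uniform measure on a Riemannian ball of radius $\e$ becomes comparable, once $j\e^2 \gtrsim 1$, to the smooth density of Brownian motion on $G$ at time $j\e^2$. I would establish this through Peter--Weyl, writing
\[
\frac{d m_{B_{\e r/2}}^{\star j}}{dx}(x) = \sum_\rho d_\rho \,\mathrm{tr}\bigl(\hat m_{B_{\e r/2}}(\rho)^j\rho(x^{-1})\bigr),
\]
and using a second-order Taylor expansion of each irreducible unitary representation $\rho$ at $\id$ combined with symmetry of $m_{B_{\e r/2}}$ to extract a spectral gap $\|\hat m_{B_{\e r/2}}(\rho)\|_{\mathrm{op}} \leq 1 - c\e^2(\lambda_\rho \wedge \e^{-2})$, where $\lambda_\rho$ is the Casimir eigenvalue of $\rho$. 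Weyl's dimension formula and Weyl's law then render the series $\sum_\rho d_\rho^2 e^{-c c_0 \lambda_\rho}$ convergent, delivering the upper density bound; choosing $c_0$ sufficiently large forces the nontrivial portion of this sum below $1$, giving the matching lower bound.
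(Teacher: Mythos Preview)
Your Part~1 is correct and essentially matches the paper's argument: both lower-bound the partition function by $Z_N \gtrsim \e^D$ via \eqref{eq:S_N_upper_bound}, split the moment integral at the threshold $|x,\id|_G \sim \e r$, and control the outer piece by a Gaussian integral in normal coordinates, with Stirling converting the Gamma factor to $\sqrt\beta$.

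For Part~2 your route diverges from the paper's, and there is a gap you should close. On a non-abelian group the convolution algebra is noncommutative, so expanding $\P_N^{\star k} = (\alpha m_B + (1-\alpha)\nu_N)^{\star k}$ ``binomially'' does \emph{not} produce $m_B^{\star j}$ as a factor: the $2^k$ terms are interleaved products $m_B \star \nu_N \star m_B \star \cdots$, and your reduction to a local CLT for $m_B^{\star j}$ alone is not justified as written. This is repairable here because $\P_N$ and $m_B$ (hence $\nu_N$) are \emph{central} measures --- $S_N$ is a class function and the geodesic ball is conjugation-invariant for the bi-invariant metric --- and central measures commute under convolution (their Fourier coefficients are scalar by Schur). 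You should make this explicit. Separately, your spectral-gap bound $\|\hat m_B(\rho)\|_{\mathrm{op}} \le 1 - c\e^2(\lambda_\rho \wedge \e^{-2})$ needs an argument in the high-frequency regime $\lambda_\rho \gtrsim \e^{-2}$, where the second-order Taylor expansion of $\rho$ is no longer useful and you need a genuine oscillatory estimate for $\int_B \chi_\rho\, m_B$.

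The paper's proof of Part~2 is quite different and avoids representation theory entirely: it verifies the hypotheses of a Harnack-type black box (Proposition~\ref{prop:density_bound}, built on the machinery of Hebisch--Saloff-Coste in Appendix~\ref{app:RWs}), which only requires checking $p^{\star 2}(e) \asymp \inf_{x\in\Omega^2} p^{\star 2}(x) \asymp \e^{-D}$ for $\Omega$ a ball of radius $\asymp \e$. This follows directly from the pointwise bounds on $p_N$, with no need for Peter--Weyl, Weyl's law, or a Chernoff argument. Your Fourier approach, once patched, should also work and is more self-contained, but the paper's route is shorter given that the Harnack machinery is developed anyway.
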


\begin{proof}
We can find a connected domain $F\subset\mfg$ on which $\exp\colon F\to G$ is a bijection
and such that $|e^X,\id| \leq C |X|_\mfg$ for all $X\in F$
(e.g. $F\eqdef \{X\in\mfg\,:\, \sigma(X)\subset(-i\pi,i\pi]\}$, where $\sigma(X)$ is the spectrum of $X$).
The left-hand side of~\eqref{eq:Gauss_tail} is then bounded above by
\begin{equ}\label{eq:Gaussian_integral}
C\Big(\int_{F} Z^{-1}e^{-S_N(e^X) } |X|^\beta_\mfg J(X)\mrd X\Big)^{1/\beta}\;,
\end{equ}
where $
Z \eqdef \int_G e^{-S_N(x)}\mrd x = \int_F e^{-S_N(e^X)} J(X)\mrd X$
and $J$ is the Jacobian of $\exp\colon F \to G$.
Clearly $J$ is bounded above on $F$ and is bounded from below by $\delta >0$ on a small ball in $F$ (e.g. by~\eqref{e:d-exp}).
The bounds~\eqref{eq:S_N_upper_bound}-\eqref{eq:S_N_lower_bound}
therefore imply $Z\asymp \e^{D}$ where $D=\dim(\mfg)$.
Splitting the integral in~\eqref{eq:Gaussian_integral} over $F$ into $B_{\e r}$ and $G\setminus B_{\e r}$ and applying~\eqref{eq:S_N_lower_bound}, we obtain that
Assumption~\ref{assum:P_N}\ref{pt:Gauss_tail} holds.

Suppose now that~\eqref{eq:S_N_lower_bound} holds for all $x\in G$.
We will apply Proposition~\ref{prop:density_bound} with $\Omega = B_{\eps h}$ for sufficiently small $h>0$.
Denote
$
p = Z^{-1}e^{-S_N} = \mrd \P_N/\mrd x
$.
The bounds $Z\asymp \eps^{D}$ and~\eqref{eq:S_N_upper_bound}-\eqref{eq:S_N_lower_bound} (with~\eqref{eq:S_N_lower_bound} holding for all $x\in G$) imply that
$p^{\star 2}(e)=|p|_{L^2}^2 \asymp \eps^{-D}$,
where the equality follows from the symmetry of $p$.
Furthermore, for all $x \in B_{\eps h}^2$,
\begin{equ}
p^{\star 2}(x) = \int_G p(xy)p(y)\mrd y \geq \int_{B_{\eps h}} p(xy)p(y)\mrd y \gtrsim \int_{B_{\eps h}}\eps^{-2D}\mrd y \gtrsim \eps^{-D}
\end{equ}
for sufficiently small $h>0$ (independent of $\eps$).
We therefore satisfy the assumptions of Proposition~\ref{prop:density_bound} (with $m=2N$ and $n=3$) from which we see that Assumption~\ref{assum:P_N}\ref{pt:density_bound} holds.
\end{proof}

\begin{example}\label{ex:verify_S_N_assump}
By Proposition~\ref{prop:criterion_S_N},
the Manton, Wilson, and Villain actions all satisfy Assumption~\ref{assum:P_N}.
Indeed, these actions satisfy Assumption~\ref{assump:R}, which implies 
\begin{equ}
\forall x\in\mathring V\;,\quad S_N(x) = \e^{-2}|x,\id|_G^2 + O(|x,\id|_G^2 +\e^{-2}|x,\id|_G^3)\;.
\end{equ}
This implies both the desired upper bound~\eqref{eq:S_N_upper_bound} and lower bound~\eqref{eq:S_N_lower_bound} for $x$ in an $\e$-independent neighbourhood of $\id$.
The lower bound outside this neighbourhood follows (a) for the Manton action
trivially, (b) for the Villain action from the Varadhan formula~\eqref{eq:Varadhan_formula} and the upper bound $e^{t\Delta}(\id) \lesssim t^{-\dim(\mfg)/2}$,
and (c) for the Wilson action from
\begin{equ}
S_N(e^X) = -\e^{-2}\sum_{k=1}^\infty \Trace(X^{2k})(2k)! = \e^{-2}\Trace(1-\cosh(X)) \geq -\theta \e^{-2} \Trace(X^2)
\end{equ}
for $\theta>0$ small and all $X\in\mfg$ with spectrum contained in $(i\pi,i\pi]$.
\end{example}
\begin{remark}
The fact that the Villain action satisfies Assumption~\ref{assum:P_N}\ref{pt:density_bound} is obvious,
% because the heat kernel on $G$ at every time $t>0$ has a smooth density.
while this fact for the Wilson action
%satisfies Assumption~\ref{assum:P_N}\ref{pt:density_bound}
alternatively follows from~\cite[Thm.~8.8]{Driver89},
which moreover proves convergence of $p^{\star 2^{2N-3}}\to e^{2^{-3}\Delta}$.
\end{remark}
We now come to the main result of this subsection. Denote $\mcR_N \eqdef \sqcup_{1\leq n\leq N} \rect_n$.
\begin{theorem}\label{thm:improved_prob}
Suppose we are in the setting of Theorem~\ref{thm:YM_gauge_fixed}.
Then for any $q>2$ there exists $\lambda>0$, depending only on $G,q$,
such that for all $\alpha<1$ and $\beta\geq 10/(1-\alpha)$
\begin{equ}
\E\Big[
\Big|\sup_{r\in\mcR_N}\frac{|\log U(\partial r)| + |X|_{\var q}}{|r|^{\alpha/2}}\Big|^\beta
\Big]
\leq \lambda^\beta C_l C_u \bar C^\beta \beta^{\beta/2}\;,
\end{equ}
where $X$ is the anti-development of $U$ along $r$.
\end{theorem}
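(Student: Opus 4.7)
The plan is to bound the supremum by a per-rectangle Gaussian moment estimate followed by a union bound. Specifically, I would prove that for each $r \in \rect_n$ with ordered plaquettes $p_1,\ldots,p_k$,
\[
\E\big[(|\log U(\partial r)| + |X|_{\var q})^\beta\big] \leq C_l C_u K^\beta \beta^{\beta/2}\, |r|^{\beta/2}
\]
uniformly in $N$ and $\beta \geq 1$, with $K = K(G, q, \bar C)$. Writing $Y_r = (|\log U(\partial r)| + |X|_{\var q})/|r|^{\alpha/2}$ and using $\E[\sup_r Y_r^\beta] \leq \sum_r \E[Y_r^\beta]$, this gives
\[
\E\Big[\sup_{r\in\mcR_N} Y_r^\beta\Big] \leq C_l C_u K^\beta \beta^{\beta/2} \sum_{n=1}^N \sum_{r \in \rect_n} |r|^{\beta(1-\alpha)/2}.
\]
Since $\rect_n$ contains $\asymp 2^{3n}$ elements with areas $|r| = m\cdot 4^{-n}$ for $1 \leq m \leq 2^n$, the double sum is bounded by a constant multiple of $\sum_{n\geq 1} 2^{3n - n\beta(1-\alpha)/2}$, which is finite once $\beta(1-\alpha) > 6$; the hypothesis $\beta \geq 10/(1-\alpha)$ gives $\beta(1-\alpha) \geq 10$, well inside this range.

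For the per-rectangle bound, the first step is comparison with an i.i.d.~random walk. Both $|\log U(\partial r)|$ and $|X|_{\var q}$ are $\Ad$-invariant functions of $U$: under a gauge transformation by $g$, the entire sequence $X$ is conjugated by the single element $g(z) \in G$ (since the holonomy transforms as $U(\partial r_i)^g = g(z) U(\partial r_i) g(z)^{-1}$), and the $\Ad$-invariance of $|\cdot|_\mfg$ preserves every norm and $q$-variation. By gauge invariance of the YM measure itself I may therefore evaluate both quantities in any convenient gauge. Using the Driver-Sengupta-Lévy construction (valid since $G$ is simply connected), in a spanning-tree gauge the scale-$N$ plaquette variables $(V_p)_{p \in \plaq_N}$ have joint density proportional to $\prod_p e^{-S_N(V_p)}$ subject only to the global constraint $\prod_p V_p = \id$. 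For $r \in \rect_n$ with $n \geq 1$, the $k' \eqdef |r|\cdot 4^N \leq 2^{2N-1}$ scale-$N$ plaquettes in $r$ leave at least $4^N - k' \geq 2^{2N-1} \geq 2^{2N-3}$ plaquettes outside; integrating the latter out and invoking Assumption~\ref{assum:P_N}\ref{pt:density_bound} bounds the joint density of the interior plaquette variables above by $C_l C_u$ times that of i.i.d.~samples from $\P_N$.

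The second step applies the Gaussian rough path estimates of Appendix~\ref{app:Gaussian_tails}. Further specialising the gauge so that the path holonomies $U(\ell_i)$ all equal $\id$ (which, by $\Ad$-invariance of the norms, does not affect the distribution of $|X|_{\var q}$ or $|\log U(\partial r)|$), the increments $X_i - X_{i-1}$ become the logarithms of independent samples from $\P_N^{\star 4^{N-n}}$. A standard concentration argument on compact Lie groups combined with Assumption~\ref{assum:P_N}\ref{pt:Gauss_tail} yields $(\E|\log V|^\beta)^{1/\beta} \leq C\sqrt{\beta}\cdot 2^{-n}$, and the symmetry of $S_N$ makes these centred. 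The appendix then delivers, via moment comparison with the Brownian rough path on $[0,|r|]$, the Gaussian bound $\E|X|_{\var q}^\beta \leq C^\beta \beta^{\beta/2}(k\cdot 4^{-n})^{\beta/2} = C^\beta \beta^{\beta/2}|r|^{\beta/2}$ for any $q > 2$. The bound on $|\log U(\partial r)|$ follows from Gaussian concentration of the endpoint $|X_k|$ together with a Baker-Campbell-Hausdorff estimate for the discrepancy between $\log U(\partial r)$ and $X_k$, which scales as $|X_k|\cdot \max_i |\log V|$ and is absorbed into the same bound.

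The main obstacle I anticipate is the convolution estimate $(\E|\log V|^\beta)^{1/\beta} \lesssim \sqrt{\beta}\cdot 2^{-n}$ for $V \sim \P_N^{\star 4^{N-n}}$: unlike on $\R^D$, sub-Gaussianity is not obviously preserved under convolution on a compact Lie group. The natural remedy is to work in a fixed neighbourhood of $\id$ where the geodesic distance is comparable to the Euclidean norm in $\mfg$, apply a sum-of-independent-variables concentration inequality there, and use the compactness of $G$ together with a truncation argument to merge the near-$\id$ and away-from-$\id$ regimes. A closely related secondary obstacle is the BCH control of $|\log U(\partial r) - X_k|$: one must ensure that the commutator corrections, although of the right asymptotic order, do not accumulate across the $k$ steps, which requires a careful iterative argument using the sub-Gaussian concentration of $\max_i |\log V_i|$.
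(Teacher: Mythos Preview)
Your proposal is correct and follows the paper's approach: a per-rectangle Gaussian moment bound (the content of Lemma~\ref{lem:P^star_M_bound}, combined with the i.i.d.\ comparison argument from \cite[Thm.~5.3]{Chevyrev19YM}) followed by a union bound (Lemma~\ref{lem:Kolmogorov}, which is exactly your sum over $\mcR_N$).

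Both obstacles you flag are genuine, but they are dispatched more cleanly than by separate BCH/truncation arguments. The key observation (implicit in Lemma~\ref{lem:P^star_M_bound}) is that for i.i.d.\ $z_1,\dots,z_M\sim\P_N$, the product $z_1\cdots z_M$ is precisely the time-$1$ value of the development $\mrd L = L\,\mrd W$ driven by the $\mfg$-valued piecewise-linear walk $W$ with increments $\log z_\ell$. Proposition~\ref{prop:RW_Gauss_tail} applied at scale $N$ gives sub-Gaussian moments for $|\mbW|_{\Hol\gamma}$ at scale $\sqrt{M}\,\bar C\,2^{-N}$; rough-path continuity of the development, together with the trivial bound $|\log(\cdot)|\le\mathrm{diam}(G)$ on the event $\{|\mbW|_{\Hol\gamma}\ge 1\}$, then yields $|\log(z_1\cdots z_M)|\lesssim_{G,\gamma}|\mbW|_{\Hol\gamma}$ and hence the convolution sub-Gaussianity $(\E|\log V|^\beta)^{1/\beta}\lesssim\bar C\sqrt{M\beta}\,2^{-N}$. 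The same development argument, applied to the scale-$n$ walk $X$, bounds $|\log U(\partial r)|$ without an iterative BCH estimate.
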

The proof of Theorem \ref{thm:improved_prob} is similar to that of \cite[Thm.~5.3]{Chevyrev19YM}
and uses an axial-type gauge to reduce the holonomies of the discrete YM measure to pinned $G$-valued random walks.
For the proof of Theorem~\ref{thm:improved_prob}, we require the following two lemmas,
the first of which provides the necessary bounds to control these walks
(it is an analogue of~\cite[Lem.~5.4]{Chevyrev19YM} under the new condition~\eqref{eq:Gauss_tail}, the main difference being that $\lambda$ below is \textit{uniform} in $\beta$).
\begin{lemma}\label{lem:P^star_M_bound}
Suppose~\eqref{eq:Gauss_tail} holds.
Then, for all $q>2$ there exists $\lambda>0$ depending only on $G,q$ such that
for all $\beta \geq 1$ and integers $k,M\geq 1$
\begin{equ}
\Big|
\int_{G^k} (|X|_{\var q}^\beta + |\log (y_1\cdots y_k)|^\beta) \P_N^{\star M}(\mrd y_1)\cdots
\P_N^{\star M}(\mrd y_k)
\Big|^{1/\beta}
%\\
%&\qquad
\leq
\lambda \bar C 2^{-N}\sqrt{kM\beta}\;,
\end{equ}
where $X_j=\sum_{i=1}^j \log y_i$ for $0\leq j \leq k$.
\end{lemma}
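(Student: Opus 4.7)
The plan is to reduce both terms on the left-hand side to a single Gaussian concentration estimate for symmetric random walks in $G$. First, since $S_N$ is a class function with $S_N(x)=S_N(x^{-1})$, the measure $\P_N$ is invariant under inversion, and so is every convolution power $\P_N^{\star m}$; in particular, under the product measure, the $G$-valued walk $(y_1 y_2 \cdots y_j)_{j=0}^k$ is symmetric. This is the framework for which I expect the moment bounds of Appendix~\ref{app:Gaussian_tails} to be formulated, and the proof below is essentially a clean invocation of those bounds at two different scales.

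The proof splits naturally into two steps. The first step handles the endpoint: since $y_1 \cdots y_k$ has law $\P_N^{\star kM}$, it suffices to prove the single-step Gaussian concentration
\[
\Big( \int_G |\log y|_\mfg^\beta \, \P_N^{\star m}(\mrd y) \Big)^{1/\beta} \leq C_G \bar C \sqrt{m\beta} \, 2^{-N} \qquad (m \geq 1)
\]
and apply it with $m = kM$. The second step handles the $q$-variation: the increments $W_i \eqdef \log y_i$ are iid in $\mfg$, and the single-step bound with $m = M$ gives $\|W_i\|_{L^\beta} \leq C_G \bar C \sqrt{M\beta} \, 2^{-N}$. Since $q > 2$, I then invoke a Gaussian tail estimate for the $q$-variation of a random walk with independent sub-Gaussian $\mfg$-valued increments --- derived in Appendix~\ref{app:Gaussian_tails} by moment comparison with a Brownian rough path on $\mfg$ (a Garsia--Rodemich--Rumsey-type argument) --- to obtain
\[
\Big( \E |X|_{\var q}^\beta \Big)^{1/\beta} \leq C_q \sqrt{k} \, \|W_1\|_{L^\beta} \leq C_{q,G} \bar C \sqrt{kM\beta} \, 2^{-N}.
\]
Summing the $\beta$-th powers of the two contributions yields the claimed bound.

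The hard part will be the single-step concentration at scale $m$. The triangle inequality $|\log y|_\mfg = |y, \id|_G \leq \sum_{l=1}^m |z_l, \id|_G$ with $y = z_1 \cdots z_m$ only produces a bound of order $m$ rather than the required $\sqrt m$, so one cannot avoid using the symmetry of $\P_N$ and the independence of the $z_l$. In the regime $m \cdot 2^{-2N} \lesssim 1$, the plan is to compare $\log(z_1 \cdots z_m)$ with the $\mfg$-valued martingale $V_m \eqdef \sum_l \log z_l$: the difference is a Baker--Campbell--Hausdorff commutator remainder whose $L^\beta$ norm scales like $m \cdot 2^{-2N}\sqrt\beta$, which is negligible compared with the leading $\sqrt{m} \, 2^{-N}\sqrt\beta$ obtained for $V_m$ from a classical sub-Gaussian concentration bound for sums of independent bounded symmetric variables (Azuma or Burkholder--Rosenthal). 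In the complementary regime $m \cdot 2^{-2N} \gtrsim 1$, the measure $\P_N^{\star m}$ is close to Haar and both sides of the estimate are already of order $1$. Stitching these two regimes uniformly in $\beta$ is the technical core of Appendix~\ref{app:Gaussian_tails} and the only non-routine input.
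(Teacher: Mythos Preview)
Your high-level plan matches the paper's: establish a single-step Gaussian concentration for $\P_N^{\star m}$ and feed it into the random-walk rough path bound of Appendix~\ref{app:Gaussian_tails}. The $q$-variation step and the reduction of the endpoint to the single-step bound at $m=kM$ are fine. The problems are in your sketch of the single-step concentration itself.

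Your BCH remainder scaling is off by a factor $\sqrt\beta$: the leading correction to $\log(z_1\cdots z_m)-\sum_l\log z_l$ is the L\'evy area $\sum_{l<l'}[\log z_l,\log z_{l'}]$, a second-chaos object with sub-exponential rather than sub-Gaussian tails, so its $L^\beta$ norm is of order $m\,2^{-2N}\beta$, not $m\,2^{-2N}\sqrt\beta$. The argument still closes, but the regime split must be taken at $m\beta\sim 2^{2N}$ (and in the large regime one only needs boundedness of $\log$ on the compact group, not proximity to Haar). More to the point, Appendix~\ref{app:Gaussian_tails} contains no such stitching; Proposition~\ref{prop:RW_Gauss_tail} is a pure $\R^d$-statement about the step-$2$ rough path lift of a symmetric sub-Gaussian walk. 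The route in \cite[Lem.~5.4]{Chevyrev19YM}, which the paper follows, avoids BCH entirely: $z_1\cdots z_m$ is the development into $G$ of the piecewise-linear path through $V_l=\sum_{r\le l}\log z_r$, so $|\log(z_1\cdots z_m)|=|z_1\cdots z_m,\id|_G$ is bounded by a $G$-dependent multiple of the H\"older norm of the step-$2$ rough path lift of $V$ (rough ODE estimate combined with compactness of $G$), and Proposition~\ref{prop:RW_Gauss_tail} with $K=m$, $\sigma=\bar C\,2^{-N}$ controls that norm. This delivers the single-step concentration uniformly in $m$ and $\beta$ in one step, without any regime splitting.
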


\begin{proof}
The proof is essentially identical to that of~\cite[Lem.~5.4]{Chevyrev19YM} upon using
Proposition~\ref{prop:RW_Gauss_tail}
in place of the enhanced (rough path) BDG inequality.
\end{proof}

\begin{lemma}\label{lem:Kolmogorov}
Let $\alpha<1$, $\beta\geq 10/(1-\alpha)$,
and $Z$ be an $\R_+^{\mcR_N}$-valued random variable.
Then
\begin{equ}\label{eq:Kolmogorov}
\E\Big[\sup_{r\in\mcR_N} |r|^{-\alpha\beta/2}Z(r)^\beta\Big]
\leq
\sup_{r\in\mcR_N} |r|^{-\beta/2}\E[Z(r)^\beta]\;.
\end{equ}
\end{lemma}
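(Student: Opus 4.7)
The plan is to deduce the lemma by a straightforward union bound combined with a purely combinatorial counting of rectangles. The factor $|r|^{-\alpha\beta/2}$ on the left-hand side is weaker than $|r|^{-\beta/2}$ on the right-hand side by $|r|^{(1-\alpha)\beta/2}$, and the condition $\beta\geq 10/(1-\alpha)$ is calibrated so that this slack absorbs the summation over all rectangles in $\mcR_N$ (uniformly in $N$). No probabilistic machinery beyond $\E[\sup_r X(r)] \leq \sum_r \E[X(r)]$ is required.

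First I would apply the union bound to get
\begin{equ}
\E\Big[\sup_{r\in\mcR_N} |r|^{-\alpha\beta/2}Z(r)^\beta\Big]
\leq \sum_{r\in\mcR_N} |r|^{-\alpha\beta/2}\E[Z(r)^\beta]\;,
\end{equ}
and then factor out the worst-case ratio
\begin{equ}
\sum_{r\in\mcR_N} |r|^{-\alpha\beta/2}\E[Z(r)^\beta]
=
\sum_{r\in\mcR_N} |r|^{(1-\alpha)\beta/2}\cdot |r|^{-\beta/2}\E[Z(r)^\beta]
\leq K_N \sup_{r\in\mcR_N} |r|^{-\beta/2}\E[Z(r)^\beta]\;,
\end{equ}
where $K_N \eqdef \sum_{r\in\mcR_N} |r|^{(1-\alpha)\beta/2}$. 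It therefore suffices to prove $K_N\leq 1$ uniformly in $N$.

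The combinatorial estimate proceeds by decomposing $\mcR_N = \sqcup_{1\leq n\leq N}\rect_n$. Writing $\gamma \eqdef (1-\alpha)\beta/2$, I would parametrise a rectangle $r\in\rect_n$ by its base point in $\Lambda_n$ (giving $2^{2n}$ choices), by one of two orientations (horizontal/vertical), and by a length $k2^{-n}$ with $1\leq k\leq 2^n$, so that $|r|=k2^{-2n}$. This gives
\begin{equ}
\sum_{r\in\rect_n}|r|^{\gamma}
\leq 2\cdot 2^{2n}\sum_{k=1}^{2^n}(k\,2^{-2n})^{\gamma}
\leq 2\cdot 2^{2n}\cdot 2^{-2n\gamma}\cdot 2^{n(\gamma+1)}
= 2\cdot 2^{n(3-\gamma)}\;.
\end{equ}
The hypothesis $\beta\geq 10/(1-\alpha)$ gives $\gamma\geq 5$, so $3-\gamma\leq -2$ and summing the geometric series yields $K_N \leq \sum_{n\geq 1}2\cdot 2^{-2n} = 2/3 < 1$, which establishes the lemma.

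The only point requiring any care is the counting of rectangles and the verification that the threshold $\beta\geq 10/(1-\alpha)$ is chosen so that the geometric series is absolutely summable with sum at most $1$; there is no genuine obstacle to overcome.
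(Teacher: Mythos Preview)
Your proof is correct and essentially identical to the paper's: both use the union bound, factor out $|r|^{(1-\alpha)\beta/2}$, and then bound the resulting sum over rectangles via the same parametrisation (base point in $\Lambda_n$, two orientations, length $k$) and the same geometric-series estimate using $3-\gamma\leq -2$. The paper compresses the argument into a single displayed line but the steps and constants match yours.
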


\begin{proof}
For $A,B$ denoting the left-hand and right-hand side of~\eqref{eq:Kolmogorov} respectively,
\begin{equ}
A
\leq \sum_{n\geq 1}\sum_{r\in \rect_n} B |r|^{\beta/2-\alpha\beta/2}
\leq
2B\sum_{n \geq 1} \sum_{x\in\Lambda_n}\sum_{k=1}^{2^n} (k2^{-2n})^{\beta(1-\alpha)/2}
%\\
%&\leq
%2B\sum_{n \geq 1} 2^{n(3-\beta(1-\alpha)/2)}
\leq B\;,
\end{equ}
where we used $|\Lambda_n|= 2^{2n}$ and $3-\beta(1-\alpha)/2 \leq -2$ in the final bound.
\end{proof}

\begin{proof}[of Theorem~\ref{thm:improved_prob}]
The same argument as in the proof of~\cite[Thm.~5.3]{Chevyrev19YM}, which in particular uses \eqref{eq:density_bound}, and
now using Lemma~\ref{lem:P^star_M_bound}
instead of~\cite[Lem.~5.4]{Chevyrev19YM},
implies that for all $\beta\geq 1$ and
$r\in\mcR_N$,
\begin{equ}
\E[|\log U(\partial r)|^\beta + |X|_{\var q}^\beta]
\leq \lambda^\beta C_lC_u \bar C^\beta \beta^{\beta/2} |r|^{\beta/2}\;,
\end{equ}
where $X$ is the anti-development of $U$ along $r$ and
where $\lambda$ depends only on $G,q$.
The conclusion follows from Lemma~\ref{lem:Kolmogorov}.
\end{proof}

\begin{proof}[of Theorem~\ref{thm:YM_gauge_fixed}]
For every $\alpha\in(0,1)$ and $q>2$, Theorem~\ref{thm:improved_prob} implies that
$\fancynorm{U}_{\alpha;N}$ and $[U]_{\alpha;q,N}$ have Gaussian tails uniform in $N$.
Hence every moment of $(1+\fancynorm{U}_{\alpha;N}^4)e^{(\log(1+[U]_{\alpha;q,N}))^2}$
is bounded uniformly in $N$,
and the conclusion follows from Theorem~\ref{thm:gauge_fix}.
\end{proof}

\section{Invariant measure for the 2D SYM}
\label{sec:invar_measure}

In this section, we prove Theorem~\ref{thm:invar_measure}
and its corollaries.
We first state a basic ergodicity property.
Denote $\overline\mfO_\alpha\eqdef \mfO_\alpha\sqcup\{\skull\}$, equipped with a metric for which $\skull$ is an isolated point.
(Recalling $\hat\mfO_\alpha$ from Section~\ref{subsec:main_results},
we have $\overline\mfO_\alpha=\hat\mfO_\alpha$ as sets but $\overline\mfO_\alpha$ carries a stronger topology.)

\begin{proposition}\label{prop:X_strong_Feller}
The Markov process $X$ in Theorem~\ref{thm:invar_measure}
is strong Feller on $\overline\mfO_\alpha$.
\end{proposition}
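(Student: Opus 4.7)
The plan is to deduce the strong Feller property for $X$ from the corresponding property for the continuum DeTurck--SYM dynamic $\SYM(\bar C,\cdot)$ on $\Omega^1_\alpha$, to descend to the quotient $\mfO_\alpha$ via the projection $\pi$, and to handle the possibility of blow-up by a short-time decomposition and restart argument. First, since $\skull$ is isolated in $\overline\mfO_\alpha$, continuity of $P_tf$ at $\skull$ is automatic, so it suffices to show that $x\mapsto \E f(X^x_t)$ is continuous at every $x\in\mfO_\alpha$ for bounded measurable $f$.

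Next I would lift to representatives. Because the group $\mfG^{0,\alpha}$ acts continuously on $\Omega^1_\alpha$ and the quotient $\mfO_\alpha$ is Polish, the projection $\pi$ admits continuous local sections, so for any $x_n\to x$ in $\mfO_\alpha$ one can choose lifts $a_n\to a$ in $\Omega^1_\alpha$ with $\pi(a_n)=x_n$ and $\pi(a)=x$. The map $f\circ\pi$ is then a bounded measurable gauge-invariant function on $\hat\Omega^1_\alpha$, and by the characterisation of $X^x$ as the pushforward under $\pi$ of a generative probability measure (\cite[Thm.~2.13(ii)]{CCHS_2D}), we have $\E f(X^x_t)=\E [(f\circ\pi)(\SYM_t(\bar C,a))]$ on the event that $\SYM$ has not blown up before time $t$. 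It is thus enough to establish, for each fixed $t>0$ and each $K>0$, that
\[
a\mapsto \E\bigl[(f\circ\pi)(\SYM_{t\wedge\tau_K(a)}(\bar C,a))\bigr]
\]
is continuous on $\{a\in\Omega^1_\alpha : |a|_\alpha\le K\}$, where $\tau_K(a)$ is the first exit time of $\SYM(\bar C,a)$ from a ball of radius $2K$.

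For this continuum strong Feller statement I would invoke the general strong Feller theorems for subcritical singular SPDEs developed in~\cite{HS22} (see also~\cite{HM16}), applied to the regularised equation~\eqref{eq:SYM_moll}. The equation is subcritical, the noise is additive, and the noise acts nondegenerately on every component of the solution after the DeTurck reduction, which are the structural hypotheses that these results need; the stopping at $\tau_K$ truncates the nonlinearities so that the bounds of~\cite{HS22} apply uniformly. Combining this with the lifting argument gives continuity on the event $\{T^x>t\}$, where $T^x$ is the first blow-up time of $X^x$.

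Finally, to close the argument I would use the decomposition
\[
\E f(X^x_t)
=\E\bigl[f(X^x_t)\,\one_{T^x>t}\bigr]+\E\bigl[f(X^x_t)\,\one_{T^x\le t}\bigr],
\]
and exploit the fact that, by the a priori moment bounds on the exit time of $\SYM(\bar C,\cdot)$ from large balls (uniform on bounded sets of initial data, as used throughout Sections~\ref{sec:diagonal_argument}--\ref{sec:gauge-covar}), the second term tends to zero uniformly for $x$ in a neighbourhood of any fixed $x_0\in\mfO_\alpha$ as the truncation level $K\to\infty$. The main obstacle I anticipate is the verification that the hypotheses of~\cite{HS22} indeed apply to the stopped, gauge-covariant, DeTurck-augmented SYM equation with locally uniform bounds --- in particular ensuring that the strong Feller estimate is stable under the restart procedure that defines the Markov process on gauge orbits beyond blow-up. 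Once that verification is in place, combining the three ingredients above yields the strong Feller property of $X$ on $\overline\mfO_\alpha$.
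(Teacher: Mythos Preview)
Your plan has the right ingredients but contains a genuine gap in how it handles the restart procedure. You write the identity $\E f(X^x_t)=\E[(f\circ\pi)(\SYM_t(\bar C,a))]$ ``on the event that $\SYM$ has not blown up before time $t$'', and then decompose according to $\{T^x>t\}$ versus $\{T^x\le t\}$, where $T^x$ is the blow-up time of $X$. These two events are different: $X$ is defined precisely so that it can survive past the blow-up time $T^*_a$ of $\SYM(\bar C,a)$ by restarting from a gauge-equivalent representative. So on $\{T^x>t\}$ it may well happen that $T^*_a\le t$, in which case $X^x_t$ is not $[\SYM_t(\bar C,a)]$ and your identity fails. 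Controlling $\P[T^x\le t]$ does not help here, and in fact continuity of $x\mapsto\P[T^x\le t]$ is part of what you are trying to prove.

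The paper closes this gap by a different mechanism. It works directly with total variation (the ultra Feller property), and uses that the probability of $\SYM(\bar C,a)$ blowing up before time $t$ is $O(t^M)$ for every $M$, locally uniformly in $|a|_\alpha$; this gives $|P^X_t(x)-\pi_*P^A_t(y)|_{\TV}\le Ct^M$ for any lift $y\in x$. One then takes $t$ small enough that this error is below $\delta/3$, combines with the strong Feller property of $A$ from \cite{HM16} for the middle term of a triangle inequality, and finally extends to all $t>0$ using that $t\mapsto|P^X_t(x)-P^X_t(x_n)|_{\TV}$ is non-increasing under a Markov semigroup. This small-$t$ plus TV-monotonicity trick is exactly what replaces your attempted stability-under-restart argument, and it is the missing idea in your plan. (As a side note, the strong Feller input here is \cite{HM16}; the reference \cite{HS22} is used in the paper for full support, not for strong Feller.)
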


\begin{proof}
Let $A = \SYM(\bar C,\cdot)$, which is a Markov process on $\bar\Omega^1_\alpha\eqdef \Omega^1_\alpha \sqcup \{\skull\}$.
The main result of~\cite{HM16} (using similar arguments as Sec.~5.1 therein)
implies that $A$ is strong Feller, which is almost the result we want; the only subtlety is that $X$ is not simply $[A]$ but is given by  restarting $A$ at random times with gauge equivalent initial conditions and then projecting to the quotient space.
We thus show that the strong Feller property persists under this transformation.

Let $\mcP(\overline\mfO_\alpha)$ denote the space of probability measures on $\overline\mfO_\alpha$
and $P^X_t \colon \overline\mfO_\alpha \to \mcP(\overline\mfO_\alpha)$ the transition probabilities of $X$ for $t\geq 0$.
We prove the `ultra Feller' property that $P^X_t(x)$, for every $t>0$, is continuous in $x$ when $\mcP(\overline\mfO_\alpha)$ is equipped with the total variation distance $|\cdot|_{\TV}$.
(This seemingly stronger property is equivalent to the strong Feller property in our setting, see~\cite{Hairer09_Markov}).

Let $P^A_t \colon \bar\Omega^1_\alpha \to \mcP(\bar\Omega^1_\alpha)$ denote the transition probabilities of $A$.
By~\cite{HM16}, $P^A_t$ is continuous for the total variation topology for $t>0$.
Furthermore, since the blow-up time of $A$ has inverse moments of all orders, it readily follows that, for any $M>0$, $y\in x\in\mfO_\alpha$, and $t\geq 0$
\begin{equ}\label{eq:x_y_diff}
|P^X_t(x) - \pi_* P^A_t(y)|_{\TV} \leq C t^M\;,
\end{equ}
where $C>0$ depends only on $M$ and $|y|_\alpha$.

Consider $x_n,x\in\overline\mfO_\alpha$ with $x_n\to x$.
By definition of the metric on $\mfO_\alpha$ (see~\cite[Sec.~3.6]{CCHS_2D}),
we can find $y_n,y\in\bar\Omega^1_\alpha$ with $y_n\in x_n$ and $y\in x$ and $y_n \to y$ in $\bar\Omega^1_\alpha$.
We can in particular suppose~\eqref{eq:x_y_diff} holds for $x,y$ and for $x_n,y_n$ and $M=1$.
Consider $\delta>0$ and any $t>0$ such that $Ct < \delta/3$.
Then
\begin{equs}
|P_t^X(x)-P^X_t(x_n)|_\TV
&\leq 
|P_t^X(x)-\pi_*P^A_t(y)|_\TV
+
|P_t^A(y)-P^A_t(y_n)|_\TV
\\
&\qquad +|\pi_* P_t^A(y_n)-P^X_t(x_n)|_\TV\;,
\end{equs}
which is smaller than $\delta$ for all $n$ sufficiently large.
Since $|P_t^X(x)-P^X_t(x_n)|_\TV$ is non-increasing in $t$,
the conclusion follows.
\end{proof}

\subsection{Simply connected case}

For $N\geq 1$ and $A\in \Omega_N$, we denote
$
M_\alpha(A) \eqdef \inf_{B\sim A} |B|_{N;\alpha}
$.
We make the same definition for $A\in\Omega_\alpha^1$ with $|B|_{N;\alpha}$ replaced by $|B|_{\alpha}$.

\begin{lemma}\label{lem:unif_bound_M}
Suppose that $G$ is simply connected.
Suppose further that $S_N$ satisfies Assumptions~\ref{assump:R} and~\ref{assum:P_N}
and is twice differentiable on $G$.
Consider the $\Omega_N$-valued Markov process $\check A^{(N)} = \log \check U^{(N)}$ where $\check U$ is the diffusion~\eqref{eq:discrete_hat_U} with $\check S_N\eqdef S_N$ therein
and with initial distribution $\mu_N$ as in~\eqref{eq:mu_N}.
Then for every $p \geq 1$
\begin{equ}
\sup_{N \geq 1} \E\Big[ \Big|\sup_{t \in [0,1]}M_\alpha(\check A^{(N)}_t) \Big|^p \Big] <\infty \;.
\end{equ}
\end{lemma}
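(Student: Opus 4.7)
The plan is to reduce the estimate, via gauge-covariance of the dynamic together with the gauge-invariance of $M_\alpha$, to the non-DeTurck Langevin dynamic $\tilde U$ of~\eqref{e:ParisiWu}, for which $\mu_N$ is directly invariant. Since $\check S_N = S_N$ is assumed twice differentiable on all of $G$, Proposition~\ref{prop:gauge_covar}\ref{pt:DeTurck_gauge_covar} provides, on a common probability space, an adapted process $\tilde U$ solving~\eqref{e:ParisiWu} and an adapted, finite-variation $g\colon[0,1]\to\mfG_N$ with $g_0=\id$ and $\tilde U_t = \check U_t^{g_t}$ pathwise. Because $M_\alpha$ is constant on gauge orbits, $M_\alpha(\tilde A^{(N)}_t) = M_\alpha(\check A^{(N)}_t)$ for every $t$, so it suffices to bound $\E\sup_{t\in[0,1]} M_\alpha(\tilde A^{(N)}_t)^p$ uniformly in $N$. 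By Proposition~\ref{prop:gauge_covar}\ref{pt:invar_measure}, $\mu_N$ is invariant under $\tilde U$, and since $\tilde U_0 = \check U_0\sim\mu_N$, the process $\tilde U$ is stationary with one-dimensional marginal $\mu_N$. Theorem~\ref{thm:YM_gauge_fixed} (which is exactly where the simple-connectedness of $G$ and Assumption~\ref{assum:P_N} enter) then yields, uniformly in $N$ and in $t\geq 0$, the fixed-time estimate $\E M_\alpha(\tilde A^{(N)}_t)^p \leq C_p$.

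To upgrade this pointwise bound to a supremum over $t\in[0,1]$, I would chain over a partition $0 = t_0 < t_1 < \cdots < t_K = 1$ whose mesh is independent of $N$. At each $t_k$, a measurable selection in the gauge-fixing procedure behind Theorem~\ref{thm:YM_gauge_fixed} produces a $\sigma(\tilde U_{t_k})$-measurable $h_k\in\mfG_N$ such that $\E|\log\tilde U_{t_k}^{h_k}|_{N;\alpha}^q \leq C_q$ for every $q\geq 1$, uniformly in $N$. Setting $\tilde V^{(k)}_t\eqdef \tilde U_t^{h_k}$, Proposition~\ref{prop:gauge_covar}\ref{pt:ic_gauge_covar} shows that $\tilde V^{(k)}$ also solves~\eqref{e:ParisiWu}, driven by the rotated noise $\tilde\BM^{h_k}$, so for $t\in[t_k,t_{k+1}]$
\begin{equation*}
M_\alpha(\tilde A^{(N)}_t) \;\leq\; |\log\tilde V^{(k)}_t|_{N;\alpha} \;\leq\; |\log\tilde V^{(k)}_{t_k}|_{N;\alpha} + |\log\tilde V^{(k)}_t - \log\tilde V^{(k)}_{t_k}|_{N;\alpha}\;.
\end{equation*}
A union bound over the finitely many $k$ handles the middle term, and the remaining task is a Kolmogorov-type modulus-of-continuity estimate for $t\mapsto\log\tilde V^{(k)}_t$ in the seminorm $|\cdot|_{N;\alpha}$ on intervals of $O(1)$ length.

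The hard part will be exactly this uniform-in-$N$ Hölder-in-time estimate, especially for the parallel-line $\rho$-component of $|\cdot|_{N;\alpha}$, since a naive bond-by-bond union bound over the $\asymp \eps^{-2}$ edges loses $N$. Writing the It\^o SDE for $\log\tilde V^{(k)}$ on the Lie algebra, the drift is bond-wise bounded because $\nabla\mcS$ is controlled by the $C^2$-size of $S_N$ and $G$ is compact, and the noise contribution is exactly a discrete Brownian motion on $\mfq_N$; over an interval of length $\Delta t$ its cumulative effect in the seminorm $|\cdot|_{N;\alpha}$ is governed by the same quantitative estimate that Proposition~\ref{prop:SHE} establishes for the discrete stochastic heat equation, uniformly in $N$. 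Feeding this SHE-type bound into Kolmogorov's continuity criterion gives moments of the sub-interval oscillation that are $N$-uniform and summable in $k$, completing the sup-in-time upgrade.
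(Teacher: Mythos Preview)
Your reduction to the non-DeTurck dynamic $\tilde U$ via Proposition~\ref{prop:gauge_covar}\ref{pt:DeTurck_gauge_covar} and the stationarity argument are fine, but the final ``upgrade to sup'' step has a real gap. The process $\tilde V^{(k)}$ solves~\eqref{e:ParisiWu}, which is \emph{not} parabolic: recall from the discussion around~\eqref{eq:SYM_no_DeTurck_heuristic} that the linearisation of~\eqref{e:ParisiWu} has the non-elliptic symbol $\Delta+\partial^2$, and in particular the noise is \emph{not} smoothed by a heat semigroup. Concretely, over an interval of length $\Delta t$ the martingale part of $\log\tilde V^{(k)}$ contributes i.i.d.\ increments of variance $\asymp\Delta t$ on each bond, so for a line $\ell$ of $k$ bonds the increment of the line integral has standard deviation $\asymp\sqrt{k\Delta t}$, giving
\[
|\ell|^{-\alpha}\bigl|\text{(noise increment)}(\ell)\bigr|\;\asymp\;(k\eps)^{-\alpha}\sqrt{k\Delta t}\;=\;\sqrt{\Delta t}\,\eps^{-\alpha}\,k^{1/2-\alpha}\;,
\]
which for $\alpha>\tfrac12$ is of order $\sqrt{\Delta t}\,\eps^{-\alpha}$ at $k=1$. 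This blows up as $\eps\to 0$ unless $\Delta t\lesssim\eps^{2\alpha}$, so the partition mesh cannot be chosen independent of $N$. Proposition~\ref{prop:SHE} does not help here: its estimates rely essentially on the heat kernel (see Lemmas~\ref{lem:SHE_gr}--\ref{lem:SHE_rho}, which integrate $e^{t\Delta}$), and there is no analogue for the bare Brownian increment on $\mfq_N$. The drift $\nabla\mcS$ is also not bond-wise $O(1)$ uniformly in $N$, since $S_N$ carries an $\eps^{-2}$ prefactor.

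The paper avoids this by never leaving the DeTurck dynamic: it restarts $\check U$ itself at a grid $\{j/K\}$, gauge-fixing the initial condition at each step via a measurable selection $\Phi$ with $|\Phi([B])|_\alpha\le 1+M_\alpha(B)$, and using invariance of $\pi_*\mu_N$ for $[\check U]$ to get $\Phi([\check A_{j/K}])\eqlaw\Phi([\check A_0])$. The short-time control on each sub-interval then comes from the parabolic solution theory of Section~\ref{sec:Aeps} (specifically Proposition~\ref{prop:sol-abs} combined with the uniform model bounds of Proposition~\ref{prop:Ze_moments}), which yields $|B|_{\alpha;[0,1/K)}\lesssim L$ once $K=L^q$ and both the initial condition and the model are bounded by $L$. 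This is precisely where the DeTurck term earns its keep.
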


\begin{proof}
Consider an integer $N \geq 1$ and introduce the shorthands $A\equiv \check A^{(N)}$ and $|\cdot|_\alpha\equiv |\cdot|_{N;\alpha}$.
%We identify here $\conf$ with a subset of $\Omega^{1,(N)}$ via the $\log$ map, so we can treat $A^{(N)}_t$ as an $\conf$-valued Markov process.
Consider a measurable map $\Phi \colon \Omega_N/{\sim} \to \Omega_N$ such that $\Phi([B])\in [B]$ and $|\Phi([B])|_\alpha \leq 1+M_\alpha(B)$.
Consider further another integer $K \geq 2$
and define a $\log(Q_N)$-valued process $B$ by
$\exp(B_{j/K}) \eqdef \exp \Phi([A_{j/K}])$ for all $j\in\{0,\ldots, K-1\}$, 
and then driven by the same (suitably rotated) noise over $[j/K,(j+1)/K)$ (see Proposition~\ref{prop:gauge_covar}\ref{pt:DeTurck_to_DeTurck})
so that $B_t \sim A_t$ for all $t \in [0,1]$ and that $B\restr_{[j/K,(j+1)/K)}$ is equal in law to $A\restr_{[0,1/K)}$ started from $B_{j/K}$.

For an interval $I\subset\R$ and $X\in \CC(I,\Omega_{N;\alpha})$, denote $|X|_{\alpha;I} = \sup_{u\in I} |X_u|_\alpha$.
Then for $h>0$
\begin{equs}
\P\Big[ \sup_{t \in [0,1]}M_\alpha(A_t) \geq h \Big] &\leq \sum_{j=0}^{K-1} \P[|B|_{\alpha;[j/K,(j+1)/K)} \geq h]
\\
&= K\P[|B|_{\alpha;[0,1/K)} \geq h]\;,
\end{equs}
where we used first that $B_t\sim A_t$ for all $t\in[0,1]$
and then that $e^{B_{j/K}} =e^{\Phi([A_{j/K}])}\eqlaw e^{\Phi([A_{0}])}=e^{B_0}$ by invariance of $\pi_*\mu_N$ for $[e^A]$ (Proposition~\ref{prop:gauge_covar}).
Note that $\P[|B|_{\alpha;[0,1/K)} \geq h]$ is bounded above, for any $L \geq 0$, by
\begin{equ}
\P\big[ |B|_{\alpha;[0,1/K)} \geq h \; \Big| \; |B_0|_{\alpha} \vee \$Z\$_{\gamma;O}^{(\e)} \leq L \big] + \P\big[|B_0|_{\alpha} \vee \$Z\$_{\gamma;O}^{(\e)} > L\big]\;,
\end{equ}
where $Z$ is the (renormalised discrete) model built from the noise, $O=[-1,2]\times\T^2$, and $\gamma$ is sufficiently large.
For the second term, by Markov's inequality, moment bounds on $\$Z\$_{\gamma;O}^{(\e)}$ (Proposition~\ref{prop:Ze_moments}), and Theorem~\ref{thm:YM_gauge_fixed} (which requires $G$ to be simply connected), for every $p \geq 1$,
\begin{equ}
\P\big[|B_0|_{\alpha} \vee \$Z\$_{\gamma;O}^{(\e)} > L\big] \leq C_p L^{-p}\;.
\end{equ}
For the first term,
there exists $q \geq 1$ such that, if $K = L^{q}$, $L\geq 2$,
and $|B_0|_{\alpha} \vee \$Z\$_{\gamma;O}^{(\e)} \leq L$,
then
$
|B|_{\alpha;[0,1/K)} \lesssim L
$
(cf.~\eqref{eq:tau_inverse_bound}-\eqref{eq:A_h_bound} and remark that the discrete heat flow is a contraction on $\Omega_{N;\alpha}$ which implies there is no blow-up at time $t=0$).
In particular, the first term vanishes if we take $h \gtrsim L$,
and thus
$
\P\Big[ \sup_{t \in [0,1]}M_\alpha(A_t) \geq L \Big] \lesssim L^{q-p}
$
for a proportionality constant depending only on $p$.
Since $p\geq 1$ is arbitrary, the conclusion follows.
\end{proof}
Recall the map $\varsigma_N\colon \Omega_{\gr\alpha} \to \log Q_N$ from Definition~\ref{def:varsigma_N}.
\begin{lemma}\label{lem:M_lower_semi_cont}
Let $a\in\Omega^1_\alpha$.
Then
$
M_\alpha(a) = \lim_{N\to\infty} M_\alpha(\varsigma_N a)
$.
\end{lemma}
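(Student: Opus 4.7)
Both inequalities $\limsup_N M_\alpha(\varsigma_N a)\le M_\alpha(a)$ and $\liminf_N M_\alpha(\varsigma_N a)\ge M_\alpha(a)$ must be established.

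For the upper bound, given $\varepsilon>0$, fix $g\in\mfG^{0,\alpha}$ with $b\eqdef a^g$ satisfying $|b|_\alpha\le M_\alpha(a)+\varepsilon$. Because the holonomy transforms covariantly, $\exp\varsigma_N b=(\exp\varsigma_N a)^{g\restr_{\Lambda_N}}$, whence $\varsigma_N b\sim\varsigma_N a$. Applying Lemma~\ref{lem:piecewise_hol} to $b$ gives $|\varsigma_N b-\pi_N b|_{N;\alpha}\lesssim 2^{N(1-3\alpha/2)}|b|_\alpha^2\to 0$ (since $\alpha>2/3$); combined with $|\pi_N b|_{N;\alpha}\le|b|_\alpha$ (contractivity of the projective system) this yields $\limsup_N M_\alpha(\varsigma_N a)\le M_\alpha(a)+\varepsilon$, and letting $\varepsilon\downarrow 0$ gives the inequality.

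For the lower bound, set $m\eqdef\liminf_N M_\alpha(\varsigma_N a)$ and assume $m<\infty$. Along a subsequence, pick lattice gauges $g_N\in\mfG_N$ (normalised so $g_N(0)=\id$) with $A_N\eqdef\log((\exp\varsigma_N a)^{g_N})\in\mfq_N$ satisfying $|A_N|_{N;\alpha}\to m$. The strategy is to pass $g_N$ to a subsequential limit $g\in\mfG^{0,\alpha}$ and verify $|a^g|_\alpha\le m$. Two a priori bounds drive the compactness: from $|A_N|_{N;\gr\alpha}\lesssim m$ and $|a|_{\gr\alpha}<\infty$, combined with the identity $\exp A_N(b')=g_N(x)\hol(a,b')g_N(y)^{-1}$ for $b'=(x,y)\in\obonds_N$, one deduces a uniform $\alpha$-H\"older modulus for $\{g_N\}$ on dyadic points (using that any two points of $\T^2$ are joinable by two axis-parallel segments of comparable length). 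The same identity shows that a suitable continuum interpolation $\tilde g_N\in\mfG^\alpha$ of $g_N$ satisfies $\varsigma_N(a^{\tilde g_N})=A_N$, yielding $\tilde A_N\eqdef a^{\tilde g_N}\in\Omega^1_\alpha$ as a continuum lift of $A_N$. Arzelà-Ascoli then extracts a further subsequence with $\tilde g_N\to g$ uniformly, placing $g$ in $\mfG^\beta$ for all $\beta<\alpha$; a mollification argument then produces $g\in\mfG^{0,\alpha}$, so $b\eqdef a^g\sim a$. Applying Lemma~\ref{lem:piecewise_hol} to each $\tilde A_N$ gives $|A_N-\pi_N\tilde A_N|_{N;\alpha}\to 0$, whence $|\pi_N\tilde A_N|_{N;\alpha}\to m$; continuity of the gauge action then transfers this bound through $\tilde A_N\to b$ to conclude $|b|_\alpha\le m$, and hence $M_\alpha(a)\le m$.

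\textbf{Main obstacle.} The delicate step is reconciling two different scales of norm: $|\tilde A_N|_\alpha$, which controls H\"older behaviour of the continuum $1$-form at \emph{all} scales, versus $|\pi_N\tilde A_N|_{N;\alpha}$, which only probes scales $\ge 2^{-N}$. Pointwise (or even uniform) convergence $g_N\to g$ alone is too weak to force $|\tilde A_N|_\alpha\to |b|_\alpha$, because a naive bond-by-bond estimate on $|\varsigma_N b - A_N|_{N;\alpha}$ produces a factor $|g-g_N|_{L^\infty}\cdot 2^N$ that is far too crude. The resolution is to exploit the uniform \emph{H\"older} bound on $\{g_N\}$ (not merely uniform convergence), combined with Lemma~\ref{lem:piecewise_hol} applied to the continuum lifts $\tilde A_N$, which translates the discrete norm control at scale $N$ into a continuum norm control that is stable as $N\to\infty$.
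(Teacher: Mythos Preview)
Your upper bound argument is correct and matches the paper's.

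For the lower bound, you take a different route from the paper and there is a genuine gap. The paper does not try to extract a limiting gauge transformation. Instead it applies a compactness result for lattice $1$-forms \cite[Thm.~3.26]{Chevyrev19YM} directly to near-minimisers $b^{(N)}\sim\varsigma_N a$, obtaining $b\in\Omega^1_\alpha$ with $|b|_\alpha\le\liminf_N |b^{(N)}|_{N;\alpha}$ and $\pi_{N_k,M}b^{(N_k)}\to\pi_M b$ along a subsequence. Gauge equivalence $b\sim a$ is then checked via the holonomy characterisation \cite[Prop.~3.35]{CCHS_2D}: since $b^{(N)}\sim\varsigma_N a$, there is a single group element $g^{(N)}\in G$ (not a function on $\T^2$) with $\Ad_{g^{(N)}}\hol(b^{(N)},\ell)=\hol(a,\ell)$ for lattice loops $\ell$ based at $0$; compactness of $G$ and Young ODE stability pass this to the limit.

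The gap in your approach is the claim that ``a mollification argument produces $g\in\mfG^{0,\alpha}$''. Having smooth $\tilde g_N$ uniformly bounded in $\CC^\alpha$ and converging uniformly to $g$ only gives $g\in\mfG^\alpha$, not $g\in\mfG^{0,\alpha}$. A concrete obstruction: for any $h\in\CC^\alpha\setminus\CC^{0,\alpha}$, the mollifications $h_N\eqdef h*\phi_{1/N}$ are smooth, uniformly bounded in $\CC^\alpha$, and converge uniformly to $h$, yet $h\notin\CC^{0,\alpha}$. Without $g\in\mfG^{0,\alpha}$ you cannot conclude $b=a^g\sim a$: the orbit space is $\Omega^1_\alpha/\mfG^{0,\alpha}$, and for $g\in\mfG^\alpha\setminus\mfG^{0,\alpha}$ one need not even have $a^g\in\Omega^1_\alpha$ (take $a=0$, so $a^g=-(\mrd g)g^{-1}$, which fails the little-H\"older condition characterising $\Omega^1_\alpha$). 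The scale-reconciliation issue you flag as the main obstacle is actually the easier part: once $\pi_M\tilde A_N\to\pi_M b$ for each fixed $M$, contractivity of $\pi_{N,M}$ gives $|\pi_M b|_{M;\alpha}\le\limsup_N|\pi_N\tilde A_N|_{N;\alpha}\le m$, and taking $\sup_M$ yields $|b|_\alpha\le m$. The real obstacle is placing $b$ in $\Omega^1_\alpha$ and in the $\mfG^{0,\alpha}$-orbit of $a$, which is precisely what the cited compactness theorem for lattice $1$-forms delivers and what your gauge-transformation route does not.
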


\begin{proof}
We first show that
\begin{equ}\label{eq:M_alpha_upper}
M_\alpha(a) \leq \liminf_{N\to\infty} M_\alpha(\varsigma_N a)\;.
\end{equ}
For every $N\geq 1$, let $\Omega_N\ni b^{(N)} \sim \varsigma_N a$ such that $|b^{(N)}|_{N;\alpha} < M_\alpha(\varsigma_N a)+1/N$.
By~\cite[Thm.~3.26]{Chevyrev19YM} applied to the (deterministic) random variables $b^{(N)}$, there exists $b\in\Omega_\alpha^1$ such that
\begin{equ}
|b|_\alpha \leq \liminf_{N\to\infty} |b^{(N)}|_{N;\alpha} = \liminf_{N\to\infty} M_\alpha(\varsigma_N a)
\end{equ}
and, for every $M\geq 1$, $\pi_{N_k,M}b^{(N_k)} \to \pi_M b$ in $\Omega_M$ for a subsequence $N_k\to\infty$.
It follows from the above uniform bounds and from Young ODE estimates that, for every axis parallel line $\ell$ supported on $\cup_{N\geq 1}\obonds_N$, $\hol(b^{(N_k)},\ell) \to \hol(b,\ell)$.

On the other hand, 
there exists $g^{(N)}\in G$ such that, for all lattice loops based at $0$ (see Definition~\ref{def:lattice_loop}),
\begin{equ}
\Ad_{g^{(N)}} \hol(b^{(N)},\ell) = \hol(\varsigma_N a,\ell)=\hol(a,\ell)\;,
\end{equ}
where we used $b^{(N)}\sim \varsigma_N a$ in the first equality and the definition of $\varsigma_N a$ in the second equality.
Passing to another subsequence so that $g^{(N)}\to g$, we see that
$
\Ad_g\hol(b,\ell) = \hol(a,\ell)
$,
from which it follows that $b\sim a$ by~\cite[Prop.~3.35]{CCHS_2D} (or, more precisely, by its obvious extension to piecewise axis parallel loops).
This shows~\eqref{eq:M_alpha_upper}.

Conversely, let $\delta>0$ and $b\sim a$ such that $|b|_\alpha < M_\alpha(a)+\delta$.
Then $\varsigma_N b \sim \varsigma_N a$, so $|\varsigma_N b|_{N;\alpha}\geq M_\alpha(\varsigma_N a)$.
Furthermore, by Lemma~\ref{lem:piecewise_hol}, $|\varsigma_N b - \pi_N b|_{N;\alpha} \to 0$ and $|\pi_N b|_{N;\alpha} \nearrow |b|_\alpha$.
Therefore
\begin{equ}
M_\alpha(a)+\delta>|b|_\alpha = \lim_{N\to\infty}|\varsigma_N b|_{N;\alpha} \geq \limsup_{N\to\infty} M_\alpha(\varsigma_N a)\;.
\end{equ}
Since $\delta>0$ is arbitrary, the conclusion follows.
\end{proof}

\begin{proof}[of Theorem~\ref{thm:invar_measure} for simply connected $G$]
Let $S_N$ be the Villain action of Section~\ref{subsubsec:example_actions} and $\mu_N$ defined by~\eqref{eq:mu_N}.
Then $\mu_N$ is the discrete YM measure in the sense of L{\'e}vy~\cite{Levy03}.
By~\cite[Thm.~1.1]{Chevyrev19YM} (which requires $G$ to be simply connected),
there exists an $\Omega^1_\alpha$-valued random variable $a$ such that $\mbox{Law}(\pi e^{\varsigma_N a}) = \pi_*\mu_N$, where $\pi\colon Q_N \to Q_N/{\sim}$ is the projection map. 
Taking $\mu$ as the law of $[a]$ implies the existence of a probability measure $\mu$ on 
$\mfO_\alpha$ that induces the YM holonomies for all lattice loops (recall  Definition~\ref{def:lattice_loop}).
Recall that there exists a measurable selection $\Upsilon\colon \mfO_\alpha\to\Omega^1_\alpha$ such that $|\Upsilon(x)|<1+\inf_{A\in x}|A|_\alpha$~\cite[Lem.~7.40]{CCHS_2D}.
By composing with $\Upsilon$,
we can and will assume that $|a|_\alpha < 1+ M_\alpha(a)$.

Recalling the metric space $\hat\Omega^1_\alpha = \Omega^1_\alpha\sqcup\{\skull\}$ from Section~\ref{subsec:main_results},
consider the Markov process $A\in D(\R_+,\hat\Omega_\alpha^1)$ with initial condition $a$ and the sequence of stopping times $\sigma_0=0 < \sigma_1 \leq \sigma_2\leq\ldots$
that satisfy\footnote{We use a construction of stopping times closer to \cite[Sec.~7.3]{CCHS_3D} rather than~\cite[Sec.~7.4]{CCHS_2D}
since the latter has an error in the argument that $T^*\eqdef \lim_{j\to\infty}\sigma_j$ is the blow up time of $M_\alpha(A)$.
This gap comes from the fact that
$t\mapsto e^{t\Delta}a\in\Omega^1_\alpha$, while continuous,
can converge arbitrarily slowly as $t\downarrow0$ for generic $a\in\Omega^1_\alpha$.
Luckily the argument in \cite[Sec.~7.3]{CCHS_3D}, which uses crucially the factor $4$ in \eqref{eq:new_sigma_def}, does not suffer from this issue and readily corrects this error.}
\begin{equ}[eq:new_sigma_def]
\sigma_{i} = \inf\{t>\sigma_{i-1} \,:\, |A(t)|_\alpha\geq 1+4M_\alpha(A(t))\}
\end{equ}
and $A(\sigma_i)=\Upsilon([A(\sigma_{j}-)])$, where we write
$f(t-) \eqdef \lim_{s\uparrow t} f(s)$, and there exists $g_i\in\mfG^{0,\alpha}$ such that
$A(\sigma_i)\sim A(\sigma_i-)^{g_i}$
(provided $A(\sigma_{i-1})\neq \skull$).
For $t\in [\sigma_{i-1},\sigma_{i})$, we define $A(t)$ as $\SYM_t(\bar C,A(\sigma_{i-1}))$ started at time $\sigma_{i-1}$.
It holds that $[A]$ is equal in law to $X$ with initial condition $[a]$.

For $h\geq 0$, we define further the stopping times
\begin{equ}
\sigma_j^h = \sigma_j\wedge 1 \wedge \inf\{t\geq 0\,:\, M_\alpha(A(t)) \geq h\}\;,
\end{equ}
which are stopping times.
For every $N,j\geq 1$, consider the dynamics $A^{(N)}\in D([0,\sigma^h_j],\mfq_N)$ defined inductively as follows.
On the interval $[0,\sigma^h_{1})$, we let $A^{(N)}$ be the process from Theorem~\ref{thm:discrete_dynamics} with initial condition $A^{(N)}(0)=\varsigma_N a$, the above choice of $\check S_N \eqdef S_N$,
and with the minor difference that $A^{(N)} \eqdef \log \check U^{(N)}$
in the notation of Section~\ref{sec:approx-YM}.
(By Remark~\ref{rem:local_condition}, Theorem~\ref{thm:discrete_dynamics} holds with $U$ replaced  by $\check U$.)
Then, for every $1\leq i < j$, define
\begin{equ}
A^{(N)}(\sigma^h_i) = A^{(N)}(\sigma^h_i-)^{g_i^{(N)}}
\end{equ}
where $g^{(N)}_i=g_i\restr_{\Lambda_N}$ and $g_i\in \mfG^{0,\alpha}$ is such that $A(\sigma^h_i) = A(\sigma^h_i-)^{g_i}$.
On the interval $[\sigma^h_i,\sigma^h_{i+1})$, we again define $A^{(N)} = \log \check U^{(N)}$
(driven by the same noise and with initial condition $A^{(N)}(\sigma^h_i)$).

By Lemma~\ref{lem:piecewise_hol}, $\lim_{N\to\infty}|A^{(N)}(0)-\pi_N A(0)|_{N;\alpha} = 0$ in probability, and thus by Theorem~\ref{thm:discrete_dynamics}
\begin{equ}
|A^{(N)}(\sigma^h_1-) - \varsigma_N A(\sigma^h_1-)|_{N;\alpha} \to 0 \quad \text{in probability as } N\to\infty\;.
\end{equ}
By~\cite[Eq.~(3.24)]{CCHS_2D},
\begin{equ}
|g_1|_{\Hol\alpha} \lesssim |A(\sigma^h_1-)|_{\alpha} + |A(\sigma^h_1-)^{g_1}|_{\alpha}\lesssim |A(\sigma^h_1-)|_{\alpha}\;.
\end{equ}
Furthermore, $
\{\varsigma_N A(\sigma^h_1-)\}^{g_1^{(N)}} = \varsigma_N A(\sigma^h_1)$.
Therefore, by the elementary extension to the lattice of~\cite[Thm.~3.27]{CCHS_2D}, which shows local uniform continuity of the gauge group action,
it follows that
\begin{equs}
|A^{(N)}(\sigma^h_1) - \varsigma_N A(\sigma^h_1)|_{N;\alpha} &= |A^{(N)}(\sigma^h_1-)^{g_1^{(N)}} - \{\varsigma_N A(\sigma^h_1-)\}^{g_1^{(N)}}|_{N;\alpha} \to 0
\end{equs}
in probability as $N\to\infty$.
Hence $a^{(N)}_1\eqdef A^{(N)}(\sigma^h_1)$ and $a_1 \eqdef A(\sigma^h_1)$
satisfy the assumptions of Theorem~\ref{thm:discrete_dynamics}.

By applying Theorem~\ref{thm:discrete_dynamics} and the above argument repeatedly $j$ times,
it follows that, almost surely along a subsequence,
\begin{equ}[eq:interval_conv]
\lim_{N\to\infty}\max_{1\leq i \leq j}
\sup_{t\in [\sigma^h_{i-1},\sigma^h_i]} (t-\sigma^h_{i-1})^{1/2}|A^{(N)}(t) - \varsigma_N A(t)|_{N;\alpha} = 0\;.
\end{equ}
It follows that
\begin{equs}[eq:sup_t_M_bound]
\sup_{t\in [0,\sigma^h_j]} M_\alpha(A(t))
&= \sup_{t\in [0,\sigma^h_j]} \lim_{N\to\infty} M_\alpha(\varsigma_N A(t))
=
\sup_{t\in [0,\sigma^h_j]} \lim_{N\to\infty} M_\alpha(A^{(N)}(t))
\\
&\leq \liminf_{N\to\infty} \sup_{t\in [0,\sigma^h_j]} M_\alpha(A^{(N)}(t))\;,
\end{equs}
where we used  Lemma~\ref{lem:M_lower_semi_cont} in the first equality and~\eqref{eq:interval_conv} in the second equality together with the implication
$|x_N - y_N|_{N;\alpha}\to 0\Rightarrow M_\alpha(x_N)-M_\alpha(y_N) \to 0$ for all $x_N,y_N\in\Omega_N$ with $\sup_N|x_N|_{N;\alpha} < \infty$
(cf.~\cite[Lem.~7.39]{CCHS_2D}).

Since $\sigma^h_i$ are stopping times
and since $A^{(N)}(\sigma^h_i) \sim A^{(N)}(\sigma^h_i-)$,
the process $A^{(N)}$ on $[0,\sigma^h_j]$ is pathwise gauge equivalent to another Markov process of the type in Theorem~\ref{thm:discrete_dynamics}
but driven by a rotated noise.
In particular, by Lemma~\ref{lem:unif_bound_M}, which is applicable to the Villain action by Example~\ref{ex:verify_S_N_assump}, 
$\sup_{t\in [0,\sigma^h_j]} M_\alpha(A^{(N)}(t))$ has every moment bounded uniformly in $N$, $h$, and $j$.
It thus follows from~\eqref{eq:sup_t_M_bound} and Fatou's lemma that, for every $p\geq 1$,
\begin{equ}
\sup_{h\geq 0}\sup_{j\geq0}\E\Big[\Big|\sup_{t\in [0,\sigma_j^h]} M_\alpha(A(t))\Big|^p\Big] < \infty\;.
\end{equ}
Since the quantity in the above expectation is increasing in both $h$ and $j$,
we can exchange $\sup_{h\geq 0}\sup_{j\geq0}$ for $\lim_{h\to\infty}\lim_{j\to\infty}$
and pass the limits inside the expectation.
Therefore, since
$\lim_{h\to\infty}\lim_{j\to\infty}\sigma^h_j < 1$ only on the event that $\sup_{t\in [0,1]} M_\alpha(A(t)) = \infty$,
we see that, a.s.
$
\lim_{h\to\infty} \lim_{j\to\infty}\sigma^h_j = 1
$,
and thus
\begin{equ}\label{eq:M_alpha_A_bound}
\E\Big[\Big|\sup_{t\in [0,1]} M_\alpha(A(t))\Big|^p\Big] < \infty\;.
\end{equ}
With the a priori bound~\eqref{eq:M_alpha_A_bound}, it follows from similar considerations as above that, for every $t\in [0,1]$,
$
\lim_{N\to\infty} |A^{(N)}(t) - \pi_N A(t)|_{N;\alpha} = 0
$ almost surely along a subsequence,
where $A^{(N)}$ is defined as above with $h=\infty$.
Since $[A^{(N)}(t)]\eqlaw [\varsigma_N a]$  for every $t\in[0,1]$ by Proposition~\ref{prop:gauge_covar},
it follows that $[A(t)]\eqlaw [a]$.
This shows invariance of $\mu$ for the Markov process $X$ on $\mfO_\alpha$.
The fact that $X$ is reversible with respect to $\mu$ follows from reversibility of $\check U^{(N)}$ with respect to the discrete YM measure $\mu_N$.

Finally, for any $C\in L(\mfg^2,\mfg^2)$ and $a\in\Omega^1_\alpha$, it follows from the main result of~\cite{HS22} that $\SYM(C,a)$ has full support in $\{Y\in (\Omega^1_\alpha)^\sol \,:\, Y_0=a\}$.
Consequently, $X^x$, for any initial condition $x\in\mfO_\alpha$, has full support in $\{Y\in (\mfO_\alpha)^\sol \,:\, Y_0=x\}$.
Since $\mu$ is invariant for $X$, $\mu$ has full support in $\mfO_\alpha$.
The fact that $\mu$ is the unique invariant probability measure of $X$ now follows from the strong Feller property (Proposition~\ref{prop:X_strong_Feller}) and the same argument as in~\cite[Cor.~3.9]{HM16}.
\end{proof}

\subsection{Product case}

In this subsection, we prove Theorem~\ref{thm:invar_measure} for $G$ of the form $G=\T^n\times L$ where $L$ is simply connected, which we call the `product case'.

We start by clarifying the role of the Lie group $G$ in the construction of the state space in~\cite{CCHS_2D}.
Consider connected Lie groups $G,H$ with the same Lie algebra $\mfg$.
Since the Banach space $\Omega_\alpha^1$ of distributional $\mfg$-valued $1$-forms  from~\cite{Chevyrev19YM} or \cite{CCHS_2D}
depends only on $\mfg$,
these Banach spaces will be the same for $G$ and for $H$.

In the setting of~\cite{CCHS_2D}, the choice of $G$ and $H$ becomes important in the definition of gauge equivalence $\sim_G$ and $\sim_H$. That is, in the smooth setting, $A\sim_G B$ if and only if there exists $g\in \CC^\infty(\T^2,G)$ such that $A^g\eqdef \Ad_g A - \mrd g g^{-1} = B$.
Here and below we write the underlying group in the subscript whenever relevant.

\begin{proof}[Theorem~\ref{thm:invar_measure} in the product case]
For any Lie groups $J,K$ with Lie algebras $\mfj,\mfk$, let us write $A=(A_J,A_K)$ for the corresponding decomposition of $\Omega^1_\alpha(\T^2,\mfj\oplus\mfk)$.
Remark that $A\sim_{J\times K} B$ if and only if $A_J\sim_J B_J$ and $A_K\sim_K B_K$, and thus
there is a canonical bijection $\mfO_{J\times K} \to \mfO_J\times \mfO_K$
given by $\mfO_{J\times K}\ni x \mapsto ([A_J],[A_K])\in \mfO_J\times \mfO_K$
for any $A\in x$.
In particular, there is a bijection $\mfO_G \simeq \mfO_{\T^n}\times \mfO_L$ (which one can check is a homeomorphism, but we do not use this).

By the simply connected case, there exists a YM measure $\mu_L$ on $\mfO_L$.
Furthermore, there exists a YM measure $\mu_{\T^n}$ which corresponds to an $\R^n$-valued white noise on $\T^2$ conditioned to have total mass $0$ (see~\cite[Thm.~1]{Levy06}),
together with the two i.i.d. $\T^n$-valued random variables distributed as the Haar measure
which encode the holonomy of two arbitrary generators of the fundamental group of $\T^2$. 
It is easy to see that $0\in L(\R^n,\R^n)$ is the unique gauge-covariant constant for $\SYM_{\T^n}$, which is simply the additive SHE (see~\cite[Sec.~I.E]{Chevyrev22YM}), and $\SYM_{\T^n}(0,\cdot)$ has invariant measure $\mu_{\T^n}$.

Finally, writing $\SYM_{\T^n\times L}(C,\cdot) = (A_{\T^n},A_L)$,
the components of $A_{\T^n}$ and $A_L$
evolve independently for any $C\in L(\mfg,\mfg)$.
It follows that the Markov process $X$ on $\mfO_{G}$ associated to $\SYM_{\T^n\times L}(0\oplus \bar C,\cdot)$
has invariant measure $\mu \eqdef \mu_{\T^n}\times \mu_L$, where we identify $\mfO_G = \mfO_{\T^n\times L}\simeq \mfO_{\T^n}\times \mfO_{L}$ as before.
The remaining statements in the theorem are easily verified as in the simply connected case.
\end{proof}

\subsection{General case}

In this subsection, we prove Theorem~\ref{thm:invar_measure} for any connected compact Lie group.
Suppose that $G,H$ are connected (not necessarily compact) and that $H$ is a covering group of $G$ with projection $p\colon H\to G$.
%(The assumption that $G,H$ are compact is not crucial here.)
By applying $p$ pointwise, we obtain a map
\begin{equ}
\tilde p\colon \mfG_H \eqdef \CC^{0,\alpha}(\T^2,H) \to \mfG_G \eqdef \CC^{0,\alpha}(\T^2,G)\;,
\end{equ}
where $\CC^{0,\alpha}$ is the closure of smooth functions in $\CC^\alpha$.
\begin{proposition}\label{prop:mfGs}
\begin{enumerate}[label=(\roman*)]
\item\label{pt:normal_subgroup} $\tilde p \mfG_H$ is a normal subgroup of $\mfG_G$.

%\item $\Ker \tilde p$ is given by the constant maps $h\colon \T^2\to H$ with values in $\Ker p$.

\item\label{pt:finite_index} If $|\Ker p|<\infty$, then $|\mfG_G/\tilde p \mfG_H | < \infty$.
%\footnote{If $G,H$ were not compact, one would require $|\Ker p|<\infty$ for this.}
\end{enumerate}
\end{proposition}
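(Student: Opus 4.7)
The plan is to exploit two classical facts about the covering homomorphism $p\colon H\to G$: (a) the kernel $\Ker p$ is a discrete normal subgroup of the connected group $H$, hence is \emph{central} in $H$ (since the connected component of the identity acts trivially by conjugation on any discrete normal subgroup); and (b) a continuous map $g\colon \T^2\to G$ lifts along $p$ to a continuous map $\T^2\to H$ if and only if $g_{*}\pi_{1}(\T^{2})\subset p_{*}\pi_{1}(H)$, with the obstruction classified by a homomorphism $\pi_{1}(\T^{2})\to\pi_{1}(G)/p_{*}\pi_{1}(H)\cong\Ker p$.

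For (i), given $g\in\mfG_{G}$ and $\tilde h\in\mfG_{H}$, I would construct $\tilde k\in\mfG_{H}$ pointwise by the formula $\tilde k(x)=\tilde g_{x}\,\tilde h(x)\,\tilde g_{x}^{-1}$, where $\tilde g_{x}\in H$ is any local lift of $g$ near $x$. Two local lifts differ by an element of $\Ker p$, and centrality of $\Ker p$ in $H$ means the conjugation is independent of this choice, so $\tilde k$ is globally well defined. Local lifts exist because $p$ is a covering map; on a small enough coordinate chart $U\subset\T^{2}$ one can choose a local smooth section $s\colon p(V)\to V\subset H$ of $p$ and write $\tilde g_{x}=s(g(x))$, which has the same regularity as $g$. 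Hence $\tilde k$ is $\CC^{\alpha}$ locally, and a standard partition-of-unity argument on the compact space $\T^{2}$ gives global $\CC^{\alpha}$-regularity; applying the construction to smooth approximants of $\tilde h$ (and keeping the same $g$) shows $\tilde k\in\CC^{0,\alpha}(\T^{2},H)$. One checks $\tilde p\tilde k=g(\tilde p\tilde h)g^{-1}$ pointwise, establishing that $\tilde p\mfG_{H}$ is normal.

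For (ii), I would define an obstruction homomorphism $o\colon\mfG_{G}\to\Hom(\pi_{1}(\T^{2}),\Ker p)\cong(\Ker p)^{2}$ as follows: for a smooth loop $\gamma\colon S^{1}\to\T^{2}$ and $g\in\mfG_{G}$, the class $[g\circ\gamma]\in\pi_{1}(G)$ projects to an element of $\Ker p\cong\pi_{1}(G)/p_{*}\pi_{1}(H)$, and $o(g)$ is defined using two generators of $\pi_{1}(\T^{2})\cong\Z^{2}$. The Eckmann--Hilton argument shows that pointwise multiplication of loops in a topological group is homotopic to concatenation, so $o$ is a group homomorphism. Its kernel is precisely $\tilde p\mfG_{H}$: the inclusion $\tilde p\mfG_{H}\subset\Ker o$ is immediate, while conversely an element $g\in\Ker o$ admits a continuous lift $\tilde g\colon\T^{2}\to H$ by covering space theory, which lies in $\mfG_{H}$ because $p$ is a local diffeomorphism (so local inversion preserves $\CC^{\alpha}$-regularity) and because smooth approximants of $g$ are themselves liftable for $n$ large with smooth lifts converging in $\CC^{\alpha}$ to $\tilde g$. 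Since $\Hom(\Z^{2},\Ker p)=(\Ker p)^{2}$ is finite, so is the quotient $\mfG_{G}/\tilde p\mfG_{H}$.

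The main technical obstacle will be the careful verification in (ii) that lifts remain in $\mfG_{H}=\CC^{0,\alpha}(\T^{2},H)$ rather than just $\CC^{\alpha}(\T^{2},H)$. This rests on two observations: first, that the obstruction $o$ is locally constant with respect to $\CC^{0}$-topology (being a homotopy invariant valued in a discrete set), so smooth approximants of a liftable map remain liftable for $n$ large; second, that local diffeomorphism invertibility of $p$ promotes the $\CC^{\alpha}$-convergence of smooth $g_{n}\to g$ to $\CC^{\alpha}$-convergence of their unique basepoint-preserving lifts $\tilde g_{n}\to\tilde g$, yielding an element of $\CC^{0,\alpha}(\T^{2},H)$.
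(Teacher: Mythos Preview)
Your proof is correct and takes essentially the same approach as the paper: both parts hinge on centrality of $\Ker p$ in $H$ and on the lifting obstruction being classified by $\Hom(\pi_{1}(\T^{2}),\Ker p)$, with the paper using path-lifting where you use local sections, and giving a direct coset bijection where you invoke an Eckmann--Hilton obstruction homomorphism. Your treatment of the $\CC^{0,\alpha}$-regularity of lifts is in fact more explicit than the paper's.
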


\begin{proof}
\ref{pt:normal_subgroup} Consider $g\in\mfG_G$, $h\in\mfG_H$,
and define $k\eqdef g (\tilde p h) g^{-1} \in \mfG_G$.
It remains to show that there exists $\tilde k \in \mfG_H$ such that $\tilde p\tilde k = k$.
Fix $x \in \T^2$.
Then for any $y\in\T^2$ and $\gamma \in \CC^\infty([0,1],\T^2)$ with $\gamma(0)=x$ 
and $\gamma(1)=y$, the lift of $k\circ \gamma$ to $H$
is given by $f (h\circ \gamma) f^{-1}$ where $f\in \CC^\alpha([0,1],H)$ is the lift of $g\circ \gamma$ with arbitrary $f(0) \in p^{-1}g(x)$.
For any other $\tilde\gamma$ joining $x$ and $y$,
the resulting $\tilde f(1)$ differs from $f(1)$ by a group element in $\Ker p$.
Since $\Ker p$ is contained in the centre of $H$,
the map $\tilde k\in\mfG_H$ given by $\tilde k(y) = f(1) h(y) f(1)^{-1}$ is well-defined 
and satisfies $\tilde p\tilde k = k$.

\ref{pt:finite_index} Let $\Gamma = \{\gamma_1,\gamma_2\}$ be a set of loops that generates $\pi_1(\T^2)\simeq \Z^2$.
By the same considerations as in the proof of~\ref{pt:normal_subgroup}, if $f,g \in \mfG_G$ satisfy
\begin{equ}\label{eq:equal_lifts}
\tilde f_i(1)^{-1} \tilde f_i(0) = \tilde g_i(1)^{-1}\tilde g_i(0)\;,
\end{equ}
where $\tilde f_i \colon [0,1]\to H$ is the lift of $f\circ \gamma_i$
and likewise for $g$,
then $f^{-1}g \in \tilde p \mfG_H$, i.e. $f,g$ are in the same coset of $\tilde p \mfG_H$.
Conversely, if $f^{-1}g \in \tilde p \mfG_H$, then~\eqref{eq:equal_lifts} holds.
This proves that there is a bijection between $\mfG_G/\tilde p \mfG_H$ and $\Hom(\pi_1(\T^2), \Ker p)$, which is finite since $\Ker p$ is finite.
\end{proof}
By considering the projection $\tilde p h\in\mfG_G$ for any $h\in \mfG_H$, it is easy to see that $A\sim_H B \Rightarrow A\sim_G B$.
Conversely, unless $p$ is a bijection,
there exist $A\sim_G B$ such that $A\not\sim_H B$
because the image $\tilde p [\CC^\infty(\T^2,H)]$ is strictly smaller than $\CC^\infty(\T^2,G)$.
(E.g. let $A_2 \equiv 0$ and $A_1(x,y)=-\mrd g(x) g(x)^{-1}$ where $g\in \CC^\infty(\T^1, G)$ with $g(0)=g(1)=\id_G$ with lift $h\in \CC^\infty([0,1], H)$ such that $h(0)=\id\neq h(1)$.)
%Then $A\sim_G 0$ since $0^g = A$ where we extend $g\in \CC^\infty (\T^2, G)$ with $\d_2 g=0$, while $A\not\sim_H 0$ because $\hol_H(A,\ell)=h(1)\neq \id_H$ for $\ell(t)=(t,0)$.)
In conclusion, there is a projection
$
\hat p\colon \mfO_H  \twoheadrightarrow \mfO_G$, $\hat p\colon [A]_H \mapsto [A]_G$,
which is well-defined since $A\sim_H B \Rightarrow A\sim_G B$
(and is injective if and only if $p\colon H\to G$ is injective.)

\begin{remark}\label{rem:fibration}
$\mfG_G/\tilde p\mfG_H$ is a group by Proposition~\ref{prop:mfGs}\ref{pt:normal_subgroup}
and acts transitively and freely on $\hat p^{-1} x$ for any $x\in\mfO_G$.
Hence $\hat p^{-1} x$ is in bijection with
$\mfG_G/\tilde p\mfG_H$, which
by the proof of Proposition~\ref{prop:mfGs}\ref{pt:finite_index},
is in bijection with $\Hom(\pi_1(\T^2),\Ker p)$.
\end{remark}

\begin{proof}[of Theorem~\ref{thm:invar_measure}
for connected compact $G$]
By the structure Theorem~\ref{thm:structure_compact_groups}, there exists $n\geq 0$ and a simply connected semi-simple compact Lie group $L$ such that
$G=(\T^n\times L)/Z$ where $Z$ is a finite subgroup of the centre of $\T^n\times L$.
We denote $H\simeq\T^n \times L$, so that $p \colon H\to H/Z\simeq G$ is a finite covering.

Define the pushforward probability measure $\mu\eqdef \hat p_*\mu_H$ on $\mfO_G$, where 
%$\hat p\colon\mfO_H\to\mfO_G$ is the projection from~\eqref{eq:H_to_G} and
$\mu_H$ is the YM measure  on $\mfO_{H}$ from the product case.
We claim that $\mu$ induces the YM holonomies for the trivial principal $G$-bundle over $\T^2$
and that $\mu$ is invariant for the Markov process $X$ in the theorem statement.
The first of these claims follows from~\cite[Sec.~2.2]{Levy06}.

For the second claim, let $A_H$ be the process as in the simply connected case
associated to $H$ with initial condition $a\in\Omega^1_\alpha$,
so that $X_H\eqdef [A]_H$ is the corresponding $\mfO_H$-valued Markov process.

We now claim that $\hat p X_H$ is equal in law to $X$ in the sense that $\hat p X^x_H(t) \eqlaw X^{\hat p x}(t)$ for any $x\in \mfO_H$.
Indeed, let $A_G$ be the process as in the simply connected case
associated to $G$ with initial condition $b\in\Omega^1_\alpha$, where $b\sim_G a$.
It suffices prove that $\hat p [A_H]_H \eqlaw [A_G]_G$.
To see this equality in law, we can proceed in the same way as the proof of~\cite[Thm.~2.13(ii)]{CCHS_2D}.
More precisely, we can couple $A_H$ and $A_G$ in such a way that $A_H \sim_G A_G$ (this follows from the fact that transformations in $\mfG_H$ preserve the relation $\sim_G$).
The final remark is that $A_H$ blows up in $\hat\Omega^1_\alpha$ if and only if $\hat p [A_H]_H$ blows up in $\mfO_G$, which follows from Remark~\ref{rem:fibration}
since $Z$ is finite and thus $|\hat p^{-1}x| = |\mfG_G/\tilde p \mfG_H| < \infty$ for any $x\in\mfO_G$.
This proves the claim and thus the fact that 
$\mu$ is invariant for $X$ 
because $X_H$ has invariant measure $\mu_H$ due to the product case.
All the remaining statements in the theorem are again easily verified as in the simply connected case.
\end{proof}

\subsection{Proofs of Corollaries~\ref{cor:decomposition} and~\ref{cor:universality}}
\label{subsec:proofs_cor}

\begin{proof}[of Corollary~\ref{cor:decomposition}]
Let $a$ be an $\Omega^1_\alpha$-valued random variable
distributed by $\Upsilon_*\mu$ for $\Upsilon$ as in the proof of Theorem \ref{thm:invar_measure} (for simply connected $G$).
By definition, $|a|_\alpha < 1+\inf_{b\sim a}|b|_\alpha$ almost surely.
Let $A\in D(\R_+,\Omega^1_\alpha)$ be a process defined also as in the proof of Theorem~\ref{thm:invar_measure}
with initial condition $a$
but where we now take the sequence of stopping times
$\sigma_0=0<\sigma_1<\sigma_2<\ldots$
defined by
\begin{equ}[eq:sigma_def_2]
\sigma_{j+1} \leq \inf\{t>\sigma_{j} \,:\, (2+|A(\sigma_{j}-)|_\alpha + \$Z\$_{\gamma;[-1,t]\times\T^2})^{-q} \leq t-\sigma_{j}\}\;,
\end{equ}
where $q \geq 1$ is fixed,
$Z$ is the BPHZ model built from the white noise $\xi$,
and $f(t-)=\lim_{s\uparrow t}f(s)$.
%One has $A(\sigma_j)=\Upsilon(A(\sigma_j-))$ and so $|A(\sigma_j)|_\alpha < 1+\inf_{B\sim A(\sigma_j-)}|B|_\alpha$.
Since $t\mapsto \$Z\$_{\gamma;[-1,t]\times\T^2}$ is increasing and continuous in $t$,
\begin{equ}\label{eq:sigma_gap_bound}
\sigma_{j+1}-\sigma_{j} \leq (2+|A(\sigma_{j}-)|_\alpha + \$Z\$_{\gamma;[-1,\sigma_{j+1}]\times\T^2})^{-q}\;.
\end{equ}
The process $[\sigma_j,\sigma_{j+1}) \ni t\mapsto A(t)$ is given by $\SYM_t(\bar C,A(\sigma_j))$ started at time $\sigma_j$,
which is guaranteed to exist by \eqref{eq:sigma_gap_bound} and $|A(\sigma_j)|_\alpha < 1+\inf_{B\sim A(\sigma_j-)}|B|_\alpha$
and bounds similar to \eqref{eq:tau_inverse_bound}-\eqref{eq:A_h_bound} (cf. \cite[Prop.~A.4]{CCHS_2D}).
While $A$ is not Markov,
its law is still generative (in the language of \cite[Def.~2.11]{CCHS_2D});
indeed, this follows immediately from $\lim_{j\to\infty}\sigma_j=\infty$ a.s., which
itself follows from the fact that
we can couple $A$ to the process in proof of
Theorem~\ref{thm:invar_measure} (for simply connected $G$)
so that they are pathwise gauge equivalent for all times
and thus $\sup_{t\in[0,T]}M_\alpha(A(t)) < \infty$ a.s.  for any $T>0$ (see in particular~\eqref{eq:M_alpha_A_bound}).

Let $\Phi$ solve $\d_t \Phi = \Delta\Phi+\xi$ with $\Phi(0)$ a GFF (Definition~\ref{def:GFF}).
Then $\Phi(1) = \Psi + f$ where $\Psi$ is also a GFF and $f$ is a $\mfg^2$-valued Gaussian random variable.
%Recall that $\Phi$ is H\"older-continuous in time with values in $\Omega^1_\alpha$.

Let $J\geq 1$ be the largest integer such that $\sigma_J < 1$
%($\sigma_J$ is \textit{not} a stopping time).
and $\Theta\colon [\sigma_{J-1},\sigma_{J+1}]\to \Omega^1_\alpha$ denote
$\SYM(\bar C,A(\sigma_{J-1}))$ started at time $\sigma_{J-1}$.
Observe that, if $q$ is taken sufficiently large, then $\Theta$ does not blow up on $[\sigma_{J-1},\sigma_{J+1}]$ (more precisely,
due to the presence of $A(\sigma_{j}-)$ in \eqref{eq:sigma_def_2}, the entire interval is covered by two Picard iterations in a modelled distribution space $\cD^{1+2\kappa,\alpha-1}_{-\kappa}$, e.g. see \cite[Proof of Thm.~2.4]{CCHS_2D}).
We can then decompose $\Theta(t) = \Phi(t) + B(t)$, where $B$ is the
reconstruction over the interval $[\sigma_{J-1},\sigma_{J+1}]$ of a suitable remainder in $\cD^{1+2\kappa,\alpha-1}_{0}$
with initial condition $A(\sigma_{J-1}) - \Phi(\sigma_{J-1})$
(%see, e.g. the proof of Lemma~\ref{lem:3_term} or of~\cite[Thm.~2.4]{CCHS_2D} --
note that $[\sigma_j,\sigma_{j+1}) \ni t\mapsto A(t)$ solves SYM driven by the \textit{same} model $Z$ for every $j$).

Observe that $|A(\sigma_{J-1})|_\alpha$, $|\Phi(\sigma_{J-1})|_\alpha$, and $(1-\sigma_{J-1})^{-1}<(\sigma_{J}-\sigma_{J-1})^{-1}$ have moments of all orders.
It readily follows that $\E |B(1)|^p_{\CC^{1-\kappa}} < \infty$ for all $p,\kappa>0$.
The conclusion follows since $\mbox{Law}([\Theta(1)]) = \mbox{Law}([A(1)]) = \mu$.
\end{proof}

\begin{remark}
The reason we consider $\SYM(\bar C,\cdot)$ started at time $\sigma_{J-1}$ instead of $\sigma_{J}$ in the above proof is that we do not know if $(1-\sigma_{J})^{-1}$ has moments of all orders,
so cannot use the smoothing effect of the heat flow just on the interval $[\sigma_J,1]$.
\end{remark}

\begin{proof}[of Corollary~\ref{cor:universality}]
\textit{Step 1: tightness.}
By a straightforward adaptation of Lemma~\ref{lem:axial}, we see that Theorem~\ref{thm:YM_gauge_fixed} holds for $\mu_{N,\T^n}$
(it is crucial here that $\mu_{N,\T^n}$ is the YM measure for the \textit{trivial} principal $\T^n$-bundle).
Consequently, Theorem~\ref{thm:YM_gauge_fixed} also holds for $\mu_{N,H} = \mu_{N,\T^n}\times\mu_{N,L}$ where we recall that $H=\T^n\times L$ and $L$ is simply connected.
The projection $p\colon H\to G$ is Lipschitz, and thus  Theorem~\ref{thm:YM_gauge_fixed} also holds for $\mu_{N} = p_* \mu_{N,H}$.
It follows that, defining $a^{(N)}=\log U^g$ as the corresponding $\Omega_N$-valued random variable,
the family $\{|a^{(N)}|_{N;\alpha}\}_{N\geq 1}$ has all moments bounded uniformly in $N$ and is in particular tight.

By~\cite[Thm.~3.26]{Chevyrev19YM}, every sequence $N_k$ has a subsequence $N_{k_m}$ such that $
\pi_M a^{(N_{k_m})} \to \pi_M a \;\;\text{in law as $\Omega_M$-valued random variables for every } M\geq 1$,
where $a$ is an $\Omega^1_\alpha$-valued random variable (that may depend on $N_{k_m}$) such that $\E|a|_{\alpha}^p<\infty$ for all $p\geq 1$.
Here $\pi_{M} \colon \Omega_N\to\Omega_M$ for $N\geq M$ and $\pi_M\colon\Omega^1_\alpha\to \Omega_M$ are shorthands for the projections from Section~\ref{sec:norms}.
By stability of Young ODEs, $\lim_{M\to\infty} f_\ell(\pi_M b) = f_\ell(b)$
uniformly over $b$ in any ball in $\Omega_{\gr\alpha}$, and thus
$
\lim_{M\to\infty} \E |f_\ell(\pi_M a)-f_\ell(a)| = 0
$.
Likewise, by tightness of $\{|a^{(N)}|_{N;\gr\alpha}\}_{N\geq 1}$,
\begin{equ}
\lim_{M\to\infty} \sup_{N\geq M}  \E |f_\ell(\pi_M a^{(N)}) - f_\ell(a^{(N)})| = 0\;.
\end{equ}
Finally, by continuity of $f_\ell\colon\Omega_M \to \R$ for every $M\geq 1$ sufficiently large, we obtain
$
\lim_{m\to \infty} \E f_\ell (a^{(N_{k_m})}) = \E f_\ell (a)$.

\textit{Step 2: identification of the limit.}
Suppose first that $G$ is simply connected and that $S_N$ is twice differentiable on $G$.
Then exactly the same proof as that of Theorem~\ref{thm:invar_measure} (for simply connected $G$),
with the initial condition of $A^{(N)}$ therein taken as $a^{(N)}$ above instead of $\varsigma_N a$,
shows that the law of $[a]$ is invariant for the Markov process $X$ from Theorem~\ref{thm:invar_measure}.
By uniqueness of the invariant measure of $X$, we conclude that $\mbox{Law}([a]) = \mu$, so the corollary statement holds.

To handle the case of simply connected $G$ but general $S_N$ (not necessarily twice differentiable everywhere),
we define a smooth approximation $S_N^t$ by $S_N^t(\id)=0$ and $e^{-S_N^t} \propto e^{-S_N} \star e^{t\Delta}$ for $t>0$,
and let $\mu_N^t$ denote the corresponding measure as in~\eqref{eq:mu_N}.
Note that $S_N^t\colon G\to \R$ is now smooth and, for every $N$, we can find $t_N>0$ sufficiently small
such that Assumptions~\ref{assump:R} and~\ref{assum:P_N} still hold for the sequence of actions $\{ S_N^{t_N}\}_{N\geq 1}$.
We therefore conclude that the corollary statement holds with $\mu_N$ replaced by $\mu^{t_N}_N$.
Finally, $\lim_{t\to 0}\mu^t_N = \mu_N$ weakly,
and therefore, by lowering $t_N$ if necessary,
we can ensure that
$
\lim_{N\to\infty} \mu_N(f_\ell) = \lim_{N\to\infty} \mu_N^{t_N}(f_\ell)
$,
concluding the proof for general $S_N$ and simply connected $G$.

The proof in the general case $G=(\T^n\times L)/Z$ follows in the same way upon remarking that Lemma~\ref{lem:unif_bound_M} (which is used in the proof of Theorem~\ref{thm:invar_measure})
holds for this $G$ and $\mu_N$ since,
as we argued in \textit{Step 1}, Theorem~\ref{thm:YM_gauge_fixed} does. 
\end{proof}

\appendix

\section{Gaussian tails of rough path norms for random walks}
\label{app:Gaussian_tails}

In this appendix we show Gaussian tails for the H{\"older} norm 
of the rough path lift of a symmetric random walk whose increments are independent and have Gaussian tails.
This replaces the use of the rough path BDG inequality~\cite{CF19} used in~\cite{Chevyrev19YM}.
The method is based on an elementary moment comparison with piecewise linear interpolation of Brownian motion.

Consider a random walk $X\colon\{0,\ldots, K\} \to \R^d$
such that, for all $j\in[K]$,
the increments $\delta_j \eqdef X_{j}-X_{j-1}$ are independent
and satisfy $\delta_j \eqlaw -\delta_j$.
Suppose further that there exists $\sigma > 0$ such that, for all $j\in[K]$
and $\beta\geq 1$
\begin{equ}\label{eq:sigma_bound}
\E[|\delta_j|^\beta]^{1/\beta} \leq \sigma \sqrt \beta \;.
\end{equ}
Denote $t_j=j/K$ and define $Y\colon[0,1]\to \R^d$
by $Y(t_j) = X_j$ and affine over $[t_j,t_{j+1}]$.
Let $\mbY\colon[0,1]\to G^2(\R^d)$ denote the canonical rough path lift of $Y$,
i.e. $\mbY_t = (Y_t, \int_0^t Y_s\otimes \mrd Y_s)$,
where $G^2(\R^d) \subset \R^d\oplus (\R^d)^{\otimes 2}$
is the step-2 free nilpotent Lie group equipped with its geodesic distance (see~\cite[Sec.~7]{FV10}).
Let $|\mbY|_{\Hol\gamma}$ denote the corresponding H\"older `semi-norm' as in~\eqref{eq:Hol_def}.

\begin{proposition}\label{prop:RW_Gauss_tail}
For $\gamma<\frac12$,
there exists $\lambda=\lambda(\gamma,d)>0$
such that for all $\beta \in [1,\infty)$
we have
$
\E[|\mbY|_{\Hol\gamma}^{\beta}]^{1/\beta} \leq \lambda \sqrt K \sigma\sqrt{\beta}$.
\end{proposition}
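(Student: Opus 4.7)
The plan is to reduce bounds on $|\mathbf{Y}|_{\Hol\gamma}$ to moment bounds on the two-parameter rough-path increments $\mathbf{Y}_{s,t}=(Y_{s,t},\mathbb{Y}_{s,t})$ in the Carnot--Carath\'eodory metric $\|\cdot\|_{G^2}$, and then estimate each level directly using the sub-Gaussian structure \eqref{eq:sigma_bound}. Concretely, I would aim to prove that for all $0\le s<t\le 1$ and $\beta\ge 2$,
\[
\E[\|\mathbf{Y}_{s,t}\|_{G^2}^\beta]^{1/\beta}\le C\sigma\sqrt{K\beta(t-s)}.
\]
Inserting this into a rough-path Garsia--Rodemich--Rumsey (GRR) inequality with exponent $p>1/\gamma$ then yields $\E[|\mathbf{Y}|_{\Hol\gamma}^\beta]^{1/\beta}\le C_{\gamma,d}\sigma\sqrt{K\beta}$ for $\beta\ge p$ by a routine computation, and the case $\beta<p$ follows by monotonicity of $L^\beta$ norms. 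In effect, this matches, scale by scale, the moments of $\mathbf{Y}$ with those of the piecewise-linear Brownian interpolation (which provides the benchmark), bypassing any explicit coupling.

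For the first level, using that $Y$ is piecewise linear with slope $K\delta_m$ on $[t_{m-1},t_m]$, one has $Y_{s,t}=\sum_{m=1}^K c_m\delta_m$ with deterministic coefficients $c_m:=K|[s,t]\cap[t_{m-1},t_m]|\in[0,1]$ satisfying $\sum_m c_m=K(t-s)$. The Khinchine--Kahane inequality for sums of independent symmetric sub-Gaussian vectors, applied to \eqref{eq:sigma_bound}, yields
\[
\E[|Y_{s,t}|^\beta]^{1/\beta}\le C\sigma\sqrt{\beta\sum_m c_m^2}\le C\sigma\sqrt{K\beta(t-s)},
\]
where the last step uses $\sum_m c_m^2\le\sum_m c_m$. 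For the second level, a direct expansion of the iterated integral in the same basis gives
\[
\mathbb{Y}_{s,t}=\sum_{l<m}c_l c_m\,\delta_l\otimes\delta_m+R,
\]
where $R$ collects diagonal ($l=m$) and boundary contributions whose antisymmetric parts vanish and whose symmetric parts are of strictly smaller order, harmlessly estimated by the same techniques.

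The heart of the argument, and the main obstacle, is the second-order chaos estimate
\[
\Big(\E\Big|\sum_{l<m}c_l c_m\,\delta_l\otimes\delta_m\Big|^{\beta/2}\Big)^{2/\beta}\le C\sigma^2\beta\sqrt{\sum_{l<m}c_l^2c_m^2}\le C\sigma^2\beta\, K(t-s),
\]
whose crucial feature is the \emph{linear} dependence on $\beta$ rather than $\sqrt\beta$; this is the correct sub-Gaussian scaling for a quadratic form and is what guarantees the Gaussian-type tail in the final conclusion. The cleanest route is a de la Pe\~na decoupling argument: replace one copy of $(\delta_m)_m$ by an independent copy $(\delta'_m)_m$, then apply the first-level Khinchine--Kahane inequality twice (first conditionally on $(\delta'_m)_m$, then unconditionally) to produce a $\beta$ dependence as the product of two $\sqrt\beta$ factors. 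Taking square roots via the homogeneity $\|\mathbb{Y}_{s,t}\|_{G^2}\asymp|\mathbb{Y}_{s,t}|^{1/2}$ matches the level-one bound in both $(t-s)$ and $\beta$, delivering the target moment bound, after which the rough-path GRR lemma (e.g.\ Friz--Victoir, Theorem A.13) closes the argument with the stated dependence of $\lambda$ on $\gamma$ and $d$.
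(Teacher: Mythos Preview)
Your approach is correct and would prove the proposition. The paper takes a somewhat different route that avoids Khinchine--Kahane and decoupling altogether: after normalising to $\sigma=K^{-1/2}$ (so that $\E[(\delta^a_j)^{2k}]\le K^{-k}(2k-1)!!$), it expands $(Y^a_{s,t})^m$ and $\bigl(\int_s^t Y^a_{s,u}\,dY^b_u\bigr)^m$ as polynomials in the increments $(\delta^a_j,\delta^b_j)$ with nonnegative coefficients, and observes that the absolute value of the expectation of each monomial is dominated by the corresponding monomial computed for a \emph{single one-dimensional} Brownian increment (using symmetry to kill odd contributions and H\"older to merge mixed coordinates). Summing recovers $\E[(B^a_{s,t})^m]$ for the first level and, for the second level, $\E\bigl[(\int_s^t(B^a_u-B^a_s)\,dB^a_u)^m\bigr]=\E\bigl[(\tfrac12(B^a_{s,t})^2)^m\bigr]$, so no Brownian area estimates are needed at all, only Gaussian moments. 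This is the ``moment comparison with piecewise linear interpolation of Brownian motion'' announced at the start of the appendix. Both routes then feed the resulting two-parameter sub-Gaussian bound into the same GRR-type criterion (the paper's Lemma~\ref{lem:Gauss_condition}, which follows Friz--Victoir, Thm.~A.19). Your argument is more systematic and closer in spirit to a chaos expansion; the paper's trick is shorter and sidesteps the need to check (as you implicitly rely on) that the de~la~Pe\~na decoupling constant is uniform in $\beta$.
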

The following lemma is easily proved by expanding the exponential function as a power series and applying Stirling's formula.
%\footnote{This lemma quantifies part of~\cite[Lem.~A.17]{FV10} and clarifies a typo therein.}
%
\begin{lemma}\label{lem:Gauss_equiv}
There exists $M>0$ with the following property.
Let $Z$ be a real random variable.
Then for all $\eta>0$ such that
$L\eqdef \E[e^{\eta Z^2}] <\infty$, we have
\begin{equ}{}
\E[|Z|^q]^{1/q} \leq M \eta^{-1/2}L^{1/q}\sqrt q
\end{equ}
for all $q\in[1,\infty)$.
Conversely, if $C>0$ is such that $\E[|Z|^q]^{1/q} \leq C\sqrt q$ for all $q\in[1,\infty)$, then
$\E[e^{\eta Z^2}] \leq 2$ for all $\eta< C^{-2}/M$.
\end{lemma}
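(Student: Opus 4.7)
\medskip

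The plan is to follow the hint essentially verbatim: expand $e^{\eta Z^2}$ as a power series, apply Stirling's formula $k! \asymp k^k e^{-k}\sqrt{2\pi k}$, and interpolate between even integer moments where needed.

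For the first implication, I would start from the Taylor expansion $\E[e^{\eta Z^2}] = \sum_{k\geq 0} \eta^k \E[Z^{2k}]/k!$. Since every term is non-negative, the finiteness of the sum gives $\E[Z^{2k}] \leq L \, k! \, \eta^{-k}$ for all $k\geq 0$. Taking $(2k)$-th roots and using Stirling in the form $(k!)^{1/(2k)} \leq C_0 \sqrt{k/e}$ for some absolute $C_0$, this yields $\E[|Z|^{2k}]^{1/(2k)} \leq C_0 L^{1/(2k)} \eta^{-1/2} \sqrt{k}$, which is exactly the target bound for $q = 2k$. For a general $q \in [1,\infty)$, I would pick the even integer $2k$ with $2k-2 < q \leq 2k$ and use Lyapunov's inequality $\E[|Z|^q]^{1/q} \leq \E[|Z|^{2k}]^{1/(2k)}$ combined with $\sqrt{k} \leq \sqrt{q/2}+1 \lesssim \sqrt{q}$ and $L^{1/(2k)} \leq L^{1/q} \vee L^{1/2}$ to absorb everything into a single constant $M$.

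For the converse, I would again expand $\E[e^{\eta Z^2}] = \sum_{k\geq 0}\eta^k \E[Z^{2k}]/k!$ and substitute the hypothesis $\E[Z^{2k}] \leq (C\sqrt{2k})^{2k} = (2kC^2)^k$. This gives $\E[e^{\eta Z^2}] \leq \sum_{k\geq 0} (2\eta C^2)^k k^k/k!$, and Stirling's lower bound $k! \geq k^k e^{-k}/C_1$ for an absolute constant $C_1$ turns this into a geometric series $C_1 \sum_k (2e\eta C^2)^k$. I would then choose $M$ large enough (essentially $M = 2e$ up to a factor to account for $C_1$ and to guarantee the sum is bounded by $2$ rather than just finite) so that $\eta < C^{-2}/M$ forces $2e\eta C^2 < 1/2$, keeping the sum below $2$.

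Nothing here is subtle: both directions reduce to controlling $k^k/k!$ via Stirling, which is elementary. The only mild bookkeeping issue is choosing one universal $M$ that simultaneously works for both implications and handling the non-even values of $q$ in part (1), which is dispatched by the Lyapunov interpolation above.
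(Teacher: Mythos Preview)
Your proposal is correct and follows exactly the approach the paper indicates (expand the exponential as a power series and apply Stirling). One small simplification: since $e^{\eta Z^2}\geq 1$ pointwise, you always have $L\geq 1$, so $q\leq 2k$ gives $L^{1/(2k)}\leq L^{1/q}$ directly and the $\vee L^{1/2}$ is unnecessary.
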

In the next lemma, let $(E,\rho)$ be a complete separable metric space.
For $f\colon [0,1]\to E$ and $\zeta\colon (0,1] \to (0,\infty)$, denote
$
\|f\|_{\Hol\zeta} = \sup_{0\leq s < t \leq 1} \zeta(t-s)^{-1}\rho(f_s,f_t)
$.
The proof of the next lemma follows from that of~\cite[Thm.~A.19]{FV10}.

\begin{lemma}\label{lem:Gauss_condition}
There exists $c>0$ with the following property.
Let $\{Z_t\}_{t\in[0,1]}$ be a continuous stochastic taking values in $E$ such that, for some $\eta>0$,
\begin{equ}\label{eq:Gauss_condition}
L\eqdef \sup_{0\leq s<t\leq 1} \E
\big[\exp
\big(
\eta \rho(Z_s,Z_t)^2|t-s|^{-1}
\big)
\big] < \infty \;.
\end{equ}
Define $\zeta(h)\eqdef\int_0^h u^{-1/2}\sqrt{\log(1+1/u^2)}\mrd u$. Then
\begin{equ}\label{eq:Gauss_Holder}
\E
\big[\exp
\big(c\eta \|Z\|_{\Hol\zeta}^2
\big)
\big]
\leq L\vee 4\;.
\end{equ}
\end{lemma}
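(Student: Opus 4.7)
The plan is to prove Lemma \ref{lem:Gauss_condition} by applying the Garsia--Rodemich--Rumsey (GRR) inequality with Gaussian Young function, exactly in the spirit of \cite[Thm.~A.19]{FV10}, which we will adapt to our setting. Concretely, I would take $\psi(x) = \exp(\eta x^2) - 1$ (a Young function with inverse $\psi^{-1}(y) = \eta^{-1/2}\sqrt{\log(1+y)}$) and $p(u)=\sqrt u$, and set
\begin{equ}
U = \int_0^1\!\int_0^1 \psi\bigl(\rho(Z_s,Z_t)/p(|t-s|)\bigr)\,\mrd s\,\mrd t\;.
\end{equ}
By Fubini and the hypothesis \eqref{eq:Gauss_condition}, one has $\E[1+U] \leq L$. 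The GRR inequality then yields, for some absolute $C>0$ and for every $s<t$,
\begin{equ}
\rho(Z_s,Z_t) \leq C\int_0^{|t-s|} \psi^{-1}(U/u^2)\,\mrd p(u) = \frac{C}{\sqrt \eta}\int_0^{|t-s|} u^{-1/2}\sqrt{\log(1+U/u^2)}\,\mrd u\;.
\end{equ}

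The key algebraic observation is the elementary bound $\log(1+U/u^2) \leq \log(1+1/u^2) + \log(1+U)$, which follows from $(1+1/u^2)(1+U) \geq 1+U/u^2$. Combined with $\sqrt{a+b}\leq \sqrt a+\sqrt b$ and $\int_0^h u^{-1/2}\mrd u = 2\sqrt h \lesssim \zeta(h)$ (since $\zeta(h) \geq \sqrt h\,\sqrt{\log 2}$ for $h\in(0,1]$), this gives
\begin{equ}
\rho(Z_s,Z_t) \leq \frac{C'}{\sqrt \eta}\bigl(\zeta(|t-s|) + \zeta(|t-s|)\sqrt{\log(1+U)}\bigr) \;,
\end{equ}
and hence $\|Z\|_{\Hol\zeta}^2 \leq C''\eta^{-1}(1+\log(1+U))$ for some absolute $C''>0$.

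Choosing $c = 1/(2C'')$, we therefore obtain
\begin{equ}
\exp\bigl(c\eta\|Z\|_{\Hol\zeta}^2\bigr) \leq e^{1/2} (1+U)^{1/2} \;,
\end{equ}
so by Jensen's inequality and $\E[1+U]\leq L$,
\begin{equ}
\E\bigl[\exp\bigl(c\eta\|Z\|_{\Hol\zeta}^2\bigr)\bigr] \leq e^{1/2} \E[1+U]^{1/2} \leq e^{1/2}L^{1/2} \leq L \vee 4\;,
\end{equ}
where the last inequality is immediate by splitting into $L\geq 4$ and $L<4$. Possibly shrinking $c$ once more absorbs any constants that have been glossed over.

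The only mildly delicate step will be pinning down the constant $c$ so that the right-hand side is genuinely $L\vee 4$ rather than a larger multiple; this is purely bookkeeping, and if the clean bound $L\vee 4$ proves awkward one may first establish the version with $CL$ on the right and then reduce $c$ to recover the stated form, since the statement is used only to get \emph{some} Gaussian tail bound (via Lemma~\ref{lem:Gauss_equiv}) in the subsequent application to random walks. No new ideas beyond the classical GRR estimate of \cite[Thm.~A.19]{FV10} are needed.
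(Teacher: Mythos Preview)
Your proof is correct and is precisely the argument the paper has in mind: the paper simply says the proof follows from that of \cite[Thm.~A.19]{FV10}, and you have spelled out exactly that GRR computation with Gaussian Young function $\psi(x)=e^{\eta x^2}-1$ and $p(u)=\sqrt u$, together with the splitting $\log(1+U/u^2)\le\log(1+1/u^2)+\log(1+U)$ and the observation $\sqrt h\lesssim\zeta(h)$. The final numerical check $e^{1/2}L^{1/2}\le L\vee 4$ for $L\ge 1$ is fine as written.
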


\begin{proof}[of Proposition~\ref{prop:RW_Gauss_tail}]
Throughout the proof, we let $C$ denote a constant depending only on $d$ which may change from line to line.
Writing in components $\delta_j=(\delta_j^1,\ldots, \delta_j^d)$
and multiplying $\sigma$ by a scalar, it suffices to consider the case $\sigma=K^{-1/2}$ and where we replace~\eqref{eq:sigma_bound} by
\begin{equ}\label{eq:sigma_bound_double_factorial}
\forall a\in[d]\;,\quad \forall \, \textnormal{ even } m \geq 1\;,\quad \E[(\delta^a_j)^m] \leq K^{-m/2} (m-1)!!\;,
\end{equ}
where $n!!$ is the double factorial.
Let $B\colon[0,1]\to \R^d$ denote the piecewise linear interpolation of standard Brownian motion over the points $t_0,\ldots,t_K$.
Note that
\begin{equ}
\E[(B^a(t_i)-B^a(t_{i-1}))^m]=K^{-m/2}(m-1)!!
\end{equ}
for all $a\in[d]$ and even $m\geq 1$.

For $a,b\in[d]$ and $m,n\geq 0$,
due to $\delta_j\eqlaw-\delta_j$, we have
$\E[(\delta_j^a)^m(\delta_j^b)^n] = 0$ if $m+n$ is odd.
If $m+n$ is even then, by~\eqref{eq:sigma_bound_double_factorial} and H{\"o}lder's inequality with exponents $\frac{n+m}{m}$ and $\frac{n+m}{n}$,
$
|\E[(\delta^a_j)^m(\delta^b_j)^n]| \leq K^{-(m+n)/2}(m+n-1)!!
$.
Hence, for all $a\in[d]$ and even $m \geq 1$,
expanding $(Y_s^a-Y_t^a)^m$ into monomials in $\{\delta_j^a\}_j$,
we obtain
\begin{equ}
\E[(Y_s^a-Y_t^a)^m]^{1/m}
\leq \E[(B_s^a-B_t^a)^m]^{1/m}
\leq C \sqrt m |t-s|^{1/2}\;.
\end{equ}
Moreover,  writing $\int_s^t (Y_u^a-Y^a_s) \mrd Y_u^b$ as a sum of degree $2$-monomials in $\{\delta^a_i\}_i$ and $\{\delta^b_j\}_j$,
we see that for all even $m \geq 1$
\begin{equs}
\E
\Big[
\Big(
\int_s^t (Y^a_u-Y^a_s) \mrd Y_u^b
\Big)^m
\Big]^{1/m}
&\leq
\E
\Big[
\Big(
\int_s^t (B_u^a-B_s^a) \mrd B_u^a
\Big)^m
\Big]^{1/m}
\\
&=\frac12 \E[(B^a_t-B^a_s)^{2m}]^{1/m}
\leq C^2m|t-s|\;.
\end{equs}
In conclusion,
$
\sup_{0 \leq s < t \leq 1} |t-s|^{-1/2}\E[\rho(\mbY_s,\mbY_t)^m]^{1/m} \leq C \sqrt m
$,
where $\rho$ is the geodesic distance on $G^2(\R^d)$.
Hence, Lemma~\ref{lem:Gauss_equiv} implies that~\eqref{eq:Gauss_condition} in Lemma~\ref{lem:Gauss_condition} holds for $Z=\mbY$ with $L=2$ and $\eta\asymp C^{-2}$,
and in turn~\eqref{eq:Gauss_Holder}
implies the desired bound
since $\zeta(h) \geq h^\gamma$ for all $h \leq h_0(\gamma)$.
\end{proof}

\section{Compact Lie groups}
\label{app:reps}

We recall a structure theorem for compact Lie groups~\cite[Thm.~V.8.1]{BtD85}.

\begin{theorem}\label{thm:structure_compact_groups}
Suppose $G$ is a connected compact Lie group.
Then $G\simeq F/Z$ where $F=K\times H$, $K$ is a torus, $H$ is a simply connected semi-simple compact Lie group, and $Z$ is a finite subgroup of the centre of $F$.
\end{theorem}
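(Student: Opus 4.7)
The plan is to follow the classical Lie-algebraic decomposition, lift it to a simply connected cover, and identify the kernel. The key algebraic input is that the Lie algebra $\mfg$ of a compact Lie group carries an $\Ad$-invariant inner product, and with respect to such an inner product the orthogonal complement of an ideal is again an ideal. Applying this to the center $Z(\mfg)$, I obtain a Lie algebra direct sum $\mfg = Z(\mfg) \oplus \mfg'$ where $\mfg' = [\mfg,\mfg]$ is the derived subalgebra; by construction $\mfg'$ has trivial center and is therefore semi-simple (of compact type, since it inherits the negative-definite Killing form on a compact Lie algebra).

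First I would produce the candidate $F$ and map $F\to G$. Let $K_0$ denote the connected component of the identity of the center $Z(G)$, a closed connected abelian subgroup of a compact Lie group, hence a torus with Lie algebra $Z(\mfg)$. Let $G_1 \eqdef [G,G]$ denote the commutator subgroup, which is a closed connected (semi-simple) compact Lie subgroup with Lie algebra $\mfg'$. Since $G_1$ is compact and semi-simple, its universal cover $H$ is again a compact Lie group (this is the standard Weyl theorem: semi-simplicity plus compactness forces $\pi_1$ to be finite). Set $F \eqdef K_0 \times H$, which is a connected compact Lie group, and let $\phi\colon F \to G$ be the homomorphism $(k,h)\mapsto k\cdot p(h)$, where $p\colon H\to G_1 \hookrightarrow G$ is the covering map. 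The Lie algebra map $\mrd\phi\colon Z(\mfg)\oplus \mfg' \to \mfg$ is the identity under the decomposition above, so $\phi$ is a local diffeomorphism and in particular surjective by connectedness of $G$.

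Next I would identify the kernel. Since $\mrd\phi$ is an isomorphism, $Z \eqdef \ker\phi$ is a discrete subgroup of the compact group $F$, hence finite. Discreteness of the kernel of a Lie group homomorphism between connected Lie groups forces $Z$ to lie in the center of $F$ (standard argument: conjugation by $F$ gives a continuous map $F\to Z$ with connected domain and discrete target, so it is constant equal to the identity). Thus $G \simeq F/Z$ with $Z$ a finite central subgroup, $F = K\times H$, $K = K_0$ a torus, and $H$ simply connected semi-simple compact, as required.

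The only mildly subtle step is compactness of the universal cover $H$ of $G_1$, i.e.\ finiteness of $\pi_1(G_1)$ for a connected semi-simple compact Lie group; this is Weyl's theorem, proved via the Killing form and estimates on the diameter of $G_1$ in its bi-invariant Riemannian metric. I would cite this rather than reprove it, and similarly cite that $[G,G]$ is closed and that an $\Ad$-invariant inner product on $\mfg$ exists by averaging. Everything else is formal manipulation of Lie algebras and covering maps.
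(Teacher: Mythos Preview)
Your proof sketch is correct and follows the standard argument for this classical structure theorem. However, the paper does not give its own proof of this statement: it simply recalls the result and cites \cite[Thm.~V.8.1]{BtD85} (Br\"ocker--tom Dieck). So there is no proof in the paper to compare against; your write-up supplies exactly the argument one finds in that reference, via the Lie algebra decomposition $\mfg = Z(\mfg)\oplus[\mfg,\mfg]$, Weyl's theorem on compactness of the simply connected cover of a compact semi-simple group, and the covering map $K_0\times H\to G$.
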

The following lemma, based on sub-Riemannian geometry, is used in the proof of Proposition~\ref{prop:A_tilde_A}.
\begin{lemma}\label{lem:curve_selection}
Consider non-zero $J \in L(\mfg,\R)$.
Then there exists a smooth curve $\zeta\colon [0,1] \to \mfg$
such that
\begin{enumerate}[label=(\roman*)]
\item\label{pt:const_deriv} $\dot\zeta=0$ on $[0,\frac14]$ and $[\frac34,1]$,
\item\label{pt:non-zero}
$\zeta(0)=0$ and $J\zeta(1)\neq 0$,
\item\label{pt:targets}
$L^\zeta(1)=\id$ where $L^\zeta\in\CC^\infty([0,1],G)$ solves
$
\mrd L^\zeta = (\mrd \zeta) L^\zeta$, $L^\zeta(0)=\id
$,
\item \label{pt:4targets} $\tilde L^\zeta(1)=\id$ where $\tilde L^\zeta\in\CC^\infty([0,1],G)$ solves
$
\mrd \tilde L^\zeta = 4(\mrd \zeta) \tilde L^\zeta$, $\tilde L^\zeta(0)=\id
$.
\end{enumerate}
\end{lemma}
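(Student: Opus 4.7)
The plan is to recast the conclusion as a controllability statement for a left-invariant control system on $\mcH \eqdef \mfg \times G \times G$, regarded as a Lie group with additive structure on the $\mfg$ factor and the natural product structure on $G\times G$. Namely, consider the system
\[
\dot\zeta = u, \qquad \dot M = u M, \qquad \dot N = 4\, u N,
\]
with control $u\colon [0,1]\to \mfg$ and initial state $(0,\id,\id)$. Conditions (ii)--(iv) then amount to finding a smooth control steering this system to $(X,\id,\id)$ with $JX \neq 0$; condition (i) can be enforced a posteriori by composing the time variable with a smooth plateau $\phi\colon [0,1]\to[0,1]$ equal to $0$ on $[0,\tfrac14]$ and $1$ on $[\tfrac34,1]$, since $\zeta(1)$, $L^\zeta(1)$ and $\tilde L^\zeta(1)$ are all invariant under monotone reparameterization of time.

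The heart of the argument is the Chow--Rashevskii computation of the Lie subalgebra $\mfa \subset \mfg^{\oplus 3}$ generated by the control directions $\{(X,X,4X) : X \in \mfg\}$. Because the first factor is Abelian, the first bracket is $[(X,X,4X),(Y,Y,4Y)] = (0,[X,Y],16[X,Y])$, and iterated bracketing with $(X,X,4X)$ produces vectors $(0,V,4^{k+1}V)$ with $V$ running through $k$-fold nested commutators in $[\mfg,\mfg]$. The crucial algebraic point, which hinges on the scaling factor being $4 \neq 1$, is that $(0,V,16V)$ and $(0,V,64V)$ are linearly independent in their last two slots, so their $\R$-span yields both $(0,V,0)$ and $(0,0,V)$ for every $V \in [\mfg,\mfg]$. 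Combining with the generators $(X,X,4X)$ and checking that the resulting subspace is closed under brackets (using that $\mfk$ is central and $[\mfl,\mfl]=\mfl$) gives
\[
\mfa = \{(X,Y,Z) \in \mfg^{\oplus 3} : Y - X \in [\mfg,\mfg],\ Z - 4X \in [\mfg,\mfg]\}.
\]
By Chow--Rashevskii (and standard approximation to ensure smooth controls), the reachable set from $(0,\id,\id)$ is the connected Lie subgroup $A \subset \mcH$ with Lie algebra $\mfa$.

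It remains to exhibit $X$ with $(X,\id,\id)\in A$ and $JX \neq 0$. Applying Theorem~\ref{thm:structure_compact_groups}, write $G = F/Z$ with $F = K \times L$, $K$ a torus, and $L$ simply connected semi-simple; since the projection $F\to G$ sends $\id_F$ to $\id_G$, it suffices to realize $(X,\id_F,\id_F)\in A$. Decompose $\mfg = \mfk \oplus \mfl$, so that $[\mfg,\mfg] = \mfl$ and $\Lambda_K \eqdef \ker\exp|_\mfk$ is a full-rank lattice in $\mfk$. For $X \in \mfl$ the element $(X,0,0)\in \mfa$ exponentiates to $(X,\id,\id)\in A$; for $\lambda \in \Lambda_K$ the element $(\lambda,\lambda,4\lambda)\in\mfa$ exponentiates to $(\lambda,\exp\lambda,\exp 4\lambda) = (\lambda,\id_F,\id_F)\in A$ (using that $\Lambda_K$ is a $\Z$-module). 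Multiplying these in $A$ shows that $(X,\id,\id)\in A$ for every $X \in \mfl + \Lambda_K$. Finally, for any non-zero $J \in L(\mfg,\R)$: if $J|_\mfl \neq 0$ pick $X \in \mfl$ with $JX \neq 0$; otherwise $J|_\mfk \neq 0$, and since $\Lambda_K$ spans $\mfk$ over $\R$ one can pick $X \in \Lambda_K$ with $JX \neq 0$. The hard part of the argument is precisely the Lie-algebraic computation of $\mfa$: it is the mismatch $4\neq 1$ between the scales in conditions (iii) and (iv) that renders the two holonomy constraints sufficiently independent at the level of iterated brackets to allow $X$ to sit outside $[\mfg,\mfg]$, which is essential when $\mfg$ has a non-trivial Abelian (toral) part.
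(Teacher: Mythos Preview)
Your argument is correct and closely parallels the paper's, but the organization differs in a way worth noting. The paper splits from the outset into the cases $J|_{\mfk}\neq 0$ and $J|_{\mfh}\neq 0$ (with $\mfh$ your $\mfl$). In the torus case it gives a direct one-line construction $\zeta(t)=tX$ for a suitable lattice vector, bypassing Chow--Rashevskii entirely; in the semi-simple case it applies Chow--Rashevskii on $H\times H\times\mfh$, where the distribution is genuinely bracket-generating (precisely because $[\mfh,\mfh]=\mfh$), so the full manifold is the orbit. You instead work uniformly on $\mfg\times G\times G$, compute the generated Lie subalgebra $\mfa$ (proper when $\mfk\neq 0$), and invoke the orbit theorem to identify the reachable set as the connected subgroup $A$ with Lie algebra $\mfa$; the torus/semi-simple split only enters at the very end when you locate $(X,\id,\id)$ inside $A$. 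Your route is more unified and makes the structure of the reachable set explicit; the paper's is more elementary in the Abelian case and avoids discussing orbits that are proper subgroups.

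Two minor points of presentation. First, ``it suffices to realize $(X,\id_F,\id_F)\in A$'' is a type mismatch since $A\subset \mfg\times G\times G$; what you mean is either to work in $\mfg\times F\times F$ with the analogous subgroup and then project, or simply to observe that $\ker(\exp_G|_{\mfk})$ already contains the full-rank lattice $\Lambda_K$, so no passage to $F$ is needed. Second, ``standard approximation to ensure smooth controls'' is not quite the right justification, since approximation alone does not hit the target exactly; the correct (and equally standard) statement is that for invariant systems on a Lie group the reachable set with smooth controls equals the connected subgroup $A$, e.g.\ because smooth concatenations of flows already realize a neighborhood of $\id$ in $A$.
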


\begin{proof}
By pre-composing with a suitable smooth $\psi\colon[0,1]\to[0,1]$,
it suffices to find $\zeta$ satisfying~\ref{pt:non-zero}\ref{pt:targets}\ref{pt:4targets}.
Let $F = K\times H$ and $Z\subset F$ be as in Theorem~\ref{thm:structure_compact_groups} so that $G=F/Z$.
We write $\mfg=\mfk \oplus \mfh$ for the corresponding decomposition, where $\mfk\eqdef T_{\id} K$ is Abelian and $\mfh\eqdef T_{\id}H$ is semi-simple.

Suppose first that $J\mfk \neq 0$.
Since $K$ is a torus, there clearly exists $X \in\mfk$ such that $JX\neq 0$ and $\exp_K(X) = \exp_K(4X) = \id$.
So we are done by setting $\zeta(t) = tX$ since then
$
L^{\zeta}(t) = \exp_G(tX) = p\exp_K(tX)
$
and $\tilde L^{\zeta}(t) = \exp_G(4tX) = p\exp_K(4tX)
$,
where $p\colon F\to G$ denotes the natural projection.

Suppose now $J\mfh \neq 0$.
Consider the product manifold $H\times H\times \mfh$ equipped with a distribution $\Delta$ such that $\Delta_{(g,h,X)}\subset T_{(g,h,X)}( H\times H\times \mfh)$
is spanned by all vectors of the form $(R^g_*Y, 4R^h_* Y, Y)$ for $Y\in\mfh$, where $R^g \colon H\to H, x\mapsto xg$.
Note that the horizontal lift of $\zeta \in \CC^\infty([0,1],\mfh)$ is the curve
\begin{equ}[eq:horizontal_lift_zeta]
[0,1]\ni x\mapsto (L^\zeta(x),\tilde L^\zeta(x),\zeta(x))
\end{equ}
(with $L^\zeta,\tilde L^\zeta$ understood as taking values in $H$).
We claim that $\Delta$ is bracket-generating (i.e. satisfies H\"ormander's condition).
Indeed, because $\mfh$ is semi-simple, every element $Y\in \mfh$ can be expressed as $Y=[A,B]$ for some $A,B\in\mfh$.
Therefore, denoting by $\Delta^{(1)}\eqdef \Gamma(\Delta)$ the space of horizontal vector fields, and $\Delta^{(n+1)} \eqdef [\Delta^{(n)},\Delta^{(1)}]$,
it follows that
\begin{equ}
\{(X+Y+Z,4X+16Y+64Z,X)\,:\, X,Y,Z\in\mfh\}
\end{equ}
is contained in $\Delta^{(3)}_{(\id,\id,0)}\subset T_{(\id,\id,0)} (H\times H\times\mfh)$
and a similar statement holds for every tangent space $T_{(g,h,A)} (H\times H\times\mfh)$.
Therefore $\Delta^{(3)}_x = T_x(H\times H\times \mfh)$ which proves the claim that $\Delta$ is bracket-generating.

By the Chow--Rashevskii theorem (see, e.g.~\cite[Sec.~1.2.B]{Gromov96_CC}), we can join $(\id,\id,0)$ and any $(g,h,Y)$ with a smooth horizontal curve, and thus, by~\eqref{eq:horizontal_lift_zeta}, there exists $\zeta\in\CC^\infty([0,1],\mfh)$
such that $\zeta(0)=0$ and $(L^\zeta(1) ,\tilde L^\zeta(1),\zeta(1))=(g,h,Y)$.
Taking $g=h=\id$ and any $Y\in\mfh$ such that $JY\neq 0$, the conclusion follows.
\end{proof}

\section{Transition functions of random walks on groups}
\label{app:RWs}

Let $G$ be a unimodular locally compact group equipped with a Haar measure, which we denote by $|A| = \int_A \mrd x$ for (Borel) measurable $A\subset G$.
Suppose $p \colon G\to [0,\infty)$ is a symmetric transition kernel, i.e. $\int p(x)\mrd x = 1$ and $p(x)=p(x^{-1})$.
We let $e$ denote the identity element of $G$.
We are interested in this appendix in deriving lower and upper bounds on the convolution power $p^{(m)} \eqdef p^{\star m}$.
These results,
inspired by~\cite{Hebisch_Saloff_Coste_93},
are used to verify Assumption~\ref{assum:P_N}\ref{pt:density_bound} (see Proposition~\ref{prop:density_bound}).

The two main results below are Theorems~\ref{thm:weak_Harnack} and~\ref{thm:pe_bound}.
Theorem~\ref{thm:weak_Harnack}
is a Harnack-type inequality and is almost identical to~\cite[Thm.~4.2]{Hebisch_Saloff_Coste_93}.
Theorem~\ref{thm:pe_bound} is a bound on $p^{(2^m)}(e)$, which can be seen as a generalisation and quantitative version of~\cite[Thm.~4.1(i)]{Hebisch_Saloff_Coste_93}
-- our proof here is different and is based on induction that does not require 
a global doubling assumption used in the proof of~\cite[Thm.~4.1(i)]{Hebisch_Saloff_Coste_93}
(we only assume a \textit{local} doubling property, see~\eqref{eq:doubling} --
one recovers the global doubling property by setting $a=\infty$ therein).
%\footnote{As far as we can tell, the proof of~\cite[Thm~4.1(i)]{Hebisch_Saloff_Coste_93} uses crucially a \textit{global} doubling assumption.}

We fix throughout a measurable set $\Omega\subset G$ such that $e\in \Omega$ and that is symmetric, i.e. $x^{-1}\in\Omega$ for all $x\in\Omega$.
Define $\rho(x) = \inf\{n\geq 1\,:\,x\in \Omega^n\}$.

\begin{theorem}\label{thm:weak_Harnack}
Suppose $c_0\eqdef \inf_{x\in\Omega^2}p^{(2)}(x) >0$ and let $B=c_0|\Omega|$.
Then for all $n,m\geq 1$, we have
$
|p^{(2n+m)}(x)-p^{(2n+m)}(e)| \leq 2 (m B)^{-1/2}\rho(x) p^{(2n)}(e)
$.
\end{theorem}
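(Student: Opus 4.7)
\medskip

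\textbf{Proof proposal.} The plan is to decompose the displacement $x$ along a word in $\Omega$ and to combine a Cauchy--Schwarz step with a pointwise spectral estimate on the ``right-translation gradient'' of the iterated kernel, in three steps.

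\emph{Step 1 (telescoping in $x$).} Write $x=a_1\cdots a_k$ with $a_i\in\Omega$ and $k=\rho(x)$, and set $y_j=a_1\cdots a_j$, $y_0=e$. Then
\[
p^{(2n+m)}(x)-p^{(2n+m)}(e)=\sum_{i=1}^{k}\bigl[p^{(2n+m)}(y_i)-p^{(2n+m)}(y_{i-1})\bigr].
\]
The triangle inequality reduces matters to estimating each single increment.

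\emph{Step 2 (Cauchy--Schwarz on the convolution formula).} Using $p^{(2n+m)}=p^{(n)}*p^{(n+m)}$, writing out the integral and performing the substitution $w=z^{-1}y_{i-1}$, the increment becomes
\[
p^{(2n+m)}(y_{i-1}a_i)-p^{(2n+m)}(y_{i-1})=\int p^{(n)}(y_{i-1}w^{-1})\bigl[p^{(n+m)}(wa_i)-p^{(n+m)}(w)\bigr]dw.
\]
Cauchy--Schwarz, together with unimodularity and symmetry of $p$ giving $\|p^{(n)}\|_{L^2}^2=p^{(2n)}(e)$, yields
\[
\bigl|p^{(2n+m)}(y_i)-p^{(2n+m)}(y_{i-1})\bigr|^{2}\;\leq\; p^{(2n)}(e)\cdot\bigl\|(R_{a_i}-I)p^{(n+m)}\bigr\|_{L^{2}}^{2},
\]
where $R_a f(z)=f(za)$. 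This reduces the problem to an estimate uniform in $a\in\Omega$.

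\emph{Step 3 (pointwise spectral estimate).} I will show that, for every $a\in\Omega$,
\begin{equation}\label{eq:key-Pm}
\bigl\|(R_a-I)p^{(n+m)}\bigr\|_{L^{2}}^{2}\;\leq\;\frac{4}{mB}\,p^{(2n)}(e).
\end{equation}
Let $P$ be the operator of left convolution by $p$. Since $p$ is symmetric, $P$ is self-adjoint on $L^{2}(G)$ with $\|P\|\leq 1$; a direct computation shows $R_{a}P=PR_{a}$, so $(R_a-I)p^{(n+m)}=P^{m}h$ with $h\eqdef(R_a-I)p^{(n)}$. The spectral theorem then gives $\|P^{m}h\|^{2}=\int\lambda^{2m}d\mu_{h}(\lambda)$. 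Combining the elementary calculus inequality $\sup_{\lambda\in[-1,1]}\lambda^{2m}(1-\lambda^{2})\lesssim m^{-1}$ with the Dirichlet-form identity
\[
\int_{G}p^{(2)}(y)\,\|(R_{y}-I)f\|_{L^{2}}^{2}\,dy\;=\;2\langle f,(I-P^{2})f\rangle,
\]
and the standing hypothesis $p^{(2)}(y)\geq c_0\mathbf{1}_{\Omega^{2}}(y)$ (so that the left-hand side controls an $L^{2}$-average of translation increments by $B=c_{0}|\Omega|$), one extracts the required $m^{-1}B^{-1}$ decay of $\|P^{m}h\|^{2}$ in terms of $\|h\|^{2}\leq 2p^{(2n)}(e)$. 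Plugging \eqref{eq:key-Pm} into Step~2 and summing over $i$ via $\|R_{x}f-f\|_{L^{2}}\leq\sum_{i}\|(R_{a_{i}}-I)f\|_{L^{2}}$ (which follows cleanly because each $R_{a_{1}\cdots a_{i-1}}$ is an $L^{2}$-isometry) produces the factor $\rho(x)$ and yields the stated inequality.

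\emph{Main obstacle.} Steps~1 and~2 are essentially algebraic once the right convolution identities are set up; the work is in Step~3, where one needs a \emph{pointwise} (in $a\in\Omega$) spectral estimate, whereas the Dirichlet-form identity only gives an $L^{1}(\Omega,da)$-type bound at first glance. Converting this into the pointwise decay \eqref{eq:key-Pm} with the correct dependence on $B$ is the delicate point, and it is what distinguishes the argument from a naive Poincar\'e inequality. This is the place where symmetry of $p$ (so that $P$ is self-adjoint and commutes with every $R_{a}$) and the local density assumption $p^{(2)}\geq c_{0}\mathbf{1}_{\Omega^{2}}$ are both essential; neither a global doubling nor a uniform spectral gap on $L^{2}(G)$ is used.
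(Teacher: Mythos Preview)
Your approach is correct and is precisely the Hebisch--Saloff-Coste argument that the paper cites; Steps~1 and~2 are exactly right. Two points in Step~3 need tightening.

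First, the Dirichlet-form identity you state (with right translations $R_y$ and $P$ the \emph{left} convolution) only holds for \emph{symmetric} $f$; for general $f$ the identity involves left translations instead. This is harmless since $p^{(n+m)}$ is symmetric, but it should be said.

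Second, the way you combine the spectral and Poincar\'e ingredients is garbled: the spectral calculus on $\mu_h$ does not by itself yield \eqref{eq:key-Pm}, and the bound is not naturally ``in terms of $\|h\|^{2}$''. The correct order is Poincar\'e first, then spectral. Namely, (i) prove the \emph{pointwise} Poincar\'e inequality
\[
\|(R_a-I)g\|_{L^2}^{2}\le \tfrac{8}{B}\,\langle(I-P^{2})g,g\rangle
\qquad\text{for every symmetric }g\text{ and every }a\in\Omega,
\]
by averaging the triangle-inequality bound $\|(R_a-I)g\|\le\|(R_{b^{-1}a}-I)g\|+\|(R_b-I)g\|$ over $b\in\Omega$ (both $b$ and $b^{-1}a$ lie in $\Omega^{2}$, where $p^{(2)}\ge c_0$, so the averaged right-hand side is controlled by the Dirichlet form); this is exactly what resolves the ``main obstacle'' you flag. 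Then (ii) apply this with $g=P^{m}p^{(n)}=p^{(n+m)}$ and use the spectral bound
\[
\langle(I-P^{2})P^{m}f,P^{m}f\rangle=\int_{[-1,1]}\lambda^{2m}(1-\lambda^{2})\,d\mu_{f}\le\tfrac{1}{2m}\|f\|_{L^2}^{2}
\]
with $f=p^{(n)}$ (this is the improved Lemma~3.2 the paper mentions). Combining gives $\|(R_a-I)p^{(n+m)}\|^{2}\le\frac{4}{mB}\,p^{(2n)}(e)$ with the exact constant, and Steps~1--2 then finish the proof.
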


\begin{proof}
The proof is identical to that of \cite[Thm.~4.2]{Hebisch_Saloff_Coste_93};
one merely needs to chase through the constants and use that \cite[Lem.~3.2]{Hebisch_Saloff_Coste_93}
can be improved (with the same proof) to the statement that, for $K$ be a symmetric Markov operator and all integers $m,n\geq 1$,
$
|(\id -K^{2n})^{1/2}K^{m}f|_{L^2}^2 \leq n(2m)^{-1}|f|^2_{L^2}
$.
\end{proof}

\begin{theorem}\label{thm:pe_bound}
Suppose there exist $a\in [0,\infty]$, $C,D\geq 0$ such that, for all $n \geq 1$,
\begin{equ}\label{eq:doubling}
|\Omega^n| \geq C\min\{a, |\Omega|n^D\}\;.
\end{equ}
With the notation and assumptions of Theorem \ref{thm:weak_Harnack}, one has for all $m\geq1$
\begin{equ}\label{eq:induction_p_e}
p^{(2^m)}(e) \lesssim \max\{2^{-Dm/2}B^{-D/2}c_1,
C^{-1}a^{-1},
C^{-1}2^{-Dm/2} |\Omega|^{-1}B^{-D/2}\}\;,
\end{equ}
where 
$c_1=p^{(2)}(e)$ and
the proportionality constant depends only on $D$.
\end{theorem}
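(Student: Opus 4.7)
The plan is to induct on $m$, using Theorem~\ref{thm:weak_Harnack} together with the doubling hypothesis \eqref{eq:doubling} to drive a Nash-type decay on $\psi_M := p^{(2^M)}(e)$. A preliminary observation is that $\psi_M$ is non-increasing in $M$: since $p$ is symmetric, convolution by $p$ is a self-adjoint contraction on $L^2$ and $\psi_M = \|p^{(2^{M-1})}\|_{L^2}^2$, which is then non-increasing along doublings.

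The core input is a \emph{mass-on-a-ball} estimate. For any $N \geq 2$ and any decomposition $N = 2n + k$ with $n,k \geq 1$, Theorem~\ref{thm:weak_Harnack} gives $p^{(N)}(x) \geq \tfrac12 p^{(N)}(e)$ for all $x$ with $\rho(x) \leq r_0 := p^{(N)}(e)(kB)^{1/2}/(4\psi_n)$. Integrating this against the Haar measure and using $\int p^{(N)} = 1$ yields $p^{(N)}(e) \leq 2/|\Omega^r|$, where $r := \max(1, \lfloor r_0\rfloor)$. Substituting the doubling inequality \eqref{eq:doubling} produces three regimes: (a) if $r_0 < 1$, one has directly $p^{(N)}(e) < 4\psi_n(kB)^{-1/2}$; (b) if $r_0 \geq 1$ and $|\Omega|r_0^D \leq a$, a short rearrangement gives the Nash-type recursion $p^{(N)}(e) \leq (C'(D)/(C|\Omega|))^{1/(D+1)} \psi_n^{D/(D+1)} (kB)^{-D/(2(D+1))}$; (c) otherwise $p^{(N)}(e) \leq 2/(Ca)$, accounting for the second term of \eqref{eq:induction_p_e}.

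I then specialise to $N = 2^M$ with the balanced splitting $n = 2^{M-2}$, $k = 2^{M-1}$ (so $2n+k = 2^M$), and induct on $M$. The third term of \eqref{eq:induction_p_e} appears as the fixed point of recursion (b): the ansatz $\psi_M \leq K(2^M B)^{-D/2}$ closes under (b) provided $K \geq C''(D)/(C|\Omega|)$. The second term is preserved by case (c). The first term $c_1 \cdot 2^{-DM/2} B^{-D/2}$ captures the ``small $M$'' regime together with the contribution of the initial data $\psi_0 \leq c_1$; it is convenient to replace the max in \eqref{eq:induction_p_e} by the additive ansatz $\psi_M \leq K_1 c_1(2^M B)^{-D/2} + K_2 C^{-1}a^{-1} + K_3 C^{-1}|\Omega|^{-1}(2^M B)^{-D/2}$, so that each term satisfies its own recursive step. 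Monotonicity of $\psi_M$ handles the parity mismatch of the two-step recursion $M \mapsto M-2$, and the base cases $M \in \{1,2\}$ are absorbed into the constants $K_i(D)$.

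The main subtlety I anticipate is reconciling the ``linear'' regime (a) with the Nash recursion (b). Iterating (a) alone would produce super-exponential decay, inconsistent with the polynomial-in-$2^M$ bound claimed, so one must show that once (a) is triggered, the subsequent steps always fall back into (b) or (c); in other words, once $\psi_M$ becomes small enough that $r_0 < 1$ at some stage, $\psi_M$ remains comparable to the equilibrium bound $C^{-1}|\Omega|^{-1}(2^M B)^{-D/2}$ for all later $M$. Controlling this transition carefully, while keeping all constants dependent only on $D$, is where the bulk of the bookkeeping lies.
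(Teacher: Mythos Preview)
Your overall strategy (induct on $m$ via the mass-on-a-ball argument from Theorem~\ref{thm:weak_Harnack} and the doubling hypothesis) matches the paper's, but the paper's execution is simpler and avoids entirely the Nash-type recursion that causes the subtlety you anticipate.

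The paper replaces your three regimes by a single dichotomy at each step. Either $\psi_{m+1}\le 2^{-D/2}\psi_m$, in which case the inductive hypothesis for $\psi_m$ immediately gives the bound for $\psi_{m+1}$ with the same constant (each term of the max either picks up exactly the factor $2^{-D/2}$ or is independent of $m$); or $\psi_{m+1}>2^{-D/2}\psi_m$, in which case the Harnack radius $r_0=\psi_{m+1}(2^m B)^{1/2}/(4\psi_m)$ satisfies $r_0\gtrsim_D(2^m B)^{1/2}$ \emph{independently of the actual value of $\psi_{m+1}$}. One then gets $\psi_{m+1}\le 2/|\Omega^{\sigma_m}|$ with $\sigma_m\asymp_D(2^m B)^{1/2}$, and a single application of \eqref{eq:doubling} delivers the second and third terms of \eqref{eq:induction_p_e} outright, with no recursion and no appeal to the inductive hypothesis in this branch. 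The first term of the max simply absorbs the finitely many small $m$ for which $\sigma_m=0$, via the trivial bound $\psi_m\le c_1$. So the paper never has to solve the recursion $\psi_M^{D+1}\lesssim\psi_{M-1}^D/(\cdots)$ that your regime (b) produces, and there is no transition between regimes to manage.

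Two minor corrections to your write-up. First, with $n=2^{M-2}$ and $k=2^{M-1}$ one has $p^{(2n)}(e)=\psi_{M-1}$, so what you obtain is a one-step recursion $M\mapsto M-1$, not $M\mapsto M-2$; your notation $\psi_n$ for $p^{(2n)}(e)$ is inconsistent with your definition $\psi_M=p^{(2^M)}(e)$. Second, your regime (a) inequality $\psi_M<4\psi_{M-1}(kB)^{-1/2}$ is not an estimate produced by the mass-on-a-ball argument: it is just the condition $r_0<1$ rewritten. Fortunately this condition says precisely that $\psi_M/\psi_{M-1}$ is much smaller than $2^{-D/2}$ once $2^M B$ is large, so your worry that iterating (a) would be ``inconsistent with the polynomial-in-$2^M$ bound claimed'' is misplaced: faster-than-required decay is perfectly consistent with an upper bound.
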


\begin{proof}
Suppose $m \geq 1$ is such that
$
\sigma_m \eqdef \floor{(2^{m}B)^{1/2}2^{-D/2}/20} = 0
$.
Then $p^{(2^m)}(e) \leq c_1\lesssim 2^{-Dm/2}B^{-D/2}c_1$ and thus~\eqref{eq:induction_p_e} holds for a proportionality constant depending only on $D$.
We proceed by induction on $m$.
The base case, i.e. the smallest $m\geq 1$ such that $\sigma_m\geq 1$, follows as before.
Suppose now~\eqref{eq:induction_p_e} holds for some $m\geq 1$ with $\sigma_m\geq 1$.
We now have two cases. The first is that $p^{(2^{m+1})}(e) \leq 2^{-D/2}p^{(2^m)}(e)$, in which case~\eqref{eq:induction_p_e}
holds with $m$ replaced by $m+1$ for the same proportionality constant by the inductive hypothesis.
The second case is that $p^{(2^{m+1})}(e) > 2^{-D/2}p^{(2^m)}(e)$.
In this case, for all
\begin{equ}\label{eq:V_m}
x\in V_m \eqdef \{y\in G\,:\, 2( 2^{m}B)^{-1/2}\rho(y) \leq 2^{-D/2}/10\}\;,
\end{equ}
it follows from Theorem~\ref{thm:weak_Harnack} that
\begin{equ}\label{eq:p(x)_lower_bound}
p^{(2^{m+1})}(x) \geq p^{(2^{m+1})}(e)/2\;.
\end{equ}
Remark that $V_m = \Omega^{\sigma_m}$, and  since $\sigma_m\geq 1$,
it follows from~\eqref{eq:doubling} that
\begin{equ}\label{eq:V_m_volume_bound}
|V_m| \gtrsim C \min \{a, |\Omega| (2^m B)^{D/2}\}
\end{equ}
for a proportionality constant depending only on $D$.
Since
$
\int_{V_m}p^{(2^{m+1})}(x)\mrd x \leq \int_G p^{(2^{m+1})}(x)\mrd x=1
$,
we obtain from~\eqref{eq:p(x)_lower_bound} and~\eqref{eq:V_m_volume_bound}
\begin{equ}
p^{(2^{m+1})}(e)\leq 2|V_m|^{-1}\lesssim C^{-1} \max\{a^{-1}, |\Omega|^{-1} (2^m B)^{-D/2}\}\;.
\end{equ}
This completes the inductive step in the second case (remark that we did not use the inductive hypothesis for this case).
\end{proof}
The following consequence of Theorems~\ref{thm:weak_Harnack} and~\ref{thm:pe_bound} is much more specific but gives a simple way to verify Assumption~\ref{assum:P_N}\ref{pt:density_bound} (see Proposition~\ref{prop:criterion_S_N}).
One should think of $\eps$ below as parametrising a family of kernels $p$.
\begin{proposition}\label{prop:density_bound}
Suppose that
\begin{itemize}
\item  $G$ is a compact connected Lie group of dimension $D$,
\item $\Omega$ is a ball centred at $e$ of radius $\eps\ll1$ for a geodesic distance on $G$, and
\item $p^{(2)}(e) \asymp \inf_{x\in \Omega^2} p^{(2)}(x) \asymp \eps^{-D}$ uniformly in $\eps$.
\end{itemize}
Then for every integer $n\geq 0$, there exist $\e_n,K_n>0$, independent of $\eps$,
such that $K^{-1}_n < p^{(2^{m-n})}(x) < K_n$ for all $x\in G$ and $m> n$
provided that $2^m\geq \e^{-2}>\e^{-2}_n$.
\end{proposition}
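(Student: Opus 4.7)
The plan is to verify the hypotheses of Theorems \ref{thm:weak_Harnack} and \ref{thm:pe_bound} for this setup and then use Cauchy--Schwarz to convert information about $p^{(k)}$ at the identity into bounds uniform in $x\in G$. First I would check that on a compact connected Lie group of dimension $D$ with $\Omega$ a ball of radius $\eps$, the classical volume comparison $|\Omega^n|\asymp \min\{|G|,(n\eps)^D\}$ holds. This supplies the doubling condition \eqref{eq:doubling} with $a\asymp |G|$, $C\asymp 1$, and $|\Omega|\asymp \eps^D$, and combining with the standing hypothesis $\inf_{x\in\Omega^2}p^{(2)}(x)\asymp \eps^{-D}$ yields $B=c_0|\Omega|\asymp 1$ and $c_1\asymp \eps^{-D}$.

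With these parameters, applying Theorem~\ref{thm:pe_bound} with $m$ replaced by $m-n$ and using $2^{m-n}\geq 2^{-n}\eps^{-2}$ gives
\begin{equ}
p^{(2^{m-n})}(e)\;\lesssim\;\max\{2^{-D(m-n)/2}\eps^{-D},\,|G|^{-1}\}\;\lesssim\;2^{Dn/2}.
\end{equ}
The elementary identity $p^{(2k)}(x)=\int p^{(k)}(y)p^{(k)}(y^{-1}x)\mrd y\leq \|p^{(k)}\|_2^2=p^{(2k)}(e)$ (Cauchy--Schwarz plus symmetry and unimodularity) then transfers this to every $x\in G$ and gives the upper bound with $K_n\asymp 2^{Dn/2}$.

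For the lower bound, the same identity gives the elementary estimate $p^{(2k)}(e)\geq |G|^{-1}(\int p^{(k)})^2=|G|^{-1}$, a uniform positive lower bound at $x=e$. To propagate it to arbitrary $x\in G$ I would apply Theorem~\ref{thm:weak_Harnack} with a balanced decomposition $2n_0+m_0=2^{m-n}$, using $\rho(x)\lesssim \eps^{-1}$ (from the boundedness of the diameter of $G$) together with the bound $p^{(2n_0)}(e)\lesssim 2^{Dn/2}$ just derived. The resulting deviation estimate
\begin{equ}
|p^{(2^{m-n})}(x)-p^{(2^{m-n})}(e)|\;\lesssim\;(2^{m-n})^{-1/2}\eps^{-1}\,2^{Dn/2}
\end{equ}
is bounded by $1/(2|G|)$ as soon as $2^{m-n}\gtrsim |G|^2\,2^{Dn}\eps^{-2}$, which handles all $m-n$ sufficiently larger than $\log_2\eps^{-2}$ and yields $p^{(2^{m-n})}(x)\geq 1/(2|G|)$ in that regime.

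The main obstacle is the complementary range $k=2^{m-n}\in[2^{-n}\eps^{-2},\,|G|^2 2^{Dn}\eps^{-2}]$, where the Harnack deviation exceeds the lower bound at $e$ and the direct subtraction argument breaks down. My plan here is a convolution/propagation argument: the upper bound $p^{(k)}\lesssim 2^{Dn/2}$ together with $\int p^{(k)}=1$ forces the set $A_k=\{p^{(k)}\geq (2|G|)^{-1}\}$ to have measure $\gtrsim 2^{-Dn/2}$, and for $k'$ large enough that the previous paragraph applies,
\begin{equ}
p^{(k+k')}(x)\;\geq\;\tfrac{1}{2|G|}\int_{xA_k^{-1}}p^{(k')}(y)\,\mrd y
\end{equ}
is bounded below by an $n$-dependent constant using the uniform positivity of $p^{(k')}$. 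A final monotonicity step ($\|p^{(k)}\|_\infty$ is non-increasing in $k$, and $p^{(k_0)}(\cdot)\geq c$ on $G$ implies $p^{(k_0+j)}(\cdot)\geq c$ for all $j\geq 0$ by convolving with $p^{(j)}$) extends both bounds from the critical scale $k_0=2^{-n}\eps^{-2}$ to all admissible $k$. The delicate part is keeping all implicit constants independent of $\eps$; heuristically $K_n^{-1}$ should decay like $2^{Dn/2}e^{-c\,2^n}$, the Gaussian heat-kernel lower bound at scale $k\eps^2\asymp 2^{-n}$, and the hard work is extracting this purely from the parabolic-Harnack input of Theorem~\ref{thm:weak_Harnack}.
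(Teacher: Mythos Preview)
Your upper bound argument is correct and essentially the same as the paper's.

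For the lower bound, there is a genuine gap. Your convolution/propagation step only produces lower bounds for $p^{(k+k')}$ with $k+k'$ \emph{larger} than the scales you already control, and the monotonicity you invoke (``$p^{(k_0)}\geq c$ implies $p^{(k_0+j)}\geq c$'') likewise only propagates lower bounds upward in $k$. In particular, the scale $k+k'$ you produce lies above $|G|^2 2^{Dn}\eps^{-2}$, which is a range you have already handled directly; nothing in your sketch ever yields a lower bound at the smallest admissible scale $k_0=2^{m-n}$ with $2^m\sim\eps^{-2}$ (so $k_0\sim 2^{-n}\eps^{-2}$). At that scale your direct Harnack attempt fails because the deviation $(k_0)^{-1/2}\eps^{-1}\cdot 2^{Dn/2}\asymp 2^{(D+1)n/2}$ dwarfs the lower bound $p^{(k_0)}(e)\gtrsim|G|^{-1}$, and the rest of your plan never returns to this scale.

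The paper's approach fixes precisely this. A pigeonhole argument on the ratios $p^{(2^{m-n'})}(e)/p^{(2^{m-n'-1})}(e)$ (these are trapped between the upper bound $\lesssim 2^{Dn'/2}$ and the trivial lower bound $\gtrsim 1$, so cannot all be $<2^{-D}$) locates a good scale $n'$ at which Theorem~\ref{thm:weak_Harnack} gives $p^{(2^{m-n'})}(x)\geq\tfrac12 p^{(2^{m-n'})}(e)$ on a ball $U_{n'}$ of radius $\asymp 2^{-n'/2}$, crucially independent of $\eps$. A chaining argument then convolves $\asymp 2^{n'/2}$ copies of this local bound to cover all of $G$, yielding a uniform lower bound at scale $\asymp 2^{n'/2}\cdot 2^{m-n'}$. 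Taking $n'$ large (via the freedom in the pigeonhole window $\{j,\ldots,2j\}$) brings this below $2^{m-n}$, and only then does monotonicity propagate upward to the target scale. The idea you are missing is this ``local Harnack on an $\eps$-independent ball + chaining'' step, which replaces a single global application of Theorem~\ref{thm:weak_Harnack} (where $\rho(x)\lesssim\eps^{-1}$ is fatally large) by many local ones.
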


\begin{proof}
We have $|\Omega|\asymp \eps^D$
and~\eqref{eq:doubling} holds with $a=1$ and $C\asymp 1$.
Let $c_1=p^{(2)}(e)$ and $c_0 = \inf_{x\in \Omega^2} p^{(2)}(x)$.
Then $B\eqdef c_0|\Omega| \asymp 1$
and the right-hand side of~\eqref{eq:induction_p_e} is bounded above by a multiple of $\max\{1, 2^{-Dm/2}\eps^{-D}\}$ for all $m\geq 1$.
By Theorem~\ref{thm:pe_bound}, we obtain the upper bound $p^{(2^{m-n})} \lesssim 2^{nD/2}$ for $m>n$ such that $2^m\geq \e^{-2}$, where the proportionality constant depends only on $D$.

For the lower bound, fix $2^{m}\geq \eps^{-2}$.
From the upper bound $p^{(2^{m-n})}(e) \lesssim 2^{nD/2}$
and the obvious lower bound $p^{(2k)}(e) \gtrsim 1$,
it follows that for every integer $j\geq j(D)\geq 1$,
there exists $n\in \{j,\ldots 2j\}$ such that
$
p^{(2^{m-n})}(e) \geq 2^{-D}p^{(2^{m-n-1})}(e)
$.
For such $n$, it follows from Theorem~\ref{thm:weak_Harnack} that
$p^{(2^{m-n})}(x) \geq \frac12 p^{(2^{m-n})}(e) \gtrsim 1$ for all
\begin{equ}
x\in U_n \eqdef \{y\in G\,:\, 2( 2^{m-n-1}B)^{-1/2}\rho(y) \leq 2^{-D}/10\} = \Omega^{\varsigma_n}\;,
\end{equ}
where $\varsigma_n = \floor{(2^{m-n-1}B)^{1/2}2^{-D}/20}$.
Remark that $\varsigma_n \gtrsim \e^{-1}2^{-n/2}$ provided that $\e \ll 2^{n/2}$, which we assume henceforth.
Since $\Omega$ is a geodesic ball centred at $e$ of radius $\e$, $U_n$ contains a geodesic ball centred at $e$ of radius $\delta 2^{-n/2}$ for $\delta>0$ independent of $n$,
and thus $U_n^{\floor{k2^{n/2}}}=G$ for some integer $k>0$ independent of $n$.
It now readily follows from a chaining argument (see, e.g.~\cite[p.~689]{Hebisch_Saloff_Coste_93})
that for some $k>0$ independent of $n$,
there exist $K_{n}>0$ such that $K_{n}^{-1} < p^{(k2^{n/2}2^{m-n})}$.
Since $k2^{n/2}2^{-n} \to 0$ as $n\to\infty$, the conclusion follows by taking $n$ large.
\end{proof}

\section{Symbolic index}

We collect in this appendix commonly used symbols of the article, together
with their meaning and, if relevant, the page where they first occur.

 \begin{center}
\renewcommand{\arraystretch}{1.1}
\begin{longtable}{lll}
\toprule
Symbol & Meaning & Page\\
\midrule
\endfirsthead
\toprule
Symbol & Meaning & Page\\
\midrule
\endhead
\bottomrule
\endfoot
\bottomrule
\endlastfoot
%$|\cdot|_{\Hol\alpha}$  & $\alpha$-H\"older `semi-norm'  &  \pageref{eq:Hol_def} \\
%$|\cdot|_{\var p}$  & $p$-variation semi-norm  &  \pageref{page:variation_norm} \\
$*_{(i)}$, $*$  & (Semi)discrete convolution on $\R\times \obonds_i$  &  \pageref{page:convolution-i} \\
$\bonds$, $\obonds$ &  Oriented bonds, positively oriented bonds in $\Lambda$ & \pageref{bonds_page_ref}, \pageref{obonds_page_ref} \\
$\cD_{\<IXi>,\e}^{\gamma,\eta}$ &  Modelled distributions with special form & \pageref{eq:A_expansion} \\
$\CE_{\times}$  & Neighbouring horizontal and vertical bonds  &  \pageref{e:def-CEtimes} \\
$\cE$ & Multiplication by $\e$ on modelled distributions & \pageref{eq:B_expand}\\
$\hat F$ & $\hat F = F\sqcup\{\skull\}$ for a metric space $F$ & \pageref{page:F_hat}\\
$G$ & Compact connected Lie group $G\subset \U(n)$ & \pageref{page:G}\\
$\mfg$ & Lie algebra of $G$, $\mfg\subset\mfu(n)$ & \pageref{page:mfg}\\
% $\mfG^\alpha$  & Gauge transformations $\mfG^\alpha = \CC^{\alpha}(\T^2,G)$ &  \pageref{mfG_page_ref} \\
 $\mfG^{0,\alpha}$  & Closure of smooth functions in $\CC^{\alpha}(\T^2,G)$ &  \pageref{mfG_0_page_ref} \\
$K^{i;\e}$  & Truncated heat kernels on $\obonds_i$ (similar for $P^{i;\e}$ etc.)  & \pageref{page:K12} \\
$\log$  & log map with good properties  & \pageref{page:log} \\
$L_G$  & Space of operators commuting with $\Ad_G$ & \pageref{eq:L_G} \\
$\Lambda$ & Lattice $\Lambda\subset \T^d$ with spacing $\eps = 2^{-N}$ & \pageref{Lamdba_page_ref}\\
$\Omega_N$ & Functionals on line segments in $\lines_N$  & \pageref{page:Omega_N} \\
$\Omega_{N;\alpha}$ &   $\Omega_N$ equipped with  $|\cdot|_{N;\alpha} \eqdef |\cdot|_{N;\gr\alpha}+|\cdot|_{N;\alpha;\rho}$  &
\pageref{page:Omega_Nalpha}\\
$\Omega$  & Projective limit of   $\Omega_N$  & \pageref{page:Omega_alpha} \\
$\Omega_{\gr\alpha}$  & Subspace of $\Omega$ with $ |A|_{\gr\alpha}<\infty$  & \pageref{page:Omega_alpha} \\
$\Omega_{\alpha}$ & Subspace of $\Omega$ with $ |A|_{\gr\alpha}+|A|_{\alpha;\rho}<\infty$ & \pageref{page:Omega_alpha} \\
$\Omega^1_{\alpha}$ & Closure of smooth functions in $\Omega_\alpha$ & \pageref{page:Omega_1_alpha} \\
%$\Omega\mcE$ & $\mfg$-valued $1$-forms with components in $\mcE$ & \pageref{page:Omega_mcE} \\
$\mfO_\alpha$ & Space of gauge orbits $\Omega^1_\alpha/\mfG^{0,\alpha}$ & \pageref{page:mfO} \\
$\plaq$ & Set of plaquettes &  \pageref{def:plaquette}\\
$\CP^\e,\CP$ & Convolution with (discrete) heat kernel &  \pageref{CP_page_ref}, \pageref{eq:A_general_FPP}\\
$\pi$  & Quotient map $\mcH\to\mcH/{\sim}$,  $\mcH\in \{Q_N,\Omega_N,\Omega^1_\alpha\}$ &
	\pageref{page:pi1}, \pageref{page:pi2}, \pageref{page:pi3}\\
$\pi_{N,M}$  & Projections $\Omega_{N} \twoheadrightarrow \Omega_{M}$ for $M\leq N$ &
	\pageref{page:pi_NM}\\
$\SYM_t(C,a)$  & Solution to SYM at time $t$ with i.c. $a$ and mass $C$ & \pageref{page:SYM_t}\\
$\varsigma_N$  &  Restricting elements of $ \Omega_{\gr\alpha}$ to bonds by $\log \hol$ &
	\pageref{def:varsigma_N} \\
%$\mfT^{\YM}_-$ & Negative trees of continuum YM-type & \pageref{page:mfT_YM_-}\\
%$\mfT^{\rem}_-$ & Negative trees arising from remainders & \pageref{page:mfT_rem_-}\\
$\mathring{V}$, $V$  &  $\mathring V$ is neighbourhood of $0$ in $\mfg$, $V=\mathring{V}^{\obonds}\subset\mfq$ & \pageref{ring_V_page_ref}, \pageref{V_page_ref}  \\
%$V$  &  Extension of $\mathring{V}$ to $\mfq$, i.e. $V=\mathring{V}^{\obonds}\subset\mfq$ &  \pageref{V_page_ref} \\
%$V_{\be}$, $V_{\CD}$ &  Domains of $\CS_{\be}$ and $\CD$   &	\pageref{eq:Diff_def} \\
$\mathring{W}$, $W$  &  $\mathring W$ is neighbourhood of $\id$ in $G$, $W=\mathring{W}^{\obonds}\subset Q$  & \pageref{ring_W_page_ref}, \pageref{W_page_ref}  \\
%$W$  &  Extension of $\mathring{W}$ to $Q$, i.e. & \pageref{W_page_ref} 
\end{longtable}
 \end{center}

\endappendix

\bibliographystyle{./Martin}
\bibliography{./refs}

\end{document}